\newcommand{\beq}{\begin{equation}}
\newcommand{\eeq}{\end{equation}}
\newcommand{\beqs}{\begin{equation*}}
\newcommand{\eeqs}{\end{equation*}}
\newcommand{\beqa}{\begin{equation}\begin{aligned}}
\newcommand{\eeqa}{\end{aligned}\end{equation}}
\newcommand{\beqas}{\begin{equation*}\begin{aligned}}
\newcommand{\eeqas}{\end{aligned}\end{equation*}}
\def\R{\mathbb R}
\def\C{\mathbb C}
\def\N{\mathbb N}
\def\al{\alpha}
\def\be{\begin{equation}}
\def\ee{\end{equation}}
\def\bea{\begin{eqnarray}}
\def\eea{\end{eqnarray}}
\def\beas{\begin{eqnarray*}}
\def\eeas{\end{eqnarray*}}
\def\a{{\bf a}}
\def\l{\lambda}
\def\la{\lambda}
\def\ga{\gamma}
\def\th{\theta}
\def\pa{\partial }
\def\om{\omega}
\def\dif{\textup{d}}
\def\S{y_\ast}
\def\de{\delta}
\let\Re\relax
\let\Im\relax
\DeclareMathOperator{\Re}{Re}
\DeclareMathOperator{\Im}{Im}
\newcommand*\diff{\mathop{}\!\mathrm{d}} 
\def\a{{\bf a}}
\def\L{\bar{\Lambda}}
\def\l{\lambda}
\def\lv{\left\vert}
\def\rv{\right\vert}
\def\trho{\tilde{\rho}}
\def\tr{\tilde{r}}
\def\eps{\varepsilon}
\def\bcr{\begin{color}{red}}
\def\bcb{\begin{color}{blue}}
\def\ec{\end{color}}
\def\tom{\tilde\omega}
\def\tr{\tilde\rho}
\def\X{\mathcal X}
\def\Y{\mathcal Y}
\def\rhoLP{\widehat{\rho}}
\def\vLP{\widehat{v}}
\def\PLP{\widehat{\Pi}}
\def\omLP{\widehat{\omega}}
\def\wLP{{w}}
\def\what{\hat{w}}
\def\bfI{\mathbf{I}}
\def\bcc{\begin{color}{magenta}}
\def\bp{\widehat{\pa}_z}
\def\bd{\widehat D_z}
\def\bl{\widehat\Delta_z}
\def\bzeta{\widehat\zeta}
\def\H{\mathcal H}
\def\HmZ{\mathcal H^{2m}_{Z}}
\def\HdotmZ{\dot{\mathcal H}^{2m}_{Z}}
\def\GmZ{\dot{\mathcal H}^{2m}_{Z}}
\def\HmZEul{\mathcal H^{2m,\textup{Eul}}_{Z}}
\def\DZ{\mathcal D_Z}
\def\DZodd{\mathcal D_Z^{\text{odd}}}
\def\DZeven{\mathcal D_Z^{\text{even}}}
\def\Dinf{\mathcal D_{\infty}}
\def\Dinfodd{\mathcal D_\infty^{\text{odd}}}
\def\Dinfeven{\mathcal D_\infty^{\text{even}}}
\def\L{\mathcal L}
\def\K{\mathcal K}
\def\bfL{{\bf L}}
\def\bfLEul{{\bf L}^{\textup{Eul}}}
\def\bfN{{\bf N}}
\def\bfP{{\bf P}}
\def\E{\mathcal E}
\def\U{\Phi}
\def\X{\mathcal X}
\def\Y{\mathcal Y}
\def\Phis{\Phi_-}
\def\Phiu{\Phi_+}
\def\tG{\tilde{G}}
\def\tH{\tilde{H}}
\def\a{\mathfrak{a}}
\def\phij{\phi_{2j}}
\def\bD{\widehat{\mathcal D}}
\def\X{\mathcal X}
\def\CLP{C_{\textup{LP}}}
\def\bG{\widehat{G}}
\def\PhiT{\Phi_{\text{in}}^T}
\def\tE{{\mathcal{E}}}
\def\Om{\Omega}
\def\RR{\mathfrak{R}}
\def\Pb{\mathbb P}
\def\sin{s_T}
\def\I{\mathcal I}
\def\tpsi{\widetilde{\psi}}
\def\rhoLPT{\varrho_{\text{LP,T}}}
\def\uLPT{u_{\text{LP,T}}}
\def\sg{{\Lambda_0}}
\def\bct{\begin{color}{teal}}
\def\m{\mathfrak{m}} 
\def\HmZm{\mathcal H^{2\m}_{Z_0}}
\def\JDN{2 [\frac{j}{2}]} 
\def\JUP{2 [\frac{j+1}{2}]} 
\def\KUP{2 [\frac{k+1}{2}]}
\def\IUP{2 [\frac{i+1}{2}]}
\newtheorem{theorem}{Theorem}[section]
\newtheorem{definition}[theorem]{Definition}
\newtheorem{proposition}[theorem]{Proposition}
\newtheorem{lemma}[theorem]{Lemma}
\newtheorem{remark}[theorem]{Remark}
\newcounter{dummy}
\newcommand\myitem[1][]{\item[#1]\refstepcounter{dummy}\def\@currentlabel{#1}}
\numberwithin{equation}{section}
\begin{document}

\title{Nonlinear stability of the Larson-Penston collapse}

\author{Yan Guo\thanks{Division of Applied Mathematics, Brown University, Providence, RI 02912, USA, Email: Yan\_Guo@brown.edu.}, \ Mahir Had\v zi\'c\thanks{Department of Mathematics, University College London, London WC1E 6XA, UK. Email: m.hadzic@ucl.ac.uk.}, \ Juhi Jang\thanks{Department of Mathematics, University of Southern California, Los Angeles, CA 90089, USA, and Korea Institute for Advanced Study, Seoul, Korea.  Email: juhijang@usc.edu.}, \ and Matthew Schrecker\thanks{Department of Mathematics, University of Bath,  Email: mris21@bath.ac.uk}}

\date{}
\maketitle
\abstract{
We prove nonlinear stability of the Larson-Penston family of self-similarly collapsing solutions to the isothermal Euler-Poisson system. Our result applies to radially symmetric perturbations and it is the first full nonlinear stability result for radially imploding compressible flows. At the heart of the proof is the ground state character of the Larson-Penston solution, which exhibits important global monotonicity properties used throughout the proof.

One of the key challenges is the proof of mode-stability for the non self-adjoint spectral problem which arises when linearising the dynamics around the Larson-Penston collapsing solution. To exclude the presence of complex growing modes other than the trivial one associated with time translation symmetry, we use a high-order energy method in low and high frequency regimes, for which the monotonicity properties are crucially exploited, and use rigorous computer-assisted techniques in the intermediate regime. In addition, the maximal dissipativity of the linearised operator is proven on arbitrary large backward light cones emanating from the singular point using the global monotonicity of the Larson-Penston solutions. Such a flexibility in linear analysis also facilitates nonlinear analysis and allows us to identify the exact number of derivatives necessary for the nonlinear stability statement. The proof is based on a two-tier high-order weighted energy method which ties bounds derived from the Duhamel formula to quasilinear top order estimates. To prove global existence we further use the Brouwer fixed point theorem to identify the final collapse time, which suppresses the trivial instability caused by the time-translation symmetry of the system.
}

\tableofcontents

\section{Introduction and the main result}

A fundamental model of a self-gravitating isolated star is given
by the compressible Euler-Poisson system~\cite{Chandrasekhar38}.
The unknowns are the star density $\varrho$, the velocity vector field ${\bf u}$, the pressure $p$, and the gravitational 
potential $\Phi$. We work under the assumption of radial symmetry and therefore assume that the velocity vector field takes the form
${\bf u}(t,{\bf x})=u(t,R)\frac{{\bf x}}{R}$, $R=|{\bf x}|$, where $u(t,R)$ is the scalar-valued radial velocity component. Moreover, in radial symmetry, the gravitational field 
can be expressed as a nonlocal operator of the gas density $\varrho$ through the standard formula
\begin{align}\label{E:FORCE}
\pa_R \Phi(t,R) = \frac{m(t,R)}{R^2}, \ \ m(t,R) & = \int_0^R 4\pi \sigma^2 \varrho(t,\sigma)\,d\sigma,
\end{align}
where we have expressed $\Phi(t,\cdot)$ and $\varrho(t,\cdot)$ as functions of the radial variable $R$. We assume further that the fluid is isothermal, i.e.~the pressure $p$ equals the density $\varrho$.\footnote{If one assumes $p=k\rho$ for some constant $k>0$, by a simple scaling argument one may set $k=1$ without loss of generality.}
The resulting radial isothermal Euler-Poisson system reads
\begin{align}
\pa_t \varrho + \Big(\pa_R+\frac2R\Big)(\varrho u) & = 0, \label{E:ECONT}\\
\varrho \left(\pa_t u + u\pa_R u\right) + \pa_R \varrho + \varrho \frac{m(t,R)}{R^2}
&= 0, \label{E:EMOM}
\end{align}
where~\eqref{E:ECONT} expresses the conservation of mass and~\eqref{E:EMOM} the conservation of momentum. 
To formulate the initial-value problem for~\eqref{E:ECONT}--\eqref{E:EMOM}, we further specify the initial data at time $t=-1$ as
\begin{align}\label{E:EULERINITIAL}
\varrho(-1,\cdot) = \varrho_0, \ \ u(-1,\cdot) = u_0.
\end{align}

Particularly important in astrophysics 
is the question of dynamic stellar collapse driven by density blow-up, which 
is also referred to as stellar implosion. A commonly used tool in the study of implosion is to search for  
{\em self-similar} blow-up solutions, which correspond to an ansatz of the form
\begin{align}
\varrho_{\text{LP},T}(t,R) & = \frac1{2\pi(T-t)^2}\rhoLP\Big(\frac{R}{T-t}\Big), \label{E:LPRHO} \\
u_{\text{LP},T}(t,R) & = \widehat{u}\Big(\frac R{T-t}\Big),\label{E:LPU}
\end{align}
which honours the scaling symmetry of the isothermal Euler-Poisson system~\eqref{E:ECONT}--\eqref{E:EMOM}.

In 1969, in their seminal works, Penston~\cite{Penston1969} and Larson~\cite{Larson1969} independently provided numerical evidence for the existence of a smooth self-similar profile corresponding to an implosion solution of the form~\eqref{E:LPRHO}--\eqref{E:LPU}, referred to as the {\em Larson-Penston} (LP) collapse or solution. In 1977, Hunter~\cite{Hunter77} numerically found  further smooth self-similar implosion profiles of the form~\eqref{E:LPRHO}--\eqref{E:LPU}, raising the question of whether the LP-collapse plays a role analogous to a ``ground state'' in a discrete family of self-similar collapse profiles. See also~\cite{Shu77,Whitworth85} for further numerical discussion of these and other possible self-similar implosions for an isothermal fluid. 

The existence of the LP-collapse was rigorously shown by Guo, Had\v zi\'c, and Jang~\cite{GHJ2021b}, who proved that 
the LP-profile $(\rhoLP,\widehat u)$ is real-analytic as a function of the self-similar variable $y=\frac{R}{T-t}\in[0,\infty)$. 
In this paper, we rigorously address the problem of nonlinear asymptotic stability of the 1-parameter family of the LP collapsing solutions, 
where the blow up time $T\in\mathbb R$ in~\eqref{E:LPRHO}--\eqref{E:LPU} serves as the parameter. 
The main result of this work is the following informally stated theorem:


\begin{theorem}[Informal statement]\label{T:MAININFORMAL}
The 1-parameter family of Larson-Penston collapsing solutions~\eqref{E:LPRHO}--\eqref{E:LPU} is nonlinearly dynamically stable against radial perturbations, in the sense
that small perturbations in suitable weighted  Sobolev spaces  converge to a nearby Larson-Penston solution.
\end{theorem}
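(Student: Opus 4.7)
The plan is to pass to self-similar coordinates $y=R/(T-t)$ and logarithmic time $\tau=-\log(T-t)$, in which the LP-profile $(\widehat{\rho},\widehat{u})$ becomes a stationary solution of the rescaled system and the unknown becomes the perturbation $U=(\widetilde\rho,\widetilde u)$. The rescaled equation takes the schematic form $\partial_\tau U = \bfL U + \bfN(U)$, where $\bfL$ is a non-self-adjoint first-order operator whose principal symbol degenerates at the sonic point of the LP-background. Because the isothermal Euler--Poisson system is invariant under time translations, $\bfL$ has an explicit growing eigenmode corresponding to $\partial_T$ applied to the background; the entire scheme is arranged so that this trivial instability is absorbed into the freedom to re-choose the final collapse time $T$, which will ultimately be pinned down by a Brouwer fixed-point argument.

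The linear analysis is carried out in two stages. First, I would prove mode stability of $\bfL$: no $\lambda$ with $\Re\lambda\ge 0$ is an eigenvalue other than the trivial translation one. Following the splitting hinted at by the abstract, the low- and high-$|\Im\lambda|$ regimes are treated by a high-order weighted energy identity tailored to the ground-state character of the LP-profile; here the global monotonicity of $\widehat\rho$ and the relative Mach number provides the signs needed to make the quadratic form positive definite and thus force any candidate eigenfunction to vanish. The intermediate range, where no such analytic monotonicity argument suffices, is closed by rigorous interval-arithmetic computer-assisted verification. Second, I would prove maximal dissipativity of $\bfL$ in the high-order weighted space $\HmZm$ on \emph{arbitrarily large} backward light cones from $(T,0)$; again the favourable sign of commutator and boundary contributions is a direct consequence of the ground-state monotonicity. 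Combined, these yield a semigroup bound on $e^{\tau\bfL}$ after projecting off the one-dimensional unstable subspace spanned by the translation mode.

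For the nonlinear step I would run the two-tier weighted high-order energy scheme outlined in the abstract. The lower tier is a Duhamel-based estimate in a lower-order weighted norm: after projecting off the unstable mode it decays like $e^{-\eta\tau}$ thanks to the linear semigroup estimate, and it provides the smallness needed to handle $\bfN(U)$. The upper tier is a quasilinear top-order energy estimate in $\HmZm$ which is not required to decay, only to remain bounded; it supplies the regularity needed to dominate the Moser-type commutator and nonlinear terms via Sobolev embedding on the backward light cone. The flexibility in the size of the light cone granted by the maximal-dissipativity step is what allows the threshold $2\m$ to be taken optimally small. Finally, for each candidate final time $T$ in a small interval one obtains a solution of the projected equation; a Brouwer fixed-point argument on the map $T\mapsto$ (coefficient along the unstable mode at $\tau=+\infty$) selects the unique $T$ for which the unstable component actually vanishes, producing a genuine solution converging in self-similar variables to $(\widehat\rho,\widehat u)$.

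The main obstacle I anticipate is the mode-stability proof: the non-self-adjointness of $\bfL$, the degeneration of its principal symbol at the sonic point, and the need for sharp constants near the translation eigenvalue $\lambda=1$ combine to make no single analytic estimate work uniformly in $\lambda$. This forces the hybrid low/high-frequency energy method combined with a computer-assisted verification in an intermediate band, and the three regimes must be patched with explicit, verifiable thresholds. A secondary difficulty is making the two tiers of the nonlinear energy method genuinely compatible on a backward light cone: the weights, the number $2\m$ of derivatives, and the projection onto the stable subspace must all be chosen consistently, so that the top-order quasilinear estimate does not destroy the decay supplied by the Duhamel tier and vice versa.
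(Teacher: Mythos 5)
Your proposal correctly identifies the four main structural ingredients of the proof — mode stability via a hybrid low/high-frequency energy argument plus interval arithmetic in an intermediate band, maximal dissipativity on arbitrarily large backward cones obtained from the monotonicity of the ground state, a two-tier high-order weighted energy scheme tied together by Duhamel, and a Brouwer fixed-point selection of the blow-up time $T$ to kill the translation mode — and these are indeed exactly the ingredients of the paper.

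Where you diverge is the choice of formulation: you work entirely in Eulerian self-similar variables $(\widetilde\rho,\widetilde u)$, whereas the paper performs the nonlinear analysis in Lagrangian (comoving) variables, reducing the problem to a scalar quasilinear wave equation for the rescaled flow map $\zeta$ and building the function spaces $\HmZm$ and the top-order energy $\tE$ on the weighted operators $\bp=z^{2/3}\pa_z$, $\bd=\pa_z(z^{2/3}\cdot)$ adapted to the $z^{1/3}$ behaviour of the flow map at the centre. The paper does use the Eulerian eigenvalue ODE for the mode-stability step (through the Lagrangian--Eulerian equivalence of Appendix~\ref{A:EULLAGEQUIV}), so there your plan is aligned; but the far-field gauge function $g$, the hierarchy of weights $\chi_{2j}$ designed to absorb the failure of critical Hardy--Sobolev at infinity, and the $L^\infty$ pointwise control via integration along Lagrangian characteristics all depend on the comoving picture. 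If you carried out the nonlinear step purely in Eulerian variables you would face the free-boundary/vacuum issues at spatial infinity and the density weight $\rhoLP\sim y^{-2}$ in a more awkward way; this is one of the reasons the paper adopts the Lagrangian frame. Your description of the top tier as ``not required to decay, only to remain bounded'' is also slightly off: the paper's a priori bound $e^{2\Omega s}\tE_{\le 2(\m+1)}\le\eps_0$ shows that the top-order energy decays exponentially as well, just at a rate $\Omega$ chosen weaker than the semigroup gap $\sg$; this decay is needed to make the derivative-loss term in the Duhamel estimate summable. Finally, the paper's Brouwer argument is run by contradiction on the maximal existence time $S_T$ rather than on ``the coefficient of the unstable mode at $\tau=+\infty$'', which presupposes global existence; the contradiction structure is what lets one avoid that circularity. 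These are corrections of emphasis rather than errors in your plan, but the first one — the Lagrangian formulation — is a substantive structural choice that would materially change how the nonlinear estimates are set up.
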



A precise formulation of Theorem~\ref{T:MAININFORMAL}, including the definition of stability and the 
specification of function spaces, can be found in Theorem~\ref{T:MAIN}, which is stated in Lagrangian coordinates. For completeness we also provide a version of the theorem stated in Eulerian variables, see Theorem~\ref{T:EULERMAIN}. We emphasise here that the function spaces in the precise statement specification allow for solutions of finite mass and energy and can be thought of as Sobolev spaces $H^k$ with suitable weights at infinity. In fact, we  allow for $k=6$, a feature of our proof discussed further below.  Our data are specified at time $t=-1$ and the blow up occurs at a time $T$, $|T|\ll1$.

Theorem~\ref{T:MAININFORMAL} is the first  result to 
show \emph{full} nonlinear stability for a fluid implosion problem against radial perturbations. In the general context of compressible fluids, the first example of smooth self-similar implosion for the compressible Euler system was given  in the seminal work of Merle, Rapha\"el, Rodnianski, and Szeftel~\cite{MRRS1,MRRS2}, who invented a general framework for the study of (finite codimension) nonlinear stability of such flows.
Various formal and numerical arguments in the physics literature suggested
that the LP-solution may be stable against radial perturbations~\cite{Ori88,Hanawa1997,Ha1998, HaMa2000,Maeda01,Ha2003,Harada03,Ha2004,Brenner98} and our result in particular provides a rigorous justification
for these claims.
We  show that the LP-solution is a local attractor for the dynamics, which makes it consistent with the {\em Similarity Hypothesis} picture found in astrophysics discussions~\cite{Ha2003,Ha2004}.
The known smooth implosion solutions in compressible fluid mechanics~\cite{MRRS1,MRRS2,BuCaGo2025} are known to be finite-codimension stable, but with unknown codimension, and hence
are not known to be stable.

The question of implosion for self-gravitating gases has naturally attracted a lot of attention in the physics literature. The simplest such solutions are obtained when there is no pressure in the system; such gases are referred to as dust. Due to their simplicity, self-gravitating dust profiles can be solved for explicitly in radial symmetry using comoving coordinates. While such solutions played a significant role in building up intuition on the nature of gravitational collapse, both in Newtonian as well as relativistic setting~\cite{Lemaitre1933,To1934,OpSn1939,Ch1984}, they do not account for the key leading order effect associated with gas dynamics: the pressure.

In addition to the above-mentioned works on isothermal self-gravitating fluids, 
in the case of the general adiabatic equation of state $p=\rho^\ga$, $\frac65<\ga<\frac43$, Yahil~\cite{Yahil83} gave numerical evidence for the existence of the analogues of the LP-type collapse. The existence of the Yahil profiles was proved by the authors of this article~\cite{GHJS2022} in the full supercritical range $1<\gamma<\frac43$, exploiting subtle nonlinear invariances of the associated ODE system and developing an ad hoc shooting method. It is well known that smooth implosion is impossible in the subcritical range, $\ga>\frac43$, e.g.,~\cite{Deng02}, while in the critical case, $\ga=\frac43$, a partial decoupling of the equations leads to the expanding and contracting  Goldreich-Weber solutions,~\cite{Goldreich80}.  Recently Sandine~\cite{Sandine2024,Sandine2025} rigorously addressed the ``highly oscillatory'' regime of the Hunter family of solutions, establishing their existence in both the isothermal ($\ga=1$) and polytropic case with $1<\ga<\frac65$. An additional implosion mechanism for the mass-supercritical EP-system ($1<\ga<\frac43$) was introduced in~\cite{GHJ2021a}, wherein the existence of ``dust-like'' imploding 
stars was shown. These solutions are {\em not} self-similar in the sense of~\eqref{E:LPRHO}--\eqref{E:LPU}.  In the fully relativistic setting, Guo, Had\v zi\'c, and Jang~\cite{GHJ2023} have shown the existence of the relativistic LP-solution, originally predicted numerically by Ori and Piran~\cite{OP1990}. This collapse profile solves the Einstein-Euler system and has important causal implications as it leads to the formation of naked singularities from smooth initial data.

In the absence of gravity, the study of self-similar radial solutions to the compressible Euler equations has a long history and goes back to the studies of self-similar shock waves by Guderley~\cite{Guderley42}, von Neumann~\cite{vonNeu47}, Sedov~\cite{Sedov46}, and Taylor~\cite{Taylor41}. Guderley constructed solutions (with numerical methods) to the full Euler equations describing the self-similar implosion of a spherical shock wave onto the origin in finite time, with the strength of the shock and certain fluid variables blowing up at time of collapse. The Guderley shock wave solution can be continued beyond the time of collapse with a further expanding shock wave that is again self-similar. These solutions were only recently constructed rigorously in~\cite{Jang24,Jang25} (and see further references within). The von Neumann--Taylor--Sedov blast wave concerns only an outgoing self-similar spherical shock wave, and has also been the subject of much study.

Smooth implosion solutions to the radial compressible isentropic Euler equations were first rigorously constructed and shown to be finite-codimension stable in a series of breakthrough works of Merle, Rapha\"el, Rodnianski, and Szeftel~\cite{MRRS1,MRRS2,MRRS3}. Whether the implosion profiles from~\cite{MRRS1} are stable is not clear; for a numerical work in this direction see~\cite{Biasi}. The works~\cite{MRRS1,MRRS2} inspired several extensions and refinements: see, for example,~\cite{BuCaGo2025,ShWeWaZh2025}. For a finite codimension stability result for nonradial perturbations see~\cite{CaGoShSt2024}, and for results on vorticity blow up for Euler equations see~\cite{ChCiShVi2024,Ch2024}.

In this work, the most significant obstacle that must be overcome to establish {\em full} stability (by contrast to finite co-dimension stability) occurs at the level of linear analysis. Here the EP system, linearised around the LP profile in suitable self-similar variables, gives rise to an unbounded, non-self-adjoint operator, which requires the introduction of significant new ideas to prove the full mode stability. Such operators occur naturally in many stability problems for hyperbolic PDE, see for example the aforementioned Euler implosion results of~\cite{MRRS2,MRRS3}, singularity formation for wave maps and semilinear wave equations~\cite{Costin17,Do2011,DoSc2016}, hyperbolic Yang-Mills equations~\cite{Glogic22}, or in the stability of the Kerr solution to the vacuum Einstein equations~\cite{Te2020}.  

Proving mode-stability for unbounded non-self-adjoint operators is in general a very challenging problem, as eigenvalues can spread throughout the complex plane and there are few if any abstract spectral techniques that one may employ. Such problems occur very naturally already at the semi-linear level where the background profile is typically explicit; see for example~\cite{Do2011,Glogic22,Te2020} and the very nice concise review by Donninger~\cite{Do2024} of this topic. A lot of effort has been invested in recent decades in understanding mode stability around such explicit profiles. In particular, if 
the profile has certain global analyticity properties and a particular algebraic structure (specifically, if it is a rational function of the self-similar variable), a technique developed by Costin, Donninger, Glogi\'c, and Huang~\cite{CoDoGlHu2016,Costin17, Glogic2018,Glogic22}
can be used to exclude nontrivial growing modes. In the present problem, however, this approach a priori does not work. The solution itself is not explicit, nor it is clear what its {\em global} analyticity properties are (it is only known to be locally real-analytic).

A key achievement of the present study is that we develop a new schematic approach to address the central difficulty of mode stability, specifically the possibility of complex unstable modes. Considering the eigenvalue problem, a second order, complex coefficient, singular ODE, we make the fundamental observation that commuting the ODE with a suitable differential operator induces a damping effect that leads to good sign conditions in the coefficients. This damping is especially effective for eigenvalues with large imaginary part and appears to have a universal character, so that we expect this feature to appear in a wide range of problems in singularity formation, including other fluid implosion problems. This damping allows us to prove energy identities that unify the real and imaginary parts into a single coercive estimate. This estimate is sufficient to exclude unstable eigenvalues with large imaginary part. Excluding eigenvalues with large real part is more straightforward, and leads to the non-existence of eigenvalues with positive real part except in a compact region of the complex plane.
We then employ interval arithmetic, a rigorous form of computer-assisted proof, to exclude potential unstable modes in this remaining compact set.    In the context of the present work, the monotonicity of the LP solution enables us to quantify the order of derivatives required to perform this analysis, but we emphasise that the structural feature on which the argument relies, i.e.~the damping effect, appears to hold for a wide family of problems, and is not specific to LP.

A second key feature of our approach is  another use of the monotonicity of the LP solution  to prove maximal dissipativity of the linearised operator on arbitrarily large backwards cones from the singular point, $r\leq Z_0(T-t)$. As in~\cite{MRRS2}, we develop an approach that ties together a high order energy method that avoids derivative loss with a lower order decay on backwards cones arising from the Duhamel formula and linear stability analysis. Compared with earlier strategies for combining interior and exterior estimates, our technique enables us to get negative powers of $Z_0$ as weights in nonlinear estimates, and so use a unified global energy structure and framework to significantly streamline sections of the nonlinear analysis.

A third novelty arises through our use of Lagrangian variables. Due to the behaviour of the Larson-Penston Lagrangian flow map $\bzeta$ in the far-field, we are forced to incorporate suitable weights in our energy scheme to close nonlinear estimates. A naive approach to selecting these weights falls afoul of the failure of critical and super-critical Hardy-Sobolev inequalities, and so we develop a hierarchy of weights adapted to the growth rates of the Lagrangian flow map and its derivatives in order to be able to close the estimates. This is a generic feature of the Lagrangian framework and we expect it to be adaptable to other problems of this flavour.

Finally, through the use of monotonicity properties of the ground state LP solution we explicitly quantify sufficient regularity required for the nonlinear stability, in contrast to the existing works on self-similar implosion (e.g.~\cite{MRRS2} and others), which may require very high numbers of derivatives. We prove nonlinear stability in a weighted $L^2$-based topology requiring 6 derivatives of $\rho$ and $u$.

We now proceed to provide a detailed breakdown of these ideas and explain the high-end overview of the proof of the main theorem.  

\subsection{Main ideas of the proof and outline}


To address the stability question, we work in Lagrangian (or comoving) coordinates, which
in the context of radial symmetry naturally maps the Euler-Poisson system into a quasilinear 
scalar wave equation satisfied by the flow map function $\eta$. 
 The EP-system then reads
\begin{align}\label{E:ETAINTRO}
\pa_t^2 \eta - \frac{\pa_r^2 \eta}{(\pa_r\eta)^2 } - \frac{2}{\eta} + \frac{ M(r) }{\eta^2} + \frac{\pa_rg}{g}\frac1{\pa_r\eta}=0.
\end{align} 
Here the function $g(r)$ is a smooth, positive function that characterises the initial particle labelling $\eta_0$ at time $t=-1$ from~\eqref{E:EULERINITIAL} via the relation
\begin{align}\label{E:DATAININTRO}
\varrho_0(\eta_0(r)) \eta_0^2 (r)\pa_r \eta_0(r) = g(r), \ r\ge0.
\end{align}
As is commonly the case when studying the stability of self-similar solutions, we pass to 
similarity coordinates $(s,z)$ defined precisely below in~\eqref{E:SCALINGLAGR}  and interpret the LP-profile as a steady state of a suitably rescaled 
Lagrangian formulation of the EP-system, which  reads (see Lemma~\ref{L:SELFSIMLAG} below)
\begin{align}
\pa_s\begin{pmatrix} \zeta \\ \mu\end{pmatrix} = \begin{pmatrix}  \mu - z\pa_z \zeta + \zeta  \\
-z\pa_z \mu  - \pa_z \left( \frac1{\zeta_z}\right) + \frac2{\zeta}-\frac{\tilde M}{\zeta^2}-\frac{\tilde g_z}{\tilde g}\frac1{\zeta_z}\end{pmatrix}, \label{E:ZETAMUINTRO}
\end{align}
where
\[
\zeta(s,z) := (T-t)^{-1}\eta(t,r), \ \ \mu(s,z) := \pa_t\eta(t,r). 
\]
The freedom to specify the function $g(r)=\tilde g(s,z)$ is the comoving gauge freedom in the problem. Via~\eqref{E:DATAININTRO}, it relates the choice of the initial density to the the choice of initial labelling. By choosing $g\in L^1(\mathbb R)$, we ensure that the total initial mass $\int_{0}^\infty \varrho_0(R)R^2 \diff R=\int_0^\infty g(r)\diff r$ is finite, which is a physical requirement on the model, see Remark~\ref{R:FINITEMASS}. On the other hand, in order to 
interpret the LP-solution as a steady state in the Lagrangian formulation, we must make a specific choice of the labelling 
\be\label{E:GLPINTRO}
g_{\text{LP}} = (4\pi)^{-1}\CLP,
\ee
so that the associated  LP flow map $\bzeta$ solves the self-similar steady state equation
\be\label{E:LPLAGINTRO}
 \Big( z^2 - \frac{1}{( \pa_z \bzeta)^2 } \Big) \pa_z^2\bzeta +  \frac{\CLP z}{\bzeta^2} - \frac{2}{\bzeta}=0.
\ee
Already at this stage we see the role of the \emph{sonic point} $z_*$ at which $ z^2 - \frac{1}{( \pa_z \bzeta)^2 }=0$ (compare Definition~\ref{def:sonic} below). 

Condition~\eqref{E:GLPINTRO} is of course incompatible with the requirement of finite total mass. It simply reflects the fact that $\int_{0}^\infty\varrho_{\text{LP}}(t,r) r^2\diff r=\infty$ due to the large tails of $\varrho_{\text{LP}}(t,r)$ which decay like $r^{-2}$ as $r\to\infty$, for any fixed $t$. Therefore, to formulate the stability question, we choose the gauge function $g$ in~\eqref{E:ETAINTRO} to agree with $g_{\text{LP}}$ on a large finite region $r\in[0,r_\ast]$ and then we make it decay sufficiently fast for $r\gg r_\ast$. This ensures on one hand that in a large region which contains the backward sound cone, the LP-solution is indeed a steady state of~\eqref{E:ZETAMUINTRO}, while in the asymptotic region $r\gg r_\ast$ the equation incurs
an effective ``source" term coming from $g$, which however is small and has no bearing on the "interior" dynamics.

The second important feature of the  Lagrangian variables  highlights in a crucial way  intrinsic regularity discrepancies
between the behaviour of the solution at the centre $z=0$ and $z\gg1$, viewed as a function of
the fluid label $z$. Specifically, it is easy to show that 
\[
\bzeta(z)\sim_{z\to0} z^{\frac13}, \ \ \bzeta(z)\sim_{z\to\infty} z.
\]
In fact, the analyticity of the LP-solution at the origin in the Eulerian variables~\cite{GHJ2021b} shows that $\bzeta$ expands analytically in the odd powers of $z^{\frac13}$ near $z=0$  (see Lemma~\ref{SS:SSP}), while
away from $z=0$, the flow map $\bzeta$ is real-analytic as a function of $z$. To handle this fractional regularity, we work in a framework of weighted operators $\bp=z^{\frac23}\pa_z$ and $\bd=\pa_z(z^{\frac23}\cdot)$, see Definition~\ref{D:KEYOP}.

To prove the stability of the LP-solution, 
we consider the perturbation 
\begin{align}
\begin{pmatrix} \th \\ \phi\end{pmatrix} =\begin{pmatrix} \zeta \\ \mu\end{pmatrix}-\begin{pmatrix} \bzeta \\ z\pa_z\bzeta-\bzeta\end{pmatrix}.
\end{align}

{\em Linearised problem.}
In order to show stability we must show that the linearisation around the exact LP-profile in the interior region does not produce any nontrivial growing modes. Linearising
the problem in a region $z\le Z_0$ the linearised dynamics takes the form
\begin{align}\label{E:LININTRO}
\pa_s\Phi = \bfL \Phi, \ \ \Phi = \begin{pmatrix} \th \\ \phi\end{pmatrix}, 
\end{align}
where $\bf L$ is a non-selfadjoint linear operator (see Lemma~\ref{L:GPROPERTIES}). The time-translation symmetry of
the LP-solution gives rise to a trivial growing mode (see Lemma~\ref{L:GMODE})
\begin{align}
\lambda_{\text{trivial}}=1.
\end{align} 
In order to prove the stability of the LP solution, we must therefore show that $\lambda_{\text{trivial}}$ is indeed the only eigenvalue with nonnegative real part. 
This is the crucial and hardest step in the proof of Theorem~\ref{T:MAIN}.
In the literature on stability of self-similar collapsing profiles, this type of linear stability statement is often referred to as {\em mode-stability}, which is a terminology we adopt in this work as well.

It turns out that it is not too hard to prove that there are no unstable eigenvalues $\la$ with $\Re \la>1$; a form of an energy-type identity excludes such modes, see Lemmas~\ref{L:STRIPONLY}--\ref{L:REALGEQONE}.
Therefore all (hypothetical) growing modes must be confined to the strip $\mathcal I = \{\la\in\mathbb C\,\big|\, 0\le \Re\la\le1\}$. To show that $1$ is the only eigenvalue in $\mathcal I$,  
we develop a strategy that combines an ad-hoc high-order energy method and rigorous computer-assisted proofs (by means of interval arithmetic).
Concretely, we devise a divide-and-conquer strategy that splits the strip $\mathcal I$ into three regions:
\begin{enumerate}
\item[(1)] Small frequencies: 
\[
\mathcal I_{\text{low}} : = \{\la\in\I\,\big| \, |\Im\la|\le b_0\};
\]
\item[(2)] High frequencies: 
\[
\mathcal I_{\text{high}} : = \{\la\in\I\,\big| \, |\Im\la|\ge b_1\};
\]
\item[(3)]
Intermediate frequencies
\[
\I_{\text{inter}} : = \{\la\in\I\,\big| \, b_0\le |\Im\la|\le b_1\},
\]
\end{enumerate}
where $0<b_0<b_1$ are suitable control parameters. The proof then runs on contradiction.

The proof of absence of growing modes for small and high frequencies $\mathcal I_{\text{low}}$ and $\I_{\text{high}}$ relies on two key ideas. As the eigenvalue equation is a complex coefficient, second order ODE with singular points, we must handle sign indeterminacies for the both the real and imaginary parts of the equation. 
For example, to exclude a high-frequency growing mode in the region $\I_{\text{high}}$, we first make the fundamental observation in Proposition~\ref{prop:A4B4} that commuting the eigenvalue equation with the differential operator $\big(\pa_y(\pa_y+\frac{2}{y})\big)^m$ creates a gap by shifting the key first order coefficient by $2m\frac{\wLP'}{\wLP}<0$, where $\wLP$ is a natural weight equivalent to the distance to the sonic point. We secondly combine the real and imaginary parts of the natural energy identities to obtain effective energy inequalities that may be used to derive a contradiction. Due to the crucial sign condition, we are able to overcome an indeterminate sign in a top order coefficient in the natural energy identity for the eigenvalue equation, and so to derive an energy inequality of the schematic form:
\beqa
-&\int \chi |D_y\Psi_\l|^2+\int |\Im\l|^2 \chi{H_\l} |\Psi_\l|^2\geq \int \chi \tilde H_\l |\Psi_\l|^2,
\eeqa
where $\chi$ is a suitable weight function, $H_\l$ and $\tilde H_\l$ are coefficient functions of the LP solution, and $\Psi_\l$ is a suitable derivative of a hypothetical eigenfunction. We then prove that $H_\l\leq -c<0$ and $|\tilde H_\l|$ is bounded independently of $|\Im\l|\gg1$ in order to deduce a contradiction. As a feature of the monotonicity of the LP solution, we are in fact able to quantify the $m$ required to obtain this coercive estimate, and we take $m=2$. To make precise the values of $b_1$ and $b_0$ for which we may apply this argument,
we use Interval Arithmetic (IA) to determine that the numerical values $b_0=\frac15$  and $b_1=8$ (determined
in Propositions~\ref{P:NOSMALLIMAG} and~\ref{P:NOLARGEIMAG} respectively) are sufficient to conclude the contradiction.

To exclude the presence of growing modes in the intermediate region $\I_{\text{inter}}$, we must employ a different method. We use
 ODE arguments to show that a hypothetical eigenfunction cannot exist due to global incompatibilities imposed by the values of Frobenius indices at the end-points of the interval $[0,z_*]$. At this stage, we see one of the key roles of the sonic point $z_*$ in the linearised equation. The degeneracy of the linear operator at this sonic point imposes a constraint on regular solutions (in particular on sufficiently smooth eigenfunctions) that restricts the dimension of the potential space of eigenfunctions. More precisely, any eigenfunction must satisfy a particular second order ODE with coefficients that are singular at both the origin and sonic point (see Proposition~\ref{L:EVALUEPROB} below). By studying the local properties of solutions around each of the singular points, we find that the regularity requirements imposed by the function space in which we perform our stability analysis restricts the space of solutions locally around each of these points to a one-dimensional space. By employing an interval arithmetic ODE solver, we are then able to prove that solutions which are regular at the sonic point are singular at the origin and vice versa, thus deducing the non-existence of eigenvalues in $\I_{\text{inter}}$.
  This is the content of Proposition~\ref{P:NOINTERIMAG}. We emphasise that Propositions~\ref{P:NOLARGEIMAG},~\ref{P:NOSMALLIMAG}, and~\ref{P:NOINTERIMAG} rely crucially on Interval Arithmetic techniques.

Interval arithmetic is a rigorous form of computer assisted proof that has seen substantial recent application in the theory of PDE, e.g.,~\cite{BuCaGo2025,Castro20,Chen21,Chen22,Cohen24}, as well as solving a number of important open conjectures, e.g.,~\cite{Gabai03,Hales05,Hass00}. For further references, see the survey~\cite{GS18}. It  replaces point values (which may not be exactly machine representable) by closed intervals whose end-points are representable in order to perform arithmetic operations. By doing so, the result of any arithmetic computation is an interval that is guaranteed to include the true result. In this work, the main package we use is the \verb!VNODE-LP! package for rigorous ODE solving, the website and documentation for which can be found at~\cite{Nedialkov10a,Nedialkov10b}. Details of the implementation of the interval arithmetic strategy are contained in Appendix~\ref{APP:IA}.

\begin{figure}
\begin{center}
\begin{tikzpicture}
\begin{scope}[scale=0.7, transform shape]

\coordinate [] (A) at (-3,7){};

\node at (-3.6,7) {$t=T$};

\coordinate [label=below:$r$] (B) at (9,7){};

\coordinate [label=below:$\text{Sonic line}$] (D) at (-1.8,5){};

\node at (-1.7,5.8) {$z=z_\ast$};

\coordinate [label=below:$\text{Accretive line}$] (E) at (4,5){};

\node at (2.2,5.95) {$z=Z_0\gg z_\ast$};

\coordinate [] (F) at (6.5,5){};

\node at (8.4,6) {Dampening region};

\coordinate [] (H) at (-3,5){};

\node at (-3.7,5) {$t=-1$};

\coordinate [label=left:$t$] (G) at (-3,8.3){};

\draw[very thick] (A)--(9,7);

\draw[very thick] (A)--(-3,4);

\draw[very thick] (A)--(-3,8.5);

\draw[thick] (A)--(D);

\draw[thick] (A)--(E);

\coordinate [label=above:$r_\ast$] (F1) at (6.5,7){};

\draw[thick] (F1)--(F);

\draw[thick,dashed] (H)--(9,5);

\draw[fill=white] (-3,7) circle (3pt);
\end{scope}
\end{tikzpicture}
 \caption{\small Sonic cone, accretive cone, and the dampening region - schematic picture}
 \label{F:LP}
\end{center}
\end{figure}
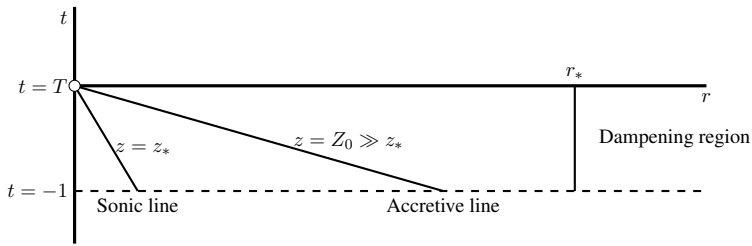

To obtain the corresponding semi-group estimate for the linearised flow we next show that our operator satisfies the
assumptions of the Lumer-Phillips theorem. Concretely, we prove that 
the operator $\bfL$ is maximally accretive
on a suitable high-order Sobolev space in a domain $[0,Z]$ for any choice of $Z>z_\ast$. In doing so, we rely roughly speaking on a
splitting of the type
\[
\bfL = \bfL_0 + \tilde{\bfL},
\]
where $\bfL_0$ is the top order  part of $\bfL$.  After taking derivatives, we show that $\bfL_0$ gains coercivity and it does so on
any arbitrary domain $[0,Z]$ which contains the backward sound cone ($Z>z_\ast$). To prove this we exploit in a fundamental way 
certain monotonicity properties of the LP-profile itself. We contrast this to
the recent series of works on finite-codimension stability of imploding self-similar solutions to compressible
Euler~\cite{MRRS2}, wherein such an accretivity result can only be shown in domains marginally larger than the width of the background sound cone (referred to in the cited work as the repulsivity property of the underlying self-similar profile). See also the development of a global approach in weighted spaces in~\cite{ChCiShVi2024},
as well as~\cite{Glogic24,Glogic25,Kim2024} in the semilinear setting.

The role of the sonic line is fundamental in this analysis. This line marks the backwards acoustical cone from the singularity, and gives the boundary of the domain of dependence of the singular point. It is therefore essential that estimates can be shown across this line to connect the interior behaviour (which directly influences the blowup) with exterior behaviour, where the domain of influence remains strictly away from the singularity. The degeneracy in the PDE system across the sonic line is a manifestation of the fact that as one crosses the sonic line, the vector field $\pa_s$ changes from timelike to spacelike across with respect to the naturally induced acoustical metric. Our choice of variables and exploitation of the monotonicity of the LP solution significantly simplifies the connection problem as well as clarifying the degree of regularity required to prove coercivity in the region $\{0\leq z\leq Z\}$ containing the backwards acoustical cone. 

This coercivity property, together with statements that $\tilde{\bfL}$ is  a  compact perturbation of $\bfL_0$ in suitable spaces, and that
the operator $\bfL$ is mode stable, leads via the Lumer-Phillips theorem to the crucial linear bound
\begin{align}\label{E:LPINTRO}
\|e^{\bfL s}({\bf I}-{\bf P})\Phi\|_{\HmZ} \le e^{-\sg s} \|\Phi\|_{\HmZ}, \ \ \Phi \in \HmZ,
\end{align}
for some $\sg >0$. Here ${\bf P}$ is the projector onto the 1-dimensional eigenspace generated by the trivial eigenvalue 1 and the Hilbert spaces $\HmZ$, defined below in Definition~\ref{D:HILBERT}, can be thought of as equivalent to Sobolev spaces $H^{2m+1}(0,Z)\times H^{2m}(0,Z)$. This is the content of Theorem~\ref{T:LINEARMAIN}
and its proof is contained in Sections~\ref{S:LINEARMODE} and~\ref{S:ACCRETIVITY}.


{\em Nonlinear analysis.}
The proof of nonlinear stability relies on a two-tier high-order energy framework, which is detailed in Sections~\ref{S:DUHAMEL}--\ref{S:POINTWISE}.
To carry it out, we introduce an index $\m\in\mathbb N$ which is used to keep track of the number of derivatives in our function spaces.  
The low-order energy spaces are confined to the so-called ``interior" region $[0,Z_0]$, $Z_0\gg z_\ast$ and correspond to spaces $\HmZm$ introduced in Section~\ref{S:LOW}. The high-order energy spaces contain two more derivatives and are equivalent to suitably weighted Sobolev norms on the {\em whole}
semi-infinite interval $[0,\infty)$. They are given by the energies $\tE_{\le 2(\m+1)}$ introduced in Section~\ref{S:HIGH}.

Section~\ref{S:DUHAMEL} is devoted to the  low-order energy bounds.
The idea is to use the Duhamel formula
\begin{align}\label{E:DUHAMELINTRO}
\Phi(s) = e^{\bfL(s-\sin)} \Phi^T_{in} + \int_{\sin}^s e^{\bfL(s-\sigma)}{\bf N}[\Phi(\sigma)]\diff\sigma,
\end{align}
where $\bfN[\Phi]$ contains the nonlinearity in the problem and $s=s_T=\log(1+T)$ corresponds to the initial time slice $t=-1$.
Formulation~\eqref{E:DUHAMELINTRO} allows us to exploit the semi-group decay~\eqref{E:LPINTRO} modulo two key obstructions.
Firstly, one must mod out the trivial growing mode associated with time-translation invariance and secondly,~\eqref{E:DUHAMELINTRO} features a derivative loss, since the problem is quasi-linear. 
We deal with the first issue by projecting the dynamics into the unstable direction and the stable part, thus separating the two. By interpolating with the top order energy $\tE_{\leq 2(\m+1)}$, we absorb the derivative loss and arrive at a bound of the form
\begin{align}
	\|\Phi\|_{\HmZm}\leq &C \underbrace{\sup_{\sigma\in[\sin,s]}(e^{\Omega \sigma}\tE_{\leq 2(\m+1)}^{\frac12})}_{\text{derivative loss due to quasilinearity}}\sup_{\sigma\in[\sin,s]}(e^{\nu\sg \sigma}\|\Phi\|_{\HmZm}) e^{-(\Omega+\nu\sg ) s}  + \underbrace{e^{-\sg (s-\sin)}\|\Phi^T_{in}\|_{\HmZm}}_{\text{semi-group decay}}
	\notag \\
	&+C\underbrace{e^{s-\sin}\Big\|\mathbf{P}\Big(\Phi^T_{in}+\int_{\sin}^{S_T}e^{-\sigma} \mathbf{N}[\Phi]\,\dif \sigma\Big)\Big\|_{\HmZm}}_{\text{symmetry-induced instability}}. \label{E:LOWBOUNDINTRO}
\end{align}

The purpose of Section~\ref{S:ENERGYBOUNDS} is to estimate  $e^{2\Omega s}\tE_{2j}(s)$, $0\leq j\leq\m+1$. To that end, instead of using~\eqref{E:LININTRO}
we write the perturbation equations in the form (see Lemma~\ref{L:QUASI0})
\begin{align}
\pa_s\begin{pmatrix} \th \\ \phi\end{pmatrix} = \begin{pmatrix} - z\pa_z \theta + \theta+\phi \\
-z\pa_z \phi +  \frac{\bzeta_z^2}{\zeta_z^2} K \theta +\mathfrak S[\th]+ \mathfrak R[\theta]\end{pmatrix}, \label{E:PERTURBINTRO}
\end{align}
where the leading order operator $K$ is given by 
\[
K\th = \bp  \left((\bp\bzeta)^{-2} \bd \theta\right).
\]
The remainder term  $\mathfrak S$ 
corresponds to errors arising from the far-field cutoff and vanishes on the region $r\leq r_*$, so that the effects of asymptotic flattening at the level of the estimates are felt only in the far-away region $r\ge r_\ast\gg1$. The remainder term 
 $\mathfrak R$ contains either lower order linear-in-$(\th,\phi)^\top$ contributions or genuinely nonlinear errors. 

As usual with such quasilinear problems, the idea is to commute~\eqref{E:PERTURBINTRO} with suitable differential operators (in this case 
$(\bp\bd)^{j})$ and seek an energy identity. The quasilinearised formulation~\eqref{E:PERTURBINTRO} shares one important common trait with the linearised problem~\eqref{E:LININTRO}: {after} taking sufficiently many derivatives, the leading order operator produces a {\em damping} term, which leads to a good, stabilising effect at the top order.

In order to close these energy estimates, we must then handle all remainder terms, including the errors arising from commutation (which are typically linear in the unknown, but below top order in derivative count). An essential difficulty arises due to the growth and decay rates of the coefficients arising from the LP solution and its derivatives in the equation (recall $\bzeta\sim z$ in the far-field). This is an intrinsic feature of the Lagrangian variables. The presence of these weights in the system of equations leads to a failure of Hardy-Sobolev inequalities that would allow us to close estimates using standard techniques. 

To estimate the remainders, we therefore introduce two new, key strategies. The first is to devise a suitable hierarchy of weights $\chi_{2j}$ at each order of derivatives in the weighted energy functionals, compare~\eqref{E:CHIJDEF} below. The growth rate at infinity of $\chi_{2j}$ compensates for the expected behaviour of the coefficient multiplying $(\bp\bd)^j\phi$. The second new element of our strategy is to exploit the fact that the global monotonicities of the LP profile allow us to obtain damping in arbitrarily large backwards cones, $z\leq Z_0$. Thanks to the carefully designed weights, we are able to control the main nonlinear term in the exterior region $z\geq Z_0$ (see Proposition~\ref{P:EXT1}) via
\[C\Big(Z_0^{-\frac{\al+1}{2}+\de}\tE_{\leq 2j} + Z_0^{-\frac{\al+1}{2}+2\de j}\tE_{\leq 2j}^{\frac32}\Big)\]
for a suitable $\al>1$ and small $\de$ and, in the interior region $z\leq Z_0$, by
\[ C(Z_0)\|\Phi\|_{\mathcal{H}^{2j-1}_{Z_0}}\tE_{2j}^{\frac12}+C \tE_{\leq 2j}^{\frac32},\]
see Proposition~\ref{P:INT1}.
By exploiting the largeness of $Z_0$, we may absorb the challenging exterior terms onto the left in our energy estimate, compare Proposition~\ref{P:EE2}.

In conclusion, we show an a priori bound of the schematic form
\begin{align}
\pa_s\tE_{\le 2(\m+1)} + C_1\tE_{\le 2(\m+1)} \le C\|\Phi\|_{\HmZm}^2
+C (r_\ast e^{s})^{-2a}  + C \tE^{\frac32}_{\le 2(\m+1)}, \label{E:CALEINTRO}
\end{align}
where the first term on the right corresponds to quadratic, localised, lower order terms, the middle term arises from asymptotic flattening, and the final term is the usual trilinear bound.

The nonlinear bound~\eqref{E:CALEINTRO} is predicated on the availability of a good $L^\infty$ bound on the lower order quantities, such as $\|\frac{\th}{\bzeta}\|_{W^{1,\infty}}$ for example. Such bounds however do not follow directly from Hardy-Sobolev embeddings, and instead require a more delicate argument which relies on integrating the 
solutions backwards along the characteristics. This is a Lagrangian way to capture the finite speed of propagation in the system, which allows us to ``export" information
from inside the backward accretive cone $0\le z\le Z_0$ to outside of it.  A related difficulty arises in the Eulerian setting~\cite{MRRS2}. We address these $L^\infty$ bounds in Section~\ref{S:POINTWISE}.

Finally, in Section~\ref{S:MAINTHEOREM} we prove Theorem~\ref{T:MAIN}. The idea is to combine the two coupled a priori bounds~\eqref{E:LOWBOUNDINTRO} with~\eqref{E:CALEINTRO} to show that the solution exists globally. This however is possible only if the formally exponentially growing term $e^{s-\sin}\Big\|\mathbf{P}\Big(\Phi^T_{in}+\int_{\sin}^{S_T}e^{-\sigma} \mathbf{N}[\Phi]\,\dif \sigma\Big)\Big\|_{\HmZm}$ in~\eqref{E:LOWBOUNDINTRO} is suppressed. Arguing by contradiction, we assume that the maximal existence time $S_T$ is finite for all $|T|$ sufficiently small. 
We then use the Brouwer fixed point theorem, by analogy to 
classical stable-manifold constructions, to show that there exists a choice of $T$ that suppresses the exponential growth by showing that the problematic term above vanishes. For such a choice of $T$ the estimates~\eqref{E:LOWBOUNDINTRO} with~\eqref{E:CALEINTRO} lead to a strict improvement of the a priori bounds, thus extending the maximal existence interval by a finite amount - a contradiction. Details of the Brouwer fixed point argument are given in Proposition~\ref{P:BROUWER}, and the proof of the main theorem immediately thereafter.
The second part of Section~\ref{S:MAINTHEOREM} is devoted to the proof of Theorem~\ref{T:EULERMAIN}, which gives an interpretation of the stability result in Eulerian coordinates.


\medskip

{\bf Acknowledgments.}
Y. Guo acknowledges the support of the NSF grant DMS-2405051.
M. Had\v zi\'c acknowledges the support of the EPSRC Early Career Fellowship EP/S02218X/1.
J. Jang acknowledges the support of the NSF grant DMS-2306910.
M. Schrecker acknowledges the support of the EPSRC Post-doctoral Research Fellowship EP/W001888/1.



\section{Precise formulation of stability }


\subsection{Self-similar formulation and the Larson-Penston collapse}


Since the isothermal Euler-Poisson (EP) system~\eqref{E:ECONT}--\eqref{E:EMOM} is invariant under the unique rescaling 
\begin{align}
\varrho \mapsto  \lambda^{-2}\varrho (\frac t\l, \frac R\l), \ \ 
u  \mapsto u (\frac t\l, \frac R\l),\label{E:SCALING}
\end{align}
it is natural to formulate the stability problem in self-similar coordinates. 
We  define the self-similar coordinates $(s,y)$ through the requirement 
\begin{align}
\frac{ds}{dt} = \frac1{T-t}, \ \ y = \frac R{T-t},
\end{align} 
which leads to 
\begin{align}\label{E:SST}
s(t) := - \log (T-t).
\end{align}
Here the particular choice~\eqref{E:SST} normalises the initial $t=-1$ time slice (compare~\eqref{E:EULERINITIAL}) to correspond to the self-similar time 
\be\label{E:SINITIAL}
\sin=-\log(1+T).
\ee
We now introduce the self-similar density and modified velocity $(\rho,v)$ via
\begin{align}
\varrho(t,R) = \frac1{2\pi}(T-t)^{-2}\rho(s,y), \ \ u(t,R) = v(s,y) - y.
\end{align}
The EP-system~\eqref{E:ECONT}--\eqref{E:EMOM} transforms into
\begin{align}
\pa_s\rho +  D_y(\rho v) - \rho & = 0, \label{E:CONTSS}\\
\rho\left(\pa_sv+ v\pa_y (v-y)\right) +  \pa_y\rho + 2\frac{\rho M}{y^2} & = 0, \label{E:MOMSS}
\end{align}
where $M[\rho](s,y) := \int_0^y z^2 \rho(s,z)\diff z$ and $D_y : = \frac1{y^2}\pa_y\left(y^2 \cdot\right) = \pa_y + \frac2y$ is the 3-d divergence operator in radial symmetry.

The Larson-Penston (from now on LP) solution is a  steady state of~\eqref{E:CONTSS}--\eqref{E:MOMSS}, whose existence was proved in~\cite{GHJ2021b}. For convenience, we state this as a theorem.
\begin{theorem}[{\cite[Theorem 1.2]{GHJ2021b}}]\label{T:LP}
There exists a steady (i.e.~$s$-independent) solution of~\eqref{E:CONTSS}--\eqref{E:MOMSS}, the LP solution, denoted
\be\label{E:LPSOL}
(\rhoLP(y),\vLP(y)) = (\rhoLP(y),y\omLP(y)), \ \ y\in[0,\infty),
\ee
where $\rhoLP,\omLP$ are real-analytic functions on $[0,\infty)$ such that 
\begin{align}
&\rhoLP(0)>\frac13, \ \omLP(0)=\frac13 ,\\
& \lim_{y\to \infty}\omLP(y)=1, \ \lim_{y\to\infty}(y^2\rhoLP(y))\in(0,\infty).
\end{align}
The LP solution admits a unique sonic point $y_*\in(2,3)$ such that $\vLP^2(y_*)=1$. Moreover, it enjoys the crucial monotonicity properties
\begin{align}
\rhoLP'(y)<0,\quad \omLP'(y)>0,\quad y>0.\label{E:LPMONOTONICITY}
\end{align}
\end{theorem}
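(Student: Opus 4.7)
The plan is to construct the LP profile via a shooting method through the sonic point $y_*$, working with the autonomous ODE system obtained by setting $\pa_s = 0$ in \eqref{E:CONTSS}--\eqref{E:MOMSS} and using $M'(y) = y^2\rho$ to eliminate the nonlocal term. Writing $v = y\omega$, the resulting first-order system for $(\rho,\omega)$ has a principal coefficient that degenerates exactly when $v^2 = 1$, so that the sonic point is characterised by $\vLP(y_*) = 1$. The strategy has three ingredients: a local analysis at $y = 0$, a local analysis at $y_*$, and a global shooting/asymptotic argument that joins them and extends the profile to infinity.

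At $y = 0$, the requirement of smoothness together with a leading-order balance of $\pa_y\rho$ against the gravitational term $2\rho M/y^2 \sim \tfrac{2}{3}\rho(0) y$ in \eqref{E:MOMSS} forces $\omLP(0) = \tfrac13$, after which Fuchsian ODE theory yields a unique real-analytic one-parameter family of local solutions indexed by $\rho(0) > 0$, with a convergent Taylor expansion in even powers of $y$ of positive radius. At any candidate sonic point $y_* \in (2,3)$, one imposes $v(y_*) = 1$ and inspects the regular singular structure: the indicial equation is arranged so that the Frobenius exponents are integers and no logarithmic terms arise, producing a one-parameter family of analytic local solutions threading $y_*$, parametrised by $\rho(y_*)$. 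Positivity and boundedness on both sides of $y_*$ further restrict the admissible pairs $(y_*, \rho(y_*))$.

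The shooting step then selects a unique $\rho(0) > \tfrac13$ such that the interior analytic orbit from $y = 0$ meets the sonic manifold at some $y_* \in (2,3)$ and matches one of the analytic Frobenius branches issuing from $y_*$. Existence of such a $\rho(0)$ is obtained by a continuity/intermediate value argument applied to a connection functional measuring the discrepancy between the forward trajectory and the admissible branches at $y_*$, combined with quantitative over/undershoot estimates in the extreme regimes of $\rho(0)$ (roughly: for large $\rho(0)$ the orbit is repelled before it reaches $v = 1$, while for small $\rho(0)$ it crosses $v = 1$ prematurely or singularly). Once the analytic sonic crossing is achieved, the exterior equations for $y > y_*$ linearise about $\omega = 1$ to a two-dimensional system whose unique bounded branch produces the far-field asymptotics $\omega \to 1$ and $y^2 \rho \to \text{const}$; the selected trajectory is shown to lie on this branch. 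The monotonicity \eqref{E:LPMONOTONICITY} is then verified by checking that $\{\rho' < 0,\ \omega' > 0\}$ is forward-invariant under the ODE flow on both sides of $y_*$, confirming the signs at $y = 0^+$, and propagating them across the sonic point using the explicit Frobenius expansion there.

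The main obstacle is the sonic connection: $y_*$ is a saddle for the phase-plane flow, so only a codimension-one set of interior orbits matches an analytic branch at $y_*$. Justifying the intermediate value argument requires quantitative control of the dependence of the forward trajectory on $\rho(0)$, together with a careful analysis of the indicial equation at $y_*$ to ensure the absence of logarithmic terms and hence real-analyticity across the sonic point. A secondary subtle point is the propagation of \eqref{E:LPMONOTONICITY} through $y_*$, which works only because the leading-order Frobenius coefficients at the sonic point carry the correct signs to preserve the invariant region across the degeneracy.
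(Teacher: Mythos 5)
The paper does not prove this statement: it cites Theorem~1.2 of \cite{GHJ2021b} for the construction and Remark~4.16 of \cite{GHJS2022} for the monotonicity \eqref{E:LPMONOTONICITY}, so there is no in-paper argument for you to have reconstructed. Comparing your sketch instead to the strategy of \cite{GHJ2021b}, you have the right circle of ideas (local analyticity at the two singular points $y=0$ and $y=y_*$, plus a connection/shooting argument), but your shooting is set up in the opposite direction from theirs, and this is not a cosmetic difference. You propose to shoot outward from $y=0$, parametrised by $\rho(0)$, and to select $\rho(0)$ so that the orbit meets the sonic line $v=1$ with the compatibility $\rho(y_*)=\omega(y_*)=1/y_*$ (cf.~\eqref{eq:order012coeffs}) needed for an analytic crossing. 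In that formulation the sonic condition is a codimension-one constraint that the forward orbit must hit, and you would have to show both that a solution through the sonic line ceases to be smooth for generic $\rho(0)$ and that the sign of the mismatch flips as $\rho(0)$ varies; you acknowledge this but give no mechanism. The proof in \cite{GHJ2021b} instead parametrises by the sonic point $y_*\in[2,3]$ itself: the local analytic solution at $y_*$ (one of the two branches of the nonlinear singular ODE there, the ``Larson--Penston branch'') is constructed first, then integrated \emph{backward} toward the origin, and the shooting target is the single scalar condition $\omega(0;y_*)=\tfrac13$. This makes the sonic crossing automatic by construction and reduces the connection problem to an intermediate-value argument in the single parameter $y_*$, together with undershoot/overshoot estimates at $y_*=2$ and $y_*=3$. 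The paper's appendix (Lemma~\ref{L:LPYSTAR} and the characterisation
\[
\bar y_*=\inf\bigl\{y_*\in(2,3)\ \bigm|\ \exists\, y_c(y_*)\ \text{s.t.}\ \omega(y_c;y_*)=\tfrac13,\ \omega'(y;y_*)>0\ \text{on}\ [y_c,y_*]\bigr\}
\bigr)
\]
reflects exactly this backward-shooting parametrisation.

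Two further points deserve caution. First, your claim that at $y_*$ ``the Frobenius exponents are integers and no logarithmic terms arise'' conflates the linear Frobenius theory for the eigenvalue ODE (Lemma~\ref{lemma:Frobenius}, where the indices are $0$ and $\frac{1-\lambda}{1-1/y_*}$) with the local structure of the nonlinear profile ODE~\eqref{E:RHOLP}--\eqref{E:OMLP}; the nonlinear sonic point admits exactly two analytic branches (LP and Hunter) for a different reason, coming from a quadratic constraint on the leading coefficients, and the growth estimates needed for convergence of the sonic Taylor series (Proposition~\ref{P:RHONOMNBDS}) are a substantial piece of work, not a consequence of standard Fuchsian theory. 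Second, your monotonicity argument by forward invariance of $\{\rho'<0,\ \omega'>0\}$ is too optimistic as stated: near $y=0$ one has $\rho'(0)=\omega'(0)=0$, and across the sonic point both numerator and denominator in \eqref{E:RHOLP}--\eqref{E:OMLP} vanish, so the sign propagation is degenerate precisely at the endpoints of the intervals you need. The cited Remark~4.16 of \cite{GHJS2022} handles this with a dedicated comparison argument and is not reducible to a one-line invariant-region check.
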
 
We note that the monotonicity property~\eqref{E:LPMONOTONICITY} was proved in~\cite[Remark 4.16]{GHJS2022}.

\begin{remark}
The exact LP-solution has infinite total mass and energy. However, by the finite-speed of propagation property, it is only the region inside the backward acoustical cone that affects the implosion (compare Figure~\ref{F:LP} below), while the global quantities, such as the total mass and the total energy reflect the $y\to\infty$ tail behaviour of the solution. 
For this reason, the solutions described in Theorem~\ref{T:MAININFORMAL} feature data that are suitably flattened  at spatial infinity to ensure that the total mass and energy are finite; see Section~\ref{S:PRECISE}. 
\end{remark}


\subsection{Lagrangian formulation}


Problem~\eqref{E:CONTSS}--\eqref{E:MOMSS} can be viewed as the self-similar Eulerian formulation of the EP-evolution. We next explain the Lagrangian formulation of the stability problem for the LP-solution, which is at the heart of this paper. 
In moving to the Lagrangian formulation, we are motivated by the observation that the Lagrangian coordinates offer an immediate reformulation of the radially symmetric Euler-Poisson flow as
a system of quasilinear wave equations. In fact,
the reinterpretation of the background LP-solution~\eqref{E:LPSOL} itself in the comoving picture offers further insights into the nature of the implosion collapse. 

For a given smooth, strictly positive function $g:[0,\infty)\to(0,\infty)$, we consider a choice of initial particle labelling $\eta_0:[0,\infty)\to[0,\infty)$ satisfying
$\eta_0(0)=0$ and 
\begin{align}\label{E:DATAIN}
\varrho_0(\eta_0(r)) \eta_0^2 (r)\pa_r \eta_0(r) = g(r), \ r\ge0.
\end{align}
The freedom to specify $g$ is the gauge freedom associated with comoving coordinates.
We next provide the comoving formulation of the EP-system.


\begin{lemma}\label{L:SELFSIMLAG}
Let $(\varrho,u)$ be a classical solution to~\eqref{E:ECONT}--\eqref{E:EMOM} and let $\eta_0:[0,\infty)\to[0,\infty)$ be a choice of the initial particle labelling satisfying~\eqref{E:DATAIN}.
Then the flow map $\eta(t,r)$, which by definition solves the ordinary differential equation
\begin{align}
\pa_t\eta(t,r) & = u(t,\eta(t,r)), \\
\eta(-1,r) & = \eta_0(r),
\end{align}
solves the following partial differential equation:
\begin{align}
\pa_t^2 \eta - \frac{\pa_r^2 \eta}{(\pa_r\eta)^2 } - \frac{2}{\eta} + \frac{ M(r) }{\eta^2} + \frac{\pa_rg}{g}\frac1{\pa_r\eta}=0. \label{E:EPLAGR}
\end{align}
Moreover, upon introducing the self-similar change of variables
\begin{align}\label{E:SCALINGLAGR}
\frac{ds}{dt}= \frac{1}{T-t}, \ z= \frac{r}{T-t}, \quad  \zeta(s,z) = \frac{\eta(t,r)}{T-t},
\end{align}
the rescaled flow map $\zeta(s,\cdot)$ solves 
\begin{align}
\zeta_{ss} +2z\zeta_{s z} +\left(z^2-\frac{1}{\zeta_z^2}\right)\zeta_{zz} -\zeta_s + \frac{\tilde M(s,z)}{\zeta^2} - \frac2{\zeta}+\frac{\tilde g_z}{\tilde g}\frac1{\zeta_z} = 0,\label{E:ZETADAMPENED}
\end{align}
where
\begin{align}\label{E:MTILDEDEF}
\tilde M(s,z) : = \frac1{T-t} M(r) = \frac{4\pi}{T-t} \int_0^r g(r')\diff r' = 4\pi \int_0^{e^sz}\tilde g(s,z) \diff z, \ \ \tilde g(s,z) := g(r).
\end{align}
Now setting
\begin{align}\label{E:BIGPSIDEF}
\mu : = \zeta_s+\Lambda\zeta-\zeta,
\end{align}
then the pair $(\zeta,\mu)$ solves the following first order system
\begin{align}
\zeta_s & = \mu - \Lambda \zeta + \zeta,  \label{E:NLZETA}\\
\mu_s & = -\Lambda \mu  - \pa_z \left( \frac1{\zeta_z}\right) + \frac2{\zeta}-\frac{\tilde M}{\zeta^2}-\frac{\tilde g_z}{\tilde g}\frac1{\zeta_z}, \label{E:NLPSI}
\end{align}
where we have defined the scaling operator
\beq\label{E:LAMBDADEF}
\Lambda : = z\pa_z.
\eeq
\end{lemma}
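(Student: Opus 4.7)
I plan to carry out the Lagrangian reformulation in four routine steps: mass conservation along trajectories, conversion of the momentum balance into the scalar equation~\eqref{E:EPLAGR}, the self-similar rescaling, and finally introduction of $\mu$. The content is essentially chain-rule bookkeeping, so I do not anticipate genuine obstacles; the key algebraic input is the Lagrangian mass identity, which underpins both the elimination of $\partial_R\varrho$ and the time-independence of the enclosed mass $M(r)$.

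First I would verify that $\varrho(t,\eta(t,r))\,\eta^2\,\partial_r\eta$ is independent of $t$. Differentiating this quantity in $t$ and substituting both the characteristic ODE $\partial_t\eta=u(t,\eta)$ and the radial continuity equation~\eqref{E:ECONT} produces an exact cancellation, while~\eqref{E:DATAIN} fixes the common value to be $g(r)$. A change of variables $\sigma=\eta(t,r')$ in the definition of $m(t,R)$ from~\eqref{E:FORCE} then yields $m(t,\eta(t,r))=4\pi\int_0^r g(r')\,dr'=M(r)$, i.e.~the enclosed mass is Lagrangian.

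To obtain~\eqref{E:EPLAGR}, I differentiate $\partial_t\eta=u(t,\eta)$ once more and invoke~\eqref{E:EMOM} to get $\partial_t^2\eta=-(\partial_R\varrho/\varrho)|_{R=\eta}-M(r)/\eta^2$. Since $\partial_R=(\partial_r\eta)^{-1}\partial_r$, taking a logarithmic $r$-derivative of $\log\varrho=\log g-2\log\eta-\log\partial_r\eta$ gives
$$\frac{\partial_R\varrho}{\varrho}=\frac{\partial_r g}{g\,\partial_r\eta}-\frac{2}{\eta}-\frac{\partial_r^2\eta}{(\partial_r\eta)^2},$$
whose substitution yields~\eqref{E:EPLAGR}. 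For the self-similar step, the chain rule under~\eqref{E:SCALINGLAGR} gives $\partial_t=e^s(\partial_s+z\partial_z)$ and $\partial_r=e^s\partial_z$; applied to $\eta=e^{-s}\zeta$ this produces
$$\partial_t^2\eta=e^s\bigl(\zeta_{ss}+2z\zeta_{sz}+z^2\zeta_{zz}-\zeta_s\bigr),\qquad \frac{\partial_r^2\eta}{(\partial_r\eta)^2}=\frac{e^s\zeta_{zz}}{\zeta_z^2},\qquad \frac{2}{\eta}=\frac{2e^s}{\zeta}.$$
Using $\tilde M=e^sM$ from~\eqref{E:MTILDEDEF} and $\partial_r g/g=e^s\tilde g_z/\tilde g$, a common factor $e^s$ in~\eqref{E:EPLAGR} cancels, and~\eqref{E:ZETADAMPENED} follows.

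Finally, the definition $\mu=\zeta_s+\Lambda\zeta-\zeta$ rearranges immediately into~\eqref{E:NLZETA}. For~\eqref{E:NLPSI} I would compute $\mu_s=\zeta_{ss}+z\zeta_{sz}-\zeta_s$ and use~\eqref{E:ZETADAMPENED} to eliminate $\zeta_{ss}$; combined with the identity $\mu_z=\zeta_{sz}+z\zeta_{zz}$, the top-order terms collapse as $-\Lambda\mu-\partial_z(1/\zeta_z)=-z\zeta_{sz}-(z^2-\zeta_z^{-2})\zeta_{zz}$, while the zeroth-order source terms match directly, completing~\eqref{E:NLPSI}.
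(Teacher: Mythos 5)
Your proposal is correct and is essentially the standard derivation that the paper delegates to its citation of Jang's work rather than spelling out. The four steps — the Lagrangian mass identity $\varrho(t,\eta)\eta^2\partial_r\eta = g(r)$ (giving both the time-independence of the enclosed mass and the substitution for $\partial_R\varrho/\varrho$), the logarithmic differentiation to rewrite the pressure term, the chain rule computations under $\partial_t = e^s(\partial_s + z\partial_z)$, $\partial_r = e^s\partial_z$ with $\eta = e^{-s}\zeta$, and the first-order reduction via $\mu = \zeta_s + \Lambda\zeta - \zeta$ — all check out, including the cancellation $-\Lambda\mu - \partial_z(\zeta_z^{-1}) = -z\zeta_{sz} - (z^2 - \zeta_z^{-2})\zeta_{zz}$. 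Since the paper itself only records the result and cites the standard derivation, your argument supplies what the paper leaves implicit; there is no genuine divergence in method.
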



\begin{proof}
Equation~\eqref{E:ZETADAMPENED} is the standard derivation of the Lagrangian formulation of the problem, see for example~\cite{Jang14}. The system~\eqref{E:NLZETA}--\eqref{E:NLPSI} is a first-order reformulation of~\eqref{E:ZETADAMPENED}.
\end{proof}


\begin{remark}
We note that 
\begin{align}
\eta(t,r) & = (T-t)\zeta(s,\frac{r}{T-t}), \notag\\
\pa_t\eta(t,r) & = - \zeta (s,z) + \zeta_s + \Lambda\zeta = \mu(s,z),
\end{align}
where we recall~\eqref{E:BIGPSIDEF}. Therefore $\mu$ simply corresponds to the particle velocity along the flow lines.
\end{remark}


\begin{remark}
Strictly speaking, for any choice of the labelling gauge function $g$ we obtain a different system of PDE~\eqref{E:NLZETA}--\eqref{E:NLPSI}. The freedom to specify  
$g$ is crucial in addressing the stability problem.
\end{remark}


We next explain how the LP-solution embeds into the comoving description and in particular what the associated gauge $g$ is. 


\begin{lemma}[Mapping between the Lagrangian LP-representation to the Eulerian LP-representation]\label{L:LPLAGRANGIAN}
In the Lagrangian variables, there exists a constant $\CLP>0$ such that the  LP-solution $\bzeta=\bzeta(z)$ corresponds to steady states of the equation~\eqref{E:ZETADAMPENED}, 
with the particular choice of the initial labelling
\begin{align}
g(r) = g_{\textup{LP}}(r) \equiv \frac{\CLP}{4\pi}.
\end{align}
In this case, $[0,\infty)\ni z\mapsto\bzeta(z)\in[0,\infty)$ solves
\be\label{E:LPLAG}
 \Big( z^2 - \frac{1}{( \pa_z \bzeta)^2 } \Big) \pa_z^2\bzeta +  \frac{\CLP z}{\bzeta^2} - \frac{2}{\bzeta}=0.
\ee
Moreover
the pair $(\rhoLP,\vLP)$ given by 
\begin{align}\label{E:LPPROP}
\rhoLP(y) : = \frac{\CLP z}{2y^2 \vLP}= \frac{\CLP}{2\bzeta^2\pa_z\bzeta}, \ \ \vLP(y) : = z\pa_z \bzeta,
\end{align}
with
\begin{align}
y: = \bzeta(z),
\end{align}
is a steady state solution of~\eqref{E:CONTSS}--\eqref{E:MOMSS}.
\end{lemma}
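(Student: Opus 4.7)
The plan is to reduce equation~\eqref{E:ZETADAMPENED} to its steady-state form and identify the gauge $g$ that is compatible with the LP self-similar profile, then recover the Eulerian relations~\eqref{E:LPPROP} from the Lagrangian mass--labelling identity~\eqref{E:DATAIN}.

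First, I would look for $s$-independent solutions $\zeta(s,z)=\bzeta(z)$ of~\eqref{E:ZETADAMPENED}. Dropping the $\zeta_s,\zeta_{ss},\zeta_{sz}$ terms leaves
\[
\Big(z^2-\frac{1}{(\pa_z\bzeta)^2}\Big)\pa_z^2\bzeta+\frac{\tilde M(s,z)}{\bzeta^2}-\frac{2}{\bzeta}+\frac{\tilde g_z}{\tilde g}\frac{1}{\pa_z\bzeta}=0,
\]
which can only be consistent with $\bzeta$ being $s$-independent if both $\tilde M$ and $\tilde g_z/\tilde g$ are themselves $s$-independent. Since $\tilde g(s,z)=g(e^{-s}z)$ via the relation $r=e^{-s}z$, the natural and essentially forced choice is $g\equiv\mathrm{const}$, which simultaneously makes $\tilde g_z=0$ and renders the mass integral in~\eqref{E:MTILDEDEF} explicitly computable: writing $g\equiv\frac{\CLP}{4\pi}$ one finds
\[
\tilde M(s,z)=4\pi e^{s}\int_0^{e^{-s}z}\frac{\CLP}{4\pi}\,\dif r'=\CLP z,
\]
which is manifestly $s$-independent. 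Substituting into the displayed equation above immediately yields~\eqref{E:LPLAG}. Existence of the unique analytic solution $\bzeta$ of this ODE with the claimed properties is provided by Theorem~\ref{T:LP} combined with the Lagrangian reparametrisation discussed there.

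Next, I would recover the Eulerian formulas. The definition~\eqref{E:BIGPSIDEF} of $\mu$ together with $\zeta_s=0$ for the LP steady state gives $\mu=\Lambda\bzeta-\bzeta=z\pa_z\bzeta-\bzeta$. On the other hand, chain-differentiating $\eta(t,r)=(T-t)\bzeta(z)$ with respect to $t$ shows $\pa_t\eta=z\pa_z\bzeta-\bzeta$, which matches the Eulerian relation $\pa_t\eta=u(t,\eta)=v(s,y)-y$ with $y=\bzeta(z)$; solving for $v$ yields $\vLP(y)=z\pa_z\bzeta$, i.e.~the second formula in~\eqref{E:LPPROP}. For the density, I would propagate the labelling identity~\eqref{E:DATAIN} along the flow via conservation of mass, which in self-similar variables reads
\[
\frac{1}{2\pi}(T-t)^{-2}\rhoLP(y)\cdot(T-t)^2\bzeta(z)^2\cdot\pa_z\bzeta(z)=g(r)=\frac{\CLP}{4\pi}.
\]
Solving for $\rhoLP$ gives $\rhoLP=\frac{\CLP}{2\bzeta^2\pa_z\bzeta}$, and substituting $\pa_z\bzeta=\vLP/z$ (from the velocity formula) produces the alternative expression $\rhoLP(y)=\frac{\CLP z}{2y^2\vLP}$, completing~\eqref{E:LPPROP}.

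Finally, I would verify that the pair $(\rhoLP,\vLP)$ so defined indeed satisfies the Eulerian self-similar system~\eqref{E:CONTSS}--\eqref{E:MOMSS}; this is automatic once one observes that the Lagrangian-to-Eulerian change of variables of Lemma~\ref{L:SELFSIMLAG} is a diffeomorphism (guaranteed by $\pa_z\bzeta>0$, cf.~the monotonicity~\eqref{E:LPMONOTONICITY}), so that~\eqref{E:LPLAG} is exactly~\eqref{E:MOMSS} in disguise, while~\eqref{E:CONTSS} is the differentiated form of the mass-labelling identity above. There is no genuine obstacle here; the essential content of the lemma is the identification $g_{\mathrm{LP}}\equiv\CLP/(4\pi)$ forced by the requirement that the LP profile be a true stationary point of the comoving system, and the rest is a direct computation. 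This constant-gauge choice is of course incompatible with $g\in L^1$, which is why, as emphasised after~\eqref{E:GLPINTRO}, the actual stability analysis requires modifying $g$ to decay far from the sonic cone.
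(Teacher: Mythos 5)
Your proposal is correct and in substance covers the same ground as the paper's proof, but it is organised differently, and one step is stated at a higher level than the paper's. The paper's proof takes~\eqref{E:LPLAG} together with the definitions~\eqref{E:LPPROP} as the starting point and \emph{computes} directly: it rewrites~\eqref{E:LPLAG} as $(\vLP^2-1)(\vLP\pa_y\vLP - \vLP) + 2\vLP^2(\rhoLP\vLP - 1/y) = 0$ using $z\pa_z = \vLP\pa_y$, and uses the identity $\pa_y(y^2\vLP\rhoLP) = \frac{\CLP}{2}\pa_y z = y^2\rhoLP$ (which is the same mass-labelling relation you invoke) to produce the two first-order ODEs~\eqref{eq:v} and~\eqref{E:RHOSS} for $(\rhoLP,\omLP)$, which are exactly the self-similar Euler--Poisson system of~\cite{GHJ2021b}, i.e.~\eqref{E:RHOLP}--\eqref{E:OMLP}. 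Your approach instead \emph{derives} the formulae~\eqref{E:LPPROP} from the flow-map relations and the mass-labelling identity, and then asserts that~\eqref{E:CONTSS}--\eqref{E:MOMSS} are "automatic" because the change of variables transports equations to equations. That assertion is in fact correct --- a direct computation shows $D_y(\rhoLP\vLP) - \rhoLP = 0$ follows from $\pa_y$-differentiating $\rhoLP\bzeta^2\pa_z\bzeta = \CLP/2$, and~\eqref{E:LPLAG} is precisely the steady momentum equation once one substitutes $M[\rhoLP](y)=\frac{\CLP z}{2}$ --- but in the paper that computation \emph{is} the proof, whereas you compress it into a single sentence. If this were to stand as a complete proof you would need to carry out that final verification; otherwise the argument structure is sound. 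One point worth keeping: your observation that the constancy of $g$ is \emph{forced} by $s$-independence of the steady state (because $\tilde g(s,z)=g(ze^{-s})$ must be $s$-independent and $\tilde M$ must reduce to $\CLP z$) is a nice conceptual motivation that the paper's proof does not spell out, and it clarifies why the LP gauge necessarily conflicts with the finite-mass requirement discussed after~\eqref{E:GLPINTRO}.
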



\begin{proof}
For $y$, $\rhoLP$ and $\vLP$ as in the statement, 
 using $z\pa_z = \vLP \pa_y$, we may write \eqref{E:LPLAG}  as 
\be
(\vLP^2 -1) (\vLP\pa_y \vLP - \vLP ) +2 \vLP^2 ( \rhoLP\vLP - \frac{1}{y})=0,
\ee
from which we see that 
\be\label{eq:v}
\pa_y \vLP = 1 - \frac{2\vLP}{y} + \frac{2\vLP^2 (\rhoLP -\omLP )}{1-\vLP^2},
\ee 
where we recall from~\eqref{E:LPSOL} that 
$
\omLP = \frac{\vLP }{y}. 
$ 
We also see that $\pa_y (y^2 \vLP \rhoLP) = \frac{\CLP}{2}\pa_y z = \frac{\CLP z}{2\vLP} = \frac{\CLP}{2\bzeta_z} = y^2\rhoLP $ from which we deduce that 
\be\label{E:RHOSS}
\pa_y \rhoLP = - \frac{2\vLP \rhoLP (\rhoLP -\omLP )}{1-\vLP^2}.
\ee
Equations~\eqref{E:RHOSS} and~\eqref{eq:v} correspond to the self-similar formulation of the EP-system (see~\cite{GHJ2021b} and~\eqref{E:RHOLP}--\eqref{E:OMLP}).
\end{proof}

As shown in~\cite{GHJ2021b}, there exists a unique value $y_\ast=\bzeta(z_\ast)\in(2,3)$ such that $z_\ast^2( \pa_z\bzeta(z_\ast))^2=1$. 
This is the sonic point which corresponds to the boundary of the backward acoustical cone (see Fig.~\ref{F:LP}). We continue to refer to its Lagrangian label $z_\ast$
as the sonic point, as in the following definition.

\begin{definition}[Sonic point]\label{def:sonic}
The unique value of $y_\ast\in(0,\infty)$ such that $\vLP(y_\ast)=1$ is called the {\em sonic point}.
By slight abuse of notation, we also refer to its Lagrangian label $z_\ast=\bzeta^{-1}(y_\ast)$ as the sonic point, as no confusion can arise.
Note that $z_\ast$ is also characterised as the unique value such that $z_\ast^2( \pa_z\bzeta(z_\ast))^2=1$. It follows that $z^2\pa_z\bzeta(z)^2<1$ for $z\in(0,z_*)$ and  $z^2\pa_z\bzeta(z)^2>1$ for $z>z_*$.
\end{definition}


\subsection{Function spaces and Energy Functionals}


As shown in Lemmas~\ref{L:ONETHIRD}--\ref{L:ZETABAR}, the self-similar LP flow map $\bzeta$ is not smooth in $z$, but only in $z^\frac13$ near $z=0$. 
Therefore, in order to study the dynamics of the flow map $\zeta$ near $\bzeta$, we must honour this intrinsic regularity 
at $z=0$.  In particular, the Eulerian coordinate $y\approx z^{\frac13}$ near the origin, and so the Eulerian gradient and divergence operators scale like $z^{\frac23}\pa_z$ and $\pa_z(z^{\frac23}\cdot)$, respectively. 
 This suggests a use of weighted differential operators in the $z$-variable to adjust to the $C^{0,\frac13}$-regularity at the origin and motivates the following definition.


\begin{definition}[Key differential operators]\label{D:KEYOP}
We define the operators $\bp$ and $\bd$
\begin{align}\label{E:DIVGRADDEF}
&\bp: = z^{\frac23}\pa_z, \ \ \bd: = \pa_z\left(z^{\frac23} \cdot \right).
\end{align}
Recalling the analogy to the physical space gradient and divergence operators, we let
\beq\label{E:BDELTA}
\bl:= \bd\bp.
\eeq
Then, for any $k\in\mathbb N$, we let 
\begin{align}
& \bD_0:=  \text{{\em Id}}, \ \ \bD^1:=\bd, \ \ \bD^2 := \bp\bd, \notag\\
& \bD^k:= 
\begin{cases} 
(\bp\bd)^{\frac k2} & \ k \ \text{ is even,} \\
\bd \bD^{k-1} & \ k \ \text{ is odd}.
\end{cases}\label{DEF:D^k}
\end{align}
\end{definition}


If a function $u(z)$, $u:[0,\infty)\to\mathbb C$ is viewed as a function of $y=\bzeta(z)$, we use the notation
$
\tilde u(y) =  u(z).
$
To any $Z>z_\ast$, we associate the space of functions
\begin{align}\label{E:DZDEF}
\DZ: = \left\{\begin{pmatrix}  \theta \\ \phi \end{pmatrix} : [0,Z]\to \mathbb C^2 \, \Big| \, y\mapsto \begin{pmatrix} \tilde \theta(y) \\ \tilde\phi(y) \end{pmatrix} \in C^\infty([0,\bzeta(Z)];\C)^2  \right\}.
\end{align}
A subspace of $\DZ$ consisting of functions whose even Taylor coefficients at $y=0$ vanish is denoted by $\DZodd$, i.e.
\begin{align}\label{E:DZODD}
\DZodd: = \left\{\begin{pmatrix}  \theta \\ \phi \end{pmatrix} \in \DZ \, \Big| \, \tilde \theta^{(2k)}(0)=\tilde\phi^{(2k)}(0)=0, \ k\in\mathbb N_0\right\}.
\end{align}
Analogously, we define the even version
\begin{align}\label{E:DZEVEN}
\DZeven: = \left\{\begin{pmatrix}  \theta \\ \phi \end{pmatrix} \in \DZ \, \Big| \, \tilde \theta^{(2k-1)}(0)=\tilde\phi^{(2k-1)}(0)=0, \ k\in\mathbb N\right\}.
\end{align}
Finally, in a  slight abuse of notation, we set 
\beq\label{E:DINFDEF}
\Dinf=\cap_{Z>z_*}\DZ,\quad \Dinfodd = \cap_{Z>z_*}\DZodd,\quad \Dinfeven=\cap_{Z>z_*}\DZeven.
\eeq


\begin{remark}
In particular, by~\eqref{E:BARZETAREGULARITY}, $\bzeta\in\Dinfodd$.
We observe that the operators $\bp,\bd$ defined in~\eqref{E:DIVGRADDEF} are adapted to the regularity classes $\DZodd,\DZeven$ in that they reverse parity. In particular, for any $u\in \DZodd$, we have $\bD^2 u \in \DZodd$.
\end{remark}


In the remainder of the paper we will make frequent use of the following  weight function.
\begin{align}\label{E:GGDEF}
\bG(z) := \frac1{(\bp \bzeta)^2},
\end{align}
where
 $\bzeta$ is the LP-flow map
given by Lemma~\ref{L:LPLAGRANGIAN}.


\begin{lemma}\label{L:GBAR}
The weight function $\bG(z)\in\Dinfeven$ is strictly positive and satisfies the following:
\begin{align}
&\bG(z)=9+O_{z\to0}(z^{\frac23}), \qquad \bG(z)=O_{z\to\infty}(z^{-\frac43}),\label{E:GBDS}\end{align}
and
$z^{\frac23}-\bG(z) <0$ for  $z<z_*$, $z^{\frac23}-\bG(z)>0$ for  $z>z_*$.
Moreover, 
 the derivatives of $\bG$ satisfy
\begin{align}
&\pa_z\bG(z)\leq 0, \ \ \ z>0,\label{E:DZG}\\
&\Big|\frac{\bp\bG}{\sqrt{\bG}}\Big|= O(z^{-1}), \ \ \ z\to\infty. \label{E:DZGBARFF}
\end{align}
\end{lemma}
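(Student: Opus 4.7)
The strategy is to systematically reduce every claim to known properties of the Eulerian profile $(\rhoLP,\vLP,\omLP)$ via the identity $z\pa_z\bzeta=\vLP(y)$ from~\eqref{E:LPPROP}. Rewriting $\bG=1/(\bp\bzeta)^2=1/(z^{4/3}(\pa_z\bzeta)^2)$ yields the key formula
\[
\bG(z)=\frac{z^{2/3}}{\vLP(y)^{2}}=\frac{z^{2/3}}{y^{2}\omLP(y)^{2}},\qquad y=\bzeta(z).
\]
Strict positivity is then immediate from $\omLP>0$.

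\textbf{Asymptotics and regularity.} From the local expansion $\bzeta(z)=z^{1/3}+O(z)$ near the origin (Lemma~\ref{L:ONETHIRD} and the $C^\infty$-regularity in $z^{1/3}$ of the LP profile) one computes $(\bp\bzeta)(0)=\tfrac13$, so $\bG(0)=9$, and the correction is $O(z^{2/3})$. For the far-field, $\bzeta\sim z$ with $\pa_z\bzeta\to c>0$ implies $\bG\sim z^{-4/3}$. To see $\bG\in\Dinfeven$, note that $z^{1/3}$ is an analytic (and even) function of $y$ near $y=0$: indeed $\bzeta(z)=z^{1/3}F(z^{2/3})$ with $F$ analytic and $F(0)=1$, so $y=z^{1/3}F(z^{2/3})$ is invertible with $z^{1/3}$ smooth and even in $y$. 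Combined with the analyticity of $\omLP$ and $\omLP(0)=\tfrac13>0$, the formula $\bG=z^{2/3}/(y^2\omLP^2)=(z^{1/3}/y)^2/\omLP^2$ displays $\bG$ as a smooth even function of $y$. The sonic inequality in~(3) is a direct consequence of Definition~\ref{def:sonic}: $z^{2/3}-\bG=z^{2/3}-z^{2/3}/(z^2(\pa_z\bzeta)^2)\cdot 1$ has the sign of $z^2(\pa_z\bzeta)^2-1$.

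\textbf{Monotonicity \eqref{E:DZG}.} Here taking a logarithmic derivative is the cleanest route. Using $\pa_z\bzeta=\vLP/z$, we find
\[
z\,\pa_z\log\bG=\frac{2}{3}-2\pa_y\vLP=2\bigl(\tfrac{1}{3}-\omLP(y)-y\omLP'(y)\bigr),
\]
after substituting $\vLP=y\omLP$. This is the key identity of the proof. Since $\omLP(0)=\tfrac13$ and $\omLP'>0$ for $y>0$ by the monotonicity property~\eqref{E:LPMONOTONICITY} of Theorem~\ref{T:LP}, the right-hand side is nonpositive, so $\pa_z\bG\le 0$ for all $z>0$. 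This is the main substantive step, as it is precisely where the ground state character of the LP profile enters.

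\textbf{Far-field decay of $\bp\bG/\sqrt{\bG}$.} Combining $\bp\bG/\sqrt{\bG}=\sqrt{\bG}\cdot z^{2/3}\pa_z\log\bG$ with the identity above, we obtain
\[
\frac{\bp\bG}{\sqrt{\bG}}=2z^{-1/3}\sqrt{\bG}\bigl(\tfrac{1}{3}-\pa_y\vLP\bigr).
\]
Since $\sqrt{\bG}=O(z^{-2/3})$ from~\eqref{E:GBDS}, it suffices to check $\pa_y\vLP$ stays bounded as $y\to\infty$. Using $\omLP\to 1$, $\rhoLP\to 0$, and $\vLP^2/(1-\vLP^2)\to -1$ in the ODE~\eqref{eq:v}, we obtain $\pa_y\vLP\to 1$, so $\tfrac{1}{3}-\pa_y\vLP$ is bounded. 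Multiplying the three factors gives $|\bp\bG/\sqrt{\bG}|=O(z^{-2/3}\cdot z^{-1/3})=O(z^{-1})$, establishing~\eqref{E:DZGBARFF}.
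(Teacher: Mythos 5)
Your proof is correct and takes essentially the same route as the paper: both reduce the monotonicity claim to the identity $\bp\bG/\sqrt{\bG} = -2(\omLP + y\omLP' - \tfrac13)/\vLP$ (your logarithmic-derivative presentation is a cosmetic repackaging of the paper's chain-rule computation via~\eqref{eq:zdzzZeta}), and both then invoke $\omLP(0)=\tfrac13$, $\omLP'\ge0$ for positivity, and the far-field asymptotics of $\omLP$ for the $O(z^{-1})$ decay. One small slip in the regularity paragraph: $z^{1/3}$ is an \emph{odd}, not even, analytic function of $y$ (as the inverse of the odd map $u\mapsto uF(u^2)$); this does not affect the conclusion, since the quantity $(z^{1/3}/y)^2$ entering $\bG$ is even either way.
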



\begin{proof}
The proof can be found in Appendix~\ref{SS:SSP}.
\end{proof}

From~\eqref{E:DZGBARFF}, given $\a>0$, we define $Z_*>z_*$ such that 
\beq\label{E:DGSQRTG}
\Big|\frac{\bp\bG}{\sqrt{\bG}}\Big|\leq\a<\frac23, \text{ for }z\geq Z_*.
\eeq
In the sequel, for precision, we fix $\a=\frac1{24}$.


Additionally, we require a further weight function. To this end, for each $0\leq m\leq 2M$, we take $\tilde g_{m}(z)$ as the solution to
\beq
\frac{\tilde g_{m}'(z)}{\tilde g_{m}(z)}=mz^{-\frac23}\frac{\bp \bG}{\bG},\qquad \tilde g_{m}(0)=1,
\eeq
which exists globally in $z$. It is straightforward to see from Lemma~\ref{L:GBAR} that $\tilde g_{m}(z)>0$ for all $z\geq 0$ and $\tilde g_{m}'(z)<0$ for $z>0$. However, we only wish to make use of this weight inside the backwards acoustic cone from the origin, and so we define a constant $\bar g_{m}>0$ and a new, smooth function $g_{m}(z)$ such that $g_{m}'(z)\leq 0$ for all $z\geq 0$ and
\beq\label{DEF:gm}
g_{m}(z)=\begin{cases}
\tilde g_{m}(z), & z\in[0,Z_*],\\
\bar g_{m}, & z\geq 2Z_*
\end{cases}
\eeq
is such that 
\beq\label{E:gmprime}
\Big\|z^{\frac23}\sqrt{\bG}\frac{g_{m}'}{m g_{m}}\Big\|_{L^\infty([Z_*,\infty))}\leq \a=\frac1{24},
\eeq 
where we recall~\eqref{E:DGSQRTG}.


\subsubsection{Low order energy space}\label{S:LOW}


We first introduce our low order energy spaces. These are defined on a finite interval $[0,Z]$ and are  versions of the usual Sobolev spaces, appropriately translated into the $z$ coordinates, and weighted in order to be well adapted to the linear analysis.

Let $Z>z_\ast$ be fixed.
For any $j,m\in\N_0$, to any two pairs $\begin{pmatrix}  \theta_1 \\  \phi_1 \end{pmatrix}$, $\begin{pmatrix}  \theta_2 \\  \phi_2\end{pmatrix} $ from the set $\DZodd$ (recall~\eqref{E:DZODD}) we associate 
the homogeneous complex inner product.
\begin{align}
\left(\begin{pmatrix}  \theta_1 \\  \phi_1 \end{pmatrix}\, ,\, \begin{pmatrix}  \theta_2 \\  \phi_2 \end{pmatrix}\right)_{\dot{\mathcal H}_{j,Z}^{m}}
:=\int_0^{Z} \bD^{m+1}  \theta_1 \overline{\bD^{m+1}  \theta_2} \bG(z) g_j(z) \diff z + \int_0^{Z}  \bD^m\phi_1 \overline{ \bD^m\phi_2} g_j(z) \diff z,
\end{align}
where we recall Definition~\ref{D:KEYOP}.
We then define the corresponding inhomogenous inner product
\begin{align}\label{E:INNERPROD}
\left(\begin{pmatrix} \theta_1 \\  \phi_1 \end{pmatrix}\, ,\, \begin{pmatrix} \theta_2 \\  \phi_2 \end{pmatrix}\right)_{\mathcal{H}^m_{j,Z}}
:= 
\left(\begin{pmatrix} \theta_1 \\  \phi_1 \end{pmatrix}\, ,\, \begin{pmatrix} \theta_2 \\  \phi_2 \end{pmatrix}\right)_{\dot{\H}_{j,Z}^m}
+  \beta  \left(\begin{pmatrix} \theta_1 \\  \phi_1 \end{pmatrix}\, ,\, \begin{pmatrix} \theta_2 \\  \phi_2 \end{pmatrix}\right)_{\dot{\H}^0_{j,Z}}
\end{align}
for a constant $\beta>0$  
sufficiently small, depending on $m$, to be specified later. 
Particular importance will be played by the spaces $\H_{m,Z}^m$, wherein the number of derivatives and the strength of the weight are mutually dependent.

\begin{definition}[Hilbert space for the linear analysis]\label{D:HILBERT}
We define the Hilbert space $\H^m_{m,Z}$ 
as the completion of $\DZodd$ with respect to the the norm $\|\cdot\|_{\H^m_{m,Z}}$ induced by the $\H^m_{m,Z}$ inner product .
\end{definition}


\begin{remark}
Our spectral analysis is most naturally carried out in complex-valued spaces $\H^m_{m,Z}$, as the underlying linear operator is not self-adjoint. However, 
it is clear from the definition of the linearised operator (see~\eqref{E:BOLDLDEF} below) that if initial data are real valued so is the resulting solution. 
\end{remark}

It is clear from the definition of the space  $\H^m_{m,Z}$ 
and the properties of $\bG$ that we have the following equivalence to the Sobolev space 
\beq\label{E:HSOBOLEVEQUIV}
\H^m_{m,Z}\cong H_{\textup{odd}}^{m+1} \times H_{\textup{odd}}^{m}
\eeq where $H_{\textup{odd}}^{m}= H_{\textup{odd}}^{m}([0,Y], y^2dy) $ is the Hilbert space generated by 3-d radial divergence and gradient in $y$ coordinates $(y\simeq z^\frac13)$ respecting the parity condition as in \eqref{E:DZODD}.


\begin{remark}
Given a fixed $m\in\N$, it is also clear that for all $k,j\in\mathbb N$, the low order norms with different weights are equivalent:
\[
\|\cdot\|_{\mathcal{H}^m_{j,Z}}\cong\|\cdot\|_{\mathcal{H}^m_{k,Z}},
\]
where the implicit constants depend on $m,Z,k,j$.
\end{remark}


We next introduce the low regularity index $\m$ which will be used to quantify regularity of the solution in the backward accretive cone $[0,Z]$.


\begin{definition}[Low regularity index $\m$]
We now take  $\m\in\N$ to be any natural number such that
\beq\label{E:MCONDITION}
\frac76-\m(\frac23-\a)<0,
\eeq
where $\a=\frac1{24}$ is the constant from~\eqref{E:DGSQRTG}. We note that it is both necessary and sufficient to take $\m\geq 2$.
\end{definition}


\begin{remark}
We shall see below that the constant $\beta>0$ from~\eqref{E:INNERPROD} will be chosen sufficiently small depending on this $\m$, see~Theorem~\ref{T:LINEARMAIN}.
\end{remark}


\subsubsection{High order energies and pointwise norm}\label{S:HIGH}

The central role of the low order Hilbert space is to enable us to develop a  linear stability theory in arbitrarily large backwards cones from the singular point. In order to develop the linear theory into a full nonlinear stability result, we shall fix a $Z_0>0$ and apply the linear theory on the backwards cone $z\leq Z_0$.  
Then to close estimates at the top order and prove the full nonlinear stability, we work with a weighted energy space defined on the whole half-line $z\geq 0$, incorporating both the weights $g_j$ and growing factors in the far-field. In order to address the quasilinear nature of the problem, we replace the weight $\bG=(\bp\bzeta)^{-2}$ used in the low order spaces by the full quantity $(\bp\zeta)^{-2}$. Moreover, we introduce growing weights in the far-field to handle the failure of critical Hardy-Sobolev embeddings and finally include a further large constant ($\kappa$ defined below) in order to bridge between interior and exterior regions. 
 
 We define the higher order energy spaces in terms of a regularity index $M>\m$, where we recall~\eqref{E:MCONDITION}. As will be seen below, especially in Section~\ref{S:ENERGYBOUNDS}, many of our estimates work for any (fixed) choice of $M>\m$. However, for precision, in our main Theorem~\ref{T:MAIN}, we take $M=\m+1$.

We therefore define a family of weight functions as follows. For a constant $c\in(0,\frac23)$ to be specified later and $j\in\N_0$, we set
\begin{align}\label{E:CHIJDEF}
\chi_{2j}(z) :=\begin{cases}
\kappa(1+z)^{-\al}, & \ \ j=0,\\
g_{2j}(z)\big(\kappa+(1+z)^{2cj-\al}\big), &\ \ j\in\mathbb N,
\end{cases}
\end{align} 
where we recall $g_{2j}$ was defined above in~\eqref{DEF:gm} and where the constants $\kappa=\kappa(Z_0)>0$ and $\al>1$ satisfy the following.
\begin{itemize}
\myitem[(a1)]\label{item:a1} $2c-\al>0$.
\myitem[(a2)]\label{item:a2} The constants $c<\frac23$ and $\al>0$ are such that $\de\leq \frac1{2M}$ where 
\be\label{def:delta}
\delta := \frac{2}{3}- c + \frac{\alpha-1}{2}.
\ee
In particular, we have $4\de M<\al+1$.
\myitem[(a3)]\label{item:a3} $\kappa=\kappa_0(1+Z_0)^{2cM-\al}$, where $\kappa_0\gg (\frac23-\a)^{-1}$, where $\a=\frac1{24}$ is defined in~\eqref{E:DGSQRTG}.
\end{itemize}
We now define the weighted energy functional
\begin{align}
\E_{2j}=\E_{2j}[\U]& :=\int \chi_{2j} \bigg( \frac{(\bD^{2j+1}\theta)^2}{(\bp\zeta)^2} +(\bD^{2j}\phi)^2 \bigg) \diff z, \ j\in\mathbb N_0. \label{E:TILDEEDEF}
\end{align}

For a fixed $M>0$, we define the high-order energies up to order $2M$ by setting, for any $j\in\N_0$, $j\leq M$,
\begin{align}\label{E:TEDEF}
\tE_{\le 2j}&: =  \sum_{\ell=0}^j\tE_{2\ell}[\U].
\end{align}
For simplicity, we also define the total top order energy $\tE_{\leq 2M}=\tE$.

We define further, for $s\geq \sin$ (recall~\eqref{E:SINITIAL}),
the pointwise norm of the flow
\begin{align}\label{E:PDEF}
\Pb[\Phi](s)=\Pb(s):= \|\frac{\th(s,\cdot)}{\bzeta}\|_{L^\infty} +\|\frac{\bp\theta(s,\cdot)}{\bp\bzeta}\|_{L^\infty} +\|\frac{z\pa_z^2\theta(s,\cdot)}{\bzeta_z}\|_{L^\infty}+\|\frac{\pa_s\bp\theta(s,\cdot)}{\bp\bzeta}\|_{L^\infty}.
\end{align}

 Under the  assumption that the pointwise norm $\Pb[\Phi]\leq \frac14$, the weights $(\bp\bzeta)^{-2}$ and $(\bp\zeta)^{-2}$ are equivalent, and hence we may make the norm bounds, for $m\leq M$,
\beq\label{E:HMZTE}
\Big\|\begin{pmatrix} \theta \\ \phi \end{pmatrix}\Big\|_{\HmZ}^2\leq C(m,Z)\tE_{\le 2 m}.
\eeq
 In order to provide a function space associated to the top order energy functional, we define the Hilbert space $\mathfrak H$ to be the completion of 
$\mathcal{D}^{\textup{odd}}_\infty$ (recall~\eqref{E:DINFDEF}) with respect to the norm $\|\cdot\|_{\mathfrak{H}}$ defined by
\beq\label{E:FRAKHDEF}
\Big\|\begin{pmatrix} \theta \\ \phi \end{pmatrix}\Big\|_{\mathfrak{H}}^2=\sum_{j=0}^M\int \chi_{2j} \bigg( \frac{(\bD^{2j+1}\theta)^2}{(\bp\bzeta)^2} +(\bD^{2j}\phi)^2 \bigg) \diff z.
\eeq
Again, it is clear that if $\Pb[\Phi]\leq \frac14$, then $\Phi\in\mathfrak{H}$ is equivalent to $\tE[\Phi]<\infty$.

The high-order energy $\tE$, as we shall see in Section~\ref{S:ENERGYBOUNDS}, stems from the quasilinear nature of the problem.
On the other hand, the quantity $\Pb(s)$ gives the pointwise control of the Lagrangian flow trajectories, which is necessary to prove global existence in the $s$-variables, as well as to interpret the global stability in $(s,z)$-variables as the stability of the LP solution.


\subsection{Precise statement of the main theorem}\label{S:PRECISE}


To state the main theorem, we first clarify the meaning of stability by explaining the exact meaning of asymptotic data flattening, the high-order energy norms, and the initial data.

{\em Asymptotic flattening.}
We take the index $M\in\N$ as in Section~\ref{S:HIGH}, recalling that in the sequel, this will be fixed to $M=\m+1$.
To asymptotically flatten the data and thereby enforce finite total mass and total energy assumption, we consider a family of smooth functions $g:[0,\infty)\to(0,\infty)$ 
which satisfy the following {\em tail asymptotic} properties:
\begin{enumerate}
\myitem[(g1)] $g:[0,\infty)\to(0,\infty)$ is a strictly positive $C^{2M}$-function and there exists a sufficiently large constant $r_\ast>0$, to be specified later, such that:\label{item:g1}
\[
g(r) = \frac{\CLP}{4\pi}, \ \ r\le r_\ast,
\]
where the constant $\CLP$ was introduced in Lemma~\ref{L:LPLAGRANGIAN};
\myitem[(g2)]\label{item:g2} there exists a constant $\ga>0$ such that, for all $0\le k\le 2M$,
$$|g^{(k)}(r)|\le \frac \ga{(1+r)^{2+k}}, \qquad \Big|\frac{\dif^k}{\dif r^k}\big(\frac{g'(r)}{g(r)}\big)\Big|\leq \frac{\gamma}{(1+r)^{k+1}}. $$ 
\end{enumerate}

{\em Background profile.}
By Lemma~\ref{L:LPLAGRANGIAN}, the flow map corresponding to the LP-family of solutions~\eqref{E:LPRHO}--\eqref{E:LPU} is given by the 1-parameter family 
\begin{align}
T \mapsto \eta_{\text{LP},T}(t,r):=(T-t)\bzeta(\frac{r}{T-t}),
\end{align}
where $\bzeta$ is the universal LP-profile given by Lemma~\ref{L:LPLAGRANGIAN}.
In the first order formulation~\eqref{E:NLZETA}--\eqref{E:NLPSI}, the LP-solution corresponds to the pair
\begin{align}
\begin{pmatrix}\bzeta \\ \widehat\mu \end{pmatrix} := \begin{pmatrix}\bzeta \\ \Lambda\bzeta-\bzeta\end{pmatrix} = \begin{pmatrix} \eta_{\text{LP},0}(-1,\cdot) \\ \pa_t\eta_{\text{LP},0}(-1,\cdot)\end{pmatrix}.
\end{align}

We are now in a position to state the main result of this work.

 
\begin{theorem}[Main theorem]\label{T:MAIN} 
There exist $\bar C>0$, $\tilde\eps_0>0$, $\eps_0>0$, $\m\in\N$, $\sg >0$, $Z_0>0$ such that the following statement is true. For any $\tilde\eps\in[0,\tilde\eps_0]$,
let $\begin{pmatrix} \zeta_0(\cdot) \\ \mu_0(\cdot)\end{pmatrix}$ be a profile such that 
\begin{align} \label{IC0}
\| \tilde\Phi_0 \|^2_{\HmZm}<\frac{\tilde\eps}{2},\ \  \tE_{\leq 2(\m+1)}[\tilde\Phi_0] <\frac{\eps_0}{4} \ \text{ and } \  \Pb[\tilde\Phi_0]&<\frac{\sqrt{\eps_0}}{4}, 
\end{align} 
 where
 \begin{align}
\tilde\Phi_0 
&:=\begin{pmatrix}\zeta_0 - \bzeta \\ \mu_0-\widehat\mu \end{pmatrix} \label{E:PROFILE}
\end{align}
and the constants $\kappa_0>0$, $c\in(0,\frac23)$ and $\al>1$ (appearing in the definition of $\tE$~\eqref{E:CHIJDEF}--\eqref{E:TEDEF}) are chosen depending on $\eps_0,\m$ so that they satisfy assumptions~\ref{item:a1}--\ref{item:a3}. Moreover, let $r_*>1$, depending on $\eps_0,Z_0$, be such that the flattening function $g$ (see~\eqref{E:DATAIN}) satisfies assumptions {\em \ref{item:g1}--\ref{item:g2}}.

Then there exists a constant $C_0$ and a final time $T$, $0\le |T|\le C_0\sqrt{\tilde\eps}$, such that the solution to the  initial  value problem~\eqref{E:NLZETA}--\eqref{E:NLPSI} with initial data  
\begin{align}
 \begin{pmatrix}\theta \\ \phi\end{pmatrix}(\sin,z) & =\begin{pmatrix}\theta_0 \\ \phi_0\end{pmatrix}   = \PhiT : = \begin{pmatrix}(T+1)^{-1} \zeta_0( r(T+1))  \\ \mu_0(r(T+1))  \end{pmatrix} - \begin{pmatrix} \bzeta \\ \widehat\mu \end{pmatrix}\label{E:PROFILE1}
\end{align}
exists and is global with respect to the self-similar time $s$ defined through 
\begin{align}\label{E:SST2}
s(t) :=  - \log (T-t), \ \ -1\le t<T.
\end{align}
Moreover, given $\nu\in(0,1)$, there exists $\Om>0$ such that the following bounds hold 
\begin{align}
\sup_{s\in[\sin,\infty)} e^{2\nu \sg s} \| \Phi (s) \|^2_{\HmZm} \le 2\bar C\tilde\eps, \ 
\sup_{s\in[\sin,\infty)} e^{2\Omega s}\tE_{\leq 2(\m+1)} [\Phi ](s) \le \frac{\eps_0}{2}, \ \ \sup_{s\in[\sin,\infty)} \Pb[\Phi] (s) \le \bar C \sqrt{\eps_0}. \label{E:MAINBOUND}
\end{align}
\end{theorem}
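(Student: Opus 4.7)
The plan is to run a continuity (bootstrap) argument on the maximal time $S_T\in(\sin,\infty]$ of existence of a solution $\Phi$ to the perturbed Lagrangian system \eqref{E:NLZETA}--\eqref{E:NLPSI} satisfying the working bounds
\[
\sup_{s\in[\sin,S_T]} e^{2\nu\sg s}\|\Phi(s)\|_{\HmZm}^2 \le 4\bar C\tilde\eps, \quad \sup_{s\in[\sin,S_T]}e^{2\Omega s}\tE_{\le 2(\m+1)}[\Phi](s)\le \eps_0, \quad \sup_{s\in[\sin,S_T]}\Pb[\Phi](s)\le 2\bar C\sqrt{\eps_0},
\]
and to show that, for a suitably chosen $T=T(\tilde\eps)$ with $|T|\le C_0\sqrt{\tilde\eps}$, each of these can be strictly improved by a factor two on the same interval. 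Local well-posedness for the quasilinear system then forces $S_T=\infty$ and delivers \eqref{E:MAINBOUND}.

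I would close the top-order improvement first, as it requires no choice of $T$. Using the schematic energy inequality \eqref{E:CALEINTRO}, the bootstrap bound on the low-order norm, and the flattening assumptions \ref{item:g1}--\ref{item:g2} (which yield $\int_{\sin}^s(r_\ast e^\sigma)^{-2a}\diff\sigma\lesssim r_\ast^{-2a}$), a Gronwall step with $0<\Omega<\nu\sg$ produces
\[
e^{2\Omega s}\tE_{\le 2(\m+1)}(s)\le C\tE[\tilde\Phi_0]+C\tilde\eps+Cr_\ast^{-2a}+C\eps_0^{1/2}\sup_{\sigma\in[\sin,s]}e^{2\Omega\sigma}\tE_{\le 2(\m+1)}(\sigma).
\]
Taking $r_\ast$ large in terms of $\eps_0,Z_0$ and $\eps_0$ small so that $C\eps_0^{1/2}\le \tfrac12$, the trilinear term is absorbed on the left and the smallness of the data \eqref{IC0} strictly improves the $\eps_0$ bootstrap to $\eps_0/4$.

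The low-order improvement requires suppressing the trivial unstable mode through the choice of $T$. From the Duhamel formula \eqref{E:DUHAMELINTRO}, splitting $\Phi=(\mathbf I-\bfP)\Phi+\bfP\Phi$ along the one-dimensional eigendirection of $\bfL$ associated with $\lambda_{\textup{trivial}}=1$, applying the semigroup bound \eqref{E:LPINTRO} on the stable part, and interpolating $\bfN[\Phi]$ against the top-order energy to absorb its one derivative of loss, one obtains exactly \eqref{E:LOWBOUNDINTRO}. The sole obstruction is the term $e^{s-\sin}\|\bfP(\PhiT+\int_{\sin}^{S_T}e^{-\sigma}\bfN[\Phi]\diff\sigma)\|_{\HmZm}$. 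To kill it, define, via the isomorphism $\mathrm{Ran}\,\bfP\cong\mathbb R$,
\[
\Psi(T) := \bfP\!\left(\PhiT + \int_{\sin}^{S_T} e^{-\sigma}\bfN[\Phi^T(\sigma)]\diff\sigma\right), \qquad T\in[-C_0\sqrt{\tilde\eps},C_0\sqrt{\tilde\eps}].
\]
The map $T\mapsto\bfP\PhiT$ is continuous and, to leading order, a monotone $O(\sqrt{\tilde\eps})$-bijection onto its image (as $\PhiT$ depends smoothly and non-degenerately on $T$ through the time-translate of the perturbation profile), while the nonlinear integral is $o(\sqrt{\tilde\eps})$ under the bootstrap. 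Brouwer's theorem (Proposition~\ref{P:BROUWER}) then produces a zero $T^\ast$ with $|T^\ast|\le C_0\sqrt{\tilde\eps}$; at this $T^\ast$, inequality \eqref{E:LOWBOUNDINTRO} reduces to $\|\Phi(s)\|_{\HmZm}^2\lesssim e^{-2\sg(s-\sin)}\tilde\eps+e^{-2(\Omega+\nu\sg)s}\eps_0$, strictly improving $4\bar C\tilde\eps$ to $\bar C\tilde\eps$ once $\eps_0\ll\bar C^2$.

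The pointwise quantity $\Pb(s)$ is handled as in Section~\ref{S:POINTWISE}: inside the accretive cone $z\le Z_0$ the improved low-order bound and the Sobolev equivalence \eqref{E:HSOBOLEVEQUIV} provide pointwise control, while outside $z\ge Z_0$ one propagates the information backwards along the trajectories $z\mapsto ze^\sigma$, paying with the far-field weights $\chi_{2j}$ appearing in $\tE$. The principal obstacle is orchestration, because the three bounds are mutually coupled: the top-order estimate uses the low-order bootstrap to control the linear source, the low-order estimate uses the top-order energy to absorb the derivative loss of $\bfN$, and both require $\Pb\le\tfrac14$ in order to equate $(\bp\zeta)^{-2}$ with $\bG$. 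This is resolved by the scale hierarchy $\tilde\eps\ll\eps_0\ll1$ with $Z_0,r_\ast$ sufficiently large, under which the feedback loop is strictly contractive and all three improvements hold simultaneously on $[\sin,S_T]$. The Eulerian statement Theorem~\ref{T:EULERMAIN} then follows by reconstructing $\varrho,u$ from $\zeta(s,z)=\bzeta(z)+\theta(s,z)$ and $\mu(s,z)=\widehat\mu(z)+\phi(s,z)$ and undoing the change of variables \eqref{E:SCALINGLAGR}.
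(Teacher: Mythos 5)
Your plan follows essentially the same strategy as the paper: a bootstrap (or contradiction) argument coupling (i) the Duhamel-based low-order bound with the unstable mode projected out, (ii) the weighted top-order energy estimate, and (iii) the pointwise estimate along characteristics, with the trivial mode suppressed by a Brouwer fixed-point choice of $T$. The orchestration, scale hierarchy $\tilde\eps\ll\eps_0\ll1$ with $Z_0,r_\ast$ large, and absorption of the trilinear term are all in agreement with the paper.

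There is one real gap in the low-order step. You treat the map
\[
\Psi(T)=\bfP\!\Big(\PhiT+\int_{\sin}^{S_T}e^{-\sigma}\bfN[\Phi^T(\sigma)]\,\diff\sigma\Big)
\]
as if its continuity in $T$ followed from the continuity of $T\mapsto\bfP\PhiT$ plus a smallness estimate on the nonlinear tail. That is not enough: $S_T$ itself depends on $T$ (and is a genuine unknown, defined as a maximal time), so the upper limit of integration and the trajectory $\Phi^T$ both vary with $T$. The paper's Proposition~\ref{P:BROUWER} cannot be invoked as a black box here, because it is predicated on the separate Lemma~\ref{L:STPROPERTIES}: this lemma (a) shows the a~priori bounds propagate to $[\sin,S_T+\de(T)]$ for some $\de(T)>0$ (so that the solution does not degenerate at the endpoint of the bootstrap interval), and (b) proves continuity of $T\mapsto S_T$ whenever $S_T<\infty$. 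Point (b) in turn relies on the difference estimate Proposition~\ref{P:HMZDIFF}, which controls $\|\Phi^{T_1}-\Phi^{T_2}\|_{\HmZm}$ in terms of the initial-data difference. Without some version of this, your assertion that $\Psi$ is continuous on $[-C_0\sqrt{\tilde\eps},C_0\sqrt{\tilde\eps}]$ is unjustified, and Brouwer does not apply.

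A related organizational remark: the integral $\int_{\sin}^{S_T}$ in $\Psi(T)$ is only well-defined because one first assumes $S_T<\infty$ for all $T$ and argues by contradiction, as the paper does explicitly. Your presentation does not name the contradiction, and as written it risks circularity (you would need global existence to know the integral converges, but global existence is what you are proving). Stating the argument as: ``assume $S_T<\infty$ for all $T$; establish continuity of $T\mapsto S_T$ and the a~priori propagation past $S_T$; apply Brouwer to find $T_*$ with $\Psi(T_*)=0$; conclude a strict improvement of the $\HmZm$ bound on $[\sin,S_{T_*}]$, contradicting maximality'' removes the ambiguity and matches the paper's proof.
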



We recall from~\eqref{E:MCONDITION} that $\m=2$ is sufficient, which results in a total of $7$ derivatives on $\zeta$ and $6$ on $\mu$. In fact the theorem will hold for any $\m\geq 2$. We refer the reader to Remark~\ref{R:ENHANCEDREG} for further discussion of these regularity requirements.

\begin{remark}
The choice of the 1-parameter family of the initial data profiles $\Phi_{\text{in}}^T $ is natural and is 
designed to capture the trivial instability induced by the time translation invariance of the problem. 
We shall later see (cf.~Proposition~\ref{P:BROUWER}) that the time $T$ is obtained through the Brouwer fixed point argument, reminiscent of the classical
stable manifold constructions in dynamical systems. Similar topological arguments have been implemented in both semilinear and quasilinear contexts (see for example~\cite{Glogic22,Do2024,MRRS2}.
\end{remark}


\begin{remark}[Consistency of the choice of initial data]\label{R:CONS}
It is necessary to understand the effect of the $T$-modulation on the size of the initial data. We write, for any profile $\Xi=(\zeta,\mu)^\top$,
\beq
\Xi^T:=\begin{pmatrix}(T+1)^{-1} \zeta( r(T+1))  \\ \mu({r}({T+1}))  \end{pmatrix}.
\eeq
We then decompose
\begin{align}
\PhiT = \Xi_0^T-\Xi_{\text{LP}}^T + \Xi_{\text{LP}}^T-\Xi_{\text{LP}}, \ \ \qquad \Xi_0:=\begin{pmatrix} \zeta_0 \\ \mu_0\end{pmatrix}, \ \  \Xi_{\text{LP}}:=\begin{pmatrix}\bzeta \\ \widehat\mu\end{pmatrix}.\label{E:XIDEF}
\end{align}
 From the smoothness of the LP profile, it is then straightforward to see that, for $0\le|T|\ll1$ sufficiently small (depending on $\m$, $Z_0$), 
\beq\label{E:IDMODULATE}
\|\Xi_0^T-\Xi_{\textup{LP}}^T\|^2_{\HmZm}= \|\tilde{\Phi}_0^T\|^2_{\HmZm}\leq \frac32\tilde\eps\quad \text{ and } \quad\|\Xi_0^T-\Xi_{\textup{LP}}^T\|^2_{\H^0_{2\m,Z_0}}\leq\frac{3}{2\beta}\tilde\eps,
\eeq
where we recall~\eqref{E:INNERPROD}.
Moreover, there exists $C_1(\m,Z_0)>0$ such that 
\[\|\Xi_{\text{LP}}^T-\Xi_{\text{LP}}\|^2_{\HmZm}\leq C_1T^2.\]
In order to close our Brouwer fixed-point argument at the end, we take  $T_0=C_0\sqrt{\tilde\eps}>0$ such that
\beq\label{E:T0DEF}
\frac{2}{\sqrt{\beta(\m,Z_0)}}\|\Gamma\|^{-1}_{\H^0_{2\m,Z_0}}\sqrt{\tilde\eps}\leq \frac12 T_0,
\eeq
where we the trivial growing mode $\Gamma$ is given in~\eqref{E:GM}.
Let the constant 
 \begin{align}\label{E:IDEST0}
 \bar C(\m,Z_0)=4(C_1C_0^2+2),
 \end{align} 
 so that for all $T\in[-T_0,T_0]$,
\beq\label{E:IDEST}
e^{2\nu\sg \sin}\|\PhiT\|^2_{\HmZm}\leq \frac12\bar{C}\tilde\eps.
\eeq
Below, we shall take $\eps_0=\widehat{C}\tilde\eps_0$ where $\widehat{C}(\m,Z_0)\gg\bar C$.
By continuity with respect to $T$, we also have, for $T_0$ sufficiently small (i.e.~$\tilde\eps$ sufficiently small),
\beq\label{E:INITIALDATA}
e^{2\Omega \sin}\tE_{\leq 2(\m+1)}[\PhiT] <\frac{3\eps_0}{8} \ \text{ and } \  \Pb[\PhiT]<\frac{\sqrt{\eps_0}}{2}.
\eeq
Bounds~\eqref{E:IDEST}--\eqref{E:INITIALDATA} in particular show that our choice of initial data is consistent with Theorem~\ref{T:MAIN}
in the sense that~\eqref{E:MAINBOUND} holds at the initial time $s=\sin$ with a strict inequality.
\end{remark}


\begin{remark}[On constants $\sg $, $\nu$, and $\Om$]The constant $\sg \in(0,1)$ will be defined below in Theorem~\ref{T:LINEARMAIN} and is determined via the decay rate of the semi-group associated to the linearised operator. The constant $\nu\in(0,1)$ may be chosen arbitrarily and is simply used to obtain a slightly slower decay rate than $\sg $.  The constant $\Om$ must be taken more carefully to satisfy 
\beq\label{E:OMEGABDS}
0<\Om\leq\min\{\frac13\nu\sg ,\frac{\al-1}{4}\}.
\eeq
At this stage, we fix $\nu\in(0,1)$ and, assuming $\sg $ to be given as in Theorem~\ref{T:LINEARMAIN}, take $\Om$ satisfying~\eqref{E:OMEGABDS}. These constants will be considered fixed from henceforth.
\end{remark}


\begin{remark}
In the original time variable $t$ the problem is initiated at the time slice $t=-1$; we therefore describe the dynamics on a time-interval of length $T+1$, $|T|\ll1$.
\end{remark}

To set up the global existence argument, we define $S_T>0$ to be the maximal existence time such that the low order norm satisfies the growth and smallness bound
\beq\label{E:STDEF}
S_T:=\sup \{s>\sin\,|\,e^{2\nu\sg s}\|\Phi\|_{\HmZm}^2\leq\bar C\tilde \eps\}
\eeq
for a suitable $\tilde \eps$.
We shall show that there exists a $T$, $|T|\ll1$, $0<\tE(\sin)\ll1$ sufficiently small, and $r_\ast\gg1$ sufficiently large, such that $S_T=\infty$.
To facilitate the analysis, for some $\eps_0>0$, $C_*>0$, we introduce our main a priori assumption: there exists some $S>0$ such that, for $s\in[\sin,S]$, we have the estimates
\begin{align}
e^{2\Omega s}\tE_{\leq 2(\m+1)}\le\eps_0, \label{E:APRIORIMAIN}\\
\Pb(s)\le C_*(Z_0)\sqrt{\eps_0},  \label{E:APRIORIMAINPT}
\end{align}
which will be  used in our a priori estimates in Sections~\ref{S:DUHAMEL},~\ref{S:ENERGYBOUNDS}, and~\ref{S:POINTWISE}.

\begin{remark}
Note that the $T$-dependence is contained in the choice of data~\eqref{E:PROFILE1} and through the choice of the initial time $\sin$~\eqref{E:SINITIAL}. The equation does not depend on $T$.
\end{remark}

 We remark for completeness that the local well-posedness of the self-similar Euler-Poisson system~\eqref{E:NLZETA}--\eqref{E:NLPSI} in the space $\mathfrak{H}$ under the assumption $\Pb[\Phi]\leq\frac14$ follows from the standard hyperbolic theory and the a priori estimates that we shall establish below in Sections~\ref{S:ENERGYBOUNDS}--\ref{S:POINTWISE}. We omit the  details.\\


\noindent{\em Eulerian description.}
With $g$ and $\eta_0:[0,\infty)\to[0,\infty)$ as in the statement of Theorem~\ref{T:MAIN}
the Eulerian initial density is given through the relation~\eqref{E:DATAIN}.
Moreover, for any $t\ge-1$ we have the fundamental identity
\begin{align}
\varrho(t,\eta(t,r))& =  e^{2s} \frac{\tilde g(s,z)}{\zeta(s,z)^2\pa_z\zeta(s,z)}, \label{E:RHOEULERIAN}
\end{align}
which follows from the relation $\pa_t\big(\varrho\circ\eta \eta^2\pa_r\eta\big) = 0$,~\eqref{E:DATAIN}, and~\eqref{E:SCALINGLAGR}.
If we denote 
$
R = \eta(t,r)
$
we may use a fixed spatial scale $Z_0/2$, related to the slope of the accretivity cone, to read off the behaviour of the Eulerian density. In particular, from~\eqref{E:RHOEULERIAN},
the largeness of $Z_0/2\gg1$, and the asymptotic relations $\eta(t,r)_{r\to0}\sim r^{\frac13}$ and $\eta(t,r)_{r\to\infty}\sim r$, it is easy to see that
\begin{enumerate}
\item in the backward cone $\{R\leq (T-t)\frac{Z_0}{2}\}$ we have $\varrho(t,R)\sim \frac1{(T-t)^2}$,
\item in the intermediate region $\{(T-t)Z_0/2\le R\le Z_0/2\}$ we have $\varrho(t,R)\sim \frac1{R^2}$,
\item and in the far-field (dampened) region $\{R\ge Z_0/2\}$ we have $\varrho(t,R)\sim \frac{g(R)}{R^2}$.
\end{enumerate}

To make the above statements precise we introduce the space-time distance
\begin{align}
d_T(t,R) = |T-t| + R, \ \  (t,R)\in[-1,T)\times\mathbb R_+,
\end{align} 
which will allow us to quantitatively describe the errors triggered by the perturbation of the LP-solution.


\begin{theorem}[LP-stability in Eulerian variables]\label{T:EULERMAIN}
Let $T\in\mathbb R$, $|T|\ll1$ be given as in Theorem~\ref{T:MAIN} and let $[-1,T)\ni (\varrho,u)(t,\cdot)$ be the unique solution to the initial value problem~\eqref{E:ECONT}--\eqref{E:EULERINITIAL}. Then there
exists a $C^1$-map $[-1,T)\times [0,\infty)\ni (t,R)\mapsto f(t,R)\ge0$  such that $f(t,0)=0$ for all $t\in[-1,T)$ and
\begin{align}\label{E:FBOUND}
\big|\frac{f(t,R)}{R}-1\big| + \big|R\pa_R\big(\frac{f(t,R)}{R}\big)\big| \le C \sqrt{\eps_0} \ d_T(t,R)^\Om,
\end{align}
so that the following statements hold.
\begin{enumerate}
\item[(a)]({\em Implosion})
There exists a constant $h>0$, independent of $Z_0$, and constants $0<C_1<C_2<\infty$ such that, for any $(t,R)\in[-1,T)\times [0,hZ_0]$, we have
\begin{align}\label{E:SANDWICH}
C_1\rhoLPT(t,f(t,R)) \le \varrho(t,R)\le C_2\rhoLPT(t,f(t,R)).
\end{align}
In particular, $C_1 \rhoLP(0)(T-t)^{-2} \le \varrho(t,0)\le C_2\rhoLP(0)(T-t)^{-2}$  for all $t\in[-1,T)$ and the density blows up at the centre as $t\to T^-$.
\item[(b)]({\em Stability in cylinders and backward cones})
Let $R\in[0,hZ_0]$ be given. There exists a constant $C>0$ such that
\begin{align}
\big|\frac{\varrho-\rhoLPT\circ f}{\rhoLPT\circ f}\big| +\big|R\pa_R\big(\frac{\varrho-\rhoLPT\circ f}{\rhoLPT\circ f}\big)\big|  \le C \sqrt{\eps_0} \ d_T(t,R)^\Om.
\label{E:DOM}
\end{align}
In particular, there exists a constant $c>0$ such that for any fixed $0<R\ll1$ we have the quantitative bound
\begin{align}\label{E:DOM2}
\sup_{t\in[T-\frac{R}{c},T)}\big|\frac{\varrho-\rhoLPT\circ f}{\rhoLPT\circ f}\big| \le C\sqrt{\eps_0} R^\Om.
\end{align}
\item[(c)]({\em Velocity is uniformly bounded and stable})
Assume additionally that $\|u_0-\uLPT\|_{L^\infty} + \|R\pa_R(u_0-\uLPT)\|_{L^\infty}\le \sqrt{\eps_0}$.
Then there exists a $C>0$ such that 
\begin{align}
\sup_{(t,R)\in[-1,T)\times[0,\infty)}|u(t,R)| \le C.
\end{align}
In fact the velocity is uniformly close to the LP-velocity profile in scaling-invariant norms:
\begin{align}
&\sup_{(t,R)\in[-1,T)\times[0,\infty)}|u(t,R)- \uLPT\circ f(t,R)|  \notag \\
&+\sup_{(t,R)\in[-1,T)\times[0,\infty)}|R\pa_R\big(u(t,R)- \uLPT\circ f(t,R)\big)|  \le C\sqrt{\eps_0}
\end{align}
\end{enumerate}
\end{theorem}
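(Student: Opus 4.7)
\noindent\textbf{Proof proposal for Theorem~\ref{T:EULERMAIN}.}

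The plan is to push the Lagrangian estimates of Theorem~\ref{T:MAIN} through to the Eulerian picture by exploiting the diffeomorphism character of $r\mapsto\eta(t,r)$ and the Eulerian/Lagrangian density identity~\eqref{E:RHOEULERIAN}. First I would define the companion map $f$: given $(t,R)$, let $r=\eta^{-1}(t,R)$, which is well-defined for each $t$ because the pointwise bound $\Pb[\Phi]\le \bar C\sqrt{\eps_0}$ in~\eqref{E:MAINBOUND} forces $\pa_z\zeta>0$, and set
\[
f(t,R):=\eta_{\text{LP},T}(t,r)=(T-t)\bzeta\bigl(\tfrac{r}{T-t}\bigr).
\]
Since $R=(T-t)\zeta(s,z)=(T-t)(\bzeta(z)+\th(s,z))$ with $z=r/(T-t)$, one has $f/R=\bzeta/\zeta=(1+\th/\bzeta)^{-1}$, and differentiating in $R$ (using $R\pa_R=\zeta(\pa_z\zeta)^{-1}\pa_z$) expresses $R\pa_R(f/R)$ in terms of $\th/\bzeta$ and $\bp\th/\bp\bzeta$. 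Thus~\eqref{E:FBOUND} would follow once we show that each of these ratios is bounded by $C\sqrt{\eps_0}\,d_T(t,R)^{\Om}$; the argument for this is given in the next paragraph and shared with part (b).

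For the pointwise decay factor $d_T^{\Om}$, I would split according to whether the fluid label lies inside or outside the backward accretivity cone. For $z\le Z_0$ (i.e.~$R\le Z_0(T-t)$), the high-order energy decay $\tE_{\le 2(\m+1)}(s)\le \eps_0 e^{-2\Om s}$ from Theorem~\ref{T:MAIN}, combined with the Sobolev equivalence~\eqref{E:HSOBOLEVEQUIV} and the structure of $\Pb$, gives $\|\th/\bzeta\|_{L^\infty}+\|\bp\th/\bp\bzeta\|_{L^\infty}\lesssim \sqrt{\eps_0}\,e^{-\Om s}=\sqrt{\eps_0}(T-t)^{\Om}\le\sqrt{\eps_0}\,d_T^{\Om}$. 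For $z\ge Z_0$ (outside the cone) one has $d_T(t,R)\sim R\gtrsim Z_0(T-t)$, so $d_T^{\Om}$ is bounded from below on a compact interval of $R$-values in $[0,hZ_0]$; here the uniform bound $\Pb\le\bar C\sqrt{\eps_0}$ suffices, possibly combined with the characteristic transport argument of Section~\ref{S:POINTWISE} to propagate the low-order smallness across the sonic line. Parts (a) and (b) then follow by writing
\[
\frac{\varrho(t,R)}{\varrho_{\text{LP},T}(t,f(t,R))}=\frac{\tilde g(s,z)}{g_{\text{LP}}}\cdot\frac{\bzeta(z)^2\pa_z\bzeta(z)}{\zeta(s,z)^2\pa_z\zeta(s,z)},
\]
which uses~\eqref{E:RHOEULERIAN} and the corresponding LP identity. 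Choosing $h>0$ small enough so that $R\le hZ_0$ forces the corresponding Lagrangian label $r$ to lie in $[0,r_\ast]$ (using $\eta(t,r)\sim r$ for large $r$ and~\ref{item:g1}), we get $\tilde g=g_{\text{LP}}$ and the ratio equals $1/\bigl((1+\th/\bzeta)^2(1+\pa_z\th/\pa_z\bzeta)\bigr)$. Expanding and using the smallness bounds from the preceding paragraph yields both the sandwich inequality~\eqref{E:SANDWICH} and the deviation bound~\eqref{E:DOM}; the $R\pa_R$ bound is obtained analogously after commutation. Taking $R=0$ gives~\eqref{E:DOM2} and the blowup rate at the centre since $\bzeta^2\pa_z\bzeta|_{z=0}$ is a positive constant by the $z^{1/3}$ expansion (Lemma~\ref{L:ONETHIRD}).

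For part (c), the key observation is that, with $f(t,R)$ as constructed, the Lagrangian labels of $R$ and $f(t,R)$ agree, hence
\[
u(t,R)-u_{\text{LP},T}(t,f(t,R))=\mu(s,z)-\widehat\mu(z)=\phi(s,z),
\]
so the $L^\infty$ deviation of the velocity is exactly $\|\phi(s,\cdot)\|_{L^\infty}$. On the backward cone this is controlled by Sobolev embedding from $\tE$; in the far-field, since the equation for $\phi$ in~\eqref{E:PERTURBINTRO} has essentially transport structure at lower order, propagating the initial smallness $\|u_0-\uLPT\|_{L^\infty}\le\sqrt{\eps_0}$ along characteristics (exactly as in the $L^\infty$-propagation argument of Section~\ref{S:POINTWISE}) gives the required bound. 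Boundedness of $u$ then follows since $\widehat u$ has a finite limit at infinity by Theorem~\ref{T:LP}. The derivative bound in scaling-invariant form comes from $R\pa_R\phi=\zeta(\pa_z\zeta)^{-1}\pa_z\phi$, reducing to control of $z\pa_z\phi/\zeta_z$ which is contained in the first- and second-order components of $\Pb$ and $\tE$ via the relation $\phi=\pa_s\th+(1-\Lambda)\th$. The main obstacle I anticipate is the far-field step in part (b): one must package the exterior pointwise bound so that it combines coherently with the interior $(T-t)^{\Om}$ decay to produce a single uniform $d_T^{\Om}$ factor, and this requires a careful interpolation between the non-decaying $\Pb$ norm and the decaying $\tE$ norm, together with a bit of book-keeping that $h$ can be chosen independently of $Z_0$.
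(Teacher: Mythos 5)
Your overall strategy matches the paper's: define $f=\eta_{\text{LP},T}\circ\eta^{-1}$, use the fundamental identity~\eqref{E:RHOEULERIAN} to reduce the density ratio to $J(s,z)=(\bzeta/\zeta)^2(\pa_z\bzeta/\pa_z\zeta)$, restrict to $R\le hZ_0$ so that $g=g_{\text{LP}}$ via Lemma~\ref{L:LPDOM}, and express all derivatives in terms of the Lagrangian quantities in $\Pb$. Part (c) via the identity $u-u_{\text{LP},T}\circ f=\phi$ and backward transport is also the paper's argument (Lemma~\ref{L:PHIUPPER}). So the route is the same; the problem is that the key step you identify as the "main obstacle" is a genuine gap, not merely a loose end.

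Specifically, in the region $z\ge Z_0$ you claim that ``$d_T^{\Om}$ is bounded from below on a compact interval of $R$-values in $[0,hZ_0]$; here the uniform bound $\Pb\le\bar C\sqrt{\eps_0}$ suffices.'' This is false on the full range needed: for any fixed small $R$, as $t\to T^-$ one has $z=e^sr\to\infty$ (so $z\ge Z_0$) while $d_T(t,R)\sim R$, which is small. So $d_T^{\Om}$ is \emph{not} bounded below near $R=0$ in the exterior region, and the uniform $\Pb$ bound alone cannot produce the factor $R^{\Om}$ that~\eqref{E:DOM} demands there. The resolution is precisely the sharpened pointwise estimate of Lemma~\ref{L:SHARPER}: one integrates backwards along the characteristic from $(s,e^sr)$ to the time $\tilde s=\log(h/r)$ at which the trajectory last touched the accretive cone $\{z\le h\}$, uses the exponential energy decay on $[0,\tilde s]$ (giving $e^{-\Om\tilde s}=r^{\Om}$), and controls the accumulated error $\int_{\tilde s}^{s}\|\phi/z\|_{L^\infty}$ again by the $e^{-\Om\sigma}$ decay of $\tE^{1/2}$. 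This gives the bound $C\sqrt{\eps_0}\big(e^{-\Om s}\chi_{s\le\log(h/r)}+r^{\Om}\chi_{s\ge\log(h/r)}\big)$, which then converts to $d_T^{\Om}$ via Lemma~\ref{L:LPDOM}. You gesture at ``characteristic transport'' and acknowledge the issue at the end, but as written the argument is incomplete: the word ``suffices'' attached to the non-decaying $\Pb$ bound would let through a non-decaying error and the theorem would fail.

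A second, smaller issue: you say ``Taking $R=0$ gives~\eqref{E:DOM2}.'' It does not; $R=0$ gives the blowup rate at the centre (the last clause of part (a)). Estimate~\eqref{E:DOM2} concerns a fixed $0<R\ll1$ and a supremum over $t\in[T-R/c,T)$; it follows from~\eqref{E:DOM} together with the observation that on this time window $d_T(t,R)\le (1+1/c)R$, so the $d_T^{\Om}$ factor becomes $CR^{\Om}$.
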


The proof of the theorem is a corollary of Theorem~\ref{T:MAIN} and is presented in Section~\ref{S:MAINTHEOREM}.

\begin{remark}
We note that the right-hand side of the  bound~\eqref{E:DOM} is proportional to $(T-t)^\Om$ and $R^\Om$ in the interior of the 
backward cone $\frac{R}{T-t}\le Z_0/2$ and its exterior respectively. It in particular captures the transfer of information from backward cones to 
cylinders $\{R\le Z_0/2\}$ making the space-time distance $d_T$ a scale-invariant measure of stability of the LP-solution as $R\to0$, see~\eqref{E:DOM2}. 
\end{remark}

\begin{remark}[Finite mass and energy]\label{R:FINITEMASS}
We note that by~\eqref{E:DATAIN}, $M(r) =4\pi \int_0^r \varrho_0 \eta_0^2 \pa_r\eta_0 \diff \tilde r = 4\pi \int_0^{\eta_0(r)} \varrho_0(R)R^2\diff R$, and therefore $M(r)$ corresponds to the $4\pi$ multiple of the local mass contained in the ball of radius $\eta_0(r)$. 
In particular, as a consequence of $(g2)$ we have 
\[
M[\rho]=\lim_{r\to\infty}M(r) =4\pi \int_0^\infty g(\tilde r)\diff \tilde r<\infty,
\] 
thus ascertaining that the initial data for our problem
have finite total mass. Similarly, it is easy to check that the total energy 
\begin{align}
E[\rho,u] = \int_0^\infty \left(\frac12 \rho u^2 +\rho\log\rho-\rho - |\pa_R\Phi|^2\right) \, R^2\diff R
\end{align}
of our initial data is finite.
\end{remark}

\subsubsection{Selection of parameters}

As there are several parameters involved in the statement of Theorem~\ref{T:MAIN}, the definitions of the spaces $\HmZ$, and the energy norms $\tE$, for the convenience of the reader, we collect here the relevant dependencies of these constants.

\begin{itemize}

\item {\em Regularity.} The regularity index $\m$ is determined directly from the constraint~\eqref{E:MCONDITION} and may be fixed to be 2. We let $M=\m+1$ (although it can be any other integer larger than $\m$).

\item {\em Weight powers.}
We introduce the constants $c\in(0,\frac23)$ and $\al>1$, depending only on $\m$, satisfying~\eqref{def:delta}. 

\item {\em Accretivity.} The slope of the accretivity cone $Z_0$ is taken large, depending on $\frac23-c$, $\al-1$, and estimates on the LP solution.
The constant $\beta$ of~\eqref{E:INNERPROD} is taken sufficiently small as $\beta(\m,Z_0)$, using Theorem~\ref{T:LINEARMAIN}.

\item  The constant $\kappa_0$ is taken sufficiently large, using properties of the LP solution in Proposition~\ref{P:ENERGYGLOBAL}.

\item {\em High-order energy.}
We let $\kappa=\kappa(Z_0,\kappa_0,c, M, \al)$ to be given by~\ref{item:a3}. We then define the energy norms $\tE$ in~\eqref{E:CHIJDEF}--\eqref{E:TEDEF}. 

\item {\em Rates of decay.}
We choose $\nu\in(0,1)$ arbitrarily, determine $\sg >0$ from Theorem~\ref{T:LINEARMAIN} (maximal accretivity theory), and subsequently choose $\Om$ satisfying~\eqref{E:OMEGABDS}. All these constants appear in~\eqref{E:MAINBOUND} in the statement of Theorem~\ref{T:MAIN}.

\item {\em Pointwise norms.}
The constant $C_*(Z_0)$ taken in~\eqref{E:APRIORIMAIN} is defined in Lemma~\ref{L:STPROPERTIES}, depending on $Z_0$ and $\Omega$.

\item {\em Smallness.}
The small constant $\eps_0>0$ in Theorem~\ref{T:MAIN} is chosen sufficiently small, depending on $Z_0$.
We set $\tilde\eps_0=\widehat{C}^{-1}\eps_0$ and $T_0=C_0\sqrt{\tilde\eps}$ for suitable $\widehat{C},C_0$, depending only on $Z_0$,$\m$, where we recall $T_0$
from Remark~\ref{R:CONS}.

\item {\em Asymptotic flattening.}
The constant $r_*$ in assumptions~\ref{item:g1}--\ref{item:g2} is taken to depend on $\eps_0$.
\end{itemize}


\section{Linearised operator and mode stability}\label{S:LINEARMODE}


The goal of this section is to prove mode-stability of the linearised operator. To that end we first introduce the associated function-analytic framework  and then define the linearised operator 
before stating the mode stability in Theorem~\ref{T:SPECTRAL} below.

We introduce the perturbed quantities:
\begin{align}\label{E:THDEF}
\th : = \zeta - \bzeta, \quad \phi : = \mu - \widehat\mu = \th_s + z\pa_z \th - \th
\end{align}
and let 
\begin{align}
\Phi : = \begin{pmatrix} \th \\ \phi \end{pmatrix}
\end{align}
denote the perturbed pair.


\subsection{Formulation of the linearised problem}


Our goal is to study the linearised problem on a finite, but sufficiently large $z$-domain $[0,Z]$. Instead of relying on the quasilinearised formulation of Lemma~\ref{L:QUASI0},
which is carefully formulated to avoid derivative loss in our top-order energy estimates in Section~\ref{S:ENERGYBOUNDS}, we state the linearised problem.

We introduce the scaling operator
\begin{align}
\Lambda : = z\pa_z.
\end{align}
and the key second order operator
\begin{align}
K \theta & : =  \bp  \left(\bG \bd \theta\right), \label{E:KDEF}
\end{align}
where we recall the weight function $\bG$ from~\eqref{E:GGDEF}. Now a direct computation reveals
\begin{align}
\pa_z \Big( \frac{\pa_z\theta}{\bzeta_z^2}  \Big)&=K\theta +\mathcal V_1 \theta,\quad \text{ where }\quad\mathcal V_1  = \frac29 z^{-\frac23} \bG + \frac43 z^{-\frac13} \bp^2 \bzeta \bG^\frac32 .\label{E:KIDENTITY}
\end{align}

In order to linearise the problem, we will commonly need to handle expressions such as $(\bzeta+\th)^{-2}$. To compress notation, we define notation for a recurring nonlinearity by 
\beq
N_1(\bzeta,\theta) : = \bzeta^{-2} \Big(\frac{\theta}{\bzeta}\Big)^2 \frac{3+\frac{2\theta}{\bzeta}}{\Big(1+\frac{\theta}{\bzeta}\Big)^2},\label{E:NONEDEF}
\eeq
so that, for example,
\begin{align}
\frac1{\zeta^2}=\frac1{\bzeta^2} - \frac{2}{\bzeta^3} \theta + N_1(\bzeta,\theta).
\end{align}

\begin{lemma}[Reformulation of the problem]\label{L:GPROPERTIES}
Let $(\zeta,\mu)$ be formally a solution of~\eqref{E:NLZETA}--\eqref{E:NLPSI} and let $(\th,\phi)$ be given via~\eqref{E:THDEF}.
Then, for $z\leq r_*$,  $(\theta,\phi)$ formally satisfies the following nonlinear problem
\begin{align}\label{E:NONLINEARFORM}
\pa_s\Phi ={\bf L} \Phi + {\bf N}[\Phi], \ \ \Phi = \begin{pmatrix} \theta \\ \phi \end{pmatrix},
\end{align}
where
\begin{align}
\bf L\begin{pmatrix} \theta \\ \phi \end{pmatrix}  & :=  \begin{pmatrix}  \phi - \Lambda\theta +\theta \\  -\Lambda \phi + K\theta + \mathcal V\theta  \end{pmatrix},\label{E:BOLDLDEF}\\
 {\bf N}\begin{pmatrix} \theta \\ \phi \end{pmatrix} &: = \begin{pmatrix} 0 \\ \mathcal N[\theta] \end{pmatrix}.\label{E:BOLDNDEF}
\end{align}
Here, the potential is defined by
\begin{align}\label{E:POTDEF}
\mathcal V : = \frac29 z^{-\frac23} \bG - 2\bzeta^{-2} + \frac43 z^{-\frac13} \bp^2\bzeta \bG^{\frac32}+ 2\CLP  z \bzeta^{-3}.
\end{align}
and 
\beqa
\mathcal N[\theta] &: =
-\left(\frac{2}{\bzeta_z^3}\theta_z-N_1(\bzeta_z,\theta_z)\right)\theta_{zz} +  N_1(\bzeta_z,\theta_z)\bzeta_{zz} 
 - \CLP z N_1(\bzeta,\theta)+ \frac{2\theta^2}{\bzeta^2(\bzeta+\theta)}  \label{E:NDEF}
\eeqa
Moreover, $\mathcal V\in\DZeven$ and 
 $\bfL :  D({\bf L})\subset \HmZ\to\HmZ$ where
\begin{align}
D(\bfL) := \left\{\Phi \in\HmZ\,| \, \bfL\Phi \in\HmZ \right\}.
\end{align}
In particular the associated linearised dynamics takes the form
\begin{align}\label{E:LINEAR1}
\pa_s\Phi = \bfL \Phi.
\end{align}
\end{lemma}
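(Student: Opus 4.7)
My plan is to derive the system~\eqref{E:NONLINEARFORM} by direct subtraction of the steady-state equation satisfied by $(\bzeta,\widehat\mu)$ from the full evolution equation, isolating the linear-in-$(\theta,\phi)$ contributions. For $\theta$ the calculation is immediate: since $\bzeta$ is $s$-independent, $\pa_s\theta=\pa_s\zeta=\mu-\Lambda\zeta+\zeta$ by~\eqref{E:NLZETA}, and substituting $\mu=\widehat\mu+\phi$ with $\widehat\mu=\Lambda\bzeta-\bzeta$ and $\zeta=\bzeta+\theta$ produces $\pa_s\theta=\phi-\Lambda\theta+\theta$. For $\phi$ I will work in the inner region $z\leq r_\ast$, where by assumption~\ref{item:g1} one has $\tilde g_z=0$ and $\tilde M=\CLP z$, so that~\eqref{E:NLPSI} reduces to $\pa_s\mu=-\Lambda\mu-\pa_z(1/\zeta_z)+2/\zeta-\CLP z/\zeta^2$. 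The LP equation~\eqref{E:LPLAG} is equivalent to the steady-state identity $\Lambda\widehat\mu+\pa_z(1/\bzeta_z)-2/\bzeta+\CLP z/\bzeta^2=0$, and subtracting the two yields
\[
\pa_s\phi=-\Lambda\phi+\bigl[\pa_z(1/\bzeta_z)-\pa_z(1/\zeta_z)\bigr]+2\bigl[1/\zeta-1/\bzeta\bigr]-\CLP z\bigl[1/\zeta^2-1/\bzeta^2\bigr].
\]

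The next step is to expand each of the three differences into a leading linear-in-$\theta$ part plus an explicit remainder. From the definition~\eqref{E:NONEDEF} of $N_1$ one verifies directly that $1/\zeta^2-1/\bzeta^2=-2\theta/\bzeta^3+N_1(\bzeta,\theta)$ and $2(1/\zeta-1/\bzeta)=-2\theta/\bzeta^2+2\theta^2/(\bzeta^2(\bzeta+\theta))$. For the derivative term I plan to write $\pa_z(1/\bzeta_z)-\pa_z(1/\zeta_z)=\bzeta_{zz}(1/\zeta_z^2-1/\bzeta_z^2)+\theta_{zz}/\zeta_z^2$, apply the same $N_1$-expansion with $\bzeta_z,\theta_z$ in place of $\bzeta,\theta$, and split $\theta_{zz}/\zeta_z^2=\theta_{zz}/\bzeta_z^2+\theta_{zz}(1/\zeta_z^2-1/\bzeta_z^2)$. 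The purely linear piece collapses to $\theta_{zz}/\bzeta_z^2-2\bzeta_{zz}\theta_z/\bzeta_z^3=\pa_z(\theta_z/\bzeta_z^2)$, which by the identity~\eqref{E:KIDENTITY} equals $K\theta+\mathcal V_1\theta$, while the quadratic-in-$\theta$ remainder is exactly $-\bigl[2\theta_z/\bzeta_z^3-N_1(\bzeta_z,\theta_z)\bigr]\theta_{zz}+N_1(\bzeta_z,\theta_z)\bzeta_{zz}$. Assembling all linear-in-$\theta$ contributions then produces the potential $\mathcal V=\mathcal V_1-2\bzeta^{-2}+2\CLP z\bzeta^{-3}$ of~\eqref{E:POTDEF}, and assembling the $N_1$-type remainders together with the $\theta^2/(\bzeta^2(\bzeta+\theta))$ term gives~\eqref{E:NDEF} verbatim.

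Finally I will verify that $\mathcal V\in\DZeven$ and record the mapping property of $\bfL$. The two summands $\tfrac{2}{9}z^{-2/3}\bG$ and $-2\bzeta^{-2}$ are individually singular when viewed in the variable $y=\bzeta(z)$, but using Lemma~\ref{L:GBAR} together with the leading asymptotics $\bzeta(z)\sim z^{1/3}$ (equivalently $z\sim y^3$ near the origin) from Appendix~\ref{SS:SSP}, both behave like $\pm 2/y^2$ and their singularities cancel; the remaining two summands are smooth in $y$ by the same asymptotic analysis, and the overall expression contains only even powers of $y$ thanks to the odd-in-$y$ Taylor structure of $\bzeta$ and the even-in-$y$ structure of $\bG$. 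Away from the origin each summand is real-analytic, so $\mathcal V\in\DZeven$. The operator statement $\bfL:D(\bfL)\subset\HmZ\to\HmZ$ is then tautological from the definition of $D(\bfL)$, with preservation of the odd parity class following from $\mathcal V\in\DZeven$ and the parity-reversing property of $\bp$ and $\bd$. The main obstacle is the bookkeeping in the middle paragraph: one must match the packaging of the $N_1$-remainders precisely to reproduce~\eqref{E:NDEF}; the cancellation of singularities in $\mathcal V$ is the second delicate point and depends crucially on the normalisation $\bG(0)=9$ together with the leading behaviour of $\bzeta$ established in the appendix.
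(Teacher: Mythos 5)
Your proof is correct and follows essentially the same strategy as the paper: algebraic expansion around the LP steady state using the $N_1$-identities, the identity~\eqref{E:KIDENTITY} to extract the linear part $K\theta+\mathcal V_1\theta$, and the cancellation $\tfrac29 z^{-2/3}\bG-2\bzeta^{-2}=O(1)$ based on $\bG(0)=9$ and $\bzeta\sim z^{1/3}$ for the claim $\mathcal V\in\DZeven$. The only cosmetic difference is that you subtract the steady state directly from the first-order system~\eqref{E:NLZETA}--\eqref{E:NLPSI}, whereas the paper plugs the ansatz into the second-order form~\eqref{E:ZETADAMPENED} and then rearranges into first-order form; the intermediate computations are identical.
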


\begin{proof}
We start with the simple expansions from~\eqref{E:NONEDEF},
\begin{align}
\frac1{\zeta^2}=\frac1{\bzeta^2} - \frac{2}{\bzeta^3} \theta + N_1(\bzeta,\theta),\qquad \frac1{\zeta_z^2}=\frac1{\bzeta_z^2} - \frac{2}{\bzeta_z^3} \theta_z + N_1(\bzeta_z,\theta_z),
\end{align}
and note as well
\begin{align}
\frac1\zeta  & =  \frac1{\bzeta} - \frac1{\bzeta^2}\theta + \frac{\theta^2}{\bzeta^2(\bzeta+\theta)}, \ \ 
\frac1{\zeta_z}   =  \frac1{\bzeta_z} - \frac1{\bzeta_z^2}\theta_z + \frac{\theta_z^2}{\bzeta_z^2(\bzeta_z+\theta_z)}. 
\end{align}
We plug these back into~\eqref{E:ZETADAMPENED}
and obtain
\begin{align}
0 &= \theta_{ss} +2\Lambda\theta_{s}  + (z^2 - \frac1{\bzeta_z^2})\theta_{zz} + \frac{2\bzeta_{zz}}{\bzeta_{z}^3}\theta_z + \frac{2}{\bzeta_z^3}\theta_z\theta_{zz}
- N_1(\bzeta_z,\theta_z)(\bzeta_{zz}+\theta_{zz}) \notag\\
& \ \ \ \ - \theta_s - \frac{2\tilde M(s,z)}{\bzeta^3}\theta + \tilde M(s,z) N_1(\bzeta,\theta) 
+\frac2{\bzeta^2}\theta - \frac{2\theta^2}{\bzeta^2(\bzeta+\theta)} + (\log \tilde g)_z\frac1{(\bzeta+\theta)_z} \notag\\
& \ \ \ \ +(z^2-\frac1{\bzeta_z^2})\bzeta_{zz}  + \frac{\tilde M(s,z)}{\bzeta^2}- \frac2{\bzeta} \notag\\
& = \theta_{ss} +2\Lambda\theta_{s} + \pa_z ((z^2 - \frac1{\bzeta_z^2}) \theta_z) - \left(\theta_s + 2\Lambda \theta \right) 
+\frac2{\bzeta^2} \left(1- \frac{\tilde M(s,z)}{\bzeta} \right)\theta \notag\\
& \ \ \ \ + \left(\frac{2}{\bzeta_z^3}\theta_z- N_1(\bzeta_z,\theta_z)\right)\theta_{zz} -  N_1(\bzeta_z,\theta_z)\bzeta_{zz} \notag\\
& \ \ \ \ + \tilde M(s,z) N_1(\bzeta,\theta)- \frac{2\theta^2}{\bzeta^2(\bzeta+\theta)}+ (\log \tilde g)_z\frac1{(\bzeta+\theta)_z}
+ \frac{\tilde M(s,z)-\CLP z}{\bzeta^2}.\label{E:NONLINEARFLOW}
\end{align}
By our choice of $g$ satisfying~\ref{item:g1}, we have $g(r)=1$ for $r\le r_\ast$. In self-similar 
coordinates $(s,z)$, the set 
$r\le r_\ast$ is characterised by the requirement $z\le r_\ast e^s$.
Therefore for
$z \le r_\ast$,
we have 
\be\label{E:SCF}
g(s,z)=g(ze^{-s})=1, \ \ s\ge 0.
\ee 
It follows that in the region $z\le r_\ast$ equation~\eqref{E:NONLINEARFLOW}
takes on the simpler form
\begin{align}
0&=\theta_{ss} +2\Lambda\theta_{s} + \Lambda^2\theta -\pa_z ( \frac1{\bzeta_z^2} \theta_z) - \left(\theta_s + \Lambda \theta \right) 
+\frac2{\bzeta^2} \left(1- \frac{\CLP z}{\bzeta} \right)\theta - \mathcal N[\theta],\label{E:INTERIORFLOW}
\end{align}
where the ``interior" nonlinearity $\mathcal N[\theta]$ takes the form~\eqref{E:NDEF} and we have noted $\pa_z(z^2\pa_z\th)=\Lambda^2\th+\Lambda\th$. Now rearranging this equation, using~\eqref{E:KIDENTITY} and recalling the relations $\phi_s=\th_{ss}+\Lambda\th_s-\th_s$ and $\Lambda\phi=\Lambda\th_s+\Lambda^2\th-\Lambda\th$, it is a simple exercise to verify~\eqref{E:NONLINEARFORM}.

It remains  only to prove  $\mathcal V\in\DZeven$. 
We further decompose $\mathcal V$ into 
$$
\frac43 z^{-\frac13} \bp^2\bzeta \bG^{\frac32}+ 2\CLP z \bzeta^{-3}
$$
which is clearly in $\DZeven$, and the remaining terms,
$$ \frac29 z^{-\frac23} \bG - 2\bzeta^{-2}.$$
This contribution is slightly trickier as each entry formally expands like $z^{-\frac23}$ to the leading order. However, since $\bG=9 + O_{z\to0}(z^{\frac23})$ and $\bzeta^{-2} =z^{-\frac23}\big( 1+ O(z^{\frac23})\big)$ from Lemma~\ref{L:GBAR} and~\eqref{E:BARZETAREGULARITY}, we see that the
leading order singular terms cancel and $\mathcal V\in\DZeven$.
\end{proof}

Since the family of LP-solutions contains a 1-parameter freedom in specifying the final blow-up time $T$, this 
generates a trivial instability direction for the linearised problem, induced by the time-translation symmetry of the 
problem.


\begin{lemma}[The symmetry-induced growing mode]\label{L:GMODE}
The vector
\begin{align}\label{E:GM}
\Gamma : = \begin{pmatrix} \bzeta - \Lambda \bzeta \\ \Lambda \bzeta - \Lambda^2\bzeta \end{pmatrix}
\end{align}
is an eigenfunction of the operator $\bfL$ associated to eigenvalue $\l=1$. Moreover,
\begin{align}\label{E:GAMMAFORMULA}
\Gamma= -\frac d{dT} \Phi_{\text{LP}}^T\Big|_{T=0}, 
\end{align}
where
\begin{align}
 \Phi_{\text{LP}}^T(\cdot) = 
\begin{pmatrix}
(T+1)^{-1} \bzeta(\cdot(T+1)) \\
\widehat\mu(\cdot(T+1))
\end{pmatrix}.
\end{align}
\end{lemma}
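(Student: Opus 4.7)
The statement has two parts --- the eigenvalue identity $\bfL\Gamma = \Gamma$ and the structural identification~\eqref{E:GAMMAFORMULA} --- and both admit direct verifications. My plan is to prove the differentiation formula first, since it is purely computational and already exhibits $\Gamma$ as the infinitesimal generator of the one-parameter LP family. This then motivates a symmetry-based derivation of the eigenvalue property, which is structurally more transparent than a term-by-term algebraic check.

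For the formula~\eqref{E:GAMMAFORMULA}, one simply differentiates the two components of $\Phi^T_{\text{LP}}$ in $T$ at $T=0$ using the product and chain rules. The first component gives $-\frac{d}{dT}\bigl[(T+1)^{-1}\bzeta(z(T+1))\bigr]_{T=0} = \bzeta(z) - z\bzeta'(z) = \bzeta - \Lambda\bzeta$, while for the second, using $\widehat\mu = \Lambda\bzeta - \bzeta$, we compute $-\frac{d}{dT}\bigl[\widehat\mu(z(T+1))\bigr]_{T=0} = -z\widehat\mu'(z) = -\Lambda\widehat\mu = \Lambda\bzeta - \Lambda^2\bzeta$. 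These match the two components of $\Gamma$.

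To prove $\bfL\Gamma = \Gamma$ I exploit the time-translation symmetry of the LP family. Define $\bar\zeta^T(s,z) := (1+Te^s)\bzeta\bigl(\tfrac{z}{1+Te^s}\bigr)$, which is the transcription of the LP solution with blow-up time $T$ into the Lagrangian self-similar coordinates anchored at $T=0$. A direct substitution shows that $\bar\zeta^T$ solves the full nonlinear equation~\eqref{E:ZETADAMPENED} with the LP gauge $\tilde g \equiv \CLP/(4\pi)$ (so $\tilde M(s,z) = \CLP z$); the verification reduces, after collecting terms and using the identity $(1+Te^s) - Te^s = 1$, precisely to the LP equation~\eqref{E:LPLAG}. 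Since $\bar\zeta^0 = \bzeta$ is stationary and lies in the interior region where~\eqref{E:LINEAR1} applies, differentiating the PDE in $T$ at $T=0$ produces a solution of the linearised equation at $\bzeta$. Computing, $\partial_T\bar\zeta^T(s,z)\bigr|_{T=0} = e^s(\bzeta - \Lambda\bzeta)$, and the analogous calculation with $\bar\mu^T := \partial_s\bar\zeta^T + \Lambda\bar\zeta^T - \bar\zeta^T$ yields $\partial_T\bar\mu^T\bigr|_{T=0} = e^s(\Lambda\bzeta - \Lambda^2\bzeta)$. Thus $s\mapsto e^s\Gamma$ solves $\partial_s\Phi = \bfL\Phi$, forcing $\bfL\Gamma = \Gamma$.

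The main obstacle is verifying that $\bar\zeta^T$ is indeed an exact solution of~\eqref{E:ZETADAMPENED}: while this is conceptually clear from time-translation invariance of the original Euler-Poisson system in the Lagrangian formulation, it is algebraically the heaviest step, requiring careful bookkeeping of the scaling in both $s$ and $z$ and the cancellation of all $T$-dependence down to~\eqref{E:LPLAG}. As an alternative, one could verify $\bfL\Gamma = \Gamma$ by direct algebra: the first-component identity is immediate from $\phi_\Gamma = -\Lambda\theta_\Gamma$, while the second reduces, via identity~\eqref{E:KIDENTITY} and the expression~\eqref{E:POTDEF} for $\mathcal V$, to applying the operator $(\Lambda-1)$ to the LP equation~\eqref{E:LPLAG}. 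Either route requires essentially the same amount of algebra; the symmetry derivation is preferable because it structurally explains why the eigenvalue must be~$1$, with the exponential growth rate $e^s$ arising from the relative scaling factor $(1+Te^s)$ between the $T=0$ and $T$-adapted self-similar frames.
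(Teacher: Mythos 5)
Your proof is correct and takes a genuinely different route from the paper's for the eigenvalue identity. The paper establishes $\bfL\Gamma=\Gamma$ by a direct algebraic check: it rewrites the eigenvalue equation using~\eqref{E:KIDENTITY} (see~\eqref{E:EMODE1}), observes the first component is trivial, and then verifies the second component by substituting $\phi_1=\Lambda\bzeta-\Lambda^2\bzeta$, applying the LP ODE~\eqref{E:LPLAG}, and rearranging. Your symmetry argument --- transcribing $\eta_{\textup{LP},T}$ into self-similar coordinates anchored at $T=0$, observing that $\bar\zeta^T(s,z)=(1+Te^s)\bzeta\bigl(\tfrac{z}{1+Te^s}\bigr)$ is a one-parameter family of solutions of the nonlinear flow with $\bar\zeta^0=\bzeta$ stationary, and differentiating at $T=0$ to obtain $s\mapsto e^s\Gamma$ as a linearised solution --- avoids the LP ODE entirely and exposes structurally \emph{why} the eigenvalue is exactly $1$. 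Your derivation of~\eqref{E:GAMMAFORMULA} matches the paper's chain-rule computation. Both routes are essentially the same amount of work; yours trades the algebraic identity with~\eqref{E:LPLAG} for keeping track of the gauge and the change of variables in Lemma~\ref{L:SELFSIMLAG}.

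Two small remarks. First, what you call the ``main obstacle'' --- verifying that $\bar\zeta^T$ solves~\eqref{E:ZETADAMPENED} --- is not an obstacle at all: it follows immediately from the change-of-variables statement in Lemma~\ref{L:SELFSIMLAG} applied to the known solution $\eta_{\textup{LP},T}$ of~\eqref{E:EPLAGR}, with the LP gauge forcing $\tilde M=\CLP z$ and $\tilde g_z/\tilde g=0$; no fresh cancellation computation is required. Second, in your closing aside on the direct algebraic alternative you write $\phi_\Gamma=-\Lambda\theta_\Gamma$, but the correct relation is $\phi_\Gamma=+\Lambda\theta_\Gamma$ (indeed $\Lambda(\bzeta-\Lambda\bzeta)=\Lambda\bzeta-\Lambda^2\bzeta=\phi_\Gamma$). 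With the right sign the first component reads $\phi_\Gamma-\Lambda\theta_\Gamma+\theta_\Gamma=\theta_\Gamma$ as needed; with your sign it would fail. This slip is in the alternative you are not pursuing, so the main argument stands, but it should be fixed.
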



\begin{proof}
The proof is a direct calculation which relies crucially on the LP-equation~\eqref{E:LPLAG}.
A straightforward application of the chain rule shows that~\eqref{E:GM} is derived from~\eqref{E:GAMMAFORMULA}. To see that this is indeed an eigenmode with eigenvalue 1, we use~\eqref{E:KIDENTITY} in~\eqref{E:BOLDLDEF} to write the eigenvalue equation in the form 
\beq\label{E:EMODE1}
\bfL\begin{pmatrix}
\theta_1\\
\phi_1
\end{pmatrix}
=\begin{pmatrix}
\phi_1 -\Lambda\theta_1 +\theta_1 \\
-\Lambda \phi_1+\pa_z\Big(\frac{1}{\bzeta_z^2}\pa_z\theta_1\Big) -\Big(\frac{2}{\bzeta^2}\big(1-\frac{\CLP z}{\bzeta}\big)\theta_1\Big)
\end{pmatrix} 
= \begin{pmatrix}
\theta_1\\
\phi_1
\end{pmatrix}.
\eeq
From~\eqref{E:GM}, it is obvious that the first component of this equation is satisfied for the claimed eigenfunction. To verify the second component, we note the simple identity
\beqa
-\Lambda^2\bzeta+\Lambda^3\bzeta+\pa_z\Big(\frac{1}{\bzeta_z^2}\pa_z\big(\bzeta-\Lambda\bzeta\big)\Big)=\Lambda\Big((z^2-\frac{1}{\bzeta^2})\pa^2_z\bzeta\Big)-\frac{1}{\bzeta^2_z}\pa^2_z\bzeta.
\eeqa
Thus, inserting the definition $\phi_1=\Lambda\bzeta-\Lambda^2\bzeta$ into  the second component of the left hand side of~\eqref{E:EMODE1} and  using the LP equation~\eqref{E:LPLAG} on each term, we rearrange to find
\beqa
\Big(\bfL\begin{pmatrix}
\theta_1\\
\phi_1
\end{pmatrix}\Big)_2=-\Lambda\Big(\frac{\CLP z}{\bzeta^2}-\frac{2}{\bzeta}\Big)-\frac{2}{\bzeta}\Big(1-\frac{\CLP z}{\bzeta}\Big)\big(\bzeta-\Lambda\bzeta\big)-z^2\pa_z^2\bzeta -\frac{\CLP z}{\bzeta^2}+\frac{2}{\bzeta}=\Lambda\bzeta-\Lambda^2\bzeta
\eeqa
as required.
\end{proof}

One of the central results of the present work is to establish the full linear stability of the operator $\bfL$. This is the content of the next theorem, the proof of which will occupy the remainder of Section~\ref{S:LINEARMODE} and all of Section~\ref{S:ACCRETIVITY}. We remind the reader that the Hilbert spaces $\HmZ$ depend on a small constant $\beta$, as in~\eqref{E:INNERPROD}.

\begin{theorem}\label{T:LINEARMAIN}
There exists  $\m\in\N$ such that, for all $m\geq \m$ and all $Z>z_*$, there exists $\beta>0$ such that  the operator $\bfL:D(\bfL)\subset \HmZ\to\HmZ$ is a closed operator with the following properties. There exists $\sg >0$ such that the spectrum satisfies
\beq
\sigma(\bfL)\cap\{\l\in\C\,|\,\Re\l\geq -\sg \}=\{1\},
\eeq
where $1$ is a simple eigenvalue.
$\bfL$ generates a quasicontraction semi-group with the growth estimate, for all $\Phi\in\H^{2\m}_Z$,
\beq
\left\|e^{s\bfL}\Phi \right\|_{\HmZ} \le e^{ s}\left\|\Phi\right\|_{\HmZ}.
\eeq
Moreover, letting $\bfP$ denote the Riesz projection onto the eigenvalue $1$, the semi-group satisfies $ e^{s\bfL}\bfP = \bfP e^{s\bfL} = e^s \bfP$ and, for all $\Phi\in\HmZ$,
\begin{align}
\left\|e^{s\bfL}\left(\mathbf{I}-\bfP\right) \Phi \right\|_{\HmZ} \le&\, e^{-\sg s}\left\|\Phi\right\|_{\HmZ},\label{E:STABLEBOUND}\\
\|\bfP e^{s\bfL}\Phi\|_{\H^{2\m}_Z} \le&\, e^s \|\Phi\|_{\HmZ}.\label{E:PPROPERTIES}
\end{align}
\end{theorem}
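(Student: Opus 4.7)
The strategy has three main pillars: (i) maximal accretivity, which via Lumer--Phillips produces the quasicontraction semigroup; (ii) a compact perturbation argument that reduces the spectral analysis in $\{\Re\l\ge-\sg\}$ to a discrete eigenvalue problem; (iii) the mode-stability analysis that excludes every non-trivial eigenvalue in $\{\Re\l\ge 0\}$ and identifies $1$ as simple. The main obstacle will be (i), specifically proving dissipativity at top order once the operator $\bfL$ is commuted through $\bD^{2\m+1}$, because the coefficient structure has indefinite sign before commutation and only the monotonicity of the LP profile produces the required damping.

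First, I would establish closedness together with the quasi-dissipative bound $\Re(\bfL\Phi,\Phi)_{\HmZ}\le\|\Phi\|_{\HmZ}^2$ for $\Phi\in D(\bfL)$. The natural pairing against $\bD^{2\m+1}\theta/(\bp\bzeta)^2$ and $\bD^{2\m}\phi$, coupled with the lower-order weighted pairing weighted by $\beta$, yields after integration by parts a top-order commutator of the form $[\bD^{2\m+1},K]\theta\cdot(\bp\bzeta)^{-2}\bD^{2\m+1}\theta$ plus boundary contributions. Using the identity $K\theta=\bp(\bG\bd\theta)$ and commuting $(\bp\bd)^{\m}$ with $K$, the principal commutator contributes a first-order coefficient proportional to $\m\,\pa_z\log\bG$, which is strictly negative on $(0,Z]$ thanks to~\eqref{E:DZG}. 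This is precisely the damping mechanism referenced in the introduction and it must beat the indefinite contributions from $\mathcal V$, $\Lambda$, and the weight $g_j$; the condition~\eqref{E:MCONDITION} on $\m$ and~\eqref{E:gmprime} are the quantitative form of this balance. With $\beta(\m,Z)$ chosen small enough to control the lower-order cross-terms generated by the inhomogeneous pairing, we obtain the estimate $\Re((\bfL-\mathbf I)\Phi,\Phi)_{\HmZ}\le 0$, i.e., $\bfL-\mathbf I$ is dissipative.

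Next, I would prove that $\bfL-\mathbf I$ is maximal dissipative by showing that $\mathrm{Range}(\lambda_0-\bfL)=\HmZ$ for some $\lambda_0$ with $\Re\lambda_0$ sufficiently large. Writing $\bfL=\bfL_0+\widetilde\bfL$ where $\bfL_0$ contains only the principal second-order part $K$ together with the scaling piece $-\Lambda$ (and transport terms of the flow-map slot), the elliptic resolvent problem $(\lambda_0-\bfL_0)\Phi=F$ on $[0,Z]$ becomes a second-order ODE system with regular singular points only at $z=0$ and at the sonic point $z_*\in(0,Z)$. The connection analysis across $z_*$, which is precisely where the naturally induced acoustical metric changes signature, is trivial here because the Frobenius indices at $z_*$ combined with the monotonicity $\pa_z\bG\le 0$ force a unique regular branch; at the origin the indices are dictated by the parity condition encoded in $\DZodd$. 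This yields solvability with estimates, hence $\lambda_0\in\rho(\bfL_0)$; the lower-order operator $\widetilde\bfL$ is subordinate (in fact compact as a map $D(\bfL_0)\to\HmZ$) because it loses strictly fewer derivatives than $\bfL_0$ and the embedding of the graph norm of $\bfL_0$ into $\HmZ$ is compact on the bounded domain $[0,Z]$. Standard perturbation yields $\lambda_0\in\rho(\bfL)$, and an application of the Lumer--Phillips theorem then produces the quasicontraction semigroup with $\|e^{s\bfL}\|\le e^s$.

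The compactness of $\widetilde\bfL$ also implies that the essential spectrum of $\bfL$ equals that of $\bfL_0$, which lies in $\{\Re\l\le -2\sg\}$ for some explicit $\sg>0$ determined by the principal symbol and the $\Lambda$ scaling (this is where the exact value of $\sg$ gets fixed). Therefore $\sigma(\bfL)\cap\{\Re\l\ge-\sg\}$ consists of finitely many eigenvalues of finite algebraic multiplicity. Lemmas~\ref{L:STRIPONLY}--\ref{L:REALGEQONE} restrict potential unstable eigenvalues to the compact strip $\mathcal I=\{0\le\Re\l\le 1\}$. The divide-and-conquer analysis on $\mathcal I_{\mathrm{low}}\cup\mathcal I_{\mathrm{inter}}\cup\mathcal I_{\mathrm{high}}$ carried out in Propositions~\ref{P:NOSMALLIMAG},~\ref{P:NOLARGEIMAG}, and~\ref{P:NOINTERIMAG}---energy identities with the $\bp\bd$-commutators for the low/high imaginary parts and computer-assisted Frobenius matching on $[0,z_*]$ for the intermediate range---rules out every eigenvalue in $\mathcal I$ other than $\lambda=1$. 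Simplicity of $\lambda=1$ follows from an ODE argument: any generalized eigenfunction must be a smooth solution of a rank-one inhomogeneous eigenvalue problem subject to the regularity constraint at both $z=0$ and $z=z_*$, and a short Frobenius count (using that $\Gamma$ already exhausts the one-dimensional regular local branch at $z_*$) shows the Jordan block is trivial.

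Finally I would assemble the estimates~\eqref{E:STABLEBOUND}--\eqref{E:PPROPERTIES}. Because $1$ is an isolated simple eigenvalue and the rest of the spectrum in $\{\Re\l\ge -\sg\}$ is empty, the Riesz projection $\bfP=\frac{1}{2\pi i}\oint(\l-\bfL)^{-1}\,d\l$ satisfies $\bfP^2=\bfP$, commutes with $e^{s\bfL}$, and restricts the semigroup to the one-dimensional eigenspace $\C\Gamma$ where it acts as multiplication by $e^s$; this delivers~\eqref{E:PPROPERTIES}. On the complementary invariant subspace $(\mathbf I-\bfP)\HmZ$ the generator $\bfL_{\mathrm{stab}}$ has spectrum in $\{\Re\l\le-\sg\}$ and is the generator of a quasicontraction semigroup, so by the Gearhart--Pr\"uss theorem (or equivalently by a direct contour-shift argument using the resolvent bounds from the compact-perturbation step) the growth bound equals the spectral bound and~\eqref{E:STABLEBOUND} follows. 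The main novelty, and the step most likely to require the full strength of the LP monotonicity, is the accretivity estimate of step one on an arbitrarily large cone $[0,Z]$; by contrast, semigroup generation and the spectral gap are routine once that quantitative dissipation is in hand.
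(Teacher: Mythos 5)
Your high-level strategy mirrors the paper's: a Lumer--Phillips style accretivity argument, a compact perturbation step to pin down the essential spectrum, the mode-stability results of Section~3, and then Riesz-projection bookkeeping. The surjectivity and simplicity sketches are also consistent with the paper's Lemmas~\ref{L:ONTO} and~\ref{L:GROWINGPROJ}. However, there is a concrete gap in your accretivity step that the paper works hard to circumvent and which your proposal glosses over.

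You claim that after commuting $\bD^{2\m}$ through $\bfL$, the damping term $\m\,\pa_z\log\bG$ together with a small choice of $\beta$ yields $\Re((\bfL-\mathbf I)\Phi,\Phi)_{\HmZ}\le 0$. But the commutator $[\bD^{2m},K]$ does not just produce the good damping term $2m\,\bp\bG\,\bD^{2m+1}$; by Lemma~\ref{L:KEYCOMM} it also generates a remainder $\mathcal R_{2m}\theta$, a sum of operators of total order up to $2m+2$ in $\theta$ with coefficients built from derivatives of $\bG$. These terms are \emph{not} small, and smallness of $\beta$ does not touch them: $\beta$ only weighs the inhomogeneous (zero-order) block of the $\HmZ$ inner product against the top-order seminorm, and is used in Proposition~\ref{P:LDISSIP} to absorb the low-order pairing generated by $\mathcal L_m$ acting on the $\dot{\H}^0$-block, not to control $\mathcal R_{2m}$. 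Similarly, $\bD^{2m}(\mathcal V\theta)$ appears at top order and has no sign. The paper's resolution is not a direct dissipativity of $\bfL-\mathbf I$, but a decomposition $\bfL=\mathscr L_m+\K_m+\tilde\K_m$ where $\K_m$ explicitly removes $\bD^{-2m}\mathcal R_{2m}+\mathcal V$ (Definition~\ref{D:LONELTWO}), $\tilde\K_m$ upgrades the \emph{seminorm} dissipativity of $A_{2m}$ (Proposition~\ref{P:DISSIP}) to full $\HmZ$-norm dissipativity of $\mathscr L_m$ via the Riesz representation trick of Lemma~\ref{L:KTILDE}, and both $\K_m,\tilde\K_m$ are shown to be compact (Lemmas~\ref{L:L2COMPACT},~\ref{L:KTILDE}). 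Only \emph{then} does Theorem~\ref{THM:COMPACTPERT} transfer the spectral conclusions to $\bfL$. Without this compact-perturbation scaffolding, your direct dissipativity estimate for $\bfL-\mathbf I$ has no argument against the indefinite contribution of $\mathcal R_{2m}$ and $\mathcal V$, and the conclusion does not follow.

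Two smaller remarks. First, your decomposition $\bfL=\bfL_0+\widetilde\bfL$ with $\bfL_0$ ``principal plus scaling'' is in the right spirit, but the paper's choice of $\mathscr L_m$ is specifically engineered so that $\bD^{2m}\mathscr L_m=A_{2m}\bD^{2m}$ exactly, which is what makes the homogeneous dissipativity estimate usable; a naive choice of $\bfL_0$ would reintroduce commutator errors. Second, invoking Gearhart--Pr\"uss is an acceptable alternative to the paper's use of the compact-perturbation theorem from~\cite{Glogic22}, but you should be explicit that the resolvent bounds on the stable subspace come from the dissipativity of the compactly perturbed operator, not from $\bfL$ itself.
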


The eigenvalue $\l=1$ is, as advertised above, a trivial mode corresponding to time translation, not to meaningful instability. In Lemma~\ref{L:GROWINGPROJ} we will establish certain useful properties of the Riesz projection (defined in the standard way in~\eqref{E:RIESZDEF}) associated to this eigenvalue. The regularity parameter $\m$ is chosen as in~\eqref{E:MCONDITION}.

As discussed in the  introduction,  in order to prove this theorem, we wish to apply the Lumer-Phillips theorem, and so we require two key ingredients: maximal accretivity of a suitable compact perturbation of the operator $\bfL$, and the full mode stability of $\bfL$ itself. As the essential spectrum is unchanged by compact perturbations, these two results will be combined to give Theorem~\ref{T:LINEARMAIN}. The construction of the compact perturbation and its maximal accretivity is deferred to Section~\ref{S:ACCRETIVITY}. In this section, we establish the mode stability of $\bfL$, stated in the following theorem.

\begin{theorem}[Mode stability]\label{T:SPECTRAL}
For any $Z>z_\ast$  and $m\geq 2$   the operator $\bfL:D(\bfL)\subset \HmZ\to\HmZ$ is mode stable, i.e. it has exactly one eigenvalue with non-negative real part given by  $\l=1$. 
\end{theorem}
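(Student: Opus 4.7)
The plan is to convert the eigenvalue problem $\bfL \Phi = \lambda \Phi$ into a single second-order ODE for the first component $\theta$ of $\Phi$. Using $\phi = (\lambda + \Lambda - 1)\theta$ from the first line of \eqref{E:BOLDLDEF}, the second line becomes a second-order linear ODE of the schematic form
\begin{equation*}
(z^2 - \bG) \, \theta'' + P_\lambda(z)\,\theta' + Q_\lambda(z)\,\theta = 0,
\end{equation*}
with coefficients depending analytically on $\lambda$ and on the LP profile, and with two singular points: the origin $z=0$ and the sonic point $z = z_*$ where $z^2 = \bG$. The regularity requirement $\Phi \in \HmZ$ (with $m \geq 2$) forces any eigenfunction to satisfy Frobenius-type regularity conditions at both singular points, pinning down a one-parameter family of candidate solutions at each end. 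I would work primarily in the Eulerian variable $y = \bzeta(z)$ so that the operator $\bp$ becomes closer to the usual radial gradient and the commutators with $\partial_y(\partial_y + \tfrac{2}{y})$ can be exploited.

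Next I would partition the closed half-plane $\{\Re\lambda \geq 0\}$ into four regions and argue by contradiction in each. \emph{Region 1: $\Re\lambda > 1$.} Here a direct weighted energy identity for the eigenvalue ODE, integrated against $\bar\theta$ over $[0,Z]$ with the natural weight $\bG$, yields coercivity from the scaling terms $\Re(\lambda)(\Re(\lambda)-1)$, excluding any such eigenvalue. This is essentially the content of Lemmas~\ref{L:STRIPONLY}--\ref{L:REALGEQONE} as signposted in the introduction. \emph{Region 2: $\lambda = 1$.} The growing mode $\Gamma$ from Lemma~\ref{L:GMODE} exists, and I would show it is simple by checking that the algebraic and geometric multiplicities both equal one via a direct ODE uniqueness argument at the sonic point. \emph{Regions 3 and 4} occupy the strip $\{0 \leq \Re\lambda \leq 1, \lambda \neq 1\}$.

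For the strip, the central idea is the commutation trick sketched in the introduction: applying $\bigl(\partial_y(\partial_y + 2/y)\bigr)^m$ to the ODE shifts the first-order coefficient by $2m\,\wLP'/\wLP$, where $\wLP(y)$ is a positive weight vanishing simply at the sonic point. Since $\wLP'/\wLP < 0$, this produces a damping/coercivity gain that scales with $m$. Writing $\Psi_\lambda$ for the $m$-fold commuted derivative of $\theta$, and combining the real and imaginary parts of the resulting energy identities against a carefully chosen weight $\chi$, one obtains an inequality of the shape
\begin{equation*}
-\int \chi\,|D_y \Psi_\lambda|^2 + |\Im\lambda|^2 \int \chi\,H_\lambda\,|\Psi_\lambda|^2 \ \geq\ \int \chi\,\tilde H_\lambda\,|\Psi_\lambda|^2,
\end{equation*}
with $H_\lambda \leq -c < 0$ uniformly and $|\tilde H_\lambda|$ bounded independently of $|\Im\lambda|$ once $|\Im\lambda|$ is large. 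This forces $\Psi_\lambda \equiv 0$ and hence $\theta \equiv 0$ for $|\Im\lambda| \geq b_1$. A symmetric but more delicate version, using that the scaling contribution dominates for $\Re\lambda \in [0,1]$ and $|\Im\lambda|$ small, rules out the low-frequency box $|\Im\lambda| \leq b_0$. I expect the main obstacle here to be tracking exact constants so that explicit numerical thresholds $b_0, b_1$ (the paper announces $b_0 = 1/5$, $b_1 = 8$) actually work; verifying the sign of the leading coefficient function and the boundedness of $\tilde H_\lambda$ involves monotonicity properties of $\bG$, $\bzeta$, and $\wLP$ from Lemma~\ref{L:GBAR} and the LP-equation \eqref{E:LPLAG}, and this is where quantifying $m \geq 2$ via \eqref{E:MCONDITION} is essential.

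Finally, for the compact intermediate box $\mathcal{I}_{\text{inter}} = \{0 \leq \Re\lambda \leq 1,\ b_0 \leq |\Im\lambda| \leq b_1\}$, no elementary energy argument suffices. Here I would rely on the Frobenius analysis: at $z = 0$, of the two independent local solutions exactly one lies in $\HmZ$ locally; the same holds at $z = z_*$. A hypothetical eigenvalue would require these two one-dimensional families to intersect. Using an interval-arithmetic rigorous ODE solver (such as \verb!VNODE-LP!), I would integrate the $\lambda$-dependent ODE from $z_*$ inward and show that the regular solution at $z_*$ acquires a nontrivial singular component at $z = 0$ for every $\lambda$ in a sufficiently fine covering of the box by complex rectangles. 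The main obstacle in this step is ensuring that the ODE integrator can be run stably across the entire box with tight enough tolerances; finiteness of the compact region and analytic dependence on $\lambda$ make this feasible, and the monotonicity of the LP profile keeps the coefficients under good control. Combining all four regions yields $\sigma_{\mathrm{pt}}(\bfL) \cap \{\Re\lambda \geq 0\} = \{1\}$.
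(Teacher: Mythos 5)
Your four-region strategy matches the paper's structure closely: Eulerian reformulation and Frobenius analysis at the two singular points, energy exclusion for $\Re\lambda>1$, commuted-energy exclusion for high and low imaginary parts, and an interval-arithmetic ODE shooting argument for the compact intermediate box. However, your description of the low-frequency region $\{|\Im\lambda|\leq b_0\}$ contains a genuine gap. You describe it as a ``symmetric but more delicate version'' of the high-frequency commuted-energy argument, relying on ``the scaling contribution dominating.'' This cannot work as stated, because $\lambda=1$ lies \emph{inside} the low-frequency box (it has $\Im\lambda=0\leq 1/5$), and $\lambda=1$ genuinely is an eigenvalue. Any energy inequality yielding uniform coercivity over that box would therefore falsely exclude $\lambda=1$. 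The coefficients of the eigenfunction ODE vary continuously in $\lambda$, so there is no sharp mechanism in a naive energy argument by which coercivity would hold for $\lambda\neq 1$ in the box but fail at $\lambda=1$ itself. The paper resolves this with an essential quotient trick: one works with $Q_\lambda=\psi_\lambda/g_1$ where $g_1$ is the Eulerian eigenfunction associated to the trivial mode $\lambda=1$, derives the commuted ODE \eqref{eq:Pequation} for $P_\lambda=Q_\lambda''$, and shows the energy inequality \eqref{ineq:Penergyorigin} forces $Q_\lambda$ to be affine. An affine $Q_\lambda$ is compatible with $\lambda=1$ (where $Q_1\equiv\text{const}$) but incompatible with the ODE for any other $\lambda$ in the box. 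Dividing out $g_1$ is exactly what removes the $\lambda=1$ obstruction from the energy functional, and without it the announced threshold $b_0=1/5$ cannot close. You should incorporate this quotient formulation explicitly; your proposed separate simplicity check for $\lambda=1$ is logically independent and does not repair the energy argument.

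A secondary and more minor point: for $\Re\lambda>1$ the paper's argument in Lemmas~\ref{L:STRIPONLY}--\ref{L:REALGEQONE} is actually a two-step reduction. First, the supersymmetric normal form \eqref{eq:supersym} shows that $(1-\lambda)^2$ must be real (because all other integrands in the energy identity are real), hence $\lambda\in\R$; second, a quotient against the $\lambda=1$ eigenfunction excludes real $\lambda>1$. The potential $\widetilde V$ in \eqref{def:Vtilde} has indefinite sign, so the scaling term does not directly deliver coercivity as your sketch suggests. Since you do cite those lemmas, this is a presentation inaccuracy rather than a gap, but the two-step structure is worth keeping in mind when carrying out the estimates.
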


For the remainder of this section, we will continue to work with the standing assumption $m\geq 2$ and suppose we have chosen a $Z>z_*$. This will be further strengthened in Section~\ref{S:ACCRETIVITY}.


\subsection{Formulations of the eigenvalue problem}

For the purpose of proving mode-stability, it is more convenient to work with the Eulerian linearisation
around the LP-solution. The equivalence of the two linearisations is stated and proved in Appendix~\ref{A:EULLAGEQUIV}, from which we recall the definition of the operator $\bfL^{\text{Eul}}$ from Lemma~\ref{L:EULERIANLIN}.

We begin by stating the central eigenfunction equation, satisfied for any eigenvalue $\l\in\C\setminus\{1\}$.

\begin{lemma}\label{L:EVALUEPROB}
Let $\l\in\C\setminus\{1\}$ be an eigenvalue of $\bfL$  on $\HmZ$, $m\geq 2$. Then $\l$ is also an eigenvalue of $\bfLEul$ on $\HmZEul$, defined as
\beq\label{E:HMZEULDEF}
\HmZEul:=\bigg\{\begin{pmatrix}
\psi \\ P
\end{pmatrix}:[0,\bzeta(Z)]\to \C^2\,\Big|\,\begin{pmatrix}
\psi \\ P
\end{pmatrix} =\begin{pmatrix}\th\circ \bzeta^{-1} \\ \phi\circ \bzeta^{-1}\end{pmatrix} \text{ for some } \begin{pmatrix}\th \\ \phi\end{pmatrix}\in\HmZ\bigg\}
\eeq  and there exists $\big(\psi_\la,P_\l\big)^\top\in\HmZEul$ satisfying $P_\l=(1-\l)\psi_\l$ and 
\beq\label{E:PSILA}
\pa_yD_y\psi_\la + \Big(\frac{\wLP'+2(1-\la)\vLP}{\wLP}-\frac{\rhoLP'}{\rhoLP}\Big)D_y\psi_\la+\frac{2\rhoLP+2(1-\l)\vLP'-(2-\l)(1-\l)}{\wLP}\psi_\la=0.
\eeq
Here we recall the Eulerian sonic point $y_\ast$ from Definition~\ref{def:sonic} and we have defined the key weight 
\beq\label{E:WDEF}
\wLP(y):=1-\vLP^2(y)=1-y^2\omLP^2,
\eeq
which acts as a signed distance to the sonic point; that is, $\wLP$ is strictly positive on $[0,y_*)$, vanishes at $y_*$ with non-vanishing slope, and is strictly negative on $(y_*,\infty]$. 
\end{lemma}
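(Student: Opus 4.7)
The plan is to derive~\eqref{E:PSILA} by first transferring the Lagrangian eigenvalue problem to Eulerian coordinates, and then reducing it to a scalar second-order ODE via the linearised continuity equation. I would begin by invoking the equivalence of the Lagrangian and Eulerian linearisations recorded in Lemma~\ref{L:EULERIANLIN} and Appendix~\ref{A:EULLAGEQUIV}: this recasts the eigenvalue problem $\bfL\Phi = \l\Phi$ on $\HmZ$ as the corresponding problem for $\bfLEul$ on $\HmZEul$, and the regularity $(\psi_\l, P_\l)^\top\in\HmZEul$ follows directly from~\eqref{E:HMZEULDEF} and the hypothesis $\Phi\in D(\bfL)\subset\HmZ$. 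Under the eigenvalue ansatz, the first (continuity) component of the Eulerian linearised system collapses to the purely algebraic relation $P_\l = (1-\l)\psi_\l$, which I would use to eliminate $P_\l$ in favour of $\psi_\l$.

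The second (momentum) component then yields a single second-order ODE for $\psi_\l$. Its principal coefficient is the self-similar Eulerian characteristic speed squared, which in these variables reads $\wLP = 1 - \vLP^2$, so that the equation degenerates precisely at the sonic point $y_\ast$ where $\wLP$ vanishes. To put this ODE into the form~\eqref{E:PSILA}, I would use the LP steady-state identities~\eqref{eq:v} and~\eqref{E:RHOSS} for $\vLP'$ and $\rhoLP'$, the normalisation $\CLP = 2\rhoLP\bzeta^2\pa_z\bzeta$ read off from~\eqref{E:LPPROP}, and the identity $\wLP' = -2\vLP\vLP'$, in order to rewrite each coefficient of $\pa_y D_y\psi_\l$, $D_y\psi_\l$, and $\psi_\l$ purely in terms of $\rhoLP$, $\vLP$, $\wLP$, and $y$. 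Dividing through by the common prefactor proportional to $\wLP$ then yields exactly~\eqref{E:PSILA}; the appearance of $\rhoLP'/\rhoLP$ in the first-order coefficient is the trace of exchanging the $D_y$ operator (which contains $2/y$) against the LP density weight when organising the divergence form.

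The main computational burden lies in this final bookkeeping step, where the LP steady-state relations must be invoked repeatedly to simplify algebraic combinations of $\bzeta$ and its derivatives into objects expressible through $(\rhoLP, \vLP, \wLP, y)$. The key conceptual observation is that $\wLP$ is the natural common denominator, reflecting the degeneration of the linearised principal symbol at $y = y_\ast$; this is what makes $\wLP$ the correct signed distance to the sonic point and furnishes the singular structure of~\eqref{E:PSILA} on which the subsequent mode-stability analysis will rest.
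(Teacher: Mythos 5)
Your proposal is correct and follows essentially the same route as the paper: the paper's proof likewise invokes the Lagrangian--Eulerian equivalence of Lemma~\ref{L:LAGEUL} and then reads off~\eqref{E:PSILA} from the definition~\eqref{E:BFLEULDEF} of $\bfLEul$ by substituting $P_\l=(1-\l)\psi_\l$ from the continuity component and simplifying the momentum component with~\eqref{E:RHOSS}. A small remark on precision: the only steady-state identity actually needed in the final bookkeeping is $\rhoLP'/\rhoLP = 2\vLP(\omLP-\rhoLP)/\wLP$ from~\eqref{E:RHOSS}; the normalisation of $\CLP$ from~\eqref{E:LPPROP} and the relation $\wLP'=-2\vLP\vLP'$ that you mention are not required once one starts from~\eqref{E:BFLEULDEF}, since $\wLP'$ enters the momentum equation directly after expanding $\pa_y(\wLP D_y\psi)$.
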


\begin{proof}
This follows directly from Lemma~\ref{L:LAGEUL} and the definition of $\bfLEul$ from~\eqref{E:BFLEULDEF}.
\end{proof}


As the eigenvalue problem is stated primarily in terms of the function $\psi_\la$, we will frequently abuse notation and simply refer to $\psi_\l\in\HmZEul$ instead of making reference to the pair $(\psi_\l,P_\l)$.

In order to study the properties of potential eigenvalues in the right half-plane (of the complex plane), it is essential to understand the regularity requirements imposed by the singular eigenvalue ODE,~\eqref{E:PSILA}. As this is a linear equation of regular type (in the Frobenius sense), we therefore identify the space of linearly independent solutions via a Frobenius analysis in the following lemma.


\begin{lemma}\label{lemma:Frobenius}
Suppose $\l\in\C\setminus\{1\}$ with $\Re\l\geq 0$ is an eigenvalue of $\bfL$ and $\psi_\la\in\HmZEul$ satisfies the Eulerian eigenvalue equation~\eqref{E:PSILA}. Then $\psi_\la(y)$ is real-analytic as a function of $y$ on the closed interval $[0,y_*]$ and there exist two sequences of complex coefficients $(A_j)$ and $(B_j)$ with $A_0,B_0\neq 0$ such that $\psi_\l$ satisfies the power series expansions
\begin{align}
\psi_\la(y)=&\,y\sum_{j=0}^\infty A_j y^j,  && y\ll1,\label{E:EFUNORIGIN}\\
\psi_\la(y)=&\,\sum_{j=0}^\infty B_j (y-y_*)^j,  && |y-y_*|\ll1.\label{E:EFUNSONIC}
\end{align}
\end{lemma}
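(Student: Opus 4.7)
The plan is to exploit the regular-singular structure of the ODE~\eqref{E:PSILA} at the two endpoints $y=0$ and $y=y_*$, and to use the Sobolev regularity implicit in membership of $\HmZEul$ (with $m\ge 2$) to single out the analytic Frobenius branch at each endpoint. On the open interval $(0,y_*)$ all coefficients of~\eqref{E:PSILA} are real-analytic: Theorem~\ref{T:LP} yields real-analyticity of $\rhoLP$ and $\omLP$, while $\wLP$, $\rhoLP$ and $y$ are strictly positive there. Standard linear ODE theory then gives real-analyticity of every classical solution on $(0,y_*)$, so the lemma reduces to a local Frobenius analysis at each singular endpoint.

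Near $y=0$, since $\vLP(0)=0$, $\wLP(0)=1$ and $\rhoLP(0)\in(0,\infty)$, the only singular contributions come from the $2/y$ inside $D_y$ and the $-2/y^2$ produced by $\pa_yD_y$. The indicial equation collapses to $r^2+r-2=0$ with roots $r_1=1$ and $r_2=-2$. The branch $y^{-2}$ is not locally square-integrable against the three-dimensional radial measure $y^2\diff y$ implicit in $\HmZEul$ (cf.~\eqref{E:HSOBOLEVEQUIV}), so the $r_2=-2$ solution is excluded by $m\ge 2$; any $\log y$ correction that could arise from the integer separation $r_1-r_2=3$ is excluded by the same regularity threshold. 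Hence $\psi_\la(y)=y\sum_{j\ge 0}A_j y^j$, and the Frobenius recursion takes the form $j(j+3)A_j=\Phi_j(A_0,\ldots,A_{j-1})$ with $j(j+3)\neq 0$ for $j\ge 1$. If $A_0=0$ then $A_j=0$ for every $j$, forcing $\psi_\la\equiv 0$, in contradiction with $\psi_\la$ being a nontrivial eigenfunction; thus $A_0\neq 0$.

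Near $y=y_*$ the weight $\wLP$ vanishes linearly with $\wLP'(y_*)=-2\vLP'(y_*)\neq 0$, while $\vLP(y_*)=1$ and all other coefficients of~\eqref{E:PSILA} remain finite. Setting $x=y-y_*$ and dividing by $\wLP$, the indicial equation takes the form $r(r-1)+P\,r=0$ with $P=1-(1-\la)/\vLP'(y_*)$, giving roots $r_1=0$ and $r_2=(1-\la)/\vLP'(y_*)$, distinct because $\la\neq 1$. Whenever $r_2\notin\mathbb N_{>0}$, the branch $x^{r_2}$ lacks the $H^{m+1}$-regularity demanded by $m\ge 2$; at resonant integer values $r_2\in\mathbb N_{>0}$, any $\log x$ contribution in the subdominant solution violates the same Sobolev threshold. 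This forces $\psi_\la$ onto the analytic Frobenius branch at $r_1=0$, so $\psi_\la(y)=\sum_{j\ge 0}B_j(y-y_*)^j$, and the analogous recursion $j(j-r_2)B_j=\Psi_j(B_0,\ldots,B_{j-1})$ shows that $B_0=0$ propagates to $B_j\equiv 0$, contradicting nontriviality via analytic continuation from $(0,y_*)$. Combined with the open-interval analyticity this gives real-analyticity on the closed interval $[0,y_*]$.

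The principal technical obstacle I anticipate is the resonant case $r_2\in\mathbb N_{>0}$ at the sonic point: if the Frobenius recursion at $y_*$ happens to close without generating a log term, both fundamental solutions are analytic, and in principle an eigenfunction of the form $c_2\,x^{r_2}(1+\ldots)$ with $B_0=0$ could arise. Handling this requires either a quantitative check using the known value of $\vLP'(y_*)$ and $\Re\la\in[0,1]$ to rule out integer resonances in the relevant strip, or an analytic-continuation argument linking the Frobenius structure at $y_*$ to that at $y=0$ so that simultaneous vanishing of both leading coefficients is incompatible with a nontrivial global eigenfunction — a connection structurally analogous to the shooting arguments used later in Proposition~\ref{P:NOINTERIMAG}.
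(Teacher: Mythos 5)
Your argument follows the paper's route essentially exactly: write the eigenfunction equation in the scalar second-order form~\eqref{eq:phila}, carry out Frobenius analysis at the two regular singular endpoints, and use the $H^{m+1}$-regularity implicit in $\HmZEul$ (with $m\ge2$) to pick out the analytic Frobenius branch. At the origin you correctly obtain indices $1$ and $-2$; the excluded branch contains $y^{-2}$ in its leading term regardless of the value of the log-coefficient, so it fails even $L^2$-integrability against $y^2\,dy$, and your recursion argument for $A_0\neq 0$ is the right closing step. At $y_*$ you correctly obtain indices $0$ and $r_2=(1-\la)/\vLP'(y_*)$.

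The one place where your write-up is not yet a proof is exactly the place you flag: the resonant case $r_2\in\mathbb N_{>0}$. The paper resolves it by the ``quantitative check'' you describe as one of your two candidate repairs. Concretely: from $\Re\la\geq 0$ alone (not merely $\Re\la\in[0,1]$, which you should not assume here, since this lemma is invoked \emph{before} Lemmas~\ref{L:STRIPONLY} and~\ref{L:REALGEQONE} establish the strip) and from $\vLP'(y_*)=1-\tfrac{1}{y_*}\in(\tfrac12,\tfrac23)$, one has $\Re r_2 = \frac{1-\Re\la}{1-1/y_*} < 2$, so the only possible positive-integer resonance is $r_2=1$, which happens precisely for $\la=1/y_*$. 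For that single value the paper then does the direct Frobenius computation at $y_*$ and checks explicitly whether the $\log$ obstruction coefficient $C_1$ vanishes, disposing of the exceptional case. So your proposal is structurally correct and anticipates the difficulty accurately, but to become a complete proof it needs the sharpened bound $\Re r_2<2$ (valid for all $\Re\la\geq 0$) to collapse the resonant set to $\{\la=1/y_*\}$, followed by an explicit treatment of that one value; the alternative you mention (analytic continuation coupling the two endpoints) is not what the paper does and would be a more roundabout route.
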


\begin{proof}
A direct calculation shows that~\eqref{E:PSILA} can equivalently be written as
\beq\label{eq:phila}
 \psi_\la''+V_1(y)\psi_\la'+V_2(y)\psi_\la=0,
 \eeq
 where $V_1$ and $V_2$ are given by
\beqa\label{def:V1V2}
 V_1=&\,\frac{2}{y}-\frac{\rhoLP'}{\rhoLP}+\frac{\wLP'}{\wLP}+\frac{2(1-\la)\vLP}{\wLP},\\
 V_2=&\,-\frac{2}{y^2}-\frac{2\rhoLP'}{y\rhoLP}+\frac{2\wLP'}{y\wLP}+\frac{4(1-\la)\vLP}{y\wLP}+\frac{2\rhoLP}{\wLP}+\frac{2(1-\la)\vLP_y}{\wLP}-\frac{(2-\la)(1-\la)}{\wLP}.
 \eeqa 
 It is then a straightforward exercise to deduce that the Frobenius indices at the origin are $1$ and $-2$. Thus, solutions are of the form
$$\psi_\la(y)=y\sum_{k=0}^\infty A_ky^k\quad \text{ or } \quad \psi_\la(y)= C_1\log y\sum_{k=0}^\infty A_ky^k+y^{-2}\sum_{k=0}^\infty B_ky^k.$$
The regularity hypothesis $\psi_\la\in\HmZEul$ with $m\geq 2$ rules out the second solution, leaving us just with the first possibility:
$$\psi_\la(y)=y\sum_{k=0}^\infty A_ky^k.$$
On the other hand, to compute the Frobenius indices at the sonic point, $y=y_*$, we first take the shifted weight $\tilde w(y)=\wLP(y)(y-y_*)^{-1}$ and verify that the indicial equation  is 
\beqas
0=&\,r(r-1)+\Big(\wLP'(y_*)\frac{1}{\tilde w(y_*)}+\frac{2(1-\la)\vLP(y_*)}{\tilde w(y_*)}\Big)r
=r\Big(r-\frac{1-\la}{1-\frac{1}{y_*}}\Big),
\eeqas
where we have used the identities~\eqref{eq:order012coeffs} for the expansions of the LP solution at $y_*$. This quadratic has roots 
$$r_1=0,\qquad r_2=r_2(\la)=\frac{1-\la}{1-\frac{1}{y_*}}.$$
In the case that $\l\neq \frac{1}{y_*}$, as $\l\neq 1$ and $\Re\l\geq 0$, it is clear that $\Re r_2(\l)<2$, $\Re r_2(\l)\neq 0,1$, and so we conclude the claimed power series expansion of the solution, again from the regularity requirement $\psi_\l\in\HmZEul$, $m\geq 2$. In the exceptional case  $\l=1-\frac{1}{y_*}$, we have $r_2(\l)=1$ and the independent solutions are
$$\psi_\la^{(1)}=\sum_{k=1}^\infty B_k (y-y_*)^k,\qquad \psi_\la^{(2)}=C_1\log (y-y_*)\sum_{k=1}^\infty B_k (y-y_*)^k+\sum_{k=0}^\infty A_k (y-y_*)^k,$$
where we take a suitable branch of the complex logarithm and we assume also that $B_1=1$. A direct computation then reveals that either $C_1\neq 0$ (violating the regularity requirement $\psi_\l\in\HmZEul$) or $A_1=C_1=0$.
\end{proof}

To exclude eigenvalues of the operator $\mathbf{L}$ in different regions of the half-plane $\Re\la\geq0$, we will rely on different, but equivalent, formulations of the eigenvalue problem.


\subsection{Exclusion of eigenvalues with $\Re\la> 1$}


Our first goal is to prove that $\mathbf L$ has no eigenvalues with real part greater than or equal to 1. To this end, we employ the supersymmetric reformulation (for related ideas see, for example, \cite{Costin17}).


\begin{lemma}\label{L:SUPERSYMM}
Suppose $\l\in\C\setminus\{1\}$ is an eigenvalue of $\bfL$ on $\HmZ$ and define $\psi_\l$ as the analytic solution to~\eqref{E:PSILA}.
There exists a smooth function $\mathcal{A}(y)>0$ such that
\beq\label{def:flambda}
f_\la(y):=\frac{y \wLP(y)^{\frac12-\frac12\frac{1-\la}{1-\frac1{y_*}}}}{\sqrt{\mathcal A}\sqrt{\rhoLP}}
\eeq
solves $2(\log f_\l)'(y)=V_1(y)$, where we recall $V_1$ was defined in \eqref{def:V1V2}. Then the function
\beq\label{def:vlam}
v_\la(y)=f_\la(y)\psi_\la(y)
\eeq
satisfies the normal form equation
\beq\label{eq:supersym}
-v_{\la}''+\widetilde{V}v_\la=-\frac{(1-\la)^2}{\wLP^2} v_\la,
\eeq
where
\beqa\label{def:Vtilde}
\widetilde{V}=\frac{2}{y^2}+\big(\frac{\rhoLP'}{\rhoLP}\big)^2-\frac14\Big(\frac{\rhoLP'}{\rhoLP}+\frac{\wLP'}{\wLP}\Big)^2-\frac{\rhoLP''}{2\rhoLP}+\frac{\wLP''}{2\wLP}+\frac{\rhoLP'}{y\rhoLP}-\frac{\wLP'}{y\wLP}-\frac{2\rhoLP}{\wLP}.
\eeqa
\end{lemma}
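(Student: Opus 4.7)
The plan is to apply a standard Liouville substitution $v_\la = f_\la \psi_\la$ to the eigenvalue equation \eqref{eq:phila}: a direct computation shows that the coefficient of $v_\la'$ in the resulting second-order ODE for $v_\la$ vanishes precisely when $2(\log f_\la)'=V_1$. I would then build $f_\la$ by integrating $V_1/2$ term by term using the definition of $V_1$ in \eqref{def:V1V2}. The algebraic pieces $2/y$, $-\rhoLP'/\rhoLP$ and $\wLP'/\wLP$ integrate to $\log y$, $-\tfrac12\log\rhoLP$ and $\tfrac12\log\wLP$, explaining the explicit factors $y$, $\rhoLP^{-1/2}$ and $\wLP^{1/2}$ in \eqref{def:flambda}. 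The only subtle contribution is $(1-\la)\vLP/\wLP$. Using the sonic-point identity $\vLP'(y_*)=1-1/y_*$ (which comes from the expansion relations \eqref{eq:order012coeffs}) together with $\wLP'=-2\vLP \vLP'$, the integrand $\vLP/\wLP$ has a simple pole at $y_*$ of residue $-\tfrac{1}{2(1-1/y_*)}$, so that
\begin{equation*}
\int \frac{\vLP}{\wLP}\,dy = -\frac{1}{2(1-1/y_*)}\log \wLP + h(y),
\end{equation*}
with $h$ smooth on $[0,y_*]$. Exponentiating $\int V_1/2\,dy$ yields the stated expression for $f_\la$, with the factor $\sqrt{\mathcal A(y)}^{-1}$ absorbing $e^{(1-\la)h(y)}$ (and any normalising constant from the lower endpoint); smoothness and strict positivity of $\mathcal A$ follow from those of $h$.

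Next, I would extract the normal form. Inserting $\psi_\la = v_\la/f_\la$ into \eqref{eq:phila} and using $2f_\la'/f_\la=V_1$ gives
\begin{equation*}
-v_\la'' + \Bigl(\tfrac12 V_1' + \tfrac14 V_1^2 - V_2\Bigr) v_\la = 0,
\end{equation*}
and the task is to verify the bracketed potential equals $\widetilde V + (1-\la)^2/\wLP^2$. I would expand in powers of $(1-\la)$ and match: the $(1-\la)^2$ part collapses to $\vLP^2/\wLP^2 + 1/\wLP = 1/\wLP^2$ via $\vLP^2+\wLP=1$, producing the spectral term $-(1-\la)^2/\wLP^2$ on the right-hand side of \eqref{eq:supersym}. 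The linear-in-$(1-\la)$ part reduces to
\begin{equation*}
\frac{1-\la}{\wLP}\Bigl(1-\vLP'-\frac{2\vLP}{y}-\frac{\vLP \rhoLP'}{\rhoLP}\Bigr),
\end{equation*}
which vanishes identically: indeed, \eqref{eq:v} yields $1-\vLP'-\tfrac{2\vLP}{y} = -\tfrac{2\vLP^2(\rhoLP-\omLP)}{\wLP}$, while \eqref{E:RHOSS} yields $\tfrac{\vLP \rhoLP'}{\rhoLP} = -\tfrac{2\vLP^2(\rhoLP-\omLP)}{\wLP}$, and the two cancel. The remaining $\la$-independent piece is then matched to \eqref{def:Vtilde} by direct expansion of $V_1^2/4 + V_1'/2 - V_2$.

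The main obstacle is not conceptual but organisational: the $\la$-independent bookkeeping — expanding $V_1^2/4$ and $V_1'/2$ and combining with the various pieces of $V_2$ — is lengthy, though purely mechanical from the definitions \eqref{E:WDEF}, \eqref{def:V1V2}. The conceptually crucial input is the pair of background ODEs \eqref{eq:v}, \eqref{E:RHOSS} together with the sonic-point identity $\vLP'(y_*)=1-1/y_*$, which jointly ensure that the $\la$-dependence of the transformed potential collapses onto the single coefficient $(1-\la)^2/\wLP^2$. This is precisely the structural feature that makes the supersymmetric form \eqref{eq:supersym} useful for the subsequent exclusion of eigenvalues with large imaginary part.
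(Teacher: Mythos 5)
Your proof is correct and takes essentially the same route as the paper: the same Liouville substitution eliminating the first-order term, the same key inputs ($\vLP'(y_*)=1-\tfrac1{y_*}$, the background ODEs \eqref{eq:v}--\eqref{E:RHOSS}, and $\vLP^2+\wLP=1$). The only difference is bookkeeping — you expand the transformed potential $\tfrac12 V_1'+\tfrac14 V_1^2-V_2$ in powers of $(1-\la)$ and verify the linear piece vanishes, whereas the paper groups the $\la$-dependent remainder into a single bracket $\tfrac{1-\la}{\wLP^2}\bigl(-\vLP'\wLP-\tfrac{2\vLP\wLP}{y}-\tfrac{\rhoLP'\vLP\wLP}{\rhoLP}+(1-\la)\vLP^2+(2-\la)\wLP\bigr)$ and reduces it directly to $1-\la$.
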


\begin{proof}
We first note from~\eqref{def:V1V2} and the obvious identity $w'=-2\vLP\vLP'$ that 
\beqas
V_1(y)=\frac{2}{y}-\frac{\rhoLP'}{\rhoLP}+\frac{\wLP'}{\wLP}-\frac{(1-\la)\wLP'}{\wLP\vLP'}
\eeqas
Observing that $\vLP'=\omLP+y\omLP'\geq \frac13>0$ and $\vLP'(y_*)=1-\frac{1}{y_*}$, the existence of the claimed function $\mathcal{A}$ follows easily, so that 
$2\frac{f_\la'}{f_\la}= V_1(y)$.
Thus, expanding derivatives,
\beqas
-v_{\la}''=&\,\big(V_1-2\frac{f_\la'}{f_\la}\big)v_\la'+\big(2\frac{(f_\la')^2}{f_\la^2}-\frac{f_\la'}{f_\la}V_1+V_2-\frac{f_\la''}{f_\la}\big)v_\la=\big(V_2-\frac{f_\la''}{f_\la}\big)v_{\la}.
\eeqas
In order to simplify the right hand side, we note $\frac{f_\la''}{f_\la}=\frac12V_1'+\frac14V_1^2$ and rearrange to obtain
\beqa
\frac{f_\la''}{f_\la}-V_2=&\,\frac{2}{y^2}+\big(\frac{\rhoLP'}{\rhoLP}\big)^2-\frac14\Big(\frac{\rhoLP'}{\rhoLP}+\frac{\wLP'}{\wLP}\Big)^2-\frac{\rhoLP''}{2\rhoLP}+\frac{\wLP''}{2\wLP}+\frac{\rhoLP'}{y\rhoLP}-\frac{\wLP'}{y\wLP}-\frac{2\rhoLP}{\wLP}\\
&+\frac{1-\la}{\wLP^2}\Big(-\vLP'\wLP-\frac{2\vLP \wLP}{y}-\frac{\rho'\vLP \wLP}{\rhoLP}+(1-\la)\vLP^2+(2-\la)\wLP\Big).
\eeqa
The first line on the right hand side  clearly matches the definition of $\widetilde{V}$ in \eqref{def:Vtilde}. To simplify the remainder, we first recall from~\eqref{E:RHOLP}--\eqref{E:OMLP} that $\omLP'=\frac{1-3\omLP}{y}-\frac{\omLP}{\rhoLP}\rhoLP'$, so that we easily verify
\beqas
-\vLP'\wLP-\frac{2\vLP \wLP}{y}-\frac{\rho'\vLP \wLP}{\rhoLP}+(1-\la)\vLP^2+(2-\la)\wLP=\wLP\Big(-\vLP'-\frac{2\vLP}{y}-\frac{\rho'\vLP}{\rhoLP}+1\Big)+(1-\la)=1-\l.
\eeqas
 Hence 
 $$\frac{f_\la''}{f_\la}-V_2=\widetilde{V}+\frac{(1-\la)^2}{\wLP^2}.$$
 This concludes the proof.
\end{proof}

\begin{lemma}\label{L:STRIPONLY}
Suppose $\la$ is an eigenvalue of $\mathbf L$ on $\HmZ$ with $\Re\la>1$. Then $\la\in\R$.
\end{lemma}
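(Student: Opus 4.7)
My plan is to pass to the supersymmetric normal form of Lemma~\ref{L:SUPERSYMM} and test the resulting equation against $\bar v_\lambda$ on the interior interval $[0,y_\ast]$. Since all imaginary content of the induced energy identity is carried by the single term $-(1-\lambda)^2/\wLP^2$, taking the imaginary part immediately forces $\Im\lambda=0$.

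First I would verify that the boundary terms and integrals are well defined and that integration by parts is legal. At the origin, the Frobenius expansion \eqref{E:EFUNORIGIN} gives $\psi_\lambda\sim A_0 y$, while $f_\lambda(y)\sim Cy$ since $\wLP(0)=1$ and $\rhoLP(0)>0$; hence $v_\lambda=O(y^2)$ and $v_\lambda'\bar v_\lambda|_{y=0}=0$. At the sonic point, because $\Re\lambda>1$ and $1-1/y_\ast>0$ (as $y_\ast\in(2,3)$), the exponent
\[
\alpha:=\tfrac12-\tfrac12\frac{1-\lambda}{1-1/y_\ast}
\]
appearing in the definition \eqref{def:flambda} of $f_\lambda$ satisfies $\Re\alpha>\tfrac12$. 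Combined with the analytic expansion \eqref{E:EFUNSONIC}, this yields $v_\lambda\sim B_0(y-y_\ast)^{\alpha}$ as $y\to y_\ast^-$, so $|v_\lambda'\bar v_\lambda|\lesssim |y-y_\ast|^{2\Re\alpha-1}\to 0$. The same power counting, together with the $(y-y_\ast)^{-2}$ singularity of $\wLP^{-2}$ and of the most singular contributions to $\widetilde V$, shows that all three integrals in the identity below are absolutely convergent.

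Next I would multiply the normal form equation \eqref{eq:supersym} by $\bar v_\lambda$, integrate over $[0,y_\ast]$, and integrate the second-derivative term by parts (with vanishing boundary contributions, by the previous step) to obtain
\[
\int_0^{y_\ast}|v_\lambda'|^2\,dy \;+\; \int_0^{y_\ast}\widetilde V\,|v_\lambda|^2\,dy \;+\; (1-\lambda)^2\int_0^{y_\ast}\frac{|v_\lambda|^2}{\wLP^2}\,dy \;=\; 0.
\]
Inspection of \eqref{def:Vtilde} shows that $\widetilde V$ is real-valued on $(0,y_\ast)$, since it is built from $y$, $\rhoLP$, $\wLP$ and their derivatives. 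Writing $\lambda=a+ib$ with $a>1$ and noting $\Im[(1-\lambda)^2]=2(a-1)b$, the imaginary part of the identity reduces to
\[
2(a-1)b\int_0^{y_\ast}\frac{|v_\lambda|^2}{\wLP^2}\,dy=0.
\]
Standard uniqueness for the second-order linear ODE satisfied by $\psi_\lambda$ away from the singular points, combined with the analyticity provided by \eqref{E:EFUNORIGIN}--\eqref{E:EFUNSONIC}, rules out $v_\lambda\equiv 0$ on $(0,y_\ast)$ (otherwise $\psi_\lambda$ would vanish identically on $[0,\bzeta(Z)]$, contradicting that it is a genuine eigenfunction). Since $a-1>0$, one concludes $b=0$, i.e.\ $\lambda\in\mathbb{R}$.

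\textbf{Main obstacle.} The genuinely delicate step is the boundary/integrability analysis at $y_\ast$: it is precisely the assumption $\Re\lambda>1$ that guarantees $\Re\alpha>\tfrac12$, which in turn makes the boundary term at $y_\ast$ vanish and the weighted integral $\int |v_\lambda|^2/\wLP^2\,dy$ finite. For $\Re\lambda\le 1$ the argument collapses entirely: the boundary term need not vanish, the weighted integral may diverge, and the cancellation driving $b=0$ is no longer available. This is exactly why the strip $0\le\Re\lambda\le 1$ must be treated separately through the high/low/intermediate frequency splitting outlined in the introduction, and also why the trivial mode $\lambda=1$ cannot be excluded by this argument.
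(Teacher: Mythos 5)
Your proposal is correct and takes essentially the same approach as the paper: pass to the supersymmetric normal form, multiply by $\overline{v_\lambda}$, integrate by parts with vanishing boundary terms justified by $\Re\lambda>1$, and note that the real-valuedness of $\widetilde V$ forces $(1-\lambda)^2$ to be real. Your version is in fact slightly more careful than the paper's on the singularity orders near $y_\ast$ (you correctly record $\widetilde V=O(|y-y_\ast|^{-2})$ and $\wLP^{-2}$ in the final integral, where the paper's wording reads $|y-y_\ast|^{-1}$ and $\wLP^{-1}$, apparent typos), and you make the imaginary-part step and the non-vanishing of $v_\lambda$ explicit.
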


\begin{proof}
From Lemma~\ref{L:SUPERSYMM}, as $\l$ is an eigenvalue of $\bfL$, there exists a solution $v_\la$ to the supersymmetric equation~\eqref{eq:supersym} of the form~\eqref{def:vlam}, where the corresponding $\psi_\la$ is analytic by Lemma~\ref{lemma:Frobenius}. We therefore see by definition that, close to the sonic point $y_*$, 
\[|v_\l| +|y-y_*||v_\l'|\leq C|y-y_*|^{\frac12-\Re \frac{1-\la}{1-\frac{1}{y_*}}}\]
while $|\widetilde V|\leq C|y-y_*|^{-1}$. We therefore observe that, as $\Re\l>1$, $|v_\l'|^2$, $\widetilde V|v_\l|^2$ and $\wLP^{-1}|v_\l|^2$ are all integrable on $[0,y_*]$.

 From the super-symmetric form of the eigenfunction equation~\eqref{eq:supersym}, we multiply  by the complex conjugate, $\overline{v_\la}$, and integrate by parts to deduce the energy identity
$$\int_0^{y_*}\big||v_\la'|^2|+\widetilde{V}|v_\la|^2\dif y=-(1-\la)^2\int_0^{y_*}\frac{|v_\la|^2}{\wLP}\dif y,$$
where the integrability of each term is verified from Lemma~\ref{lemma:Frobenius}, the formula~\eqref{def:vlam} and the condition $\Re\la>1$, as is the vanishing of the boundary term arising from integration by parts. 
As the integrands are all real, we conclude that $(\la-1)^2$ must also be real and so, as $\Re\la>1$, therefore $\la\in\R$.
\end{proof}

\begin{lemma}\label{L:REALGEQONE}
There exist no eigenvalues $\la$ of $\mathbf L$ on $\HmZ$ with $\Re\la>1$.
\end{lemma}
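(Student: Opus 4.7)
The first step is to apply Lemma~\ref{L:STRIPONLY}, which reduces the problem to excluding real eigenvalues $\lambda > 1$. The strategy is then to use the trivial eigenfunction at $\lambda = 1$ as a positive reference solution in a Sturm--Liouville factorisation. By Lemma~\ref{L:GMODE} combined with the Eulerian formulation of Lemma~\ref{L:EVALUEPROB}, the mode $\Gamma$ transports under composition with $\bzeta^{-1}$ to $\psi_1(y) = y(1-\omLP(y))$. The monotonicity $\omLP' > 0$ from Theorem~\ref{T:LP}, together with $\omLP(y_\ast) = 1/y_\ast < 1$, gives $\psi_1 > 0$ on $(0, y_\ast]$. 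Hence $v_1 := f_1 \psi_1$ is strictly positive on $(0, y_\ast)$ and satisfies the homogeneous super-symmetric equation $-v_1'' + \widetilde V\, v_1 = 0$, i.e.~\eqref{eq:supersym} at $\lambda = 1$.

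Given a hypothetical eigenvalue $\lambda > 1$ real with eigenfunction $\psi_\lambda \in \HmZEul$, let $v_\lambda = f_\lambda\psi_\lambda$ be the super-symmetric profile of Lemma~\ref{L:SUPERSYMM} and set $w_\lambda := v_\lambda / v_1$ on $(0, y_\ast)$. Inserting $v_\lambda = v_1 w_\lambda$ into~\eqref{eq:supersym} and using the $\lambda = 1$ equation to cancel the potential $\widetilde V$ yields the divergence-form identity
\begin{equation*}
(v_1^2 w_\lambda')' = \frac{(1-\lambda)^2}{\wLP^2}\, v_1^2 w_\lambda.
\end{equation*}
Multiplying by $\overline{w_\lambda}$ and integrating by parts over $[0, y_\ast]$ produces
\begin{equation*}
-\int_0^{y_\ast} v_1^2 |w_\lambda'|^2 \,\diff y + \bigl[v_1^2 w_\lambda' \overline{w_\lambda}\bigr]_0^{y_\ast} = (1-\lambda)^2 \int_0^{y_\ast} \frac{v_1^2}{\wLP^2} |w_\lambda|^2 \,\diff y.
\end{equation*}

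The crux is the vanishing of the boundary terms. For real $\lambda > 1$, the regular Frobenius exponent at the sonic point from Lemma~\ref{lemma:Frobenius} leaves $\psi_\lambda$ analytic at $y_\ast$, while $f_\lambda$ acquires the factor $\wLP^\epsilon$ with $\epsilon := (\lambda-1)/(2(1-1/y_\ast)) > 0$. Combined with $v_1 \sim C\wLP^{1/2}$ near $y_\ast$ (so $v_1^2 \sim \wLP$), this yields $w_\lambda \sim \wLP^\epsilon \to 0$, $w_\lambda' = O(\wLP^{\epsilon - 1})$, and a boundary contribution of size $O(\wLP^{2\epsilon}) \to 0$; at the origin, the expansion $\psi_\lambda \sim A_0 y$ of Lemma~\ref{lemma:Frobenius} combined with $v_1 \sim y^2$ makes $w_\lambda$ and $w_\lambda'$ bounded, so $v_1^2 w_\lambda' \overline{w_\lambda} \to 0$. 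The same asymptotics give integrability of the remaining terms. With the boundary contributions removed, the left-hand side of the identity is non-positive, while for real $\lambda > 1$ the right-hand side is non-negative and strictly positive unless $w_\lambda \equiv 0$. The only consistent option is $w_\lambda \equiv 0$, i.e.\ $\psi_\lambda \equiv 0$, contradicting the eigenvalue assumption. The principal obstacle is the endpoint asymptotic analysis required to justify the integration by parts, and it is precisely the strict inequality $\lambda > 1$ that forces $\epsilon > 0$ and supplies the necessary decay at the sonic point.
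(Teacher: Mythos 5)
Your proof is correct and follows the same route as the paper: reduce to real $\lambda>1$ via Lemma~\ref{L:STRIPONLY}, divide the supersymmetric profile by the known $\lambda=1$ profile $v_1$, pass to the divergence-form quotient ODE, and integrate by parts to obtain a sign contradiction. (The paper's $q_\lambda$ and your $w_\lambda$ are the same quantity, and your equation $(v_1^2 w_\lambda')'=\tfrac{(1-\lambda)^2}{\wLP^2}v_1^2 w_\lambda$ is exactly the paper's quotient equation~\eqref{eq:q} multiplied through by $-v_1$.) Your treatment of the boundary terms at $y=0$ and $y=y_*$ is actually a bit more explicit than the paper's, which simply asserts vanishing citing Lemma~\ref{lemma:Frobenius}, \eqref{def:vlam}, and $\lambda>1$. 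The one small slip: by Lemma~\ref{L:GROWINGMODEEUL} and the change of variables $\psi=\rhoLP\,\theta\circ\bzeta^{-1}$ in Lemma~\ref{L:LAGEUL}, the Eulerian eigenfunction at $\lambda=1$ is $g_1(y)=y\rhoLP(y)(1-\omLP(y))$, not $y(1-\omLP(y))$; you dropped the factor $\rhoLP$, but since $\rhoLP>0$ on $[0,y_*]$ this does not affect positivity of $v_1$ or the rest of the argument.
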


\begin{proof}
 From  Lemma~\ref{L:STRIPONLY}, we know that any such eigenvalue must be real. Suppose for a contradiction that $\la>1$ is an eigenvalue. We define $v_1$ via the formula~\eqref{def:vlam} with $\la=1$ and the eigenfunction $\psi_1$ associated to this time-translation mode, see Lemma~\ref{L:GROWINGMODEEUL}. We then derive from~\eqref{eq:supersym} the quotient form equation for $q_\l:=v_\la v_1^{-1}$,
\beq\label{eq:q}
-v_1q_\l''-2v_1'q_\l'=-\frac{(1-\la)^2}{\wLP^2} v_1 q_\l.
\eeq
Multiplying the left hand side by $v_1 q_\l$ and integrating by parts over $(0,y_*)$, we arrive at
\beq
\int_0^{y_*}-v_1q_\l''v_1q_\l\,\dif y=\int_0^{y_*}2v_1v_1'q_\l'q_\l+v_1^2(q_\l')^2\dif y,
\eeq 
where the integrands are all integrable and the boundary terms at $y=0,y_*$ vanish due to Lemma~\ref{lemma:Frobenius},~\eqref{def:vlam} and $\la>1$.
Thus,
 multiplying \eqref{eq:q} by $v_1q_\l$ and integrating, we find
\beqas
\int_0^{y_*}v_1^2(q_\l')^2\dif y=\int_0^{y_*}-\frac{(1-\la)^2}{\wLP^2} (v_1q_\l)^2\,\dif y,
\eeqas
and hence $q_\l\equiv 0$.
\end{proof}


\subsection{Exclusion of eigenvalues at high frequencies ($\Re\la\in[0, 1]$, $\Im\la\gg 1$)}\label{S:LARGEIMAG}


The strategy we employ to exclude eigenvalues in the strip with large imaginary part is based on a high order energy argument for the eigenvalue equation. We therefore begin by deriving a suitable equation for fourth order derivatives of any potential eigenfunctions.

Throughout this section and the next, to isolate real and complex arguments, we set
\beq
1-\la=a+ib.
\eeq

\begin{proposition}\label{prop:A4B4}
Let $\la\in\C$ and suppose $\psi_\la\in\HmZEul$ is a solution to equation~\eqref{E:PSILA}. The function 
\beq\label{def:P}
\Psi_\l:=(\pa_y D_y)^2\psi_\la
\eeq
satisfies the equation
\beq\label{eq:Peqn}
\pa_yD_y \Psi_\l+A^{(4)} D_y\Psi_\l+B^{(4)} \Psi_\l=0,
\eeq
where 
\beq\label{E:A4DEF}
A^{(4)}=\frac{5\wLP'+2a\vLP+2ib\vLP}{\wLP}-\frac{\rhoLP'}{\rhoLP}-D^{(4)},
\eeq
where $D^{(4)}$ is smooth, vanishes at $y=0$,
\beq\label{ineq:D42ests}
D^{(4)}_1:=\Re D^{(4)}=O(b^{-2}),\quad  D^{(4)}_2:=\Im D^{(4)}=O(b^{-1}),
\eeq
and, setting $b_1=\Re(\wLP B)$, $b_2=\Im(\wLP B)$, we have a constant $C>0$, independent of $b$, such that
\beqa\label{eq:b4top}
\big|b^{(4)}_1(y)-b^2\big|\leq &\,C,\\
\big|b^{(4)}_2(y)-b(10\vLP'(y)-2a-1)\big|\leq &\, Cb^{-1},
\eeqa
uniformly in $y\in[0,y_*]$.
\end{proposition}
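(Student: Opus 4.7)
The plan is to commute the Eulerian eigenvalue equation $\pa_y D_y\psi_\la + A D_y\psi_\la + B\psi_\la = 0$ from Lemma~\ref{L:EVALUEPROB} twice with the operator $\pa_y D_y$. The structural reason a clean second-order ODE for $\Psi_\l := (\pa_y D_y)^2\psi_\la$ appears, with no lower-order source terms, is that $\psi_\la$ satisfies a second-order linear ODE, so every iterate $(\pa_y D_y)^k\psi_\la$ lies in the two-dimensional function space $\{P(y)\psi_\la + Q(y) D_y\psi_\la\}$. Consequently the three functions $\Psi_\l$, $D_y\Psi_\l$, $\pa_y D_y\Psi_\l$ are linearly dependent, with the coefficients of this dependence providing $1$, $A^{(4)}$, $B^{(4)}$.

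\textbf{Execution.} The central commutation identities, both of which follow from $D_y^2\psi = \pa_y D_y\psi + (2/y)D_y\psi$ and $\pa_y D_y = D_y\pa_y - 2/y^2$, read
\[\pa_y D_y(f D_y\psi) = f D_y(\pa_y D_y\psi) + 2f'(\pa_y D_y\psi) + \pa_y D_y(f)\, D_y\psi\]
and
\[\pa_y D_y(f\psi) = f\pa_y D_y\psi + 2f'D_y\psi + (f''-2f'/y)\psi\]
for any smooth $f$. Starting from $\Psi^{(2)} := \pa_y D_y\psi_\la = -A D_y\psi_\la - B\psi_\la$, successive applications of these identities yield explicit representations
\[\Psi_\l = P\psi_\la + Q D_y\psi_\la,\qquad D_y\Psi_\l = R\psi_\la + S D_y\psi_\la,\qquad \pa_y D_y\Psi_\l = U\psi_\la + V D_y\psi_\la,\]
in which $P, Q, R, S, U, V$ are polynomial combinations of $A, B$ and their derivatives; a representative intermediate formula is $P = -A^2 B + 2A'B + B^2 + AB' - B'' + 2B'/y$. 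Solving the $2\times 2$ linear system $R A^{(4)} + P B^{(4)} = -U$, $S A^{(4)} + Q B^{(4)} = -V$ by Cramer's rule produces the explicit closed-form expressions for $A^{(4)}$ and $B^{(4)}$. The key upgrade of the coefficient of $\wLP^{-1}$ in the real part of $A$ from $\wLP'$ to $5\wLP'$ in $A^{(4)}$ is the cumulative effect of the two $2A'$-shifts generated by the two commutations, together with the dominance of $\wLP'/\wLP$ in the real part of $A$ near the sonic point.

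\textbf{Asymptotics and main obstacle.} For the large-$|b|$ analysis I would expand using $A\sim 2ib\vLP/\wLP$ and $\wLP B \sim b^2 + ib(2\vLP'-1-2a)$, and then track the highest powers of $b$ in each of $P, Q, R, S, U, V$. A careful power-counting shows the determinant $RQ - PS$ is $O(b^8)$ and the numerators for $A^{(4)}$ and $B^{(4)}$ are $O(b^9)$ and $O(b^{10})$ respectively, yielding the expected scalings $A^{(4)}\sim b$ and $B^{(4)}\sim b^2$. The LP-profile identities $\wLP = 1-\vLP^2$ and $\wLP'=-2\vLP\vLP'$ are then used repeatedly to collapse cross-terms such as $-A^2 B + B^2 = B(-A^2+B)$ into the clean leading symbols stated in~\eqref{E:A4DEF} and~\eqref{eq:b4top}; the subleading bounds~\eqref{ineq:D42ests} and the $O(b^{-1})$ error in $b^{(4)}_2$ follow by inspection of the first subleading corrections in each expansion, while the claim that $D^{(4)}$ vanishes at $y=0$ follows from $\vLP(0)=\wLP'(0)=\rhoLP'(0)=0$ and the regularity of $A, B$ at the origin. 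The main obstacle is purely algebraic: every intermediate expression is a bulky rational combination of $A, B, A', B'$ (and further derivatives), and only after systematic cancellations using the LP-profile relations does the clean structure of the proposition emerge.
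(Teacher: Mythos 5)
Your approach is mathematically valid but genuinely different from the paper's, and it trades structural clarity for algebraic opacity. The paper's proof does not jump directly to $(\pa_y D_y)^2\psi_\la$; it iterates the single commutation with $\pa_y D_y$ twice, deriving an explicit two-step recurrence for the coefficients (passing through intermediate $A^{(1)},B^{(1)},\dots,A^{(3)},B^{(3)}$) of the form $A^{(j+1)}=A^{(j)}-(B^{(j)})'/B^{(j)}$ plus corrections. The whole point of this organization is that, because $B^{(j)}=b^{(j)}/\wLP$, the quotient $-(B^{(j)})'/B^{(j)}$ splits cleanly as $\wLP'/\wLP - (b^{(j)})'/b^{(j)}$, so each step shifts the $\wLP'/\wLP$ coefficient of $A$ by exactly $1$ and contributes a single small remainder $d^{(j)}=(b^{(j)})'/b^{(j)}$ that is $O(b^{-2})+iO(b^{-1})$. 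This makes the factor $5\wLP'$ in $A^{(4)}$, the definition $D^{(4)}=\sum_{j=0}^3 d^{(j)}$, its smoothness and vanishing at the origin (each $b^{(j)}$ is even and nonvanishing), and the $b$-asymptotics of $b^{(4)}_1,b^{(4)}_2$ all transparent. Your route — expanding $(\pa_y D_y)^k\psi_\la$ in the basis $\{\psi_\la,D_y\psi_\la\}$ and applying Cramer's rule to recover $A^{(4)},B^{(4)}$ — produces the same coefficients, and I verified your intermediate formula for $P$. But it recovers the structured answer only after heavy post-hoc simplification: for instance, the naive power counting $RQ-PS=O(b^8)$ hides a genuine leading-order cancellation (the raw $b^8$ contributions from $RQ$ and $PS$ both equal $-\tilde c_2^2 A^2 B$ to leading order; the surviving term is $-b^8/\wLP^4$ after using $\wLP=1-\vLP^2$), and recognizing that $A^{(4)}-A^{(0)}-4\wLP'/\wLP$ is small, smooth, and vanishes at $0$ requires rediscovering from the Cramer quotient exactly the cancellations that the paper's recurrence makes automatic. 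So while your sketch is not wrong, it relocates the real work to the unwritten "systematic cancellations" you acknowledge as the main obstacle, whereas the paper's step-by-step commutation is the device that systematizes them.

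One point to make precise in your writeup: the well-definedness and smoothness of the Cramer output on $[0,y_*]$ requires the determinant $RQ-PS$ (equivalently, in the paper's language, each $B^{(j)}$) to be nonvanishing; this holds for $b$ large by the very leading-order estimates you are trying to prove, so the logical order matters and should be flagged, as the paper does explicitly.
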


\begin{proof}
We first derive the main identity by commuting derivatives onto the eigenvalue ODE,~\eqref{E:PSILA}. First set
\beqa\label{E:AUPPERZERO}
A^{(0)}=\frac{\wLP'+2a\vLP+2ib\vLP}{\wLP}-\frac{\rhoLP'}{\rhoLP}
\eeqa
and
\beqa\label{def:b0}
&B^{(0)}=\frac{b^{(0)}}{v},\qquad &&b^{(0)}=b^{(0)}_1+ib^{(0)}_2,\\
&b^{(0)}_1=b^2-a^2-a+2\rhoLP+2a\vLP',\qquad &&b^{(0)}_2=b(2\vLP'-2a-1),
\eeqa
so that
\eqref{E:PSILA} becomes
\beq
\pa_yD_y\psi_\la+A^{(0)}D_y\psi_\la+B^{(0)}\psi_\la=0.
\eeq
Note that as $\rhoLP$ and $\omLP$ are even (\emph{cf}.~Appendix~\ref{SS:SSP}) , $A^{(0)}$ is odd-in-$y$ and $B^{(0)}$ is even-in-$y$.

Provided $B^{(0)}\neq 0$, we may apply  $\pa_yD_y$ to obtain the ODE
$$\pa_yD_y(\pa_y D_y\psi)+A^{(2)}D_y(\pa_yD_y\psi)+B^{(2)}\pa_yD_y\psi=0,$$
where
\begin{align}
A^{(1)}=&\,A^{(0)}-\frac{(B^{(0)})'}{B^{(0)}},\quad B^{(1)}=B^{(0)}+D_y A^{(0)}-\frac{(B^{(0)})'}{B^{(0)}}A^{(0)},\label{E:A1B1DEF}\\
A^{(2)}=&\,A^{(1)}-\frac{(B^{(1)})'}{B^{(1)}},\quad B^{(2)}=-\frac{2}{y}A^{(1)}+B^{(1)}+\pa_y A^{(1)}-\frac{(B^{(1)})'}{B^{(1)}}A^{(1)}.\label{E:A2B2DEF}
\end{align}
Observe from the parity considerations that each $A^{(j)}$ is odd-in-$y$ and each $B^{(j)}$ is even-in-$y$. Moreover, it is clear that
\[ -\frac{(B^{(0)})'}{B^{(0)}} = \frac{\wLP'}{\wLP}-d^{(0)},\qquad d^{(0)}=-\frac{(b^{(0)})'}{b^{(0)}},\]
where $b^2\Re(d^{(0)})$ and $b\Im(d^{(0)})$ are bounded independent of $b\geq 1$ in $C^4$.  This gives us from~\eqref{E:A1B1DEF}
\[ A^{(1)}=\frac{2\wLP'+2a\vLP+2ib\vLP}{\wLP}-\frac{\rhoLP'}{\rhoLP}-D^{(1)},\qquad D^{(1)}=d^{(0)},\]
and 
\beqas
B^{(1)}=\frac{b^{(1)}}{\wLP}=&\,\frac{b^{(0)}}{\wLP}+\frac{\wLP''+2a\vLP'+2ib\vLP'}{\wLP}-\Big(\frac{\rhoLP'}{\rhoLP}\Big)'+\frac{2}{y}\Big(\frac{\wLP'+2a\vLP+2ib\vLP}{\wLP}-\frac{\rhoLP'}{\rhoLP}\Big)\\
&-\frac{\wLP'}{\wLP}\frac{\rhoLP'}{\rhoLP}-d^{(0)}\frac{\wLP'+2a\vLP+2ib\vLP}{\wLP}+d^{(0)}\frac{\rhoLP'}{\rhoLP},
\eeqas
where we have noted that the most singular terms of order $\wLP^{-2}$ appearing in $B^{(1)}$ from~\eqref{E:A1B1DEF} have cancelled. We see that $b^{(1)}$ is a smooth, even function and satisfies
\beqas
|\Re(b^{(1)})-b^2|\leq C,\qquad \big|\Im(b^{(1)})-b(4\vLP'+4\omLP-2a-1)\big|\leq Cb^{-1}.
\eeqas
  Inserting these expressions into~\eqref{E:A2B2DEF}, we set 
  \beqs
  d^{(1)}=\frac{(b^{(1)})'}{b^{(1)}},\qquad D^{(2)}=d^{(0)}+d^{(1)}
  \eeqs and easily find
  \beqs
A^{(2)}=\frac{3\wLP'+2a\vLP+2ib\vLP}{\wLP}-\frac{\rhoLP'}{\rhoLP}-D^{(2)},
\eeqs
where again, $b^2\Re(d^{(1)})$ and $b\Im(d^{(1)})$ are uniformly bounded independent of $b\geq 1$ in $C^4$ and 
\beqas
B^{(2)}=\frac{b^{(2)}}{\wLP},\qquad \big|\Re(b^{(2)})-b^2\big|\leq C,\quad \big|\Im(b^{(2)})-b(6\vLP'-2a-1)\big|\leq Cb^{-1}.
\eeqas

Iterating this procedure again, we arrive at $A^{(4)}$, $B^{(4)}$ with the claimed structure and solving~\eqref{eq:Peqn}. Note that $B^{(j)}\neq 0$ for each $j$ for sufficiently large $b$ by the claimed estimates. 
\end{proof}

Based on the ODE~\eqref{eq:Peqn}, in the next proposition, we will derive the key energy identity that excludes eigenvalues with large imaginary part. 
As a preliminary step, we introduce the function
\beq\label{def:U}
U(y)=-\frac{2\vLP(y)(\vLP'(y)-\vLP'(y_*))}{\wLP(y)}>0\text{ is smooth and bounded.}
\eeq
Defining the constants
\beq
a_*=\vLP'(y_*)=1-\frac{1}{y_*}\quad\text{ and }\quad\al_*=a-2a_*,
\eeq
 we may write
\beq\label{E:AFOURDEF}
A^{(4)}=\frac{3\wLP'+2\al_*\vLP+2ib\vLP}{\wLP}+2U-\frac{\rhoLP'}{\rhoLP}-D^{(4)}_1-iD^{(4)}_2. 
\eeq
Since $2<y_\ast<3$, we have the crucial sign condition $\al_* = a - 2a_\ast<0$.

\begin{lemma}\label{L:LARGEBENERGY}
We define a weight function $$\chi=\frac{1}{\rho}\exp\big(\int_0^y (2U-D^{(4)}_1)\big)$$ such that
\beq\label{def:chi}
\frac{\chi'}{\chi}=-\frac{\rhoLP'}{\rhoLP}-D^{(4)}_1+2U.
\eeq
Suppose $\psi_\la\in\HmZEul$ is a solution to equation~\eqref{E:PSILA} and let $\Psi_\l=(\pa_y D_y)^2\psi_\la$ be the corresponding solution to the ODE~\eqref{eq:Peqn}.
We write $\Psi_\l=\Psi_{\l,1}+i\Psi_{\l,2}$ . Then $\Psi_\l$ satisfies the energy identity
\beqa\label{eq:Penergy}
-&\int_0^{y_*}\wLP^3\chi |D_y\Psi_\l|^2y^2\dif y+\int_0^{y_*}\wLP^2 \chi{H_\l} |\Psi_\l|^2y^2\,\dif y\\
&=\int_0^{y_*}\wLP^3\chi D^{(4)}_2\Big(D_y(\Psi_{\l,1},\Psi_{\l,2})\cdot((\Psi_{\l,2},-\Psi_{\l,1})+\frac{b}{\al_*}(\Psi_{\l,1},\Psi_{\l,2}))\Big)y^2\dif y,
\eeqa
where
\beq\label{E:HPSI}
H_\l(y)=\big(\al_*+\frac{b^2}{\al_*}\big)\big(-2\vLP'+1+\frac{4\vLP^2\vLP'}{\wLP}+\vLP D^{(4)}_1-2\vLP U\big)+b^{(4)}_1+\frac{b}{\al_*}b^{(4)}_2.
\eeq
\end{lemma}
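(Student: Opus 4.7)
The plan is to derive the energy identity directly from the commuted fourth-order ODE \eqref{eq:Peqn} by multiplying by the test function $c\,\wLP^3 \chi\, \bar\Psi_\l\, y^2$, where $c := 1 - ib/\al_*$, integrating over $(0, y_*)$, and taking the real part. The choice of $c$ is guided by the crucial algebraic observation
\[
(1 - ib/\al_*)(2\al_*\vLP + 2ib\vLP) = 2(\al_* + b^2/\al_*)\vLP \in \R,
\]
which is precisely designed so that the dominant imaginary contribution from $A^{(4)}$ (the $2ib\vLP/\wLP$ piece) is absorbed into a real coefficient $2(\al_* + b^2/\al_*)\vLP/\wLP$. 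This produces the factor $b^2/\al_*$ which ultimately appears throughout $H_\l$ and is the whole point of introducing $c$: it reunites the real and imaginary parts of the natural energy identity into a single coercive estimate.

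Concretely, integration by parts on $\int c\wLP^3 \chi (\pa_y D_y \Psi_\l) \bar\Psi_\l y^2 \dif y$ produces $-c\int \wLP^3 \chi |D_y\Psi_\l|^2 y^2 \dif y$ together with a remainder $-c \int (\wLP^3 \chi)' \bar\Psi_\l D_y \Psi_\l y^2 \dif y$. The boundary contributions vanish at $y = 0$ since $\Psi_\l = O(y)$ by the Frobenius analysis of Lemma~\ref{lemma:Frobenius}, and at $y = y_*$ since $\wLP(y_*) = 0$ while $\Psi_\l$ remains analytic. Combining this with $c\int \wLP^3 \chi A^{(4)} D_y\Psi_\l \bar\Psi_\l y^2 \dif y$, the defining ODE \eqref{def:chi} for $\chi$ is calibrated so that direct computation yields
\[
c\bigl(A^{(4)} - \tfrac{\chi'}{\chi} - \tfrac{3\wLP'}{\wLP}\bigr) = \frac{2(\al_* + b^2/\al_*)\vLP}{\wLP} - \bigl(\tfrac{b}{\al_*} + i\bigr)D^{(4)}_2,
\]
so every imaginary contribution outside the leading $2(\al_*+b^2/\al_*)\vLP/\wLP$ term collapses into a single factor of $D^{(4)}_2$. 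The resulting cross term $2(\al_*+b^2/\al_*)\int \wLP^2 \chi \vLP \Re(D_y \Psi_\l \bar\Psi_\l) y^2 \dif y$ is then integrated by parts once more, using the identity $2\Re(D_y\Psi\bar\Psi)\,y^2 = \pa_y(|\Psi|^2 y^2) + 2y|\Psi|^2$, producing a $|\Psi_\l|^2$ coefficient equal to $(\al_*+b^2/\al_*)\wLP^2 \chi \bigl[4\vLP^2\vLP'/\wLP + \vLP \rhoLP'/\rhoLP + \vLP D^{(4)}_1 - 2\vLP U - \vLP' + 2\omLP\bigr]$.

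The background LP solution now enters decisively. Combining the LP profile equations \eqref{eq:v} and \eqref{E:RHOSS} of Lemma~\ref{L:LPLAGRANGIAN} with $\vLP = y\omLP$ yields the key identity $\vLP \rhoLP'/\rhoLP = 1 - 2\omLP - \vLP'$, which collapses the bracket to $-2\vLP' + 1 + 4\vLP^2\vLP'/\wLP + \vLP D^{(4)}_1 - 2\vLP U$, exactly matching the first summand of $H_\l$ in \eqref{E:HPSI}. Adding the contribution $\Re(c b^{(4)}) = b^{(4)}_1 + (b/\al_*)b^{(4)}_2$ from the zeroth-order term $c B^{(4)} \Psi_\l$ recovers $H_\l$ in full, while the residual imaginary term $-\Re \int \wLP^3 \chi (b/\al_* + i)D^{(4)}_2 D_y\Psi_\l \bar\Psi_\l y^2 \dif y$, after decomposing $\Psi_\l = \Psi_{\l,1} + i \Psi_{\l,2}$ and computing $\Re\bigl((b/\al_* + i) D_y\Psi_\l \bar\Psi_\l\bigr)$, reproduces the advertised inner-product structure on the right-hand side of \eqref{eq:Penergy}. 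The main obstacle is the organised bookkeeping of signs and coefficient cancellations through the two successive integration by parts steps; crucially, without the LP steady-state identity $\vLP \rhoLP'/\rhoLP = 1 - 2\omLP - \vLP'$, the combination $\vLP\rhoLP'/\rhoLP - \vLP' + 2\omLP$ would not reduce to the tractable expression $-2\vLP' + 1$ needed for $H_\l$ to have the definite-sign structure required in the next step of the mode-stability argument.
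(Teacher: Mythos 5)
Your proof is correct and takes essentially the same route as the paper: the paper first derives separate real and imaginary energy identities and then substitutes one into the other with coefficient $b/\al_*$, which is algebraically identical to your testing the ODE against $c\,\wLP^3\chi\,\bar\Psi_\l\,y^2$ with $c=1-ib/\al_*$ and taking the real part. The complex-multiplier packaging is a slightly tidier way of expressing the same linear combination, but the key steps — the calibration of $\chi$ against $A^{(4)}$, the integration by parts producing $-\pa_y(y^2\wLP^2\chi\vLP)$, and the LP identity $\vLP\rhoLP'/\rhoLP = 1-2\omLP-\vLP'$ collapsing the bracket — coincide with those in the paper.
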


\begin{proof}
Throughout the proof, to simplify notation, we drop the subscript $\la$. We let $\al_*=a-2a_*$ and introduce the weight $\wLP^3\chi$ so that \eqref{eq:Peqn} becomes
\beq
\pa_y(\wLP^3\chi D_y\Psi)+\wLP^2\chi\big(2\al_*\vLP+2ib\vLP-i\wLP D^{(4)}_2\big)D_y\Psi+\wLP^2 \chi\big(b^{(4)}_1+ib^{(4)}_2)\Psi=0.
\eeq
We note the identity
\beq
D_y\Psi\overline{\Psi}=\frac{1}{2}\pa_y\big(|\Psi|^2\big)+\frac{2}{y}|\Psi|^2+i\{\Psi_2,\Psi_1\}.
\eeq
Testing against the function $\overline{\Psi}$ in the weighted space $L^2_{y^2}$ and integrating by parts in the first term, we have
\beqa
-\int_0^{y_*}&\wLP^3\chi|D_y\Psi|^2y^2\dif y+\int_0^{y_*}\wLP^2 \chi(2\al_*\vLP+2ib\vLP-i\wLP D^{(4)}_2)\big(\frac{1}{2}\pa_y\big(|\Psi|^2\big)+\frac{2}{y}|\Psi|^2+i\{\Psi_2,\Psi_1\}\big)y^2\dif y\\
&+\int_0^{y_*}\wLP^2\chi\big(b^{(4)}_1+ib^{(4)}_2)|\Psi|^2y^2\dif y =0.
\eeqa
Here we have used the fact that $D_y\Psi$ and $\Psi$ are both bounded at the origin to see that both boundary terms vanish.

Taking real and imaginary parts,
\begin{align}
-&\int_0^{y_*}\wLP^3\chi|D_y\Psi|^2y^2\dif y+\int_0^{y_*}\wLP^2 \chi\Big(\al_*\vLP\big(\pa_y\big(|\Psi|^2\big)+\frac{4}{y}|\Psi|^2\big)-2b\vLP\{\Psi_2,\Psi_1\}\Big)y^2\dif y\notag\\
&+\int_0^{y_*}\wLP^3 \chi D^{(4)}_2\{\Psi_2,\Psi_1\}y^2\dif y+\int_0^{y_*}\wLP^2\chi b^{(4)}_1|\Psi|^2y^2\dif y=0 ,\label{eq:Penergyreal}\\
&\int_0^{y_*}\wLP^2 \chi b\vLP\big(\pa_y\big(|\Psi|^2\big)+\frac{4}{y}|\Psi|^2\big)y^2\dif y+\int_0^{y_*}\wLP^2\chi b^{(4)}_2|\Psi|^2y^2\dif y\notag\\
&-\int_0^{y_*}\wLP^3 \chi D^{(4)}_2\big(\frac12\pa_y\big(|\Psi|^2\big)+\frac{2}{y}|\Psi|^2\big)y^2\dif y  =-\int_0^{y_*}2\wLP^2 \chi\al_*\vLP\{\Psi_2,\Psi_1\}y^2\dif y.\label{eq:Penergyim}
\end{align}
Substituting \eqref{eq:Penergyim} for the Poisson bracket term in \eqref{eq:Penergyreal}, we have
\beqa\label{E:ENERGYRECOMBINE}
0=&-\int_0^{y_*}\wLP^3\chi|D_y\Psi|^2y^2\dif y+\int_0^{y_*}\wLP^2 \chi\al_*\vLP\big(\pa_y\big(|\Psi|^2\big)+\frac{4}{y}|\Psi|^2\big)y^2\dif y\\
&+\int_0^{y_*}\wLP^3 \chi D^{(4)}_2\{\Psi_2,\Psi_1\}y^2\dif y+\int_0^{y_*}\wLP^2\chi b^{(4)}_1|\Psi|^2y^2\dif y\\
&+\frac{b}{\al_*}\Big(\int_0^{y_*}\wLP^2 \chi b\vLP\big(\pa_y\big(|\Psi|^2\big)+\frac{4}{y}|\Psi|^2\big)y^2\dif y+\int_0^{y_*}\wLP^2\chi b^{(4)}_2|\Psi|^2y^2\dif y\\
&-\int_0^{y_*}\wLP^3 \chi D^{(4)}_2\big(\frac12\pa_y\big(|\Psi|^2\big)+\frac{2}{y}|\Psi|^2\big)y^2\dif y\Big)\\
=&\,-\int_0^{y_*}\wLP^3\chi |D_y\Psi|^2y^2\dif y+\int_0^{y_*}\wLP^2 \chi \big(\al_*+\frac{b^2}{\al_*}\big)\vLP\big(\pa_y\big(|\Psi|^2\big)+\frac{4}{y}|\Psi|^2\big)y^2\dif y\\
&+\int_0^{y_*}\wLP^2\chi \big(b^{(4)}_1+\frac{b}{\al_*}b^{(4)}_2\big)|\Psi|^2y^2\dif y+\int_0^{y_*}\wLP^3 \chi D^{(4)}_2\{\Psi_2,\Psi_1\}y^2\dif y\\
&-\frac{b}{\al_*}\int_0^{y_*}\wLP^3 \chi D^{(4)}_2\big(\frac12\pa_y\big(|\Psi|^2\big)+\frac{2}{y}|\Psi|^2\big)y^2\dif y\Big).
\eeqa
Now note that from the LP equations~\eqref{E:RHOLP}--\eqref{E:OMLP}, we have $(\vLP\rhoLP)'=(1-2\omLP)\rhoLP$ so that $-\frac{\vLP\rhoLP'}{\rhoLP}=\vLP'+2\omLP-1$ and hence
\beqa
-\pa_y\Big(y^2\wLP^2 \chi \vLP \Big)=&\,-y^2\wLP^2\chi \Big(\frac{2\vLP}{y}-\frac{4\vLP^2\vLP'}{\wLP}-\frac{\vLP\rhoLP'}{\rhoLP}-\vLP D^{(4)}_1+2\vLP U+\vLP'\Big)\\
=\,&y^2\wLP^2\chi \big(-4\omLP-2\vLP'+1+\frac{4\vLP^2\vLP'}{\wLP}+\vLP D^{(4)}_1-2\vLP U\big),
\eeqa
where we have also used \eqref{def:chi} for the quantity $\frac{\chi '}{\chi }$.

Therefore, integrating by parts in the first term containing $\pa_y(|\Psi|^2)$ (noting again that all boundary terms vanish) yields
\beqa
0=&\,-\int_0^{y_*}\wLP^3\chi |D_y\Psi|^2y^2\dif y+\int_0^{y_*}\wLP^2 \chi \big(\al_*+\frac{b^2}{\al_*}\big)4\omLP |\Psi|^2\big)y^2\dif y\\
&+\int_0^{y_*}\wLP^2 \chi \big(\al_*+\frac{b^2}{\al_*}\big)\big(-4\omLP-2\vLP'+1+\frac{4\vLP^2\vLP'}{\wLP}+\vLP D^{(4)}_1-2\vLP U\big)|\Psi|^2y^2\dif y\\
&+\int_0^{y_*}\wLP^2\chi \big(b^{(4)}_1+\frac{b}{\al_*}b^{(4)}_2\big)|\Psi|^2y^2\dif y+\int_0^{y_*}\wLP^3 \chi D^{(4)}_2\{\Psi_2,\Psi_1\}y^2\dif y\\
&-\frac{b}{\al_*}\int_0^{y_*}\wLP^3 \chi D^{(4)}_2\big(\frac12\pa_y\big(|\Psi|^2\big)+\frac{2}{y}|\Psi|^2\big)y^2\dif y\Big).
\eeqa
Noting now that the $4\omLP$ terms exactly cancel, we re-group terms as
\beqa\label{eq:fourthenergyfull}
-&\int_0^{y_*}\wLP^3\chi |D_y\Psi|^2y^2\dif y\\
&+\int_0^{y_*}\wLP^2 \chi \Big(\big(\al_*+\frac{b^2}{\al_*}\big)\big(-2\vLP'+1+\frac{4\vLP^2\vLP'}{\wLP}+\vLP D^{(4)}_1-2\vLP U\big)+b^{(4)}_1+\frac{b}{\al_*}b^{(4)}_2\Big)|\Psi|^2y^2\dif y\\
=&-\int_0^{y_*}\wLP^3 \chi D^{(4)}_2\Big(\{\Psi_2,\Psi_1\}-\frac{b}{\al_*}\big(\frac12\pa_y\big(|\Psi|^2\big)+\frac{2}{y}|\Psi|^2\big)\Big)y^2\dif y,
\eeqa
and observe that the second line is exactly 
$$\int_0^{y_*}\wLP^2 \chi {H_\l} |\Psi|^2y^2\,\dif y$$
as required.
Considering now the right hand side of this energy identity, we  see that
$$\{\Psi_2,\Psi_1\}=\pa_y\Psi_2\Psi_1-\pa_y\Psi_1\Psi_2=(D_y\Psi_2)\Psi_1-\frac{2}{y}\Psi_2\Psi_1-D_y\Psi_1\Psi_2+\frac{2}{y}\Psi_2\Psi_1=-(D_y\Psi)\cdot \Psi^\perp,$$
where we have treated $\Psi=(\Psi_1,\Psi_2)$, $D_y\Psi=(D_y\Psi_1,D_y\Psi_2)$ as vectors in $\R^2$ to take the perpendicular vector, while
$$\frac12\pa_y\big(|\Psi|^2\big)+\frac{2}{y}|\Psi|^2=\pa_y\Psi_1\Psi_1+\pa_y\Psi_2\Psi_2+\frac{2}{y}(\Psi_1\Psi_1+\Psi_2\Psi_2)=D_y\Psi\cdot\Psi.$$
Thus 
\beqa
-\int_0^{y_*}&\wLP^3\chi D^{(4)}_2\Big(\{\Psi_2,\Psi_1\}-\frac{b}{\al_*}\big(\frac12\pa_y\big(|\Psi|^2\big)+\frac{2}{y}|\Psi|^2\big)\Big)y^2\dif y\\
=&\,\int_0^{y_*}\wLP^3\chi D^{(4)}_2\Big(D_y\Psi\cdot(\Psi^\perp+\frac{b}{\al_*}\Psi)\Big)y^2\dif y
\eeqa
as required.
\end{proof}

\begin{remark}
A simple estimate on the right hand side of \eqref{eq:Penergy} shows 
\beqa
&\int_0^{y_*}\wLP^3\chi D^{(4)}_2\Big(D_y\Psi\cdot(\Psi^\perp+\frac{b}{\al_*}\Psi)\Big)y^2\dif y\\
&\geq -\int_0^{y_*}\wLP^3\chi |D_y\Psi|^2y^2\dif y-\frac{1+\frac{b^2}{\al_*^2}}{4}\int_0^{y_*}\wLP^3\chi |D^{(4)}_2|^2|\Psi|^2y^2\dif y.
\eeqa
So to exclude   $\la=1-a-ib$  as an eigenvalue, it is sufficient to estimate
\beq
{H_\l}(y)+\frac{1+\frac{b^2}{\al_*^2}}{4}\wLP|D^{(4)}_2|^2
\eeq
and show this is negative.
\end{remark}


\begin{remark}
From \eqref{eq:b4top} and \eqref{ineq:D42ests}, we identify the top order terms with respect to $b$ in ${H_\l}$ as
\beqa
{H_\l}+\frac{1+\frac{b^2}{\al_*^2}}{4}\wLP|D^{(4)}_2|^2=&\,\frac{b^2}{\al_*}\Big(-2\vLP'+1+\frac{4\vLP^2\vLP'}{\wLP}+\vLP D^{(4)}_1-2\vLP U+\al_*+10\vLP'-2a-1\Big)+O(1)\\
=&\,\frac{b^2}{\al_*}\Big(\frac{4\vLP^2\vLP'}{\wLP}-a-2a_*+8\vLP'-2\vLP U\Big)+O_{b\to\infty}(1).
\eeqa
Analysing the leading order coefficient, we first observe that, as $y_*\in[2,3]$, $a_*\leq \frac23$, while also $a\in[0,1]$ and $\vLP'=y\omLP'+\omLP\geq \frac13$, so that
\beqas
-a-2a_*+8\vLP'\geq -1 - \frac{4}{3} +\frac{8}{3} = \frac{1}{3},\\
\frac{4\vLP^2\vLP'}{\wLP}-2\vLP U = \frac{4\vLP^2}{\wLP} (2\vLP' - \vLP'(y_*)),
\eeqas
where $2\vLP' - \vLP'(y_*) \ge 2 \frac{1}{3} -\frac{2}{3} =0 $. Thus,
$$\frac{4\vLP^2\vLP'}{\wLP}-a-2a_*+8\vLP'-2\vLP U \geq\frac13.$$
Hence, as $\al_*=a-2a_*<0$ for all $a\in[0,1]$, the energy estimate provides a contradiction provided $b$ is sufficiently large. 
\end{remark}

\begin{proposition}\label{P:NOLARGEIMAG}
There exist no eigenvalues $\la=1-a-ib$ of $\bfL$ on $\HmZ$ with $a\in[0,1]$ and $b\geq 8$.
\end{proposition}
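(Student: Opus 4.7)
The plan is to argue by contradiction using the energy identity of Lemma~\ref{L:LARGEBENERGY} combined with a pointwise negativity estimate on the bracketed integrand. Suppose $\la = 1 - a - ib$ with $a \in [0,1]$ and $b \geq 8$ is an eigenvalue of $\bfL$. By Lemma~\ref{L:EVALUEPROB} there exists an Eulerian eigenfunction $\psi_\l \in \HmZEul$ solving~\eqref{E:PSILA}, and by Lemma~\ref{lemma:Frobenius} this eigenfunction is real-analytic on $[0, y_*]$. Hence $\Psi_\l := (\pa_y D_y)^2 \psi_\l$ is smooth on $[0, y_*]$ and cannot vanish identically (otherwise $\psi_\l$ would be forced to be a low-order polynomial in $y$, and substitution into~\eqref{E:PSILA} would give $\psi_\l \equiv 0$). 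By Proposition~\ref{prop:A4B4}, $\Psi_\l$ solves the reduced ODE~\eqref{eq:Peqn}, so Lemma~\ref{L:LARGEBENERGY} yields the energy identity~\eqref{eq:Penergy}.

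The next step is to bound the right-hand side of~\eqref{eq:Penergy} via Young's inequality $|\xi \eta| \le \xi^2 + \tfrac14 \eta^2$, applied pointwise with $\xi = \sqrt{\wLP^3 \chi}\,|D_y \Psi_\l|$ and $\eta = \sqrt{\wLP^3 \chi}\,|D^{(4)}_2|\,|\Psi_\l^\perp + (b/\al_*)\Psi_\l|$, using the identity $|\Psi_\l^\perp + (b/\al_*)\Psi_\l|^2 = (1 + b^2/\al_*^2)|\Psi_\l|^2$. Absorbing the resulting $\xi^2$-term into the first (negative) integral on the left of~\eqref{eq:Penergy} reduces the identity to the inequality
$$\int_0^{y_*} \wLP^2 \chi \left( H_\l(y) + \frac{1 + b^2/\al_*^2}{4}\, \wLP(y) |D^{(4)}_2(y)|^2 \right) |\Psi_\l(y)|^2 \, y^2 \, \dif y \;\geq\; 0.$$
Because $\Psi_\l \not\equiv 0$ and $\wLP^2 \chi y^2 > 0$ on $(0, y_*)$, a contradiction follows as soon as the bracketed integrand $\mathfrak{J}_\l(y)$ is shown to be strictly negative pointwise on $[0, y_*]$ for every $a \in [0,1]$ and every $b \geq 8$.

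The quantitative negativity of $\mathfrak{J}_\l$ is anchored in the leading-order expansion recorded in the second remark after Lemma~\ref{L:LARGEBENERGY}:
$$\mathfrak{J}_\l(y) \;=\; \frac{b^2}{\al_*}\!\left(\frac{4\vLP^2 \vLP'}{\wLP} - a - 2a_* + 8\vLP' - 2\vLP U \right) + O_{b \to \infty}(b),$$
where the parenthesised quantity is bounded below by $\tfrac13$ and $\al_* = a - 2a_* < 0$ because $a_* = 1 - 1/y_* > \tfrac12$ (since $y_* > 2$) and $a \leq 1$. Thus the leading term is bounded above by $b^2/(3\al_*)$, which is of order $-b^2$, while Proposition~\ref{prop:A4B4} guarantees that the subleading contributions from $b^{(4)}_1, b^{(4)}_2, D^{(4)}_1, D^{(4)}_2$ are of order $b$ and $1$ uniformly in $y$. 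The main obstacle is that this asymptotic analysis only certifies negativity for $b$ sufficiently large, whereas the statement demands the explicit threshold $b \geq 8$. To close the gap I would invoke the interval arithmetic framework of Appendix~\ref{APP:IA}: rigorous enclosures of $\rhoLP, \vLP, \wLP$ and their derivatives on $[0, y_*]$ propagate to enclosures of $\chi, U, D^{(4)}_1, D^{(4)}_2, b^{(4)}_1, b^{(4)}_2$, yielding a certified upper bound on $\mathfrak{J}_\l(y)$ as an explicit rational expression in $a$ and $1/b$. A finite computation on a sufficiently fine grid in $(y, a) \in [0, y_*] \times [0,1]$, combined with monotonicity of the leading $b^2/\al_*$-term in $b$, then verifies $\mathfrak{J}_\l(y) < 0$ uniformly on $[0, y_*] \times [0,1] \times [8, \infty)$, completing the contradiction.
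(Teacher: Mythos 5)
Your proposal is correct and follows essentially the same path as the paper: pass to the Eulerian formulation, apply Proposition~\ref{prop:A4B4} and Lemma~\ref{L:LARGEBENERGY} to obtain the energy identity for $\Psi_\l=(\pa_yD_y)^2\psi_\l$, absorb the right-hand side via Young's inequality, and then appeal to the interval arithmetic verification (Lemma~\ref{L:HLAMBDASIGN}) that $H_\l+\frac{1+b^2/\al_*^2}{4}\wLP|D^{(4)}_2|^2<0$ on $[0,y_*]$ for $a\in[0,1]$, $b\geq 8$. The only (immaterial) structural difference is that you assert $\Psi_\l\not\equiv 0$ up front, whereas the paper concludes $\Psi_\l\equiv 0$ from the energy inequality and then explicitly writes out the cubic polynomial that $\psi_\l$ would have to be, showing it cannot solve~\eqref{eq:phila}.
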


\begin{proof}
Suppose for a contradiction that there exists such an eigenvalue. Then, moving to the Eulerian framework, we obtain a function $\psi_\la\in\HmZEul$ solving~\eqref{eq:phila} and therefore satisfying the expansions~\eqref{E:EFUNORIGIN}--\eqref{E:EFUNSONIC} of Lemma~\ref{lemma:Frobenius}. Defining $\Psi_\l=(\pa_yD_y)^2\psi_\la$, from Proposition~\ref{prop:A4B4}, we obtain that $\Psi_\l$ satisfies~\eqref{eq:Peqn} and hence, from Lemma~\ref{L:LARGEBENERGY}, $\Psi_\l$ satisfies the energy identity~\eqref{eq:Penergy}. Now applying  the interval arithmetic Lemma~\ref{L:HLAMBDASIGN}, as described in Appendix~\ref{S:ENERGYIA},  we obtain that, for $b\geq 8$, $a\in[0,1]$, 
\beq
{H_\l}(y)+\frac{1+\frac{b^2}{\al_*^2}}{4}\wLP|D^{(4)}_2|^2<0
\eeq
for all $y\in[0,y_*]$. We therefore conclude that $\Psi_\l\equiv 0$, and hence $D_y\pa_yD_y\psi_\la=c_1\in\R$, implying, as $\psi_\la\in\HmZEul$, that $\psi_\l$ is solving this ODE yields $\pa_yD_y\psi_\la= \frac{c_1y}{3}$, and hence $D_y\psi_\la = \frac{c_1y^2}{6}+c_2$ and $\psi_\la =\frac{c_1 y^3}{30}+\frac{c_2 y}{3}$. However, it is simple to see that no function of this form is an exact solution of the eigenvalue equation~\eqref{eq:phila}, and hence we deduce a contradiction.
\end{proof}



\subsection{Exclusion of eigenvalues at low frequencies ($\Re\la\in[0, 1]$, $\Im\la\ll 1$)}\label{S:SMALLIMAG}

Although we have now restricted the region in which potential unstable eigenvalues of the operator $\bfL$ may lie to a given compact set ($\Re\l\in[0,1]$, $|\Im\l|\leq 8$), in order to simplify the implementation of interval arithmetic, we further eliminate the possibility of eigenvalues close to the real line (except for the trivial, growing mode). In order to eliminate the obstruction caused by the growing mode, we work now with the following quotient form of the equation.

The quotient $Q_\la:=\frac{\psi_\la}{g_1}$, where $g_1$ is the Eulerian eigenfunction for the trivial mode $\la=1$ (see Lemma~\ref{L:GROWINGMODEEUL}), satisfies
\beq\label{eq:Qlambda}
Q_\la''+W_1Q_\la'+W_2Q_\la=0,
\eeq
where
\beqa\label{def:W1W2}
W_1=&\,\frac{4}{y}+\frac{\wLP'+2(1-\la)\vLP}{\wLP}+\frac{\rhoLP'}{\rhoLP}-\frac{2\omLP'}{1-\omLP},\\
W_2=&\,\frac{1}{\wLP}\Big(\la(1-\la)+2(1-\la)\omLP\frac{1-\vLP'}{1-\omLP}\Big).
\eeqa
The proof of this identity is a direct computation based on~\eqref{eq:phila}.

Based on this identity, we  prove the following proposition.

\begin{proposition}\label{P:NOSMALLIMAG}
There do not exist any eigenvalues $\l=1-a-ib$ of $\bfL$ on $\HmZ$ such that $a\in[0,1]$ and $|b|\leq \frac15$ except the trivial eigenvalue $\l=1$.
\end{proposition}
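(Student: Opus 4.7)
The plan is to argue by contradiction using the quotient formulation \eqref{eq:Qlambda}, which removes the obstruction from the trivial mode and makes the problem amenable to an interval-arithmetic shooting argument. Suppose $\la=1-a-ib$ with $a\in[0,1]$, $|b|\le \tfrac15$, $\la\ne 1$ were such an eigenvalue. By Lemma~\ref{L:EVALUEPROB} and Lemma~\ref{lemma:Frobenius} there is an associated $\psi_\la\in\HmZEul$ which is real-analytic on $[0,y_\ast]$ and satisfies the Frobenius expansions $\psi_\la(y)=y\sum_{j\geq 0}A_jy^j$ near $0$ and $\psi_\la(y)=\sum_{j\geq 0}B_j(y-y_\ast)^j$ near $y_\ast$, with $A_0,B_0\ne 0$. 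I would first check that the trivial eigenfunction $g_1$ from Lemma~\ref{L:GROWINGMODEEUL} is analytic on $[0,y_\ast]$, vanishes to first order at the origin and is non-zero at $y_\ast$, so that the quotient $Q_\la:=\psi_\la/g_1$ is analytic on all of $[0,y_\ast]$, satisfies the reduced ODE~\eqref{eq:Qlambda}, and may be normalised by $Q_\la(0)=1$. The Frobenius analysis of~\eqref{eq:Qlambda} at $y=0$ (indicial roots $0$ and $-3$) then uniquely determines the full Taylor series at the origin from this normalisation and the recurrence induced by~\eqref{eq:Qlambda}; the analytic branch at $y=y_\ast$ is singled out by the regularity hypothesis $\psi_\la\in\HmZEul$ together with the analyticity of $g_1$ at $y_\ast$.

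The idea is to shoot rigorously in both directions and measure matching at a common interior point. Concretely, I would pick a small $\delta>0$ and use the power-series representations at $y=0$ and $y=y_\ast$ to produce truncated Taylor polynomials with interval enclosures for their remainders on $[0,\delta]$ and $[y_\ast-\delta,y_\ast]$ respectively, with $\la$-dependent coefficients generated by the recurrence. Using the rigorous IVP solver \verb!VNODE-LP! I would then propagate these enclosures forward from $\delta$ and backwards from $y_\ast-\delta$ to a common matching point $y_0\in(\delta,y_\ast-\delta)$, obtaining interval enclosures of the pairs $(Q_\la^{\mathrm L}(y_0),(Q_\la^{\mathrm L})'(y_0))$ and $(Q_\la^{\mathrm R}(y_0),(Q_\la^{\mathrm R})'(y_0))$, where $Q_\la^{\mathrm R}$ is the analytic sonic branch normalised by $Q_\la^{\mathrm R}(y_\ast)=1$. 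Compatibility of the two branches is measured by the Wronskian
\begin{equation*}
\mathcal W(\la):=Q_\la^{\mathrm L}(y_0)\,(Q_\la^{\mathrm R})'(y_0)-(Q_\la^{\mathrm L})'(y_0)\,Q_\la^{\mathrm R}(y_0),
\end{equation*}
which depends analytically on $\la$; the existence of the eigenvalue $\la$ is equivalent to $\mathcal W(\la)=0$.

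The main difficulty, which dictates the choice of the quotient variable, is that $Q_1\equiv 1$ is an exact solution of~\eqref{eq:Qlambda} at $\la=1$ (since $W_2$ in~\eqref{def:W1W2} carries a factor $(1-\la)$), so $\mathcal W(1)=0$ and a naive IA enclosure of $\mathcal W$ near $\la=1$ cannot exclude it. The fix is to divide out the trivial zero analytically: writing $W_2=(1-\la)\widetilde W_2(\la,\cdot)$ with $\widetilde W_2$ regular at $\la=1$, the rescaled unknown $R_\la:=(Q_\la-1)/(1-\la)$ satisfies a well-conditioned inhomogeneous linear ODE whose coefficients depend regularly on $\la$ throughout a neighbourhood of $\la=1$, and the corresponding rescaled Wronskian $\widetilde{\mathcal W}(\la):=\mathcal W(\la)/(1-\la)$ is therefore analytic. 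The quantity $\widetilde{\mathcal W}(1)$ is explicitly computable and non-zero, encoding the simplicity of the trivial eigenvalue. Covering the compact set $\mathcal K:=\{1-a-ib:a\in[0,1],\,|b|\le \tfrac15\}$ by finitely many small rectangles and running the IA scheme for $\widetilde{\mathcal W}$ on each rectangle, the non-vanishing of $\widetilde{\mathcal W}(1)$ furnishes the uniform separation from zero required to close the verification and deduce the contradiction; the precise numerical value $|b|\le \tfrac15$ is calibrated so that the IA enclosure of $\widetilde{\mathcal W}$ indeed excludes $0$ everywhere on $\mathcal K$.
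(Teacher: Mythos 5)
Your proposal takes a genuinely different route from the paper. The paper's proof of this proposition works with $P_\l:=Q_\l''$, derives the commuted ODE~\eqref{eq:Pequation}, and establishes the energy inequality~\eqref{ineq:Penergyorigin}; Interval Arithmetic is then used only to check the sign condition $\widetilde H_\l+\frac{1+b^2/\al_*^2}{4}\wLP|\widetilde a_2^{(2)}|^2\le -c_1<0$ on $[0,y_*]$, from which one deduces $P_\l\equiv 0$ and, since no affine function can satisfy~\eqref{eq:Qlambda} for $\l\neq1$, the desired contradiction. You instead propose to extend the connection/shooting argument that the paper reserves for the intermediate strip (Proposition~\ref{P:NOINTERIMAG}) down to $|b|\le\frac15$, and you correctly identify the obstruction: $Q_1\equiv1$ is an exact solution, so the matching Wronskian vanishes at $\l=1$ and a naive IA enclosure cannot separate from zero. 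Your fix -- dividing out the simple zero to obtain $\widetilde{\mathcal W}(\l)=\mathcal W(\l)/(1-\l)$, or equivalently working with $R_\l=(Q_\l-1)/(1-\l)$, which satisfies a $\l$-regular inhomogeneous ODE -- is sound: one can check that $\widetilde{\mathcal W}(1)=(R_1^{\mathrm R})'(y_0)-(R_1^{\mathrm L})'(y_0)$, and this quantity vanishing would produce a function regular at both $0$ and $y_*$ solving the inhomogeneous equation at $\l=1$, which after multiplying by $g_1$ is precisely the generalised eigenvector excluded by Lemma~\ref{L:GROWINGPROJ}(ii). So the nonvanishing you need is indeed encoded in the simplicity of the trivial eigenvalue. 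What the paper's energy route buys over yours is robustness of the IA step: the energy coefficient is an explicit, pointwise scalar function of the LP profile whose negativity is simple to check rigorously, whereas your scheme requires (a) verified analytic dependence in $\l$ of the Frobenius-normalised branches so that $\widetilde{\mathcal W}$ is well-defined and analytic, (b) rigorous IA integration of the \emph{inhomogeneous} system for $R_\l$ from both singular endpoints, and (c) an a posteriori numerical enclosure of $\widetilde{\mathcal W}(1)$ away from zero to bootstrap the uniform separation on the whole rectangle $\mathcal K$. None of these steps is a gap, but each is substantially more delicate to implement in interval arithmetic than checking a sign, which is presumably why the paper prefers the energy method in this regime and falls back on shooting only where the trivial zero is far away.
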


\begin{proof}
The proof proceeds in several steps, similar to Section~\ref{S:LARGEIMAG}. However, for convenience, we do not respect the gradient/divergence structure, but instead simply derive an equation for the derivatives of the quotient $Q_\l$. We suppose for a contradiction that there exists an such an eigenvalue $\l$, and hence that there is a non-trivial function $Q_\l$ solving~\eqref{eq:Qlambda}. By Lemma~\ref{lemma:Frobenius}, such a $Q_\l$ is necessarily analytic.

\emph{Step 1:} First, we define the function
\beq\label{E:PLADEF}
P_\l:=Q_\l''.
\eeq
Then $P_\l$ satisfies the equation
\beq\label{eq:Pequation}
P_\l''+\widetilde{A}^{(2)}P_\l'+\widetilde{B}^{(2)}P_\l=0,
\eeq
where
\beq\label{def:A2QB2Q}
\widetilde{A}^{(2)}=\frac{6}{y}+\frac{\wLP'+2(a-2a_*)\vLP+2ib\vLP}{\wLP}+(\widetilde{a}_1^{(2)}+i\widetilde{a}_2^{(2)}),\quad \widetilde{B}^{(2)}=\frac{ \widetilde{b}_1^{(2)} +i\widetilde{b}_2^{(2)}  }{\wLP},
\eeq
for some smooth, odd, real-valued functions $\widetilde{a}_j^{(2)}$ and even $\widetilde{b}_j^{(2)}$, $j=1,2$ satisfying
\beqa
\widetilde{a}_1^{(2)}=&\,\frac{\rhoLP'}{\rhoLP}-\frac{2\omLP'}{1-\omLP}+2U-\widetilde{d}^{(0)}_1-\widetilde{d}^{(1)}_1,\qquad \widetilde{a}_2^{(2)}=-\widetilde{d}^{(0)}_2-\widetilde{d}^{(1)}_2,
\eeqa
where $U$ was defined above in~\eqref{def:U} and
the precise form of the remaining coefficients is contained in Appendix~\ref{S:PEQCOEFFS}, which provides the proof of this claim.

\emph{Step 2:} We next derive an energy identity for the quantity $P_\l$.
We define the constant $\al_* = a-2a_*$ and a weight $\widetilde{\chi}\geq0$ satisfying
\beq\label{def:GP}
\frac{\widetilde{\chi}'}{\widetilde{\chi}}=\frac6y+\widetilde{a}_1^{(2)}.
\eeq
Then  $P$ satisfies the energy-type inequality
\beqa\label{ineq:Penergyorigin}
\Big(\al_*+\frac{b^2}{\al_*}\Big)\widetilde{\chi}(y_*)|P(y_*)|^2+\int_0^{y_*}\widetilde{\chi}\Big(\widetilde{H}_\l + \frac{1+\frac{b^2}{\al_*^2}}{4}\wLP|\widetilde{a}^{(2)}_2|^2\Big)|P|^2\,\dif y\geq 0,
\eeqa
where
\beqa\label{def:HP}
\widetilde{H}_\l=\big(\al_*+\frac{b^2}{\al_*}\big)\Big(-4\omLP-1+\frac{2\omLP'\vLP}{1-\omLP}-2\vLP U+\vLP(\widetilde{d}^{(0)}_1+\widetilde{d}^{(1)}_1)\Big)+\widetilde{b}^{(2)}_1+\frac{b}{\al_*}\widetilde{b}^{(2)}_2.
\eeqa
To prove~\eqref{ineq:Penergyorigin}, we begin by multiplying the equation \eqref{eq:Pequation} by $\wLP\widetilde{\chi}$ and using \eqref{def:GP} to combine terms as (dropping the subscript $\l$ for notational convenience)
\beq
(\wLP\widetilde{\chi}P')'+\big(2\al_*\vLP+2ib\vLP+i\wLP\widetilde{a}^{(2)}_2\big)\widetilde{\chi}P'+(\widetilde{b}^{(2)}_1+i\widetilde{b}^{(2)}_2)\widetilde{\chi}P=0.
\eeq
Then, following a similar argument to the proof of Lemma~\ref{L:LARGEBENERGY}, we test the equation with $\overline{P}$, integrate and, after splitting into real and imaginary parts to eliminate the Poisson bracket terms, with a little work, we arrive at an identity analogous to~\eqref{E:ENERGYRECOMBINE}:
\beqa\label{eq:Penergyintegralidentity}
{}&-\int_0^{y_*}\wLP\widetilde{\chi}|P'|^2\,\dif y+\int_0^{y_*}\widetilde{\chi}\big(\al_*+\frac{b^2}{\al_*}\big)\vLP\pa_y(|P|^2)\,\dif y+\int_0^{y_*}\widetilde{\chi}\big(\widetilde{b}^{(2)}_1+\frac{b}{\al_*}\widetilde{b}^{(2)}_2\big)|P|^2\,\dif y\\
&-\int_0^{y_*}\wLP\widetilde{\chi}\widetilde{a}^{(2)}_2\big(\{P_2,P_1\}-\frac12\frac{b}{\al_*}\pa_y(|P|^2)\big)\,\dif y=0.
\eeqa
We now observe that 
\beqa
-\pa_y(\vLP \widetilde{\chi})=&\,-\widetilde{\chi}\Big(\frac{6\vLP}{y}+\frac{\rhoLP'\vLP}{\rhoLP}-\frac{2\omLP'\vLP}{1-\omLP}+2\vLP U-\vLP(\widetilde{d}^{(0)}_1+\widetilde{d}^{(1)}_1)+\vLP'\Big)\\
=&\,-\widetilde{\chi}\Big(4\omLP+1-\frac{2\omLP'\vLP}{1-\omLP}+2\vLP U-\vLP(\widetilde{d}^{(0)}_1+\widetilde{d}^{(1)}_1)\Big),
\eeqa
where we have used the identity $\frac{\rhoLP'\vLP}{\rhoLP}=-\vLP'-2\omLP+1$.

In addition, we may consider $P=(P_1,P_2)$ as a vector in $\R^2$ and regroup the terms
\beqa
\{P_2,P_1\}-\frac12\frac{b}{\al_*}\pa_y(|P|^2)=-(P'\cdot P^\perp+\frac{b}{\al_*}P'\cdot P\big)=-P'\cdot(P^\perp+\frac{b}{\al_*}P).
\eeqa
Integrating by parts in the first $\pa_y(|P|^2)$ term of \eqref{eq:Penergyintegralidentity}, we use both of these identities, along with $\vLP(y_*)=1$ in the boundary term,  and get
\beqa
&-\int_0^{y_*}\wLP\widetilde{\chi}|P'|^2\,\dif y+\int_0^{y_*}\widetilde{\chi}\big(\al_*+\frac{b^2}{\al_*}\big)\Big(-4\omLP-1+\frac{2\omLP'\vLP}{1-\omLP}-2\vLP U+\vLP(\widetilde{d}^{(0)}_1+\widetilde{d}^{(1)}_1)\Big)|P|^2\,\dif y\\
&+\Big(\al_*+\frac{b^2}{\al_*}\Big)\widetilde{\chi}(y_*)|P(y_*)|^2+\int_0^{y_*}\widetilde{\chi}\big(\widetilde{b}^{(2)}_1+\frac{b}{\al_*}\widetilde{b}^{(2)}_2\big)|P|^2\,\dif y\\
=&-\int_0^{y_*}\wLP\widetilde{\chi}\widetilde{a}^{(2)}_2\big(P'\cdot(P^\perp+\frac{b}{\al_*}P)\big)\,\dif y\\
\geq&\,-\int_0^{y_*}\wLP\widetilde{\chi}|P'|^2\,\dif y-\frac{1+\frac{b^2}{\al_*^2}}{4}\int_0^{y_*}\wLP\widetilde{\chi}|\widetilde{a}^{(2)}_2|^2|P|^2\,\dif y.
\eeqa
Cancelling the weighted derivative terms, we obtain the claimed inequality~\eqref{ineq:Penergyorigin}.

\emph{Step 3:} Finally, we apply the interval arithmetic Lemma~\ref{L:HLAMBDATILDE} to verify that there exists a constant $c_1>0$ such that, for all  $a\in[0,1]$, $b\in[0,\frac15]$,  the following inequality holds:
\beqa
\widetilde{H}_\l + \frac{1+\frac{b^2}{\al_*^2}}{4}\wLP|\widetilde{a}^{(2)}_2|^2\leq -c_1<0,\ \ \ \ y\in[0,y_*].
\eeqa
We therefore conclude from~\eqref{ineq:Penergyorigin} that $P_\l\equiv 0$ and hence $Q_\l$ is an affine function. However, it is straightforward to see that, for $\l\neq 1$,~\eqref{eq:Qlambda} cannot be solved by an affine function. We therefore arrive at the desired contradiction.
\end{proof}

\subsection{Exclusion of eigenvalues in the intermediate region}


The final step to conclude the mode stability of the operator $\bfL$ is to eliminate the possibility of eigenvalues with imaginary part between $\frac15$ and $8$. The previous sections were essentially analytic and relied on interval arithmetic only to  provide explicit numerical bounds, rather than simply proving the existence of sufficient constants. In contrast, the argument of this section relies heavily and directly on the implementation of interval arithmetic (and is the reason we need the exact constants in Propositions~\ref{P:NOLARGEIMAG} and~\ref{P:NOSMALLIMAG}).

\begin{proposition}\label{P:NOINTERIMAG}
The operator $\bfL$ has no eigenvalues $\l=1-a-ib$ with $a\in[0,1]$ and $b\in[\frac15,8]$. 
\end{proposition}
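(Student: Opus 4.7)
The plan is to reduce the statement to a shooting/connection argument for the Eulerian eigenvalue ODE~\eqref{E:PSILA} that can be closed by rigorous interval arithmetic on the compact set
\[
\I_{\text{inter}}^+:=\{\la=1-a-ib:a\in[0,1],\,b\in[\tfrac15,8]\};
\]
the case $b<0$ is handled by complex conjugation, since conjugating~\eqref{E:PSILA} sends an eigenpair $(\la,\psi_\la)$ to $(\bar\la,\bar\psi_\la)$. The key structural ingredient is Lemma~\ref{lemma:Frobenius}: at both singular endpoints $y=0$ and $y=y_\ast$ the subspace of $\HmZEul$-admissible local solutions of~\eqref{E:PSILA} is exactly one dimensional. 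Concretely, near $y=0$ only the index-$1$ Frobenius branch $\psi_\la(y)=y\sum_{j\ge 0} A_j y^j$ is admissible, the alternative branch with leading behaviour $y^{-2}$ (plus a possible $\log y$ correction) being incompatible with the regularity imposed by $m\ge 2$; and near $y=y_\ast$ only the index-$0$ branch $\psi_\la(y)=\sum_{j\ge 0} B_j(y-y_\ast)^j$ is admissible, since the second Frobenius exponent $r_2(\la)=(1-\la)/(1-1/y_\ast)$ satisfies $\Re r_2<2$ and $r_2\notin\{0,1\}$ throughout $\I_{\text{inter}}^+$.

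First I would normalise the sonic-point solution by setting $B_0=1$ and computing the Taylor coefficients $B_j(\la)$ recursively from~\eqref{E:PSILA}, truncating at a fixed order $N$ and controlling the tail by a standard analytic-majorant estimate. This yields validated enclosures for $\psi_\la(y_\ast-\sigma)$ and $\psi_\la'(y_\ast-\sigma)$ for a small but fixed $\sigma>0$. I would then propagate these enclosures leftwards, from $y_\ast-\sigma$ down to a small $y=\eps$, using a rigorous interval ODE solver such as the \verb!VNODE-LP! package already employed elsewhere in the paper; throughout, $\la$ is treated as an interval variable in $\mathbb C$, so a single validated integration certifies the conclusion for every $\la$ in a given rectangle. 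Near $y=\eps$ the general solution decomposes as $\psi_\la=c_1(\la)\psi^{\text{reg}}_\la+c_2(\la)\psi^{\text{sing}}_\la$, with $\psi^{\text{reg}}_\la(y)=y+O(y^3)$ and $\psi^{\text{sing}}_\la(y)=y^{-2}+\cdots$ (plus possible log term); the coefficient $c_2(\la)$ is then an explicit linear functional of $(\psi_\la(\eps),\psi_\la'(\eps))$ whose values on the two Frobenius branches at the origin are obtained from truncated series with rigorous remainder control. The remaining task is to verify, via interval arithmetic, that $c_2(\la)\ne 0$ for every $\la$ in a finite covering of $\I_{\text{inter}}^+$ by subrectangles, which rules out $\HmZEul$-admissible eigenfunctions and hence, via Lemma~\ref{L:EVALUEPROB}, eigenvalues of $\bfL$ in $\I_{\text{inter}}^+$.

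The main obstacle I expect will be obtaining sharp enough enclosures near the sonic point, where~\eqref{E:PSILA} degenerates and the Frobenius recursion depends sensitively on $\la$ through the indicial exponent $r_2(\la)$: a naive launch at $y=y_\ast$ overestimates wildly. The resolution is standard but has to be tuned carefully: one launches slightly to the left of $y_\ast$ using a high-order Taylor expansion whose order $N$ is large enough, relative to both $\sigma$ and the diameter of the $\la$-subrectangle, that the certified majorant tail bound lies below the target precision, and one then refines the partition of $\I_{\text{inter}}^+$ until the interval enclosures of $c_2(\la)$ are uniformly bounded away from $0$ on each cell. A secondary difficulty is that the cover must remain finite; since $c_2(\la)$ varies continuously in $\la$ and the forbidden set $\{c_2=0\}$ is at worst a discrete set of candidate eigenvalues (none of which coincides with $1$, already handled separately), uniform non-vanishing is at least plausible, and the adequacy of any explicit partition is ultimately a finite, a posteriori certified statement whose implementation I would defer to Appendix~\ref{APP:IA}.
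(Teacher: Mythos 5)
Your strategy is essentially the paper's: construct the regular solution at the sonic point by Taylor expansion with certified error bounds, propagate it leftward with a rigorous ODE solver (\texttt{VNODE-LP}), and show at a small interior point that it is not a complex scalar multiple of the origin-regular branch, so that no $\HmZEul$-admissible eigenfunction connects the two singular endpoints. The only cosmetic difference is that the paper certifies this by checking complex linear independence of two vectors in $\mathbb{C}^2$, whereas you propose extracting the singular coefficient $c_2(\lambda)$; the two tests are equivalent (up to the nonzero Wronskian of the two Frobenius branches at the origin), but yours would additionally require validated control of the singular branch at the matching point.
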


We remark before continuing that combining this proposition with  Propositions~\ref{P:NOLARGEIMAG} and~\ref{P:NOSMALLIMAG}, we conclude the proof of Theorem~\ref{T:SPECTRAL}.

\begin{proof}
The proof proceeds as follows. Given a candidate eigenvalue $\l=1-a-ib$ as in the statement of the proposition, we recall from Lemma~\ref{lemma:Frobenius} that an eigenfunction is rigidly constrained (up to taking a constant complex scalar multiple) to obey the analytic expansions~\eqref{E:EFUNORIGIN}--\eqref{E:EFUNSONIC} at the origin and sonic point.  We first apply Lemma~\ref{L:EFUNCTIONTAYLOR}(i) to construct the regular solution around the sonic point by Taylor expansion with precise error bounds and then employ interval arithmetic to solve the eigenfunction ODE~\eqref{eq:phila} backwards from the sonic point $y_*$ to $y=0.01$, giving us $(\psi_\l^r,(\psi_\l^r)')$. We then compare this to the solution obtained directly by Taylor expansion around the origin from Lemma~\ref{L:EFUNCTIONTAYLOR}(ii) to obtain $(\psi_\l^l,(\psi_\l^l)')$. We verify that these two complex vectors are not (complex) linearly dependent, thus showing that there exists no regular solution of~\eqref{eq:phila} defined on the whole of $[0,y_*]$, and hence $\l$ is not an eigenvalue. This is executed using the function \verb!intermediate_evalue_excluder! in the attached code file.
\end{proof}


\subsection{Surjectivity}

In Section~\ref{S:ACCRETIVITY}, specifically Theorem~\ref{T:LUPH}, in order to show the maximal accretivity of a compact perturbation $\mathscr{L}_m$ of $\bfL$ and so apply the Lumer-Phillips theorem, we must show the surjectivity of $\mathscr{L}_m-\l\mathbf{I}$ for all real $\l$ sufficiently large. In preparation for this, we here prove the equivalent surjectivity statement for the full operator $\bfL$.

\begin{lemma}[Surjectivity of $\bf L - \l \bfI$ for $\l$ sufficiently large]\label{L:ONTO}
There exists a $\l_0\in\mathbb R$, such that for any $\la\geq\la_0$,  $\begin{pmatrix} f_1 \\ f_2 \end{pmatrix}\in \HmZ$, there exists a unique 
$\begin{pmatrix} \theta \\ \phi\end{pmatrix}\in \HmZ$ such that 
\begin{align}\label{E:ONTO}
\left(\mathbf{L}  - \l \bfI\right) \begin{pmatrix} \theta \\ \phi\end{pmatrix} = \begin{pmatrix} f_1 \\ f_2\end{pmatrix}.
\end{align}
\end{lemma}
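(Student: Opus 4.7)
First I would reduce~\eqref{E:ONTO} to a scalar second-order ODE for $\theta$. The first component of $(\mathbf{L}-\lambda\mathbf{I})\Phi=F$ reads $\phi-\Lambda\theta+\theta-\lambda\theta=f_1$, hence
\[
\phi=f_1+(\Lambda+\lambda-1)\theta.
\]
Substituting into the second component produces the inhomogeneous ODE
\[
\mathcal{P}_\lambda\theta:=K\theta-\Lambda^2\theta-(2\lambda-1)\Lambda\theta+\bigl(\mathcal{V}-\lambda(\lambda-1)\bigr)\theta=F, \qquad F:=f_2+(\Lambda+\lambda)f_1,
\]
and $F$ inherits from $f_1,f_2\in\HmZ$ the appropriate weighted $L^2$ regularity (losing at most one derivative on $f_1$). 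Thus existence and uniqueness of $\Phi\in\HmZ$ reduces to finding $\theta$ with the correct number of weighted derivatives solving $\mathcal{P}_\lambda\theta=F$.

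Second, I would establish a coercive a priori estimate for $\lambda$ sufficiently large. Testing $\mathcal{P}_\lambda\theta$ against $\bar{\theta}$ in a weighted $L^2$ inner product adapted to $\bG$ and integrating by parts on $[0,Z]$, the second-order piece $K\theta-\Lambda^2\theta$ generates a quadratic form in the first derivatives of $\theta$ whose principal coefficient is proportional to $\bG-z^{2/3}$ and therefore changes sign at the sonic point $z_*$ by Lemma~\ref{L:GBAR}, while the zeroth-order contribution becomes $-\lambda(\lambda-1)\int|\theta|^2\,\dif z$ modulated by the weight. For $\lambda\geq\lambda_0$ sufficiently large, this negative, quadratic-in-$\lambda$ bulk term majorises both the indefinite second-order contribution and the linear-in-$\lambda$ transport contribution, yielding a coercive estimate of the schematic form $\lambda\|\theta\|_{H^1_w}\leq C\|F\|_{L^2_w}$. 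In particular, this gives injectivity of $\mathcal{P}_\lambda$ on the relevant weighted space.

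Third, for existence I would use a Galerkin scheme: project $\mathcal{P}_\lambda\theta=F$ onto finite-dimensional subspaces $V_N\subset\DZodd$ generated by a suitable orthonormal basis (e.g.~weighted odd polynomials in the Eulerian variable $y=\bzeta(z)$). On each $V_N$ the projected problem is a finite-dimensional linear system whose unique solvability and uniform bounds follow from the coercivity of the previous step; a weak subsequential limit then solves $\mathcal{P}_\lambda\theta=F$ weakly. Regularity is bootstrapped by using the ODE itself to express the highest-order derivative of $\theta$ in terms of lower-order ones and $F$; at the regular singular points $z=0$ and $z=z_*$ a Frobenius analysis analogous to Lemma~\ref{lemma:Frobenius} ensures compatibility of local branches, and the completion topology of $\DZodd$ at $z=0$ automatically selects the analytic branch. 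Setting $\phi:=f_1+(\Lambda+\lambda-1)\theta$ then places $(\theta,\phi)$ in $\HmZ$; uniqueness follows at once from the injectivity estimate.

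The main obstacle is the sign change of the principal coefficient $\bG z^{4/3}-z^2$ of $K-\Lambda^2$ across the sonic point $z_*$: the operator $\mathcal{P}_\lambda$ is not elliptic in any uniform sense on $[0,Z]$, so a direct Lax-Milgram argument with a single-signed leading-order contribution is unavailable. The resolution is to take $\lambda\geq\lambda_0$ sufficiently large so that the dominant zeroth-order term $-\lambda(\lambda-1)|\theta|^2$ absorbs the indefinite quadratic contribution from the second-order piece uniformly in $z\in[0,Z]$. Propagating regularity smoothly across $z_*$ is a secondary subtlety handled by the Frobenius analysis already developed for the eigenvalue problem.
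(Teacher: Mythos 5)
Your reduction to a scalar second-order ODE for $\theta$ is the right starting move, and matches the paper's opening step (the paper performs it in the Eulerian variable $y=\bzeta(z)$, writing~\eqref{E:SURJLAG}, but the substance is the same). Your invocation of the Frobenius analysis at $y=0$ and $y=y_\ast$ is also germane. However, the core of your argument --- the coercivity estimate in Step 2 --- does not hold, and the gap is not cosmetic.

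After integrating by parts, the principal part of the form $(\mathcal{P}_\lambda\theta,\theta)$ contributes a term proportional to
\begin{equation*}
\int \left(z^2-\frac{1}{\bzeta_z^2}\right)|\pa_z\theta|^2\,\diff z
\;=\;\int z^{\frac43}\bigl(z^{\frac23}-\bG(z)\bigr)|\pa_z\theta|^2\,\diff z,
\end{equation*}
whose weight changes sign at the sonic point $z_\ast$ (Lemma~\ref{L:GBAR}): it is negative on $(0,z_\ast)$ and positive on $(z_\ast,Z)$. The potential term $-\lambda(\lambda-1)\int|\theta|^2$ is negative and quadratic in $\lambda$, but it controls a \emph{weaker} norm; it simply cannot absorb a sign-indefinite $\int c(z)|\pa_z\theta|^2$ term, no matter how large $\lambda$ is. Concretely: for a rapidly oscillating test function supported in $(z_\ast,Z)$, the second-order contribution is positive and of order $\epsilon^{-2}\|\theta\|^2$, swamping the $-\lambda(\lambda-1)\|\theta\|^2$ term. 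So the form is genuinely indefinite, and neither Lax--Milgram nor a Galerkin scheme founded on coercivity goes through. (The paper's dissipativity in Proposition~\ref{P:DISSIP} is a different animal: it holds for the top-order operator $A_{2m}$ in the semi-norm $\dot{\H}^0_{2m,Z}$ after commuting with $\bD^{2m}$, exploits the good sign of the boundary term at $Z>z_\ast$ together with the commutator gain $\propto m\,\bp\bG$, and is never a statement about the plain $\langle\mathcal{P}_\lambda\theta,\theta\rangle$ pairing.)

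The paper's actual route is ODE-theoretic, not variational. It passes to the Eulerian scalar equation, constructs the two regular branches $\psi_0$ (analytic at $0$) and $\psi_1$ (analytic at $y_\ast$), observes that for $\lambda>1$ these must be linearly independent by the absence of eigenvalues with $\Re\lambda>1$ (Lemma~\ref{L:REALGEQONE}), computes the Wronskian explicitly, and writes down the solution by variation of constants. Smoothness across the sonic point is verified via the Frobenius decomposition of $\psi_0$ in terms of $\psi_1$ and the second (power-law) branch; smoothness at the origin is bootstrapped by viewing $D_y\psi$ as a solution of a uniformly elliptic equation on a ball in $\mathbb{R}^3$, which requires taking $\lambda$ large enough to make the zeroth-order coefficient negative --- this is where $\lambda\geq\lambda_0$ enters, not through any coercivity. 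If you want to salvage a variational scheme, you would first need to commute with $\bD^{2m}$ and work in the high-order semi-norm as the paper does in Section~\ref{S:ACCRETIVITY}, and even then surjectivity of $\bfL-\lambda\bfI$ itself (as opposed to its compact perturbation $\mathscr L_m-\lambda\bfI$) is still obtained by this ODE construction, precisely because $\bfL$ is not dissipative.
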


\begin{proof}
{\bf Step 1.} We start by showing that for any $\begin{pmatrix} f_1 \\ f_2 \end{pmatrix}\in \DZodd$ there exists a unique $\begin{pmatrix} \theta \\ \phi\end{pmatrix}\in  \HmZ$ so that~\eqref{E:ONTO} is true. A simple density  argument then allows us to infer the claim for $\begin{pmatrix} f_1 \\ f_2 \end{pmatrix}\in \HmZ$. 

Suppose $(f_1,f_2)^\top\in\DZodd$. We begin by observing that the surjectivity problem for $\bfL-\la \bfI$ is equivalent to the corresponding surjectivity problem defined for the linearised Eulerian operator $\bfL^{\text{Eul}}$
by Lemma~\ref{L:LAGEUL}. In particular, from Lemma~\ref{L:LAGEUL}(ii) and~\eqref{E:BFLEULDEF}, $\psi(y)=\bar\rho(y) \theta(z)$ ($y=\bzeta(z)$) solves
\begin{align}
\pa_y&\,\Big(\big(1-\vLP^2\big)D_y\psi\Big)-2\big(\vLP(\omLP-\rhoLP)-(1-\l)\vLP\big)D_y\psi+\Big(2\rhoLP+2(1-\l)(\vLP'-1)+\l(1-\l)\Big)\psi\notag\\
=&\,\rhoLP\big(\vLP\pa_y f+\l f+g\big).\label{E:SURJLAG}
\end{align}
Now just like in~\eqref{eq:phila} we may re-write the equation~\eqref{E:SURJLAG} as
 \beq\label{eq:psisource}
 \psi''+V_1(y)\psi'+V_2(y)\psi=(1-\vLP^2)^{-1}H,
 \eeq
 where $V_1$, $V_2$ are given above in~\eqref{def:V1V2},
  the source term 
 \beq
 H(y)=\rhoLP\big(\vLP\pa_y f_1+\l f_1+f_2\big)
 \eeq
 is smooth and $O(y)$ at the origin from the assumption $f,g\in \DZodd$. Note that we used the ``prime" notation instead of $\pa_y$. From Lemma~\ref{lemma:Frobenius},
 the homogeneous equation
 \beq\label{eq:philahomog}
 \psi''+V_1(y)\psi'+V_2(y)\psi=0
 \eeq
 admits solutions $\psi_0$ and $\psi_1$ to the homogeneous equation~\eqref{eq:philahomog} which are analytic at $y=0$ and $y=y_*$, respectively. Observe that if $\l>1$, then $\psi_0$ and $\psi_1$ must be linearly independent, else $\l$ would be an eigenvalue for the operator $\mathbf{L}^{\text{Eul}}$, in contradiction to Lemma~\ref{L:REALGEQONE}. We henceforth assume $\la>1$.
 
 The Wronskian $W(\psi_0,\psi_1)$ of $\psi_0$ and $\psi_1$  then solves the equation $W'=-V_1 W$, i.e.,
 $$\big(\log W\big)'=-\frac2y+\frac{\rhoLP'}{\rhoLP}-\frac{\wLP'}{\wLP}+\frac{(1-\la)\wLP'}{\wLP\vLP'},$$
 where we have used $\wLP'=-2\vLP\vLP'$. This is solved by
 \beq\label{eq:Wronskian}
 W(\psi_0,\psi_1)=\mathcal A(y)\frac{\rhoLP}{y^2}\wLP(y)^{-1+\frac{(1-\la)}{1-\frac1{y_*}}},
 \eeq
 where $\mathcal A(y)>0$ is the smooth function defined in  Lemma~\ref{L:SUPERSYMM}. 
 
 Therefore, from the method of variation of constants, we find a solution to the inhomogeneous problem~\eqref{eq:psisource} given by
 \begin{align}
 \psi(y)=&\,-\psi_0(y)\int_y^{y_*}\frac{s^2\psi_1(s)H(s)}{W(\psi_0,\psi_1)(s)\wLP(s)}\,\dif s-\psi_1(y)\int_0^y\frac{s^2\psi_0(s)H(s)}{W(\psi_0,\psi_1)(s)\wLP(s)}\,\dif s\notag\\
 =&\,-\psi_0(y)\int_y^{y_*}\frac{s^2\psi_1(s)H(s)\wLP(s)^{\frac{\la-1}{1-\frac1{y_*}}}}{\mathcal A(s)\rhoLP(s)}\,\dif s-\psi_1(y)\int_0^y\frac{s^2\psi_0(s)H(s)\wLP(s)^{\frac{\la-1}{1-\frac1{y_*}}}}{\mathcal A(s)\rhoLP(s)}\,\dif s.\label{E:PSIVARCONSTS}
 \end{align}
 Now, from standard ODE theory and the Frobenius indices computed in Lemma~\ref{lemma:Frobenius}, we observe that there exists a function $\psi_2(y)$, smooth on $(0,\infty)$ and locally analytic around $y_*$, such that $\wLP(y)^{-\frac{\la-1}{1-\frac1{y_*}}}\psi_2(y)$ is a solution to~\eqref{eq:philahomog} which is linearly independent from $\psi_1$. Therefore, there exist constants $c_1,c_2$ such that 
 $$\psi_0(y)=c_1\psi_1(y)+\frac{c_2\psi_2(y)}{\wLP(y)^{\frac{\la-1}{1-\frac1{y_*}}}}.$$
 Clearly  $c_2\neq 0$ as $\psi_0$ and $\psi_1$ are linearly independent. 
 Thus the second integral in~\eqref{E:PSIVARCONSTS} converges to a finite value $\mathcal{I}$ as $y\to y_*$.
Thus, rearranging~\eqref{E:PSIVARCONSTS}, we arrive at
 \beqa
 \psi(y)=-c_2\frac{\psi_2(y)}{\wLP(y)^{\frac{\la-1}{1-\frac1{y_*}}}}\int_y^{y_*}\frac{s^2\psi_1(s)H(s)\wLP(s)^{\frac{\la-1}{1-\frac1{y_*}}}}{\mathcal A(s)\rhoLP(s)}\,\dif s-\mathcal{I}\psi_1(y)+c_2\psi_1(y)\int_y^{y_*}\frac{s^2\psi_2(s)H(s)}{\mathcal A(s)\rhoLP(s)}\,\dif s.
 \eeqa
 From the analyticity of $\psi_1(y)$ and $\psi_2(y)$  around $y_*$ and the smoothness of $H(y)$, it is now easy to see that $\psi(y)$ extends smoothly up to $y_*$. It is then simple to see that, as $\psi_1$, $\psi_2(y)|\wLP(y)|^{-\frac{\la-1}{1-\frac1{y_*}}}$ is still an analytic basis for solutions to~\eqref{eq:philahomog} for $y>y_*$, the solution extends smoothly beyond $y_*$ also, by replacing $\wLP$ with $|\wLP|$ everywhere in the formula above. 
 
 We have therefore shown that the solution $\psi(y)$ as defined in~\eqref{E:PSIVARCONSTS} is smooth on $(0,\bzeta(Z))$ (the domain of definition and smoothness for $H$; recall here that $Z$ is as in the definition of the space $\HmZ$). In order to check that $\psi(y)$ is also smooth up to and including the origin, $y=0$, we return to~\eqref{E:PSIVARCONSTS} and observe that, as the Frobenius indices at $0$ are $1$ and $-2$, $\psi_0$ is analytic and $O(y)$ as $y\to0$, while $$|\psi_1^{(j)}(y)|\leq Cy^{-2-j}, \text{ for }j=0,1,2,\quad \text{ and }\qquad \Big|\frac{\dif}{\dif y}\big(\frac{\psi_1(y)}{y}\big)\Big|\leq Cy^{-4}.$$
 Thus, inspecting~\eqref{E:PSIVARCONSTS}, we see easily that as $H(y)=O(y)$ and $H$ is smooth, $|D_y\psi(y)|$ is bounded near $y=0$, and $|\pa_yD_y\psi(y)|\leq Cy$ (recall $D_y=\pa_y+\frac2y$ is the 3d divergence operator in the radial coordinates). Thus, shifting from the radial $y$ variable to the full-space $\R^3$, we have that $D_y\psi\in H^1(B_\delta(0))$ for some $\delta\in(0,y_*)$. 
 
 On the other hand, applying the divergence $D_y$ to~\eqref{E:SURJLAG} and rearranging, we directly see that for $\la$ sufficiently large so that $$2\rhoLP(y) +2(1-\la)(\vLP'(y)-1)+\la(1-\la)<0,\ \ \ y\in[0,y_*],$$ 
 we have that $D_y\psi$ solves a uniformly elliptic equation on $B_\delta(0)\subset \R^3$. (We crucially use here that $\rhoLP$, $\vLP$ etc are representatives of radial functions, as is $D_yH$ by the choice of the function space $\DZodd$ for $f$ and $g$ and properties of $\bzeta$.) Hence elliptic regularity theory guarantees that $D_y\psi$ is in fact smooth at $0$ also. 
 We have therefore shown the existence of $\l_0>1$ such that for all $\la\geq\la_0$ and $(f_1,f_2)^\top\in \DZodd$, there is a smooth solution to~\eqref{E:ONTO}. 
 \end{proof}


\subsection{Time translation mode properties}

We define the standard Riesz projection onto the trivial time-translation mode $\l=1$ by 
\begin{align}\label{E:RIESZDEF}
\bfP=\frac{1}{2\pi i}\int_{\ga}(\bfL-\l\bfI)^{-1}\,\dif \l,
\end{align}
where $\ga$ is any closed, positively-oriented circle around 1 with radius less than $1$. Then, by Theorem~\ref{T:LINEARMAIN}, the only spectral point inside the contour region is the single eigenvalue $1$.

\begin{lemma}\label{L:GROWINGPROJ}
The growing mode $\Gamma$ and Riesz projector  satisfy the following properties:
\begin{itemize}
\item[(i)] The eigenvalue $\la=1$ is a simple eigenvalue of multiplicity one.
\item[(ii)] The projection $\bfP$ satisfies $\textup{rg}\,\bfP=\langle\Gamma\rangle$.
\end{itemize}
\end{lemma}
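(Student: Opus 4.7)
The plan is as follows. Since $\lambda=1$ is an isolated point of $\sigma(\bfL)$ in the contour region around it (the only spectrum with $\Re \lambda \ge 0$ by Theorem~\ref{T:SPECTRAL}, and separated from the rest by the contour), the standard functional calculus guarantees that $\bfP$ is a bounded projector commuting with $\bfL$, whose range equals the generalized eigenspace $\bigcup_{k\ge 1}\ker\bigl((\bfL-\bfI)^k\bigr)$. Because $\Gamma\in\ker(\bfL-\bfI)$ by Lemma~\ref{L:GMODE}, both (i) and (ii) reduce to the following two assertions: \textbf{(a)} $\dim\ker(\bfL-\bfI)=1$, and \textbf{(b)} there is no $\Phi\in D(\bfL)$ with $(\bfL-\bfI)\Phi=\Gamma$.

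For (a), I would pass to the Eulerian picture via Lemma~\ref{L:EVALUEPROB} and study the ODE~\eqref{E:PSILA} at $\lambda=1$. At $y=0$ the Frobenius indices $\{1,-2\}$ are independent of $\lambda$, so the regularity requirement $\psi\in\HmZEul$ selects a one-dimensional family, exactly as in Lemma~\ref{lemma:Frobenius}. At the sonic point $y_*$ the two indices $\{0,\,r_2(\lambda)\}$ with $r_2(\lambda)=(1-\lambda)/(1-1/y_*)$ collapse into a single double root when $\lambda=1$. In this exceptional repeated-root case the standard Frobenius theorem produces one analytic branch and one branch of the form $c_{\log}\,\psi_{1}\,\log(y-y_*)+(\text{analytic})$; membership in $\HmZEul$ rules out the logarithmic branch and again cuts the local solution space to one dimension. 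Hence $\dim\ker(\bfL-\bfI)\le 1$, with equality provided by the existence of $\Gamma$.

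For (b), I would use the quasicontraction bound $\|e^{s\bfL}\Phi\|_{\HmZ}\le e^{s}\|\Phi\|_{\HmZ}$ from Theorem~\ref{T:LINEARMAIN}, which is produced by Lumer--Phillips applied to a compact perturbation of $\bfL$ in Section~\ref{S:ACCRETIVITY} and therefore does not itself rely on the rank-one statement. Suppose $\Phi\in D(\bfL)$ satisfies $(\bfL-\bfI)\Phi=\Gamma$. A direct computation with the semigroup identity gives
\[
e^{s\bfL}\Phi=e^{s}\Phi+s\,e^{s}\,\Gamma,
\qquad
\|e^{s\bfL}\Phi\|_{\HmZ}\ge s\,e^{s}\|\Gamma\|_{\HmZ}-e^{s}\|\Phi\|_{\HmZ},
\]
which grows strictly faster than $e^{s}\|\Phi\|_{\HmZ}$ as $s\to\infty$ since $\Gamma\neq0$, contradicting the semigroup bound. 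Combining (a) and (b), the generalized eigenspace is one-dimensional; this simultaneously says $\lambda=1$ has algebraic (and hence geometric) multiplicity one, establishing (i), and since $\Gamma$ lies in that one-dimensional space, $\mathrm{rg}\,\bfP=\langle\Gamma\rangle$, establishing (ii).

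I expect the main obstacle to be the repeated-root Frobenius step at $y_*$: Lemma~\ref{lemma:Frobenius} explicitly handles only $\lambda\neq1$ (and the exceptional resonance $\lambda=1-1/y_*$), so the case of a true double root at $y_*$ must be spelt out, with particular care to verify that the constant $c_{\log}$ multiplying the logarithm is genuinely nonzero on the non-regular branch, so that the $\HmZEul$ regularity really does cut the local solution dimension down to one. The Jordan-block exclusion in step (b), by contrast, is a short and robust semigroup argument.
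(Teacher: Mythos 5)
Your part (a) follows the same path as the paper (pass to the Eulerian formulation, Frobenius analysis at both singular points). The paper implements the double-root case at $y_*$ by explicitly constructing the second solution via reduction of order, $g_2(y)=g_1(y)\int_y^{y_*}(\tilde y^3 \wLP(1-\omLP))^{-1}\,d\tilde y$, and reading off the $\log(y_*-y)$ blowup directly; your appeal to the abstract Frobenius theorem for a repeated indicial root is equivalent, and the concern you raise about $c_{\log}$ vanishing is unfounded — for a genuine double root the second Frobenius solution always carries a nontrivial logarithm, so this is not an actual obstacle. Part (a) is fine.

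Part (b) has a genuine circularity. The bound $\|e^{s\bfL}\Phi\|_{\HmZ}\le e^{s}\|\Phi\|_{\HmZ}$ that you invoke is \emph{not} produced by Lumer--Phillips alone. Lumer--Phillips in Section~\ref{S:ACCRETIVITY} gives $\|e^{s\mathscr L_m}\Phi\|\le e^{-2\sg s}\|\Phi\|$ for the \emph{compactly perturbed} operator $\mathscr L_m=\bfL-\K_m-\tilde\K_m$, not for $\bfL$ itself. For $\bfL$, the compact-perturbation theorem (Theorem~\ref{THM:COMPACTPERT}) yields exponential decay on $\textup{rg}(\bfI-\bfP)$ plus a finite-rank Riesz projector $\bfP$ onto the algebraic eigenspace at $\lambda=1$; on $\textup{rg}\,\bfP$ the semigroup is a matrix exponential whose growth is $s^{k-1}e^{s}$ if there is a $k\times k$ Jordan block. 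The sharp factor $e^{s}$ (with no polynomial correction) is therefore \emph{equivalent} to the semisimplicity of $\lambda=1$ — which is precisely what you are trying to prove — and indeed the paper's proof of Theorem~\ref{T:LINEARMAIN} cites Lemma~\ref{L:GROWINGPROJ} in order to obtain $e^{s\bfL}\bfP=e^{s}\bfP$ and hence this bound. Nor does a weaker a priori estimate $\|e^{s\bfL}\|\le C_\epsilon e^{(1+\epsilon)s}$ help, since $s\,e^{s}=o(e^{(1+\epsilon)s})$ for any $\epsilon>0$. So the semigroup shortcut does not close. The paper instead rules out the Jordan block directly at the ODE level: assume $(\bfL-\bfI)(\theta,\phi)^\top=\Gamma$, write down the Eulerian inhomogeneous ODE, solve by variation of parameters using the Wronskian $W(g_1,g_2)=\rhoLP/(y^2\wLP)$, and observe that the resulting solution blows up logarithmically at the sonic point unless $\int_0^{y_*}y^2 g_1(\vLP\pa_y g_1+g_1)\,dy=0$; Lemma~\ref{L:GROWINGMODEEUL} shows the integrand is strictly positive, so no such solution exists. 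You would need to supply that argument (or an independent non-circular proof of the sharp quasicontraction bound) for part (b).
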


\begin{proof}
The proof follows the same lines as~\cite[Proposition 3.4]{Glogic22} and so we provide only a sketch outline here. To prove (i), we use the Lagrangian-Eulerian equivalence  Lemma~\ref{L:LAGEUL}(ii) and~\eqref{E:BFLEULDEF} to observe that if $(\th,\phi)^\top\in\textup{ker}\,(\bfL-\bfI)$, then the corresponding Eulerian function $\psi(y)=\rhoLP(y)\th(z)$ satisfies
\beq
\psi''(y)+\Big(\frac2y-\frac{\rhoLP'}{\rhoLP}+\frac{\wLP'}{\wLP}\Big)\psi'+\Big(-\frac2{y^2}-\frac{2\rhoLP'}{y\rhoLP}+\frac{2\wLP'}{y\wLP}+\frac{2\rhoLP}{\wLP}\Big)\psi=0.
\eeq
Clearly the eigenfunction $g_1(y)=y\rhoLP(1-\omLP)$ is a solution to this problem (compare Lemma~\eqref{L:GROWINGMODEEUL}), and by reduction of order, we find a second, linearly independent solution
\beq
g_2(y)=g_1(y)\int_{y}^{y_*}\big(\tilde y^{3}\wLP(\tilde y)(1-\omLP(\tilde y))\big)^{-1}\,\dif \tilde y.
\eeq
 One checks directly that $g_2(y)\simeq y^{-1}$ as $y\to 0$, $g_2(y)\simeq \log(y_*-y)$ as $y\to y_*$. As this is a linearly independent solution and is clearly not in  $\HmZEul$, the proof of (i) is complete.

As in~\cite[Proposition 3.4]{Glogic22}, to show (ii), it is sufficient to show that there does not exist $(\th,\phi)^\top\in\HmZ$ such that
\[ (\bfL-\bfI)(\th,\phi)^\top=\Gamma.\]
Assuming for a contradiction that such a pair exists, we argue again in the Eulerian framework and find that this is equivalent to the existence of $\psi(y)$ such that
\beq
\psi''(y)+\Big(\frac2y-\frac{\rhoLP'}{\rhoLP}+\frac{\wLP'}{\wLP}\Big)\psi'+\Big(-\frac2{y^2}-\frac{2\rhoLP'}{y\rhoLP}+\frac{2\wLP'}{y\wLP}+\frac{2\rhoLP}{\wLP}\Big)\psi=\frac{\rhoLP}{\wLP}(\vLP\pa_yg_1+g_1).
\eeq
As in~\eqref{eq:Wronskian}, we see easily that the Wronskian of the two independent solutions is given by 
\[W(g_1,g_2)=\frac{\rhoLP}{y^2\wLP}.\]
Applying the variation of parameters method, there exist constants $c_1,c_2$ such that the solution $\psi$ is of the form
\beq
\psi(y)=c_1g_1(y)+c_2g_2(y)+g_2(y)\int_0^y \tilde y^2g_1(\vLP\pa_yg_1+g_1)\,\dif \tilde y-g_1(y)\int_0^y\tilde y^2g_2(\vLP\pa_yg_1+g_1)\,\dif \tilde y.
\eeq
One checks from the asymptotics of $g_1$ and $g_2$ that both integrals are well-defined near $y=0$. Moreover, as $g_2\simeq y^{-1}$ near $y=0$, we must have $c_2=0$. From the asymptotic property $g_2(y)\simeq \log(y_*-y)$ near $y_*$, we see that the final term on the right remains bounded near the sonic point. However, from the same property, we find that $\psi$ blows up logarithmically near the sonic point unless the integral
\[ \int_0^{y_*} y^2g_1(\vLP\pa_yg_1+g_1)\,\dif y\]
vanishes. But from~\eqref{E:GROWINGMODEPROP}, the integrand is strictly positive, and hence we arrive at the desired contradiction.
\end{proof}


\section{Maximal accretivity}\label{S:ACCRETIVITY}


The purpose of this section is to complement the mode stability result of Theorem~\ref{T:SPECTRAL} with an analysis of the remaining portion of the spectrum of $\bfL$. To this end, we will identify a leading order operator that captures the top order behaviour of $\bfL$ and enables us to prove control of the essential spectrum. This operator will be shown to be a compact perturbation of $\bfL$, and hence enables us to apply standard spectral theoretic results and the Lumer-Phillips theorem in order to conclude the proof of Theorem~\ref{T:LINEARMAIN}. 


\subsection{Differential operator properties}

In order to handle lower order terms arising from the application of $\bD^{2m}$ to $\bfL$, it is convenient to be able to invert $\bD^2$. As we work in radial symmetry, it is particularly straightforward to write down the inverse for this operator directly. We recall that we are working with functions defined on the interval $z\in[0,Z]$ for some $Z>z_*$ fixed.

\begin{definition}[Inversion of $\bD^2$]
To any  $f\in L^2(0,Z)$, we associate the inverse
\begin{align}\label{E:BLINVERSE}
\bD^{-2} f (z) : =  z^{-\frac23}\int_0^z\int_0^{\bar z} \tilde z^{-\frac23}  f(\tilde z)\, \diff \tilde z\,d\bar z,
\end{align}
which is normalised by the requirement $\bD^{-2}f(0)=0$. Accordingly, we define, for $m\geq 2$,
\begin{align}
\bD^{-2m} := \bD^{-2}\bD^{-2(m-1)}.
\end{align}
\end{definition}

It is easy to see that $\bD^2 (\bD^{-2}f) = f$. We next prove
some boundedness properties of the operator $\bD^{-2}$ that will be used often throughout this section.


\begin{lemma}\label{L:INVERSE}
Let $m\in\N$. There exists $C>0$ such that for any $f\in L^2(0,Z)$, $z\in[0,Z]$,
\begin{align}
\lv \bD^{-2m}f(z) \rv & \leq C z^{\frac23 m-\frac16} \|\frac f{z^{\frac13}}\|_{L^2(0,Z)}
\leq C z^{\frac23 m-\frac16}\|\bd f\|_{L^2(0,Z)}, \label{E:I1}\\
\|\bD^{-2m} f\|_{L^2(0,Z)} & \leq C Z^{\frac 23m+\frac13}\|\frac f{z^{\frac13}}\|_{L^2(0,Z)} \leq C Z^{\frac 23m+\frac13} \|\bd f\|_{L^2(0,Z)}. \label{E:I2}
\end{align}
\end{lemma}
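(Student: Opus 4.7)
The plan is to prove both estimates by first establishing the pointwise bound~\eqref{E:I1} for $m=1$ via a direct application of Cauchy--Schwarz to the defining formula~\eqref{E:BLINVERSE}, then iterating, and finally integrating to obtain~\eqref{E:I2}. The second (``$\bd f$'') form of each inequality will follow from a Hardy-type estimate adapted to the weighted divergence $\bd = \pa_z(z^{2/3}\cdot)$.

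For the base case $m=1$, I would apply Fubini to rewrite
\[
\bD^{-2}f(z) = z^{-\frac23}\int_0^z (z-\tilde z)\,\tilde z^{-\frac23} f(\tilde z)\,\diff\tilde z
= z^{-\frac23}\int_0^z (z-\tilde z)\,\tilde z^{-\frac13}\cdot\frac{f(\tilde z)}{\tilde z^{\frac13}}\,\diff\tilde z,
\]
and then estimate by Cauchy--Schwarz using $\int_0^z (z-\tilde z)^2\tilde z^{-\frac23}\,\diff\tilde z \le C z^{\frac{7}{3}}$, which yields $|\bD^{-2}f(z)| \le C z^{\frac12}\|f/z^{\frac13}\|_{L^2}$, matching $z^{\frac23 m-\frac16}$ with $m=1$. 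For the inductive step $m-1 \to m$, I would simply plug the pointwise bound on $\bD^{-2(m-1)}f$ into the same representation of $\bD^{-2}$: the integrand becomes $\tilde z^{-\frac23} \cdot C \tilde z^{\frac23(m-1)-\frac16}\|f/z^{\frac13}\|_{L^2}$, and explicit integration of the power $\tilde z^{\frac23(m-1)-\frac56}$ (integrable since $m\ge 2$ in the iteration, and checked directly for $m=1$) gives exactly the power $z^{\frac23 m-\frac16}$ after the double integral and the prefactor $z^{-\frac23}$ are collected.

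From the pointwise bound, inequality~\eqref{E:I2} is immediate by integration:
\[
\|\bD^{-2m}f\|_{L^2(0,Z)}^2 \le C\|f/z^{\frac13}\|_{L^2}^2 \int_0^Z z^{\frac43 m-\frac13}\,\diff z \le C Z^{\frac43 m+\frac23}\|f/z^{\frac13}\|_{L^2}^2,
\]
since $\frac43 m-\frac13>-1$. Taking square roots produces the desired $Z^{\frac23 m+\frac13}$ weight.

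The remaining task is the second inequality in each line, namely $\|f/z^{\frac13}\|_{L^2(0,Z)} \le C\|\bd f\|_{L^2(0,Z)}$. Setting $g(z):=z^{\frac23}f(z)$, so that $g' = \bd f$ and $g^2/z^2 = f^2/z^{\frac23}$, this is the standard one-dimensional Hardy inequality: integrating by parts with $1/z^2 = -\frac{d}{dz}(1/z)$ gives
\[
\int_0^Z \frac{g^2}{z^2}\,\diff z = -\Big[\frac{g^2}{z}\Big]_0^Z + 2\int_0^Z \frac{g\,g'}{z}\,\diff z
\le 2\Big(\int_0^Z \frac{g^2}{z^2}\,\diff z\Big)^{\!1/2}\Big(\int_0^Z (g')^2\,\diff z\Big)^{\!1/2},
\]
where the boundary term at $z=Z$ is non-positive and the one at $z=0$ vanishes for $f$ in the density class (e.g., $f\in \DZodd$ or its completion in $\|\bd\,\cdot\|_{L^2}$), yielding $\|f/z^{\frac13}\|_{L^2}\le 2\|\bd f\|_{L^2}$. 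The only subtle point — and the step requiring care — is this vanishing at $0$: one has to note that for $f$ in the natural function spaces used throughout the paper, $z^{\frac23}f(z)\to 0$ as $z\to 0^+$ (a consequence of $\bd f\in L^2$ and the representation $z^{\frac23}f(z)=\int_0^z \bd f\,\diff\tilde z$), so the Hardy inequality applies and a standard density argument extends it to all of $L^2(0,Z)$ with $\bd f\in L^2(0,Z)$.
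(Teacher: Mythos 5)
Your proof is correct and follows essentially the same route as the paper: Cauchy--Schwarz for the $m=1$ pointwise bound, iteration by plugging the pointwise bound back into the defining double integral, integration of the pointwise bound to obtain~\eqref{E:I2}, and a one-dimensional Hardy inequality (which is precisely the paper's~\eqref{E:HARDYREFINED} with $\beta=0$) for the $\bd f$ form. The only superficial differences are that you reduce the double integral to a single kernel $(z-\tilde z)$ via Fubini before applying Cauchy--Schwarz, whereas the paper bounds the inner integral directly, and that you compose the inverse operators in the order $\bD^{-2}\circ\bD^{-2(m-1)}$ rather than $\bD^{-2(m-1)}\circ\bD^{-2}$.
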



\begin{proof}
Note that the bound~\eqref{E:I2} is a simple consequence of~\eqref{E:I1}. We clearly have the bound
\begin{align}
\lv\int_0^{\bar z} \tilde z^{-\frac23}  f(\tilde z)\, \diff \tilde z \rv
\le \|\frac f{z^{\frac13}}\|_{L^2(0,\bar z)} \|z^{-\frac13}\|_{L^2(0,\bar z)} \leq C \|\frac f{z^{\frac13}}\|_{L^2(0,\bar z)} \bar z^{\frac16}.
\end{align}
Therefore  from~\eqref{E:BLINVERSE} we have
\begin{align}
\lv \bD^{-2}f(z)\rv \le z^{-\frac23} \|\frac f{z^{\frac13}}\|_{L^2(0,z)}  \int_0^z \bar z^{\frac16}\,d\bar z \leq C z^{\frac12} \|\frac f{z^{\frac13}}\|_{L^2(0,z)},\label{E:INVERSEPOINTWISE}
\end{align}
which proves~\eqref{E:I1} when $m=1$.
We now use~\eqref{E:INVERSEPOINTWISE} to infer 
$\lv \bD^{-2}f(z)\rv \le z^{\frac12} \|\frac f{z^{\frac13}}\|_{L^2(0,Z)}$ for any $z\in(0,Z]$. Therefore, for any $m\ge2$, noting $|\bD^{-2}f(z)|\leq \bD^{-2}|f|(z)$,
\begin{align*}
\lv \bD^{-2m}f(z) \rv \leq C  \|\frac f{z^{\frac13}}\|_{L^2(0,Z)} \bD^{-2(m-1)} (z^{\frac12}) \leq C z^{\frac23 m-\frac16} \|\frac f{z^{\frac13}}\|_{L^2(0,Z)}
\leq C z^{\frac23 m-\frac16}\|\bd f\|_{L^2(0,Z)},
\end{align*}
where we have used~\eqref{E:HARDYREFINED} with $\beta=0$ in the last line.
\end{proof}

\subsubsection{Commutation properties}


As we work in the space $\HmZ$, it is important to understand the commutation between $\bfL$ and the operator $\bD^{2m}$. To this end, we first establish a number of commutation relations and define some convenient families of operators. We recall the operator definitions~\eqref{E:DIVGRADDEF} and~\eqref{DEF:D^k}.

It is easy to check that formally the following commutation properties hold.
\begin{align}
\bp \Lambda = &\,\Lambda\bp + \frac13 \bp ,\label{E:GAIN1}\\
\bd \Lambda =&\, \Lambda \bd +\frac13 \bd,\label{E:GAIN2}\\
\bp\bd F = &\,\bd \bp F - \frac29 z^{-\frac23}F, \ \ F\in \DZ.
\end{align}
We also note that, from~\eqref{DEF:D^k} and~\eqref{E:KDEF}, 
\begin{align}\label{E:KDEF2}
K\theta = \bp  \left(\bG \bd \theta\right)  =\bG \bD^2 \theta + \bp \bG \, \bd \theta.
\end{align}
Properties~\eqref{E:GAIN1}--\eqref{E:GAIN2} encode the dissipativity gain that will become transparent in Proposition~\ref{P:DISSIP}. 
They in particular give the commutator relations (recall the definition~\eqref{E:BDELTA})
\be\label{E:ACC1}
[\bl, \Lambda] =\frac23 \bl, \ \ [\bD^2,\Lambda] = \frac23 \bD^2.
\ee


In order to prove the desired maximal accretivity properties we need to carefully describe the commutation properties between the 
leading order elliptic operator $K$ and $\bD^2$.
To that end we introduce the following algebra of operators.
We let the trivial operator classes $\mathcal{X}_0=\mathcal{Y}_0=\{I\}$ contain only the identity operator. Then,
\begin{align}
\mathcal X_{2j} : =&\, \left\{\mathcal P =\bp \mathcal R_j  \dots \bp \mathcal R_1 \,|\, (\mathcal R_1,\dots, \mathcal R_j)\in \{\bd, z^{-\frac13}\}^j\right\}, \ j\in\mathbb N, \label{E:XTWOJ}\\
\mathcal X_{2j-1} : =&\, \left\{\mathcal P =\mathcal R_{j-1} \bp \mathcal R_j  \dots \bp \mathcal R_1\,|\,(\mathcal R_1,\dots, \mathcal R_{j+1})\in \{\bd, z^{-\frac13}\}^j\right\}, \ j\in\mathbb N.\label{E:XTWOJPLUSONE}
\end{align}
Similarly, we define the concatenated operator classes
\begin{align}
\mathcal Y_{2j} : =&\, \left\{\mathcal P = \mathcal R_j \bp  \dots \mathcal R_1\bp \,|\, (\mathcal R_1,\dots, \mathcal R_j)\in \{\bd, z^{-\frac13}\}^j\right\}, \ j\in\mathbb N, \\
\mathcal Y_{2j-1} : =&\, \left\{\mathcal P =\bp \mathcal R_{j-1} \bp \dots \mathcal R_1 \bp\,|\, (\mathcal R_1,\dots, \mathcal R_{j+1})\in \{\bd, z^{-\frac13}\}^j\right\}, \ j\in\mathbb N.
\end{align}
In short we write
\begin{align}
\mathcal Y_{2j} = \mathcal X_{2j-1}\bp, \ \ \mathcal Y_{2j+1} = \mathcal X_{2j}\bp.
\end{align}

It is clear from the definitions that the operator algebras are designed to act particularly nicely on $\DZodd$ and $\DZeven$, respectively. In particular, 
let $f\in\DZeven$, $g\in\DZodd$, and take $P\in\mathcal{Y}_j$, $Q\in\mathcal{X}_j$. Then
\beqs
Pf\in\begin{cases}
\DZodd, & j\text{ odd},\\
\DZeven, & j\text{ even},
\end{cases}\qquad
Qg\in\begin{cases}
\DZeven, & j\text{ odd},\\
\DZodd, & j\text{ even}.
\end{cases}
\eeqs

For the purposes of estimates in the far-field (i.e.~when $z\gg1$), it is often convenient to have the following characterisation of these operators in terms of standard $\pa_z$ derivatives. This is the content of the next lemma.

\begin{lemma}\label{L:XJDECOMP}
For $Q\in\X_j$ or $Q\in\Y_j$, there exist two collections of real constants $c_\ell^Q$ and $\tilde c_\ell^Q$ such that
\[
Q=\sum_{\ell=0}^j c_\ell^Q z^{-\frac{j-\ell}{3}}\bD^\ell=\sum_{\ell=0}^j\tilde c_\ell^Qz^{\ell-\frac{j}{3}}\pa_z^\ell .
\] 
In particular, such formulae hold for the operators $\bD^j$. Moreover, we have the reverse identities
\be\label{EquivID}
\pa_z^k f = \frac{ \bD^k f }{z^{\frac23 k}} + \sum_{i=1}^{k} c_{ki} \frac{\bD^{k-i} f}{z^{\frac23 k+\frac{i}{3}}} .
\ee
\end{lemma}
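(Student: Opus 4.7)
The plan is to proceed by induction on $j$, exploiting the fact that the operators in $\X_j$ and $\Y_j$ are by construction $j$-fold compositions of elements from the ``atomic'' set $\{\bp, \bd, z^{-\frac13}\}$. The clean way to organise the argument is to introduce the auxiliary class of operators
\[
\mathcal S_j := \Big\{\, \sum_{\ell=0}^{j} c_\ell \, z^{\ell - \frac{j}{3}} \pa_z^\ell \,\Big|\, c_\ell \in \mathbb R\,\Big\},
\]
and prove first that $\mathcal S_j$ is stable under left multiplication by each of the three atomic operators, in the precise sense that $\bp\,\mathcal S_j \subset \mathcal S_{j+1}$, $\bd\,\mathcal S_j \subset \mathcal S_{j+1}$, and $z^{-\frac13}\mathcal S_j\subset\mathcal S_{j+1}$. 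This is a direct check: expanding $\bp(c_\ell z^{\ell-\frac{j}{3}}\pa_z^\ell)=c_\ell(\ell-\tfrac{j}{3})z^{\ell-\frac{j+1}{3}}\pa_z^\ell+c_\ell z^{\ell+1-\frac{j+1}{3}}\pa_z^{\ell+1}$, and similarly for $\bd=z^{\frac23}\pa_z+\frac23 z^{-\frac13}$ and for $z^{-\frac13}$, one verifies that both the derivative order and the corresponding power of $z$ shift by exactly the amount required to land in $\mathcal S_{j+1}$.

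From this closure property, an immediate induction on $j$ shows that every $Q\in\X_j\cup\Y_j$ lies in $\mathcal S_j$, establishing the second representation. In particular, $\bD^j\in\mathcal S_j$. To pass to the first representation $Q=\sum_\ell c_\ell^Q z^{-\frac{j-\ell}{3}}\bD^\ell$, I would observe that the change-of-basis map between $\{z^{\ell-\frac{j}{3}}\pa_z^\ell\}_{0\le\ell\le j}$ and $\{z^{-\frac{j-\ell}{3}}\bD^\ell\}_{0\le\ell\le j}$ is lower-triangular (with nonzero diagonal), because $\bD^\ell f=z^{\frac{2\ell}{3}}\pa_z^\ell f+\text{lower-order in }\pa_z$ by a straightforward induction using $\bp f=z^{\frac23}\pa_z f$ and $\bd f=z^{\frac23}\pa_z f+\tfrac23 z^{-\frac13}f$. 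Inverting this triangular map yields the coefficients $c_\ell^Q$ from the $\tilde c_\ell^Q$.

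Finally, the reverse identity~\eqref{EquivID} is established by induction on $k$. For $k=1$, we have $\bd f=z^{\frac23}\pa_z f+\tfrac23 z^{-\frac13}f$, so $\pa_z f=z^{-\frac23}\bd f-\tfrac23 z^{-1} f$, which is of the claimed shape. For the inductive step, apply $\pa_z$ to the formula at step $k$ and observe that $\pa_z$ acting on $z^{-\frac{2k}{3}-\frac{i}{3}}\bD^{k-i}f$ produces a term of the form $z^{-\frac{2(k+1)}{3}-\frac{i'}{3}}\bD^{k+1-i'}f$ (using $\pa_z=z^{-\frac23}(\bd-\tfrac23 z^{-\frac13})$ once more on the gradient factor, and the power rule on the $z$-weight). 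Bookkeeping the indices shows the sum retains the prescribed form.

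The only real subtlety is the index arithmetic across the two equivalent representations, since the definitions of $\X_{2j}, \X_{2j-1}, \Y_{2j}, \Y_{2j-1}$ have slightly different structural templates (odd vs.\ even lengths, and the presence or absence of a leading or trailing $\bp$). This is handled uniformly once one observes that \emph{every} operator in $\X_j\cup\Y_j$, regardless of its internal pattern, is a $j$-fold composition of atomic factors in $\{\bp,\bd,z^{-\frac13}\}$, so the closure of $\mathcal S_j$ under such compositions and an induction of depth $j$ is all that is needed. I do not expect any genuine obstacle; the proof is essentially an exercise in carefully tracking derivative order and the associated power of $z$.
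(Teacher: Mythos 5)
The paper itself gives no proof of Lemma~\ref{L:XJDECOMP} (it is stated and the text moves on with ``As a consequence of these identities, it is trivial to prove the following''), so there is no paper argument to compare against; you are filling a gap the authors deliberately left. Your proof is correct, and it is the natural argument.

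The key points all check out. Each of $\bp$, $\bd$, $z^{-\frac13}$ is homogeneous of degree $-\frac13$ under the scaling $z\pa_z$, so every $j$-fold composition of them has degree $-\frac{j}{3}$, which is exactly the common homogeneity of the basis elements $z^{\ell-\frac{j}{3}}\pa_z^\ell$, $0\le\ell\le j$. Your closure statement $a\,\mathcal S_j\subset\mathcal S_{j+1}$ for $a\in\{\bp,\bd,z^{-\frac13}\}$ is a direct computation, and since every element of $\X_j\cup\Y_j$ is (by inspection of the definitions, regardless of the precise internal pattern of even/odd cases) a $j$-fold composition of such atomic factors applied on the left, induction gives $\X_j\cup\Y_j\subset\mathcal S_j$. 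The change-of-basis to the $\bD^\ell$ representation is indeed triangular with unit diagonal: since $\bp$ and $\bd$ both have leading symbol $z^{\frac23}\pa_z$, one has $\bD^\ell f=z^{\frac{2\ell}{3}}\pa_z^\ell f+(\text{lower order in }\pa_z)$, and multiplying by $z^{-\frac{j-\ell}{3}}$ puts the leading term at $z^{\ell-\frac{j}{3}}\pa_z^\ell$ exactly. Your induction for \eqref{EquivID} also works; the one small point worth making explicit, which you implicitly use, is that $\pa_z\bD^m f=z^{-\frac23}\bD^{m+1}f+(\text{const})\,z^{-1}\bD^m f$ in both parities of $m$ (with the constant $0$ when $m$ is odd, since then $\bD^{m+1}=\bp\bD^m$, and $-\frac23$ when $m$ is even, since then $\bD^{m+1}=\bd\bD^m$), which is what makes every term land in the $(k+1)$-level template after applying $\pa_z$. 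No gaps.
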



As a consequence of these identities, it is trivial to prove the following.

\begin{lemma}\label{L:PGBOUND}
Let $k\in\N$ and $P\in \Y_k$.  We define
\beq\label{E:GTILDEDEF}
\tG(z) : = \frac{z^{\frac13}}{\bzeta(z)}.
\eeq
Then  $\tG\in\Dinfeven$, and there exists $C>0$, depending on $k$, such that, for all $z\geq 0$,
\begin{align}\label{E:PGBOUND}
\lv P\bG(z) \rv \leq C (1+z)^{-\frac{4+k}{3}},\ \ \ \  \lv P\sqrt{\bG(z)} \rv \leq C (1+z)^{-\frac{2+k}{3}}\ \ \ \  \lv P\tilde G(z) \rv \leq C (1+z)^{-\frac{2+k}{3}}.
\end{align}
\end{lemma}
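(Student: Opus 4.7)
I would split the statement into two parts: the membership $\tilde G \in \Dinfeven$, and the three uniform pointwise bounds. The key observation is that each of $\bG$, $\sqrt{\bG}$ and $\tilde G$ is smooth as a function of $y=z^{\frac13}$ with only even powers appearing at the origin (i.e., lies in $\Dinfeven$), so the near-origin behaviour will be automatic and the real content of the lemma is the far-field decay.

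For the membership claim, I would use the Frobenius/regularity analysis of the LP profile at the origin from the appendix (Lemma~\ref{SS:SSP}): one has an expansion $\bzeta(z) = z^{\frac13}\Phi(z^{\frac23})$ with $\Phi$ real-analytic and strictly positive at $0$, so $\tilde G(z) = 1/\Phi(z^{\frac23})$ is smooth and even in $y$ near the origin. Combined with the real-analyticity and strict positivity of $\bzeta$ on $(0,\infty)$ (see Lemma~\ref{L:ZETABAR}), this gives $\tilde G \in \Dinfeven$. The same reasoning, applied together with Lemma~\ref{L:GBAR}, shows that $\sqrt{\bG} \in \Dinfeven$ since $\bG > 0$ and $\bG \in \Dinfeven$.

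For the pointwise bounds, I would invoke Lemma~\ref{L:XJDECOMP} to write any $P \in \Y_k$ in the form
\[
P = \sum_{\ell=0}^k \tilde c_\ell^P\, z^{\ell - \frac{k}{3}}\,\pa_z^\ell.
\]
Because $P$ sends $\Dinfeven$ into $\Dinf$ by parity, each of $P\bG$, $P\sqrt{\bG}$ and $P\tilde G$ is smooth, hence bounded, on any fixed compact interval $[0,Z_1]$, and the target weights $(1+z)^{-\frac{4+k}{3}}$, $(1+z)^{-\frac{2+k}{3}}$ are bounded away from zero there; this handles the near-origin region. In the far-field $z \geq Z_1 \gg 1$, I would use the leading asymptotics $\bzeta(z) \sim c_\infty z$ and $\bp\bzeta(z) \sim c_\infty z^{\frac23}$ together with an inductive ODE analysis based on~\eqref{E:LPLAG} to obtain
\[
\pa_z^\ell \bG(z) = O(z^{-\frac43 - \ell}),\qquad \pa_z^\ell \sqrt{\bG}(z) = O(z^{-\frac23 - \ell}),\qquad \pa_z^\ell \tilde G(z) = O(z^{-\frac23 - \ell}),
\]
as $z \to \infty$. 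Substituting into the expansion of $P$, each term $z^{\ell - \frac{k}{3}}\pa_z^\ell$ contributes at most $z^{-\frac{4+k}{3}}$ when acting on $\bG$ and at most $z^{-\frac{2+k}{3}}$ when acting on $\sqrt{\bG}$ or $\tilde G$, which gives the claimed decay rates upon matching with the near-origin bound.

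The principal obstacle is the rigorous derivation of the higher-derivative far-field asymptotics, sharpening the $\ell=0,1$ statements~\eqref{E:GBDS} and~\eqref{E:DZGBARFF} already contained in Lemma~\ref{L:GBAR}. These should follow from repeated differentiation of the LP-equation~\eqref{E:LPLAG} and an induction on $\ell$, exploiting the fact that the coefficients of~\eqref{E:LPLAG} admit a convergent expansion in negative fractional powers of $z$ at infinity; but the bookkeeping needs care to ensure that each additional $\pa_z$ genuinely produces an extra factor of $z^{-1}$ rather than a spurious $z^{-\frac13}$ or $z^{-\frac23}$.
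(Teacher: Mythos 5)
Your plan is correct and is in substance the paper's proof, which is a one-liner: recall \eqref{E:GGDEF}, \eqref{E:GTILDEDEF} and invoke the expansions of $\bzeta$ in Lemma~\ref{L:ZETABAR}. The one point you over-estimate is the ``principal obstacle'' flagged at the end: Lemma~\ref{L:ZETABAR} does not merely give the leading asymptotic $\bzeta\sim\tilde\zeta_{-1}z$; it provides a \emph{convergent} far-field expansion $\bzeta(z)=\tilde\zeta_{-1}z+\sum_{j\ge 0}\tilde\zeta_j z^{-j}$ with $\tilde\zeta_{-1}>0$. Consequently, for $z\gg1$, each of $\bG$, $\sqrt{\bG}$ and $\tG$ equals a leading power of $z$ (namely $z^{-4/3}$, $z^{-2/3}$, $z^{-2/3}$ respectively) multiplied by a real-analytic, nonvanishing function of $z^{-1}$. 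Each application of $\pa_z$ therefore drops the power of $z$ by exactly one, so $\pa_z^\ell\bG=O(z^{-4/3-\ell})$, $\pa_z^\ell\sqrt{\bG}=O(z^{-2/3-\ell})$ and $\pa_z^\ell\tG=O(z^{-2/3-\ell})$ follow from term-by-term differentiation of the series, with no inductive ODE analysis through~\eqref{E:LPLAG} needed and no risk of spurious $z^{-1/3}$ or $z^{-2/3}$ gains. Substituting these into the decomposition $P=\sum_{\ell=0}^k\tilde c_\ell^P z^{\ell-k/3}\pa_z^\ell$ from Lemma~\ref{L:XJDECOMP} then gives the claimed rates exactly as you describe, with the near-origin region handled by the $\Dinfeven$ regularity as you say.
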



\begin{proof}
We recall~\eqref{E:GGDEF} and~\eqref{E:GTILDEDEF}. Given the expansion of $\bzeta$ from Lemma~\ref{L:ZETABAR},~\eqref{E:PGBOUND} follows easily.
\end{proof}


We introduce a convenient product and chain rule for the operators in the algebras $\mathcal{X}_j$ and $\mathcal{Y}_j$.
\begin{lemma}\label{L:XYPRODCHAIN}
Let  $j\in\N$, $f,g\in \Dinf$. Then, for any $Q\in\mathcal{X}_j$ and $P\in\mathcal{Y}_j$, there exist families of constants such that
\begin{align}
Q(fg)=\sum_{k=0}^j\sum_{Q_1\in\mathcal{X}_k,\,P_2\in\mathcal{Y}_{j-k}}c_{Q_1P_2}Q_1fP_2g,\label{E:XJPROD}\\
P(fg)=\sum_{k=0}^j\sum_{P_1\in\mathcal{Y}_k,\,P_2\in\mathcal{Y}_{j-k}}c_{P_1P_2}P_1fP_2g.\label{E:YJPROD}
\end{align}
Moreover, if $f\in C^j([-\frac12,\frac12])$, $g\in C^j$, then, for $P\in\mathcal{Y}_j$, the quantity $P\big(f(g(z))\big)$ expands as a linear combination of terms of the form 
\beq\label{E:YJCHAIN}
f^{(k)}(g(z))P_{j_1}g\cdots P_{j_k}g,
\eeq
for some $j_1,\ldots,j_k\in\N$ such that
\[
1\leq k\leq j,\ \ P_{j_n}\in\mathcal{Y}_{j_n},\ \ n\in\{1,\ldots,k\},\ \ j_1+\cdots+j_k=j.
\]
\end{lemma}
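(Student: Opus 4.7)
The plan is to prove all three statements by simultaneous induction on $j$, with the base cases handled directly. The starting point is the three fundamental Leibniz identities for the basic operators, which follow by direct differentiation:
\begin{equation*}
\bp(uv) = (\bp u)v + u(\bp v), \quad \bd(uv) = (\bd u)v + u(\bp v), \quad z^{-\frac13}(uv) = (z^{-\frac13}u)v.
\end{equation*}
For items (1) and (2), the case $j=0$ is immediate since $\mathcal{X}_0 = \mathcal{Y}_0 = \{I\}$. For the chain rule (3), the base $j=1$ (i.e. $P=\bp$, the unique element of $\mathcal{Y}_1$) gives $\bp(f(g(z))) = z^{\frac23}f'(g(z))g'(z) = f'(g(z))\cdot\bp g$, which is of the claimed form with $k=1,\,j_1=1$.

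For the inductive step of (1) and (2), given $Q\in\mathcal{X}_{j+1}$ or $P\in\mathcal{Y}_{j+1}$, I would peel off the leftmost basic operator $A\in\{\bp,\bd,z^{-\frac13}\}$ using the structural identities $\mathcal{X}_{2k}=\bp\,\mathcal{X}_{2k-1}$, $\mathcal{X}_{2k-1}=\mathcal{R}\,\mathcal{X}_{2k-2}$, and similarly for $\mathcal{Y}$. Write $Q=AQ_0$ (resp.~$P=AP_0$), apply the induction hypothesis to expand $Q_0(fg)$ or $P_0(fg)$ into a sum $\sum c\, Q_1f\cdot P_2g$ (with indices in the desired classes), and then apply $A$ to each summand using its Leibniz rule. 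This produces a candidate expansion with total degree $j+1$, which must then be rearranged to sit in the prescribed classes. For (3), the inductive step mirrors this scheme: apply $A$ to the hypothesis expansion $\sum f^{(k)}(g)\prod P_{j_i}g$, using the chain rule $\bp f^{(k)}(g)=f^{(k+1)}(g)\bp g$ (and its trivial analogue for $z^{-\frac13}$, which is purely multiplicative) for the outer factor, combined with the product rule (2) on $\prod P_{j_i}g$.

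The main obstacle is the combinatorial bookkeeping to guarantee that every operator that appears acts within the prescribed $\mathcal{X}$- or $\mathcal{Y}$-class. Two types of unwanted terms naturally arise: (i) nested $\bp\bp$ blocks, which violate the alternating pattern defining both algebras, and (ii) lower-order factors such as isolated $z^{-\frac13}$ appearing on the wrong side of the product. Difficulty (i) is handled by the algebraic identity $\bp\bp = \bd\bp - \frac{2}{3}z^{-\frac13}\bp$, which expresses $\bp\bp$ as a linear combination of elements of $\mathcal{Y}_2$ (and more generally by the rewriting $\bp = \bd - \frac{2}{3}z^{-\frac13}$, deployed wherever two consecutive $\bp$'s meet). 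Difficulty (ii) is handled by two commutativity facts: the scalar factor $z^{-\frac13}$ may be freely moved across the pointwise product $u\cdot v$, and the product itself is commutative $fg = gf$, allowing us to swap which factor receives the operator $Q_1\bp\in\mathcal{Y}_{k+1}$ produced when $\bp$ lands on an already-$\bp$-ending $P_2\in\mathcal{Y}$.

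Tracking these rewrites carefully through the induction, the summation indices balance as $k+(j-k)=j$, since no rewriting step raises the total order, and the resulting decompositions take precisely the form stated in \eqref{E:XJPROD}, \eqref{E:YJPROD}, and \eqref{E:YJCHAIN}. I expect the most delicate step to be verifying, at each inductive increment, that all the rewritings required to recast terms into $\mathcal{X}$ or $\mathcal{Y}$ form indeed produce only admissible operators; this is essentially a parity argument combined with the fact that $\{z^{\ell-j/3}\pa_z^\ell\}_{0\le\ell\le j}$ spans a space compatible with the decomposition of Lemma~\ref{L:XJDECOMP}, so that no genuinely new terms can appear.
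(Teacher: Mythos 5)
Your proposal is correct and takes essentially the same inductive approach as the paper, which likewise proves \eqref{E:XJPROD}--\eqref{E:YJPROD} ``from a simple inductive argument and the usual Leibniz rule, noting also that $\bp=\bd-\frac23 z^{-\frac13}$,'' and then derives \eqref{E:YJCHAIN} from the product rules. Your sketch actually surfaces two technical points that the paper's one-sentence proof leaves implicit: (i) after rewriting a misplaced $\bd$ or $\bp$, a stray $z^{-1/3}$ factor can land in front of an operator starting with $\bd$ or $z^{-1/3}$ (a $\mathcal{R}\mathcal{R}$ block, which belongs to neither $\mathcal{X}$ nor $\mathcal{Y}$), and it must be shuttled across the pointwise product to attach to the \emph{other} factor where it completes a legal $\mathcal{R}\bp\cdots$ pattern; and (ii) a parity count (the total order determines the parity of the indices, guaranteeing that an odd-order slot is always available to receive the displaced $z^{-1/3}$, or that a $\bp$-initial factor exists to absorb it). Identifying these two mechanisms is precisely what makes the induction close, and your appeal to $fg=gf$ to swap which factor carries the $\mathcal{X}$-type operator is also a step the paper's proof uses silently when passing from the $\mathcal{X}$-formula to the $\mathcal{Y}$-formula. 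So there is no gap in the approach; what remains is the case-by-case parity bookkeeping you already flagged as ``the most delicate step,'' which is routine once these two fixes are in hand.
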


\begin{proof}
The proofs of~\eqref{E:XJPROD}--\eqref{E:YJPROD} follow from a simple inductive argument and the usual Leibniz rule, noting also that $\bp=\bd-\frac23 z^{-\frac13}$. 

The proof of~\eqref{E:YJCHAIN} is similarly simple and relies on~\eqref{E:XJPROD}--\eqref{E:YJPROD}, where we note the constraint $k\geq 1$ in the formula occurs as $P\in\mathcal{Y}_j$, so that $P$ applies at least one $\bp$ directly to $f(g(z))$.
\end{proof}



\begin{lemma}\label{L:KEYCOMM}
Let $\theta\in\DZodd$, $m\in\N$. We then have the following commutation property:
\begin{align}
\bD^{2m} K \theta =&\, K \bD^{2m}\theta + 2m \bp \bG \bD^{2m+1}\theta + \mathcal R_{2m}\theta,\label{E:H2jCOMM}
\end{align}
where
\begin{align}\label{E:REMAINDER}
\mathcal R_{2m} \theta = \sum_{\ell=1}^{2m} \sum_{Q\in \mathcal X_\ell, P\in \mathcal Y_{2m+2-\ell}} c^\ell_{PQ} P\bG Q \theta.
\end{align}
\end{lemma}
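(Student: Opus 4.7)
The plan is to proceed by induction on $m \geq 1$. For the base case $m=1$, begin with the identity $K\theta = \bG\bD^2\theta + \bp\bG\cdot \bd\theta$ from \eqref{E:KDEF2} and compute $\bD^2 K\theta = \bp\bd K\theta$ directly, using the Leibniz rules $\bp(Fg) = \bp F\cdot g + F\bp g$ and $\bd(Fg) = \bp F\cdot g + F\bd g$. Collecting terms according to how many derivatives land on $\bG$: the contributions with no further derivative on $\bG$ (beyond the $\bp\bG$ already present in the second summand) total $\bG\bD^4\theta + \bp\bG\bD^3\theta = K\bD^2\theta$; the contributions with exactly one extra derivative on $\bG$ total $2\bp\bG\bD^3\theta$ (one from $\bp\bG \cdot \bd\theta$ when $\bp\bd$ places a $\bp$ on $\bG$, and two from $\bG\bD^2\theta$ as either of the two outer derivatives of $\bp\bd$ lands on $\bG$); the remaining terms all carry two or more derivatives on $\bG$ and are visibly of the form $P\bG\cdot Q\theta$ with $P\in\mathcal{Y}_{4-\ell}$, $Q\in\mathcal{X}_\ell$, $\ell\in\{1,2\}$, using $\bd\bd\theta = \bD^2\theta + \tfrac{2}{3}z^{-1/3}\bd\theta$ to reconcile non-standard compositions. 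This establishes the case $m=1$.

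For the inductive step, assume the identity at level $m-1$. Writing $\bD^{2m}K\theta = \bD^{2(m-1)}(\bD^2 K\theta)$ and inserting the base case gives
\begin{equation*}
\bD^{2m}K\theta = \bD^{2(m-1)}\bigl(K\bD^2\theta\bigr) + 2\,\bD^{2(m-1)}\bigl(\bp\bG\cdot \bD^3\theta\bigr) + \bD^{2(m-1)}\mathcal{R}_2\theta.
\end{equation*}
Applying the induction hypothesis to the first term with $\theta \mapsto \bD^2\theta$, and using the composition identities $\bD^{2(m-1)}\bD^2 = \bD^{2m}$ and $\bD^{2m-1}\bD^2 = \bD^{2m+1}$, produces $K\bD^{2m}\theta + 2(m-1)\bp\bG\bD^{2m+1}\theta + \mathcal{R}_{2(m-1)}(\bD^2\theta)$. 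For the middle term, apply the product rule \eqref{E:XJPROD} to $\bD^{2(m-1)}(\bp\bG\cdot\bD^3\theta)$: the summand in which no derivative lands on $\bp\bG$ contributes $\bp\bG\cdot\bD^{2(m-1)}\bD^3\theta$, which expands via the commutation relation $\bd\bp = \bp\bd + \tfrac{2}{9}z^{-2/3}$ as $\bp\bG \bD^{2m+1}\theta$ plus terms in which $\bp\bG$ multiplies an operator of weighted order strictly less than $2m+1$ applied to $\theta$; the latter (together with all other summands from the product rule, which place at least one additional derivative on $\bp\bG$) are of the form $P\bG\cdot Q\theta$ with $P\in\mathcal{Y}_{2m+2-\ell}$, $Q\in\mathcal{X}_\ell$, $\ell\leq 2m$, and so lie in $\mathcal{R}_{2m}$. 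The third term $\bD^{2(m-1)}\mathcal{R}_2\theta$ is handled identically, since each summand of $\mathcal{R}_2$ already has $\geq 1$ derivative on $\bG$ and this property is preserved when $\bD^{2(m-1)}$ is distributed via \eqref{E:XJPROD}. Summing the top-order contributions yields $2(m-1) + 2 = 2m$, as claimed.

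The structural input making this scheme close is the stability of the algebras $\mathcal{X}_k$ and $\mathcal{Y}_k$ under the Leibniz rules together with the commutator identities $\bd = \bp + \tfrac{2}{3}z^{-1/3}$ and $\bd\bp = \bp\bd + \tfrac{2}{9}z^{-2/3}$. These guarantee that any compositions produced by the product rule or by reconciling $\bd\bd$, $\bp\bd$, $\bd\bp$ with the standard $\bD^k$ remain within $\mathcal{X}_\ell$ and $\mathcal{Y}_{2m+2-\ell}$ of the correct total weighted order. The main obstacle is the combinatorial bookkeeping: verifying that the coefficient of $\bp\bG\bD^{2m+1}\theta$ is exactly $2m$ (no further main-order contributions hidden in the reconciliation of compositions) and that every lower-order correction sits precisely in $\mathcal{X}_\ell\times\mathcal{Y}_{2m+2-\ell}$ for some $\ell\in\{1,\ldots,2m\}$. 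The inductive organisation above confines this bookkeeping to the base case together with a single application of the product rule \eqref{E:XJPROD}--\eqref{E:YJPROD}.
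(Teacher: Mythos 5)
Your strategy is the same in spirit as the paper's (induction on $m$, base case $m=1$, then peel off a $\bD^2$), but you have inverted the direction of the peeling, and this creates a genuine gap. The paper writes $\bD^{2(m+1)}K\theta - K\bD^{2(m+1)}\theta$ and applies the \emph{outermost} $\bD^2$ last: this uses the $m=1$ case on the inner expression $\bD^{2m}\theta$, which produces compositions $\bD^2\bD^{2m}$ and $\bD^3\bD^{2m}$. Since $\bD^{2m}=(\bp\bd)^m$ is the inner factor and the $\bD^k$ are defined recursively by \emph{left}-appending ($\bD^{k}=\bd\bD^{k-1}$ or $\bD^k=\bp\bD^{k-1}$), these compositions collapse exactly: $\bD^2\bD^{2m}=\bD^{2m+2}$ and $\bD^3\bD^{2m}=\bd(\bp\bd)^{m+1}=\bD^{2m+3}$. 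Your version applies $\bD^{2(m-1)}$ to the $m=1$ identity, which produces the composition $\bD^{2(m-1)}\bD^3=(\bp\bd)^{m-1}\bd\bp\bd$. For $m\geq 2$ this is \emph{not} equal to $\bD^{2m+1}=\bd(\bp\bd)^m$: already for $m=2$, $\bp\bd\bd\bp\bd\neq\bd\bp\bd\bp\bd$, the difference being $[\bD^2,\bd]\bD^2=-\tfrac{2}{9}z^{-2/3}\bd\bD^2$. You acknowledge this and invoke $\bd\bp=\bp\bd+\tfrac29 z^{-2/3}$, but the resulting correction terms do not slot automatically into $\mathcal{R}_{2m}$: the formula \eqref{E:REMAINDER} requires every summand to factor as $P\bG\cdot Q\theta$ with $P\in\mathcal{Y}_{2m+2-\ell}$ and $Q\in\mathcal{X}_\ell$, whose orders sum to exactly $2m+2$, whereas the commutator corrections you generate naturally pair $P=\bp\in\mathcal{Y}_1$ with operators on $\theta$ whose weighted order is $<2m+1$ (plus loose powers of $z^{-1/3}$). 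Redistributing the $z$-weights between $P$ and $Q$ to hit the algebras of the exactly prescribed orders is precisely the bookkeeping your write-up defers; it is not immediate, and the paper's choice of inductive direction is what lets it avoid this entirely.

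There is also a smaller imprecision in your base case: you describe the one-extra-derivative-on-$\bG$ contributions as ``one from ... and two from ...'' (three sources) yet claim they total $2\bp\bG\bD^3\theta$, and you assert the remainder $\mathcal{R}_2\theta$ carries ``two or more derivatives on $\bG$.'' The paper's explicit $\mathcal{R}_2\theta=\bigl(3\bl\bG-\tfrac83 z^{-1/3}\bp\bG\bigr)\bD^2\theta+\bp\bl\bG\,\bd\theta$ contains a term with only one derivative on $\bG$ (the $z^{-1/3}\bp\bG$ piece, which is in $\mathcal{Y}_2$ because of the $z^{-1/3}$); your bookkeeping by ``number of derivatives on $\bG$'' isn't the correct invariant, the correct one being membership in the graded algebras $\mathcal{Y}_k$. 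The cleanest repair of your argument would be to induct exactly as the paper does --- write $\bD^{2m+2}K\theta-K\bD^{2m+2}\theta=\bD^2(\bD^{2m}K\theta-K\bD^{2m}\theta)+\bigl(\bD^2K\bD^{2m}\theta-K\bD^{2m+2}\theta\bigr)$, use the inductive hypothesis on the first bracket, the $m=1$ case on the second, and then a single explicit $\bD^2$ Leibniz expansion on $\bp\bG\bD^{2m+1}\theta$, where the needed reconciliations $\bp^2\bG=\bl\bG-\tfrac23 z^{-1/3}\bp\bG$ and $\bp\bD^{2k}=\bD^{2k+1}-\tfrac23 z^{-1/3}\bD^{2k}$ stay local and land visibly in $\mathcal{R}_{2m+2}$.
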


\begin{proof}
We begin in the case $m=1$. From the Leibniz rule
\be\label{E:LEIBNIZ}
\bd(uv) = u \bp v + \bd u v
\ee
we easily conclude
\begin{align}
\bd K\theta & = \bd (\bG  \bD^2 \theta + \bp \bG   \bd \theta) \notag\\
& = \bp \bG  \bD^2 \theta + \bG  \bd\bD^2\theta + \bl \bG  \bd \theta + \bp \bG  \bD^2 \theta \notag\\
& = \bG  \bD^3 \theta + 2\bp \bG  \bD^2 \theta + \bl \bG  \bd \theta.
\end{align}
We now apply $\bp$ to the above identity and obtain
\begin{align}
\bD^2 K \theta & = \bp\left(\bG  \bD^3 \theta + 2\bp \bG  \bD^2 \theta + \bl \bG  \bd \theta\right) \notag\\
& = K \bD^2\theta + 2 \bp^2 \bG  \bD^2 \theta + 2\bp \bG  \bp\bD^2 \theta + \bp\bl \bG  \bd \theta + \bl \bG  \bD^2\theta.
\end{align}
We now use the relation $\bp u = \bd u- \frac23 z^{-\frac13}u$ to rewrite
\beq\label{E:GRAD^2}
\bp^2 \bG  = \bl \bG  - \frac23 z^{-\frac13}\bp \bG , \ \ \bp\bD^2\theta = \bD^3\theta - \frac23 z^{-\frac13} \bD^2\theta
\eeq
and finally obtain the identity
\begin{align}
\bD^2 K \theta = K \bD^2\theta + 2\bp \bG  \bD^3 \theta + \left(3\bl \bG  -\frac83z^{-\frac13}\bp \bG \right) \bD^2\theta +\bp\bl \bG  \bd\theta,\label{E:HCOMM}
\end{align}
which is precisely~\eqref{E:H2jCOMM} in the case $m=1$. In particular
\be\label{E:RONEFORMULA}
\mathcal R_2 \theta =   \Big(3\bl \bG  -\frac83z^{-\frac13}\bp \bG \Big) \bD^2\theta +\bp\bl \bG  \bd\theta.
\ee
We now proceed by induction on $m$. 
Assume that~\eqref{E:H2jCOMM} holds for some $m\in\mathbb N$. 
By the inductive hypothesis and~\eqref{E:HCOMM}, we deduce
\beqa
\bD^{2m+2} K\theta - K \bD^{2m+2}\theta & = \bD^2\big(\bD^{2m}K\theta - K\bD^{2m}\theta\big)+2\bp\bG\bD^{2m+3}\theta + \mathcal R_2\bD^{2m}\theta\\
& = \bD^2\big(2m\bp\bG\bD^{2m+1}\theta\big)+\bD^2\mathcal R_{2m}\theta+2\bp\bG\bD^{2m+3}\theta + \mathcal R_2\bD^{2m}\theta.
\eeqa
Expanding the first term on the right using~\eqref{E:GRAD^2}, we find 
\beqa
\bD^2\big(\bp\bG\bD^{2m+1}\theta\big) &=\bp\bG\bD^{2m+3}\theta +\Big(2\bl\bG -\frac23 
z^{-\frac13}\bp\bG\Big)\bD^{2m+2}\theta + \bp\bl\bG\bD^{2m+1}\theta.
\eeqa  
Thus, we have obtained
\beqa
\bD^{2m+2} K\theta - K \bD^{2m+2}\theta & = (2m+2)\bp\bG\bD^{2m+3}\theta+  \mathcal{R}_{2m+2}\theta, 
\eeqa
where 
\beq 
\mathcal R_{2m+2}\theta=\mathcal R_2\bD^{2m}\theta + \bD^{2}\mathcal R_{2m}\theta +2m \Big(2\bl\bG -\frac23 z^{-\frac13}\bp\bG\Big)\bD^{2m+2}\theta +2m  \bp\bl\bG\bD^{2m+1}\theta 
\eeq 
is easily seen to be of the form~\eqref{E:REMAINDER} with the aid of~\eqref{E:GRAD^2}.
\end{proof}


\subsection{Dissipativity}


Our aim is to identify a compact perturbation of $\bfL$ for which we can show maximal dissipativity on the space $\HmZ$. In this section, we demonstrate that the leading order dynamics of $\bfL$, suitably defined, induce dissipation in the semi-norm $\dot{\H}^{2m}_{Z}$. To show this, we commute $\bfL$ with the operator $\bD^{2m}$, applying the commutation identities of the previous section, and derive the key Proposition~\ref{P:DISSIP}. 

We now commute~\eqref{E:LINEAR1} with $\bD^{2m}$, $m\in\mathbb N$, recalling~\eqref{E:BOLDLDEF}.
To do that, we recall the crucial commutation property~\eqref{E:ACC1}, which gives the identity
\begin{align}
[\bD^{2m},\Lambda] : = \bD^{2m}\Lambda - \Lambda \bD^{2m} = \frac{2m}{3}\bD^{2m}.
\end{align}
Therefore, applying $\bD^{2m}$ to~\eqref{E:LINEAR1}, assuming $\theta\in\DZodd$, and denoting 
\begin{align}
\theta_{2m}:=\bD^{2m}\theta, \ \ \phi_{2m} : = \bD^{2m}\phi,
\end{align}
we obtain the system
\begin{align}
\pa_s\theta_{2m} & = \phi_{2m} - \Lambda\theta_{2m} - \frac{2m-3}3 \theta_{2m}, \label{E:SIMPLETHETAM}\\
\pa_s\phi_{2m} & = -\Lambda \phi_{2m} -\frac{2m}3\phi_{2m}+ K\theta_{2m} + 2m \bp \bG \bd \theta_{2m} + \mathcal R_{2m} \theta+\bD^{2m}\left(\mathcal V\theta\right), \label{E:SIMPLEPSIM}
\end{align}
where we have used Lemma~\ref{L:KEYCOMM}. This naturally leads to the operator  $A_{2m}:D(A_{2m})\subset \H^0_{2m,Z}\to \H^0_{2m,Z}$, where $D(A_{2m})=\bD^{2m}D(\bfL)$,  defined by 
\begin{align}
\label{E:AMDEF}
A_{2m} \begin{pmatrix} \theta \\ \phi \end{pmatrix}: = \begin{pmatrix} \phi- \Lambda\theta - \frac{2m-3}3 \theta \\ -\Lambda\phi -
\frac{2m}3\phi+ K\theta + 2m \bp \bG  \bd \theta  \end{pmatrix}.
\end{align}
Our strategy to prove Theorem~\ref{T:SPECTRAL} relies in part on the dissipative properties of the operator $A_{2m}$, which effectively captures the highest order behaviour of the linearised operator $\mathbf{L}$. Before proceeding, we therefore establish the crucial dissipation estimate for this leading order contribution. As a consequence of this estimate, we are also able to identify the key regularity parameter $\m$ that will play a crucial role in the remainder of this paper.


\begin{proposition}\label{P:DISSIP}
There exists an  $\m\in\mathbb N$  sufficiently large and a constant $k_0>0$ so that the following dissipativity bound holds:  for each  $m \ge \m$,
\begin{align}
\textup{Re} \left( \begin{pmatrix} \theta \\ \phi \end{pmatrix}\,,\, A_{2m} \begin{pmatrix} \theta \\ \phi \end{pmatrix}\right)_{\dot{\H}^0_{2m,Z}}
 \le - k_0 m \left\| \begin{pmatrix} \theta \\ \phi \end{pmatrix}\right\|_{\dot{\H}^0_{2m,Z}}^2, \ \ \begin{pmatrix} \theta \\ \phi \end{pmatrix}\in D(A_{2m}).
 \end{align}
\end{proposition}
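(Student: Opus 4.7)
The approach is to expand the inner product $(\cdot, A_{2m}\cdot)_{\dot{\H}^0_{2m,Z}}$ directly, integrate by parts on the $\Lambda$-terms and on $K\theta=\bp(\bG\bd\theta)$ (via $\bp^{\ast}=-\bd$ against the weight $g_{2m}$), and observe that the very design of the weight $g_{2m}$ produces a precise cancellation of the principal cross term inside the dampening region $[0,Z_{\ast}]$, while the smallness constant $\alpha=1/24$ from~\eqref{E:DGSQRTG} and~\eqref{E:gmprime} makes the residual exterior contributions absorbable by the main dissipation coming from the factor $m$ in $A_{2m}$.  Using the commutator $\bd\Lambda=\Lambda\bd+\tfrac13\bd$ from~\eqref{E:GAIN2} and the identity $\Re(\bar f\,\Lambda f)=\tfrac12 z\pa_z|f|^2$, after integration by parts and regrouping one arrives at
\[
\Re\big((\theta,\phi),A_{2m}(\theta,\phi)\big)_{\dot{\H}^0_{2m,Z}} = \tfrac{7-4m}{6}\!\int_0^Z\!|\bd\theta|^2\bG g_{2m}\,dz + \tfrac{3-4m}{6}\!\int_0^Z\!|\phi|^2 g_{2m}\,dz + \mathcal{D} + \mathcal{C} + \mathcal{B},
\]
where $\mathcal{D}=\tfrac12\int(z\bG' g_{2m}+z\bG g_{2m}')|\bd\theta|^2\,dz+\tfrac12\int zg_{2m}'|\phi|^2\,dz$ collects the monotonicity-driven bulk dissipation, $\mathcal{C}=\Re\int_0^Z\phi\,\overline{\bd\theta}\,\big(2m\,\bp\bG\cdot g_{2m}-z^{2/3}g_{2m}'\,\bG\big)\,dz$ is the signed combination of the $K\theta$-IBP cross term and the $2m\bp\bG\bd\theta$ cross term, and $\mathcal{B}$ collects the boundary terms.

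The three key observations are then the following. First, on $[0,Z_\ast]$ the defining ODE $g_{2m}'/g_{2m}=2mz^{-2/3}\bp\bG/\bG$ makes the integrand of $\mathcal{C}$ vanish identically --- an exact cancellation which is precisely the reason behind the construction of $g_{2m}$. Second, on $[Z_\ast,Z]$ both cross-term coefficients $2m\bp\bG\,g_{2m}$ and $z^{2/3}g_{2m}'\,\bG$ are non-positive (by $\bG'\le 0$ from~\eqref{E:DZG} and $g_{2m}'\le 0$ from~\eqref{DEF:gm}) and each is bounded in modulus by $2m\alpha\sqrt{\bG}\,g_{2m}$ thanks to~\eqref{E:DGSQRTG} and~\eqref{E:gmprime}; because they share a sign, $|2m(\bp\bG)g_{2m}-z^{2/3}g_{2m}'\bG|\le 2m\alpha\sqrt{\bG}\,g_{2m}$ (and not the triangle-inequality value $4m\alpha\sqrt{\bG}g_{2m}$), so Young's inequality yields $|\mathcal{C}|\le m\alpha\int(|\phi|^2+\bG|\bd\theta|^2)g_{2m}\,dz$. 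Third, $\mathcal{D}\le 0$ by the same monotonicity (and on $[0,Z_\ast]$ it even provides extra dissipation via $z\bG g_{2m}'=2mz\bG' g_{2m}$), so it is simply discarded.

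For the boundary contribution $\mathcal{B}$, the endpoint $z=0$ vanishes: the $\DZodd$-parity forces $\theta(z)\sim z^{1/3}\cdot(\textup{analytic in }z^{1/3})$ while $\bd\theta,\phi$ are bounded, so every boundary factor carries at least one positive power of $z$ (as $z^{2/3}$ or $z$). At $z=Z$ the IBP produces
\[
-\tfrac12 Z\bG(Z)g_{2m}(Z)|\bd\theta(Z)|^2-\tfrac12 Zg_{2m}(Z)|\phi(Z)|^2+Z^{2/3}\bG(Z)g_{2m}(Z)\Re\big(\phi(Z)\overline{\bd\theta(Z)}\big),
\]
and the indefinite cross term is absorbed into the two negative quadratic terms by Young's inequality with any $\epsilon$ in the interval $[Z^{-1/3},\,Z^{1/3}/\bG(Z)]$, which is non-empty precisely because $\bG(Z)<Z^{2/3}$ for $Z>z_\ast$ by Lemma~\ref{L:GBAR}. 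Combining everything gives
\[
\Re\big((\theta,\phi),A_{2m}(\theta,\phi)\big)_{\dot{\H}^0_{2m,Z}} \le \Big(\tfrac{7}{6}-m(\tfrac{2}{3}-\alpha)\Big)\!\int\!|\bd\theta|^2\bG g_{2m}\,dz + \Big(\tfrac{1}{2}-m(\tfrac{2}{3}-\alpha)\Big)\!\int\!|\phi|^2 g_{2m}\,dz,
\]
and with $\alpha=1/24$ the regularity threshold~\eqref{E:MCONDITION} makes both prefactors strictly negative once $m\ge\m$, yielding the advertised $\le -k_0 m\|(\theta,\phi)\|_{\dot{\H}^0_{2m,Z}}^2$ with a universal $k_0>0$.

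The principal obstacle is that the coefficient $2m$ in front of the cross term $2m\bp\bG\bd\theta$ a priori threatens to absorb all of the dissipation one hopes to gain from differentiating $2m$ times: if one were to bound the two pieces of $\mathcal{C}$ independently by the triangle inequality, one would get $|\mathcal{C}|\le 2m\alpha\int(\cdots)$ and require the stronger constraint $\tfrac{2m}{3}>2m\alpha$, which would not reproduce the sharp threshold in~\eqref{E:MCONDITION}. The resolution is the same-sign observation exploited above, which halves the effective constant and yields the precise quantitative threshold $\m\ge 2$ used throughout the paper.
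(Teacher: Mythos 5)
Your proof is correct and follows essentially the same route as the paper: the same energy identity (including the exact coefficients $\tfrac{7-4m}{6}$ and $\tfrac{3-4m}{6}$), the exact cancellation of the cross term on $[0,Z_*]$ built into the choice of $g_{2m}$, the same-sign sharpening of the triangle inequality on $[Z_*,Z]$ using~\eqref{E:DGSQRTG} and~\eqref{E:gmprime}, the discard of the monotonicity-driven bulk terms via $\bp\bG\le0$ and $g_{2m}'\le0$, and the boundary absorption at $z=Z$ exploiting $\bG(Z)\le Z^{2/3}$ from Lemma~\ref{L:GBAR}. The concluding remark about the obstacle posed by the $2m$ factor and its resolution via the sign-sharing observation is a clean articulation of exactly the step the paper implements in~\eqref{E:BADTERMDIFF}.
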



\begin{proof}
 To prove the key dissipation estimate, we work with $(\th,\phi)^\top\in\DZodd$, and apply a simple density argument at the end to conclude the inequality also for $(\th,\phi)^\top\in D(A_{2m})$. 

Let us evaluate 
\begin{align}
 \left( \begin{pmatrix} \theta \\ \phi \end{pmatrix}\,,\, A_{2m} \begin{pmatrix} \theta \\ \phi \end{pmatrix}\right)_{\dot{\H}^0_{2m,Z}} 
& = \int_0^{Z}  \Big({\bd\theta}\overline{\bd \phi}  - \bd\theta\overline{\bd \Lambda \theta}  - \frac{2m-3}{3}|\bd\theta|^2\Big) \bG(z) g_{2m}(z)  \diff z \notag\\
& \ \ \ \  + \int_0^{Z} \Big(- \phi\overline{\Lambda\phi}  - \frac{2m}{3}|\phi|^2 + \phi \overline{{K\theta}}+ 2m \bp \bG  \phi\overline{\bd \theta}  \Big) g_{2m}(z)  \diff z. \label{E:D0}
\end{align}
It is straightforward to check that the following general formula holds:
\begin{align}
\text{Re} \int_0^Z \overline{\Lambda \phi} \phi g(z)\diff z = -\frac12 \int_0^Z |\phi|^2 \pa_z\left(z g(z)\right)\diff z + \frac12\left(|\phi|^2 zg(z)\right)\Big|^Z_0.\label{eq:LambdaIBP}
\end{align}
In particular
\begin{align}
{}&- \text{Re} \int_0^Z  \bd \theta \overline{\bd \Lambda \theta} \bG(z) g_{2m}(z)  \diff z  =
 -\text{Re} \int_0^Z  \bd\theta\overline{\Lambda \bd\theta}  \bG(z) g_{2m}(z)  \diff z - \frac13 \int_0^Z  |\bd \theta|^2 \bG(z)  g_{2m}(z) \diff z \notag\\
 & =\frac12 \int_0^Z  |\bd \theta|^2 \pa_z\left(\bG(z) g_{2m}(z)  z\right)\diff z - \frac12|\bd \theta(Z)|^2 \bG(Z)  g_{2m}(Z) Z
 - \frac13 \int_0^Z  |\bd \theta|^2 \bG(z) g_{2m}(z)  \diff z \notag\\
 & = \frac16 \int_0^Z  |\bd \theta|^2 \bG(z)  g_{2m}(z) \diff z + \frac12 \int_0^Z  |\bd \theta|^2 \big(\bp \bG(z) z^{\frac13} g_{2m}(z)+\bG(z) g_{2m}'(z) z \big) \diff z - \frac12|\bd \theta(Z)|^2 \bG(Z)  g_{2m}(Z) Z.
\end{align}
Similarly,
\begin{align}
- \text{Re} \int_0^Z  \phi\overline{\Lambda \phi}    g_{2m}(z) \diff z &
= \frac12 \int_0^Z |\phi|^2 \big(g_{2m}(z)+zg_{2m}'(z)\big)  \diff z- \frac12 |\phi(Z)|^2 g_{2m}(Z)  Z.
\end{align}
We next observe the key calculation
\begin{align}
& \int_0^{Z}  \Big({\bd\theta}\overline{\bd \phi} \bG(z) g_{2m}(z)  + \phi\overline{K\theta}  g_{2m}(z)  \Big)\diff z  = \int_0^{Z}  {\bd\theta}\overline{\bd \phi}\bG(z)  g_{2m}(z) +  \bp \left(\bG \overline{\bd \theta}\right) \phi  g_{2m}(z) \diff z   \notag\\
& =  \int_0^{Z}  \Big(\bd\theta\overline{\bd \phi} \bG(z) g_{2m}(z)  -  \bd \phi \overline{\bd\theta} \bG(z)  g_{2m}(z)- z^{\frac23}\phi\overline{\bd\theta} \bG g_{2m}'(z)  \Big)\diff z  + z^{\frac23} \bG \overline{\bd\theta}  \phi  g_{2m}   \Big|^Z_0,
\end{align}
and therefore
\begin{align}
&\text{Re}\bigg( \int_0^{Z} \bd\theta \overline{\bd \phi} \bG(z) g_{2m}(z)  + \overline{K\theta} \phi  g_{2m}(z) \diff z  \bigg)\notag \\
&\quad= Z^{\frac23} \bG(Z)  g_{2m}(Z) \text{Re}\left(\bd\theta (Z) \bar\phi(Z)\right)- \int_0^Zz^{\frac23}\text{Re}
\big(\phi\overline{\bd\theta}\big) \bG g_{2m}'(z)  \diff z. \label{E:D1}
\end{align}
From~\eqref{E:D0}--\eqref{E:D1} we conclude that 
\begin{align}
& \text{Re}\left( \begin{pmatrix} \theta \\ \phi \end{pmatrix}\,,\, A_{2m} \begin{pmatrix} \theta \\ \phi \end{pmatrix}\right)_{\dot{\H}^0_{2m,Z}} \notag\\
&= \left(\frac76-\frac{2m}{3}\right)\int_0^Z |\bd\theta|^2 \bG g_{2m}(z) \,\diff z +\left(\frac12-\frac{2m}{3}\right)\int_0^Z |\phi|^2 g_{2m}(z) \diff z\\
&\ \ \ \ + \frac12 \int_0^Z |\phi|^2zg_{2m}'(z)\diff z   + 2m\int_0^Z \bp \bG \text{Re}\left(\overline{\bd \theta} \phi\right)  g_{2m}(z)  \, dz \notag\\
&\ \ \ \ +  \frac12 \int_0^Z  |\bd \theta|^2 \big(\bp \bG(z) z^{\frac13}g_{2m}(z)+\bG(z) g_{2m}'(z) z \big) \diff z - \int_0^Zz^{\frac23}\text{Re}
\big(\phi\overline{\bd\theta}\big) \bG g_{2m}'(z)  \diff z\notag\\
& \ \ \ \ -\frac12 |\bd\theta(Z)|^2 \bG(Z) g_{2m}(Z) Z -\frac12 |\phi(Z)|^2  g_{2m}(Z) Z + Z^{\frac23} \bG(Z) g_{2m}(Z)  \text{Re}\left(\bd\theta(Z) \overline{\phi(Z)}\right) \label{E:D2}.
\end{align}
We now recall the definition of $g_{2m}$ from~\eqref{DEF:gm}, so that  
\beqa
{}&\bigg|2m\int_0^Z \bp \bG \text{Re}\left(\overline{\bd \theta} \phi\right)  g_{2m}(z)  \, dz- \int_0^Zz^{\frac23}\text{Re}
\big(\phi\overline{\bd\theta}\big) \bG g_{2m}'(z)  \diff z\bigg|\\
&\leq\int_{Z_*}^Z\Big|2m\frac{\bp \bG}{\sqrt{\bG}}-z^{\frac23}\sqrt{\bG}\frac{g_{2m}'}{g_{2m}}\Big|\big|\text{Re}
\big(\phi\overline{\bd\theta}\big) \big|{\sqrt{\bG}}g_{2m}(z)\,\dif z\\
&\leq m\a\bigg(\int_{Z_*}^Z|\bd \theta|^2\bG g_{2m}\,\dif z+\int_{Z_*}^Z|\phi|^2 g_{2m}\,\dif z\bigg),
\eeqa
where we have used that $\bp\bG,g_{2m}'\leq 0$ to estimate, for $z\geq Z_*$,
\beq\label{E:BADTERMDIFF}
 \frac12\Big|2m\frac{\bp \bG}{\sqrt{\bG}}-z^{\frac23}\sqrt{\bG}\frac{g_{2m}'}{g_{2m}}\Big|\leq \max\Big\{m\Big\|\frac{\bp \bG}{\sqrt{\bG}}\Big\|_{L^\infty([Z_*,\infty)},\Big\|z^{\frac23}\sqrt{\bG}\frac{g_{2m}'}{2g_{2m}}\Big\|_{L^\infty([Z_*,\infty))}\Big\}\leq m \a
 \eeq  
by~\eqref{E:DGSQRTG} and~\eqref{E:gmprime}.

As a consequence, from~\eqref{E:D2}, using also $\bp\bG\leq 0$ and $g_{2m}'\leq 0$, we obtain
\beqa\label{E:D3}
& \text{Re}\left( \begin{pmatrix} \theta \\ \phi \end{pmatrix}\,,\, A_{2m} \begin{pmatrix} \theta \\ \phi \end{pmatrix}\right)_{\dot{\H}^0_{2m,Z}} \\
&\leq \left(\frac76-m\big(\frac{2}{3}-\a\big)\right)\int_0^Z |\bd\theta|^2 \bG g_{2m}(z) \,\diff z +\left(\frac12-m\big(\frac{2}{3}-\a\big)\right)\int_0^Z |\phi|^2 g_{2m}(z) \diff z\\
& \ \ \ \ -\frac12 |\bd\theta(Z)|^2 \bG(Z) g_{2m}(Z) Z -\frac12 |\phi(Z)|^2 Zg_{2m}(Z) + Z^{\frac23} \bG(Z) g_{2m}(Z)  \text{Re}\left(\bd\theta(Z) \overline{\phi(Z)}\right). 
\eeqa 
Moreover,
\begin{align}
&\Big(-\frac12 |\bd\theta(Z)|^2 \bG(Z)Z -\frac12 |\phi(Z)|^2 Z + Z^{\frac23} \bG(Z) \text{Re}\left(\bd\theta(Z) \overline{\phi(Z)}\right)\Big)g_{2m}(Z) \notag\\
&
\le\Big( -\frac12 |\bd\theta(Z)|^2 \bG(Z)Z-\frac12 |\phi(Z)|^2 Z +\frac12  \bG(Z) \left(Z|\bd\theta(Z)|^2+Z^{\frac13}|\phi(Z)|^2\right) \Big)g_{2m}(Z)\notag\\
& =\Big(-  \frac12 Z^{\frac13}|\phi(Z)|^2 \left(Z^{\frac23}-\bG(Z)\right) \Big)g_{2m}(Z)\notag\\
& \le 0, \label{E:D4}
\end{align}
where we have used the condition $Z\ge z_\ast$, which implies $Z^{\frac23}-\bG(Z)\ge0$ by Lemma~\ref{L:GBAR}. From~\eqref{E:D3}--\eqref{E:D4} we then infer that
\beqa
& \text{Re}\left( \begin{pmatrix} \theta \\ \phi \end{pmatrix}\,,\, A_{2m} \begin{pmatrix} \theta \\ \phi \end{pmatrix}\right)_{\dot{\H}^0_{2m,Z}} \notag\\
&\leq \left(\frac76-m\big(\frac{2}{3}-\a\big)\right)\int_0^Z |\bd\theta|^2 \bG g_{2m}(z) \,\diff z +\left(\frac12-m\big(\frac{2}{3}-\a\big)\right)\int_0^Z |\phi|^2 g_{2m}(z) \diff z.
\eeqa 
Choosing $\m$ as in~\eqref{E:MCONDITION}, this concludes the proof.
\end{proof}

\begin{remark}
We note that we crucially require $Z>z_\ast$. The dissipativity bound is correct for {\em any} such $Z>z_\ast$, which is a consequence of the ground state character of the LP-solution, encoded in the property $\bp\bG<0$ stated in Lemma \ref{L:GBAR}.  

We also observe that the requirement on $\m$ is that $\frac76-\m(\frac23-\a)<0$, as advertised in~\eqref{E:MCONDITION}.
\end{remark}


\subsection{Maximal dissipativity}


In order to use the above Proposition~\ref{P:DISSIP} to obtain spectral information for $\mathbf{L}$, we must now establish the connection between $\bD^{2m}\mathbf{L}$ and $A_{2m}$. From the right hand side of~\eqref{E:SIMPLEPSIM}, we see that the principal difference arises in the lower order terms $\mathcal{R}_{2m}$ and $\bD^{2m}(\mathcal V\theta)$.  In order to control such terms, we employ the inverse operator $\bD^{-2}$, defined above in~\eqref{E:BLINVERSE}. This leads us to the next definition.


\begin{definition}\label{D:LONELTWO}
For each $m\in\N$, we introduce the operators $\mathcal L_m : D(\mathcal L_m)\subset \HmZ\to\HmZ$, and $\mathcal K_m:\HmZ\to\HmZ$ through
\begin{align}
{\mathcal K}_m\color{black}  \begin{pmatrix} \theta \\ \phi \end{pmatrix} & : = \begin{pmatrix} 0 \\  \bD^{-2m}\mathcal R_{2m}\theta 
+\mathcal V \theta\color{black} \end{pmatrix} , \\
\mathcal L_m   & : = \bfL - \mathcal K_m,\label{D:Lm}
\end{align}
where we recall the definitions of $\mathcal{V}$ and $\mathcal{R}_{2m}$ from~\eqref{E:POTDEF} and~\eqref{E:REMAINDER}.
\end{definition}

By~\eqref{E:SIMPLETHETAM}--\eqref{E:AMDEF} and the above definition, we clearly have
\begin{align}
\bD^{2m} \L_m = A_{2m} \bD^{2m}.\label{E:DMAM}
\end{align}
The following lemma states the elementary fact that the operator $\mathcal K_m:\HmZ\to\HmZ\subset \H^0_{2m,Z}$ is indeed bounded viewed as an operator into $\H^0_{2m,Z}$.
 
\begin{lemma}
Let $m\in\N$. There exists a constant $C_{\K_m}>0$ such that the operator $\K_m : \HmZ\to \HmZ\subset \H^0_{2m,Z}$ satisfies the bound
\begin{align}
\bigg\| \K_m  \begin{pmatrix} \theta \\ \phi \end{pmatrix} \bigg\|_{\H^0_{2m,Z}} \le C_{\K_m} \left(\|\bd\bD^{2m}\theta\|_{L^2}+\|\bd \theta\|_{L^2}\right), \ \ 
\begin{pmatrix} \theta \\ \phi \end{pmatrix} \in \HmZ.
\label{E:LTWOBOUND}
\end{align}
As a consequence, there exists a constant $C_{\L_m}>0$ such that 
\begin{align}
\bigg\| \L_m  \begin{pmatrix} \theta \\ \phi \end{pmatrix} \bigg\|_{\H^0_{2m,Z}} \le C_{\L_m} \bigg(\bigg\|\bD^{2m}\begin{pmatrix} \theta \\ \phi \end{pmatrix}\bigg\|_{\H^0_{2m,Z}}
+\bigg\| \begin{pmatrix} \theta \\ \phi \end{pmatrix}\bigg\|_{\H^0_{2m,Z}}\bigg). \label{E:LONEBOUND}
\end{align}
In particular, both operators $\L_m,\K_m : D(\mathcal L_m)\subset \HmZ \to \mathcal H^0_{2m,Z}$, (viewed as operators from $\HmZ$ to $\H^0_{2m,Z}$) are bounded. 
\end{lemma}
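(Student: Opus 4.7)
The first bound \eqref{E:LTWOBOUND} reduces immediately to estimating the plain $L^2([0,Z])$ norm of the second component of $\K_m(\theta,\phi)$, since the first component vanishes identically and the weight $g_{2m}$ is smooth and uniformly bounded above and below on the finite interval $[0,Z]$. It thus suffices to show that $\|\bD^{-2m}\mathcal R_{2m}\theta\|_{L^2}+\|\mathcal V\theta\|_{L^2}$ is controlled by the right-hand side of \eqref{E:LTWOBOUND}. The potential contribution is the easier one: by Lemma \ref{L:GPROPERTIES}, $\mathcal V\in\DZeven$ is smooth and hence bounded on $[0,Z]$, while the parity condition $\theta\in\DZodd$ forces $\theta(0)=0$, and the identity $z^{2/3}\theta(z)=\int_0^z\bd\theta(\tilde z)\,d\tilde z$ (which is just the definition of $\bd$ integrated) combined with Cauchy--Schwarz yields the Hardy-type bound $\|\theta\|_{L^2(0,Z)}\le C(Z)\|\bd\theta\|_{L^2(0,Z)}$, and hence $\|\mathcal V\theta\|_{L^2}\le C(Z,\mathcal V)\|\bd\theta\|_{L^2}$.

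For the more delicate $\bD^{-2m}\mathcal R_{2m}\theta$ contribution, I would apply Lemma \ref{L:INVERSE} to reduce the problem to estimating $\|\bd\mathcal R_{2m}\theta\|_{L^2}$. Recalling the explicit form \eqref{E:REMAINDER} and applying the Leibniz rule \eqref{E:LEIBNIZ} to each summand, every term $\bd(P\bG\,Q\theta)$ with $Q\in\mathcal X_\ell$, $P\in\mathcal Y_{2m+2-\ell}$, $1\le\ell\le 2m$, splits into $P\bG\,\bp(Q\theta)+\bd(P\bG)\,Q\theta$. By Lemma \ref{L:PGBOUND}, $P\bG$ is uniformly bounded on $[0,Z]$; the factor $\bd(P\bG)$ may pick up a mild $z^{-1/3}$ singularity at the origin, but this is absorbed by the corresponding vanishing of $Q\theta$ at $z=0$ that follows from the fact that $Q\in\mathcal X_\ell$ preserves or reverses the parity class of $\theta\in\DZodd$. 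It then remains to control $\|\bp(Q\theta)\|_{L^2}$ and suitably weighted $L^2$ norms of $Q\theta$; using the decomposition of Lemma \ref{L:XJDECOMP}, each of these reduces to a finite linear combination of terms of the form $z^{-\beta}\bD^i\theta$ with $0\le i\le\ell+1\le 2m+1$. The top order piece $i=2m+1$ produces exactly $\|\bd\bD^{2m}\theta\|_{L^2}$, while the intermediate terms with $i\le 2m$ are absorbed by $\|\bd\bD^{2m}\theta\|_{L^2}+\|\bd\theta\|_{L^2}$ via iterated weighted Hardy/interpolation inequalities on the bounded interval $[0,Z]$.

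The second bound \eqref{E:LONEBOUND} is then almost immediate from $\L_m=\bfL-\K_m$ and the triangle inequality. The $\K_m$ contribution is handled by \eqref{E:LTWOBOUND}, noting that $\|\bd\bD^{2m}\theta\|_{L^2}$ and $\|\bd\theta\|_{L^2}$ are dominated by $\|\bD^{2m}(\theta,\phi)\|_{\H^0_{2m,Z}}$ and $\|(\theta,\phi)\|_{\H^0_{2m,Z}}$ respectively. For $\bfL(\theta,\phi)=(\phi-\Lambda\theta+\theta,-\Lambda\phi+K\theta+\mathcal V\theta)$, every term is a differential operator of order at most two applied to $(\theta,\phi)$, so weighted interpolation between the low-order norm $\|(\theta,\phi)\|_{\H^0_{2m,Z}}$ and the top-order norm $\|\bD^{2m}(\theta,\phi)\|_{\H^0_{2m,Z}}$ delivers the required control. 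The asserted boundedness of $\K_m,\L_m:D(\L_m)\to\H^0_{2m,Z}$ is then a formal consequence. The principal technical obstacle throughout is the organisation of the weighted interpolation in the second paragraph: one must track carefully the negative $z$-powers produced by the decomposition of $Q\in\mathcal X_\ell$ and verify that the parity of $\theta\in\DZodd$ together with the alternating $\bp$/$\mathcal R_i$ structure ensures integrability at $z=0$ for every intermediate term, without overspending the regularity budget carried by the right-hand side of \eqref{E:LTWOBOUND}.
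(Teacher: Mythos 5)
Your proposal follows the same route as the paper: reduce to the plain $L^2$ norm of the second component, control $\mathcal V\theta$ by Hardy, apply Lemma~\ref{L:INVERSE} to trade $\bD^{-2m}\mathcal R_{2m}\theta$ for $\bd\mathcal R_{2m}\theta$, expand via Leibniz, bound the $P\bG$ factors by Lemma~\ref{L:PGBOUND}, and close by estimating the operators in $\mathcal X_k$ against the $\H^{2m}_Z$ norm. Two expository remarks: where you invoke ``iterated weighted Hardy/interpolation inequalities'' you should simply cite Lemma~\ref{L:EQUIVALENCE}, which is precisely the statement that $\sum_{k\le 2m+1}\sum_{Q\in\X_k}\|Q\theta\|_{L^2}\lesssim \|\bd\bD^{2m}\theta\|_{L^2}+\|\bd\theta\|_{L^2}$; and rather than observing a $z^{-1/3}$ singularity in $\bd(P\bG)$ that is then cancelled by the vanishing of $Q\theta$, the paper selects whichever of the two Leibniz forms $\bd(uv)=u\bp v+\bd u\,v=\bp u\,v+u\bd v$ respects the parity of each factor, so that every intermediate quantity stays regular and no cancellation argument is needed.
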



\begin{remark}\label{R:ZDEPENDENCE}
We note that the constants above depend on the choice of both $m$ and $Z$ and grow large as $Z$ gets larger. 
\end{remark}


\begin{proof}
By Lemma~\ref{L:INVERSE} we have
\begin{align}
\left\|\K_m \begin{pmatrix} \theta \\ \phi \end{pmatrix} \right\|_{\H^0_{2m,Z}} 
\le C\big( \|\bD^{-2m}\mathcal R_{2m}\theta\|_{L^2} + \|\mathcal V \theta\|_{L^2} \big) \le C\big( \|\bd \mathcal R_{2m}\theta \|_{L^2}+ \|\bd \theta\|_{L^2} \big),
\end{align}
 where we have also used~\eqref{E:HARDYREFINED} and recall that $\mathcal V\in \DZeven$ is bounded, as are $g_{2m}$, $\bG$ from Lemma~\ref{L:GBAR}. From~\eqref{E:REMAINDER}, we then have 
\begin{align}
\bd \mathcal R_{2m}\theta   =&\, \bd\bigg(\sum_{j=1}^{2m} \sum_{Q\in \mathcal X_j, P\in \mathcal Y_{2m+2-j}} c^j_{PQ} P\bG Q \theta\bigg) \notag \\
 =&\, \sum_{k=1}^{m} \sum_{Q\in \mathcal X_{2k}, P\in \mathcal Y_{2m+2-2k}} c^{2k}_{PQ} \left(  P\bG  \bd Q \theta +\bp P\bG  Q \theta \right) 
\\& +  \sum_{k=1}^{m} \sum_{Q\in \mathcal X_{2k-1}, P\in \mathcal Y_{2m+3-2k}} c^{2k-1}_{PQ} \left(  P\bG  \bp Q \theta+\bd P\bG   Q \theta \right). \notag
\end{align} 
Since, for any $k\in\{1,\dots,m\}$, $\bd Q \in \X_{2k+1}$ if $Q\in \X_{2k}$ and similarly $\bp Q\in \X_{2k}$ if $Q\in \X_{2k-1}$, it is obvious from the above
that 
\begin{align}
\|\bd \mathcal R_{2m}\theta \|_{L^2} \le C \sum_{k=1}^{2m+1} \sum_{Q\in \X_k}\|Q\theta\|_{L^2} \le C \left(\|\bd\bD^{2m}\theta\|_{L^2}+\|\bd \theta\|_{L^2}\right),
\end{align}
where we have used Lemma~\ref{L:EQUIVALENCE} in the last line. This implies the bound on $\left\| \K_m  \begin{pmatrix} \theta \\ \phi \end{pmatrix} \right\|_{\H^0_{2m,Z}}$. Since $\L_m = \bfL-\K_m$ and $\bfL$ clearly satisfies a bound analogous to~\eqref{E:LONEBOUND} as $m\geq 1$, the claim follows.
\end{proof}

We now sharpen our understanding of the operator $\K_m$. Due to the smoothing 
induced by the operator $\bD^{-2}$, we have the following lemma:


\begin{lemma}\label{L:L2COMPACT}
Let $m\in\N$. The operator $\K_m:\HmZ\to\HmZ$ is a bounded compact operator.
\end{lemma}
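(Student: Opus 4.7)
The strategy is to exhibit $\K_m$ as a bounded operator from $\HmZ$ into a strictly stronger regularity space, and then invoke the Rellich-Kondrachov compact embedding on the bounded interval $[0,Z]$. The gain of regularity comes from the $2m$-fold integration operator $\bD^{-2m}$, which more than compensates for the $2m$ weighted derivatives appearing inside $\mathcal R_{2m}$, with the net effect of producing one extra full Sobolev derivative.

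Using the isomorphism~\eqref{E:HSOBOLEVEQUIV}, we identify $\HmZ$ with $H^{2m+1}_{\textup{odd}}\times H^{2m}_{\textup{odd}}$. Since the first component of $\K_m$ vanishes identically, it suffices to show that the map
\[
\theta \ \longmapsto \ \bD^{-2m}\mathcal R_{2m}\theta + \mathcal V\theta
\]
is compact as a map from the $\theta$-factor into $H^{2m}_{\textup{odd}}(0,Z)$. For the potential piece, Lemma~\ref{L:GPROPERTIES} gives $\mathcal V\in \DZeven$, hence smooth and bounded on $[0,Z]$; multiplication by $\mathcal V$ therefore maps $H^{2m+1}_{\textup{odd}}$ boundedly into itself, and the compact embedding $H^{2m+1}(0,Z)\hookrightarrow H^{2m}(0,Z)$ immediately gives compactness of this term.

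For the commutator piece $\bD^{-2m}\mathcal R_{2m}\theta$, I would track the regularity term by term in the expansion~\eqref{E:REMAINDER}. Each summand has the form $P\bG \cdot Q\theta$ with $Q\in\X_\ell$, $P\in\Y_{2m+2-\ell}$, $1\le \ell\le 2m$. By Lemma~\ref{L:PGBOUND}, $P\bG$ is smooth and bounded on $[0,Z]$, and Lemma~\ref{L:XJDECOMP} expresses $Q$ as a sum of standard derivatives $\pa_z^k$ of order at most $\ell\le 2m$ with $z$-weights. Applied to $\theta\in H^{2m+1}_{\textup{odd}}$, which by design (via $\DZodd$) matches the $z^{\frac13}$-fractional regularity at the origin, this yields $\mathcal R_{2m}\theta \in H^1(0,Z)$ with the quantitative bound
\[
\|\mathcal R_{2m}\theta\|_{H^1(0,Z)} \ \le \ C \, \|\theta\|_{H^{2m+1}(0,Z)}.
\]
Each application of $\bD^{-2}$ is a weighted double integration and gains two degrees of differentiability (consistent with the pointwise bounds of Lemma~\ref{L:INVERSE}), so $\bD^{-2m}\mathcal R_{2m}\theta \in H^{2m+1}(0,Z)$ with a corresponding bound in terms of $\|\theta\|_{H^{2m+1}(0,Z)}$. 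A second application of the Rellich compact embedding $H^{2m+1}(0,Z)\hookrightarrow H^{2m}(0,Z)$ then delivers the compactness of this term, completing the proof.

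\textbf{Main obstacle.} The non-trivial bookkeeping is the interaction between the $z^{-\frac23}$ weights inside $\bD^{-2}$ and the $z^{\frac13}$-fractional regularity at the origin carried by $\DZodd$, both of which can in principle degrade the naive regularity count near $z=0$. The operator algebras $\X_\ell,\Y_\ell$ are designed precisely to respect this structure, and Lemmas~\ref{L:INVERSE},~\ref{L:XJDECOMP}, and~\ref{L:PGBOUND} collectively ensure that the regularity gain is genuine and uniform up to the boundary $z=0$; once these tools are marshalled, the compactness argument is essentially routine.
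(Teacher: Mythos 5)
Your proof is correct and rests on the same structural observation as the paper's: the key to compactness is that $\bD^{2m}\bD^{-2m}\mathcal R_{2m}\theta = \mathcal R_{2m}\theta$, which involves only derivatives of $\theta$ up to order $2m$, one fewer than the domain norm measures. The two arguments package this observation differently. The paper's proof takes a sequence $\Phi_n\rightharpoonup 0$ weakly in $\HmZ$, uses the compact embedding $\HmZ\hookrightarrow H^{2m}\times H^{2m-1}$ of the \emph{domain} to extract strong convergence $\theta_n\to 0$ in $H^{2m}$, and then observes that both $\mathcal R_{2m}\theta_n$ and $\bD^{2m}(\mathcal V\theta_n)$ converge to $0$ in $L^2$ because they only see those lower-order derivatives, yielding $\K_m\Phi_n\to 0$ in $\HmZ$. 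Your proof instead shows that $\K_m$ maps $\HmZ$ boundedly into the strictly smaller space $\{0\}\times H^{2m+1}_{\textup{odd}}$, and concludes via the compact embedding of the \emph{range} into the $\phi$-slot $H^{2m}_{\textup{odd}}$. These are equivalent characterizations of the same gain of smoothness, and your version is arguably cleaner in that it handles the $\beta$-weighted low-order piece of the $\HmZ$-norm for free. The one step you leave implicit is that each $\bD^{-2}$ gains a genuine full Sobolev order (you cite Lemma~\ref{L:INVERSE}, but that lemma only gives $L^\infty$ and $L^2$ control of $\bD^{-2m}f$ in terms of $\bd f$; the derivative gain additionally requires the identity $\bD^2\bD^{-2}=\mathrm{Id}$ together with the low-order control, combined via Lemma~\ref{L:EQUIVALENCE}). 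This is a small bookkeeping omission, not a gap.
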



\begin{proof}
By definition, for any $( \theta,\phi)^\top\in \HmZ$, we have $\bD^{2m}\bD^{-2m}\mathcal R_{2m} \theta=\mathcal R_{2m} \theta$.
From the regularity properties $ \bG,\mathcal V\in\DZeven$ from Lemmas~\ref{L:GBAR} and~\ref{L:GPROPERTIES} and formula~\eqref{E:REMAINDER}, it easily follows that 
\begin{align}
\left\|\K_m\begin{pmatrix}  \theta \\ \phi \end{pmatrix}\right\|_{\HmZ}
& \leq  \left\|\mathcal R_{2m} \theta\right\|_{L^2}  + \|\bD^{2m}\big(\mathcal V \theta\big) \|_{L^2}  +  \beta 
 \left\|\K_m\begin{pmatrix}  \theta \\ \phi \end{pmatrix}\right\|_{\H^0_{2m,Z}} \notag\\
& \le C \sum_{j=1}^{2m}\sum_{Q\in X_j}\|Q \theta\|_{L^2}  +  \beta 
C_{\K_m} \left(\|\bd\bD^{2m} \theta\|_{L^2}+\|\bd  \theta\|_{L^2}\right) \notag\\
& \le C  \left(\|\bd\bD^{2m} \theta\|_{L^2}+\|\bd  \theta\|_{L^2}\right), 
\end{align}
which implies the boundedness. Note all $L^2$ norms are taken to be $L^2(0,Z)$. Now assume that $\begin{pmatrix}  \theta_n \\ \phi_n \end{pmatrix}\rightharpoonup 0$ weakly  in $\HmZ$ as $n\to\infty$ for some sequence $\begin{pmatrix}  \theta_n \\ \phi_n \end{pmatrix}\subset \HmZ$. Due to the  compact embedding $\HmZ\hookrightarrow H^{2m}\times H^{2m-1}$ deduced from~\eqref{E:HSOBOLEVEQUIV}, it is easily seen that $\bD^{2m}(\bD^{-2m}\mathcal R_{2m} \theta_n)=\mathcal R_{2m} \theta_n\to 0$ in $L^2(0,Z)$, that $\bD^{2m}(\mathcal V \theta_n)\to 0$ in $L^2(0,Z)$, and hence that  $\K_m \begin{pmatrix}  \theta_n \\ \phi_n \end{pmatrix}\to 0$ in $\HmZ$.  
 Therefore the operator $\K_m:\HmZ\to\HmZ$ is also compact.
\end{proof}


Observe from Proposition~\ref{P:DISSIP} and~\eqref{E:DMAM} that the operator $\mathcal{L}_m$ satisfies good dissipativity properties with respect to the \emph{semi-norm} in $\HdotmZ$. In order to obtain from this an operator with good dissipativity with respect to the full norm in $\HmZ$, we make the following classical argument.

\begin{lemma}\label{L:KTILDE}
Let $m\in\N$. There exists a compact operator $\tilde{\K}_m:\HmZ\to\HmZ$ such that 
\begin{align}\label{E:KTILDE}
\left( \tilde{\K}_m\begin{pmatrix} \Theta \\ \Phi \end{pmatrix}\, ,\, \begin{pmatrix}  \theta \\ \phi \end{pmatrix}\right)_{\HmZ}
= \left(\begin{pmatrix} \Theta \\ \Phi \end{pmatrix}\, , \, \begin{pmatrix}  \theta \\ \phi \end{pmatrix}\right)_{\mathcal H^0_{2m,Z}}, \ \ 
\begin{pmatrix}  \theta \\ \phi \end{pmatrix}\in \HmZ.
\end{align}
\end{lemma}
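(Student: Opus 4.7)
The strategy is a standard Riesz-representation argument, with the compactness coming from the compact Sobolev embedding on the bounded interval $[0,Z]$. First I would fix $\begin{pmatrix}\Theta\\\Phi\end{pmatrix}\in\HmZ$ and consider the anti-linear functional
\[
F_{\Theta,\Phi}:\HmZ\to\C,\qquad F_{\Theta,\Phi}\begin{pmatrix}\theta\\\phi\end{pmatrix}:=\left(\begin{pmatrix}\Theta\\\Phi\end{pmatrix},\begin{pmatrix}\theta\\\phi\end{pmatrix}\right)_{\H^0_{2m,Z}}.
\]
Because $\H^0_{2m,Z}$ is (up to the equivalent weight $g_{2m}\cong g_m$) an $L^2$-space on $[0,Z]$, the equivalence~\eqref{E:HSOBOLEVEQUIV} gives the continuous embedding $\HmZ\hookrightarrow \H^0_{2m,Z}$, and hence $|F_{\Theta,\Phi}(\theta,\phi)|\le C\|(\Theta,\Phi)\|_{\HmZ}\|(\theta,\phi)\|_{\HmZ}$. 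The Riesz representation theorem applied in the Hilbert space $\HmZ$ therefore produces a unique element $\tilde\K_m\begin{pmatrix}\Theta\\\Phi\end{pmatrix}\in\HmZ$ satisfying~\eqref{E:KTILDE}. Linearity of the assignment $\begin{pmatrix}\Theta\\\Phi\end{pmatrix}\mapsto\tilde\K_m\begin{pmatrix}\Theta\\\Phi\end{pmatrix}$ is immediate from uniqueness and the sesquilinearity of the two inner products.

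Next I would verify boundedness. Testing~\eqref{E:KTILDE} with $\begin{pmatrix}\theta\\\phi\end{pmatrix}=\tilde\K_m\begin{pmatrix}\Theta\\\Phi\end{pmatrix}$ and using the Cauchy--Schwarz inequality in $\H^0_{2m,Z}$ together with the continuous embedding $\HmZ\hookrightarrow\H^0_{2m,Z}$ gives
\[
\bigl\|\tilde\K_m\begin{pmatrix}\Theta\\\Phi\end{pmatrix}\bigr\|_{\HmZ}^2\le \bigl\|(\Theta,\Phi)\bigr\|_{\H^0_{2m,Z}}\bigl\|\tilde\K_m(\Theta,\Phi)\bigr\|_{\H^0_{2m,Z}}\le C\bigl\|(\Theta,\Phi)\bigr\|_{\HmZ}\bigl\|\tilde\K_m(\Theta,\Phi)\bigr\|_{\HmZ},
\]
so $\tilde\K_m:\HmZ\to\HmZ$ is bounded by $C$.

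Finally, for compactness I would invoke the compact embedding $\HmZ\hookrightarrow\H^0_{2m,Z}$, which is a direct consequence of~\eqref{E:HSOBOLEVEQUIV} and the Rellich--Kondrachov theorem on the bounded interval $[0,Z]$ (the factor $\bG(z)g_{2m}(z)$ is bounded above and below by positive constants on $[0,Z]$, so the weights do not affect this embedding). Let $\begin{pmatrix}\Theta_n\\\Phi_n\end{pmatrix}\rightharpoonup 0$ weakly in $\HmZ$; then this sequence converges strongly to $0$ in $\H^0_{2m,Z}$. The identity~\eqref{E:KTILDE} with $(\theta,\phi)=\tilde\K_m(\Theta_n,\Phi_n)$ combined with the uniform bound $\|\tilde\K_m(\Theta_n,\Phi_n)\|_{\HmZ}\le C\sup_n\|(\Theta_n,\Phi_n)\|_{\HmZ}$ then yields
\[
\bigl\|\tilde\K_m(\Theta_n,\Phi_n)\bigr\|_{\HmZ}^2\le \bigl\|(\Theta_n,\Phi_n)\bigr\|_{\H^0_{2m,Z}}\bigl\|\tilde\K_m(\Theta_n,\Phi_n)\bigr\|_{\H^0_{2m,Z}}\xrightarrow[n\to\infty]{}0,
\]
so $\tilde\K_m$ is compact. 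No step here is delicate; the only substantive input is the compact Sobolev embedding, which is available because we are working on the bounded interval $[0,Z]$ and the low-order spaces $\HmZ$ are norm-equivalent to the standard Sobolev spaces $H^{m+1}_{\textup{odd}}\times H^m_{\textup{odd}}$.
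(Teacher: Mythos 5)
Your proof is correct and follows essentially the same route as the paper's: a Riesz-representation construction of $\tilde{\K}_m$, a self-referential application of the defining identity~\eqref{E:KTILDE} to get boundedness, and the compact embedding $\HmZ\hookrightarrow\H^0_{2m,Z}$ (via Rellich--Kondrachov on the bounded interval, justified by~\eqref{E:HSOBOLEVEQUIV}) to upgrade weak-to-strong convergence and conclude compactness. The only cosmetic difference is that the paper first defines $\tilde\K_m$ on the larger space $\H^0_{2m,Z}$ and then restricts, and explicitly records the self-adjointness of $\tilde\K_m$, neither of which changes the substance.
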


\begin{proof}
We notice that  $\HmZ$  
embeds compactly into $\mathcal H^0_{2m,Z}$. For any given $\begin{pmatrix} \Theta \\ \Phi \end{pmatrix}\in\H^0_{2m,Z}$, following a standard compactness argument, we consider the linear form $\HmZ\ni  \begin{pmatrix}  \theta \\ \phi \end{pmatrix} \mapsto 
\left(\begin{pmatrix} \Theta \\ \Phi \end{pmatrix}\, , \, \begin{pmatrix}  \theta \\ \phi \end{pmatrix}\right)_{\mathcal H^0_{2m,Z}}$, which is clearly a bounded, anti-linear map from $\HmZ$ to $\mathbb C$. Therefore, by the Riesz representation theorem, there exists a unique $\tilde{\K}_m\begin{pmatrix} \Theta \\ \Phi \end{pmatrix}\in \HmZ$ such that 
\begin{align}
\left( \tilde{\K}_m\begin{pmatrix} \Theta \\ \Phi \end{pmatrix}\, ,\, \begin{pmatrix}  \theta \\ \phi \end{pmatrix}\right)_{\HmZ}
= \left(\begin{pmatrix} \Theta \\ \Phi \end{pmatrix}\, , \, \begin{pmatrix}  \theta \\ \phi \end{pmatrix}\right)_{\mathcal H^0_{2m,Z}}, \ \ 
\begin{pmatrix}  \theta \\ \phi \end{pmatrix}\in \HmZ.
\end{align}
The map $\tilde{\K}_m:\mathcal H^0_{2m,Z}\to \HmZ$ is a bounded linear map. It is not hard to see that $\tilde\K_m$ is symmetric
by
\begin{align}
\left(\tilde{\K}_m\begin{pmatrix} \Theta_1 \\ \Phi_1 \end{pmatrix}, \begin{pmatrix} \Theta_2 \\ \Phi_2 \end{pmatrix} \right)_{\mathcal H^0_{2m,Z}} 
= \overline{\left(\begin{pmatrix} \Theta_2 \\ \Phi_2 \end{pmatrix} ,\tilde{\K}_m\begin{pmatrix} \Theta_1 \\ \Phi_1 \end{pmatrix}\right)_{\mathcal H^0_{2m,Z}}}
= \overline{\left( \tilde{\K}_m \begin{pmatrix} \Theta_2 \\ \Phi_2 \end{pmatrix} \, , \, \tilde{\K}_m\begin{pmatrix} \Theta_1 \\ \Phi_1 \end{pmatrix}\right)_{\HmZ}}, \label{E:SYMMETRY}
\end{align}
and therefore self-adjoint. Note that due to the compact embedding $\HmZ\hookrightarrow \mathcal H^0_{2m,Z}$, considering $\tilde{\K}_m$ as a map into $\mathcal H^0_{2m,Z}$, it
is in fact compact, since it is a composition of a continuous and a compact map. 

Our goal is to show that the restriction $\tilde{\K}_m\Big|_{\HmZ}:\HmZ\to \HmZ$ is a compact, self-adjoint operator.  
To that end, suppose $\begin{pmatrix} \Theta_n \\ \Phi_n \end{pmatrix} \rightharpoonup {\bf 0}$ weakly in $\HmZ$, then clearly 
$\begin{pmatrix} \Theta_n \\ \Phi_n \end{pmatrix} \to {\bf 0}$ in $\mathcal H^0_{2m,Z}$ due to the compact embedding  $\HmZ\hookrightarrow \mathcal H^0_{2m,Z}$. 
Moreover, by~\eqref{E:SYMMETRY}
\begin{align}
\left\|\tilde{\K}_m\begin{pmatrix} \Theta_n \\ \Phi_n \end{pmatrix} \right\|_{\HmZ}^2
= \left(\begin{pmatrix} \Theta_n \\ \Phi_n \end{pmatrix}, \tilde{\K}_m\begin{pmatrix} \Theta_n \\ \Phi_n \end{pmatrix} \right)_{\mathcal H^0_{2m,Z}} 
\le \left\|\tilde{\K}_m\begin{pmatrix} \Theta_n \\ \Phi_n \end{pmatrix} \right\|_{\mathcal H^0_{2m,Z}} \left\|\begin{pmatrix} \Theta_n \\ \Phi_n \end{pmatrix} \right\|_{\mathcal H^0_{2m,Z}}, 
\end{align}
which clearly converges to $0$. Therefore $\tilde{\K}_m\Big|_{\HmZ}:\HmZ\to \HmZ$ is also compact.
\end{proof}

We finally have the pieces in place to define the maximally dissipative operator $\mathscr L_m$ that we will use to derive spectral properties of the original operator $\bfL$. We now consider the operator, for each $m\geq \m$ and $Z>z_*$,
\begin{align}\label{E:LSCR}
\mathscr L_m  := \L_m - \tilde{\K}_m=\bfL-\K_m-\tilde{\K}_m : D(\bfL)\to \HmZ,
\end{align}
where we recall~\eqref{D:Lm}. We recast~\eqref{E:LINEAR1} in the form
\begin{align}
\pa_s\begin{pmatrix} \theta \\ \phi \end{pmatrix} = \mathscr L_m \begin{pmatrix} \theta \\ \phi \end{pmatrix} + \left(\K_m + \tilde{\K}_m  \right)\begin{pmatrix} \theta \\ \phi \end{pmatrix}.
\end{align}


Before stating the main dissipativity estimate on the operator $\mathscr{L}_m$, we remind the reader of the definition of the norm~\eqref{E:INNERPROD} and its dependence on the constant $\beta>0$ and on the end-point $Z$. We also recall the regularity index $\m\in\N$ defined in Proposition~\ref{P:DISSIP}. 

\begin{proposition}[Dissipativity of $\mathscr L_m$]\label{P:LDISSIP}
Let $m\in\N$ be such that $m\geq \m$ and let $Z>z_*$. Then there exist $\beta>0$ and $\sg >0$, depending on $m$ and $Z$, such that the operator $\mathscr L_m  : D(\bfL)\subset \HmZ \to \HmZ$ 
satisfies the dissipativity bound
\begin{align}
\textup{Re}\left(  \begin{pmatrix}  \theta \\ \phi \end{pmatrix}\,,\, \mathscr L_m\begin{pmatrix}  \theta \\ \phi \end{pmatrix}\right)_{\HmZ}
\le - 2\sg \left\|\begin{pmatrix}  \theta \\ \phi \end{pmatrix} \right\|_{\HmZ}^2, \ \ \begin{pmatrix}  \theta \\ \phi \end{pmatrix}\in D(\bfL).
\end{align}
\end{proposition}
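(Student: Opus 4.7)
The plan is to leverage the three pieces already assembled: (i) the top-order dissipativity of $A_{2m}$ from Proposition~\ref{P:DISSIP}, (ii) the commutation identity $\bD^{2m}\L_m = A_{2m}\bD^{2m}$ from~\eqref{E:DMAM}, and (iii) the defining property of $\tilde{\K}_m$ from Lemma~\ref{L:KTILDE}. The subtle point is that Proposition~\ref{P:DISSIP} gives coercivity only in the \emph{semi-norm} $\dot{\mathcal H}^{2m}_Z$, and hence we must use $\tilde{\K}_m$ to recover control of the lower-order part of the norm while keeping $\beta$ small enough that the cross term generated by $\L_m$ in the low-order piece can be absorbed.

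Fix $\Phi=(\theta,\phi)^\top \in D(\bfL)$ and decompose
\[
\operatorname{Re}\!\left(\Phi,\mathscr L_m\Phi\right)_{\HmZ} = \operatorname{Re}\!\left(\Phi,\L_m\Phi\right)_{\dot{\mathcal H}^{2m}_Z} + \beta\operatorname{Re}\!\left(\Phi,\L_m\Phi\right)_{\dot{\mathcal H}^0_{2m,Z}} -\operatorname{Re}\!\left(\Phi,\tilde{\K}_m\Phi\right)_{\HmZ}.
\]
For the top-order term, observe that since $\bd\bD^{2m}=\bD^{2m+1}$, one has the identity $\|\bD^{2m}\Phi\|_{\dot{\mathcal H}^0_{2m,Z}} = \|\Phi\|_{\dot{\mathcal H}^{2m}_Z}$, so~\eqref{E:DMAM} combined with Proposition~\ref{P:DISSIP} applied to $\bD^{2m}\Phi\in D(A_{2m})=\bD^{2m}D(\bfL)$ yields
\[
\operatorname{Re}\!\left(\Phi,\L_m\Phi\right)_{\dot{\mathcal H}^{2m}_Z} = \operatorname{Re}\!\left(\bD^{2m}\Phi,\,A_{2m}\bD^{2m}\Phi\right)_{\dot{\mathcal H}^0_{2m,Z}} \le -k_0 m\,\|\Phi\|_{\dot{\mathcal H}^{2m}_Z}^2.
\]
For the cross term, the boundedness bound~\eqref{E:LONEBOUND} for $\L_m$, together with the equivalence $\|\bD^{2m}\Phi\|_{\mathcal H^0_{2m,Z}}\lesssim \|\Phi\|_{\dot{\mathcal H}^{2m}_Z}$, gives
\[
\beta\,|\operatorname{Re}(\Phi,\L_m\Phi)_{\dot{\mathcal H}^0_{2m,Z}}|\le \beta C_{\L_m}\|\Phi\|_{\dot{\mathcal H}^0_{2m,Z}}\bigl(\|\Phi\|_{\dot{\mathcal H}^{2m}_Z}+\|\Phi\|_{\dot{\mathcal H}^0_{2m,Z}}\bigr),
\]
and Young's inequality with weight $k_0 m/2$ absorbs half of the top-order term on the right while producing only a lower-order error of size $O(\beta + \beta^2/(k_0 m))$.

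For the last term, by Lemma~\ref{L:KTILDE} and the self-adjointness of $\tilde{\K}_m$ on $\HmZ$ established in its proof,
\[
\operatorname{Re}\!\left(\Phi,\tilde{\K}_m\Phi\right)_{\HmZ} = \|\Phi\|_{\mathcal H^0_{2m,Z}}^2 = (1+\beta)\|\Phi\|_{\dot{\mathcal H}^0_{2m,Z}}^2.
\]
Assembling, we obtain
\[
\operatorname{Re}\!\left(\Phi,\mathscr L_m\Phi\right)_{\HmZ}\le -\tfrac{k_0 m}{2}\|\Phi\|_{\dot{\mathcal H}^{2m}_Z}^2 -\Bigl(1+\beta-\beta C_{\L_m}-\tfrac{\beta^2 C_{\L_m}^2}{2k_0 m}\Bigr)\|\Phi\|_{\dot{\mathcal H}^0_{2m,Z}}^2.
\]
Choosing $\beta=\beta(m,Z)>0$ small enough (depending on $k_0 m$ and $C_{\L_m}$, the latter encoding the $Z$-dependence noted in Remark~\ref{R:ZDEPENDENCE}) so that the second parenthesis exceeds $\tfrac12$, and then setting $2\sg:=\min\{k_0 m/2,\, 1/(2\beta)\}$, we conclude
$\operatorname{Re}(\Phi,\mathscr L_m\Phi)_{\HmZ}\le -2\sg\,\|\Phi\|_{\HmZ}^2$ via $\|\Phi\|_{\HmZ}^2=\|\Phi\|_{\dot{\mathcal H}^{2m}_Z}^2+\beta\|\Phi\|_{\dot{\mathcal H}^0_{2m,Z}}^2$. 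A density argument using $\DZodd\subset D(\bfL)$ dense in $\HmZ$ extends the estimate from smooth test functions (where Proposition~\ref{P:DISSIP} was derived) to all of $D(\bfL)$. The only genuinely delicate step has already been executed in Proposition~\ref{P:DISSIP}; here the work is purely algebraic.
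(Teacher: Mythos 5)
Your proof is correct and follows essentially the same route as the paper: decompose $\operatorname{Re}(\Phi,\mathscr L_m\Phi)_{\HmZ}$ via $\mathscr L_m=\L_m-\tilde{\K}_m$, use~\eqref{E:DMAM} with Proposition~\ref{P:DISSIP} for the top-order semi-norm, apply~\eqref{E:LONEBOUND} and Young/AM-GM to the $\beta$-weighted cross term, and use~\eqref{E:KTILDE} for the $\tilde{\K}_m$ contribution before taking $\beta$ small and $\sg=\tfrac12\min\{k_0m/2,1/(2\beta)\}$. Your final density remark is harmless but redundant, since Proposition~\ref{P:DISSIP} was already stated for all of $D(A_{2m})=\bD^{2m}D(\bfL)$, so the estimate applies directly to $\Phi\in D(\bfL)$ without a further density pass.
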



\begin{proof}
Observe that for any $\begin{pmatrix}  \theta \\ \phi \end{pmatrix}\in D(\bfL)$, from~\eqref{E:DMAM},~\eqref{E:LONEBOUND}, and Proposition~\ref{P:DISSIP},
\begin{align}
{}&\text{Re}\left(  \begin{pmatrix}  \theta \\ \phi \end{pmatrix}\,,\, \mathscr L_m\begin{pmatrix}  \theta \\ \phi \end{pmatrix}\right)_{\HmZ} \notag\\
&=   \text{Re} \left( \bD^{2m}\begin{pmatrix}  \theta \\ \phi \end{pmatrix}\,,\, A_{2m}\bD^{2m} \begin{pmatrix}  \theta \\ \phi \end{pmatrix}\right)_{\dot{\H}^0_{2m,Z}}
+\beta 
 \text{Re} \left( \begin{pmatrix}  \theta \\ \phi \end{pmatrix}\,,\, \L_m\begin{pmatrix}  \theta \\ \phi \end{pmatrix}\right)_{\dot{\H}^0_{2m,Z}}  - \text{Re} \left( \begin{pmatrix}  \theta \\ \phi \end{pmatrix}\,,\,\tilde{\K}_m\begin{pmatrix}  \theta \\ \phi \end{pmatrix}\right)_{\HmZ} \notag\\
& \le  - k _0 m \left\|\begin{pmatrix}  \theta \\ \phi \end{pmatrix} \right\|_{\GmZ}^2
 -  \left\|  \begin{pmatrix}  \theta \\ \phi \end{pmatrix}\right\|_{\dot{\H}^0_{2m,Z}}^2 
 + \beta 
 \left\| \L_m \begin{pmatrix}  \theta \\ \phi \end{pmatrix}\right\|_{\dot{\H}^0_{2m,Z}}  \left\| \begin{pmatrix}  \theta \\ \phi \end{pmatrix}\right\|_{\dot{\H}^0_{2m,Z}}  \notag\\
 & \le - k _0 m \left\|\begin{pmatrix}  \theta \\ \phi \end{pmatrix} \right\|_{\GmZ}^2
 -  \left\|  \begin{pmatrix}  \theta \\ \phi \end{pmatrix}\right\|_{\dot{\H}^0_{2m,Z}}^2 
+ \beta 
C_{\L_m}  \bigg( \left\|\begin{pmatrix}  \theta \\ \phi \end{pmatrix} \right\|_{\GmZ}^2
+ \left\|  \begin{pmatrix}  \theta \\ \phi \end{pmatrix}\right\|_{\dot{\H}^0_{2m,Z}}^2\bigg) \notag\\
& \le - 2\sg  \left\|\begin{pmatrix}  \theta \\ \phi \end{pmatrix} \right\|_{\HmZ}^2,
\end{align}
for some $\sg >0$ and $\beta$ 
chosen sufficiently small, depending on $C_{\L_m}$ and hence on $m$ and $Z$. 
\end{proof}


To prove that $\mathscr{L}_m$ is maximally dissipative, it remains only to prove that $\mathscr{L}_m-\l \bfI$ is surjective for $\l$ sufficiently large (recall the value $\l_0$  defined in Lemma~\ref{L:ONTO}).
 
\begin{proposition}[Surjectivity of $\mathscr L_m - \l \bfI$ for $\l$ sufficiently large]\label{P:ONTO}
Let $m\in \N$, $m\geq \m$. There exists $\l_1>1$ such that, for any $\l\geq\l_1$ and any $\begin{pmatrix} f \\ g \end{pmatrix}\in \HmZ$, there exists a unique 
$\begin{pmatrix} \theta \\ \phi\end{pmatrix}\in \HmZ$ such that 
\begin{align}
\left(\mathscr L_m - \l \bfI\right) \begin{pmatrix} \theta \\ \phi\end{pmatrix} = \begin{pmatrix} f \\ g\end{pmatrix}.
\end{align}
\end{proposition}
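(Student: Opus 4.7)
The plan is to reduce the surjectivity of $\mathscr{L}_m - \lambda \mathbf{I}$ to the already-established surjectivity of $\mathbf{L} - \lambda \mathbf{I}$ (Lemma~\ref{L:ONTO}) via a Fredholm-alternative argument, using the compactness of the perturbation $\mathcal{K}_m + \tilde{\mathcal{K}}_m$ and the dissipativity of $\mathscr{L}_m$ (Proposition~\ref{P:LDISSIP}) to rule out a non-trivial kernel.

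First, I would fix $\lambda_1 \geq \max(\lambda_0, 1)$ where $\lambda_0$ is the constant from Lemma~\ref{L:ONTO}. For any $\lambda \geq \lambda_1$, Lemma~\ref{L:ONTO} guarantees surjectivity of $\mathbf{L} - \lambda \mathbf{I}$. Injectivity follows from Theorem~\ref{T:SPECTRAL}: any element of the kernel would be an eigenfunction with eigenvalue $\lambda > 1$, contradicting mode stability. Hence $(\mathbf{L} - \lambda \mathbf{I})^{-1}: \HmZ \to D(\mathbf{L}) \subset \HmZ$ exists as a bounded operator by the closed graph theorem. Since $\mathscr{L}_m = \mathbf{L} - (\mathcal{K}_m + \tilde{\mathcal{K}}_m)$ with $\mathcal{K}_m + \tilde{\mathcal{K}}_m$ bounded on $\HmZ$ (and compact by Lemmas~\ref{L:L2COMPACT} and~\ref{L:KTILDE}), we have the factorisation
\begin{equation*}
\mathscr{L}_m - \lambda \mathbf{I} = (\mathbf{L} - \lambda \mathbf{I})\bigl(\mathbf{I} - (\mathbf{L} - \lambda \mathbf{I})^{-1}(\mathcal{K}_m + \tilde{\mathcal{K}}_m)\bigr)
\end{equation*}
on $D(\mathbf{L})$. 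The operator $(\mathbf{L} - \lambda \mathbf{I})^{-1}(\mathcal{K}_m + \tilde{\mathcal{K}}_m): \HmZ \to \HmZ$ is compact as a composition of a bounded and a compact operator.

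Next, by the Fredholm alternative, $\mathbf{I} - (\mathbf{L} - \lambda \mathbf{I})^{-1}(\mathcal{K}_m + \tilde{\mathcal{K}}_m)$ is bijective on $\HmZ$ provided it is injective. Suppose $\Phi \in \HmZ$ lies in its kernel. Then $(\mathbf{L} - \lambda \mathbf{I})\Phi = (\mathcal{K}_m + \tilde{\mathcal{K}}_m)\Phi$, which shows that $\Phi \in D(\mathbf{L}) = D(\mathscr{L}_m)$ and $(\mathscr{L}_m - \lambda \mathbf{I})\Phi = 0$. Pairing this identity with $\Phi$ in $\HmZ$ and taking real parts, Proposition~\ref{P:LDISSIP} yields
\begin{equation*}
0 = \textup{Re}\bigl(\Phi, (\mathscr{L}_m - \lambda \mathbf{I})\Phi\bigr)_{\HmZ} \leq -(2\sg + \lambda)\|\Phi\|_{\HmZ}^2,
\end{equation*}
and since $\lambda \geq \lambda_1 > 0$, we conclude $\Phi = 0$. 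Thus $\mathbf{I} - (\mathbf{L} - \lambda \mathbf{I})^{-1}(\mathcal{K}_m + \tilde{\mathcal{K}}_m)$ is an isomorphism of $\HmZ$, and composing with the bijection $\mathbf{L} - \lambda \mathbf{I}$ proves that $\mathscr{L}_m - \lambda \mathbf{I}: D(\mathbf{L}) \to \HmZ$ is surjective. Uniqueness of the solution $\Phi$ to $(\mathscr{L}_m - \lambda \mathbf{I})\Phi = (f,g)^\top$ is immediate from the same dissipativity estimate applied to the difference of two solutions.

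The argument is largely routine once the preceding structural results are in hand; the only delicate point is ensuring compatibility of domains and verifying that the kernel argument occurs genuinely on $D(\mathbf{L})$, which is automatic because the compact perturbation is bounded on the whole of $\HmZ$. No step here presents a serious obstacle, the heavy lifting having already been done in Lemma~\ref{L:ONTO}, Lemmas~\ref{L:L2COMPACT}--\ref{L:KTILDE}, and Proposition~\ref{P:LDISSIP}.
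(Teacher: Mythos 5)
Your strategy mirrors the paper's up to the factorisation
$\mathscr L_m - \l\bfI = (\bfL-\l\bfI)\bigl(\bfI-(\bfL-\l\bfI)^{-1}(\K_m+\tilde\K_m)\bigr)$,
but you invert the second factor by the Fredholm alternative (compactness plus trivial
kernel, the latter read off from Proposition~\ref{P:LDISSIP}), whereas the paper uses a
Neumann-series/small-norm argument driven by the explicit resolvent bound
$\|(\bfL-\l\bfI)^{-1}\|\le 2/\l$. The Fredholm route buys you a smaller admissible
threshold $\l_1$, while the paper needs $\l$ large enough that the operator norm of
$(\bfL-\l\bfI)^{-1}(\K_m+\tilde\K_m)$ drops below $1$; since the statement only
requires \emph{some} $\l_1$, both are adequate, and your injectivity check for
$\bfL-\l\bfI$ via mode stability (Theorem~\ref{T:SPECTRAL}) is a legitimate
substitute for the paper's coercivity-based injectivity.

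There is, however, one genuine gap. To conclude that $(\bfL-\l\bfI)^{-1}$ is
\emph{bounded} --- which you need so that the composition
$(\bfL-\l\bfI)^{-1}(\K_m+\tilde\K_m)$ is compact and the Fredholm alternative can be
invoked --- you appeal to the closed graph theorem. That theorem requires $\bfL$ to be a
closed operator on $\HmZ$, but closedness of $\bfL$ is \emph{not available at this point
in the paper's logical ordering}: it is asserted only in Theorem~\ref{T:LINEARMAIN},
whose proof derives it from the maximal dissipativity of $\mathscr L_m$, which in turn
rests on this very Proposition~\ref{P:ONTO}. As written, the step is circular. The
standard repair is exactly what the paper does: derive boundedness of the inverse
directly from dissipativity, avoiding closedness altogether, by combining
Proposition~\ref{P:LDISSIP} with the boundedness of $\K_m+\tilde\K_m$ to obtain
\begin{equation*}
\textup{Re}\bigl(-(\bfL-\l\bfI)\Phi,\,\Phi\bigr)_{\HmZ}\;\ge\;\tfrac{\l}{2}\|\Phi\|_{\HmZ}^2
\quad\text{for $\l$ sufficiently large},
\end{equation*}
whence $\|(\bfL-\l\bfI)\Phi\|_{\HmZ}\ge\tfrac{\l}{2}\|\Phi\|_{\HmZ}$ by
Cauchy--Schwarz. (One could instead verify closedness of $\bfL$ on its maximal domain by
a distribution-theoretic argument, but the paper does not supply this, and the coercivity
estimate is cleaner and also delivers the quantitative resolvent bound.) With this
amendment, the rest of your argument --- compactness, Fredholm alternative, kernel
computation from $(\mathscr L_m-\l\bfI)\Phi=0$, and uniqueness --- is sound.
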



\begin{proof}
We rewrite
\[
\mathscr L_m = \bfL - (\K_m+\tilde{\K}_m),
\]
where we note that $\K_m,\tilde{\K}_m:\HmZ \to \HmZ$ are compact operators by Lemmas~\ref{L:L2COMPACT} and~\ref{L:KTILDE}.
We now use the identity for $\la>1$,
\begin{align}\label{E:IDENTITYSIMPLE}
\bfL - (\K_m+\tilde{\K}_m)-\l \bfI = (\bfL-\l \bfI)\left(\bfI - (\bfL-\l \bfI)^{-1}(\K_m+\tilde\K_m)\right).
\end{align}

We next claim that, for $\l$ sufficiently large, the resolvent operator $(\mathbf{L} - \l \bfI)^{-1}:\HmZ\to\HmZ$  is well-defined  and satisfies the bound
\begin{align}\label{E:RESOLVENTBOUND}
\|(\mathbf{L} - \l \bfI)^{-1}\|_{\L(\HmZ,\HmZ)}\le \frac 2{\l}.
\end{align}
To see this, we first observe by the standard argument that, for $\la$ sufficiently large, using Proposition~\ref{P:LDISSIP} and the boundedness of $\K_m+\tilde{\K}_m$,
 \beqa
 \textup{Re}\left( -(\bfL-\la \bfI) \begin{pmatrix} \theta \\ \phi \end{pmatrix} ,  \begin{pmatrix} \theta \\ \phi \end{pmatrix} \right)_{\HmZ}= \textup{Re}\left( -(\mathscr L_m+\K_m+\tilde{\K}_m-\la \bfI) \begin{pmatrix} \theta \\ \phi \end{pmatrix} ,  \begin{pmatrix} \theta \\ \phi \end{pmatrix} \right)_{\HmZ}\\
 \geq 2\sg \|\begin{pmatrix} \theta \\ \phi \end{pmatrix}\|_{\HmZ}^2-C\|\begin{pmatrix} \theta \\ \phi \end{pmatrix}\|_{\HmZ}^2+\la\|\begin{pmatrix} \theta \\ \phi \end{pmatrix}\|_{\HmZ}^2\geq \frac{\la}{2}\|\begin{pmatrix} \theta \\ \phi \end{pmatrix}\|_{\HmZ}^2.
 \eeqa
 In particular, this implies the injectivity of $(\bfL-\la \bfI)$. By Lemma~\ref{L:ONTO}, $(\bfL-\la \bfI)$ is also surjective provided $\l\geq \l_0$, and hence the resolvent is well-defined and satisfies the estimate~\eqref{E:RESOLVENTBOUND}.

By~\eqref{E:RESOLVENTBOUND}, for $\l$ sufficiently large, we have $\|(\bfL-\l \bfI)^{-1}(\K_m+\tilde\K_m)\|_{\L(\HmZ,\HmZ)}<\frac12$. Moreover $(\bfL-\l \bfI)^{-1}(\K_m+\tilde\K_m)$ is compact and therefore  the 
operator $\bfI - (\bfL-\l \bfI)^{-1}(\K_m+\tilde\K_m)$ is invertible. It then follows from~\eqref{E:IDENTITYSIMPLE} and~\eqref{E:RESOLVENTBOUND} that there exists $\l_1\geq\l_0$ such that $\bfL - (\K_m+\tilde{\K}_m)-\l \bfI$ is invertible for $\l\geq\l_1$. This concludes the proof.
\end{proof}

\begin{theorem}[Lumer-Phillips applied to $\mathscr L_m=\bfL - (\K_m+\tilde{\K}_m)$]\label{T:LUPH}
Let $m\geq \m$, $Z>z_*$. There exist $\beta>0$ and an $\sg >0$ such that the operator $\mathscr L_m : D(\L_m)\subset  \HmZ \to \HmZ $ generates a quasicontraction semigroup  $e^{s\mathscr L_m}_{s\geq 0}$ such that 
\begin{align}
\left\|e^{s \mathscr L_m} \begin{pmatrix} \theta \\ \phi \end{pmatrix}\right\|_{\HmZ} \leq e^{-2\sg s}\left\|\begin{pmatrix} \theta \\ \phi \end{pmatrix}\right\|_{\HmZ}
\end{align}
\end{theorem}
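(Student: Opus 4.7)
The plan is to apply the Lumer--Phillips theorem to the shifted operator $\widetilde{\mathscr L}_m := \mathscr L_m + 2\sg \bfI$, from which the desired semigroup bound will follow immediately by the identity $e^{s\mathscr L_m} = e^{-2\sg s}e^{s\widetilde{\mathscr L}_m}$. The three ingredients required are: dense definition of $\widetilde{\mathscr L}_m$, dissipativity, and a range condition.

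First, $\widetilde{\mathscr L}_m$ is densely defined on $\HmZ$ since its domain is $D(\bfL) \supset \DZodd$, and $\DZodd$ is dense in $\HmZ$ by the very definition of the Hilbert space (see Definition~\ref{D:HILBERT}). Second, dissipativity of $\widetilde{\mathscr L}_m$ is an immediate consequence of Proposition~\ref{P:LDISSIP}: for every $(\theta,\phi)^\top \in D(\bfL)$,
\[
\textup{Re}\left(\begin{pmatrix}\theta\\ \phi\end{pmatrix},\widetilde{\mathscr L}_m\begin{pmatrix}\theta\\ \phi\end{pmatrix}\right)_{\HmZ}
= \textup{Re}\left(\begin{pmatrix}\theta\\ \phi\end{pmatrix},\mathscr L_m\begin{pmatrix}\theta\\ \phi\end{pmatrix}\right)_{\HmZ} + 2\sg \left\|\begin{pmatrix}\theta\\ \phi\end{pmatrix}\right\|_{\HmZ}^2 \le 0,
\]
with the appropriate choice of $\beta>0$ (depending on $m$ and $Z$) inherited from Proposition~\ref{P:LDISSIP}.

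Third, the range condition: we must exhibit some $\lambda^\ast>0$ such that $\lambda^\ast \bfI - \widetilde{\mathscr L}_m = (\lambda^\ast - 2\sg)\bfI - \mathscr L_m$ maps $D(\bfL)$ onto $\HmZ$. By Proposition~\ref{P:ONTO}, there exists $\l_1>1$ such that, for all $\l \ge \l_1$, the operator $\mathscr L_m - \l \bfI$ is a bijection from $D(\bfL)$ to $\HmZ$. Choosing $\lambda^\ast = \l_1 + 2\sg$, we see that $\lambda^\ast \bfI - \widetilde{\mathscr L}_m = \l_1 \bfI - \mathscr L_m$ is surjective (in fact bijective), which is precisely the required range condition.

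With these three ingredients in hand, the Lumer--Phillips theorem yields that $\widetilde{\mathscr L}_m$ is the generator of a strongly continuous contraction semigroup $\bigl(e^{s\widetilde{\mathscr L}_m}\bigr)_{s\ge 0}$ on $\HmZ$. Consequently $\mathscr L_m = \widetilde{\mathscr L}_m - 2\sg \bfI$ generates the quasicontraction semigroup $e^{s\mathscr L_m} = e^{-2\sg s}e^{s\widetilde{\mathscr L}_m}$, and
\[
\left\|e^{s\mathscr L_m}\begin{pmatrix}\theta\\ \phi\end{pmatrix}\right\|_{\HmZ}
= e^{-2\sg s}\left\|e^{s\widetilde{\mathscr L}_m}\begin{pmatrix}\theta\\ \phi\end{pmatrix}\right\|_{\HmZ}
\le e^{-2\sg s}\left\|\begin{pmatrix}\theta\\ \phi\end{pmatrix}\right\|_{\HmZ},
\]
which is the claimed estimate. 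No step presents a real obstacle here, as all the hard work has been front-loaded into Propositions~\ref{P:LDISSIP} and~\ref{P:ONTO}; the present result is a clean packaging of those two facts via Lumer--Phillips. The only mild care needed is to ensure that the constant $\beta>0$ chosen in Proposition~\ref{P:LDISSIP} is compatible with the one implicit in Proposition~\ref{P:ONTO} (via the resolvent bound~\eqref{E:RESOLVENTBOUND}), but since both statements merely require $\beta$ to be taken sufficiently small depending on $m$ and $Z$, we simply pick the smaller of the two.
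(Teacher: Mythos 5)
Your proposal is correct and takes essentially the same approach as the paper, which simply invokes the Lumer--Phillips theorem (citing \cite[Theorem 12.22]{Renardy04}) without writing out the three hypotheses; you have supplied the standard verification using Propositions~\ref{P:LDISSIP} and~\ref{P:ONTO} exactly as the authors intend. Your closing remark about reconciling the $\beta$'s is harmless but unnecessary, since the proof of Proposition~\ref{P:ONTO} already invokes Proposition~\ref{P:LDISSIP} and so uses the very same $\beta$, imposing no additional constraint.
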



\begin{proof}
The statement follows as a direct consequence of the Lumer-Phillips Theorem, see, e.g.,~\cite[Theorem 12.22]{Renardy04}.
\end{proof}


\begin{proof}[Proof of Theorem~\ref{T:LINEARMAIN}]
We recall the decomposition from~\eqref{E:LSCR},
\begin{align}
\bfL= \mathscr L_m + \K_m+\tilde{\K}_m,
\end{align}
where $\K_m+\tilde{\K}_m$ is a compact operator on $\HmZ$. Thus, from Theorem~\ref{THM:COMPACTPERT}, we immediately find that $\bfL$ generates a strongly continuous semigroup. From Theorem~\ref{THM:COMPACTPERT}(i), for any $\epsilon\in(0,\om)$, the set
\beqs
\sigma(\bfL)\cap\{\l\in\C\,|\,\Re\l\geq -2\sg \}.
\eeqs
contains only the single eigenvalue $\l=1$ associated to the time-translation mode, where we have also applied Theorem~\ref{T:SPECTRAL}. Thus, from  Theorem~\ref{THM:COMPACTPERT}(iii), 
\begin{align}
\left\|e^{s\bfL}\left(\mathbf{I}-\bfP\right) \begin{pmatrix} \theta \\ \phi \end{pmatrix} \right\|_{\HmZ} \le e^{-\sg s}\left\|\begin{pmatrix} \theta \\ \phi \end{pmatrix}\right\|_{\HmZ}, \ \ \begin{pmatrix} \theta \\ \phi \end{pmatrix}\in \HmZ,
\end{align}
with $\sg >0$ provided by Theorem~\ref{T:LUPH}. Moreover, by the standard theory of Riesz projections, the projection $\bfP$ commutes with $\bfL$ so that $ e^{s\bfL}\bfP = e^s \bfP$ and, by applying Lemma~\ref{L:GROWINGPROJ}, we have
\begin{align}
\|e^{s\bfL}\bfP \Phi\|_{\HmZ} \le e^s \|\Phi\|_{\HmZ}, \ \ \Phi\in\HmZ.
\end{align}
This concludes the proof.
\end{proof}


\section{Lower order bounds via Duhamel principle}\label{S:DUHAMEL}


Having established the linear stability of the operator $\bfL$ in Theorem~\ref{T:LINEARMAIN}, we are now in a position to turn to the full, nonlinear problem. As discussed in the introduction, due to the quasilinear character of the problem, we need to establish two kinds of nonlinear estimates: first, exponential decay of a lower-order norm using the semi-group properties of the linearised flow (but with derivative loss); second, top order energy estimates without derivative loss. The purpose of this section is to establish the former estimates.

For precision, we now fix the regularity exponent $m$ to be  $\m\in\N$ as in Theorem~\ref{T:LINEARMAIN}, satisfying~\eqref{E:MCONDITION}. We also take $Z=Z_0$ to be determined later, and let $\beta=\beta(\m,Z_0)>0$ as in Theorem~\ref{T:LINEARMAIN}. We will assume throughout that $\eps_0$ will be taken sufficiently small, depending on $Z_0$, so that for any constant $C(Z_0)$ appearing, we assume $C(Z_0)\eps_0\leq \frac14$ always. In particular, from~\eqref{E:APRIORIMAINPT}, we always retain the norm bound~\eqref{E:HMZTE}.

Starting from~\eqref{E:NONLINEARFORM}, we employ the Duhamel formula to write the solution as
\begin{align}\label{E:DUHAMEL}
\Phi(s) = e^{ \bfL(s-\sin) } \PhiT + \int_{\sin}^{s} e^{\bfL(s-\sigma)}\bfN[\Phi](\sigma) \,d\sigma,
\end{align}
where we recall~\eqref{E:BOLDNDEF} and~\eqref{E:NDEF}. We remind the reader that $T\in\mathbb R$ parametrises the 1-parameter family of initial data $\PhiT$ via~\eqref{E:PROFILE1} and the initial self-similar time $\sin$ by~\eqref{E:SINITIAL}.

We next project the dynamics into the stable part and the growing mode induced by time-translation symmetry of the problem.
Recalling the Riesz projection,~\eqref{E:RIESZDEF}, we introduce the splitting
\begin{align}\label{E:PLUSMINUS}
\Phi = (\bfI-\bfP)\Phi + \bfP\Phi =: \Phis + \Phiu.
\end{align}
From~\eqref{E:DUHAMEL} and the semigroup bound~\eqref{E:STABLEBOUND} of Theorem~\ref{T:LINEARMAIN}, we obtain 
\begin{align}\label{E:MINUSBOUND}
\|\Phis\|_{\HmZm} \le e^{-\sg (s-\sin) }\|\PhiT\|_{\HmZm} + \int_{\sin}^{s} e^{-\sg (s-\sigma)}\|\bfN[\Phi](\sigma)\|_{\HmZm} \,d\sigma.
\end{align}
When we project into the unstable 1-dimensional subspace, we obtain in turn from~\eqref{E:PPROPERTIES} the Duhamel
formula for $\Phiu$:
\begin{align}\label{E:PHIU}
\Phiu(s) = e^{s-\sin}\bfP\PhiT + \int_{\sin}^{s} e^{s-\sigma} \bfP\bfN[\Phi](\sigma)\,d\sigma,
\end{align}
which is equivalently rewritten in the form
\begin{align}\label{E:STABLEMFD}
\Phiu(s) & = e^{s-\sin}\bfP\bigg(\PhiT +\int_{\sin}^{S_T} e^{\sin-\sigma} \bfN[\Phi](\sigma)\diff \sigma\bigg) - \int_{s}^{S_T} e^{s-\sigma} \bfP\bfN[\Phi](\sigma)\diff \sigma ,
\end{align}
 where $S_T$ is the maximal time defined in~\eqref{E:STDEF}.


\subsection{Statement of the low-order a priori bound}


Our goal is to use these Duhamel formulae to prove an a priori bound on the $\HmZ$ norm of the solution $\Phi$ that encodes decay induced by the linear semi-group, but at the cost of a derivative loss. In order to make this precise, we recall the constants $\nu\in(0,1)$ and $\Om>0$ satisfying~\eqref{E:OMEGABDS}.

\begin{proposition}\label{P:EE12}
Let $\Phi$ be a unique solution to~\eqref{E:NONLINEARFORM}  with initial data~\eqref{E:PROFILE1}. 
Assume the a priori bounds~\eqref{E:APRIORIMAIN}--\eqref{E:APRIORIMAINPT} and recall the definition~\eqref{E:STDEF} of the maximal time $S_T$.
Then for any $s\in [\sin,S_T)$ the following bound holds 
\beqa
	e^{\nu\sg s}\|\Phi\|_{\HmZm}\leq &\,C\sup_{\sigma\in[\sin,s]}(e^{\Omega \sigma}\tE_{\leq 2(\m+1)}^{\frac12})\sup_{\sigma\in[\sin,s]}(e^{\nu\sg \sigma}\|\Phi\|_{\HmZm}) e^{-\Omega s}  + e^{-(1-\nu)\sg s}\|\Phi^T_{in}\|_{\HmZm}\\
	&+e^{(1+\nu\sg )s}\Big\|\mathbf{P}\Big(\Phi^T_{in}+\int_{\sin}^{S_T}e^{\sin-\sigma} \mathbf{N}[\Phi]\,\dif \sigma\Big)\Big\|_{\HmZm}.
	\eeqa
\end{proposition}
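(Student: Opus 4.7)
My approach is to decompose the solution via the Riesz projection \eqref{E:PLUSMINUS}, estimate each part using the two Duhamel formulations \eqref{E:MINUSBOUND} and \eqref{E:STABLEMFD}, and control the nonlinearity by a bilinear estimate tailored to the two-tier norm structure.

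\textbf{Step 1: Bilinear nonlinear estimate.} The central ingredient will be
\beqs
\|\mathbf{N}[\Phi]\|_{\HmZm} \leq C \|\Phi\|_{\HmZm}\,\tE_{\leq 2(\m+1)}^{1/2},
\eeqs
valid under the pointwise a priori bound \eqref{E:APRIORIMAINPT}. I plan to prove it by applying $\bD^{2\m}$ to the explicit expression \eqref{E:NDEF} for $\mathcal{N}[\theta]$ and distributing derivatives via the Leibniz rule of Lemma~\ref{L:XYPRODCHAIN}. Since $\mathcal N[\theta]$ is at least quadratic in $\theta$ with at most two spatial derivatives per factor, each resulting summand carries at most $2\m+2$ derivatives on a single factor: the unique summand with exactly $2\m+2$ derivatives on one $\theta$-factor will be placed in $L^2$ and absorbed into $\tE_{\leq 2(\m+1)}^{1/2}$, while the remaining factors are placed in $L^\infty$ using $\Pb$ together with the Sobolev embedding \eqref{E:HSOBOLEVEQUIV}, leaving one $L^2$ factor bounded by $\|\Phi\|_{\HmZm}$. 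The pointwise smallness $\Pb\leq C_\ast\sqrt{\eps_0}$ ensures that the denominators $\bzeta+\theta$ and $\bzeta_z+\theta_z$ arising inside the composite expressions $N_1$ are uniformly bounded away from zero.

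\textbf{Step 2: Stable component.} Inserting the bilinear estimate into \eqref{E:MINUSBOUND} and writing $\|\Phi(\sigma)\|_{\HmZm}\tE^{1/2}(\sigma) \leq e^{-(\nu\sg+\Omega)\sigma} X$, where $X := \sup_{\tau\in[\sin,s]}(e^{\nu\sg\tau}\|\Phi\|_{\HmZm})\sup_{\tau\in[\sin,s]}(e^{\Omega\tau}\tE^{1/2})$, I obtain
\beqs
\|\Phi_-(s)\|_{\HmZm} \leq e^{-\sg(s-\sin)}\|\PhiT\|_{\HmZm} + C X \int_{\sin}^{s}e^{-\sg(s-\sigma)}e^{-(\nu\sg+\Omega)\sigma}\,\dif\sigma.
\eeqs
Evaluating the elementary exponential integral (in the regime $\Omega<(1-\nu)\sg$ guaranteed by the natural choice of $\nu$ consistent with \eqref{E:OMEGABDS}) and multiplying by $e^{\nu\sg s}$ will produce $Ce^{-(1-\nu)\sg s}\|\PhiT\|_{\HmZm} + CXe^{-\Omega s}$, accounting for the first two terms in the proposition.

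\textbf{Step 3: Unstable component.} From \eqref{E:STABLEMFD},
\beqs
\|\Phi_+(s)\|_{\HmZm} \leq e^{s-\sin}\Big\|\bfP\Big(\PhiT + \int_{\sin}^{S_T}e^{\sin-\sigma}\bfN[\Phi]\,\dif\sigma\Big)\Big\|_{\HmZm} + \int_s^{S_T} e^{s-\sigma}\|\bfP\bfN[\Phi]\|_{\HmZm}\,\dif\sigma.
\eeqs
The first term, after multiplying by $e^{\nu\sg s}$ and absorbing $e^{-\sin}$ into $C$ (using $|\sin|\ll 1$), is precisely the third term of the proposition. For the second I will use boundedness of the rank-one projector $\bfP$ on $\HmZm$ together with the Step 1 bilinear estimate to obtain $\|\bfP\bfN[\Phi](\sigma)\|_{\HmZm} \leq CXe^{-(\nu\sg+\Omega)\sigma}$; computing $\int_s^{S_T}e^{s-\sigma}e^{-(\nu\sg+\Omega)\sigma}\,\dif\sigma \leq Ce^{-(\nu\sg+\Omega)s}$ and multiplying by $e^{\nu\sg s}$ yields the contribution $CXe^{-\Omega s}$, which merges with the first term from Step 2.

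\textbf{Main obstacle.} The technical heart is the bilinear estimate of Step 1: although the derivative-count bookkeeping is transparent, one must carefully verify that the weights $\chi_{2j}$ in $\tE_{\leq 2(\m+1)}$ and the weights $g_{2\m}$ in $\HmZm$ remain mutually compatible when the derivatives are distributed across the LP-profile coefficients and the composite nonlinearities $N_1$, exploiting the hierarchy built into \eqref{E:CHIJDEF}. The seemingly fatal exponentially growing prefactor $e^{(1+\nu\sg)s}$ in the third term is not a genuine loss; it encodes the trivial instability arising from the time-translation symmetry of the LP family and will be neutralised in Section~\ref{S:MAINTHEOREM} by a Brouwer fixed-point choice of the blow-up time $T$ forcing the bracketed quantity to vanish, as in Proposition~\ref{P:BROUWER}.
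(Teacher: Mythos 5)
Your proposal follows essentially the same route as the paper: you first establish the bilinear bound $\|\bfN[\Phi]\|_{\HmZm}\le C\|\Phi\|_{\HmZm}\tE_{\le 2(\m+1)}^{1/2}$ (the paper's Lemma~\ref{L:NBOUND} and~\eqref{E:CRUDENONLINEAR0}), feed it into the Duhamel formulations~\eqref{E:MINUSBOUND} and~\eqref{E:STABLEMFD} for the stable and unstable components, and evaluate the exponential integrals using $\nu\sg+\Om<\sg$ from~\eqref{E:OMEGABDS}. The only imprecision is in your Step 1 bookkeeping: the $\|\Phi\|_{\HmZm}$ factor does not come from ``one $L^2$ factor'' left over after placing the top-order piece in $L^2$, but from the lower-order factors placed in $L^\infty$ via the Hardy--Sobolev embedding and the inequality $\|\Phi\|_{\mathcal H^{\m+2}_{Z_0}}\le C\|\Phi\|_{\H^{2\m}_{Z_0}}$ (valid since $\m\ge 2$), exactly as the paper does in deriving~\eqref{E:MATHCALNBOUND}.
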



\begin{remark}
Proposition~\ref{P:EE12} identifies the term
\be\label{E:PERRON}
\bfP\bigg( \PhiT +\int_{\sin}^{S_T} e^{\sin-\sigma} \bfN[\Phi](\sigma)\diff \sigma\bigg)
\ee
as the key obstacle to closing the estimates globally in $s$. This is reminiscent of the classical (un)stable manifold arguments and we shall show in 
Section~\ref{S:MAINTHEOREM} that there exists a choice of $T$, $|T|\ll1$ such that~\eqref{E:PERRON} vanishes.
\end{remark}


\subsection{Nonlinear estimates}


Before proving Proposition~\ref{P:EE12}, we first carefully analyse the algebraic structure of the nonlinearity $\bfN[\Phi]$ near the origin $z=0$. Our key lemma states that $\mathcal N[\theta]$ is suitably regular near $z=0$.


\begin{lemma}
Let $\mathcal N[\theta]$ be the nonlinearity defined in~\eqref{E:NDEF} and let $(\theta,\phi)^\top\in \HmZm$. 
Let
\begin{align}\label{E:GHTILDEDEF}
\tH(z):=z^2\bzeta_{zz}(z)+\CLP z^{\frac13}\tG(z)^2,
\end{align}
where we recall $\tG$ from~\eqref{E:GTILDEDEF}. We note that $\tH\in\DZodd$ is real analytic as a function of $z^{\frac13}$ near $z=0$. 
Then the following identity holds:
\begin{align}
-\color{black}\mathcal N[\theta] &=\mathcal N_1[\theta] + \mathcal N_2[\theta]+\mathcal N_3[\theta]+\mathcal N_4[\theta], \label{E:NDECOMP}
\end{align}
where
\begin{align}
\mathcal N_1[\theta]& :=\sqrt{\bG}\, f_1(\sqrt{\bG}\bp\theta)\, \sqrt{\bG}\bp\theta \bp\big( \sqrt{\bG}\bp \theta\big), \ \  f_1(x) : = \frac{2+x}{(1+x)^2},
\label{E:N1}\\
\mathcal N_2[\theta] &:= \tH\, f_2(\sqrt{\bG}\bp\theta) \, (\sqrt{\bG}\bp\theta)^2, \ \ f_2(x):=-\frac{1}{(1+x)}, \label{E:N2}\\
\mathcal N_3[\theta]&:=\CLP z^{\frac13} \tG^2 \, f_3(\tG \frac{\theta}{z^{\frac13}}) \, (\tG \frac{\theta}{z^{\frac13}})^2, \ \ f_3(x) := \frac{3+2x}{(1+x)^2}, 
\label{E:N3} \\
\mathcal N_4[\theta]& := \sqrt{\bG}\bp \big(\tG \frac{\theta}{z^{\frac13}}\big) f_4(\tG \frac{\theta}{z^{\frac13}},\sqrt{\bG}\bp\theta), \ \ 
f_4(x,y) : = 2\frac{(1+x)(1+y)-1}{(1+x)(1+y)}.
\label{E:N4}
\end{align}
\end{lemma}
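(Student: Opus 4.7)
The plan is to verify the identity \eqref{E:NDECOMP} by direct algebraic manipulation, starting from the defining formula \eqref{E:NDEF} and regrouping terms so that the delicate $z\to 0$ cancellations implicit in $\mathcal{N}[\theta]$ become manifest. The crucial observation that guides the regrouping is the pair of elementary identities
\begin{align*}
\sqrt{\bG}\,\bp\theta = \frac{\theta_z}{\bzeta_z},\qquad \tG\,\frac{\theta}{z^{1/3}}=\frac{\theta}{\bzeta},
\end{align*}
following from $\bG = z^{-4/3}\bzeta_z^{-2}$ and $\tG = z^{1/3}/\bzeta$ together with $\bp = z^{2/3}\pa_z$. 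These identities suggest that the natural ``small" quantities are $X:=\sqrt{\bG}\bp\theta$ (controlling $\theta_z/\bzeta_z$) and $Y:=\tG\theta/z^{1/3}$ (controlling $\theta/\bzeta$), and one should rewrite the entire nonlinearity in terms of smooth functions of $X$ and $Y$ times at most one derivative of the form $\bp(\cdot)$.

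First, I would collect the two terms in $\mathcal{N}[\theta]$ that are quadratic in $\theta_z$ and multiplied by $\theta_{zz}$. Using the definition \eqref{E:NONEDEF} of $N_1$ with argument $\bzeta_z,\theta_z$, an elementary polynomial computation produces the identity
\begin{align*}
\frac{2}{\bzeta_z^3}\theta_z - N_1(\bzeta_z,\theta_z) = \bzeta_z^{-2}\,\frac{X(2+X)}{(1+X)^2} = \bzeta_z^{-2}\,X\,f_1(X),
\end{align*}
which is the origin of the factor $f_1$. Multiplying by $\theta_{zz}$ and using $\bp(\sqrt{\bG}\bp\theta) = \sqrt{\bG}\theta_{zz}+(\text{terms involving }\theta_z\text{ and }\bzeta_{zz})$ produces $\mathcal{N}_1$ plus a remainder proportional to $\bzeta_{zz}\,X^2\cdot(\text{smooth in }X)$. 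This remainder must then be combined with $N_1(\bzeta_z,\theta_z)\bzeta_{zz}$, which itself takes the form $\bzeta_{zz}\cdot X^2 \cdot(\text{smooth in }X)$.

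Second, the $-\CLP z\,N_1(\bzeta,\theta)$ term naturally contributes $\CLP z^{1/3}\tG^2\,Y^2\,g(Y)$ for some rational $g$; part of this can be split off as $\mathcal{N}_3$ (with $g=f_3$), while the remainder pairs with the $\bzeta_{zz}$ contribution of the previous paragraph to form exactly the $\mathcal{N}_2$ term. This is precisely what forces the auxiliary function $\tH = z^2\bzeta_{zz}+\CLP z^{1/3}\tG^2$ to appear: the combination $\tH$ is dictated by the requirement that the singular $z\to 0$ behaviours of $z^2\bzeta_{zz}$ and $\CLP z^{1/3}\tG^2$ cancel out, which they do precisely because $\bzeta$ solves the LP profile equation \eqref{E:LPLAG}. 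Concretely, rewriting \eqref{E:LPLAG} in terms of $\bp\bzeta$ and dividing through by $\bp\bzeta$ in a neighbourhood of the origin identifies $\tH$ as a combination of $\bzeta$, $\tG$ and their derivatives that is manifestly smooth as a function of $y=\bzeta(z)$ and vanishes at $y=0$, placing $\tH\in\DZodd$. The verification of this smoothness uses Lemma \ref{L:ONETHIRD}/\ref{L:ZETABAR} for the analyticity of $\bzeta$ in $z^{1/3}$ at the origin.

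Finally, the term $\frac{2\theta^2}{\bzeta^2(\bzeta+\theta)}$ is rewritten using $Y=\theta/\bzeta$ and factoring out $\sqrt{\bG}\bp(\tG\theta/z^{1/3})$; after combining with the residual contributions from the previous two steps, one obtains the precise form of $\mathcal{N}_4$ with $f_4(x,y) = 2[(1+x)(1+y)-1]/[(1+x)(1+y)]$. The main (and essentially only) obstacle is bookkeeping: ensuring that every numerator and denominator cancel correctly and that no residual singular term is left over. This can be checked cleanly by expanding both sides as rational functions in $X$ and $Y$ with coefficients in $\{\bzeta,\bzeta_z,\bzeta_{zz},z\}$ and observing termwise agreement, or equivalently by verifying the identity on smooth test profiles using a computer algebra system, since both sides are manifestly rational in $(X,Y)$ of the same degree.
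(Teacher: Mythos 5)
Your high-level plan — rewrite everything in terms of the small quantities $X := \sqrt{\bG}\bp\theta = \theta_z/\bzeta_z$ and $Y := \tG\theta/z^{1/3} = \theta/\bzeta$ and expose the singular cancellations — is the right one and matches the paper. However, the specific accounting contains several errors that would prevent the computation from closing as you describe it.

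First, your explanation of how $\tH = z^2\bzeta_{zz} + \CLP z^{1/3}\tG^2$ arises is wrong. You claim the two constituent pieces have singular $z\to 0$ behaviours that cancel by virtue of the LP profile equation. In fact, since $\bzeta\sim z^{1/3}$ near the origin, one has $z^2\bzeta_{zz}\sim z^{1/3}$ and $\CLP z^{1/3}\tG^2\sim \CLP z^{1/3}$: both pieces are already regular of the \emph{same} order, and nothing singular cancels between them. The role of the LP equation~\eqref{E:LPLAG} is quite different: the genuinely singular quantity appearing in the reorganised nonlinearity is $\bzeta_{zz}/\bzeta_z^2\sim z^{-1/3}$, and the LP equation rewrites it as
\[
\frac{\bzeta_{zz}}{\bzeta_z^2}= -\frac2{\bzeta}+ z^2\bzeta_{zz}+\frac{\CLP z}{\bzeta^2}= -\frac2{\bzeta}+\tH,
\]
isolating the $O(z^{-1/3})$ piece $-2/\bzeta$ from the regular remainder $\tH$. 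That is the provenance of $\tH$ and of both of its summands.

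Second, your claim that $\CLP z\,N_1(\bzeta,\theta)$ only \emph{partly} produces $\mathcal{N}_3$, with a remainder that feeds into $\mathcal{N}_2$, is incorrect: a direct substitution using~\eqref{E:NONEDEF} and $z\bzeta^{-2}=z^{1/3}\tG^2$ shows $\CLP z\,N_1(\bzeta,\theta) = \CLP z^{1/3}\tG^2\, Y^2 f_3(Y) = \mathcal{N}_3$ exactly, with no remainder. The $\CLP z^{1/3}\tG^2$ piece of $\tH$ inside $\mathcal{N}_2$ comes from the LP-equation substitution above, applied to the $\bzeta_{zz}$ terms — not from $\CLP z\,N_1(\bzeta,\theta)$.

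Third, the most delicate step — the cancellation producing $\mathcal{N}_4$ — is not correctly identified. After the LP-equation substitution, the residual singular term $\frac{2}{\bzeta}\frac{X^2}{1+X}$ must be paired with $\frac{2\theta^2}{\bzeta^2(\bzeta+\theta)}=\frac{2}{\bzeta}\frac{Y^2}{1+Y}$; both are individually $O(z^{-1/3})$ but their difference is not. The derivative factor $\sqrt{\bG}\bp Y$ in $\mathcal{N}_4$ appears only \emph{after} this pairing, via the algebraic identity
\[
\frac{X^2}{1+X}-\frac{Y^2}{1+Y}=(X-Y)\,\frac{(1+X)(1+Y)-1}{(1+X)(1+Y)}
\]
together with $\bzeta_z^{-1}\pa_z(\theta/\bzeta) = \bzeta^{-1}(X-Y)$. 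One cannot ``factor out $\sqrt{\bG}\bp Y$'' from $\frac{2\theta^2}{\bzeta^2(\bzeta+\theta)}$ alone, as you suggest; that term contains no derivative. Finally, your closing remark about verifying the identity on test profiles via computer algebra is not a substitute for the algebraic derivation; it would at best be a sanity check.
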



\begin{proof}
We observe first that, due to~\eqref{E:NONEDEF},
\begin{align}
\frac{2}{\bzeta_z^3}\theta_z - N_1(\bzeta_z,\theta_z) = \frac1{\bzeta_z^2}\frac{\theta_z}{\bzeta_z} \frac{2+\frac{\theta_z}{\bzeta_z}}{\big(1+\frac{\theta_z}{\bzeta_z}\big)^2}.
\end{align}
Upon rewriting $\theta_{zz}$ in the form $$\theta_{zz}=\bzeta_z\pa_z\big(\frac{\theta_z}{\bzeta_z}\big)+ \frac{\theta_z}{\bzeta_z}\bzeta_{zz},$$
after a brief algebraic manipulation, we have 
\begin{align}
&\Big(\frac{2}{\bzeta_z^3}\theta_z - N_1(\bzeta_z,\theta_z)\Big) \theta_{zz} - N_1(\bzeta_z,\theta_z)\bzeta_{zz}
=\frac1{\bzeta_z} \frac{\theta_z}{\bzeta_z}\frac{2+\frac{\theta_z}{\bzeta_z}}{\big(1+\frac{\theta_z}{\bzeta_z}\big)^2} \pa_z\big( \frac{\theta_z}{\bzeta_z}\big)
- \frac1{\bzeta_z^2} \big(\frac{\theta_z}{\bzeta_z}\big)^2\frac1{1+\frac{\theta_z}{\bzeta_z}}\bzeta_{zz}. 
\end{align}
Recalling the steady state equation~\eqref{E:LPLAG}, we have $\frac{\bzeta_{zz}}{\bzeta_z^2}= -\frac2{\bzeta}+ z^2\bzeta_{zz}+\frac{\CLP z}{\bzeta^2}$, where we note that the term $-\frac2{\bzeta}$ is the most singular one near $z=0$. Using this and~\eqref{E:NDEF} we may therefore rewrite $\mathcal N[\theta]$ in the form
\begin{align}
-\color{black}\mathcal N[\theta] &=\frac1{\bzeta_z} \frac{\theta_z}{\bzeta_z}\frac{2+\frac{\theta_z}{\bzeta_z}}{\big(1+\frac{\theta_z}{\bzeta_z}\big)^2} \pa_z\big( \frac{\theta_z}{\bzeta_z}\big)- \big(\frac{\theta_z}{\bzeta_z}\big)^2\frac1{1+\frac{\theta_z}{\bzeta_z}}\left(-\frac2{\bzeta}+ z^2\bzeta_{zz}+\frac{\CLP z}{\bzeta^2}\right) \notag\\
& \ \ \ \ + \CLP z \frac1{\bzeta^2}\Big(\frac{\theta}{\bzeta}\Big)^2\frac{3+2\frac{\theta}{\bzeta}}{\big(1+\frac{\theta}{\bzeta}\big)^2}- \frac{2\theta^2}{\bzeta^2(\bzeta+\theta)}. \label{E:NINTER1}
\end{align}
The leading order singular contributions cancel out as follows.
\begin{align}
\big(\frac{\theta_z}{\bzeta_z}\big)^2\frac1{1+\frac{\theta_z}{\bzeta_z}}\frac2{\bzeta} - \frac{2\theta^2}{\bzeta^2(\bzeta+\theta)}
& = -\frac2{\bzeta}\Big[ \frac{\frac{\theta^2}{\bzeta^2}}{1+\frac{\theta}{\bzeta}} - \frac{\frac{\theta_z^2}{\bzeta_z^2}}{1+\frac{\theta_z}{\bzeta_z}}\Big] \notag\\
& = \frac2{\bzeta_z}\big(\frac{\theta}{\bzeta}\big)_z \frac{(1+\frac{\theta}{\bzeta})(1+\frac{\theta_z}{\bzeta_z})-1}{(1+\frac{\theta}{\bzeta})(1+\frac{\theta_z}{\bzeta_z})},\label{E:NINTER2}
\end{align}
where the last identity follows from a straightforward algebraic manipulation.
We replace every occurrence of the operator $\frac1{\bzeta_z}\pa_z$ by $\frac1{\bp\bzeta}\bp=\sqrt{\bG}\bp$. 
Using~\eqref{E:GHTILDEDEF} and~\eqref{E:NINTER1}--\eqref{E:NINTER2}, this then leads to the claimed identity.
\end{proof}


The next lemma provides the key nonlinear bound, which captures the loss-of-derivative associated with the quasi-linear nature of the problem.


\begin{lemma}\label{L:NBOUND}
Let $\Phi=\begin{pmatrix}\theta\\\phi\end{pmatrix}\in\mathcal H^{2\m+1}_{Z_0}$ be such that $\|\Phi\|_{\HmZm}\le 1$.
Then there exists a constant $C=C(\m,Z_0)>0$ such that 
\begin{align}
\|\bfN[\Phi]\|_{\HmZm} \le C\|\Phi\|_{\mathcal H^{2\m+1}_{Z_0}}\, \|\Phi\|_{\HmZm}.
\end{align}
In particular,
\beq\label{E:CRUDENONLINEAR0}
\|\bfN[\Phi]\|_{\HmZm}  \le C \|\Phi\|_{\HmZm}\tE_{\leq 2(\m+1)}^{\frac12}. 
\eeq
On the other hand, if $\Phi_1,\Phi_2\in\mathcal{H}^{2\m+1}_{Z_0}$ both satisfy  $\|\Phi_j\|_{\HmZm}\le 1$, $j=1,2$, then
 \beq\label{E:NPHIDIFF}
 \|\mathbf{N}[\Phi_1]-\mathbf{N}[\Phi_2]\|_{\HmZm}\leq \|\Phi_1-\Phi_2\|_{\mathcal{H}^{2\m+1}_{Z_0}}(\|\Phi_1\|_{\HmZm}+\|\Phi_2\|_{\HmZm}).
 \eeq 
\end{lemma}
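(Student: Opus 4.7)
\emph{Proof proposal.} The strategy exploits the algebraic decomposition~\eqref{E:NDECOMP} to reduce the full nonlinear bound to estimates on each $\mathcal{N}_i[\theta]$. Since $\bfN[\Phi]=(0,\mathcal N[\theta])^\top$, by~\eqref{E:INNERPROD} the quantity $\|\bfN[\Phi]\|_{\HmZm}^2$ is the sum of $\beta\|\mathcal N[\theta]\|_{L^2_{g_{2\m}}}^2$ and $\|\bD^{2\m}\mathcal N[\theta]\|_{L^2_{g_{2\m}}}^2$, so by the triangle inequality it suffices to treat each of $\mathcal N_1,\dots,\mathcal N_4$ separately. The key structural feature is that each $\mathcal N_i$ is built from the elementary ``building blocks'' $\sqrt{\bG}\bp\theta$ (for $\mathcal N_1$, $\mathcal N_2$) and $\tG\theta/z^{1/3}$ (for $\mathcal N_3$, $\mathcal N_4$), multiplied by a smooth profile factor in $\DZeven$ (namely $\sqrt{\bG}$, $\tH$, or $z^{1/3}\tG^2$) and composed with the smooth scalar functions $f_j$. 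The parity considerations of $\DZodd$/$\DZeven$ combined with Lemma~\ref{L:GBAR} and the regularity of $\bzeta$ ensure that each building block inherits the correct parity/regularity structure and has its $L^2$ norm controlled by $\|\bd\theta\|_{L^2}$, and its $\bD^{2\m}$ derivative controlled by $\|\bd\bD^{2\m}\theta\|_{L^2}$.

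First I would apply $\bD^{2\m}$ to each $\mathcal N_i$ using the product and chain rule formulas from Lemma~\ref{L:XYPRODCHAIN}, producing a finite sum of products of the form $f_j^{(k)}(\cdot) P_{j_1}(\text{block}_1)\cdots P_{j_\ell}(\text{block}_\ell)$ with operators in $\mathcal Y_{j_i}$ summing to total order $2\m$. In each such product, I distribute the derivatives so that exactly one block carries up to $2\m+1$ derivatives of $\theta$ (estimated in $L^2$), while every other factor carries at most $2\m-1$ derivatives of $\theta$ and is estimated in $L^\infty$. Thanks to the equivalence $\HmZm\cong H^{2\m+1}_{\text{odd}}\times H^{2\m}_{\text{odd}}$ of~\eqref{E:HSOBOLEVEQUIV} in the Eulerian $y$-variable, Sobolev embedding in dimension three gives $H^{2\m-1}\hookrightarrow L^\infty$ (since $\m\ge 2$ implies $2\m-1\ge 3>3/2$), so every low-order factor is bounded in $L^\infty$ by $C\|\Phi\|_{\HmZm}$. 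The smallness assumption $\|\Phi\|_{\HmZm}\le 1$ combined with Sobolev embedding gives $\|\sqrt{\bG}\bp\theta\|_{L^\infty}+\|\tG\theta/z^{1/3}\|_{L^\infty}\le C$ in a compact range where $f_j$, $f_j'$, $f_j''$ are smooth and bounded, so the composed functions and their derivatives contribute only uniformly bounded coefficients.

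Combining these ingredients through H\"older's inequality, each term in the expansion of $\bD^{2\m}\mathcal N_i[\theta]$ is controlled by $C(\m,Z_0)\|\Phi\|_{\mathcal H^{2\m+1}_{Z_0}}\|\Phi\|_{\HmZm}$. The lower-order piece $\|\mathcal N[\theta]\|_{L^2_{g_{2\m}}}$ is handled identically (indeed more easily, since $\m+1$ derivatives are comfortably available for all factors). This proves the first bound, and then~\eqref{E:CRUDENONLINEAR0} follows immediately from the embedding~\eqref{E:HMZTE} of $\tE_{\le 2(\m+1)}$ into $\mathcal H^{2\m+1}_{Z_0}$ under the pointwise smallness condition. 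For the difference estimate~\eqref{E:NPHIDIFF}, I would apply the fundamental theorem of calculus along the segment $\tau\Phi_1+(1-\tau)\Phi_2$: the integrand $\frac{d}{d\tau}\mathbf N[\tau\Phi_1+(1-\tau)\Phi_2]$ is a linearised nonlinearity with the same multilinear structure (involving one extra factor of $\Phi_1-\Phi_2$ in place of a base factor), and the compositions with $f_j', f_j''$ remain bounded on the segment since both endpoints are small. Exactly the same derivative-distribution argument then yields~\eqref{E:NPHIDIFF}.

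The main obstacle I expect is bookkeeping: one must verify that through the iterated application of Lemma~\ref{L:XYPRODCHAIN}, every resulting product can indeed be arranged so that the top-order factor carries exactly the $2\m+1$ derivatives matching $\|\Phi\|_{\mathcal H^{2\m+1}_{Z_0}}$, while the $z^{-1/3}$-type singular factors in $\mathcal N_3$, $\mathcal N_4$ are absorbed correctly by the parity structure at the origin so that no spurious singularity arises. In practice this reduces to noting that the operators $\bp$, $\bd$ reverse parity, so that $\sqrt{\bG}\bp\theta\in\DZeven$ and $\tG\theta/z^{1/3}\in\DZeven$ remain smooth functions of $y=\bzeta(z)\sim z^{1/3}$ at the origin, so that Sobolev embeddings in three dimensions are genuinely available. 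Tracking this through the chain-rule terms $f_j^{(k)}$ composed with these even-parity expressions is the one step that deserves care.
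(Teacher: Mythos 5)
Your overall strategy — decomposing $\mathcal N$ via~\eqref{E:NDECOMP}, applying the Leibniz/chain-rule calculus of Lemma~\ref{L:XYPRODCHAIN} to generate products of the form $f_j^{(k)}(\cdot)\,P_{j_1}(\text{block})\cdots$, and then distributing derivatives so that exactly one factor is measured in $L^2$ while the rest are measured in $L^\infty$ via the Hardy--Sobolev embedding — is exactly what the paper does. The routing through the Eulerian $H^{2\m+1}_{\textup{odd}}\times H^{2\m}_{\textup{odd}}$ equivalence and standard $\R^3$ Sobolev embedding is just a rephrasing of Lemma~\ref{L:LINFTYHARDYSOBOLEV}, and the FTC-along-a-segment argument for the difference bound is a clean restatement of ``tracking the difference of $\theta_1-\theta_2$ throughout.'' So the approach is the right one.

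There is, however, a concrete gap in how you justify that $f_j$ and its derivatives stay bounded. You assert that $\|\Phi\|_{\HmZm}\le 1$ combined with Sobolev embedding gives $\|\sqrt{\bG}\bp\theta\|_{L^\infty}+\|\tG\theta/z^{1/3}\|_{L^\infty}\le C$ ``in a compact range where $f_j$, $f_j'$, $f_j''$ are smooth and bounded.'' This is not right as stated: the Sobolev constant is $C=C(Z_0)$ and can be (much) larger than $1$, while $f_1(x)=\tfrac{2+x}{(1+x)^2}$, $f_2$, $f_3$, $f_4$ all blow up at $x=-1$, so boundedness on $[-C,C]$ fails if $C\ge 1$. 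The assumption $\|\Phi\|_{\HmZm}\le 1$ therefore does \emph{not} by itself keep $\sqrt{\bG}\bp\theta$ and $\tG\theta/z^{1/3}$ away from $-1$. The paper's proof instead invokes the standing pointwise a priori bound~\eqref{E:APRIORIMAINPT} (equivalently $\Pb[\Phi]\le\tfrac14$, which controls exactly $\|\theta/\bzeta\|_{L^\infty}$ and $\|\bp\theta/\bp\bzeta\|_{L^\infty}$ on the whole half-line) to obtain~\eqref{E:FKDERIVBDS}; this smallness is quantitatively needed and is present throughout Section~\ref{S:DUHAMEL} as a blanket hypothesis. You should replace your Sobolev-embedding step here by an explicit appeal to~\eqref{E:APRIORIMAINPT}, or strengthen the hypothesis to a quantitative smallness $\|\Phi\|_{\HmZm}\le\eps$ with $\eps$ chosen so that $C(Z_0)\eps\le\tfrac12$; the same caveat applies to the segment $\tau\Phi_1+(1-\tau)\Phi_2$ in the FTC argument for~\eqref{E:NPHIDIFF}.

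One smaller point of bookkeeping: the correct constraint is that at most one factor $P_{j_i}$ has order $j_i\ge\m+1$ (since the orders sum to $2\m+1$ and are each $\ge 1$, two factors of order $\ge\m+1$ would overshoot), so the remaining factors carry order $\le\m$ — tighter than the ``at most $2\m-1$'' you write — and this is what lets~\eqref{E:PLOWBDS} bound them by $\|\Phi\|_{\mathcal H^{\m+2}_{Z_0}}\le C\|\Phi\|_{\HmZm}$ using $\m+2\le 2\m$ for $\m\ge 2$.
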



\begin{proof}
It follows from the assumption~\eqref{E:APRIORIMAINPT} that there exists $C>0$, depending only on $\m$, such that for all $k,\ell\in\{0,\ldots,2\m\}$, 
\beq
\big|f_1^{(k)}(\sqrt{\bG}\bp\theta)\big|+\big|f_2^{(k)}(\sqrt{\bG}\bp\theta)\big|+\big|f_3^{(k)}(\tG \frac{\theta}{z^{\frac13}})\big|+\big|(\pa_x^k\pa_y^\ell f_4)(\tG \frac{\theta}{z^{\frac13}},\sqrt{\bG}\bp\theta)\big|\leq C,\label{E:FKDERIVBDS}
\eeq
where $f_j^{(k)}$ refers to the $k$-th derivative of $f_j$ and $\pa_x^k\pa_y^\ell f_4$ is defined in the obvious way.

An inductive argument based on Lemma~\ref{L:XYPRODCHAIN} shows that 
for any $\ell\in\mathbb N$ we can express $\bD^{2\ell}\mathcal N_1[\theta]$ as a finite linear combination of the expressions of the form
\begin{align}
f_1^{(k)}(\sqrt{\bG} \bp\theta) P_{j_0}(\sqrt{\bG}) P_{j_1}(\sqrt{\bG} \bp\theta) P_{j_2}(\sqrt{\bG} \bp\theta) \dots P_{j_{k+2}}(\sqrt{\bG} \bp\theta) \label{E:CLN1}
\end{align}
for some $j_0,\dots,j_{k+2}\in\mathbb N_0$ where 
\[
0\le k \le 2\ell, \ \ P_{j_n}\in \mathcal Y_{j_n},  \ \ n\in\{0,1,\dots, k+2\}, \ \ \ j_0+j_1+\dots j_{k+2} = 2\ell+1.
\]
Similarly, 
we can express $\bD^{2\ell}\mathcal N_2[\theta]$ as a finite linear combination of the expressions of the form
\begin{align}
f_2^{(k)}(\sqrt{\bG} \bp\theta) Q_{j_0}(\tH) P_{j_1}(\sqrt{\bG} \bp\theta) P_{j_2}(\sqrt{\bG} \bp\theta) \dots P_{j_{k+2}}(\sqrt{\bG} \bp\theta)
\end{align}
for some $j_0,\dots,j_{k+1}\in\mathbb N_0$ where 
\[
0\le k \le 2\ell, \ \ P_{j_n}\in \mathcal Y_{j_n},  \ \ n\in\{1,\dots, k+2\}, \ \ Q_{j_0}\in \mathcal X_{j_0},  \ \ \ j_0+j_1+\dots j_{k+2} = 2\ell.
\]
Similarly, 
we can express $\bD^{2\ell}\mathcal N_3[\theta]$ as a finite linear combination of the expressions of the form
\begin{align}
f_3^{(k)}(\sqrt{\bG} \bp\theta) Q_{j_0}(z^{\frac13}\tG^2) P_{j_1}(\tG \frac{\theta}{z^{\frac13}}) P_{j_2}(\tG \frac{\theta}{z^{\frac13}}) \dots P_{j_{k+2}}(\tG \frac{\theta}{z^{\frac13}})
\end{align}
for some $j_0,\dots,j_{k+2}\in\mathbb N_0$ where 
\[
0\le k \le 2\ell, \ \ P_{j_n}\in \mathcal Y_{j_n},  \ \ n\in\{1,\dots, k+2\}, \ \ Q_{j_0}\in \mathcal X_{j_0},  \ \ \ j_0+j_1+\dots j_{k+2} = 2\ell.
\]
Finally, the expression $\bD^{2\ell}\mathcal N_4[\theta]$ can be written as a finite linear combination of the expressions of the form
\begin{align}
&\pa_x^{k_1}\pa_y^{k_2}f_4(\tG\frac{\theta}{z^{\frac13}}, \sqrt{\bG} \bp\theta) P_{j_0}(\sqrt{\bG}) P_{j_1}(\tG\frac{\theta}{z^{\frac13}}) P_{j_2}(\tG\frac{\theta}{z^{\frac13}}) \dots P_{j_{k_1+1}}(\tG\frac{\theta}{z^{\frac13}}) \times \notag\\
& \qquad \times P_{i_1}(\sqrt{\bG} \bp\theta) P_{i_2}(\sqrt{\bG} \bp\theta) \dots P_{i_{k_2}}(\sqrt{\bG} \bp\theta), \label{E:CLN4}
\end{align}
for some $j_0,\dots,j_{k_1+1}, i_1,\dots, i_{k_2}\in\mathbb N\cup\{0\}$ where 
\[
0\le k_1+k_2 \le 2\ell, \ \ P_{j_n}\in \mathcal Y_{j_n},  \ \ n\in\{0,1,\dots, k_1+1\}, \ \ P_{i_n}\in \mathcal Y_{i_n},  \ \ n\in\{1,\dots, k_2\},
\]
and $j_0+\dots+j_{k_1+1}+ i_1+\dots+i_{k_2}=2\ell+1$. Since,  $f_4(0,0)=0$, to the leading order in $(x,y)$ it is linear near $(x,y)=(0,0)$, and so the expression~\eqref{E:CLN4} is at least quadratic. 

We next observe that for any $j\in\mathbb N$, $j\leq 2\m+1$, and any $P\in\mathcal Y_j$, from Lemmas~\ref{L:PGBOUND}--\ref{L:XYPRODCHAIN}, we may estimate
\begin{align}\label{E:PEXP1}
\Big|P(\sqrt{\bG} \bp\theta)\Big|+\Big|P(\tG \frac{\theta}{z^{\frac13}})\Big| \leq C \sum_{k=0}^{j} \sum_{Q\in \mathcal X_{k+1}, R\in \mathcal Y_{j-k}}  \big|R (\sqrt{\bG})\big| \big|Q \theta\big| \leq C(\m,Z_0) \sum_{k=0}^{j} \sum_{Q\in \mathcal X_{k+1}}  \big|Q \theta\big|.
\end{align}
It follows in particular, using Lemma~\ref{L:EQUIVALENCE}, that for any $P\in \mathcal Y_{j}$,
\begin{align}\label{E:PBOUNDS}
\|P(\sqrt{\bG} \bp\theta)\|_{L^2(0,Z_0)} + \|P(\tG \frac{\theta}{z^{\frac13}})\|_{L^2(0,Z_0)} \le C \|\begin{pmatrix} \theta \\ 0\end{pmatrix}\|_{\mathcal H^{j}_{Z_0}}.
\end{align}

We now consider the quantity $\bD^{2m}\mathcal N[\theta]$. Returning  to~\eqref{E:CLN1}--\eqref{E:CLN4}, we observe that in each summand there is at most one term of 
the general form $P_j(\sqrt G\bp\theta)$ or $P_j(\tG  \frac{\theta}{z^{\frac13}})$ with order $j\ge \m+1$ and note that the maximum
order  of any such  operator  is $2\m+1$. Therefore, when estimating the $L^2(0,Z_0)$ norm of~\eqref{E:CLN1}--\eqref{E:CLN4}, our goal is to 
bound the top order derivative in $L^2$-norm and the lower order derivatives in $L^\infty$-norm. To make this explicit, we consider a typical term in~\eqref{E:CLN1}, and suppose without loss of generality that $ P_{j_{k+2}}\in\mathcal{Y}_j$ for $\m+1\leq j\leq 2\m+1$, all other $P_{j_{\ell}}$ have order $\leq \m$.  Then by~\eqref{E:PBOUNDS},
\beq\label{E:PJK+2}
\big\|P_{j_{k+2}}(\sqrt{\bG} \bp\theta)\big\|_{L^2(0,Z_0)}\leq C \|\begin{pmatrix} \theta \\ \phi \end{pmatrix}\|_{\mathcal H^{2\m+1}_{Z_0}}.
\eeq
From~\eqref{E:PEXP1} and the Hardy-Sobolev embeddings~\eqref{E:XKHARDY}, we find for $P\in\mathcal{Y}_j$, $j\leq \m$,
\begin{align}
\|P(\sqrt{\bG}\bp\theta)\|_{L^\infty(0,Z_0)} +\|P(\tG \frac{\theta}{z^{\frac13}})\|_{L^\infty(0,Z_0)} \le C \sum_{k=0}^j\sum_{Q\in\mathcal{X}_{k+1}}\|Q\theta\|_{L^\infty(0,Z_0)}\leq C\big\|\begin{pmatrix}\theta\\0\end{pmatrix}\big\|_{\mathcal H^{j+2}_{Z_0}}.\label{E:PLOWBDS}
\end{align}
 Thus, combining~\eqref{E:FKDERIVBDS},~\eqref{E:PJK+2}, and~\eqref{E:PLOWBDS}, we have
\beqa
{}&\Big\|f_1^{(k)}(\sqrt{\bG} \bp\theta) P_{j_0}(\sqrt{\bG}) P_{j_1}(\sqrt{\bG} \bp\theta) P_{j_2}(\sqrt{\bG} \bp\theta) \dots P_{j_{k+2}}(\sqrt{\bG} \bp\theta) \Big\|_{L^2(0,Z_0)}\\
&\leq C\big\|P_{j_{k+2}}(\sqrt{\bG} \bp\theta)\big\|_{L^2(0,Z_0)}\prod_{i=1}^{k+1}\big\|P_{j_{i}}(\sqrt{\bG} \bp\theta)\big\|_{L^\infty(0,Z_0)}\\
&\leq C \Big\|\begin{pmatrix} \theta \\ \phi \end{pmatrix}\Big\|_{\mathcal H^{2\m+1}_{Z_0}} \Big\|\begin{pmatrix} \theta \\ \phi \end{pmatrix}\Big\|_{\mathcal H^{\m+2}_{Z_0}}^{k+1}.
\eeqa
Arguing similarly for the other terms~\eqref{E:CLN1}--\eqref{E:CLN4} arising in~\eqref{E:NDECOMP}, we
 conclude that 
\begin{align}
\|\bD^{2m} \big(\mathcal N[\theta]\big)\|_{L^2(0,Z_0)} \le C \big\|\begin{pmatrix}\theta\\0\end{pmatrix}\big\|_{\mathcal H^{2\m+1}_{Z_0}} p( \big\|\begin{pmatrix}\theta\\0\end{pmatrix}\big\|_{\mathcal H^{\m+2}_{Z_0}}), \label{E:MATHCALNBOUND}
\end{align}
where $p(x)$ is a polynomial such that $p(0)=0$. The claim now follows easily from~\eqref{E:MATHCALNBOUND} and~\eqref{E:BOLDNDEF}.
 Since  $\|\Phi\|_{\H^{\m+2}_{Z_0}}\le C \|\Phi\|_{\H^{2\m}_{Z_0}}$ as $\m\geq 2$ and  $\|\Phi\|_{\H^{\m+2}_{Z_0}}\le C \tE_{\leq 2(\m+1)}^{\frac12}$  from~\eqref{E:HMZTE},  estimate~\eqref{E:CRUDENONLINEAR0} follows directly.

Finally, the estimate~\eqref{E:NPHIDIFF} follows from essentially the same argument, now tracking the difference of $\theta_1-\theta_2$ throughout.
\end{proof}


\subsection{Proof of the low-order a priori bound}


{\em Proof of Proposition~\ref{P:EE12}}.
 From Lemma~\ref{L:NBOUND}, for any $s\in[\sin,S_T)$ we have
\begin{align}
\Big\|&\,\int_{\sin}^{s} e^{-\sg (s-\sigma)}\bfN[\Phi] \diff \sigma\Big\|_{\HmZm} 
\le C \int_{\sin}^{s} e^{-\sg (s-\sigma)} \|\Phi\|_{\HmZm}\tE_{\leq 2(\m+1)}^{\frac12} \diff \sigma \notag\\
& \le C \sup_{\sigma\in[\sin,s]} (e^{\Om \sigma  }\tE_{\leq 2(\m+1)}^{\frac12})\sup_{\sigma\in[\sin,s]} (e^{\nu\sg \sigma  }\|\Phi\|_{\HmZm})\int_{\sin}^{s} e^{-\sg (s-\sigma)}  e^{-(\nu\sg +\Om) \sigma  }\diff \sigma \notag\\
& \le C  \sup_{\sigma\in[\sin,s]} (e^{\Om \sigma  }\tE_{\leq 2(\m+1)}^{\frac12})\sup_{\sigma\in[\sin,s]} (e^{\nu\sg \sigma  }\|\Phi\|_{\HmZm})e^{-(\nu\sg +\Om) s } ,
\label{E:NBOUNDCRUDE00}
\end{align}
 where we have used  the assumption $\nu\sg +\Om<\sg $. A similar argument yields
\beq
\Big\|\int_{s}^{S_T} e^{s-\sigma} \bfN[\Phi](\sigma)\diff \sigma\Big\|_{\HmZm}\leq C  \sup_{\sigma\in[\sin,s]} (e^{\Om \sigma  }\tE_{\leq 2(\m+1)}^{\frac12})\sup_{\sigma\in[\sin,s]} (e^{\nu\sg \sigma  }\|\Phi\|_{\HmZm})e^{-(\nu\sg +\Om) s } .
\eeq
We feed these bounds into~\eqref{E:MINUSBOUND} and~\eqref{E:STABLEMFD}  respectively to obtain 
\begin{align}
e^{\nu\sg s }\|\Phis(s)\|_{\HmZm} &\le e^{-(1-\nu)\sg s }\|\PhiT\|_{\HmZm} + C  \sup_{\sigma\in[\sin,s]} (e^{\Om \sigma  }\tE_{\leq 2(\m+1)}^{\frac12})\sup_{\sigma\in[\sin,s]} (e^{\nu\sg \sigma  }\|\Phi\|_{\HmZm})e^{-\Om s } ,\notag\\
e^{\nu\sg s }\|\Phiu(s)\|_{\HmZm} &  
 \le  e^{(1+\nu\sg ) s }\bigg\|\bfP\bigg(\PhiT +\int_{\sin}^{S_T} e^{\sin-\sigma} \bfN[\Phi](\sigma)\diff \sigma\bigg)\bigg\|_{\HmZm} \notag\\
 &\ \ \ + C  \sup_{\sigma\in[\sin,s]} (e^{\Om \sigma  }\tE_{\leq 2(\m+1)}^{\frac12})\sup_{\sigma\in[\sin,s]} (e^{\nu\sg \sigma  }\|\Phi\|_{\HmZm})e^{-\Om s } .
\end{align}
By adding the two bounds above and using the a priori bound~\eqref{E:APRIORIMAIN} 
we obtain
\begin{align}
 e^{\nu\sg s  }\|\Phi\|_{\HmZm} & \le C  \sup_{\sigma\in[\sin,s]} (e^{\Om \sigma  }\tE_{\leq 2(\m+1)}^{\frac12})\sup_{\sigma\in[\sin,s]} (e^{\nu\sg \sigma  }\|\Phi\|_{\HmZm})e^{-\Om s } + e^{-(1-\nu)\sg s }\|\PhiT\|_{\HmZm} \notag\\
& \ \ \ \ +e^{(1+\nu\sg ) s }\bigg\|\bfP\bigg(\PhiT +\int_{\sin}^{S_T} e^{\sin-\sigma} \bfN[\Phi](\sigma)\diff \sigma\bigg)\bigg\|_{\HmZm}.
\end{align}
This concludes the proof of Proposition~\ref{P:EE12}.


\section{High-order energy bounds} \label{S:ENERGYBOUNDS}


As explained in the introduction, the second key ingredient to closing the nonlinear estimates 
is the weighted $L^2$-based energy bounds that fully take the quasilinear 
nature of the problem into account. In contrast to the essentially semi-linear 
approach of Section~\ref{S:DUHAMEL}, the nonlinear bounds of this section experience no derivative loss. The proofs of these propositions will rely on exploiting the precise quasilinear structure of the Euler-Poisson system. We therefore reformulate the perturbation equations in a suitable quasilinearised form in the following lemma.

\begin{lemma}[Perturbation problem - quasilinearised version]\label{L:QUASI0}
Let $(\zeta,\mu)$ be formally a classical solution of~\eqref{E:NLZETA}--\eqref{E:NLPSI} and let $(\th,\phi)$ be given via~\eqref{E:THDEF}. Then  the pair $(\theta,\phi)$ solves the following first order system
\begin{align}
\theta_s & = - \Lambda \theta + \theta+\phi,  \label{E:NLTHETA0}
 \\
\phi_s & = -\Lambda \phi +  \frac{\bzeta_z^2}{\zeta_z^2} K \theta +\frac{\CLP z-\tilde M}{\zeta^2}-\frac{\tilde g_z}{\tilde g}\frac1{\zeta_z} + \mathfrak R[\theta], \label{E:NLPHI0}
\end{align}
where we recall the operator $K$ and potential $\mathcal V_1$ from~\eqref{E:KDEF} and~\eqref{E:KIDENTITY}, and set
\begin{align} 
\mathfrak R[\theta] &=  \frac{\bzeta_z^2}{\zeta_z^2} \mathcal V_1 \th  - \frac{2\th}{\zeta\bzeta}   + \frac{\CLP z\theta(2\bzeta+\th)}{\zeta^2\bzeta^2}   
 -  \frac{\bzeta_{zz}}{\bzeta_z^2 \zeta_z^2} (\pa_z \theta)^2. \label{E:MATHFRAKRDEF}
\end{align}
\end{lemma}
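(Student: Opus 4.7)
The plan is to verify each equation by direct substitution. The first equation \eqref{E:NLTHETA0} is immediate: by the definition \eqref{E:THDEF}, $\phi = \mu - \widehat\mu = \theta_s + \Lambda\theta - \theta$, which rearranges to exactly \eqref{E:NLTHETA0}. For \eqref{E:NLPHI0}, I would start from \eqref{E:NLPSI} using $\phi_s = \mu_s$ (since $\widehat\mu$ depends only on $z$), substitute $\mu = \phi + \widehat\mu$, and cancel the background contributions using the steady-state identity satisfied by the LP profile.

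Concretely, the LP profile is a stationary solution of \eqref{E:NLZETA}--\eqref{E:NLPSI} in the canonical gauge $\tilde g_{\textup{LP}} = \CLP/(4\pi)$, for which $\tilde M_{\textup{LP}} = \CLP z$ and $(\tilde g_{\textup{LP}})_z = 0$; setting $\mu_s = 0$ in \eqref{E:NLPSI} at the LP profile yields
\begin{equation*}
\Lambda \widehat\mu = -\pa_z\bigl(\tfrac{1}{\bzeta_z}\bigr) + \tfrac{2}{\bzeta} - \tfrac{\CLP z}{\bzeta^2},
\end{equation*}
which is equivalent to \eqref{E:LPLAG}. Subtracting this from the full $(\zeta,\mu)$ version of \eqref{E:NLPSI} replaces each pointwise occurrence of $1/\zeta$, $1/\zeta_z$ and $\tilde M/\zeta^2$ by its difference from the LP value. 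The zeroth-order differences simplify by elementary algebra to
\begin{equation*}
\tfrac{2}{\zeta} - \tfrac{2}{\bzeta} = -\tfrac{2\theta}{\zeta\bzeta}, \qquad \tfrac{\CLP z}{\bzeta^2} - \tfrac{\tilde M}{\zeta^2} = \tfrac{\CLP z\,\theta(2\bzeta+\theta)}{\zeta^2\bzeta^2} + \tfrac{\CLP z - \tilde M}{\zeta^2},
\end{equation*}
producing two of the explicit entries of $\mathfrak R[\theta]$ together with the source-like term $(\CLP z - \tilde M)/\zeta^2$ appearing in \eqref{E:NLPHI0}.

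The only step requiring real care---and hence the main obstacle, if one can call it that---is the extraction of the quasilinear leading operator $\tfrac{\bzeta_z^2}{\zeta_z^2} K\theta$ from the first-derivative difference
\begin{equation*}
\pa_z\bigl(\tfrac{1}{\bzeta_z}\bigr) - \pa_z\bigl(\tfrac{1}{\zeta_z}\bigr) = \tfrac{\zeta_{zz}}{\zeta_z^2} - \tfrac{\bzeta_{zz}}{\bzeta_z^2} = \tfrac{\theta_{zz}}{\zeta_z^2} + \bzeta_{zz}\bigl(\tfrac{1}{\zeta_z^2} - \tfrac{1}{\bzeta_z^2}\bigr).
\end{equation*}
I would apply \eqref{E:KIDENTITY} in the form $\theta_{zz}/\bzeta_z^2 = K\theta + \mathcal V_1 \theta + 2\bzeta_{zz}\theta_z/\bzeta_z^3$, multiply through by $\bzeta_z^2/\zeta_z^2$, and rewrite $\bzeta_{zz}(1/\zeta_z^2 - 1/\bzeta_z^2) = -\bzeta_{zz}\theta_z(\zeta_z + \bzeta_z)/(\zeta_z^2\bzeta_z^2)$. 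The two resulting $\bzeta_{zz}\theta_z$ contributions then collapse through the algebraic identity $2\bzeta_z - (\zeta_z + \bzeta_z) = -\theta_z$ into the single quadratic remainder $-\bzeta_{zz}\theta_z^2/(\zeta_z^2\bzeta_z^2)$, exactly matching the last entry of \eqref{E:MATHFRAKRDEF}, while the $\mathcal V_1\theta$ contribution picks up the factor $\bzeta_z^2/\zeta_z^2$. Collecting all pieces yields \eqref{E:NLPHI0} with $\mathfrak R[\theta]$ of the stated form; there is no conceptual difficulty beyond this cancellation, which requires combining three separate sources of $\bzeta_{zz}\theta_z$ into a single genuinely quadratic term in $\theta$.
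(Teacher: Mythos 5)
Your proposal is correct and takes essentially the same approach as the paper: subtract the LP steady state from \eqref{E:NLPSI}, reduce the elementary differences of $1/\zeta$, $\tilde M/\zeta^2$ and $1/\zeta_z$ against their background values, and extract the quasilinear leading operator $\frac{\bzeta_z^2}{\zeta_z^2}K\theta$ from the second-derivative difference via \eqref{E:KIDENTITY}, with the remaining $\bzeta_{zz}\theta_z$ contributions collapsing to the single quadratic term $-\bzeta_{zz}\theta_z^2/(\bzeta_z^2\zeta_z^2)$. The algebraic bookkeeping in the quasilinearisation step is organised slightly differently from the paper (which factors $\frac{\theta_z}{\zeta_z\bzeta_z}=\frac{\bzeta_z}{\zeta_z}\frac{\theta_z}{\bzeta_z^2}$ and applies the product rule, rather than splitting $\frac{\zeta_{zz}}{\zeta_z^2}-\frac{\bzeta_{zz}}{\bzeta_z^2}$ into $\frac{\theta_{zz}}{\zeta_z^2}+\bzeta_{zz}(\frac1{\zeta_z^2}-\frac1{\bzeta_z^2})$), but the cancellation mechanism and end result are identical.
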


\begin{proof}
Plugging the ansatz $\zeta = \bzeta+\th$, $\mu=\widehat\mu+\phi$ in~\eqref{E:NLZETA}--\eqref{E:NLPSI}, and using the steady state equation~\eqref{E:LPLAG} we obtain
\begin{align}
\theta_s & = - \Lambda \theta + \theta+\phi,  \label{E:NLTHETAAUX} \\
\phi_s & =
 -\Lambda \phi  + \pa_z \Big( \frac{\pa_z\theta}{\zeta_z\bzeta_z}  \Big) -\frac{2\theta}{\zeta\bzeta} + \CLP z \frac{\theta(2\bzeta + \theta)}{\zeta^2\bzeta^2} +\frac{\CLP z-\tilde M}{\zeta^2}-\frac{\tilde g_z}{\tilde g}\frac1{\zeta_z}.\label{E:NLPSI0}
\end{align}
We next quasi-linearise the second term on the right-hand side of~\eqref{E:NLPSI0}. 
\begin{align*}
\pa_z \Big( \frac{\pa_z\theta}{\zeta_z\bzeta_z}  \Big) &= \pa_z \Big( \frac{\bzeta_z}{\zeta_z}\frac{\pa_z\theta}{\bzeta_z^2}  \Big) = \frac{\bzeta_z}{\zeta_z} \pa_z \Big( \frac{\pa_z\theta}{\bzeta_z^2}  \Big) + \frac{\bzeta_{zz}}{\zeta_z} \frac{\pa_z\theta}{\bzeta_z^2}  -\frac{\zeta_{zz}}{\zeta_z^2} \frac{\pa_z\theta}{\bzeta_z} \\
& = \frac{\bzeta_z}{\zeta_z} \pa_z \Big( \frac{\pa_z\theta}{\bzeta_z^2}  \Big) + \frac{\bzeta_{zz}}{\zeta_z^2\bzeta_z^2} (\pa_z\theta)^2    -\frac{1}{\bzeta_z\zeta_z^2} \pa_z\theta\pa_z \Big( \frac{\bzeta_z^2}{\bzeta_z^2} \pa_z\theta\Big)  \\
&= \frac{\bzeta_z^2}{\zeta_z^2} \pa_z \Big( \frac{\pa_z\theta}{\bzeta_z^2}  \Big) - \frac{\bzeta_{zz}}{\bzeta_z^2 \zeta_z^2} (\pa_z \theta)^2 \\
&=  \frac{\bzeta_z^2}{\zeta_z^2} \big( K \theta +\big( \frac29 z^{-\frac23} \bG + \frac43 z^{-\frac13} \bp^2 \bzeta \bG^\frac32\big) \theta\big) -  \frac{\bzeta_{zz}}{\bzeta_z^2 \zeta_z^2} (\pa_z \theta)^2,
\end{align*}
where we recall the operator $K$ from~\eqref{E:KDEF}--\eqref{E:KIDENTITY}. Together with~\eqref{E:NLPSI0}, this completes the proof.
\end{proof}

The goal of this section is the following a priori bounds on the top order energy functional $\tE_{\leq 2(\m+1)}$, which we recall is defined in~\eqref{E:TILDEEDEF} using the constants $Z_0$, $\kappa$, $c$, $\al$ satisfying~\ref{item:a1}--\ref{item:a3}. In order to state our assumptions, we recall the definition of the Hilbert space $\mathfrak{H}$ from~\eqref{E:FRAKHDEF}. We also recall that the constants $\eps_0$ and $C_*(Z_0)$ are such that $\eps_0 C_*(Z_0)\leq\frac14$ so that, whenever~\eqref{E:APRIORIMAINPT} holds, we have $\Pb[\Phi]\leq\frac14$.

\begin{proposition}\label{P:EE2}
Let $\Phi=(\th,\phi)^\top\in L^\infty((\sin,S);\mathfrak{H})$ be the unique solution to~\eqref{E:NLTHETA0}--\eqref{E:NLPHI0} with initial data~\eqref{E:PROFILE1} up to any time $S>\sin$ such that the a priori bounds~\eqref{E:APRIORIMAIN}--\eqref{E:APRIORIMAINPT} hold on $s\in[\sin,S]$.
Then there exist constants $c_3=\min\{\frac1{24},\frac{1}{4}(\frac23-c)\}>0$, $c_4=\frac{\al-1}{4}>0$ such that, for $\kappa_0,Z_0$  sufficiently large  and $\eps_0$ sufficiently small, for any $s\in [\sin,S]$, the following bound holds:
\begin{align}
\frac12\pa_s\tE_{2j}(s)  + (c_3 j +c_4)\tE_{2j} (s) \le &\,C(Z_0)\|\Phi\|_{\HmZm}^2\chi_{j\leq \m}+ \sqrt{\kappa}(r_*e^s)^{-\frac12}\tE_{\leq 2j}^{\frac12}\label{E:APRIORIHIGH}\\
&+\Big(C(r_*e^s)^{-\frac23}+C Z_0^{-\frac12}\Big)\tE_{\leq 2j}+C(Z_0)\tE_{\leq 2j}^{\frac32}+C(Z_0)\|\Phi\|_{\mathcal{H}^{2j-1}_{Z_0}}\tE_{2j}^{\frac12}.\notag
\end{align}
\end{proposition}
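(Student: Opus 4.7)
The plan is to commute the quasilinearised system of Lemma~\ref{L:QUASI0} with $\bD^{2j+1}$ (applied to $\theta$) and $\bD^{2j}$ (applied to $\phi$), then derive a weighted energy identity for $\tE_{2j}$. The strategy mirrors the maximal accretivity analysis of Proposition~\ref{P:DISSIP}, but now applied to the \emph{quasilinear} operator, with $\bG$ replaced by $(\bp\zeta)^{-2}$ throughout, and incorporating the nonlinear remainder $\mathfrak R[\theta]$, the far-field source terms coming from $\tilde M - \CLP z$ and $\tilde g$, and the hierarchy of weights $\chi_{2j}$. The damping $c_3 j + c_4$ should come from two distinct sources: the commutator between $\bD^{2j}$ and $K$ (as in Lemma~\ref{L:KEYCOMM}), which produces a damping proportional to $j$ via $2j\bp((\bp\zeta)^{-2})\bD^{2j+1}\theta$; and the far-field exponent of $\chi_{2j}\sim(1+z)^{2cj-\alpha}$, which after integration by parts in $\Lambda$ contributes damping proportional to $\tfrac{\alpha-1}{2}$.

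First I would differentiate $\tE_{2j}$ in $s$ and exploit the crucial cancellation between the cross-term $\int \chi_{2j}\bD^{2j+1}\phi\cdot \bD^{2j+1}\theta\,(\bp\zeta)^{-2}\diff z$ arising from the $\theta$-equation and the top-order quasilinear term $\int \chi_{2j}\bD^{2j}\phi\cdot\bp\bigl((\bp\zeta)^{-2}\bd\bD^{2j}\theta\bigr)\diff z$ arising from the $\phi$-equation, which cancel (up to commutator errors) by integration by parts in $z$. The leftover piece $2j\int \chi_{2j}\bp((\bp\zeta)^{-2})\bD^{2j+1}\theta\cdot\bD^{2j}\phi\diff z$, combined with the $\Lambda$ integration by parts~\eqref{eq:LambdaIBP} evaluated against the weight $\chi_{2j}g_{2j}$, is precisely what yields the $c_3 j+c_4$ damping, provided the gauge weight $g_{2j}$ satisfies~\eqref{E:gmprime} and $\chi_{2j}$ is chosen as in~\eqref{E:CHIJDEF}. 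This is how the numerical constants $c_3=\min\{\tfrac1{24},\tfrac14(\tfrac23-c)\}$ and $c_4=\tfrac{\alpha-1}{4}$ emerge.

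The nonlinear and commutator remainders will then be split into interior ($z\leq Z_0$) and exterior ($z\geq Z_0$) contributions. In the interior, the pointwise bound $\Pb[\Phi]\leq\tfrac14$, Hardy–Sobolev embeddings, and the norm equivalence~\eqref{E:HMZTE} yield the contributions $C(Z_0)\|\Phi\|_{\mathcal H^{2j-1}_{Z_0}}\tE_{2j}^{1/2}+C(Z_0)\tE_{\leq 2j}^{3/2}$, as stated in Proposition~\ref{P:INT1}. For $j\leq\m$ the low-order commutator errors coming from $\mathcal V$ and related lower-order coefficients cannot be absorbed into $\tE_{\leq 2j}$ alone and must be retained as $C(Z_0)\|\Phi\|_{\HmZm}^2\chi_{j\leq\m}$, which is consistent with their later use in the Duhamel estimate of Section~\ref{S:DUHAMEL}. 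In the exterior region the hierarchy $\chi_{2j+2}/\chi_{2j}\sim (1+z)^{2c}$ exactly compensates the growth of the Lagrangian coefficients ($\bzeta\sim z$), while the shift of $\alpha$ in~\eqref{E:CHIJDEF} produces negative powers $Z_0^{-(\alpha+1)/2+\delta j}$ (with $\delta$ from~\eqref{def:delta}); together they give $CZ_0^{-1/2}\tE_{\leq 2j}$ (absorbable on the left-hand side for $Z_0$ large) and $CZ_0^{-1/2}\tE_{\leq 2j}^{3/2}$, corresponding to Proposition~\ref{P:EXT1}.

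Finally, the far-field source terms $\tfrac{\CLP z-\tilde M}{\zeta^2}$ and $\tfrac{\tilde g_z}{\tilde g\zeta_z}$ vanish for $z\leq r_\ast e^s$ by~\ref{item:g1}; in the outer region the decay rates~\ref{item:g2}, combined with the overall factor of $\kappa$ carried by $\chi_{2j}$, yield source contributions bounded by $\sqrt{\kappa}(r_\ast e^s)^{-1/2}\tE_{\leq 2j}^{1/2}+C(r_\ast e^s)^{-2/3}\tE_{\leq 2j}$. The hard part will be the delicate balance imposed on the weight exponent $2cj-\alpha$: it must simultaneously compensate the quasilinear coefficient growth in the Lagrangian far field (forcing $c<\tfrac23$), provide a damping $\tfrac{\alpha-1}{2}$ after integration by parts in $\Lambda$ (forcing $\alpha>1$), and close the exterior estimates with a genuine negative power of $Z_0$ at every order $j\leq M$ (constraint~\ref{item:a2}). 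Verifying that all three requirements are simultaneously consistent, and that the resulting $c_3$ and $c_4$ come out strictly positive with the precise values claimed, is the principal technical obstacle of the estimate and is the payoff of the careful design of the weights $\chi_{2j}$.
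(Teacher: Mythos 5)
Your plan captures the overall architecture of the paper's proof — commute the quasilinearised system with $\bD^{2j}$, derive the weighted energy identity (Lemma~\ref{L:HIGHENERGY0}), split into interior and exterior regions, control the far-field flattening errors (Lemma~\ref{L:CUTOFFBOUND}) and the commutator/quasilinear remainders (Lemmas~\ref{L:COMMBOUND1}--\ref{L:COMMBOUND2}, Propositions~\ref{P:EXT1},~\ref{P:INT1}), and assemble via Propositions~\ref{P:ZEROENERGYGLOBAL}--\ref{P:ENERGYGLOBAL}. The cancellation structure between the $\theta$-equation cross term and the $\phi$-equation quasilinear term, and the role of the weight hierarchy, are correctly identified.

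However, there is a genuine gap in how you extract the damping $c_3 j$. You claim the commutator with $K$ (Lemma~\ref{L:KEYCOMM}) is what ``produces a damping proportional to $j$ via $2j\bp((\bp\zeta)^{-2})\bD^{2j+1}\theta$''. That term, tested against $\phi_{2j}$, gives the cross integral $2j\int\chi_{2j}\,\bp\bG\,\frac{\bd\theta_{2j}}{\bp\zeta}\,\phi_{2j}$, whose sign is indeterminate; it cannot by itself be coercive, and if you tried to bound it by Young's inequality as a damping source the argument would stall. The coercive $j$-proportional damping in fact comes from the \emph{other} commutator, $[\bD^{2j},\Lambda]=\frac{2j}{3}\bD^{2j}$ (see~\eqref{E:ACC1}), which after the integration by parts~\eqref{eq:LambdaIBP} produces the factor $-\frac{4j-3}{6}$ in~\eqref{E:COMP22}. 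The $2j\bp\bG$ cross term is an \emph{obstruction}: it is partially cancelled by the $g_{2j}'/g_{2j}$ contribution of $\Lambda\chi_{2j}$ (precisely the reason $g_{2j}$ is constructed as in~\eqref{DEF:gm}), and the residual is controlled by $|\bp\bG/\sqrt{\bG}|\le\a=\frac1{24}$ for $z\ge Z_*$, yielding a penalty $+\a j$. The net interior $j$-coefficient is therefore $j(-\frac{2}{3}+\a)$, which explains the value $c_3=\min\{\frac1{24},\frac14(\frac23-c)\}$ in the statement. Similarly, the indicator $\chi_{j\le\m}$ does not come from commutator errors involving $\mathcal V$: it arises because the interior IBP coefficient $j(-\frac23+\a+\cdots)+\frac76-\cdots$ in Proposition~\ref{P:ENERGYGLOBAL} is insufficiently negative when $j\le J\le\m$, forcing those interior contributions to be bounded by the low-order $\HmZm$ norm; the genuinely nonlinear/commutator errors produce the separate $C(Z_0)\|\Phi\|_{\mathcal H^{2j-1}_{Z_0}}\tE_{2j}^{1/2}$ and $C(Z_0)\tE_{\le 2j}^{3/2}$ terms.
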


\begin{proposition}\label{P:HMZDIFF}
Suppose $\Phi_1=(\th_1,\phi_1)^\top$ and $\Phi_2=(\th_2,\phi_2)^\top$ are both solutions to~\eqref{E:NLTHETA0}--\eqref{E:NLPHI0} with initial data~\eqref{E:PROFILE1} for values $T_1$, $T_2$, as in Proposition~\ref{P:EE2}.
Assume the a priori bounds~\eqref{E:APRIORIMAIN}--\eqref{E:APRIORIMAINPT} hold for each solution on $[\overline\sin,S]$, where $\overline\sin=\max_{i=1,2}\sin(T_i)$. Then, for $s_1, s_2 \in[\overline\sin,S]$, 
\beq
\| (\Phi_1-\Phi_2)(s_2) \|_{\HmZm}  \leq  e^{C(Z_0) (s_2-s_1) } \|(\Phi_1-\Phi_2)(s_1)\|_{\HmZm} . 
\eeq 
\end{proposition}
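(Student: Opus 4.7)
Writing $\Psi := \Phi_1 - \Phi_2 = (\Theta, \Xi)^\top$ with $\Theta = \theta_1 - \theta_2$, $\Xi = \phi_1 - \phi_2$, the plan is to prove the bound by a direct energy estimate in $\HmZm$ for the equation satisfied by $\Psi$, followed by Gr\"onwall's inequality. Since $Z_0 < r_\ast$, on the interior region $[0,Z_0]$ both $\Phi_i$ obey $\pa_s \Phi_i = \bfL \Phi_i + \bfN[\Phi_i]$ by Lemma~\ref{L:GPROPERTIES}, hence
\[
\pa_s \Psi = \bfL \Psi + \bigl(\bfN[\Phi_1] - \bfN[\Phi_2]\bigr).
\]
Using the decomposition~\eqref{E:NDECOMP} together with the mean value identity $f_j(x_1)-f_j(x_2) = (x_1-x_2)\int_0^1 f_j'(x_2 + t(x_1-x_2))\,dt$ applied to each of the building blocks $f_1, f_2, f_3, f_4$, the difference $\bfN[\Phi_1] - \bfN[\Phi_2]$ is written as a first order linear differential operator in $\Psi$ whose coefficients are smooth functions of $\bzeta$, $\Phi_1$, $\Phi_2$ and polynomials in the derivatives of $\Phi_1, \Phi_2$ up to order $2\m+1$, uniformly bounded thanks to $\Pb[\Phi_i] \le C_\ast(Z_0)\sqrt{\eps_0}\le 1/4$.

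Following the derivation of Propositions~\ref{P:DISSIP} and~\ref{P:LDISSIP}, I would then apply $\bD^{2\m}$ to the equation for $\Psi$, pair with $\bD^{2\m}\Psi$ in $\dot{\mathcal H}^0_{2\m,Z_0}$, and add $\beta$ times the zeroth-order identity. The $\bfL\Psi$-contribution reproduces the accretivity structure of Proposition~\ref{P:DISSIP}: the boundary term at $z=Z_0$ has the correct sign thanks to $Z_0>z_\ast$, and the commutator remainder $\mathcal R_{2\m}\Theta$ together with the potential term $\mathcal V \Theta$ from~\eqref{E:REMAINDER} and~\eqref{E:POTDEF} contribute at most $C(Z_0)\|\Psi\|_{\HmZm}^2$ via Lemma~\ref{L:INVERSE}. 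The contribution from $\bfN[\Phi_1]-\bfN[\Phi_2]$ is expanded via Lemma~\ref{L:XYPRODCHAIN} so that in each resulting product exactly one factor carries the top derivative count; that factor is placed in $L^2(0,Z_0)$ while every other factor sits in $L^\infty(0,Z_0)$ via the Hardy--Sobolev embedding~\eqref{E:HSOBOLEVEQUIV} (valid since $\m\ge 2$), the coefficients built from $\bzeta$ are bounded by Lemma~\ref{L:PGBOUND}, and the quasilinear denominators $\frac{\bzeta_z^2}{\zeta_{i,z}^2}$ are uniformly comparable to $1$ by the pointwise assumption~\eqref{E:APRIORIMAINPT}. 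Collecting these ingredients produces the inequality $\tfrac{d}{ds}\|\Psi\|_{\HmZm}^2 \le 2 C(Z_0)\|\Psi\|_{\HmZm}^2$, and Gr\"onwall on $[s_1,s_2]$ concludes the argument.

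The principal technical obstacle is the quasilinear correction coming from the algebraic identity $\tfrac{\bzeta_z^2}{\zeta_{1,z}^2} K\theta_1 - \tfrac{\bzeta_z^2}{\zeta_{2,z}^2} K\theta_2 = \tfrac{\bzeta_z^2}{\zeta_{1,z}^2} K\Theta + K\theta_2 \cdot \bzeta_z^2 \bigl(\tfrac{1}{\zeta_{1,z}^2} - \tfrac{1}{\zeta_{2,z}^2}\bigr)$. The first piece is absorbed into the $\bfL\Psi$-analysis as above. The second piece, after commuting $\bD^{2\m}$ past it, at worst produces a top-order factor $\bD^{2\m+2}\theta_2$, which lies one derivative above what $\HmZm$ controls on $\theta_2$. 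This is circumvented by invoking the higher a priori bound~\eqref{E:APRIORIMAIN}: $\tE_{\le 2(\m+1)}$ controls $\|\bD^{2\m+2}\theta_2\|_{L^2(0,Z_0)} \le C(Z_0)\tE_{\le 2(\m+1)}^{1/2} \le C(Z_0)\sqrt{\eps_0}$. The complementary factor involves $\Theta_z$ times smooth bounded multipliers, belonging to $L^\infty(0,Z_0)$ with norm $\le C\|\Psi\|_{\HmZm}$ by Hardy--Sobolev. The resulting contribution is thus $\le C(Z_0)\sqrt{\eps_0}\,\|\Psi\|_{\HmZm}^2$, safely absorbed into the Gr\"onwall constant $C(Z_0)$.
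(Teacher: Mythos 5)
The overall strategy (difference equation, high-order energy estimate, Gr\"onwall) is the same as the paper's, and you correctly identify both the quasilinear derivative loss and its resolution via the higher a priori bound $\tE_{\le 2(\m+1)}$ controlling $\bD^{2\m+2}\theta_2$. However, there is a genuine gap in the middle of the argument.

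Your claim that $\bfN[\Phi_1]-\bfN[\Phi_2]$ ``is written as a first order linear differential operator in $\Psi$'' is false: inspecting~\eqref{E:NDEF} or~\eqref{E:N1}, the nonlinearity carries the second-order factor $\bp(\sqrt{\bG}\bp\theta)\sim \bD^2\theta$, so the difference contains a piece of the schematic form $\sqrt{\bG}\bp\theta_1\cdot \bD^2\Theta$. After applying $\bD^{2\m}$ and pairing with $g_{2\m}\Xi_{2\m}$ you obtain $\int g_{2\m}\,\bp\theta_1\,\bD^{2\m+2}\Theta\,\Xi_{2\m}$. No amount of integration by parts closes this term in $\HmZm$ with the frozen LP weight $\bG=(\bp\bzeta)^{-2}$: writing $\bD^{2\m+2}\Theta=\bp\bd\Theta_{2\m}$ and integrating once transfers a derivative onto $\Xi_{2\m}$, producing $\bp\Xi_{2\m}=\bD^{2\m+1}\Xi$, which is one derivative beyond what $\HmZm$ controls, and unlike the $K\Theta_{2\m}$ pairing there is no corresponding term in the $\Theta$-equation to produce a cancellation. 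Smallness of $\bp\theta_1$ does not help here, because the uncontrolled factor is not being absorbed into a coercive quantity.

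The fix is exactly what your final paragraph gestures at but does not reconcile with your setup: the quadratic piece $\sqrt{\bG}\bp\theta_1\cdot\bD^2\Theta$ must be recombined with the linear $K\Theta$ term to restore the full quasilinear operator $\tfrac{\bzeta_z^2}{\zeta_{1,z}^2}K\Theta$, and—crucially—the energy weight must be changed from the frozen $\bG$ to the quasilinear $(\bp\zeta_1)^{-2}$, so that the pairing $\int g_{2\m}\tfrac{\bd\Theta_{2\m}}{(\bp\zeta_1)^2}\,\pa_s\bd\Theta_{2\m}$ cancels the top order of $\int g_{2\m}\Xi_{2\m}\tfrac{\bzeta_z^2}{\zeta_{1,z}^2}K\Theta_{2\m}$. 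This is precisely why the paper works from the quasilinearised formulation~\eqref{E:NLTHETA0}--\eqref{E:NLPHI0} of Lemma~\ref{L:QUASI0} and proves a dedicated difference-energy identity (Lemma~\ref{L:DIFFENERGY}) with the weight $(\bp\zeta_1)^{-2}$, rather than starting from $\pa_s\Phi=\bfL\Phi+\bfN[\Phi]$. Once that identity is in hand, the remaining quasilinear remainder $\bigl(\tfrac{\bzeta_z^2}{\zeta_{1,z}^2}-\tfrac{\bzeta_z^2}{\zeta_{2,z}^2}\bigr)K\theta_{2,2\m}$ is controlled exactly as you propose, by $\|\Phi_2\|_{\mathcal H^{2\m+1}_{Z_0}}\lesssim\tE_{\le 2(\m+1)}^{1/2}$ times $\|\bp\Theta\|_{L^\infty}$. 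So your resolution of the top-order quasilinear term is correct, but the framework you set up in the first two paragraphs—semilinear split with frozen weight—cannot reach it.
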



\subsection{High-order energy identities}


Our first step towards proving the main energy estimate, Proposition~\ref{P:EE2}, is to establish a general weighted energy-type identity for solutions $(\th,\phi)^\top$ of~\eqref{E:NLTHETA0}--\eqref{E:NLPHI0} satisfying the assumptions of Proposition~\ref{P:EE2}. 

We recall the high order regularity index $M=\m+1$, as in Section~\ref{S:HIGH}. To simplify notation, we will retain the notation $M$ throughout this section and set, for each $j\in\N$, $j\leq M$, 
\beq
\theta_{2j}:=\bD^{2j}\theta, \ \ \phij : = \bD^{2j}\phi.
\eeq

\begin{lemma}\label{L:HIGHENERGY0}
Let  $(\th,\phi)^\top$ satisfy the assumptions of Proposition~\ref{P:EE2}, $j\in\N$, $j\leq M$. Then the following high-order energy identity holds:
\begin{align}
& \frac12\frac{d}{ds}\int \chi_{2j} \Big( \frac{(\bd\theta_{2j})^2}{(\bp\zeta)^2} +\phij^2 \Big) \notag \\
&= -\frac{4j-3}{6}\int \chi_{2j} \Big(\frac{(\bd\theta_{2j})^2}{(\bp\zeta)^2} + \phij^2\Big)+ 2j \int \frac{\chi_{2j}\bp \bG}{\sqrt{\bG}} \frac{ \bd \theta_{2j} }{\bp \zeta }\phij 
-\int \chi_{2j} \frac{(\bd\theta_{2j})^2}{(\bp\zeta)^2} \frac{\pa_s\bp\theta}{\bp\zeta}\notag \\
&\ \ \  \
  + \frac12\int \Lambda \chi_{2j}  \Big( \frac{(\bd\theta_{2j})^2}{(\bp\zeta)^2} +\phij^2 \Big)
   - \int  \frac{\bp \chi_{2j}}{(\bp\zeta)^2} \bd \theta_{2j} \phij  - \int \chi_{2j} \frac{(\bd\theta_{2j})^2}{(\bp\zeta)^2}\frac{z\pa_{zz}\zeta}{\pa_z\zeta}  \notag\\
   &\ \ \ \ - \int \chi_{2j} \bp(\frac{\bzeta_z^2}{\zeta_z^2}) \bG  \bd \theta_{2j} \phi_{2j} -2j \int \chi_{2j} \frac{ \pa_z\theta }{\zeta_z} \frac{\bp \bG }{\sqrt{\bG }} \frac{ \bd \theta_{2j} }{\bp \zeta }\phi_{2j}\notag\\
& \ \ \ \ + \int \chi_{2j}\bD^{2j} \Big(\frac{\CLP z-\tilde M}{\zeta^2}-\frac{\tilde g_z}{\tilde g}\frac1{\zeta_z}  \Big)    \phij  + \int \chi_{2j} \Big(\frac{\bzeta_z^2}{\zeta_z^2} \mathcal R_{2j}\theta + \mathcal N_{2j}[\theta] + \bD^{2j}\RR[\th]\Big) \phij,   \label{E:MAGENTA}
\end{align}
where we recall the weights $\chi_{2j}$ from~\eqref{E:CHIJDEF}, the definitions~\eqref{E:REMAINDER} and~\eqref{E:MATHFRAKRDEF},
and we set
\begin{align}\label{E:NTWOMDEF}
\mathcal N_{2j}[\th]:= \bD^{2j} ( \frac{\bzeta_z^2}{\zeta_z^2} K \theta) -  \frac{\bzeta_z^2}{\zeta_z^2}\bD^{2j} K \theta. 
\end{align}
\end{lemma}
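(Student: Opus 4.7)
The plan is to derive the identity by commuting $\bD^{2j}$ through the system \eqref{E:NLTHETA0}--\eqref{E:NLPHI0}, differentiating the weighted energy in $s$, and then carrying out two distinct integrations by parts: one to handle the $\Lambda$-transport terms, and one to pair the $\bd\phij$ contribution with the top-order $K\theta_{2j}$ term in the quasilinear flux. First I would apply $\bD^{2j}$ to both equations, using the commutator relation $[\bD^{2j},\Lambda] = \frac{2j}{3}\bD^{2j}$ from \eqref{E:ACC1}, the definition~\eqref{E:NTWOMDEF} of $\mathcal N_{2j}$, and Lemma~\ref{L:KEYCOMM} to obtain
\begin{align*}
\pa_s\theta_{2j} & = -\Lambda\theta_{2j} - \tfrac{2j-3}{3}\theta_{2j} + \phij, \\
\pa_s\phij & = -\Lambda\phij - \tfrac{2j}{3}\phij + \tfrac{\bzeta_z^2}{\zeta_z^2}\bigl(K\theta_{2j} + 2j\,\bp\bG\,\bd\theta_{2j} + \mathcal R_{2j}\theta\bigr) + \mathcal N_{2j}[\theta] + \bD^{2j}\bigl(\tfrac{\CLP z - \tilde M}{\zeta^2} - \tfrac{\tilde g_z}{\tilde g}\tfrac{1}{\zeta_z}\bigr) + \bD^{2j}\RR[\theta].
\end{align*}
Then I would apply $\bd$ to the first equation, using the commutator $\bd\Lambda = \Lambda\bd + \frac13\bd$ from \eqref{E:GAIN2}, which produces the coefficient $-\frac{2j-2}{3}$ in front of $\bd\theta_{2j}$.

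Next, I would differentiate the energy in $s$, noting $\pa_s(\bp\zeta) = \bp\pa_s\theta$ since $\bp\bzeta$ is $s$-independent; the chain rule applied to $(\bp\zeta)^{-2}$ produces the explicit term $-\int \chi_{2j}\frac{(\bd\theta_{2j})^2}{(\bp\zeta)^2}\frac{\pa_s\bp\theta}{\bp\zeta}$ on the right-hand side of~\eqref{E:MAGENTA}. The $\Lambda$-terms are treated via the standard identity $\int \chi u \Lambda u\,dz = -\frac12\int \chi u^2\,dz - \frac12\int \Lambda\chi\, u^2\,dz$ (valid for decaying $u$), applied separately to $(\bd\theta_{2j})^2/(\bp\zeta)^2$ and $\phij^2$. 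The $\Lambda$ falling on the weight $(\bp\zeta)^{-2}$ produces $-2\Lambda\bp\zeta/(\bp\zeta)^3$, which via the elementary identity $\frac{\Lambda\bp\zeta}{\bp\zeta} = \frac{2}{3} + \frac{z\pa_{zz}\zeta}{\pa_z\zeta}$ yields the term $-\int \chi_{2j}\frac{(\bd\theta_{2j})^2}{(\bp\zeta)^2}\frac{z\pa_{zz}\zeta}{\pa_z\zeta}$ together with a $-\frac23$ contribution. Combining $\frac12 - \frac23 - \frac{2j-2}{3} = -\frac{4j-3}{6}$ (for the $\theta$-part) and $\frac12 - \frac{2j}{3} = -\frac{4j-3}{6}$ (for the $\phi$-part) produces the common leading damping coefficient.

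The central step is the integration by parts coupling the $\bd\phij$ term coming from $\bd\pa_s\theta_{2j}$ with the $K\theta_{2j} = \bp(\bG\,\bd\theta_{2j})$ term coming from $\pa_s\phij$. Using the key algebraic identity $\frac{\bzeta_z^2}{\zeta_z^2}\bG = \frac{1}{(\bp\zeta)^2}$, I would write
\[
\tfrac{\bzeta_z^2}{\zeta_z^2}\bp(\bG\,\bd\theta_{2j}) = \bp\bigl(\tfrac{\bd\theta_{2j}}{(\bp\zeta)^2}\bigr) - \bp\bigl(\tfrac{\bzeta_z^2}{\zeta_z^2}\bigr)\bG\,\bd\theta_{2j},
\]
so that integration by parts of $\bp$ against $\chi_{2j}\phij$ exactly cancels $\int \chi_{2j}\bd\theta_{2j}\,\bd\phij/(\bp\zeta)^2$ except for the boundary piece $-\int \frac{\bp\chi_{2j}}{(\bp\zeta)^2}\bd\theta_{2j}\phij$ and the quasilinear remainder $-\int\chi_{2j}\bp(\frac{\bzeta_z^2}{\zeta_z^2})\bG\,\bd\theta_{2j}\phij$ appearing in the statement. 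Finally, for the $2j\,\bp\bG$ term I would use $\frac{\bzeta_z^2}{\zeta_z^2} = \bigl(\frac{\bp\bzeta}{\bp\zeta}\bigr)^2$, rewrite one copy of $\frac{\bp\bzeta}{\bp\zeta} = 1 - \frac{\pa_z\theta}{\zeta_z}$, and use $\frac{1}{\sqrt\bG} = \bp\bzeta$ to split this contribution into exactly the two integrals $2j\int \chi_{2j}\frac{\bp\bG}{\sqrt\bG}\frac{\bd\theta_{2j}}{\bp\zeta}\phij - 2j\int\chi_{2j}\frac{\pa_z\theta}{\zeta_z}\frac{\bp\bG}{\sqrt\bG}\frac{\bd\theta_{2j}}{\bp\zeta}\phij$ that appear in~\eqref{E:MAGENTA}. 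All remaining contributions ($\mathcal R_{2j}$, $\mathcal N_{2j}$, $\bD^{2j}\RR$, and the source) pair trivially with $\chi_{2j}\phij$ and form the last two integrals in~\eqref{E:MAGENTA}.

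The main bookkeeping obstacle is tracking the correct algebraic split between terms containing the background weight $\bG = (\bp\bzeta)^{-2}$ and those containing the full quasilinear weight $(\bp\zeta)^{-2}$; the identity $\frac{\bzeta_z^2}{\zeta_z^2}\bG = \frac{1}{(\bp\zeta)^2}$ is the key structural cancellation that makes the top-order nonlinear IBP exact. The boundary contributions at $z=0$ and $z=\infty$ vanish due to the parity/regularity encoded in $\Dinfodd$ (by~\eqref{E:DZODD}) and the decay afforded by $\Phi \in \mathfrak{H}$ against the growth of $\chi_{2j}$.
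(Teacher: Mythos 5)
Your proposal is correct and follows essentially the same sequence of steps as the paper's own proof: commuting $\bD^{2j}$ through the system via \eqref{E:ACC1} and Lemma~\ref{L:KEYCOMM}, testing against $\chi_{2j}(\bp\zeta)^{-2}\bd\theta_{2j}$ and $\chi_{2j}\phij$, handling the $\Lambda$-transport by integration by parts together with the identity $\Lambda\bp\zeta/\bp\zeta = \tfrac23 + z\zeta_{zz}/\zeta_z$, and pairing the $\bd\phij$ term with $\frac{\bzeta_z^2}{\zeta_z^2}K\theta_{2j}$ through the structural cancellation $\frac{\bzeta_z^2}{\zeta_z^2}\bG = (\bp\zeta)^{-2}$, exactly as in the paper. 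The only small terminological slip is calling $-\int \frac{\bp\chi_{2j}}{(\bp\zeta)^2}\bd\theta_{2j}\phij$ a ``boundary piece''; it is a weight-derivative term, not a boundary evaluation, but this does not affect the correctness of the argument.
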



\begin{proof}
Applying $\bD^{2j}$ to~\eqref{E:NLTHETA0}--\eqref{E:NLPHI0}, assuming $\theta\in\DZodd$,  
we obtain the system
\begin{align}
\pa_s \bd \theta_{2j}  =&\, \bd \phi_{2j} - \Lambda\bd \theta_{2j} - \frac{2j-2}3 \bd \theta_{2j}, \label{E:SIMPLETHETAM00}\\
\pa_s\phi_{2j}  =&\, -\Lambda \phi_{2j} -\frac{2j}3\phi_{2j}+ \frac{\bzeta_z^2}{\zeta_z^2}  \left( K\theta_{2j} + 2j \bp \bG \bd \theta_{2j} + \mathcal R_{2j} \theta\right)+\mathcal N_{2j}[\theta] \notag \\
& +\bD^{2j} \Big(\frac{\CLP z-\tilde M}{\zeta^2}-\frac{\tilde g_z}{\tilde g}\frac1{\zeta_z}  \Big) + \bD^{2j}\RR[\th], \label{E:SIMPLEPSIM00}
\end{align}
where we have used~\eqref{E:ACC1}, Lemma~\ref{L:KEYCOMM},~\eqref{E:MATHFRAKRDEF}, and~\eqref{E:NTWOMDEF}. 
We recall  $\chi_{2j}( z) \ge 0$, defined in~\eqref{E:CHIJDEF}, is smooth. We multiply \eqref{E:SIMPLETHETAM00}--\eqref{E:SIMPLEPSIM00} by $\chi_{2j} \frac{1}{(\bp \zeta)^2}\bd\theta_{2j} $ and $\chi_{2j} \phi_{2j}  $ respectively and integrate to obtain 
\begin{align}
&\int \chi_{2j} \Big( \frac{1}{(\bp \zeta)^2}\bd\theta_{2j} \pa_s \bd \theta_{2j} + \phi_{2j} \pa_s\phi_{2j}\Big) \label{comp10}
= \\
&- \int \chi_{2j} \frac{1}{(\bp \zeta)^2} \Lambda\bd \theta_{2j} \bd \theta_{2j}  -  \tfrac{2j-2}3 \int \chi_{2j} \frac{1}{(\bp \zeta)^2} (\bd \theta_{2j})^2 - \int  \chi_{2j} \Lambda\phi_{2j} \phi_{2j}  -\tfrac{2j}{3} \int \chi_{2j} \phi_{2j}^2 \label{comp20} \\
&+ \int \chi_{2j}  \bd \phi_{2j}  \frac{1}{(\bp \zeta)^2}\bd\theta_{2j} + \int \chi_{2j} \phi_{2j}  \frac{\bzeta_z^2}{\zeta_z^2}  K\theta_{2j} +2j \int \chi_{2j}  \frac{\bzeta_z^2}{\zeta_z^2} \bp \bG \bd \theta_{2j} \phi_{2j} \label{comp30} \\
& + \int \chi_{2j} \frac{\bzeta_z^2}{\zeta_z^2} \mathcal R_{2j}\theta  \phi_{2j}  + \int \chi_{2j} \mathcal N_{2j}[\theta] \phi_{2j} \notag\\ 
& + \int \chi_{2j}\bD^{2j} \Big(\frac{\CLP z-\tilde M}{\zeta^2}-\frac{\tilde g_z}{\tilde g}\frac1{\zeta_z}  \Big)    \phi_{2j} \diff z  +\int \chi_{2j} \bD^{2j}\RR[\th] \phi_{2j} \diff z. \label{E:ENERGYLAST}
\end{align}
We first rewrite the first three lines: 
\begin{align}
\eqref{comp10}= \frac12\frac{d}{ds}\int \chi_{2j} \bigg( \frac{(\bd\theta_{2j})^2}{(\bp\zeta)^2} +\phi_{2j}^2 \bigg)
+ \int \chi_{2j} \frac{(\bd\theta_{2j})^2}{(\bp\zeta)^2} \frac{\pa_s\bp\theta }{\bp\zeta}.
\end{align}
Next, integrating by parts,
\begin{align}
\eqref{comp20}&= -\frac{4j-7}{6}\int \chi_{2j} \frac{(\bd\theta_{2j})^2}{(\bp\zeta)^2}  - \frac{4j-3}{6}\int \chi_{2j} \phi_{2j}^2 + \frac12\int \Lambda \chi_{2j}  \bigg([ \frac{(\bd\theta_{2j})^2}{(\bp\zeta)^2} +\phi_{2j}^2 \bigg)  \notag\\
&\quad  - \int \chi_{2j} \frac{(\bd\theta_{2j})^2}{(\bp\zeta)^2}\frac{\Lambda\bp\zeta}{\bp\zeta}. \label{E:COMP210}
\end{align}
We note that 
\begin{align}
\frac{\Lambda\bp\zeta}{\bp\zeta}
= \frac{z^{\frac13}\pa_z(z^{\frac23}\pa_z\zeta)}{\pa_z\zeta} =\frac23 + \frac{z\pa_{zz}\zeta}{\pa_z\zeta}. \label{E:COERCIVE10}
\end{align}
We may therefore rewrite~\eqref{E:COMP210} in the form
\begin{align}
\eqref{comp20}&= -\frac{4j-3}{6}\int \chi_{2j} \bigg(\frac{(\bd\theta_{2j})^2}{(\bp\zeta)^2}+\chi_{2j} \phi_{2j}^2\bigg) + \frac12\int \Lambda \chi_{2j}  \bigg(\frac{(\bd\theta_{2j})^2}{(\bp\zeta)^2} +\phi_{2j}^2 \bigg)  \notag\\
&\quad  - \int \chi_{2j} \frac{(\bd\theta_{2j})^2}{(\bp\zeta)^2} \frac{z\pa_{zz}\zeta}{\pa_z\zeta}. \label{E:COMP22}
\end{align}
The first two terms of \eqref{comp30} combine to give
\begin{align}
\eqref{comp30}_1&= \int \chi_{2j}  \bd \phi_{2j}  \frac{1}{(\bp \zeta)^2}\bd\theta_{2j} + \int \chi_{2j} \phi_{2j}  \frac{\bzeta_z^2}{\zeta_z^2}  K\theta_{2j} \notag\\
&= - \int \chi_{2j} \bp(\frac{\bzeta_z^2}{\zeta_z^2}) \bG  \bd \theta_{2j} \phi_{2j}  - \int \bp \chi_{2j} \frac{1}{(\bp\zeta)^2} \bd \theta_{2j} \phi_{2j}, 
\end{align}
and we rewrite the third term of \eqref{comp30} 
\begin{align}
2j \int \chi_{2j}  \frac{\bzeta_z^2}{\zeta_z^2} \bp \bG  \bd \theta_{2j} \phi_{2j} = 2j \int \chi_{2j}  \frac{\bp \bG }{\sqrt{\bG }} \frac{ \bd \theta_{2j} }{\bp \zeta }\phi_{2j} -2j \int \chi_{2j} \frac{ \pa_z\theta }{\zeta_z} \frac{\bp \bG }{\sqrt{\bG }} \frac{ \bd \theta_{2j} }{\bp \zeta }\phi_{2j}.
\end{align}
The identity~\eqref{E:MAGENTA} now follows easily from the above.
\end{proof}


We now specialise the energy identity from Lemma~\ref{L:HIGHENERGY0} to the cases $j=0$ and $1\leq j\leq M$, which gives us the starting point for the high-order energy bounds. We first prove the energy estimate for the zero order energy, for which the weight function has a slightly different form (see~\eqref{E:CHIJDEF}).

\begin{proposition}[Basic energy inequality for $\tE_{0}$]\label{P:ZEROENERGYGLOBAL}
Let $(\th,\phi)^\top$ satisfy the assumptions of Proposition~\ref{P:EE2}. Then,  for  $Z_0>0$ sufficiently large, $\eps_0$ sufficiently small, the modified energy $\tE_{0}(s)$ defined in~\eqref{E:TILDEEDEF} satisfies, for all $s\in(\sin,S)$,
 the following a priori energy bound:
\begin{align}
&\frac12\pa_s\tE_{0}(s)  + c_4\tE_{0} (s) \le C \kappa\|\Phi\|_{\H^{0}_{2\m,Z_0}}^2
  + \int \chi_{0}  \Big(\frac{\CLP z-\tilde M}{\zeta^2}-\frac{\tilde g_z}{\tilde g}\frac1{\zeta_z}  \Big)    \phi \diff z .\label{Energy-zero}
\end{align}
\end{proposition}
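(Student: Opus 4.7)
The plan is to specialise the general energy identity of Lemma~\ref{L:HIGHENERGY0} to $j=0$. At this order $\bD^0=\mathrm{Id}$, so the commutator corrections $\mathcal R_0\theta$ and $\mathcal N_0[\theta]$ vanish identically, and the identity reduces to a finite list of quadratic terms in $(\bd\theta,\phi)$, the prescribed source, and the nonlinear remainder $\int\chi_0\mathfrak R[\theta]\phi$. Every integral is then decomposed across $[0,Z_0]\cup[Z_0,\infty)$: on the interior the sharp pointwise bound $\chi_0|_{[0,Z_0]}\le\kappa$ together with the equivalence of the weights $g_{2\m}$, $\bG$, and (from $\Pb\le\tfrac14$) $(\bp\zeta)^2\sim(\bp\bzeta)^2$ on the compact set $[0,Z_0]$ funnels every interior contribution into $C(Z_0)\kappa\|\Phi\|_{\mathcal H^0_{2\m,Z_0}}^2$, which is exactly the term allowed on the right-hand side. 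All coercivity will be extracted from the exterior.

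For $\chi_0=\kappa(1+z)^{-\al}$ one computes $\Lambda\chi_0=-\tfrac{\al z}{1+z}\chi_0$ and $\bp\chi_0=-\tfrac{\al z^{2/3}}{1+z}\chi_0$. Pairing the universal constant $-\tfrac{4j-3}{6}|_{j=0}=\tfrac12$ with $\tfrac12\Lambda\chi_0$ gives a scalar multiplier $\chi_0\cdot\tfrac{1-(\al-1)z}{2(1+z)}$, which takes the value $-c_4=-\tfrac{\al-1}{4}$ at $z_\dagger:=\tfrac{\al+1}{\al-1}$ and is bounded above by $-c_4$ for $z\ge z_\dagger$. The largeness hypothesis on $Z_0$ permits $Z_0\ge z_\dagger$, delivering the key exterior contribution $-c_4\int_{Z_0}^{\infty}\chi_0\bigl(\tfrac{(\bd\theta)^2}{(\bp\zeta)^2}+\phi^2\bigr)$; combined with the easy interior bound (multiplier $\le\tfrac12$ and $\chi_0\le\kappa$) this reproduces the desired $-c_4\tE_0$ on the left modulo an additive $C(Z_0)\kappa\|\Phi\|_{\mathcal H^0_{2\m,Z_0}}^2$ on the right.

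The remaining cross terms in~\eqref{E:MAGENTA}, namely those involving $\pa_s\bp\theta/\bp\zeta$, $z\pa_{zz}\zeta/\pa_z\zeta$, $\bp(\bzeta_z^2/\zeta_z^2)\bG$, and $\bp\chi_0/(\bp\zeta)^2$, are estimated by the same interior/exterior strategy. Exterior smallness comes from three independent mechanisms: (i)~the a priori pointwise bound $\Pb\le C_\ast\sqrt{\eps_0}$ from~\eqref{E:APRIORIMAINPT}, which pointwise dominates $\pa_s\bp\theta/\bp\zeta$, the $\theta$-dependent part of $z\pa_{zz}\zeta/\pa_z\zeta$, and the variable part of $\bp(\bzeta_z^2/\zeta_z^2)\bG$; (ii)~the far-field LP asymptotic $|z\pa_{zz}\bzeta/\pa_z\bzeta|=O(z^{-2})$ obtained from the steady equation~\eqref{E:LPLAG} and Lemma~\ref{L:GBAR}, contributing $\le C/Z_0^{2}$ on $[Z_0,\infty)$; (iii)~the far-field decay $(\bp\zeta)^{-2}\le C Z_0^{-4/3}$ coming from Lemma~\ref{L:GBAR}, which after weighted Cauchy--Schwarz renders the $\bp\chi_0$ coupling $\le C\al Z_0^{-2/3}\tE_0$. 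The nonlinear remainder $\int\chi_0\mathfrak R[\theta]\phi$ is handled term-by-term from~\eqref{E:MATHFRAKRDEF}: each linear-in-$\theta$ piece has an LP coefficient that is bounded on $[0,Z_0]$ and decays like $z^{-2}$ at infinity (by Theorem~\ref{T:LP} and Lemma~\ref{L:GBAR}), while the quadratic piece $-\bzeta_{zz}(\pa_z\theta)^2/(\bzeta_z^2\zeta_z^2)$ is controlled via $|\pa_z\theta/\pa_z\bzeta|\le\Pb$; combined with $|\theta/\bzeta|\le\Pb\le C\sqrt{\eps_0}$, each yields $C(Z_0)\kappa\|\Phi\|_{\mathcal H^0_{2\m,Z_0}}^2+C\sqrt{\eps_0}\tE_0$. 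Absorbing every $(C\sqrt{\eps_0}+CZ_0^{-2/3})\tE_0$-type error into $\tfrac{c_4}{2}\tE_0$ on the left (valid once $\eps_0$ is small and $Z_0$ is large) and keeping the source term unaltered on the right delivers~\eqref{Energy-zero}. The main technical subtlety lies not in any single estimate but in the global bookkeeping that prevents a spurious $C(Z_0)\tE_0$ from appearing on the right: this is ensured by always using the sharp interior inequality $\chi_0\le\kappa$ rather than weaker weight comparisons, so that all interior error contributions collapse into the permitted $\|\Phi\|_{\mathcal H^0_{2\m,Z_0}}^2$.
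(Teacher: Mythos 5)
Your overall route is the paper's: specialise the high-order energy identity to $j=0$, compute $\pa_z\chi_0=-\tfrac{\al}{1+z}\chi_0$, split $[0,Z_0]\cup[Z_0,\infty)$, push all interior terms into $C\kappa\|\Phi\|^2_{\H^0_{2\m,Z_0}}$ and extract coercivity at rate $-c_4$ from the exterior using $\al>1$, Lemma~\ref{L:GBAR}, and the a~priori bound~\eqref{E:APRIORIMAINPT}. Your bookkeeping for the $\Lambda\chi_0$ and $\bp\chi_0$ pieces is a harmless rearrangement of the paper's: they absorb $\bp\chi_0$ into a single multiplier $-\tfrac{\al z}{2(1+z)}\chi_0\bigl(X^2+Y^2-2\tfrac{\sqrt{\bG}}{z^{1/3}}(1-\tfrac{\bp\th}{\bp\zeta})XY\bigr)$, whereas you pair $\tfrac12$ with $\tfrac12\Lambda\chi_0$ first and treat the cross-term via $\bG\sim z^{-4/3}$; both work.

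However, there is a genuine gap in the way you estimate the remainder $\int\chi_0\mathfrak R[\th]\,\phi$. (Incidentally, you are right that this term belongs in the $j=0$ identity via~\eqref{E:NLPHI0} and~\eqref{E:MAGENTA}; the paper's displayed identity~\eqref{E:ZEROENERGYFIRST} suppresses it without comment.) Your argument asserts that ``each linear-in-$\theta$ piece [of $\mathfrak R$] has an LP coefficient that is bounded on $[0,Z_0]$,'' and reduces the interior contribution to $C(Z_0)\kappa\|\Phi\|^2_{\H^0_{2\m,Z_0}}$. This is false: reading off~\eqref{E:MATHFRAKRDEF} together with~\eqref{E:KIDENTITY}, the coefficients $\mathcal V_1=\tfrac29 z^{-2/3}\bG+\tfrac43 z^{-1/3}\bp^2\bzeta\,\bG^{3/2}$, $\tfrac1{\zeta\bzeta}$, and $\tfrac{\CLP z}{\zeta^2\bzeta^2}$ all blow up like $z^{-2/3}$ as $z\to0$ (since $\bzeta\sim z^{1/3}$ and $\bG\to9$), so $\mathfrak R[\th]\sim z^{-1/3}$ near the origin. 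The $\H^0_{2\m,Z_0}$ semi-norm $\int(\bd\th)^2\bG\,g_{2\m}+\int\phi^2 g_{2\m}$ carries no negative $z$-weight at $z=0$, so one cannot bound $\int_0^{Z_0}z^{-4/3}\th^2$ (equivalently $\int_0^{Z_0}z^{-2/3}|\bd\th|^2$ via Hardy) by it alone; the needed Hardy inequality~\eqref{E:HARDYREFINED} produces exactly the weight you do not have. What actually makes the interior estimate close in the paper is the cancellation established in Lemma~\ref{L:MATHFRAKRNEW}: the $z^{-1/3}$-singular pieces of $\mathfrak R[\th]$ recombine into the regular expressions $\mathcal S_1$, $\mathcal S_2$ (with $g_1, z^{1/3}g_2\in\DZodd$), at which point the interior integral really is controlled by the claimed quantity. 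Without invoking that cancellation (or an equivalent mechanism, e.g.\ using $|\th/\bzeta|,|\bp\th/\bp\bzeta|\le\Pb\lesssim\sqrt{\eps_0}$ to gain smallness and then absorbing a $C\sqrt{\eps_0}\,\tE_0$ error), the step as you have written it does not go through. A minor further overclaim: from~\eqref{eq:zdzzZeta} and $\omLP=1+O(1/y)$ one has $|z\pa_{zz}\bzeta/\pa_z\bzeta|=O(z^{-1})$, not $O(z^{-2})$; the weaker rate still suffices but you should not cite the stronger one.
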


\begin{proof}
We take $j=0$ in Lemma~\ref{L:HIGHENERGY0} and, for ease of notation, we set $X = \frac{1}{(\bp\zeta)} \bd \theta$, $Y=\phi$. 
We first calculate
\[
\pa_z\chi_{0}=\kappa\frac{-\al}{1+z}(1+z)^{-\al} = - \frac{\al}{1+z}\chi_0
\]
to obtain
\begin{align}
& \frac12\frac{d}{ds}\int \chi_{0} \big( X^2 + Y^2 \big) \notag \\
&=  \frac12\int \chi_{0} \big( X^2 + Y^2 \big)  - \int \chi_{0}   X^2 \Big(\frac{z\pa_{zz}\zeta}{\pa_z\zeta} + \frac{\pa_s\bp\theta }{\bp\zeta}\Big) 
 + \int \frac{-\alpha}{2}\frac{z}{1+z}\chi_{0}   \Big( X^2 + Y^2 - 2  \frac{\sqrt{\bG }}{z^{\frac13}}\Big(1-\frac{\bp\theta}{\bp\zeta}\Big) XY \Big)  \notag \\
 &\ \ \ \ - \int \chi_{0} \bp(\frac{\bzeta_z^2}{\zeta_z^2}) \bG  (\bp\zeta) XY 
+ \int \chi_{0} \Big(\frac{\CLP z-\tilde M}{\zeta^2}-\frac{\tilde g_z}{\tilde g}\frac1{\zeta_z}  \Big)    Y \diff z.\label{E:ZEROENERGYFIRST}
\end{align}
Given a $0<d\ll1$ sufficiently small, there exists $Z_0$ sufficiently large, from Lemma~\ref{L:GBAR} and~\eqref{E:APRIORIMAINPT}, such that
\begin{align}
(1-d) (X^2+Y^2)\le X^2+Y^2-2 \frac{\sqrt{\bG }}{z^{\frac13}}\Big(1-\frac{\bp\theta}{\bp\zeta}\Big)  XY  \le (1+d)(X^2+Y^2), \ \ z\ge Z_0, 
\end{align}
and for some $e_3 >0$,  independent of $Z_0$,
\begin{align}
 \frac{z}{z+1} \Big| X^2+Y^2-2 \frac{\sqrt{\bG }}{z^{\frac13}} \Big(1-\frac{\bp\theta}{\bp\zeta}\Big) XY \Big|  \le e_3 (X^2+Y^2), \ \ z\in(0, Z_0). 
 \end{align}
Thus, we make the estimate
\beqa\label{E:ZEROENERGY2}
&\int \frac{\alpha}{2}\frac{z}{1+z}\chi_{0}   \bigg( X^2 + Y^2 - 2  \frac{\sqrt{\bG }}{z^{\frac13}}\Big(1-\frac{\bp\theta}{\bp\zeta}\Big) XY \bigg)\\
&\leq \int_0^{Z_0}\frac{\alpha}{2}\chi_{0}e_3(X^2+Y^2) + \int_{Z_0}^\infty\chi_{0} \Big(-\frac{\alpha}{2}(1-d)\Big)(X^2+Y^2).
\eeqa
Next, we expand $\frac{z\pa_{zz}\zeta}{\pa_z\zeta}=\frac{z\pa_{zz}\bzeta}{\pa_z\bzeta}-  \frac{z\pa_{zz}\bzeta}{\pa_z\bzeta} \frac{\pa_z\theta}{\pa_z\zeta} +\frac{z\pa_z^2\theta}{\pa_z\zeta}$ in order to estimate
\beqa\label{E:ZEROENERGY3}
&- \int \chi_{0}   X^2 \Big(\frac{z\pa_{zz}\zeta}{\pa_z\zeta}  + \frac{\pa_s\bp\theta }{\bp\zeta}\Big)
 \leq \int \chi_{0}   X^2 \Big(\frac23\mathbf{1}_{z\leq Z_0}+\frac{C}{Z_0}\mathbf{1}_{z\geq Z_0}+C_1C_*\sqrt{\eps_0} \Big),
\eeqa
where we have used the lower bound $\frac{z\pa_{zz}\bzeta}{\pa_z\bzeta} \ge -\frac23$, and $\frac{z\pa_{zz}\bzeta}{\pa_z\bzeta} = O(\frac1z) $ for $z\gg1$, both of which follow from~\eqref{eq:zdzzZeta}, and the pointwise a priori estimate~\eqref{E:APRIORIMAINPT}. We note that the constant $C_1>0$ depends only on the global bounds of the LP solution, $\bzeta$. 

Applying~\eqref{E:PBOUNDS}, we easily check that $|\bp\big(\frac{\bzeta_z^2}{\zeta_z^2}\big)|\leq C\|\Phi\|_{\mathcal{H}^2_{2\m,Z_0}}\leq C
\sqrt{\eps_0}$ for $z\leq Z_0$, while the pointwise bound~\eqref{E:APRIORIMAINPT} gives $|\bp\big(\frac{\bzeta_z^2}{\zeta_z^2}\big)|\leq C_1C_*\sqrt{\eps_0}$, and hence
\beq\label{E:ZEROENERGY4}
\bigg|\int \chi_{0} \bp(\frac{\bzeta_z^2}{\zeta_z^2}) \bG \bd\theta\phi\bigg|\leq C(Z_0)\sqrt{\eps_0}\int \chi_0 (X^2+Y^2).
\eeq

Combining~\eqref{E:ZEROENERGY2}--\eqref{E:ZEROENERGY3} in~\eqref{E:ZEROENERGYFIRST}, we have obtained
\begin{align}
& \frac12\frac{d}{ds}\int \chi_{0} \big( X^2 + Y^2 \big) \notag \\
&\leq \int_0^{Z_0} \chi_{0}    \big(\frac12 +\frac{\al e_3}{2} +\frac23 +C(Z_0)\sqrt{\eps_0}\big) ( X^2 + Y^2)\label{E:ZEROENERGYFIRST1}\\
&
+ \int_{Z_0}^\infty \chi_{0}  \Big(-\frac{\alpha}{2}(1-d)+\frac12 +\frac{C}{Z_0} +C(Z_0)\sqrt{\eps_0}\Big) ( X^2 + Y^2)+ \int \chi_{0}   \Big(\frac{\CLP z-\tilde M}{\zeta^2}-\frac{\tilde g_z}{\tilde g}\frac1{\zeta_z}  \Big)    Y \diff z.\label{E:ZEROENERGYEXTERIOR1}
\end{align}
We may take $d$ sufficiently small, then $Z_0\gg1$ sufficiently large (depending on $\al-1$), and finally $\eps_0$ sufficiently small (depending on $\al,Z_0$) so that
\beq
0< c_4=\frac{\al-1}{4}\leq \frac{\alpha}{2}(1-d)-\frac12 -\frac{C}{Z_0} -C(Z_0)\sqrt{\eps_0},
\eeq
and hence~\eqref{E:ZEROENERGYEXTERIOR1} provides a coercive estimate in the far-field region. To control the interior contribution in~\eqref{E:ZEROENERGYFIRST1}, we bound 
\beqa
\int_0^{Z_0} \chi_{0}    \big(\frac12 +\frac{\al e_3}{2} +\frac23 +C(Z_0)\sqrt{\eps_0}\big) ( X^2 + Y^2)\leq C\kappa\|\Phi\|_{\mathcal{H}^{0}_{2\m,Z_0}}^2.
\eeqa
This concludes the proof.
\end{proof}

\begin{proposition}[Basic energy inequality for $\tE_{2j}$]\label{P:ENERGYGLOBAL}
Let $(\th, \phi)^\top$ be as in Proposition~\ref{P:EE2}. Then there exists $J\in\mathbb N$  with $J\le \m$  such that for $\kappa_0,Z_0$ sufficiently large and $\eps_0$ sufficiently small, the modified energy $\tE_{2j}(s)$ defined in~\eqref{E:TILDEEDEF} satisfies
 the following a priori energy bound: for $1\le j \le M$,
\begin{align}
&\frac12\pa_s\tE_{2j}(s)  + (c_3 j +c_4)\tE_{2j} (s) \le C \kappa\|\Phi\|_{\H^{2J}_{2\m,Z_0}}^2 \bm{1}_{j\le J}
 \notag\\
&\quad  + \int \chi_{2j} \bD^{2j} \Big(\frac{\CLP z-\tilde M}{\zeta^2}-\frac{\tilde g_z}{\tilde g}\frac1{\zeta_z}  \Big)    \phi_{2j} \diff z \notag\\
&\quad  + \int \chi_{2j} \Big(\frac{\bzeta_z^2}{\zeta_z^2} \mathcal R_{2j}\theta + \mathcal N_{2j}[\theta] + \bD^{2j}\RR[\th]\Big) \phij \diff z,  \label{Energy-high}
\end{align}
where $\bm{1}_{j\le J}$ is the characteristic function of the set $\{1,2,\dots,J\}\subset \mathbb N$.
\end{proposition}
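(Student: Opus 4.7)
The starting point is the exact identity of Lemma~\ref{L:HIGHENERGY0}. Abbreviating $X:=\bd\theta_{2j}/(\bp\zeta)$ and $Y:=\phi_{2j}$ so that $\tE_{2j}=\int\chi_{2j}(X^2+Y^2)$, the plan is to show that the first six lines of the right-hand side of~\eqref{E:MAGENTA}, when regrouped using the structure of $\chi_{2j}$, yield the coercive bound $-(c_3 j+c_4)\tE_{2j} + C\kappa\|\Phi\|_{\H^{2J}_{2\m,Z_0}}^2 \mathbf{1}_{j\le J}$; the last two lines are carried over to the right-hand side of~\eqref{Energy-high} unchanged.

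First, I will isolate the pure diagonal contribution
\[
\mathcal I_{\text{diag}} := -\tfrac{4j-3}{6}\!\int\!\chi_{2j}(X^2+Y^2) + \tfrac12\!\int\!\Lambda\chi_{2j}(X^2+Y^2) - \!\int\!\chi_{2j} X^2 \tfrac{z\pa_{zz}\zeta}{\pa_z\zeta}-\!\int\!\chi_{2j} X^2\tfrac{\pa_s\bp\theta}{\bp\zeta}.
\]
Using $\Lambda\chi_{2j} = z\tfrac{g_{2j}'}{g_{2j}}\chi_{2j}+g_{2j}z(2cj-\al)(1+z)^{2cj-\al-1}$ together with the LP identity $\tfrac{z\pa_{zz}\bzeta}{\pa_z\bzeta}\ge-\tfrac23$ (with $O(z^{-1})$ decay in the far field) and the pointwise a priori bound~\eqref{E:APRIORIMAINPT} for the perturbative pieces $\tfrac{\pa_z\theta}{\pa_z\zeta},\tfrac{\pa_s\bp\theta}{\bp\zeta}$, one verifies that in the exterior $z\ge Z_0$,
\[
\mathcal I_{\text{diag}} \le -\left(\tfrac{4j-3}{6}-\tfrac{2}{3}-O(Z_0^{-1})-O(\sqrt{\eps_0})\right)\!\int_{z\ge Z_0}\!\chi_{2j} X^2 - \tfrac{\al-2cj-1}{2}\!\int_{z\ge Z_0}\!\chi_{2j}(X^2+Y^2) + (\text{lower}),
\]
where I have used~\ref{item:a1}--\ref{item:a2} (in particular $2cj-\al<0$ for small $j$ and the additional damping from the $(1+z)^{2cj-\al-1}$ factor) and the sign $g_{2j}'\le 0$ to produce the needed gap $c_3 j + c_4$.

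Next, the cross term $2j\int \chi_{2j}\tfrac{\bp\bG}{\sqrt{\bG}} XY$ and the weight-derivative cross contribution
$-\int (\bp\chi_{2j})(\bp\zeta)^{-2}\bd\theta_{2j}\phi_{2j}$ are the crucial exterior obstructions. I will exploit the monotonicity $\bp\bG\le 0$ together with the key constraint~\eqref{E:gmprime}, which was \emph{designed} so that $\bigl|2j\tfrac{\bp\bG}{\sqrt{\bG}} - z^{2/3}\sqrt{\bG}\tfrac{g_{2j}'}{g_{2j}}\bigr|\le 2j\a$ uniformly on $[Z_*,\infty)$, with $\a=\tfrac{1}{24}$. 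Combining this with the portion of $\Lambda\chi_{2j}$ coming from $g_{2j}'$ (rather than from the power weight) and applying AM--GM, the far-field cross terms are absorbed as $j\a\,\tE_{2j}$, hence paid for by the coercive gap provided that $c_3<\tfrac{2}{3}-\a$; the remaining small cross terms from $\bp(\bzeta_z^2/\zeta_z^2)$ and $\pa_z\theta/\zeta_z$ are controlled by $\sqrt{\eps_0}$ from~\eqref{E:APRIORIMAINPT}.

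Finally, the interior region $z\le Z_0$ lacks a sign-definite coefficient for small $j$, but there every quantity is equivalent to the corresponding $\H^{2J}_{2\m,Z_0}$-type norm (with constants depending on $Z_0$). For $j\le J$ with $J$ chosen so that $\tfrac{4j-3}{6}+c_3 j+c_4<C(Z_0)$, the negative sign fails in the interior, and I absorb the excess by the lower-order term $C\kappa\|\Phi\|_{\H^{2J}_{2\m,Z_0}}^2$; for $j>J$ the coefficient $-\tfrac{4j-3}{6}$ is large enough to dominate uniformly, so the lower-order contribution is unnecessary. \emph{The main obstacle} is reconciling these three regimes into a single inequality valid for every $1\le j\le M$: the budget for absorbing the off-diagonal contribution $2j\bp\bG/\sqrt{\bG}$ (which grows linearly in $j$) must be matched against the damping $-\tfrac{4j-3}{6}$ and the weight-derivative damping from $\Lambda\chi_{2j}$, which is precisely the origin of conditions~\ref{item:a1}--\ref{item:a3} and of the choice $\a=\tfrac{1}{24}$. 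Once the coercive quadratic form has been extracted, the last two lines of~\eqref{E:MAGENTA} are relabeled as the nonlinear and source error terms appearing on the right-hand side of~\eqref{Energy-high}, completing the proof.
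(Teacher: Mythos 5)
Your proposal follows essentially the same route as the paper: start from the exact identity of Lemma~\ref{L:HIGHENERGY0}, set $X=\bd\theta_{2j}/(\bp\zeta)$, $Y=\phi_{2j}$, split $\Lambda\chi_{2j}$ into its $g_{2j}'$-piece and power-weight piece, combine the $2j\bp\bG/\sqrt{\bG}\,XY$ cross term with $z\frac{g_{2j}'}{g_{2j}}\chi_{2j}$ via~\eqref{E:gmprime} so that the net cost is $\a j\,\tE_{2j}$, split $[0,\infty)$ at $Z_0$, and absorb the interior region by $C\kappa\|\Phi\|^2_{\H^{2J}_{2\m,Z_0}}$ for $j\le J$ while letting the $-\tfrac{2j}{3}$-damping dominate for $j>J$. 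This is the paper's proof.

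Two of your stated signs are backwards, however. First, you write that ``$2cj-\al<0$ for small $j$'', but assumption~\ref{item:a1} is $2c-\al>0$, so $2cj-\al\ge 2c-\al>0$ for all $j\ge1$. Consequently the power-weight contribution $\tfrac{2cj-\al}{2}\cdot\tfrac{z}{1+z}\cdot\tfrac{(1+z)^{2cj-\al}}{\kappa+(1+z)^{2cj-\al}}$ is positive, i.e.\ it is \emph{anti-damping}, not ``additional damping''. It is precisely to keep this anti-damping small that $\kappa=\kappa_0(1+Z_0)^{2cM-\al}$ is taken large (which suppresses the factor $\tfrac{(1+z)^{2cj-\al}}{\kappa+(1+z)^{2cj-\al}}$ on $[0,Z_0]$) and that $c<\tfrac23$ is enforced (so the linear-in-$j$ anti-damping $cj$ is beaten by the $\tfrac{2j}{3}$-damping in the exterior).

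Second, the shift $\tfrac{z\pa_{zz}\bzeta}{\pa_z\bzeta}\ge-\tfrac23$ is only the relevant bound near the origin; in the far field~\eqref{eq:zdzzZeta} gives $\tfrac{z\pa_{zz}\bzeta}{\pa_z\bzeta}=O(z^{-1})$, so the $\tfrac23$ correction in your displayed bound for $\mathcal I_{\mathrm{diag}}$ on $\{z\ge Z_0\}$ should instead be $O(Z_0^{-1})$; as written it spends a fixed $\tfrac23$ of exterior budget unnecessarily, and the resulting constant $\tfrac53-\tfrac{\al}{2}$ is then positive for $\al>1$, so no coercive $j$-independent gap $c_4$ emerges from your formula. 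Keeping the $\tfrac23$ loss confined to $z\le Z_0$ (where it is paid for by $C\kappa\|\Phi\|^2_{\H^{2J}_{2\m,Z_0}}$) is what makes $c_4=\tfrac{\al-1}{4}$ available in the exterior. With these two corrections your outline coincides with the paper's argument.
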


\begin{proof}
We use Lemma~\ref{L:HIGHENERGY0} and for ease of notation we set $X = \frac{1}{(\bp\zeta)} \bd \theta_{2j}$, $Y=\phi_{2j}$. 
We first calculate
\[
\pa_z\chi_{2j}=\frac{g_{2j}'}{g_{2j}}\chi_{2j}+\frac{2cj-\al}{1+z}g_{2j}(1+z)^{2cj-\al}.
\]
 obtain
\begin{align}
& \frac12\frac{d}{ds}\int \chi_{2j} \big( X^2 + Y^2 \big) \notag \\
&= -\frac{2j}{3}\int \chi_{2j}  \bigg( X^2 + Y^2-3  \frac{\bp \bG }{\sqrt{\bG }}\big(1-\frac{\bp\theta}{\bp\zeta}\big) XY  \bigg)  + \frac12\int \chi_{2j} \big( X^2 + Y^2 \big)  - \int \chi_{2j}   X^2 \Big(\frac{z\pa_{zz}\zeta}{\pa_z\zeta}  + \frac{\pa_s\bp\theta }{\bp\zeta}\Big)\notag \\
&\ \ \   + \int \Big(\frac12 z\frac{g_{2j}'(z)}{g_{2j}(z)}+\frac{2cj-\alpha}{2}\frac{z}{1+z}\frac{(1+z)^{2cj-\al}}{\kappa+(1+z)^{2cj-\al}}\Big)\chi_{2j}   \bigg(X^2 + Y^2 - 2  \frac{\sqrt{\bG }}{z^{\frac13}}\Big(1-\frac{\bp\theta}{\bp\zeta}\Big) XY \bigg)  \notag \\
 &\ \ \ \ - \int \chi_{2j} \bp(\frac{\bzeta_z^2}{\zeta_z^2}) \bG  (\bp\zeta) XY \notag\\
& \ \ \   
+ \int \chi_{2j} \bD^{2j} \Big(\frac{\CLP z-\tilde M}{\zeta^2}-\frac{\tilde g_z}{\tilde g}\frac1{\zeta_z}  \Big)    Y \diff z
+ \int \chi_{2j} \Big(\frac{\bzeta_z^2}{\zeta_z^2} \mathcal R_{2j}\theta + \mathcal N_{2j}[\theta] + \bD^{2j}\RR[\th]\Big) Y \diff z. \label{E:BASICENERGYFIRST}
\end{align}

We begin by treating the cross-terms. First, we make the estimate
\beqa\label{E:BASICENERGY1}
{}& \int \frac12 z\frac{g_{2j}'(z)}{g_{2j}(z)}\chi_{2j}  ( X^2 + Y^2) +
\int\chi_{2j}\Big(2j\frac{\bp\bG}{\sqrt{\bG}}-z^{\frac23}\frac{g_{2j}'}{g_{2j}}\sqrt{\bG}\Big)\big(1-\frac{\bp\theta}{\bp\zeta}\big)XY\\
&\leq \int\chi_{2j} \big(\a j\mathbf{1}_{z\leq Z_0}+\frac{C}{Z_0}j\mathbf{1}_{z\geq Z_0}\big)(1+CC_*\sqrt{\eps_0})(X^2+Y^2),
\eeqa
where we have used $g_{2j}'\leq0$ to eliminate the first term on the left,~\eqref{E:gmprime} to estimate the second term on the left as in~\eqref{E:BADTERMDIFF} for $z\leq Z_0$, used $|\frac{\bp\bG}{\sqrt{\bG}}|\leq Cz^{-1}$ for $z\geq Z_0$ by Lemma~\ref{L:GBAR}, and employed the a priori bound~\eqref{E:APRIORIMAINPT} to control $\frac{\bp\theta}{\bp\zeta}$.

For a $0<d\ll1$ sufficiently small, and $Z_0$ sufficiently large, from Lemma~\ref{L:GBAR} and~\eqref{E:APRIORIMAINPT} we have 
\begin{align}
(1-d) (X^2+Y^2)\le X^2+Y^2-2 \frac{\sqrt{\bG }}{z^{\frac13}}\Big(1-\frac{\bp\theta}{\bp\zeta}\Big)  XY  \le (1+d)(X^2+Y^2), \ \ z\ge Z_0, 
\end{align}
and for some $e_3 >0$, depending only on the LP solution (recall we assume $C_*(Z_0)\eps_0\leq\frac14$, so that~\eqref{E:APRIORIMAINPT} yields $ \big|\frac{\bp\theta}{\bp\zeta}\big|\leq\frac14$),
\begin{align}
 \frac{z}{z+1} \Big| X^2+Y^2-2 \frac{\sqrt{\bG }}{z^{\frac13}} \Big(1-\frac{\bp\theta}{\bp\zeta}\Big) XY \Big|  \le e_3 (X^2+Y^2), \ \ z\in(0, Z_0). 
 \end{align}
Thus, recalling $\kappa=\kappa_0(1+Z_0)^{2cM-\al}$ from assumption~\ref{item:a3}, 
\beqa\label{E:BASICENERGY2}
&\int \Big(\frac{2cj-\alpha}{2}\frac{z}{1+z}\frac{(1+z)^{2cj-\al}}{\kappa+(1+z)^{2cj-\al}}\Big)\chi_{2j}   \bigg( X^2 + Y^2 - 2  \frac{\sqrt{\bG }}{z^{\frac13}}\Big(1-\frac{\bp\theta}{\bp\zeta}\Big) XY \bigg)\\
&\leq \int_0^{Z_0}\frac{2cj-\alpha}{2} 
\frac{(1+Z_0)^{2cj-\al}}{\kappa+(1+Z_0)^{2cj-\al}} e_3 \chi_{2j}(X^2+Y^2) 
+ \int_{Z_0}^\infty \frac{2cj-\alpha}{2} (1+d) \chi_{2j} 
(X^2+Y^2)\\
&\leq \int \bigg( (c j-\frac{\al}{2})\frac{e_3}{\kappa_0}\mathbf{1}_{z\leq Z_0} + \Big((cj -\frac{\alpha}{2})(1+d)\Big)\mathbf{1}_{z\geq Z_0}\bigg)\chi_{2j}(X^2+Y^2),
\eeqa
where we have used $2cj -\alpha >0$ for $j \ge1$ due to assumption \ref{item:a1}. 

Exactly as in~\eqref{E:ZEROENERGY3}--\eqref{E:ZEROENERGY4} in the previous proof, we obtain 
\beqa\label{E:BASICENERGY3}
&- \int \chi_{2j}   X^2 \Big(\frac{z\pa_{zz}\zeta}{\pa_z\zeta}  + \frac{\pa_s\bp\theta }{\bp\zeta}\Big)  -\int \chi_{2j} \bp(\frac{\bzeta_z^2}{\zeta_z^2}) \bG(\bp\zeta) XY \\
&\quad \leq 
 \int \chi_{2j}   (X^2 +Y^2) \Big(\frac23\mathbf{1}_{z\leq Z_0}+\frac{C}{Z_0}\mathbf{1}_{z\geq Z_0}+C(Z_0)\sqrt{\eps_0}\Big),
\eeqa 
where we have used the lower bound $\frac{z\pa_{zz}\bzeta}{\pa_z\bzeta} \ge -\frac23$, and $\frac{z\pa_{zz}\bzeta}{\pa_z\bzeta} = O(\frac1z) $ for $z\gg1$, both of which follow from~\eqref{eq:zdzzZeta}, and the pointwise a priori estimate~\eqref{E:APRIORIMAINPT}.

Combining~\eqref{E:BASICENERGY1}--\eqref{E:BASICENERGY3} in~\eqref{E:BASICENERGYFIRST}, we have obtained  
\begin{align}
& \frac12\frac{d}{ds}\int \chi_{2j} \big( X^2 + Y^2 \big) \notag \\
&\le \int_0^{Z_0} \chi_{2j}  \big( j\big(-\frac{2}{3} + \a +\frac{e_3}{\kappa_0} c\big)  +\frac12 - \frac{e_3\al}{2\kappa_0} +\frac23 +C(Z_0)\sqrt{\eps_0}\big) ( X^2 + Y^2)\label{E:ENERGYINTERIOR1}\\
& \ \ \  + \int_{Z_0}^\infty \chi_{2j}  \Big(j\big(-\frac{2}{3}+c(1+d)+\frac{C}{Z_0}\big) +\big(-\frac{\alpha}{2}(1+d)+\frac12 +\frac{C}{Z_0} +C(Z_0)\sqrt{\eps_0}\big)\Big) ( X^2 + Y^2)\label{E:ENERGYEXTERIOR1}\\
& \ \ \   
+ \int \chi_{2j}  \bD^{2j} \bigg(\frac{\CLP z-\tilde M}{\zeta^2}-\frac{\tilde g_z}{\tilde g}\frac1{\zeta_z}  \bigg)    Y \diff z
+ \int \chi_{2j} \bigg(\frac{\bzeta_z^2}{\zeta_z^2} \mathcal R_{2j}\theta + \mathcal N_{2j}[\theta] + \bD^{2j}\RR[\th]\bigg) Y \diff z. \notag
\end{align} 
Thus, it is clear that for $c<\frac23$ and $\al>1$, we may first take $\kappa_0$ large enough, then take $d$ sufficiently small, then $Z_0\gg1$ sufficiently large, then finally $\eps_0$ sufficiently small so that, with $c_3$, $c_4$ as defined in the statement of  Proposition~\ref{P:EE2},
\beq
c_3\leq \frac12\min\big\{\frac{2}{3} - \a -\frac{e_3}{\kappa_0} c,\frac{2}{3}-c(1+d)-\frac{C}{Z_0}\big\}, \qquad c_4\leq \frac{\alpha}{2}(1+d)-\frac12 -\frac{C}{Z_0} -C(Z_0)\sqrt{\eps_0},
\eeq
and hence~\eqref{E:ENERGYEXTERIOR1} provides a coercive estimate in the far-field region. Moreover, from the choice of $\kappa_0$ and~\eqref{E:MCONDITION}, it is clear that, taking $\kappa_0$ large and $\eps_0$ small, depending on $Z_0$, there exists $J\leq\m$ such that 
$$j\big(-\frac{2}{3} + \a+\frac{e_3}{\kappa_0} c\big)  +\frac76 -\frac{e_3\al}{2\kappa_0} +C(Z_0)\sqrt{\eps_0}\begin{cases}
\geq -(c_3 j + c_4), j\leq J,\\
<-(c_3 j+c_4), j> J.
\end{cases}$$
This then also  controls the interior contribution in~\eqref{E:ENERGYINTERIOR1}.\footnote{For the convenience of the reader, we note that, as $\a=\frac1{24}$, $-2(\frac23-\a)+\frac76=-\frac1{12}$, $-3(\frac23-\a)+\frac76=-\frac{17}{24}$.}

It is clear from the definition of $c_3$, $c_4$ that such a $J\leq \m$ exists due to~\eqref{E:MCONDITION}. For $j\leq J$, we bound 
\beqa
{}&\int_0^{Z_0} \chi_{2j}  \Big(j\big(-\frac{2}{3} + \a +\frac{e_3}{\kappa_0} c\big)  +\frac76 -\frac{e_3\al}{2\kappa_0} +C(Z_0)\sqrt{\eps_0}\Big) ( X^2 + Y^2)\leq C\kappa\|\Phi\|_{\mathcal{H}^{2J}_{2\m,Z_0}}^2,
\eeqa
where we recall the definition~\ref{item:a3} of $\kappa$. For $j>J$, we recall the definition of $c_3$ and $c_4$ and employ the coercive estimate to move this contribution onto the left. This concludes the proof.
\end{proof}


\subsection{Technical lemmas}\label{S:TECHNICAL}


In order to prove the top order energy estimates, Propositions~\ref{P:EE2},~\ref{P:HMZDIFF}, we must close the necessary estimates on the nonlinear terms arising on the right hand side of the energy inequalities in Propositions~\ref{P:ZEROENERGYGLOBAL} and Proposition~\ref{P:ENERGYGLOBAL}. In preparation for these proofs, we establish a number of $L^2$- and $L^\infty$-type estimates adapted to the weighted structure of our energy spaces (compare the definition~\eqref{E:TILDEEDEF} of $\tE_{2j}$). As the weights $\chi_{2j}$, defined in~\eqref{E:CHIJDEF}, have both a constant term $\kappa$ and a growing weight $(1+z)^{2cj-\al}$, we treat  these contributions separately in the following.

We note here for the convenience of the reader that we will regularly employ the integer part (or floor) function $[\cdot]$ in the index of the energies $\tE_{\leq 2j}$ due to the fact that we have defined these energy functionals only for even orders.

 Throughout this section, we make the standing assumption 
\beq\label{E:STANDING} \Phi=\begin{pmatrix} \th \\ \phi \end{pmatrix} \in \mathfrak{H},\qquad \tE_{\leq 2M}[\Phi]<\infty, \qquad \Pb[\Phi]\leq\frac14.
\eeq


\subsubsection{$L^2$- and $L^\infty$-type estimates for growing weights}

We begin by establishing weighted $L^2$-type estimates for the differential operators $Q\in\mathcal{X}_j$ (compare~\eqref{E:XTWOJ}--\eqref{E:XTWOJPLUSONE}).

\begin{lemma}\label{L:QBOUNDS}
Let $Q\in \mathcal X_j$, $1\le j \le 2M$, where we recall the definitions~\eqref{E:XTWOJ}--\eqref{E:XTWOJPLUSONE}. Then there exists $C>0$, independent of $Z_0>z_*$, such that
if $j$ is odd we have
\begin{align}\label{E:JODD}
\int_{Z_0}^\infty (1+z)^{(j-1)c-\al-\frac43}|Q\th|^2\diff z \le C  \tE_{\leq 2 [\frac{j}{2}]} ,
\end{align}
and if $j$ is even the following slightly weaker bound holds:
\begin{align}\label{E:JEVEN}
\int_{Z_0}^\infty (1+z)^{(j-2)c-\al-\frac43}|Q\th|^2\diff z \le C  \tE_{\leq \JDN}.
\end{align}
\end{lemma}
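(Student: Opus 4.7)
The starting point is Lemma~\ref{L:XJDECOMP}, which allows the decomposition
\[
Q\th=\sum_{\ell=0}^j c_\ell^Q\, z^{-(j-\ell)/3}\bD^\ell\th.
\]
Setting $\gamma=(j-1)c-\al-\frac43$ for odd $j$ and $\gamma=(j-2)c-\al-\frac43$ for even $j$, the proof reduces to estimating each integral
\[
I_\ell:=\int_{Z_0}^\infty(1+z)^{\gamma}\,z^{-\frac23(j-\ell)}|\bD^\ell\th|^2\,\diff z,\qquad 0\le\ell\le j,
\]
with the two cases $\ell$ odd and $\ell$ even treated separately.

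For odd $\ell=2p+1$ with $0\le p\le [j/2]$, the derivative $\bD^{2p+1}\th$ is exactly the top-order $\th$-component of $\tE_{2p}$, whose far-field weight is (using $(\bp\bzeta)^2\sim z^{\frac43}$ for $z\gg 1$) of order $(1+z)^{2cp-\al-\frac43}$. A direct comparison of powers, using $c<\frac23$ from assumption~\ref{item:a2}, yields
\[
(1+z)^{\gamma}z^{-\frac23(j-\ell)}\le C(1+z)^{2cp-\al-\frac43}\quad\text{on }[Z_0,\infty),
\]
so that $I_\ell\le C\tE_{2p}\le C\tE_{\le \JDN}$.

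For even $\ell=2p$, the quantity $\bD^{2p}\th$ is not the top-order term of any $\tE_{2p'}$, so we invoke a far-field Hardy inequality
\[
\int_{Z_0}^\infty f^2(1+z)^{\beta}\,\diff z\le C\Big(f^2(Z_0)(1+Z_0)^{\beta+1}+\int_{Z_0}^\infty|\bd f|^2(1+z)^{\beta+\frac23}\,\diff z\Big),\qquad \beta<-1,
\]
applied with $f=\bD^{2p}\th$ (so that $\bd f=\bD^{2p+1}\th$) and $\beta=\gamma-\frac23(j-\ell)$. This reduces $I_\ell$ to an integral of the form treated in the odd case, plus a boundary term. Matching the resulting weight on $|\bD^{2p+1}\th|^2$ against the weight of $\tE_{2p}$ now requires $c\ge\frac13$, which is guaranteed by the formula $c=\frac23+\frac{\al-1}{2}-\de$ together with $\de\le\frac1{2M}$ and $\al>1$; in particular $c>\frac12$.

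The Hardy boundary term $f^2(Z_0)(1+Z_0)^{\beta+1}$ is handled by a Sobolev-type embedding on the bounded interval $[0,Z_0]$, controlling $\|\bD^{2p}\th\|_{L^\infty(0,Z_0)}^2$ by $C(Z_0)\tE_{\le 2p}$ (the regularity $p\le [j/2]\le M$ ensures this embedding is valid). Summing over $\ell$ yields the two stated inequalities. The main technical point, and the principal obstacle, is the careful bookkeeping of weight exponents across the odd/even split: the reason the target exponent is $(j-1)c$ for odd $j$ but only $(j-2)c$ for even $j$ is precisely that the even case must absorb the $(1+z)^{\frac23}$ weight loss incurred by Hardy, a compensation that succeeds only in the indicated range $\frac13<c<\frac23$ of the parameter.
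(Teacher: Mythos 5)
Your proposal follows the same overall scheme as the paper: decompose $Q\th$ via Lemma~\ref{L:XJDECOMP} into a sum of $z^{-(j-\ell)/3}\bD^\ell\th$, and split by the parity of $\ell$. The odd-$\ell$ case is handled correctly and matches the paper's argument (the exponent comparison $(j-2p-1)(c-\tfrac23)\le 0$, resp.~its even-$j$ analogue, is exactly what the paper uses).

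The gap is in your treatment of the even-$\ell$ terms. You apply a Hardy inequality on $[Z_0,\infty)$, producing a boundary term $|\bD^{2p}\th(Z_0)|^2(1+Z_0)^{\beta+1}$, and propose to control $|\bD^{2p}\th(Z_0)|^2$ by a Sobolev embedding on $[0,Z_0]$, which you acknowledge gives a constant $C(Z_0)$. But the lemma requires the final constant to be \emph{independent} of $Z_0>z_*$, and the combination $C(Z_0)(1+Z_0)^{\beta+1}$ is not uniformly bounded in $Z_0$ without further argument — the growth of the generic Sobolev constant is uncontrolled. The paper avoids this entirely: instead of a one-step Hardy with boundary at $Z_0$, it invokes Lemma~\ref{L:LARGEZ} (estimates \eqref{E:theta} and \eqref{E:theta2}), whose proof iterates Hardy inequalities to migrate the boundary contribution from the variable point $Z_0$ down to the \emph{fixed} point $z=1$ (where $(\bp\bzeta)^{-2}$ is uniformly bounded below), or in the complementary exponent range uses decay at infinity so that no interior boundary term appears at all. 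The distinction between \eqref{E:theta} and \eqref{E:theta2} according to whether $2cj-\al\lessgtr\tfrac73$ is precisely the mechanism that makes the constant $Z_0$-uniform, and your proposal does not capture it. Your structural remark about why even $j$ loses a power $c$ (because the $\ell=j$ term carries no spare factor $(1+z)^{c-\frac23}$) is essentially right, but the proof as written does not establish the claimed $Z_0$-independence.
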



\begin{proof}
By Lemma~\ref{L:XJDECOMP}, we express $Q\th$ as a linear combination of the products of
the form $z^{-\frac{j-\ell}{3}}\bD^\ell\th$, where $0\le\ell\le j$.
Note further that
\begin{align*}
\int_{Z_0}^\infty (1+z)^{(j-1)c-\al-\frac43}z^{-\frac{2(j-\ell)}{3}}|\bD^\ell\th|^2\diff z &= \int_{Z_0}^\infty (1+z)^{(j-\ell)(c-\frac23)+c(\ell-1)-\al-\frac43}|\bD^\ell\th|^2\diff z.
\end{align*}
If $\ell=2\ell'+1$ is odd, then 
\begin{align*}
\int_{Z_0}^\infty (1+z)^{(j-\ell)(c-\frac23)+c(\ell-1)-\al-\frac43}|\bD^\ell\th|^2\diff z\leq\int_{Z_0}^\infty (1+z)^{2\ell' c-\al}\frac{|\bd\bD^{2\ell'}\th|^2}{z^{\frac43}}\diff z \le C \tilde\E_{2\ell'},
\end{align*}
where we have used $c<\frac23$ in the first inequality and the definition~\eqref{E:TILDEEDEF} of $\tE$ in the second inequality.
If however $\ell=2\ell'$ is even, then the parity of $j$ makes a difference.
Assuming that $j$ is odd we then always have $j-\ell\ge1$. In that case
\begin{align}
  \int_{Z_0}^\infty (1+z)^{(j-\ell)(c-\frac23)+c(\ell-1)-\al-\frac43}|\bD^\ell\th|^2\diff z& =\int_{Z_0}^\infty (1+z)^{(j-\ell-1)(c-\frac23)+2\ell' c-\al-2}|\bD^{2\ell'}\th|^2\diff z  \notag\\
& \leq C  \tE_{2\ell'}, \notag
\end{align}
where we have used Lemma~\ref{L:LARGEZ} in the last bound. If $j$ is even, then in the case $\ell=2\ell'$ we can prove 
by same token the weaker bound~\eqref{E:JEVEN} using Lemma~\ref{L:LARGEZ}.
\end{proof}


To facilitate nonlinear estimates, we present $L^\infty$ estimates on $(Z_0,\infty)$. It is convenient to estimate unweighted derivatives $\pa_z^k$ and to bound them by our energy built on weighted derivatives $\bD^k\theta$, recalling the identity~\eqref{EquivID} that tells us that,  on $(Z_0,\infty)$, $\pa_z^k f$ and $ \frac{ \bD^k f }{z^{\frac23 k}}$ are equivalent modulo lower order terms. Due to the failure of the critical Hardy inequality at infinity and the growing weights of the energy functionals, we carefully derive some auxiliary weights $z^{a_k}$ in the following lemma and employ them to produce  $L^\infty$ estimates of $z^{a_k}\pa_z^k\theta$ for $0\le k\le 2M$.

\begin{lemma}\label{L:weightedthetaHardy} Let $0\le k\le 2M$ be given, and 
let 
\be\label{DEF:ak}
a_k := k (\frac{c}{2}+\frac23) -\frac12-\frac{\alpha}{2} - \frac12(\frac23-c) \mathbf{1}_{k\in 2\mathbb N +1}.
\ee
Then there exists $C>0$, independent of $Z_0>z_*$, such that for any $(\theta,\phi)^\top$ satisfying~\eqref{E:STANDING} (so that  $\lim_{z\to \infty} z^{a_k} \pa_z^k\theta(z)=0$), we have
\begin{align}\label{Linfty1}
\|z^{a_k} \pa_z^{k}\theta\|_{L^\infty(Z_0,\infty)}\leq C \Big(\|z^{a_k-\frac12} \pa_z^{k}\theta\|_{L^2(Z_0,\infty)}+ \|z^{a_k+\frac12} \pa_z^{k+1}\theta\|_{L^2(Z_0,\infty)}\Big) \leq C   \tE_{\le \KUP}^\frac12 . 
\end{align}
We also obtain the more precise top order estimate, for $j=1,\ldots,M$,
\beqa\label{E:DAMPENINGTOPORDER}
{}&\big|\pa_z^{2j+1}\theta\big|\leq C\Big( z^{-\frac23(2j+1)}|\bD^{2j+1}\theta|+\sum_{k=0}^{2j}z^{k-2j-1}|\pa_z^k\theta|\Big)
\leq C \Big(z^{-\frac23(2j+1)}|\bD^{2j+1}\th| + z^{-a_{2j}-1}\tE_{\leq 2j}^\frac12\Big).
\eeqa
\end{lemma}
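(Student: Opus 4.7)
The plan is to establish \eqref{Linfty1} by a standard one-dimensional weighted Sobolev embedding, and then to reduce the resulting $L^2$ norms to $\tE_{\le \KUP}$ by using the equivalence identity \eqref{EquivID} between $\pa_z^k$ and $\bD^k$ together with the Hardy-type estimates of Lemma~\ref{L:LARGEZ}. The pointwise bound \eqref{E:DAMPENINGTOPORDER} then follows by inverting \eqref{EquivID} to express $\pa_z^{2j+1}\theta$ in terms of $\bD^{2j+1}\theta$ and lower $\pa_z^k\theta$, and applying the already-proven \eqref{Linfty1} at lower orders.

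For the first inequality in \eqref{Linfty1}, I would integrate the identity
\beqas
\pa_z(z^{2a_k}|\pa_z^k\theta|^2) = 2a_k z^{2a_k-1}|\pa_z^k\theta|^2 + 2 z^{2a_k}\pa_z^k\theta\,\pa_z^{k+1}\theta
\eeqas
from $z$ to $\infty$. The assumption $\tE_{\le 2M}[\Phi]<\infty$, combined with the pointwise control $\Pb[\Phi]\le\frac14$ and straightforward decay properties of $\bzeta$ and $\chi_{2j}$, ensures $z^{a_k}\pa_z^k\theta\to 0$ as $z\to\infty$, so no boundary term appears. A Cauchy--Schwarz on the cross term $z^{2a_k}\pa_z^k\theta\,\pa_z^{k+1}\theta \le \frac12(z^{2a_k-1}|\pa_z^k\theta|^2+z^{2a_k+1}|\pa_z^{k+1}\theta|^2)$ then produces the claimed $L^2$ upper bound.

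For the second inequality in \eqref{Linfty1}, I would invoke \eqref{EquivID} to write $\pa_z^k\theta$ as a linear combination of terms $z^{-\frac{2k}{3}-\frac{i}{3}}\bD^{k-i}\theta$ for $0\le i\le k$, and similarly for $\pa_z^{k+1}\theta$, then verify the matching of weights. For the leading contributions with $\bD^{2m+1}\theta$ of odd order, the key algebraic check is that the resulting weight differs from the weight $(1+z)^{2cm-\alpha}z^{-4/3}$ inside $\tE_{2m}$ by a factor $z^{2c-\frac43}$ at most, which decays at infinity since $c<\frac23$ by \ref{item:a1}--\ref{item:a2}; this is precisely why one must take the enlarged index $\KUP$ (rather than $\KDN$) so that all of $\tE_{0},\dots,\tE_{2[(k+1)/2]}$ are available. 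For contributions involving $\bD^{2m}\theta$ of even order, I would absorb them through Lemma~\ref{L:LARGEZ}, using $\bD^{2m+1}\theta = \bd\bD^{2m}\theta = z^{\frac23}\pa_z \bD^{2m}\theta + \frac23 z^{-\frac13}\bD^{2m}\theta$ to convert back to quantities present in $\tE_{2m}$.

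Finally, for \eqref{E:DAMPENINGTOPORDER}, the first inequality is purely algebraic: iterating \eqref{EquivID} gives
\beqas
\pa_z^{2j+1}\theta = z^{-\frac{2(2j+1)}{3}}\bD^{2j+1}\theta + \sum_{k=0}^{2j} c_k\,z^{k-2j-1}\pa_z^k\theta,
\eeqas
which yields the stated pointwise bound. The second inequality follows by applying \eqref{Linfty1} at order $k\le 2j$ to conclude $|\pa_z^k\theta|\le C\, z^{-a_k}\tE_{\le 2j}^{\frac12}$ (noting $\KUP\le 2j$ in that range), and then checking that $z^{k-2j-1-a_k}\le C\, z^{-a_{2j}-1}$ for $z\ge Z_0$, i.e.\ $a_{2j}-a_k\le 2j-k$. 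A brief case analysis on the parity of $k$, using the explicit formula \eqref{DEF:ak}, reduces this inequality in every case to $c\le \frac23$, which holds by \ref{item:a2}.

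The main obstacle in this plan is the weight matching at even-order $\bD$-derivatives, where $\bD^{2m}\theta$ does not appear directly in the energy functional $\tE$; resolving it requires careful use of Lemma~\ref{L:LARGEZ} to transfer to $\bD^{2m+1}\theta$, and the verification that the resulting weight is compatible with assumptions \ref{item:a1}--\ref{item:a2} so that the Hardy-type inequality indeed applies in the relevant range of indices. Once this algebraic bookkeeping is done, the rest of the argument is a routine weighted embedding.
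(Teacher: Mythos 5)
Your proposal is correct and matches the paper's proof in all essentials: the same fundamental-theorem-of-calculus argument with Cauchy--Schwarz for the first inequality, the same use of the representation \eqref{EquivID} together with the Hardy-type estimates of Lemma~\ref{L:LARGEZ} to absorb the even-order $\bD$ contributions into $\tE_{\le \KUP}$, and the same reduction of \eqref{E:DAMPENINGTOPORDER} via the inequality $a_{2j}-a_k\le 2j-k$ (which is \eqref{E:AI1} in the paper). The only minor imprecision is that the odd-$k$ case of $a_{2j}-a_k\le 2j-k$ requires both $c\le\frac23$ and $2j-k\ge1$, not just $c\le\frac23$, but this is harmless since $k\le 2j$ and $k$ odd force $2j-k\ge1$.
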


\begin{proof} For any $Z\in(Z_0,\infty)$, the first inequality follows from 
\[
\begin{split}
-( Z^{a_k} \pa_z^{k}\theta(Z) )^2 &= 2 \int_Z^\infty (z^{a_k} \pa_z^{k}\theta) ( z^{a_k} \pa_z^{k+1}\theta+ a_k z^{a_k-1} \pa_z^{k}\theta)\dif z\\
&= 2 \int_Z^\infty z^{2a_k} \pa_z^{k}\theta  \pa_z^{k+1}\theta dz+2a_k \int  z^{2a_k-1} (\pa_z^{k}\theta)^2\dif z.
\end{split}
\]
To show the energy bound, we consider the cases $k$ is even or $k$ is odd separately and in each case we bound the terms independently. As the arguments are similar for each case, we show only the estimate for $\|z^{a_k+\frac12} \pa_z^{k+1}\theta\|_{L^2(Z,\infty)}^2$ in the case $k$ is even.   Let $k=2j$ so that $a_k = 2j (\frac{c}{2}+\frac23) -\frac12-\frac{\alpha}{2}$. Then using \eqref{EquivID} and recalling $c<\frac23$,
\begin{align*}
&\|z^{a_k+\frac12} \pa_z^{k+1}\theta\|_{L^2(Z,\infty)}^2 = \int_Z^\infty z^{2j (c+\frac43) -\alpha} |  \pa_z^{2j+1}\theta |^2  dz \leq C   \sum_{i=0}^{2j+1} \int_Z^\infty z^{2jc -\alpha-\frac43-\frac{2i}{3} }(\bD^{2j+1-i} \theta)^2 dz   \\
&\leq C \sum_{\ell=0}^j \int_Z^\infty z^{2jc-\al-\frac43-\frac43(j-\ell)}|\bD^{2\ell+1}\th|^2dz + C\sum_{\ell=0}^j \int_Z^\infty z^{2jc-\al-\frac43-\frac23(2(j-\ell)+1)}|\bD^{2\ell}\th|^2dz\\
&\leq C  \sum_{\ell =0}^j \int_Z^\infty z^{2c\ell  -\alpha -\frac43 } (\bd \bD^{2\ell}\theta )^2 dz +
 C\sum_{\ell =0}^{j }  
\int_Z^\infty z^{2c \ell-\alpha -2} |\bD^{2\ell}\theta|^2 dz \\
&\leq C \sum_{\ell=0}^j \int_0^\infty (1+z)^{2c \ell-\alpha } \frac{ (\bd\bD^{2\ell}\theta)^2}{(\bp\bzeta)^2} dz \leq C \tE_{2j},
\end{align*}
where, in the last inequality, we have bounded the second summation by applying~\eqref{E:theta2} when $2c\ell-\al>\frac73$ and applying~\eqref{E:theta} when $2c\ell-\al<\frac73$.

The remaining estimates follow similarly. This concludes the proof of~\eqref{Linfty1}.

To show~\eqref{E:DAMPENINGTOPORDER}, we apply Lemma \ref{L:XJDECOMP},~\eqref{Linfty1}, and~\eqref{E:AI1}, to estimate 
\beqas
{}&\big|\pa_z^{2j+1}\theta\big|\leq C z^{-\frac23(2j+1)}|\bD^{2j+1}\theta|+C\sum_{k=0}^{2j}z^{k-2j-1}|\pa_z^k\theta|\\
&\leq C z^{-\frac23(2j+1)}|\bD^{2j+1}\theta|+C\sum_{k=0}^{2j}z^{k-2j-1-a_k}\tE_{\leq 2j}^{\frac12}
\leq C z^{-\frac23(2j+1)}|\bD^{2j+1}\th| + Cz^{-a_{2j}-1}\tE_{\leq 2j}^\frac12.
\eeqas
\end{proof}

\begin{remark}\label{R:DELTA} We observe that $a_1$ in \eqref{Linfty1} is 
\be a_1= c-\frac23- \frac{\alpha-1}{2}  = -\delta ,
\ee
where $\delta>0$ is given in~\eqref{def:delta}.
\end{remark}
We note the following simple inequalities for the exponents $a_i$:
\begin{align}
&a_{i}-(i-j)\leq a_{j}, && \text{ for }j\leq i\leq 2M,\label{E:AI1}\\
&\ell-\frac{2j}{3}-1-a_{\ell+1}\leq -cj+\de-1, &&\text{ for }\ell\leq \min\{2j,2M-1\}.\label{E:AI2}
\end{align}

\begin{lemma}\label{L:THETAZNONLINEAREST}
Let $f\in C^{2j}([-\frac12,\frac12])$, $j\leq M$, and assume that~\eqref{E:STANDING} holds. 
Then, for any $k=1,\ldots,2j$, there exists a constant $C>0$, independent of $Z_0>z_*$, such that for all $z\geq Z_0$,
\beqa\label{E:THETAZNONLINEAREST}
\Big|\pa_z^k f\Big(\frac{\theta_z}{\bzeta_z}\Big)\Big|\leq C\begin{cases}
z^{-a_{k+1}}  \tE^{\frac12}_{\le 2[\frac{k+2}{2}] }
+z^{-(\frac{c}{2}+\frac23-\de)k} \tE_{\le 2[\frac{k+1}{2}] },
& 1\leq k\leq 2j-1,\\
z^{-\frac23(2j+1)}|\bD^{2j+1}\theta|+z^{-a_{2j}-1}\tE_{\leq 2j}^{\frac12}+z^{-(\frac{c}{2}+\frac23-\de)2j}\tE_{\leq 2j}, & k=2j,
\end{cases}
\eeqa
and
\be\label{E:THETAZNONLINEAREST2}
\Big|\pa_z^k f\Big(\frac{\theta}{\bzeta}\Big)\Big| \leq C\Big(\tE_{\leq 2[\frac{k+1}{2}]}^{\frac12} z^{-a_k -1}  +  \tE_{\leq 2[\frac{k}{2}]} z^{-(\frac{c}{2} +\frac23-\delta)k }\Big).
\ee 
If, in addition, $f'(0)=0$, then we obtain the improved estimates
\begin{align}
\Big|\pa_z^k f\Big(\frac{\theta_z}{\bzeta_z}\Big)\Big|\leq&\, Cz^{-a_1-a_{k+1}}\tE_{\le 2[\frac{k+2}{2}] }+Cz^{-(\frac{c}{2}+\frac23-\de)\ell}\tE_{\le 2[\frac{k+1}{2}] }\notag\\
&+C\Big(z^{-a_{2j}-a_1-1}\tE_{\leq 2j}+z^{-a_1-\frac23(2j+1)}\tE_2^{\frac12}|\bD^{2j+1}\theta|\Big)\mathbf{1}_{k=2j},\label{E:THETAZNONLINEAREST3}\\
\Big|\pa_z^k f\Big(\frac{\theta}{\bzeta}\Big)\Big| \leq &\,C\tE_{\leq 2[\frac{k+1}{2}]} z^{-a_0-a_k -1}  + C \tE_{\leq 2[\frac{k}{2}]} z^{-(\frac{c}{2} +\frac23-\delta)k } \label{E:THETAZNONLINEAREST4}.
\end{align}
\end{lemma}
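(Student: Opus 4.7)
The basic strategy is to apply the Faà di Bruno formula to expand $\pa_z^k(f \circ g)$ for $g=\theta_z/\bzeta_z$ or $g=\theta/\bzeta$, and then estimate each resulting term using the weighted $L^\infty$ bounds of Lemma~\ref{L:weightedthetaHardy}, placing at most one factor at top order. Under the standing assumption~\eqref{E:STANDING}, the pointwise control $\Pb[\Phi]\leq \tfrac14$ combined with a standard Hardy argument yields $|\theta/\bzeta|, |\theta_z/\bzeta_z|\leq \tfrac12$, so $f^{(n)}(g(z))$ is uniformly bounded for all $n\leq 2j$ by the $C^{2j}$ hypothesis on $f$. Thus the task reduces to estimating the Bell-polynomial factors $B_{k,n}(g',\ldots,g^{(k-n+1)})$ in weighted $L^\infty$.

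First I would treat the case $k<2j$, where every derivative of $\theta$ appearing in the expansion has order at most $k+1\leq 2j-1$ and so may be controlled in $L^\infty$ by~\eqref{Linfty1}. A Leibniz expansion of $\pa_z^\ell g=\pa_z^\ell(\theta_z\cdot\bzeta_z^{-1})$, together with the smoothness of $\bzeta$ and the far-field bounds $|\pa_z^j(\bzeta_z^{-1})|\leq C(1+z)^{-j-\varepsilon}$ extracted from Lemma~\ref{L:ZETABAR}, gives
\[
|\pa_z^\ell g(z)|\leq C z^{-a_{\ell+1}}\tE_{\leq 2[\frac{\ell+2}{2}]}^{1/2}
\]
after using the monotonicity~\eqref{E:AI1} of the sequence $(a_i)$ to absorb lower-order contributions. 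The $n=1$ term of the Faà di Bruno expansion, namely $f'(g)\,\pa_z^k g$, then produces the first summand $z^{-a_{k+1}}\tE_{\leq 2[\frac{k+2}{2}]}^{1/2}$ on the right of~\eqref{E:THETAZNONLINEAREST}. Each $n\geq 2$ term is a product of $n\geq 2$ factors $\pa_z^{\ell_i}g$ with $\sum\ell_i=k-n$ and $\ell_i\geq 0$; summing the exponents $-a_{\ell_i+1}$ via~\eqref{E:AI2} collapses them to the weight $-(\tfrac{c}{2}+\tfrac{2}{3}-\delta)k$ and each factor contributes $\tE^{1/2}$, giving the quadratic-in-$\tE^{1/2}$ term. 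The bound~\eqref{E:THETAZNONLINEAREST2} on $\pa_z^k f(\theta/\bzeta)$ is proved identically, replacing Lemma~\ref{L:weightedthetaHardy} applied to $\theta_z$ with the corresponding Hardy bound $|\theta/\bzeta|\leq Cz^{-a_0}\tE^{1/2}$ on $\theta$ itself (the index shift $a_k\to a_k$ in the exponent reflects that we gain one fewer derivative).

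For the top-order case $k=2j$ in~\eqref{E:THETAZNONLINEAREST}, the Bell polynomial $B_{2j,1}$ contains the single derivative $\pa_z^{2j+1}\theta$, which cannot be placed in $L^\infty$ at this regularity. Here I would invoke the sharper inequality~\eqref{E:DAMPENINGTOPORDER}, trading the untamed derivative for either $z^{-\frac23(2j+1)}|\bD^{2j+1}\theta|$ (the first term in the $k=2j$ case) or a lower-order $L^\infty$-controlled remainder $z^{-a_{2j}-1}\tE_{\leq 2j}^{1/2}$; for all other $n\geq 2$ contributions the Bell product has each factor of order $\leq 2j-1$ and is treated exactly as before. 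For the improved estimates~\eqref{E:THETAZNONLINEAREST3}--\eqref{E:THETAZNONLINEAREST4} under $f'(0)=0$, I would use Taylor's theorem to write $f'(x)=xh(x)$ with $h$ smooth, so that $|f'(g)|\leq C|g|$. This extra factor of $|g|$ enters only the $n=1$ Faà di Bruno term (since for $n\geq 2$ the product of Bell factors already contains at least two copies of $g$-derivatives and hence is already quadratic), and contributes $z^{-a_1}\tE_{\leq 2}^{1/2}$ or $z^{-a_0}\tE_{\leq 0}^{1/2}$ respectively; multiplying into the previous bounds yields the advertised exponents $-a_1-a_{k+1}$ or $-a_0-a_k-1$.

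The main obstacle is bookkeeping: verifying that the weighted $L^\infty$ exponents $-a_{\ell_i+1}$ arising from each factor $\pa_z^{\ell_i}g$ aggregate under the constraint $\sum(\ell_i+1)=k+n$ to precisely reproduce the stated power $-(\tfrac{c}{2}+\tfrac{2}{3}-\delta)k$ when $n\geq 2$, and to the stated exponent $-a_{k+1}$ when $n=1$. This requires using the definition~\eqref{DEF:ak} together with both~\eqref{E:AI1} and~\eqref{E:AI2} carefully, splitting cases according to parity of the individual $\ell_i+1$ (since $a_{2p+1}$ and $a_{2p}$ differ by $\tfrac12(\tfrac23-c)$). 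The case $k=2j$ requires additional attention because a naive attempt to place $\pa_z^{2j+1}\theta$ in $L^\infty$ would require the next energy $\tE_{2(j+1)}$ which is not available, so~\eqref{E:DAMPENINGTOPORDER} is essential.
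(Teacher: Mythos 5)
Your proposal follows essentially the same approach as the paper's proof: both reduce to estimating $\pa_z^\ell g$ (for $g=\theta_z/\bzeta_z$ or $\theta/\bzeta$) in weighted $L^\infty$ using~\eqref{Linfty1} and~\eqref{E:AI1}, then apply Faà di Bruno, isolating the linear ($n=1$) term $f'(g)\pa_z^k g$ from the quadratic-and-higher terms which carry the $-(\tfrac c2+\tfrac23-\delta)k$ exponent via~\eqref{E:AI2}, and invoke~\eqref{E:DAMPENINGTOPORDER} at the critical order $k=2j$ to avoid an untamed $\pa_z^{2j+1}\theta$. The treatment of the $f'(0)=0$ improvement is also identical in substance (the extra factor $|g|$ only touches the $n=1$ term, since $n\geq 2$ terms are already quadratic). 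The only cosmetic deviation is your use of $|\pa_z^j(\bzeta_z^{-1})|\leq C(1+z)^{-j-\varepsilon}$, which is weaker than the paper's $O(z^{-2-j})$ (valid for $j\geq 1$ from Lemma~\ref{L:ZETABAR}), but still sufficient since~\eqref{E:AI1} already gives $a_{i+1}-a_{j+1}\leq i-j$ with slack.
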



\begin{proof}
We begin by proving~\eqref{E:THETAZNONLINEAREST}. To compress notation, we set $x(z)=\frac{\theta_z}{\bzeta_z}$.
It is straightforward to check that
 $|f^{(i)}(x(z))|\leq C$ for each $i\in \{1,\ldots,2j\}$ using the pointwise estimate~\eqref{E:APRIORIMAINPT} to see $|x(z)|\leq \frac12$. In order to estimate the derivatives of $x(z)$ , we first expand 
\be
\pa_z^i x(z) = \frac{\pa_z^{i+1}\th}{\pa_z\bzeta}+\sum_{j=0}^{i-1}{i \choose j}\pa_z^{i-j}(\bzeta_z^{-1})\pa_z^{j+1}\theta.
\ee
Now from Lemma~\ref{L:ZETABAR}, we note that $\pa_z^{2+\ell}\bzeta = O(z^{-3-\ell})$ for $\ell\geq0$, so that, in particular, $\pa_z^\ell(\bzeta_z^{-1})=O(z^{-2-\ell})$. Thus, applying~\eqref{Linfty1} and~\eqref{E:AI1},
\be\label{Bound x^i 0}
| z^{a_{i+ 1}} \pa_z^i x|\leq C\Big( \frac{z^{a_{i+ 1}}|\pa_z^{i+1}\th|}{|\pa_z\bzeta|}+\sum_{j=0}^{i-1}z^{a_{i+1}-2-(i-j)}|\pa_z^{j+1}\theta|\Big) \leq C  \tE^{\frac12}_{\le 2[\frac{i+2}{2}]},
\quad 1\le i\le  2j-1 .
\ee
In the case $i=2j$,  we apply~\eqref{E:DAMPENINGTOPORDER} and~\eqref{E:AI1},
\begin{align}
|  \pa_z^{2j} x|\leq &\, C\Big(\frac{|\pa_z^{2j+1}\th|}{|\pa_z\bzeta|}+\sum_{j=0}^{2j-1}z^{-2-(i-j)}|\pa_z^{j+1}\theta|\Big)\leq C\Big(  \frac{|\pa_z^{2j+1}\th|}{|\pa_z\bzeta|}+\sum_{j=0}^{2j-1}z^{-a_{j+1}-2-(2j-j)}\tE_{\leq 2j}^{\frac12}\Big)\notag\\
  \leq &\, C\Big( z^{-\frac23(2j+1)}|\bD^{2j+1}\th| + z^{-a_{2j}-1}\tE_{\leq 2j}^\frac12\Big).\label{E:D2MX}
\end{align}
We now use these bounds to estimate  $\pa_z^\ell f(x(z))$. By the Faa di Bruno formula,
\be\label{f^(j) 0}
 \pa_z^\ell f (x(z)) = f'(x) \pa_z^\ell x +  \sum_{\substack{(\lambda_1, \dots, \lambda_\ell)\in M_\ell \\ \lambda_\ell  =0}} \frac{\ell !}{\lambda_1 ! \cdots \lambda_{\ell} !} f^{(\lambda_1+ \cdots+ \lambda_{\ell})} \prod_{i=1}^{\ell} (\frac{\pa_z^i x }{i !})^{\lambda_i},
\ee
where 
\[
M_\ell = \{(\lambda_1, \dots, \lambda_\ell)\in (\mathbb Z_{\ge 0})^\ell : \sum_{i=1}^\ell i \lambda_i = \ell \}.
\]
From the definitions, it is clear that if $\ell=1$, then the summation on the right of~\eqref{f^(j) 0} is empty, while, for $\ell\geq 2$,
for $(\lambda_1, \dots, \lambda_\ell)\in M_\ell$ with $\lambda_\ell=0$, we have $2 \le \sum_{i=1}^{\ell} \lambda_i\le \sum_{i=1}^{\ell} i \lambda_i = \ell $.
By using \eqref{Bound x^i 0} (note $i \le \ell-1\le  2j-1$), the second (summation) term in \eqref{f^(j) 0} is then bounded (recalling the a priori assumption~\eqref{E:APRIORIMAIN}) by 
\beqa\label{E:FDBDAMPENING}
&\left|   \sum_{\substack{(\lambda_1, \dots, \lambda_\ell)\in M_\ell \\ \lambda_\ell  =0}} \frac{\ell !}{\lambda_1 ! \cdots \lambda_{\ell} !} f^{(\lambda_1+ \cdots+ \lambda_{\ell})} \prod_{i=1}^{\ell} (\frac{\pa_z^i x }{i !})^{\lambda_i} \right| \leq C
\tE_{\leq 2[\frac{\ell+1}{2}]} z^{-\sum_{i=1}^\ell \lambda_i (a_{i+1})} \leq C \tE_{\leq 2[\frac{\ell+1}{2}]}
z^{-(\frac{c}{2} +\frac23)\ell + \ell \delta},
\eeqa
where we recall $\de= \frac{2}{3}- c + \frac{\alpha-1}{2}$ from~\eqref{def:delta}.

Estimating the first term in~\eqref{f^(j) 0}, we use~\eqref{Bound x^i 0} and~\eqref{E:D2MX} to see
\be\label{E:FPRIMEXEST}
|f'(x) \pa_z^\ell x| \leq C\begin{cases}   \tE_{\leq 2[\frac{\ell+2}{2}]}^\frac12 z^{-a_{\ell +1}}, & \ell\le 2j-1,\\
z^{-\frac23(2j+1)}|\bD^{2j+1}\th| + z^{-a_{2j}-1}\tE_{\leq 2j}^\frac12, & \ell=2j.
\end{cases}
\ee
Summing~\eqref{E:FPRIMEXEST} with~\eqref{E:FDBDAMPENING} yields~\eqref{E:THETAZNONLINEAREST}.

In the case that also $f'(0)=0$, then clearly $|f'(x(z))|\leq C|\theta_z|\leq Cz^{-a_1}\tE_2^{\frac12}$, so that~\eqref{E:FPRIMEXEST} improves to yield
\[
|f'(x) \pa_z^\ell x| \leq C\begin{cases} \tE_{\leq 2[\frac{\ell+2}{2}]} z^{-a_1-a_{\ell +1}}, & \ell\le 2j-1,\\
z^{-a_1-\frac23(2j+1)}|\bD^{2j+1}\th|\tE_2^{\frac12} + z^{-a_1-a_{2j}-1}\tE_{\leq 2j}, & \ell=2j,
\end{cases}
\]
which in turn proves~\eqref{E:THETAZNONLINEAREST3}.

Finally, to prove~\eqref{E:THETAZNONLINEAREST2}, we make a very similar argument, now defining $\tilde x(z)=\frac{\th}{\bzeta}$. An argument as above, as in the derivation of~\eqref{Bound x^i 0}, using Lemma~\ref{L:weightedthetaHardy}, yields
\be\label{Bound x^i}
| z^{a_i +1} \pa_z^i \tilde x|\leq C \tE_{\leq 2[\frac{i+1}{2}]}^\frac12, \quad 1\le i\le 2j. 
\ee
Again applying the Faa di Bruno formula, we arrive at
\be
|\pa_z^\ell f(\tilde x(z))| \leq C\Big(\tE_{\leq 2[\frac{\ell+1}{2}]}^{\frac12} z^{-a_\ell -1}  +  \tE_{2[\frac{\ell}{2}]} z^{-(\frac{c}{2} +\frac23-\delta)\ell }  \Big).
\ee 
\end{proof}


\subsubsection{$L^2$- and $L^\infty$-type estimates for $\kappa$ weight}

\begin{lemma}\label{L:KAPPAQBOUND}
Let $Q\in \mathcal X_j$, $2\le j \le 2M+1$, where we recall the definitions~\eqref{E:XTWOJ}--\eqref{E:XTWOJPLUSONE}. Then, under the assumption~\eqref{E:STANDING}, 
there exists a constant $C>0$, independent of $Z_0>z_*$, such that
\begin{align}
\kappa \int_{Z_0}^\infty |Q\th|^2 z^{-2} \diff z \le C \tE_{\leq \JDN}.  
\end{align}
 Also, if $Q\in \X_1$, 
\begin{align}
\kappa \int_{Z_0}^\infty |Q\th|^2 z^{-2-\frac23} \diff z \le C  \tE_{0}.
\end{align}
\end{lemma}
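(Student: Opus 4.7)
The proof will follow closely the template of Lemma~\ref{L:QBOUNDS}, with the essential modification that the $\kappa$-piece of the weight $\chi_{2k}$ replaces the growing piece. The plan is first to reduce everything via Lemma~\ref{L:XJDECOMP}: for $Q\in\mathcal{X}_j$ we write
\[
Q\th \;=\; \sum_{\ell=0}^{j} c_{\ell}^{Q}\, z^{-\frac{j-\ell}{3}}\bD^{\ell}\th,
\]
so that
\[
\kappa\!\int_{Z_{0}}^{\infty}\!|Q\th|^{2}z^{-2}\,\dif z \;\le\; C\sum_{\ell=0}^{j}\kappa\!\int_{Z_{0}}^{\infty}\!z^{-\frac{2(j-\ell)}{3}-2}|\bD^{\ell}\th|^{2}\,\dif z,
\]
and the task reduces to bounding each individual summand by $C\tE_{\le\JDN}$. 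One separates the cases of odd and even $\ell$.

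For odd $\ell=2\ell'+1\le j$ the idea is to isolate the natural $z^{-\frac{4}{3}}$ weight present in the energy. Writing $-\frac{2(j-\ell)}{3}-2=-\frac{2(j-\ell)}{3}-\frac{2}{3}-\frac{4}{3}$ and noting $j-\ell\ge 0$, the factor $z^{-\frac{2(j-\ell)}{3}-\frac{2}{3}}$ is bounded by $Z_{0}^{-\frac{2(j-\ell)}{3}-\frac{2}{3}}\le z_{*}^{-\frac{2(j-\ell)}{3}-\frac{2}{3}}\le C$ uniformly in $Z_{0}\ge z_{*}$. Using $\chi_{2\ell'}\ge C\kappa g_{2\ell'}\ge c\kappa$ on $[Z_{0},\infty)$, together with $(\bp\zeta)^{2}\sim z^{\frac{4}{3}}$ in the far field (a consequence of the pointwise a~priori bound $\Pb[\Phi]\le \tfrac14$), the summand is bounded by $C\kappa\int_{Z_{0}}^{\infty}z^{-\frac{4}{3}}(\bD^{2\ell'+1}\th)^{2}\,\dif z \le C\tE_{2\ell'}$. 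The parity bookkeeping $\ell=2\ell'+1\le j$ gives $2\ell'\le\JDN$ in both parities of $j$, so this contribution is indeed controlled by $\tE_{\le\JDN}$.

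For even $\ell=2\ell'\le j$, the energy only controls the next (odd) derivative $\bD^{2\ell'+1}\th=\bd\bD^{2\ell'}\th$, so a weighted Hardy inequality must be invoked (Lemma~\ref{L:LARGEZ}, or equivalently the identities~\eqref{E:theta}--\eqref{E:theta2}). Setting $F(z):=z^{2/3}\bD^{2\ell'}\th$, one has $\pa_{z}F=\bD^{2\ell'+1}\th$, and the exponent $a:=\tfrac{2(j-2\ell')}{3}+2\ge 2>1$ makes the weighted Hardy inequality on $(Z_{0},\infty)$ applicable in the form
\[
\int_{Z_{0}}^{\infty}\!z^{-a}(\bD^{2\ell'}\th)^{2}\dif z \;\le\; C_{a}\Big(Z_{0}^{1-a}|\bD^{2\ell'}\th(Z_{0})|^{2}+\int_{Z_{0}}^{\infty}z^{-a+\frac{2}{3}}(\bD^{2\ell'+1}\th)^{2}\dif z\Big).
\]
Extracting the factor $Z_{0}^{-\frac{2(j-2\ell')}{3}}\le z_{*}^{-\frac{2(j-2\ell')}{3}}$ as above, the bulk term is bounded by $C\tE_{2\ell'}\le C\tE_{\le\JDN}$ (with $2\ell'\le\JDN$ in all parities). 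The boundary term $\kappa Z_{0}^{1-a}|\bD^{2\ell'}\th(Z_{0})|^{2}$ is controlled by the same energy via the standard trace/Sobolev estimate on a far-field dyadic window $[Z_{0},2Z_{0}]$, the trace constant being compensated by the negative power of $Z_{0}$.

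The second statement, for $Q\in\mathcal{X}_{1}$, is handled similarly but exploits condition~\ref{item:a2}, which forces $\alpha\le\tfrac{4}{3}$. Writing $|Q\th|^{2}\le C((\bd\th)^{2}+z^{-\frac{2}{3}}\th^{2})$, the first contribution gives $\kappa z^{-\frac{8}{3}}(\bd\th)^{2}\le C\kappa(1+z)^{-\alpha}(\bd\th)^{2}/(\bp\zeta)^{2}$, directly controlled by $\tE_{0}$; the second is bounded by the same Hardy argument as in Step~3 applied with $\ell'=0$, converting $\int z^{-\frac{10}{3}}\th^{2}$ into $\int z^{-\frac{8}{3}}(\bd\th)^{2}$ plus a boundary piece at $Z_{0}$, all absorbed in $\tE_{0}$. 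The main delicate point in the whole argument is ensuring uniform-in-$Z_{0}$ control of boundary terms and verifying the parity-counting inequality $2\ell'\le\JDN$ in both Step~2 and Step~3; the growing weight restrictions $\alpha\le\tfrac{4}{3}$ and the monotonicity of $g_{2k}$ make the far-field comparisons $\chi_{2k}\ge c\kappa$ and $(\bp\zeta)^{2}\sim z^{\frac{4}{3}}$ immediate.
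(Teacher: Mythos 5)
Your overall structure (decompose $Q\th$ via Lemma~\ref{L:XJDECOMP}, split by the parity of $\ell$, use a weighted Hardy inequality for even $\ell$) matches the paper's, which simply cites Lemma~\ref{L:QBOUNDS} and the estimate~\eqref{theta_bound} with $\al=0$. But there is a genuine gap in your treatment of the boundary term produced by the Hardy inequality in the even-$\ell$ case.

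You apply~\eqref{E:HARDYREFINED} on $(Z_0,\infty)$ and obtain the boundary contribution $\kappa Z_0^{1-a}|\bD^{2\ell'}\th(Z_0)|^2$, which you propose to absorb via a ``standard trace/Sobolev estimate on a far-field dyadic window $[Z_0,2Z_0]$, the trace constant being compensated by the negative power of $Z_0$.'' This does not close. A 1D trace estimate on $[Z_0,2Z_0]$ gives
$|\bD^{2\ell'}\th(Z_0)|^2 \lesssim Z_0^{-1}\int_{Z_0}^{2Z_0}|\bD^{2\ell'}\th|^2\,\dif z + Z_0\int_{Z_0}^{2Z_0}|\pa_z\bD^{2\ell'}\th|^2\,\dif z$. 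Multiplying by $\kappa Z_0^{1-a}$ and using $z\sim Z_0$ on the window, the first piece is comparable to $\kappa\int_{Z_0}^{2Z_0}z^{-a}|\bD^{2\ell'}\th|^2\,\dif z$, i.e.\ a piece of the very quantity you are trying to estimate, with no small prefactor. The power of $Z_0$ cancels exactly and the argument is circular; iterating over dyadic annuli does not help since there is no geometric gain. The point of~\eqref{theta_bound} (and of the analogous computation in Lemma~\ref{L:KAPPAQINFTY}) is that the boundary value at $Z_0$ must be controlled by integrating on $[0,Z_0]$, exploiting that $\bD^{2\ell'}\th\in\DZodd$ vanishes at $z=0$: one applies~\eqref{E:HARDYREFINED} with $\tilde Z=1$, $Z=Z_0$, $\beta=-\tfrac43$, and then again with $\tilde Z=0$, $Z=1$, $\beta=0$, which bounds $Z_0^{-1}|\bD^{2\ell'}\th(Z_0)|^2$ by $\int_0^{Z_0}|\bD^{2\ell'+1}\th|^2(\bp\bzeta)^{-2}\,\dif z$ and hence by $\tE_{2\ell'}/\kappa$. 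This is precisely what~\eqref{theta_bound} packages. The same gap affects your argument for the second statement ($Q\in\X_1$), since you reuse Step~3 there.

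There is also a smaller slip in your odd-$\ell$ step: the inequality ``$\chi_{2\ell'}\ge C\kappa g_{2\ell'}\ge c\kappa$ on $[Z_0,\infty)$'' is false when $\ell'=0$, since $\chi_0=\kappa(1+z)^{-\al}$ decays. However, when $\ell=1$ and $j\ge 2$ the prefactor in your decomposition carries an extra $z^{-2(j-1)/3}\le z^{-2/3}$ decay, and since $\al<2c<\tfrac43$ this extra decay more than compensates for the missing $(1+z)^{-\al}$ in $\chi_0$; so this is fixable, but as stated your inequality is wrong.
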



\begin{proof}
The proof is analogous to the proof of Lemma~\ref{L:QBOUNDS} and relies on~\eqref{theta_bound} with $\al=0$. When $j$ is even, the bound can be
improved by replacing $z^{-2}$ by $z^{-\frac43}$.
\end{proof}


\begin{lemma}\label{L:KAPPAQINFTY}
Let $Q\in \mathcal X_j$, $2\le j \le 2M$, where we recall the definitions~\eqref{E:XTWOJ}--\eqref{E:XTWOJPLUSONE}. Then, under the assumption~\eqref{E:STANDING},  
there exists a constant $C>0$, independent of $Z>z_*$, such that
\beq
\kappa Z^{-1}|Q\theta(Z)|^2\leq C  \tE_{\leq \JUP}. 
\eeq
For $Q\in\X_1$, we have
\beq
\kappa Z^{-1-\frac23}|Q\theta(Z)|^2\leq C  \tE_{\leq 2} .
\eeq
\end{lemma}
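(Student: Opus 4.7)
The plan is to combine the fundamental theorem of calculus with a carefully tuned Young's inequality, reducing the desired pointwise bound to weighted $L^2$ integrals already controlled by Lemma~\ref{L:KAPPAQBOUND}. I treat the cases $Q\in\X_j$ with $j\geq 2$ and $Q\in\X_1$ in parallel, the latter being completely analogous with a slightly heavier weight.

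For $Q\in\X_j$ with $j\geq 2$, the standing assumption $\Phi\in\mathfrak H$ together with Lemma~\ref{L:weightedthetaHardy} (via a density argument in $\mathcal D^{\textup{odd}}_\infty$) justifies $z^{-1}|Q\theta(z)|^2\to 0$ as $z\to\infty$, so that integrating the identity $\pa_z(z^{-1}|Q\theta|^2) = -z^{-2}|Q\theta|^2 + 2z^{-1} Q\theta\,\pa_z Q\theta$ from $Z$ to $\infty$ yields
\beqas
Z^{-1}|Q\theta(Z)|^2 = \int_Z^\infty z^{-2}|Q\theta|^2\,\dif z - 2\int_Z^\infty z^{-1}Q\theta\,\pa_z Q\theta\,\dif z.
\eeqas
Rewriting $\pa_z Q\theta = z^{-2/3}\bp Q\theta$ and applying Young's inequality with the choice $a = z^{-2/3}|Q\theta|$, $b = z^{-1}|\bp Q\theta|$ (so that $2ab = 2z^{-1}|Q\theta||\pa_z Q\theta|$) produces the key estimate
\beqas
2 z^{-1}|Q\theta||\pa_z Q\theta| \leq z^{-4/3}|Q\theta|^2 + z^{-2}|\bp Q\theta|^2.
\eeqas
Multiplying by $\kappa$ I arrive at
\beqas
\kappa Z^{-1}|Q\theta(Z)|^2 \leq \kappa\int_Z^\infty z^{-2}|Q\theta|^2 + \kappa\int_Z^\infty z^{-4/3}|Q\theta|^2 + \kappa\int_Z^\infty z^{-2}|\bp Q\theta|^2.
\eeqas

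The first and third integrals are controlled directly by Lemma~\ref{L:KAPPAQBOUND}: the first by $C\tE_{\leq\JDN}\leq C\tE_{\leq\JUP}$, and the third by observing that $\bp Q$ is a linear combination of operators in $\X_{j+1}$ (as noted in the discussion of $\bp Q$ in the proof of Lemma~\ref{L:L2COMPACT}), so that with $j+1\geq 3$ the lemma gives $C\tE_{\leq 2[(j+1)/2]} = C\tE_{\leq\JUP}$. The middle term is where the parity of $j$ matters: for $j$ even, the improvement noted in the proof of Lemma~\ref{L:KAPPAQBOUND} (wherein $z^{-2}$ is replaced by $z^{-4/3}$) immediately gives $C\tE_{\leq\JDN}$. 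For $j$ odd, I decompose $Q\theta = \sum_{\ell=0}^j c_\ell^Q z^{-(j-\ell)/3}\bD^\ell\theta$ via Lemma~\ref{L:XJDECOMP}; the leading term $\ell=j$ produces $z^{-4/3}|\bD^j\theta|^2$, which is bounded by $\tE_{j-1}$ because, for $j$ odd, $\bD^j\theta$ is the top-order $\theta$-derivative appearing in $\tE_{j-1}$ and the weight $z^{-4/3}$ matches the asymptotic behaviour $\chi_{j-1}/(\bp\zeta)^2 \sim \kappa g_{j-1}(1+z)^{-4/3}$ in the far field; each lower-order contribution $\ell\leq j-1$ carries the extra factor $z^{-2(j-\ell)/3}$ so that $z^{-4/3-2(j-\ell)/3}\leq z^{-2}$, and is thus subsumed into Lemma~\ref{L:KAPPAQBOUND}.

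The case $Q\in\X_1$ is handled by the same strategy with the heavier weight $z^{-5/3}$: starting from $\pa_z(z^{-5/3}|Q\theta|^2) = -\tfrac53 z^{-8/3}|Q\theta|^2 + 2 z^{-5/3}Q\theta\,\pa_z Q\theta$ and applying Young's inequality with $a = z^{-4/3}|Q\theta|$, $b = z^{-1}|\bp Q\theta|$ leads to
\beqas
\kappa Z^{-5/3}|Q\theta(Z)|^2 \leq C\,\kappa\int_Z^\infty z^{-8/3}|Q\theta|^2 + \kappa\int_Z^\infty z^{-2}|\bp Q\theta|^2,
\eeqas
where now both integrals on the right fall within the scope of Lemma~\ref{L:KAPPAQBOUND}: the first by the sharper estimate for $\X_1$ (weight $z^{-8/3}$, bound $C\tE_0$) and the second by applying the lemma to $\bp Q\in\X_2$ (bound $C\tE_{\leq 2}$). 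For $Z\in(z_*,Z_0]$, the bound follows trivially from Sobolev embedding on the compact interval $[z_*,Z_0]$. The main technical subtlety is the precise matching of the weight $z^{-4/3}$ with the top-order term in $\tE_{j-1}$ for $j$ odd; this is what forces the index on the right to be $\JUP$ rather than the naively expected $\JDN$, and is the essence of why the extra derivative appears in the bound.
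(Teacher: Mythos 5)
Your route is genuinely different from the paper's. The paper first decomposes $Q\theta$ via Lemma~\ref{L:XJDECOMP} into $\sum_\ell c_\ell z^{-(j-\ell)/3}\bD^\ell\theta$ and then hits each summand with a Hardy inequality adapted to the parity of $\ell$ — \eqref{E:HARDYREFINED} (Hardy from $z=1$) for even $\ell$ and $\ell=0$, and \eqref{E:HARDYinf3} (Hardy at infinity) plus \eqref{E:theta} for odd $\ell$ — so it has to control boundary data at $z=1$ as well as the decay at infinity. You instead work with $Q\theta$ as a whole, integrate $\pa_z(z^{-1}|Q\theta|^2)$ from $Z$ to $\infty$, apply Young, and reduce the whole pointwise bound to three $L^2$ integrals over $(Z,\infty)$, two of which are already in Lemma~\ref{L:KAPPAQBOUND}; the decomposition of Lemma~\ref{L:XJDECOMP} only reappears for the intermediate $z^{-4/3}|Q\theta|^2$ term when $j$ is odd. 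This is cleaner in that it avoids the boundary terms at $z=1$, at the cost of needing the vanishing of $z^{-1}|Q\theta(z)|^2$ at infinity. Your invocation of a density argument for this is a bit loose; a tighter justification is that once you have shown $\int_Z^\infty z^{-2}|Q\theta|^2\,\dif z<\infty$ and $\int_Z^\infty z^{-1}|Q\theta||\pa_z Q\theta|\,\dif z<\infty$, the identity forces $z^{-1}|Q\theta(z)|^2$ to have a finite limit as $z\to\infty$, and that limit must be zero lest $z^{-2}|Q\theta|^2$ fail to be integrable.

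There is one small imprecision to flag. When you handle the middle term for $j$ odd, you say the lower-order contributions $\ell\le j-1$ from the expansion of $Q\theta$ are ``subsumed into Lemma~\ref{L:KAPPAQBOUND}.'' This is correct for $\ell\ge 1$, since then $\bD^\ell\in\X_\ell$ with $\ell\ge 1$ and the weight $z^{-4/3-2(j-\ell)/3}\le z^{-8/3}$ is at least as strong as required. But Lemma~\ref{L:KAPPAQBOUND} is only stated for $Q\in\X_\ell$, $\ell\ge1$, so it does not directly apply to the $\ell=0$ contribution $z^{-4/3-2j/3}|\theta|^2$. That term does satisfy the required bound — since $\al<\frac43$ one has $z^{-4/3-2j/3}\le z^{-\al-2}$ for $j\ge3$, and then \eqref{theta_bound} multiplied by $\kappa$ gives $\kappa\int_Z^\infty z^{-\al-2}\theta^2\,\dif z\le C\tE_0$ — but you should state this explicitly rather than lump it under Lemma~\ref{L:KAPPAQBOUND}.
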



\begin{proof}
We first consider the case $j\geq 2$. We apply Lemma~\ref{L:XJDECOMP} to write $Q\theta=\sum_{\ell=0}^j c_\ell z^{-\frac{j-\ell}{3}} \bD^\ell\theta $. When $\ell=0$, we note $j-\ell\geq 2$, so that we may apply~\eqref{E:HARDYREFINED} with $\beta=-\frac83$ and then again with $\beta=0$ to estimate
\beqas
\kappa Z^{-1-\frac43}|\theta(Z)|^2\leq C\kappa |\theta(1)|^2+C\kappa \int_1^Z(1+z)^{-\frac83}|\bd\theta|^2\leq C\kappa \int_0^Z(1+z)^{-\al}{|\bD^{1}\theta|^2}{(\bp\bzeta)^2}\leq C\tE_{0},
\eeqas
where we have  recalled that $\al<2c<\frac43$ in the final inequality.\\
For any even $\ell\in\{2,\ldots,j\}$, we apply~\eqref{E:HARDYREFINED} with $\beta=-\frac43$ and then again with $\beta=0$ to see
\beqa
\kappa Z^{-1}|\bD^\ell\theta(Z)|^2\leq C\kappa |\bD^\ell\theta(1)|^2+C\kappa \int_1^Zz^{-\frac43}|\bD^{\ell+1}\theta|^2\leq C\kappa \int_0^Z\frac{|\bD^{\ell+1}\theta|^2}{(\bp\bzeta)^2}\leq C\tE_{\ell}.
\eeqa
For odd $\ell=1,\ldots,j$, we instead apply~\eqref{E:HARDYinf3} with $\beta=-\frac43-\frac{2(j-\ell)}{3}$ and \eqref{E:theta} to obtain 
\beqa
&\kappa Z^{-1-\frac{2(j-\ell)}{3}}|\bD^\ell\theta(Z)|^2\leq  \kappa \int_Z^\infty|\bD^{\ell+1}\theta|^2z^{-\frac43-\frac{2(j-\ell)}{3}}+C\kappa \int_Z^\infty |\bD^\ell\theta|^2z^{-2-\frac{2(j-\ell)}{3}} \\
&\qquad \le  \kappa \int_Z^\infty|\bD^{\ell+2}\theta|^2 z^{-\frac23-\frac{2(j-\ell)}{3}}+C\kappa \int_Z^\infty |\bD^\ell\theta|^2z^{-2-\frac{2(j-\ell)}{3}}
\leq C(\tE_{\ell+1}+ \tE_{\ell-1}),
\eeqa 
where we have used that if $\ell=1$ then $j-\ell\geq 1$  and $\alpha < 2c < 4/3$  so that, in the final inequality, $z^{-2-\frac{2(j-\ell)}{3}}\leq C(\bp\bzeta)^{-2}z^{-\al}$, where $C$ is independent of $Z>z_*$. 

When $Q\in\X_1$, we  estimate exactly as in the previous inequality, using the additional factor of $z^{-\frac23}$ to compensate for the missing $z^{-\frac{2(j-\ell)}{3}}$.
\end{proof}


\begin{lemma}\label{L:NONLINEARPQEXPANSION}
Let $f\in C^{2M}([-\frac12,\frac12])$ and assume~\eqref{E:STANDING}. Then, for any $j=1,\ldots,2M$ and $P\in\Y_j$, there exists $C>0$, independent of $Z_0>z_*$, such that 
\beq\label{E:PJNL2}
\kappa\int_{Z_0}^\infty z^{-\frac23}\Big(\Big|P\Big(f\big(\frac{\theta_z}{\bzeta_z}\big)\Big)\Big|^2+\Big|P\Big(f\big(\frac{\theta}{\bzeta}\big)\Big)\Big|^2\Big)\leq C\tE_{\leq 2[\frac{j+1}{2}]},
\eeq
and, if $ j\leq M+1$,
\beq\label{E:PJNLINFTY}
\sqrt{\kappa}\Big\|P\Big(f\big(\frac{\theta_z}{\bzeta_z}\big)\Big)\Big\|_{L^\infty(Z_0,\infty)}+\sqrt{\kappa}\Big\|P\Big(f\big(\frac{\theta}{\bzeta}\big)\Big)\Big\|_{L^\infty(Z_0,\infty)}\leq CZ_0^{-\frac16}\tE_{\leq2[\frac{j+2}{2}]}^{\frac12}.
\eeq
If, in addition, $f'(0)=0$, then the $L^2$ type estimate improves to
\begin{align}
{}&\kappa\int_{Z_0}^\infty z^{-\frac23}\Big(\Big|P\Big(f\big(\frac{\theta_z}{\bzeta_z}\big)\Big)\Big|^2+\Big|P\Big(f\big(\frac{\theta}{\bzeta}\big)\Big)\Big|^2\Big)\leq C\tE_{\leq 2[\frac{j+1}{2}]}(C_*\sqrt{\eps_0}+Z_0^{-\frac13}\tE_{\leq 2[\frac{j}{4}+1]}).\label{E:PJNL2IMP}
\end{align}
Finally, for $Q\in\X_\ell$, $0\leq \ell\leq 2M-1$, we have
\beq\label{E:QKL2}
\kappa\int_{Z_0}^\infty |QK\theta|^2\leq C Z_0^{-\frac23}\tE_{\leq2[\frac{\ell+2}{2}]},
\eeq
and, for $\ell+2\leq M+1$,
\beq\label{E:QKLINFTY}
\Big\|z^{\frac13}QK\theta\Big\|_{L^\infty(Z_0,\infty)}\leq CZ_0^{-\frac12}\tE_{\leq 2[\frac{\ell+3}{2}]}^{\frac12}.
\eeq
\end{lemma}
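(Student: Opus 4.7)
My plan is to prove each of the four estimates in turn by combining the chain rule for the differential operator algebras (Lemma~\ref{L:XYPRODCHAIN}) with the pointwise derivative bounds of Lemma~\ref{L:THETAZNONLINEAREST} (and the underlying weighted Hardy-Sobolev bounds of Lemma~\ref{L:weightedthetaHardy}). Throughout, the a priori bound $\Pb[\Phi]\le\frac14$ ensures that $f$ and its derivatives, evaluated at $\tfrac{\theta_z}{\bzeta_z}$ or $\tfrac{\theta}{\bzeta}$, are uniformly bounded.

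For the $L^2$ estimate~\eqref{E:PJNL2}, since $P\in\Y_j$ applies at least one $\bp$ directly to $f\bigl(\tfrac{\theta_z}{\bzeta_z}\bigr)$, I would apply Lemma~\ref{L:XJDECOMP} to write $P=\sum_{\ell=1}^{j}c_\ell z^{\ell-j/3}\pa_z^\ell$ and then invoke Lemma~\ref{L:THETAZNONLINEAREST} to obtain, pointwise on $(Z_0,\infty)$,
\[
|z^{\ell-j/3}\pa_z^\ell f(\tfrac{\theta_z}{\bzeta_z})|\le C\Bigl(z^{\ell-j/3-a_{\ell+1}}\tE^{1/2}_{\le 2[(\ell+2)/2]}+z^{\ell-j/3-(\frac{c}{2}+\frac23-\delta)\ell}\tE_{\le 2[(\ell+1)/2]}\Bigr),
\]
with the top order $\ell=j$ case additionally producing a $z^{-\frac23(2j+1)}|\bD^{2j+1}\theta|$ contribution that integrates directly against the $\chi_{2M}$ weight inside $\tE_{2M}$. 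Squaring, multiplying by $\kappa z^{-2/3}$, and integrating, the weights reduce (using $c<\tfrac23$, $\alpha>1$, and the algebraic identity $2(\ell-j/3)-2a_{\ell+1}-\tfrac23<-1$, which is verifiable directly from~\eqref{DEF:ak} and~\eqref{def:delta}) to the claimed bound $C\tE_{\le 2[(j+1)/2]}$. The analogous argument for $f(\tfrac{\theta}{\bzeta})$ uses~\eqref{E:THETAZNONLINEAREST2}.

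For the $L^\infty$ bound~\eqref{E:PJNLINFTY}, I would use the same chain rule expansion, but now simply bound each factor pointwise via Lemma~\ref{L:THETAZNONLINEAREST}; the gain $Z_0^{-1/6}$ emerges from summing the strictly negative powers $z^{\ell-j/3-a_{\ell+1}}$ evaluated at $z\ge Z_0$. For the improved $L^2$ bound~\eqref{E:PJNL2IMP} with $f'(0)=0$, I would replace the crude bound $|f^{(k)}|\le C$ by $|f^{(k)}(x)|\le C|x|$ for $k=1$, and at $k\geq 2$ by $|f^{(k)}(x)|\le C$ with an extra $|x|$ factor taken from any of the remaining $P_{j_i}$ contributions: one such $|x|$ can be absorbed into the pointwise bound $|\tfrac{\theta_z}{\bzeta_z}|\le C_*\sqrt{\eps_0}$ from~\eqref{E:APRIORIMAINPT}, while at least one additional factor of $\tE^{1/2}$ accompanied by a negative weight power $z^{-a_1}=z^{\delta}$ becomes $z^{-1/3}\tE^{1/2}_{\le 2[j/4+1]}$ after integration, supplying the $Z_0^{-1/3}$ factor. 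For~\eqref{E:QKL2}--\eqref{E:QKLINFTY}, I would use $K\theta=\bG\bD^2\theta+\bp\bG\,\bd\theta$, apply the product rule Lemma~\ref{L:XYPRODCHAIN} to write $QK\theta$ as a sum of $(Q_1\bG)(Q_2\bD^2\theta)+(Q_1\bp\bG)(Q_2\bd\theta)$ type terms, and deploy Lemma~\ref{L:PGBOUND} to harvest the decay $(1+z)^{-(4+k)/3}$ on the $\bG$ factors; the integrations over $(Z_0,\infty)$ against the $\tE$-energies then produce $Z_0^{-2/3}$ (respectively $Z_0^{-1/2}$ in $L^\infty$) as the residual power.

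The main obstacle is the careful tracking of weight exponents across each chain-rule expansion: for every term produced by Lemma~\ref{L:XYPRODCHAIN}, one must verify that the sum of the exponents from $z^{\ell-j/3}$, the pointwise $z^{-a_{\ell+1}}$ factors, and the ambient $z^{-2/3}$ weight (or the raw power in $L^\infty$) matches what is required for both integrability and the desired $Z_0$ decay. The top-order contribution at $\ell=j$ is especially delicate because it must be fed \emph{directly} into the $L^2$-energy $\tE_{2M}$ via~\eqref{E:DAMPENINGTOPORDER}, without passing through any Sobolev embedding that would incur derivative loss; this is precisely what forces the hierarchy $\chi_{2j}$ of weights introduced in~\eqref{E:CHIJDEF} to line up with the algebraic structure of the nonlinearity.
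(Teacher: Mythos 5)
Your proposal takes a genuinely different technical route from the paper. The paper's proof is built on two lemmas designed specifically for the constant weight $\kappa$, namely Lemma~\ref{L:KAPPAQBOUND} and Lemma~\ref{L:KAPPAQINFTY}, which give $\kappa$-weighted $L^2$ and $L^\infty$ bounds on the algebra operators $Q\in\X_k$ applied to $\theta$. The paper expands $P(f(\cdot))$ via the chain rule (Lemma~\ref{L:XYPRODCHAIN}) into products $f^{(k)}(\cdot)\prod_n P_{j_n}(\cdot)$, bounds each $P_{j_n}$ factor via~\eqref{E:PJNEST} using the decay of $\bG$ from Lemma~\ref{L:PGBOUND}, and then reduces directly to the energies via Lemmas~\ref{L:KAPPAQBOUND} and \ref{L:KAPPAQINFTY}. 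Your route instead expands $P$ in $z$-weighted ordinary derivatives via Lemma~\ref{L:XJDECOMP} and invokes the pointwise bounds of Lemma~\ref{L:THETAZNONLINEAREST}, which rest on the growing-weight part of the energy (via Lemma~\ref{L:weightedthetaHardy}).

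The key gap in your plan: because those pointwise bounds come from the growing-weight infrastructure and carry no $\kappa$, the $\kappa$ in~\eqref{E:PJNL2} must be absorbed by the $Z_0$-decay of the resulting powers, and you assert this ``reduces'' without verifying it. For the first ($\tE^{1/2}$) contribution in Lemma~\ref{L:THETAZNONLINEAREST} the exponent check works out. But the second contribution carries the factor $z^{-(\frac{c}{2}+\frac23-\delta)\ell}\tE_{\le\cdot}$: squaring, multiplying by $\kappa z^{-2/3}$, and taking $\ell=j=1$ yields the exponent $2(\frac13-\frac{c}{2}+\delta)-\frac43 = 2\delta - c -\frac23$, which is $<-1$ only if $2\delta < c - \frac13$. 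The paper's assumptions \ref{item:a1}--\ref{item:a2} permit $\delta$ up to $\tfrac1{2M}$ while $c$ may be barely above $\tfrac12$, so this condition is not guaranteed and your integral need not converge. The paper's Lemma~\ref{L:KAPPAQBOUND} sidesteps this entirely by controlling $\kappa\int|Q\theta|^2$ against the $\kappa$ term already built into $\tE$, with no delicate exponent matching.

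Two smaller points. For~\eqref{E:PJNL2IMP}, the paper's observation when $f'(0)=0$ is that in the Fa\`a di Bruno expansion either the surviving $f'$ factor supplies $|f'(\sqrt{\bG}\bp\theta)|\le C|\pa_z\theta|\le CC_*\sqrt{\eps_0}$ from~\eqref{E:APRIORIMAINPT}, or every term is at least quadratic and the $L^2$--$L^\infty$ split via Lemma~\ref{L:KAPPAQINFTY} produces the $Z_0^{-\frac13}$ gain; your description attributes the gain to $z^{-a_1}=z^\delta$, but since $a_1=-\delta$ and $\delta>0$ this is a \emph{growing} power in $z$, so it cannot supply $Z_0$-decay. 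Finally, the paper's derivative-count landing on $[\tfrac{j+1}{2}]$ relies on noting that in each chain-rule term $\sum_n j_n = j$, so at most one $j_n$ can exceed $[\tfrac{j+1}{2}]$; your sketch does not articulate this bookkeeping, which is necessary to obtain the claimed index on the right-hand side.
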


\begin{proof}
As in the proof of Lemma~\ref{L:NBOUND}, an inductive argument based on Lemma~\ref{L:XYPRODCHAIN} shows that $P\big(f\big(\frac{\theta_z}{\bzeta_z}\big)\big)$ expands as a sum of terms of the form
\begin{align}\label{E:PFSUMMATION}
f^{(k)}(\sqrt{\bG} \bp\theta)  P_{j_1}(\sqrt{\bG} \bp\theta) P_{j_2}(\sqrt{\bG} \bp\theta) \dots P_{j_{k}}(\sqrt{\bG} \bp\theta)
\end{align}
for some $j_1,\dots,j_{k}\in\mathbb N$, where 
\[
1\le k \le j, \ \ P_{j_n}\in \mathcal Y_{j_n},  \ \ n\in\{1,\dots, k\}, \ \ \ j_1+j_2+\dots j_{k} = j.
\]
For the other term, $P\big(f\big(\frac{\theta}{\bzeta}\big)\big)$, we recall~\eqref{E:GTILDEDEF}, so that $\theta\bzeta^{-1}=\tilde G\frac{\th}{z^{\frac13}}$ and we have the expansion
\begin{align}
f^{(k)}(\tilde{G} \frac{\theta}{z^{\frac13}})  P_{j_1}(\tilde{G} \frac{\theta}{z^{\frac13}}) P_{j_2}(\tilde{G} \frac{\theta}{z^{\frac13}}) \dots P_{j_{k}}(\tilde{G} \frac{\theta}{z^{\frac13}}).
\end{align}
For any such $P_{j_n}$, from~\eqref{E:PEXP1} and~\eqref{E:PGBOUND}, we have the estimate
\beq\label{E:PJNEST}
\big|P_{j_{n}}(\sqrt{\bG} \bp\theta)(z)\big|+\big|P_{j_{n}}(\tilde{G} \frac{\theta}{z^{\frac13}})(z)\big|\leq C\sum_{i=0}^{j_n}\sum_{Q\in\X_{i+1}}(1+z)^{-\frac{2+j_n-i}{3}}|Q\theta|.
\eeq
Thus, for each $j_n\leq M+1$, we apply Lemma~\ref{L:KAPPAQINFTY}, noting $j_n\geq 1$ for all $n$, to see
\beq\label{E:PJNLINFTYPF}
\sqrt{\kappa}\big\|P_{j_n}(\sqrt{\bG} \bp\theta)\big\|_{L^\infty(Z_0,\infty)}+\sqrt{\kappa}\big\|P_{j_n}(\tilde{G} \frac{\theta}{z^{\frac13}})\big\|_{L^\infty(Z_0,\infty)}\leq CZ_0^{-\frac16}\tE_{\leq2[\frac{j_n+2}{2}]}^{\frac12},
\eeq
that is,~\eqref{E:PJNLINFTY}. On the other hand, applying Lemma~\ref{L:KAPPAQBOUND} to~\eqref{E:PJNEST}, we have
\beqa\label{E:PJNL2PF}
\kappa&\,\int_{Z_0}^\infty z^{-\frac23}\Big(|P_{j_n}(\sqrt{\bG}\bp\theta)|^2+|P_{j_n}(\tilde{G} \frac{\theta}{z^{\frac13}})|^2\Big)\\
\leq&\,C\int_{Z_0}^\infty \kappa\Big(\sum_{Q\in\X_1}(1+z)^{-2-\frac{2j_n}{3}}|Q\theta|^2+\sum_{i=1}^{j_n}\sum_{Q\in\X_{i+1}}(1+z)^{-2-\frac{j_n-i}{3}}|Q\theta|^2\Big)\leq C\tE_{\leq 2[\frac{j_n+1}{2}]}.
\eeqa
Noting that at most one index $j_n$ in~\eqref{E:PFSUMMATION} can be of size greater than $[\frac{j+1}{2}]$, this implies~\eqref{E:PJNL2}. 

In the case that $f'(0)=0$, note that either every contribution to~\eqref{E:PFSUMMATION} is at least quadratic, in which case we may combine the $L^\infty$ and $L^2$ bounds,~\eqref{E:PJNLINFTYPF}--\eqref{E:PJNL2PF}, to obtain, for example
\begin{align}
\kappa\int z^{-\frac23}\big|P_{j_1}(\sqrt{\bG} \bp\theta)\big|^2 \big|P_{j_2}(\sqrt{\bG} \bp\theta)\big|^2\leq&\, C  Z_0^{-\frac13}\tE_{\leq2[\frac{j_2+2}{2}]}\tE_{\leq 2[\frac{j_1+1}{2}]}\\
\leq &\, C  Z_0^{-\frac13}\tE_{\leq2[\frac{j}{4}+1]}\tE_{\leq 2[\frac{j+1}{2}]},
\end{align}
where we  used that, without loss of generality,  $j_2\leq \frac{j}{2}$,
 or we have an additional factor of 
$$|f'(\sqrt{\bG}\bp\th)|\leq C|\pa_z\th|\leq CC_*\sqrt{\eps_0}$$
for $z\geq Z_0$ by~\eqref{E:APRIORIMAINPT}. This proves~\eqref{E:PJNL2IMP}.

Now for $Q\in\X_\ell$, again applying an argument analogous to~\eqref{E:PEXP1} and Lemma~\ref{L:PGBOUND}, we obtain
\[
\big|QK\theta\big|=\big|(Q\bp)(\bG\bd\theta)\big|\leq C\sum_{i=1}^{\ell+2}\sum_{\substack{Q\in\X_{i} \\ P\in \Y_{\ell+2-i}}}|P\bG| |Q\theta|\leq \sum_{i=1}^{\ell+2}\sum_{Q\in\X_i}(1+z)^{-\frac{4+\ell+2-i}{3}}|Q\theta|.
\]
To show~\eqref{E:QKLINFTY}, for $\ell+2\leq M+1$, we again apply Lemma~\ref{L:KAPPAQINFTY} to estimate, when $z\geq Z_0$,
\[
z^{\frac13}\big|QK\theta\big|\leq \sum_{i=1}^{\ell+2}\sum_{Q\in\X_i}(1+z)^{-\frac{3+\ell+2-i}{3}}|Q\theta|\leq CZ_0^{-\frac12}\tE_{\leq 2[\frac{\ell+3}{2}]}^{\frac12}.
\]
Finally, to prove~\eqref{E:QKL2}, we apply Lemma~\ref{L:KAPPAQBOUND} to see
\beqas
\kappa\int_{Z_0}^\infty |QK\theta|^2\leq&\, C\int_{Z_0}^\infty \kappa\Big((1+z)^{-\frac{10+2\ell}{3}}\sum_{Q\in\X_1}|Q\theta|^2+\sum_{i=2}^{\ell+2}\sum_{Q\in\X_i}(1+z)^{-\frac{8+2(\ell+2-i)}{3}}|Q\theta|^2\Big)\\
\leq&\, CZ_0^{-\frac23}\tE_{\leq2[\frac{\ell+2}{2}]}.
\eeqas
\end{proof}


\subsection{Error term estimates: far-field flattening error}

The main goal of this section is to prove the following bound.

\begin{lemma}\label{L:CUTOFFBOUND}[Flattening term]
Let $\Phi=(\th,\phi)^\top$ satisfy the assumptions of Proposition~\ref{P:EE2}.
Then, for $0\leq j\leq M$, there exists $C>0$ such that
\begin{align}
&\Big|\int  \chi_{2j} \bD^{2j} \left(\frac{Cz-\tilde M}{\zeta^2}-\frac{\tilde g_z}{\tilde g}\frac1{\zeta_z}  \right)    \phi_{2j} \diff z\Big|\notag\\
&\quad \leq C\Big( \sqrt{\kappa}(r_*e^s)^{-\frac12-\frac{2j}{3}}\tE_{\leq 2j}^{\frac12}+ (r_\ast e^s)^{ - (\frac23-c)j - \frac{\alpha+1}{2}} \tE_{\leq 2j}^\frac12 + (r_*e^s)^{-\frac23}\tE_{\leq 2j}  +  (r_\ast e^s)^{-\frac{\alpha+1}{2} + 2\delta j }\tE_{\leq 2j}^\frac32\Big), \notag
\end{align}
where we recall $\delta>0$ is given in \eqref{def:delta}.
\end{lemma}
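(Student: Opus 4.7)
By assumption~\ref{item:g1}, $g\equiv\CLP/(4\pi)$ on $\{r\le r_*\}$, so both $\CLP z-\tilde M(s,z)$ and $\tilde g_z/\tilde g$ vanish identically for $z\le r_*e^s$, and the integrand is supported on the far-field set $\{z\ge r_*e^s\}$. Using $\tilde g(s,z)=g(e^{-s}z)$ in conjunction with~\ref{item:g2} and the LP asymptotics $\bzeta\sim z$, $\bzeta_z\sim 1$ from Lemma~\ref{L:ZETABAR}, I would first verify the uniform pointwise bounds
\[
\bigl|\pa_z^\ell A_i(s,z)\bigr|\lesssim z^{-(\ell+1)},\qquad z\ge r_*e^s,\quad 0\le \ell\le 2M,\quad i=1,2,
\]
for the two LP-profile source coefficients $A_1:=(\CLP z-\tilde M)/\bzeta^2$ and $A_2:=(\tilde g_z/\tilde g)/\bzeta_z$.

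\textbf{Decomposition and source estimate.} Writing $\zeta=\bzeta(1+\theta/\bzeta)$ and $\zeta_z=\bzeta_z(1+\theta_z/\bzeta_z)$, one decomposes
\[
\frac{\CLP z-\tilde M}{\zeta^2}-\frac{\tilde g_z}{\tilde g}\frac{1}{\zeta_z}\;=\;(A_1-A_2)\;+\;A_1\,G_1\!\bigl(\tfrac{\theta}{\bzeta}\bigr)\;-\;A_2\,G_2\!\bigl(\tfrac{\theta_z}{\bzeta_z}\bigr),
\]
with $G_1(x):=(1+x)^{-2}-1$ and $G_2(x):=(1+x)^{-1}-1$, both vanishing at $x=0$. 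For the pure source term, Lemma~\ref{L:XJDECOMP} combined with the decay above gives $|\bD^{2j}(A_1-A_2)|\lesssim z^{-(1+2j/3)}$, and a direct Cauchy--Schwarz against $\phi_{2j}$ reduces matters to estimating
\[
\int_{r_*e^s}^\infty\bigl(\kappa+(1+z)^{2cj-\alpha}\bigr)z^{-4j/3-2}\,\dif z\;\lesssim\;\kappa(r_*e^s)^{-4j/3-1}+(r_*e^s)^{-2(\tfrac23-c)j-\alpha-1},
\]
whose square root paired with $\tE_{\le 2j}^{1/2}$ produces exactly the first two terms of the claimed bound.

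\textbf{Nonlinear correction.} For the remaining contribution $\bD^{2j}(A_i G_i)$, the Leibniz-type identity of Lemma~\ref{L:XYPRODCHAIN} expresses it as a finite sum of products $(P_1A_i)(P_2G_i)$ with $P_1\in\Y_k$, $P_2\in\Y_{2j-k}$ and $|P_1A_i|\lesssim z^{-(1+k/3)}$. The key manoeuvre is to split $G_i=G_i^{(\textup{lin})}+G_i^{(\textup{quad})}$ into its leading linear part (explicitly $-2\theta/\bzeta$, respectively $-\theta_z/\bzeta_z$) and its quadratic-and-higher remainder. The linear piece carries the improved coefficient decay $A_i/\bzeta\sim z^{-2}$ (respectively $A_i/\bzeta_z\sim z^{-2}$), and a Cauchy--Schwarz estimate combined with the weighted Hardy-type bounds of Lemmas~\ref{L:QBOUNDS} and~\ref{L:KAPPAQBOUND} (applied to $\bD^{2j}\theta\in\X_{2j}$ and to lower-order derivatives of $\theta$) yields the third term $(r_*e^s)^{-2/3}\tE_{\le 2j}$. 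For $G_i^{(\textup{quad})}$ I would place the top-order factor in $L^2$ via the improved estimate~\eqref{E:PJNL2IMP} of Lemma~\ref{L:NONLINEARPQEXPANSION} (available because $G_i^{(\textup{quad})\prime}(0)=0$) and the lower-order factors in $L^\infty$ via Lemma~\ref{L:THETAZNONLINEAREST}; the resulting weight exponents then collapse, through the identity~\eqref{def:delta} defining $\delta$, into the final $(r_*e^s)^{-(\alpha+1)/2+2\delta j}\,\tE_{\le 2j}^{3/2}$ term.

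\textbf{Main difficulty.} The principal technical obstacle is the sharp bookkeeping of the growing polynomial weight $(1+z)^{2cj-\alpha}\subset\chi_{2j}$ against both the decay of the $A_i$ and the growth exponents $a_k$ from~\eqref{DEF:ak} and $(\tfrac{c}{2}+\tfrac{2}{3}-\delta)k$ appearing in Lemma~\ref{L:THETAZNONLINEAREST}, each of which must be tracked term-by-term across the entire Leibniz expansion. In particular, obtaining the linear-in-$\theta$ bound with only a $(r_*e^s)^{-2/3}$ prefactor requires extracting the full $z^{-2}$ coefficient decay of $A_i/\bzeta$ (respectively $A_i/\bzeta_z$) systematically at every order in the expansion; without this, a direct product estimate would produce a worse exponent depending on $j$. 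The appearance of $2\delta j$ in the last term is achievable only once one invokes the sharp $f'(0)=0$-refined far-field $L^2$ estimate of Lemma~\ref{L:NONLINEARPQEXPANSION}.
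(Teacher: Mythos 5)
Your proposal follows essentially the same route as the paper's proof: reduce to the far-field set $\{z\ge r_*e^s\}$, exploit the decay $|\pa_z^\ell h|\lesssim z^{-(\ell+1)}$ of the LP-profile coefficients, expand $\bD^{2j}$ via Lemma~\ref{L:XJDECOMP}, invoke Lemma~\ref{L:THETAZNONLINEAREST} for the composite nonlinearities, and split the weight $\chi_{2j}$ into its $\kappa$-part and its growing $(1+z)^{2cj-\al}$-part before applying Cauchy--Schwarz. Your explicit isolation of the pure source $A_1-A_2$ and the linear/quadratic split of $G_i$ is a legitimate re-organisation; in the paper the same distinction is encoded automatically in the $\tE^{1/2}$-prefactor vs.~$\tE$-prefactor terms of~\eqref{E:THETAZNONLINEAREST}.

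Two points need correcting. First, the asserted decay $A_2/\bzeta_z\sim z^{-2}$ is wrong: since $\bzeta_z\to\tilde\zeta_{-1}>0$ as $z\to\infty$ by Lemma~\ref{L:ZETABAR}, one has $A_2/\bzeta_z=\frac{\tilde g_z}{\tilde g}\bzeta_z^{-2}=O(z^{-1})$, and the extra power of $z^{-1}$ you claim simply is not there. The linear estimate nonetheless closes, but the actual reason is the structural constraint $\frac{\al+1}{2}-\de>\tfrac23$ built into the weight exponents (as in~\eqref{E:FLAT1}--\eqref{E:FLAT2}), not an enhanced $z^{-2}$ decay of the coefficient. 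Second, your reliance on~\eqref{E:PJNL2IMP} to produce the $(r_*e^s)^{-\frac{\al+1}{2}+2\de j}\tE^{3/2}$ term is misplaced: that estimate is weighted only by $\kappa z^{-2/3}$ and thus cannot deliver an exponent depending on $\al$ and $\de$. In the paper, the $\tE^{3/2}$ contribution comes from squaring the $\tE_{\le 2j}z^{-(c/2+2/3-\de)k}$ part of the \emph{unimproved}~\eqref{E:THETAZNONLINEAREST} against the growing weight $(1+z)^{2cj-\al}$, which is where the exponent $-\frac{\al+1}{2}+2\de j$ actually originates. You do not need the $f'(0)=0$ refinement anywhere in this particular lemma; it enters later (Proposition~\ref{P:EXT1}), not here.
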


\begin{proof} 
We first recall from assumptions~\ref{item:g1}--\ref{item:g2} that the support of $\frac{\CLP z-\tilde M}{\zeta^2}  -\frac{\tilde g_z}{\tilde g}\frac1{\zeta_z}   $ lies in the set $\{z\ge r_\ast e^{s}\}$. Hence, recalling also the definition of the weight $\chi_{2j}$ from~\eqref{E:CHIJDEF}, it suffices to estimates  
\be\label{WTS1}
\int_{r_\ast e^s}^\infty \chi_{2j} \Big| \bD^{2j} \Big(\frac{\CLP z-\tilde M}{\zeta^2}  \Big)  \Big|^2dz, \quad   \int_{r_\ast e^s}^\infty \chi_{2j} \Big| \bD^{2j} \left(\frac{\tilde g_z}{\tilde g}\frac1{\zeta_z}  \right)  \Big|^2dz 
\ee
by the Cauchy-Schwartz inequality. We  first estimate  the second term arising here.

We begin with the case $j=0$. Since $\frac{1}{\zeta_z}=O(1)$ by the a priori bound~\eqref{E:APRIORIMAINPT} and $|\frac{\tilde g_z}{\tilde g}|\leq C z^{-1}$ for $z\gg 1$ by \ref{item:g2}, we recall the definition of $\chi_0$ from~\eqref{E:CHIJDEF} and estimate
\be
\int_{r_\ast e^s}^\infty  \kappa(1+z)^{-\al} \Big| \left(\frac{\tilde g_z}{\tilde g}\frac1{\zeta_z}  \right)  \Big|^2dz  \leq C \int_{r_\ast e^s}^\infty \frac{\kappa }{z^{2+\alpha}} dz \leq C \frac{\kappa }{(r_\ast e^s)^{1+\alpha}},
\ee
and so we are done for $j=0$.

When $j\geq 1$, we recall from~\eqref{E:CHIJDEF} that $\chi_{2j}=g_{2j}(z)(\kappa+(1+z)^{2cj-\al})$ and first estimate the factor arising from the term $(1+z)^{2cj-\al}$, recalling that $g_{2j}$ is uniformly bounded above and below for all $j\in\{1,\ldots,M\}$ by $M$-dependent constants.  To estimate the integrand in~\eqref{WTS1}, we first write
\be
\frac{\tilde g_z}{\tilde g}\frac{1}{\zeta_z} = h (z) f(x(z)), \quad h (z)= \frac{\tilde g_z}{\tilde g} \frac{1}{\bzeta_z}, \quad x(z) = \frac{\pa_z\theta}{\pa_z\bzeta}, \quad f(x) = \frac{1}{1+x},
\ee
so that  $f(x(z))$ satisfies~\eqref{E:THETAZNONLINEAREST}.
From~\ref{item:g2}, it is clear that $ 
\Big|\pa_z^k \big(\frac{\tilde g_z}{\tilde g} \big) \Big|\leq C  \frac{ e^{-(k+1)s}}{(1+ze^{-s})^{k+1}} \le C\frac{1}{z^{k+1}}$ 
and hence, using also Lemma~\ref{L:ZETABAR},
\be
\Big|\pa_z^k h \Big|\leq C  \frac{1}{z^{k+1}}, \quad k\ge 0.
\ee
To estimate  $\bD^{2j} (h(z) f(x(z)))$, we will use the representation of $\bD^{2j}$ via $z$-weighted normal  derivatives from Lemma~\ref{L:XJDECOMP}.
Putting this together with~\eqref{E:THETAZNONLINEAREST},
\begin{align}
\Big|\bD^{2j}  \Big( \frac{\tilde g_z}{\tilde g}\frac{1}{\zeta_z} \Big) \Big| &\leq C\Big( z^{\frac23(2M)}|h(z)||\pa_z^{2M}f(x(z))|\mathbf{1}_{j=M}+ \sum_{k=0}^{2j} z^{k-\frac{2j}{3}}\sum_{\substack{\ell=0 \\ \ell\neq 2M}}^{k}|\pa_z^{k-\ell}h||\pa_z^\ell f(x(z))|\Big)\notag \\
&\leq C z^{\frac23(2M)-1}\Big(z^{ -\frac23(2M+1)}  |\bD^{2M+1} \theta|+z^{-a_{2M}-1}\tE^{\frac12}+\tE z^{-(\frac c2+\frac23-\de)2M}\Big)  \cdot \mathbf{1}_{j=M}  \notag\\
&+ C\sum_{k=0}^{2j} z^{k-\frac{2j}{3}} \big[ z^{-(k +1)}  +\sum_{\substack{\ell=1 \\ \ell\neq 2M}}^k z^{-(k-\ell+1)} ( \tE^\frac12_{2[\frac{\ell+1}{2}]} z^{-a_{\ell +1}}  +  \tE_{\le 2[\frac{\ell+1}{2}]} z^{-(\frac{c}{2} +\frac23-\delta )\ell }   ) \big]\notag \\ 
&\leq C\Big( z^{-\frac{2j}{3} -1 }+ \tE^{\frac12}_{\le 2j}  z^{-c j -1 +\delta }+ \tE_{\le 2j}  z^{ (-c+2\delta) j -1  }  + |z^{ -1-\frac23}  \bD^{2M+1} \theta|  \cdot \mathbf{1}_{j=M}\Big), \notag
\end{align}
where we have applied~\eqref{E:AI2}.
Therefore,  
\begin{align}
&\int_{r_\ast e^s}^\infty (1+z)^{2cj -\alpha} \Big| \bD^{2j} \Big(\frac{\tilde g_z}{\tilde g}\frac{1}{\zeta_z}   \Big)  \Big|^2\notag\\
&\leq C \int_{r_\ast e^s}^\infty z^{-2(\frac23 -c) j -\alpha -2 }dz + C \tE_{\le 2j} \int_{r_\ast e^s}^\infty  z^{ -\alpha -2 + 2\delta} dz +  C\tE^2_{\le 2j}  \int_{r_\ast e^s}^\infty  z^{ -\alpha -2 + 4\delta j } dz\notag\\
&\quad + C \int_{r_\ast e^s}^\infty  (1+z)^{2cM -\alpha} \frac{1}{z^2} \frac{| \bD^{2M+1} \theta |^2}{|\bar\pa_z \zeta|^2} dz  \cdot \mathbf{1}_{j=M}  \notag  \\
&\leq C\Big( \frac{1}{(r_\ast e^s)^{ 2 (\frac23 -c) j +\alpha +1 }} +  \frac{1}{(r_\ast e^s)^{ \alpha +1- 2\delta }}\tE_{\le 2j } +  \frac{1}{(r_\ast e^s)^{  \alpha +1- 4\delta j }} \tE_{\le 2j}^2\Big) \label{E:FLAT1},
\end{align}
where we have used $-\al-2+4\de j<-1$ from assumption~\ref{item:a2} to see convergence of the integrals and also used $\al+1-2\de <2$ in the final bound.

To estimate the contribution from $\kappa$ to the weight $\chi_{2j}$, we observe from Lemma~\ref{L:XJDECOMP} that, for any $Q\in\X_{2j-\ell}$, we have
\[
|Qh|\leq C\sum_{i=0}^{2j-\ell}z^{i-\frac{2j-\ell}{3}}|\pa_z^ih|\leq Cz^{-1-\frac{2j-\ell}{3}}.
\]
Thus, from Lemma~\ref{L:NONLINEARPQEXPANSION},
\begin{align}
\kappa\int_{r_*e^s}^\infty|\bD^{2j}(f(x(z))h(z))|^2\leq&\, C\kappa\int_{r_*e^s}^\infty\sum_{\ell=0}^{2j}\sum_{\substack{P\in\Y_\ell \\ Q\in\X_{2j-\ell}}}|Pf(x(z))|^2|Qh|^2\notag\\
\leq&\,C\int_{r_*e^s}^\infty \kappa \Big(z^{-2-\frac{4j}{3}}+\sum_{\ell=1}^{2j}\sum_{P\in\Y_\ell}z^{-\frac43-2\frac{2j-\ell}{3}}z^{-\frac23}|Pf|^2\Big)\notag\\
\leq&\, C\kappa(r_*e^s)^{-1-\frac{4j}{3}}  +C(r_*e^s)^{-\frac43}\tE_{\le 2j}.\label{E:FLAT2}
\end{align}
Combining~\eqref{E:FLAT1} and~\eqref{E:FLAT2}, we conclude the estimate on the second term in~\eqref{WTS1}.

To control the first term in~\eqref{WTS1}, we use analogous arguments, this time writing
 $ \frac{Cz-\tilde M}{\zeta^2}$ as 
\be\label{dampening1}
 \frac{Cz-\tilde M}{\zeta^2}  =h_0(z) f_0 (x_0(z)), \quad h_0(z) =  \frac{Cz-\tilde M}{\bzeta^2},  \ \  x_0(z) = \frac{\theta}{\bzeta}, \ \  f_0(x)=\frac{1}{(1+x)^2},
\ee
so that $f_0(x_0(z))$ satisfies~\eqref{E:THETAZNONLINEAREST2}.
From~\eqref{E:MTILDEDEF}, we see $\pa_z \tilde M = 4\pi \tilde g $, and so using also assumption~\ref{item:g2} and Lemma \ref{L:ZETABAR}, 
it is easy to check 
\be
\pa_z^i h = O(\frac{1}{z^{i+1}}), \quad i \ge 0.
\ee
Following the argument above, we eventually find
\begin{align}
\Big|\bD^{2j}  \Big( \frac{Cz-\tilde M}{\zeta^2}\Big) \Big| &\leq C\Big( z^{-\frac{2j}{3} -1 }+ \tE^{\frac12}_{\le 2j}  z^{-c j -1 +\delta }+ \tE_{\le 2j}  z^{ (-c+2\delta) j -1  }\Big).  \label{bound (6.313)}
\end{align}
Thus, for $j\geq 1$,
\begin{align}
&\int_{r_\ast e^s}^\infty (1+z)^{2cj -\alpha} \Big| \bD^{2j} \Big(\frac{Cz-\tilde M}{\zeta^2}  \Big)  \Big|^2\notag\\
&\leq C\Big( \frac{1}{(r_\ast e^s)^{ 2 (\frac23 -c) j +\alpha +1 }} +  \frac{1}{(r_\ast e^s)^{ \alpha +1- 2\delta }}\tE_{\le 2j} +  \frac{1}{(r_\ast e^s)^{  \alpha +1- 4\delta j }} \tE_{\le 2j}^2 \Big).\label{E:FLAT3}
\end{align} 
Finally,
\beqa
&\int_{r_\ast e^s}^\infty \kappa\Big| \bD^{2j} \Big(\frac{Cz-\tilde M}{\zeta^2}  \Big)  \Big|^2\leq C\kappa(r_*e^s)^{-1-\frac{4j}{3}}  +C(r_*e^s)^{-\frac43}\tE_{\le 2j}\label{E:FLAT4}
\eeqa
also. Combining estimates~\eqref{E:FLAT1}--\eqref{E:FLAT2} and~\eqref{E:FLAT3}--\eqref{E:FLAT4} and noting $\frac23<\frac{\al+1}{2}-2\de$, we conclude.
\end{proof}


\subsection{Error term estimates: commutator bounds}


The purpose of the section is to prove bounds on the two commutator-type nonlinear 
remainder terms arising on the right hand side of~\eqref{Energy-high}, arising from $\mathcal{R}_{2j}$ and $\mathcal{N}_{2j}$. To this end, in each instance, we split the domain of integration into interior and exterior regions, for example as
\begin{align}
\int \chi_{2j} \frac{\bzeta_z^2}{\zeta_z^2} \mathcal R_{2j}\theta  \phi_{2j} \diff z = \int_0^{Z_0} \chi_{2j} \frac{\bzeta_z^2}{\zeta_z^2} \mathcal R_{2j}\theta  \phi_{2j}\diff z + \int_{Z_0}^\infty \chi_{2j} \frac{\bzeta_z^2}{\zeta_z^2} \mathcal R_{2j}\theta  \phi_{2j} \diff z.
\end{align}
We then use different 
strategies to estimate the interior and exterior contributions. For the interior bounds, we rely on the maximal accretivity estimate and the Hilbert spaces $\H^{2j}_{Z_0}$. In the exterior region, we crucially use  the largeness of $Z_0\gg1$ and the precise structure of the weights $\chi_j$ along with the technical estimates of Section~\ref{S:TECHNICAL}.

With a view also to proving Proposition~\ref{P:HMZDIFF} in the sequel, we also provide an estimate for the errors arising from a difference of solutions.


\begin{lemma}\label{L:COMMBOUND1}
Under the assumptions of Proposition~\ref{P:EE2},
for any $j\in\{1,\dots M\}$, the following bound holds
\beq
\Big| \int \chi_{2j} \frac{\bzeta_z^2}{\zeta_z^2} \mathcal R_{2j}\theta \phij \diff z\Big| \le C(Z_0)\|\Phi\|_{\mathcal{H}^{2j-1}_{Z_0}}\tE_{2j}^{\frac12} + CZ_0^{-\frac23}\tE_{\leq 2j},
\eeq
where we recall the commutator term $\mathcal R_{2j}$ from~\eqref{E:REMAINDER}.

Moreover, under the assumptions of Proposition~\ref{P:HMZDIFF}, setting $\vartheta=\theta_1-\theta_2$ and $\varphi=\phi_1-\phi_2$, for $j\leq \m$,
\beq\label{E:COMMBOUND1DIFF}
\Big| \int_0^{Z_0} g_{2j} \Big(\frac{\bzeta_z^2}{\zeta_{1,z}^2} \mathcal R_{2j}\theta_1-\frac{\bzeta_z^2}{\zeta_{2,z}^2} \mathcal R_{2j}\theta_2\Big) \varphi_{2j} \diff z\Big| \le C(Z_0)\|\Phi_1-\Phi_2\|_{\mathcal{H}^{2j-1}_{Z_0}}  \|\Phi_1-\Phi_2\|_{\mathcal{H}^{2j}_{Z_0}} .
\eeq
\end{lemma}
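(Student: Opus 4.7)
The proof splits the integral at $Z_0$ and treats the two regions separately: on $[0,Z_0]$ everything is reduced to the $\H^{2j-1}_{Z_0}$-norm by treating $\chi_{2j}$, $P\bG$ as uniformly bounded, while on $[Z_0,\infty)$ the decay of $P\bG$ from Lemma~\ref{L:PGBOUND} is combined with the growth rates of $\chi_{2j}$ to extract a factor $Z_0^{-2/3}$.

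\medskip

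\emph{Interior region $[0,Z_0]$.} The a priori bound~\eqref{E:APRIORIMAINPT} gives $\bzeta_z^2/\zeta_z^2 \le C$ uniformly, while $\chi_{2j}$ is comparable to $1$ on $[0,Z_0]$ up to $Z_0$-dependent constants, and each factor $P\bG$ appearing in~\eqref{E:REMAINDER} is bounded by $C(Z_0)$. Consequently the structure of $\mathcal R_{2j}$ in~\eqref{E:REMAINDER} and Lemma~\ref{L:EQUIVALENCE} yield
\[
\|\mathcal R_{2j}\theta\|_{L^2(0,Z_0)} \le C(Z_0)\sum_{\ell=1}^{2j}\sum_{Q\in \X_\ell}\|Q\theta\|_{L^2(0,Z_0)} \le C(Z_0)\|\Phi\|_{\H^{2j-1}_{Z_0}}.
\]
Since $\|\phi_{2j}\|_{L^2(0,Z_0)} \le C(Z_0)\tE_{2j}^{1/2}$, Cauchy--Schwarz closes the interior estimate.

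\medskip

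\emph{Exterior region $[Z_0,\infty)$.} By Cauchy--Schwarz against the naturally weighted $\phi_{2j}$-norm, it suffices to show
\[
\int_{Z_0}^\infty \chi_{2j}|\mathcal R_{2j}\theta|^2\,dz \le C Z_0^{-4/3}\tE_{\le 2j}.
\]
For each summand $P\bG\, Q\theta$ in~\eqref{E:REMAINDER} with $P\in\Y_{2j+2-\ell}$ and $Q\in\X_\ell$, Lemma~\ref{L:PGBOUND} gives
$|P\bG|^2 \le C(1+z)^{-2(6+2j-\ell)/3}$.
Writing $\chi_{2j}\le C[\kappa + (1+z)^{2cj-\al}]$ splits the estimate into
\[
I_1 = \int_{Z_0}^\infty (1+z)^{2cj-\al-2(6+2j-\ell)/3}|Q\theta|^2\,dz, \qquad I_2 = \kappa\int_{Z_0}^\infty (1+z)^{-2(6+2j-\ell)/3}|Q\theta|^2\,dz.
\]
Let $E_\ell^{\textup{odd}} = (\ell-1)c-\al-\tfrac43$ and $E_\ell^{\textup{even}} = (\ell-2)c-\al-\tfrac43$ denote the reference exponents of Lemma~\ref{L:QBOUNDS} for the two parity cases. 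Direct exponent comparison using the strict inequality $c<\tfrac23$ from~\ref{item:a2} shows that in each parity case
\[
2cj - \al - \tfrac{2(6+2j-\ell)}{3} - E_\ell \le -\tfrac43,
\]
so on $[Z_0,\infty)$ the integrand of $I_1$ is pointwise bounded by $Z_0^{-4/3}$ times that of Lemma~\ref{L:QBOUNDS}; hence $I_1 \le C Z_0^{-4/3}\tE_{\le 2j}$. An analogous exponent count (noting $\ell\le 2j$ and that $Q\in\X_\ell$ is compatible with the reference weight $z^{-2}$ for $\ell\ge 2$ and $z^{-8/3}$ for $\ell=1$) reduces $I_2$ to Lemma~\ref{L:KAPPAQBOUND} and gives $I_2\le C Z_0^{-4/3}\tE_{\le 2j}$. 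Summing over the finitely many summands in~\eqref{E:REMAINDER} finishes the exterior bound.

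\medskip

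\emph{Difference estimate.} Setting $\vartheta = \theta_1-\theta_2$, decompose
\[
\frac{\bzeta_z^2}{\zeta_{1,z}^2}\mathcal R_{2j}\theta_1 - \frac{\bzeta_z^2}{\zeta_{2,z}^2}\mathcal R_{2j}\theta_2 \;=\; \frac{\bzeta_z^2}{\zeta_{1,z}^2}\mathcal R_{2j}\vartheta \;+\; \Big(\frac{\bzeta_z^2}{\zeta_{1,z}^2}-\frac{\bzeta_z^2}{\zeta_{2,z}^2}\Big)\mathcal R_{2j}\theta_2 .
\]
The first summand is linear in $\vartheta$ and is handled by the interior argument above with $\theta$ replaced by $\vartheta$, producing $C(Z_0)\|\Phi_1-\Phi_2\|_{\H^{2j-1}_{Z_0}}\|\Phi_1-\Phi_2\|_{\H^{2j}_{Z_0}}$. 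For the second summand, the mean value theorem and~\eqref{E:APRIORIMAINPT} yield pointwise $|\bzeta_z^2/\zeta_{1,z}^2 - \bzeta_z^2/\zeta_{2,z}^2|\le C|\pa_z\vartheta|$, and the Hardy--Sobolev embedding on $[0,Z_0]$ (valid since $j\le\m$) places $\pa_z\vartheta$ in $L^\infty$ at the cost of $C(Z_0)\|\Phi_1-\Phi_2\|_{\H^{2j-1}_{Z_0}}$; the a priori smallness of $\Phi_2$ then absorbs $\mathcal R_{2j}\theta_2$, and Cauchy--Schwarz against $\varphi_{2j}$ finishes the claim.

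\medskip

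\emph{The main obstacle} is the bookkeeping in the exterior estimate: one must verify that in every parity case of $\ell$ the exponent gap between the integrand of $I_1$ (respectively $I_2$) and the reference weight of Lemma~\ref{L:QBOUNDS} (respectively Lemma~\ref{L:KAPPAQBOUND}) is at most $-\tfrac43$, uniformly in $\ell\in\{1,\dots,2j\}$. This succeeds precisely because $c<\tfrac23$ is strict in~\ref{item:a2} and because the commutator structure~\eqref{E:REMAINDER} forces at least two derivatives on $\bG$, giving a two-order decay improvement in $|P\bG|$ that is the key source of the $Z_0^{-2/3}$ gain.
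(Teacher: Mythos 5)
Your proposal is correct and follows essentially the same route as the paper: an interior/exterior split at $Z_0$, with the interior handled by uniform bounds plus Lemma~\ref{L:EQUIVALENCE}, the exterior by the decay of $P\bG$ (Lemma~\ref{L:PGBOUND}) combined with Lemmas~\ref{L:QBOUNDS} and~\ref{L:KAPPAQBOUND} after splitting $\chi_{2j}$ into its $\kappa$ and $(1+z)^{2cj-\al}$ contributions, and the difference estimate by the linearity of $\mathcal R_{2j}$. The only cosmetic deviations are that you Cauchy--Schwarz after squaring the commutator term (the paper distributes a $\chi_{2j}^{1/2}$ and applies Cauchy--Schwarz directly, getting the same $Z_0^{-2/3}$), and you spell out the two-term decomposition for the difference estimate that the paper leaves implicit.
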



\begin{proof}
\textit{Step 1: Interior estimate.}
We observe that the term $\mathcal{R}_{2j}\theta$ is non-zero only for $j\geq 1$ and the weight $\chi_{2j}=g_{2j}(\kappa+(1+z)^{2cj -\al})\leq C(Z_0)$.

Using~\eqref{E:REMAINDER}, we have for the interior part
\begin{align}
\int_0^{Z_0} \chi_{2j} \frac{\bzeta_z^2}{\zeta_z^2} \mathcal R_{2j}\theta  \phi_{2j}  \diff z 
&= \sum_{\ell=1}^{2j} \sum_{Q\in \mathcal X_\ell, P\in \mathcal Y_{2j+2-\ell}} c^\ell_{PQ} 
\int_0^{Z_0} \chi_{2j} \frac{\bzeta_z^2}{\zeta_z^2}   |P\bG| |Q \theta|  |\phij| \diff z \notag\\
& \le C\sum_{\ell=1}^{2j} \sum_{Q,P}
\int_0^{Z_0} \chi_{2j}   |P\bG| |Q \theta|  |\phij| \diff z \notag\\
& \le C\sum_{\ell=1}^{2j} \sum_{Q,P} \int_0^{Z_0}\chi_{2j}^{\frac12} |\phij| \big(\sqrt{\kappa}+(1+z)^{cj-\frac\al2}\big)(1+z)^{-\frac{4+2j+2-\ell}{3}}|Q\th| \diff z \notag\\
& \le C(Z_0) \tE_{2j}^{\frac12} \sum_{\ell=1}^{2j} \sum_{Q\in\mathcal X_\ell}\Big(\int_0^{Z_0}|Q\th|^2\diff z\Big)^{\frac12}, \label{E:QUAD1}
\end{align}
where we have used the a priori bound $\big|\frac{\bzeta_z^2}{\zeta_z^2}\big|\leq C$, Lemma~\ref{L:PGBOUND}, $c<\frac23$, and $\ell\le2j$. 
From Lemma~\ref{L:EQUIVALENCE}, we estimate, using $1\leq\ell\leq 2j$,
\beq
\|Q\theta\|_{L^2(0,Z_0)}\leq C(Z_0)\|\Phi\|_{\H^{2j-1}_{Z_0}}.
\eeq
Using this in~\eqref{E:QUAD1} we conclude that for any $1\le j\le M $ we have
\beq
\Big|\int_0^{Z_0} \chi_{2j} \frac{\bzeta_z^2}{\zeta_z^2} \mathcal R_{2j}\theta  \phij  \diff z \Big|\leq C(Z_0)\|\Phi\|_{\H^{2j-1}_{Z_0}}\tE_{2j}^{\frac12}.\label{E:COMM1}
\eeq

In order to prove~\eqref{E:COMMBOUND1DIFF}, we first note from~\eqref{E:REMAINDER} that $\mathcal{R}_{2j}[\th_1]-\mathcal{R}_{2j}[\th_2]$ admits the very simple difference structure
\[\mathcal R_{2j} [\theta_1]-\mathcal R_{2j} [\theta_2] = \sum_{\ell=1}^{2j} \sum_{Q\in \mathcal X_\ell, P\in \mathcal Y_{2j+2-\ell}} c^\ell_{PQ} P\bG \,Q( \theta_1-\th_2).
\] Therefore a straightforward adaptation of the argument above yields the claimed estimate~\eqref{E:COMMBOUND1DIFF}.\\

\textit{Step 2: Exterior region.} In the exterior region $[Z_0,\infty)$, we first split the weight $\chi_{2j}\leq \kappa+(1+z)^{2cj-\al}$ and then, using the formula for $\mathcal R_{2j}$~\eqref{E:REMAINDER} as above,  we have
\begin{align}
\int_{Z_0}^{\infty}&\, (1+z)^{2cj-\al} \Big|\frac{\bzeta_z^2}{\zeta_z^2} \mathcal R_{2j}\theta  \phij \Big| \diff z 
 \le C\sum_{\ell=1}^{2j} \sum_{Q\in \mathcal X_\ell \atop P\in \mathcal Y_{2j+2-\ell}}
\int_{Z_0}^{\infty} \chi_{2j}   |P\bG| |Q \theta|  |\phij| \diff z \notag\\
& \le  C\sum_{\ell=1}^{2j}  \sum_{Q\in \mathcal X_\ell \atop P\in \mathcal Y_{2j+2-\ell}}\int_{Z_0}^\infty \chi_{2j}^{\frac12}\phij (1+z)^{(\frac{\ell}{2}-1)c-\frac{\al}{2}-\frac23}|Q\th| (1+z)^{(c-\frac23)(j-\frac\ell2+1)-\frac23}\diff z\notag\\
& \le C \sum_{\ell=1}^{2j}  \sum_{Q\in \mathcal X_\ell \atop P\in \mathcal Y_{2j+2-\ell}}(1+Z_0)^{(c-\frac23)(j-\frac {\ell}2+1)-\frac23} \tilde\E_{2j}^{\frac12}\tE_{\leq 2j}^{\frac12}\notag\\
& \le C Z_0^{-\frac 23}\tE_{\leq 2j},\label{E:COMM2}
\end{align}
where we have used Lemma~\ref{L:QBOUNDS}. 
By analogous bounds we show that 
\begin{align}
\kappa&\, \int_{Z_0}^{\infty}  \frac{\bzeta_z^2}{\zeta_z^2} \mathcal R_{2j}\theta  \phij  \diff z \le C \|\sqrt \kappa \phij\|_{L^2([Z_0,\infty))} \sum_{\ell=2}^{2j} \sum_{Q\in X_\ell}\|\sqrt\kappa Q\th z^{-1}\|_{L^2([Z_0,\infty))}\|z^{1-\frac{4+2j-\ell+2}{3}}\|_{L^\infty([Z_0,\infty))}\notag\\
&+ C \|\sqrt \kappa \phij\|_{L^2([Z_0,\infty))}\sum_{Q\in X_1}\|\sqrt\kappa Q\th z^{-\frac43}\|_{L^2([Z_0,\infty))}\|z^{-\frac{2j-1+2}{3}}\|_{L^\infty([Z_0,\infty))}\notag\\
& \le CZ_0^{-1}\tE_{\leq 2j},\label{E:COMM3}
\end{align}
where we have used Lemma~\ref{L:KAPPAQBOUND}. 
Adding up~\eqref{E:COMM1}--\eqref{E:COMM3} we obtain the claim of the lemma.
\end{proof}

\begin{lemma}\label{L:COMMBOUND2}
Under the assumptions of Proposition~\ref{P:EE2}, 
for any $j\in\{1,\dots M\}$ the following bound holds 
\begin{align}
\Big| \int  \chi_{2j} \mathcal N_{2j}[\theta] \phij \diff z\Big| \le C(Z_0)\tE_{\leq 2j} \tE_{\leq 2[\frac{j+3}{2}]}^{\frac12} +C   Z_0^{-\frac12}\tE_{\leq 2j}^{\frac32}+C Z_0^{-\frac{\al}{2}+(2M+1)\de-\frac{c}{2}-\frac16}\tE_{\leq 2j}^{2} ,
\end{align}
where we recall the error term $\mathcal N_{2j}$ from~\eqref{E:NTWOMDEF}.

Moreover, under the assumptions of Proposition~\ref{P:HMZDIFF}, setting $\vartheta=\theta_1-\theta_2$ and $\varphi=\phi_1-\phi_2$, for $j\leq \m$,
\begin{align}
\Big| \int_0^{Z_0}  g_{2j} \Big(\mathcal N_{2j}[\theta_1]-\mathcal N_{2j}[\theta_2]\Big) \varphi_{2j} \diff z\Big| \le C(Z_0)\|\Phi_1-\Phi_2\|_{\mathcal{H}^{2j}_{Z_0}}^2\big(\tE_{2j}[\Phi_1]^{\frac12}+\tE_{2j}[\Phi_2]^{\frac12}\big) .\label{E:COMMBOUND2DIFF}
\end{align}
\end{lemma}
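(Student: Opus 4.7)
The plan is to proceed in the same spirit as Lemma~\ref{L:COMMBOUND1}: expand the commutator $\mathcal N_{2j}[\theta]$ via the product rule and then split the integral into an interior region $[0,Z_0]$ and an exterior region $[Z_0,\infty)$, using different strategies on each. By the Leibniz-type rule~\eqref{E:XJPROD} for $\bD^{2j}$, we will first write
\[
\mathcal N_{2j}[\theta] = \sum_{k=1}^{2j} \sum_{Q,P} c_{QP}\, Q\!\Big(\frac{\bzeta_z^2}{\zeta_z^2}\Big)\, P(K\theta),
\]
where $Q,P$ range over the appropriate operator classes of orders $k$ and $2j-k$ with alternating parity. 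The key structural observation is that $k\geq 1$, so writing $\frac{\bzeta_z^2}{\zeta_z^2}-1 = f_5(\theta_z/\bzeta_z)$ with $f_5(x)=(1+x)^{-2}-1$ (so $f_5(0)=0$), the factor $Q(\frac{\bzeta_z^2}{\zeta_z^2})$ is always a derivative of a nonlinear function of $\theta_z/\bzeta_z$ and therefore inherits a $\theta$-smallness via Lemmas~\ref{L:THETAZNONLINEAREST} and~\ref{L:NONLINEARPQEXPANSION}.

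For the interior estimate on $[0,Z_0]$, the weight $\chi_{2j}$ is bounded by $C(Z_0)$, and we will employ the Sobolev equivalence~\eqref{E:HSOBOLEVEQUIV} to place the unique highest-order factor (appearing either in $Q(\frac{\bzeta_z^2}{\zeta_z^2})$ or in $P(K\theta)$) in weighted $L^2$ and the remaining factors in $L^\infty$, the latter being controlled by $\tE_{\leq 2[(j+3)/2]}^{1/2}$ through the usual Hardy-Sobolev embedding. Combined with the bound $\|\phi_{2j}\|_{L^2(0,Z_0)}\leq C(Z_0)\tE_{\leq 2j}^{1/2}$, this yields the first term $C(Z_0)\tE_{\leq 2j}\tE_{\leq 2[(j+3)/2]}^{1/2}$.

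For the exterior contribution on $[Z_0,\infty)$, we split $\chi_{2j}\leq g_{2j}(\kappa + (1+z)^{2cj-\alpha})$ and estimate the two contributions independently. For the growing-weight piece we apply the $L^2$ bound~\eqref{E:PJNL2IMP} (available because $f_5(0)=0$ supplies quadratic smallness) to $Q(\frac{\bzeta_z^2}{\zeta_z^2})$ and the bounds~\eqref{E:QKL2}--\eqref{E:QKLINFTY} to $P(K\theta)$; for the $\kappa$-piece, we use~\eqref{E:PJNLINFTY} and~\eqref{E:QKL2} analogously. In each summand, one nonlinear factor is placed in $L^2$ paired against $\chi_{2j}^{1/2}\phi_{2j}$ (which contributes $\tE_{\leq 2j}^{1/2}$) while the remaining factor is placed in $L^\infty$; the Cauchy-Schwarz calculation and careful accounting of the resulting weight exponents via~\eqref{E:AI1}--\eqref{E:AI2} produces the cubic gain $CZ_0^{-1/2}\tE_{\leq 2j}^{3/2}$ from the leading bilinear-in-$\theta$ contribution, while expansions producing additional $\theta$-factors (from the Faà di Bruno expansion underlying~\eqref{E:PJNL2IMP}) yield the quartic term $CZ_0^{-\alpha/2+(2M+1)\delta-c/2-1/6}\tE_{\leq 2j}^{2}$, the $Z_0$ exponent tracking the unbalanced leftover weight from a loss of one full order of $z$ per additional $\theta$-factor. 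The difference estimate~\eqref{E:COMMBOUND2DIFF} then follows by running the interior argument once more, this time linearising $f_5(\theta_{1,z}/\bzeta_z)-f_5(\theta_{2,z}/\bzeta_z)$ in $\theta_{1,z}-\theta_{2,z}$ via the fundamental theorem of calculus, which naturally produces the advertised $(\tE_{2j}[\Phi_1]^{1/2}+\tE_{2j}[\Phi_2]^{1/2})$ prefactor.

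The principal obstacle is the exterior book-keeping: in each multilinear summand one must verify that the growing weight $(1+z)^{2cj-\alpha}$, the algebraic decay coming from $Q(\frac{\bzeta_z^2}{\zeta_z^2})$ and $P(K\theta)$, and the Sobolev embedding losses conspire so that the top-order factor is controlled by $\tE_{\leq 2j}$ alone (rather than a higher-order energy) while a strictly negative residual power of $Z_0$ is retained. The cleanest path is to expand each nonlinear factor via Lemma~\ref{L:THETAZNONLINEAREST} and then invoke the parity-sensitive weighted Hardy inequalities of Lemmas~\ref{L:QBOUNDS}--\ref{L:KAPPAQINFTY}; in particular, the use of~\eqref{E:PJNL2IMP} rather than~\eqref{E:PJNL2} is essential to obtain the improved $Z_0$-dependence, as it exploits the $\theta$-smallness of $f_5$ at the origin to trade an otherwise neutral term for a small factor.
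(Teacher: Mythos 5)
Your decomposition of $\mathcal N_{2j}[\theta]$ via the Leibniz rule, the key observation that the operator acting on $\frac{\bzeta_z^2}{\zeta_z^2}$ always has order $\geq 1$ (so that it carries $\theta$-smallness), the interior $L^\infty$--$L^2$ splitting, the treatment of the $\kappa$-part of the exterior, and the idea for the difference estimate all match the paper's proof in spirit. However, the treatment of the growing-weight piece $(1+z)^{2cj-\alpha}$ on $[Z_0,\infty)$ has a genuine gap. You propose to control $Q\big(\frac{\bzeta_z^2}{\zeta_z^2}\big)$ there via~\eqref{E:PJNL2IMP}, but that estimate (like~\eqref{E:PJNL2} and~\eqref{E:QKL2}) is a $\kappa$-weighted $L^2$ bound with the weight $\kappa z^{-2/3}$, which is \emph{smaller} than $(1+z)^{2cj-\alpha}$ for large $z$ because $2cj-\alpha>0$ by~\ref{item:a1}. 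Trying to absorb the excess by placing $z^{1/3}QK\theta$ in $L^\infty$ via~\eqref{E:QKLINFTY} leaves behind a residual weight $(1+z)^{2cj-\alpha-2/3}$ that is still unbounded, so the estimate cannot close this way. The paper instead treats the growing-weight piece with \emph{pointwise} decay estimates: Lemma~\ref{L:THETAZNONLINEAREST} applied to $P\big(\frac{\bzeta_z^2}{\zeta_z^2}\big)=Pf\big(\frac{\theta_z}{\bzeta_z}\big)$ together with analogous pointwise bounds on $QK\theta$ derived from Lemma~\ref{L:weightedthetaHardy}, multiplies these together, and integrates the product directly against $(1+z)^{2cj-\alpha}$; it is precisely this calculation that generates the explicit exponent $-\frac{\al}{2}+(2M+1)\de-\frac{c}{2}-\frac16$.

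Relatedly, the claim that~\eqref{E:PJNL2IMP} is ``essential to obtain the improved $Z_0$-dependence'' misattributes the mechanism: the paper does not invoke~\eqref{E:PJNL2IMP} at all in this lemma (it does use it for $\mathfrak R[\theta]$ in Proposition~\ref{P:EXT1}, where a genuinely cubic cancellation is needed), and the unimproved bounds~\eqref{E:PJNL2},~\eqref{E:PJNLINFTY},~\eqref{E:QKL2},~\eqref{E:QKLINFTY} already suffice for the $\kappa$-piece because each summand $P\big(\frac{\bzeta_z^2}{\zeta_z^2}\big)\,QK\theta$ is already at least bilinear in $\theta$. On the other hand, your approach to the difference estimate via the fundamental theorem of calculus applied to $f_5\big(\frac{\theta_{1,z}}{\bzeta_z}\big)-f_5\big(\frac{\theta_{2,z}}{\bzeta_z}\big)$ is a valid alternative to the paper's explicit algebraic factorisation of $\zeta_{2,z}^2-\zeta_{1,z}^2=(\theta_{2,z}-\theta_{1,z})(\zeta_{2,z}+\zeta_{1,z})$; both routes produce the advertised prefactor.
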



\begin{proof}
For every term, we will estimate 
\[\Big| \int  \chi_{2j} \mathcal N_{2j}[\theta] \phij \diff z\Big| \le \bigg(\int \chi_{2j}|\mathcal N_{2j}[\theta]|^2\,\dif z\bigg)^{\frac12}\tE_{2j}^{\frac12},\]
and so we focus on the first factor on the right. Applying the product rule Lemma~\ref{L:XYPRODCHAIN} repeatedly, we may write
\begin{align}
\mathcal N_{2j}[\th] &= \sum_{\ell=0}^{2j-1} \sum_{P\in \Y_{2j-\ell} \atop Q\in \X_{\ell}}c_{PQ}  P \big(\frac{\bzeta_z^2}{\zeta_z^2}\big) Q K\th.\label{E:N2JEXPANSION}
\end{align}
Note that $\frac{\bzeta_z^2}{\zeta_z^2} = f(\frac{\bp\th}{\bp\bzeta})$ for $f(x)=\frac{1}{(1+x)^2}$.\\

\noindent \textit{Step 1: Interior region estimate}\\
As in the proof of Lemma~\ref{L:NBOUND}, an inductive argument based on Lemma~\ref{L:XYPRODCHAIN} shows that for any $P\in \Y_{2j-\ell}$, $0\le \ell\le2j-1$, $P \big(\frac{\bzeta_z^2}{\zeta_z^2}\big)$ is a sum of terms of the form
\begin{align}
f^{(k)}(\sqrt{\bG} \bp\theta)  P_{j_1}(\sqrt{\bG} \bp\theta) P_{j_2}(\sqrt{\bG} \bp\theta) \dots P_{j_{k}}(\sqrt{\bG} \bp\theta)
\end{align}
for some $j_1,\dots,j_{k+2}\in\mathbb N\cup\{0\}$, where 
\[
1\le k \le 2j-\ell, \ \ P_{j_n}\in \mathcal Y_{j_n},  \ \ n\in\{1,\dots, k\}, \ \ \ j_1+j_1+\dots j_{k} = 2j-\ell.
\]
By the a priori bound~\eqref{E:APRIORIMAINPT}, we have $|f^{(k)}(\sqrt{\bG}\bp\theta)|\leq C$ for all $k\geq 1$.
In the interior region, $z\in(0,Z_0)$, as in the proof of Lemma~\ref{L:NBOUND}, we easily find
\begin{align}
|P \big(\frac{\bzeta_z^2}{\zeta_z^2}\big)| \leq C(Z_0) \sum_{k=1}^{2j-\ell+1}\sum_{P'\in \X_k}|P'\th|,
\end{align}
where we used the bound $2j-\ell\ge1$.
Similarly to the proof of Lemma~\ref{L:KEYCOMM} we can show that 
\begin{align}
|Q K\th| \leq C(Z_0) \sum_{k=0}^{\ell+2}\sum_{Q'\in \X_k} |Q'\th|, \ \ Q\in \X_\ell, z\leq Z_0.
\end{align}
 Observe that for $0\leq\ell\leq 2j-1\leq 2M-1$, at most one of $2j-\ell+1$ and $\ell+2$ is greater than or equal to $j+2$, while the other is less than or equal to $j+1$. Applying  the Hardy-Sobolev embedding~\eqref{E:XKHARDY} and Lemma~\ref{L:EQUIVALENCE}, we deduce
\beqa\label{E:N2JINT}
\Big| \int_0^{Z_0} \chi_{2j}  \mathcal N_{2j}[\theta] \phij \diff z\Big|\leq &\, C(Z_0)\sqrt{\kappa}\Big(\int_0^{Z_0}\chi_{2j}\phi_{2j}^2\Big)^{\frac12}\Big(\int_0^{Z_0}\sum_{k=0}^{2j+1}\sum_{Q'\in \X_k} |Q'\th|^2\Big)^{\frac12}\Big\|\sum_{k=0}^{j+1}\sum_{P'\in \X_k} |P'\th|\Big\|_{L^\infty(0,Z_0)}^{\frac12}\\
\leq&\, C(Z_0)\sqrt{\kappa}\tE_{2j}^{\frac12}\|\Phi\|_{\mathcal{H}^{2j}_{Z_0}}\|\Phi\|_{\H^{j+2}_{Z_0}}\leq C(Z_0)\tE_{\leq 2j}  \tE_{\leq 2[\frac{j+3}{2}]}^{\frac12}. 
\eeqa

Before turning to the exterior region, we now note that a similar argument is sufficient to deduce the difference estimate~\eqref{E:COMMBOUND2DIFF}. In particular, it is straightforward to see that, expanding $\mathcal{N}_{2j}$ as in~\eqref{E:N2JEXPANSION}, any given term in the sum satisfies the difference identity
\[P \big(\frac{\bzeta_z^2}{\zeta_{1,z}^2}\big) Q K\th_1-P \big(\frac{\bzeta_z^2}{\zeta_{2,z}^2}\big) Q K\th_2=P \Big(\frac{\bzeta_z^2}{\zeta_{1,z}^2}\frac{\zeta^2_{2,z}-\zeta^2_{1,z}}{\zeta_{2,z}^2}\Big) Q K\th_1+P \big(\frac{\bzeta_z^2}{\zeta_{2,z}^2}\big) (Q K\th_1-QK\th_2),
\]
which is enough to apply a simple adaptation of the previous argument and conclude, as $\zeta_{2,z}^2-\zeta_{1,z}^2=(\th_{2,z}-\th_{1,z})(\zeta_{2,z}+\zeta_{1,z})$.\\

\noindent \textit{Step 2: Exterior region, growing weight.}\\
In the exterior region, we again split the weight $\chi_{2j}$ into the $\kappa$ term and the growing weight $(1+z)^{2cj-\al}$. We treat the contribution from the latter first. Return to~\eqref{E:N2JEXPANSION}. Setting $f(x)=(1+x)^{-2}$ and $x(z)=\frac{\pa_z\theta}{\pa_z\bzeta}$, clearly $f(x(z))$ satisfies the assumptions of Lemma~\ref{L:THETAZNONLINEAREST} and hence satisfies estimate~\eqref{E:THETAZNONLINEAREST}.
Thus, for $P\in \Y_{2j-\ell}$, as $Pf=\sum_{k=1}^{2j-\ell}c_{jk}z^{k-\frac{2j-\ell}{3}}\pa_z^k f$, we find
\beqa\label{E:N2JPTERMS}
\big|Pf(x(z))\big|\leq&\, C\sum_{\substack{ k=1 \\ k<2j}}^{2j-\ell}\Big(z^{k-\frac{2j-\ell}{3}-a_{k+1}}\tE^{\frac12}_{\le 2[\frac{k+2}{2}] }+z^{k-\frac{2j-\ell}{3}-(\frac{c}{2}+\frac23-\de)k}\tE_{\le 2[\frac{k+1}{2}] }\Big) + z^{-\frac23}|\bD^{2j+1}\theta|\mathbf{1}_{\ell=0}\\
\leq&\,C\Big(z^{-\frac{c}{2}(2j-\ell)+\de}\tE^{\frac12}_{\le 2j }+z^{-\frac{c}{2}(2j-\ell)+(2j-\ell)\de}\tE_{\le 2j } + z^{-\frac23}|\bD^{2j+1}\theta|\mathbf{1}_{\ell=0}\Big),
\eeqa
where we have used that $k-\frac{2j-\ell}{3}-a_{k+1}\leq -\frac{c}{2}(2j-\ell)+\de$ and $k-\frac{2j-\ell}{3}-(\frac{c}{2}+\frac23-\de)k\leq -\frac{c}{2}(2j-\ell)+k\de$ for $k\leq 2j-\ell$.

Turning to the second factor in each term of~\eqref{E:N2JEXPANSION}, we first consider $Q\in\X_\ell$ for $\ell\leq 2M-2$ and use that for any $Q\in \X_k$, we have $Qg=\sum_{i=0}^{k}c_{ki}z^{i-\frac{k}{3}}\pa_z^i g$ to estimate, from Lemmas~\ref{L:PGBOUND}--\ref{L:XYPRODCHAIN},
\beqa
|QK\theta|\leq&\, C\sum_{k=1}^{\ell+2}\sum_{Q\in\X_k}(1+z)^{-\frac{4+\ell+2-k}{3}}|Q\theta|\leq C\sum_{i=0}^{\ell+2}(1+z)^{i-\frac{\ell}{3}-2}|\pa_z^i\theta|\\
\leq&\, C\sum_{i=0}^{\ell+2}(1+z)^{i-\frac{\ell}{3}-2-a_i}\tE_{\le \IUP}^{\frac12} \leq C(1+z)^{-\frac{c}{2}\ell+\de -\frac23-\frac{c}{2}}\tE_{\leq 2[\frac{\ell+3}{2}]}^{\frac12},
\eeqa
where we have applied~\eqref{Linfty1} and  used $i-\frac{\ell}{3}-a_i\leq -\frac{c}{2}\ell+\de +\frac43-\frac{c}{2}$. If $\ell=2j-1$, we split the sum and  argue similarly to~\eqref{E:DAMPENINGTOPORDER} and obtain
\beqa\label{E:N2JQTERMS}
|QK\theta|\leq&\, C\sum_{k=1}^{2j+1}\sum_{Q\in\X_k}(1+z)^{-\frac{4+(2j-1)+2-k}{3}}|Q\theta|\leq C\sum_{i=0}^{2j+1}(1+z)^{i-\frac{2j-1}{3}-2}|\pa_z^i\theta|\\
\leq&\, C\sum_{i=0}^{2j}(1+z)^{i-\frac{2j-1}{3}-2-a_i}\tE_{\le \IUP}^{\frac12} + z^{\frac{2(2j-1)}{3}}|\pa_z^{2j+1}\theta|\\
\leq&\,C(1+z)^{-\frac{c}{2}(2j-1)+\de -\frac23-\frac{c}{2}}\tE_{\leq 2j}^{\frac12} +z^{-\frac{4}{3}}|\bD^{2j+1}\theta|,
\eeqa
where we have used $\frac{2(2j-1)}{3}-a_{2j}-1\leq -cj+\de-\frac23$ in the last step.

Taking the product of terms in~\eqref{E:N2JPTERMS}--\eqref{E:N2JQTERMS}, we therefore obtain 
\begin{align}
\int_{Z_0}^\infty&\, (1+z)^{2cj-\al}|\mathcal N_{2j}[\theta]|^2\leq C \int_{Z_0}^\infty (1+z)^{2cj-\al} \sum_{\ell=0}^{2j-1} \sum_{P\in \Y_{2j-\ell} \atop Q\in \X_{\ell}} \big| P \big(\frac{\bzeta_z^2}{\zeta_z^2}\big)\big|^2 |Q K\th|^2\notag\\
\leq C &\,\int_{Z_0}^\infty  \sum_{\ell=0}^{2j-1}(1+z)^{2cj-\al}\Big(z^{-c(2j-\ell)+2\de}\tE_{\leq 2j}+z^{-c(2j-\ell)+2(2j-\ell)\de}\tE_{\leq 2j}^2 + z^{-\frac43}|\bD^{2j+1}\theta|^2\mathbf{1}_{\ell=0}\Big)\notag\\
&\ \ \ \ \ \ \ \times\Big((1+z)^{-c\ell+2\de -\frac43-c}\tE_{\leq 2j} +z^{-\frac{8}{3}}|\bD^{2j+1}\theta|^2\mathbf{1}_{\ell=2j-1}\Big)\notag\\
\leq C &\, \sum_{\ell=0}^{2j-1}\int_{Z_0}^\infty\Big((1+z)^{-\al+4\de-c-\frac43}\tE_{\leq 2j}^2+(1+z)^{-\al+2(2j-\ell+1)\de-c-\frac43}\tE_{\leq 2j}^{3} \notag\\
&\ \ \ \ \ \ +\Big(z^{2cj-\al-c+2\de-\frac83}\tE_{\leq 2j}+z^{2cj-\al-c+2\de-\frac83}\tE_{\leq 2j}^2\Big)|\bD^{2j+1}\theta|^2\Big)
\notag\\
\leq C &\,Z_0^{-\al+4\de-c-\frac13}\tE_{\leq 2j}^2+Z_0^{-\al+2(2j+1)\de-c-\frac13}\tE_{\leq 2j}^{3} +  Z_0^{-c- \frac43+2\delta }  (\tE_{\leq 2j}^2+\tE_{\leq 2j}^3).\label{E:N2JEXT1}
\end{align}

\noindent\textit{Step 3: Exterior region, constant weight}\\
To control the contribution arising from the term $\kappa$  in the weight $\chi_{2j}$, we employ a different strategy. Returning to~\eqref{E:N2JEXPANSION}, we estimate from Lemma~\ref{L:NONLINEARPQEXPANSION}
\beqa\label{E:N2JEXT2}
\kappa&\,\int_{Z_0}^\infty |\mathcal N_{2j}[\theta]|^2\\
&\leq C\kappa\int_{Z_0}^\infty\Big(\sum_{\ell=0}^{j-1}\sum_{P\in \Y_{2j-\ell} \atop Q\in \X_{\ell}}|P(f(\frac{\bp\th}{\bp\bzeta}))|^2|QK\th|^2+\sum_{\ell=j}^{2j-1}\sum_{P\in \Y_{2j-\ell} \atop Q\in \X_{\ell}}|P(f(\frac{\bp\th}{\bp\bzeta}))|^2|QK\th|^2\Big)\\
&\leq C\sum_{\ell=0}^{j-1}\sum_{P\in \Y_{2j-\ell} \atop Q\in \X_{\ell}}\Big\|z^{\frac13}QK\theta\Big\|_{L^\infty(Z_0,\infty)}^2\kappa\int_{Z_0}^\infty z^{-\frac23}\Big|P\Big(f(\frac{\bp\th}{\bp\bzeta})\Big)\Big|^2\\
&\ \ \ \ +\sum_{\ell=j}^{2j-1}\sum_{P\in \Y_{2j-\ell} \atop Q\in \X_{\ell}}\Big\|P\Big(f\big(\frac{\theta_z}{\bzeta_z}\big)\Big)\Big\|_{L^\infty(Z_0,\infty)}^2\kappa\int_{Z_0}^\infty|QK\th|^2\\
&\leq CZ_0^{-1}\tE_{\leq 2j}^2.
\eeqa
Summing~\eqref{E:N2JINT},~\eqref{E:N2JEXT1} and~~\eqref{E:N2JEXT2}, and noting that $1<\al-4\de+c+\frac13$, we conclude.
\end{proof}


\subsection{The error term $\mathfrak R[\th]$}


In this section we estimate on the most complex error term $\int \chi_{2j} \bD^{2j}  \mathfrak R[\th] \phij \diff z$ appearing on the right-hand side of~\eqref{Energy-high}.
The error $\mathfrak R[\th]$ defined in~\eqref{E:MATHFRAKRDEF} is very sensitive to the different behaviour of the solution in the interior ($z\le Z_0$) and the exterior ($z\ge Z_0$). As a consequence,
we will have to rewrite $\mathfrak R[\th]$ in the interior region in a way that will allow us to close the estimates. In its current form~\eqref{E:MATHFRAKRDEF} we are facing a possible derivative loss issues
having to do the with the $\bzeta\sim_{z\to0}z^{\frac13}$-singularity at the comoving origin $z=0$.


\subsubsection{Exterior estimate}


We start  with the exterior estimate.

\begin{proposition}[Exterior bound]\label{P:EXT1}
Under the assumptions of Proposition~\ref{P:EE2}, 
for any $j\in\{1,\dots M\}$ the following bound holds: 
\begin{align}
\Big|\int_{Z_0}^\infty \chi_{2j} \bD^{2j}\mathfrak R[\th] \phij \diff z \Big| \le C\Big(Z_0^{-\frac{\al+1}{2}+\de}\tE_{\leq 2j} + Z_0^{-\frac{\al+1}{2}+2\de j}\tE_{\leq 2j}^{\frac32}\Big), 
\end{align}
where we recall definition~\eqref{E:MATHFRAKRDEF} of $\mathfrak R[\th]$.
\end{proposition}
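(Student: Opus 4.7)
The plan is to bound the four contributions to $\mathfrak R[\theta]$ from~\eqref{E:MATHFRAKRDEF} separately after applying Cauchy--Schwarz against $\phi_{2j}$, exploiting the uniform control
\[
\Big(\int_{Z_0}^\infty \chi_{2j}\phi_{2j}^2\,\diff z\Big)^{1/2} \le \tE_{2j}^{1/2}.
\]
It then suffices to establish, for each of the four pieces of $\mathfrak R[\theta]$,
\[
\int_{Z_0}^\infty \chi_{2j}\big|\bD^{2j}\mathfrak R[\theta]\big|^2\,\diff z \le C\Big(Z_0^{-(\al+1)+2\de}\tE_{\leq 2j} + Z_0^{-(\al+1)+4\de j}\tE_{\leq 2j}^{2}\Big),
\]
after which the claimed estimate follows from Cauchy--Schwarz. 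Throughout the argument, the weight $\chi_{2j}$ is split into the two natural pieces $\kappa g_{2j}$ and $g_{2j}(1+z)^{2cj-\al}$, and the former is controlled using Lemmas~\ref{L:KAPPAQBOUND},~\ref{L:KAPPAQINFTY}, and~\ref{L:NONLINEARPQEXPANSION}, while the latter is controlled using Lemmas~\ref{L:QBOUNDS},~\ref{L:weightedthetaHardy}, and~\ref{L:THETAZNONLINEAREST}.

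\textbf{Linear contributions.} I first treat the three linear-in-$\theta$ terms in $\mathfrak R[\theta]$, namely $\frac{\bzeta_z^2}{\zeta_z^2}\mathcal V_1\theta$, $-\frac{2\theta}{\zeta\bzeta}$, and $\frac{\CLP z \theta(2\bzeta+\theta)}{\zeta^2\bzeta^2}$. In each case the LP coefficient (obtained by replacing $\zeta$ with $\bzeta$) decays at least like $z^{-2}$ in the far field, thanks to $\mathcal V_1\sim z^{-2}$, $\bzeta^{-2}\sim z^{-2}$, and the structure $z\bzeta^{-3}\sim z^{-2}$. Writing each coefficient as $(\text{smooth LP decaying factor})\cdot f(\bG\bp\theta/\bp\bzeta)$ or $\cdot f(\theta/\bzeta)$ for appropriate smooth $f$, I apply the Leibniz expansion from Lemma~\ref{L:XYPRODCHAIN}, distribute $\bD^{2j}$, and use Lemma~\ref{L:XJDECOMP} to convert weighted derivatives into ordinary ones multiplied by powers of $z$. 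The top-order factor (whichever carries the most derivatives) goes into $L^2$ and is estimated via Lemmas~\ref{L:QBOUNDS} and~\ref{L:KAPPAQBOUND}; the remaining factors are placed in $L^\infty$ via Lemma~\ref{L:weightedthetaHardy} and the nonlinear $L^\infty$ bounds of Lemma~\ref{L:NONLINEARPQEXPANSION}. The numerology is arranged via the exponent inequalities~\eqref{E:AI1}--\eqref{E:AI2} so that the accumulated weight is $(1+z)^{2cj-\al-4}|\bD^{2j}\theta|^2$ plus lower-order pieces, yielding the target $Z_0^{-(\al+1)+2\de}\tE_{\leq 2j}$.

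\textbf{Quadratic contribution.} The genuinely quadratic term $-\frac{\bzeta_{zz}}{\bzeta_z^2\zeta_z^2}(\pa_z\theta)^2$ is the most delicate, since its coefficient behaves like $z^{-2}$ and the Leibniz expansion of $\bD^{2j}$ produces terms of the schematic form $(1+z)^{-2-\text{more}}\,\pa_z^{j_1}\theta\,\pa_z^{j_2}\theta$ with $j_1+j_2=2j+2$. I will place the lower-derivative factor (order at most $j+2\le M+1$) into $L^\infty$ via~\eqref{Linfty1} of Lemma~\ref{L:weightedthetaHardy}, which yields a factor $z^{-a_{j_1}}\tE_{\le 2j}^{1/2}$. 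The higher-derivative factor goes into weighted $L^2$ via Lemma~\ref{L:QBOUNDS} and, at the very top order $j_2=2j+1$, via the sharp identity~\eqref{E:DAMPENINGTOPORDER} which trades $\pa_z^{2j+1}\theta$ for $z^{-\frac23(2j+1)}|\bD^{2j+1}\theta|$ modulo lower-order terms directly absorbable into $\tE_{\leq 2j}$. Tracking all the accumulated $z$-powers via~\eqref{def:delta} and~\eqref{E:AI1}--\eqref{E:AI2} yields the bound $Z_0^{-(\al+1)+4\de j}\tE_{\leq 2j}^{2}$, where the $4\de j$ slack accumulates because every one of the $2j$ derivatives applied through the Fa\`a di Bruno chain rule on $f(\th_z/\bzeta_z)$ contributes a factor bounded in terms of $\de$. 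Combining these two cases with Cauchy--Schwarz produces the cubic-in-energy contribution $Z_0^{-\frac{\al+1}{2}+2\de j}\tE_{\leq 2j}^{3/2}$.

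\textbf{Anticipated difficulty.} The main obstruction is the careful book-keeping of $z$-powers in the quadratic term at top order: a naive Cauchy--Schwarz would either lose a derivative or fail to produce a negative power of $Z_0$ large enough to absorb the nonlinear contribution on the left of the energy estimate in Proposition~\ref{P:EE2}. The sharp top-order identity~\eqref{E:DAMPENINGTOPORDER} of Lemma~\ref{L:weightedthetaHardy}, the precise choice of the exponents $a_k$ in~\eqref{DEF:ak}, and the hierarchy of weights $\chi_{2j}$ (calibrated by assumption~\ref{item:a2}) are tailored exactly so that the cubic contribution produces the power $-(\al+1)/2+2\de j$, which remains strictly negative provided $\de\leq 1/(2M)$.
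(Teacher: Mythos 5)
Your proposal correctly identifies the structure of the paper's proof: Cauchy--Schwarz against $\phi_{2j}$ in the weighted $L^2$ norm, split the weight $\chi_{2j}$ into the constant ($\kappa$) and growing ($(1+z)^{2cj-\al}$) parts, write each piece of $\mathfrak R[\theta]$ as a decaying LP coefficient $h(z)$ times an analytic function $f$ of $\theta_z/\bzeta_z$ or $\theta/\bzeta$, and close using the same technical lemmas (Lemma~\ref{L:THETAZNONLINEAREST} for the growing weight, Lemma~\ref{L:NONLINEARPQEXPANSION} for the $\kappa$-part, with the top-order identity~\eqref{E:DAMPENINGTOPORDER} to trade $\pa_z^{2j+1}\theta$ against $\bD^{2j+1}\theta$). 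Two cosmetic slips worth flagging but not affecting correctness: the quadratic coefficient $\bzeta_{zz}/(\bzeta_z^2\zeta_z^2)$ actually decays like $z^{-3}$ (not $z^{-2}$ as stated), which only makes its contribution even smaller than you estimate; and when placing the lower-order factor in $L^\infty$, the relevant constraint is that it carries at most $j+1\le 2M$ derivatives, not "$j+2\le M+1$" as written.
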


\begin{proof} We recall~\eqref{E:MATHFRAKRDEF}:
\begin{align} 
\mathfrak R[\theta] &=  \frac{\bzeta_z^2}{\zeta_z^2} \mathcal V_1 \th  - \frac{2\th}{\zeta\bzeta}   + \frac{\CLP z\theta(2\bzeta+\th)}{\zeta^2\bzeta^2}   
 -  \frac{\bzeta_{zz}}{\bzeta_z^2 \zeta_z^2} (\pa_z \theta)^2.  \label{E:MATHFRAKRDEF2}
\end{align}
We present in detail the bound for the second term and the last term on the right-hand side of~\eqref{E:MATHFRAKRDEF2}, as the bounds on the remaining two terms  follow analogously. 

We begin with the last term, which contains a quadratic nonlinearity of the highest order, $(\pa_z\theta)^2$. Note from Lemma~\ref{L:ZETABAR} that 
\be
 \frac{\bzeta_{zz}}{\bzeta_z^2}=: h (z) = O(z^{-3}), \quad \pa_z^i h (z) = O(z^{-3-i}). 
\ee
Let $j=0$. We use \eqref{theta_bound} and \eqref{Linfty1} with $k=1$ to get  
\begin{align*}
\int_{Z_0}^\infty \kappa(1+z)^{-\alpha } \left( \frac{\bzeta_{zz}}{\bzeta_z^2} (\frac{\pa_z\theta}{\zeta_z})^2 \right)^2 \diff z
& \leq C Z_0^{-6 } \|\pa_z\th\|_{L^\infty_{\{z\ge Z\}}}^2 \int_{Z_0}^\infty \kappa(1+z)^{-\alpha} |\pa_z\th|^2 \diff z   \notag\\
& \leq C Z_0^{-6}   C_*^2\eps_0 \tE_0 
\end{align*}
where we have used the pointwise estimate~\eqref{E:APRIORIMAINPT} for $\th_z$ and hence also $\zeta_z$.
To handle $j\ge 1$,  we use the same strategy as in the proof of Lemma~\ref{L:CUTOFFBOUND}. To that end, 
we introduce further notations 
\begin{align*}
 \frac{\bzeta_{zz}}{\bzeta_z^2} (\frac{\pa_z\theta}{\zeta_z})^2 = h(z) f(x(z)), \quad x= \frac{\pa_z\theta}{\pa_z\bzeta}, \quad f(x) = (\frac{x}{1+x})^2,
\end{align*}
so that $f(x(z))$ satisfies the estimate~\eqref{E:THETAZNONLINEAREST3}.

Therefore, using the representation formula $\bD^{2j} (hf) = 
\sum_{k=0}^{2j} c_{jk} z^{k -\frac{2j}{3}}\pa_z^{k}(hf)$ and the product formula $\pa_z^k (h f ) =   \sum_{\ell =0}^k c_\ell \pa_z^{k-\ell} h \pa_z^\ell f $, we deduce that 
 \begin{align}
\Big|\bD^{2j} \left( \frac{\bzeta_{zz}}{\bzeta_z^2} (\frac{\pa_z\theta}{\zeta_z})^2 \right) \Big| &\leq C \sum_{k=0}^{2j} z^{k-\frac{2j}{3}}  \sum_{\substack{\ell=0 \\ \ell< 2j}}^k z^{-(k-\ell+3)} ( z^{-a_1-a_{\ell +1}}  +  z^{-(\frac{c}{2} +\frac23-\delta )\ell }   )   \tE_{\le 2j}
\notag \\ 
&\quad + Cz^{ -3-\frac23+\delta }  \tE^\frac12 |\bD^{2j+1} \theta|    \notag\\
&\leq C   \Big(\tE_{\le 2j}
z^{-c j -3 +2\delta j  } +  z^{ -3-\frac23+\delta }  \tE_2^\frac12 |\bD^{2j+1} \theta|\Big),
\end{align}
where we have used~\eqref{E:AI2}.
Hence 
\begin{align}
&\int_{Z_0}^\infty (1+z)^{2cj-\alpha }\left| \bD^{2j} \left( \frac{\bzeta_{zz}}{\bzeta_z^2} (\frac{\pa_z\theta}{\zeta_z})^2 \right)\right|^2 dz\notag \\
&\leq C    \tE^2_{\le 2j}
 \int_{Z_0}^\infty  z^{-6+4\delta j -\alpha } dz + C \tE_2 \int_{Z_0}^\infty (1+z)^{2cj -\alpha}z^{-6+2\delta } \frac{| \bD^{2j+1} \theta |^2}{|\bar\pa_z \zeta|^2} dz    \notag \\
&\leq C (\frac{1}{Z_0^{5+\alpha -4\delta j }} + \frac{1}{Z_0^{6-2\delta} })   \tE^2_{\le 2j}
\leq C Z_0^{-4} \tE^2_{\le 2j},
\end{align}
where we recall that $4\de M<\al+1$ from~\ref{item:a2}.

To control the contribution arising from the $\kappa$ term in the weight $\chi_{2j}$, we argue as in the proof of Lemma~\ref{L:CUTOFFBOUND}. First, we note that $|f(x(z))|\leq C\eps_0$. More precisely,
we observe that for any $Q\in\X_{2j-\ell}$, we have
\[
|Qh|\leq C\sum_{i=0}^{2j-\ell}z^{i-\frac{2j-\ell}{3}}|\pa_z^ih|\leq Cz^{-3-\frac{2j-\ell}{3}}.
\]
Thus, from the improved bound~\eqref{E:PJNL2IMP} from Lemma~\ref{L:NONLINEARPQEXPANSION},
\beqa
\kappa&\,\int_{Z_0}^\infty|\bD^{2j}(f(x(z))h(z))|^2\leq C\kappa\int_{Z_0}^\infty\sum_{\ell=0}^{2j}\sum_{\substack{P\in\Y_\ell \\ Q\in\X_{2j-\ell}}}|Pf(x(z))|^2|Qh|^2\\
\leq&\,C\int_{Z_0}^\infty \kappa \Big(z^{-6-\frac{4j}{3}}|\pa_z\th|^4+\sum_{\ell=1}^{2j}\sum_{P\in\Y_\ell}z^{-\frac{16}{3}-2\frac{2j-\ell}{3}}z^{-\frac23}|Pf|^2\Big)\\
\leq&\,C\Big( C_*^2\eps_0 Z_0^{-6-\frac{4}{3}+\al}\tE_0 +\tE_{\leq 2j}(C_*^2\eps_0+Z_0^{-\frac13}\tE_{\leq 2[\frac{j+3}{2}]})Z_0^{-\frac{16}{3}}\Big),
\eeqa
where we have also used~\eqref{E:APRIORIMAINPT} and~\eqref{theta_bound}.

Next we derive the bound for the second term on the right-hand side of~\eqref{E:MATHFRAKRDEF2}: 
\[
\int_{Z_0}^\infty (1+z)^{2cj -\alpha} \left| \bD^{2j} \left( \frac{\theta}{\zeta\bzeta } \right)\right|^2 dz.
\]
If $j=0$, using \eqref{theta_bound}, we deduce that 
\[
\int_{Z_0}^\infty (1+z)^{-\alpha} \left(  \frac{\theta}{\zeta\bzeta } \right)^2  dz \le \frac{1}{Z_0^2}\int_{Z_0}^\infty (1+z)^{-\alpha-2} \theta^2 dz \leq C Z_0^{-2} \mathcal E_0.
\]
To deal with $j\ge 1$, we write  
\[
\frac{\theta}{\zeta\bzeta }  = h(z) f(x(z)), \quad h(z) =  \frac{1}{\bzeta },  \ \  x(z) = \frac{\theta}{\bzeta}, \ \  f(x)=\frac{x}{1+x}.
\]
Applying~\eqref{E:THETAZNONLINEAREST4} and arguing as in the proof of Lemma~\ref{L:CUTOFFBOUND}, we deduce the following bound analogous to~\eqref{bound (6.313)} :
\be
 \left| \bD^{2j} \left( \frac{\theta}{\zeta\bzeta } \right)\right|  \leq C \Big(z^{-\frac{2j}{3} -1}|\theta| + \tE_{\leq 2j}^{\frac12}  z^{-c j -1 +\delta }+ \tE_{\leq 2j}  z^{ (-c+2\delta) j -1  } \Big),
\ee
which in turn leads to 
\be
\int_{Z_0}^\infty  (1+z)^{2cj -\alpha} \left| \bD^{2j} \left( \frac{\theta}{\zeta\bzeta } \right)\right|^2 dz \leq C\Big(  \kappa^{-1}\frac{1}{Z_0^{ 2 (\frac23 -c) j  }}\tE_{\leq 2j} +  \frac{1}{Z_0^{ \alpha +1- 2\delta }}\tE_{\leq 2j} +  \frac{1}{Z_0^{  \alpha +1- 4\delta j }} \tE_{\leq 2j}^2\Big) ,
\ee
where we have used~\eqref{E:theta} in the last line.

The first and third terms of~\eqref{E:MATHFRAKRDEF2} can be estimated analogously by writing them as the product of coefficients with the $z$-decay, and analytic functions of $\frac{\theta}{\bzeta}$ and $\frac{\pa_z\theta}{\pa_z\bzeta}$, and applying the same arguments. 
\end{proof}


\subsubsection{Interior estimate}


As written, term $\mathfrak R[\th]$ features several terms which each, individually scale like $z^{-\frac13}$ near $z=0$ assuming the expected behaviour $\th\sim_{z\to0}z^{\frac13}$; 
this is too singular to be bounded by our energy framework, in particular using the Hardy inequality. There is however a hidden cancellation between these terms, 
which requires some algebraic manipulation before we can display it clearly. We therefore first prove the following lemma.


\begin{lemma}\label{L:MATHFRAKRNEW}
Let $\th\in C^1([0,\infty)$ be given. There exist functions $g_1$ and $g_2$ with $g_1,\,z^{\frac23}g_2\in\DZodd$ such that the remainder term $\mathfrak R[\th]$, defined in~\eqref{E:MATHFRAKRDEF}, satisfies the following identity:
\begin{align}
\mathfrak R[\th] & = 
\mathcal S_1[ \th,\th_z] + \mathcal S_2[\th,\th_z] - \frac{\bG z^{\frac83}}{3(\bp\zeta)^2\zeta\bzeta^2}(\bzeta^3)_{zz} \th_z^2\th \notag\\ 
& \ \ \ \ 
 + \Big[\frac43 \frac{\bzeta_z^2}{\zeta_z^2}  z^{-\frac13} \bp^2 \bzeta \bG^\frac32 + \frac{2\CLP z}{\zeta^2\bzeta}\Big]\th
 + \frac{\CLP z\theta^2}{\zeta^2\bzeta^2},  \label{E:MATHFRAKR2}
\end{align}
where
\begin{align}
\mathcal S_1[\th,\th_z] &:= - 2 \frac{1}{(\bp\zeta)^2}  (z^\frac13 g_2)\frac{z^{\frac13}}{\zeta}\big(\bp\bzeta+\frac{\bzeta}{3z^{\frac13}}\big)\th, \label{E:SONEDEF}  \\
\mathcal S_2[\th,\th_z] & : = 2z^{\frac13}\frac{ z^{\frac13}}{(\bp\zeta)^2\zeta} \Big[\bp\big(\frac{\th}{z^{\frac13}}\big)\Big]^2+\frac{z^{\frac13}}{(\bp\zeta)^2\zeta}\Big(  \frac{\bzeta}{z^{\frac13}}g_1 (\bp\th)^2 - 4 z^{\frac13}g_2 \th \bp\th\Big).
\label{E:STWODEF}
\end{align}
\end{lemma}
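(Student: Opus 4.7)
The plan is to verify~\eqref{E:MATHFRAKR2} by direct algebraic manipulation; the functions $g_1,g_2$ are not independent structural inputs but are to be \emph{read off} from the rearrangement. The content of the lemma is that, despite the apparent $z^{-1/3}$ singularity of several individual summands in $\mathfrak R[\theta]$ at the origin, the LP steady-state equation~\eqref{E:LPLAG} forces a cancellation whose remainders assemble into terms of the claimed $\DZodd$-type regularity.

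First I would isolate the contributions of $\mathfrak R[\theta]$ in~\eqref{E:MATHFRAKRDEF} that already appear verbatim on the right of~\eqref{E:MATHFRAKR2}. Splitting $\mathcal V_1 = \tfrac{2}{9}z^{-\frac23}\bG + \tfrac{4}{3}z^{-\frac13}\bp^2\bzeta\,\bG^{\frac32}$, the second piece $\tfrac{4}{3}\tfrac{\bzeta_z^2}{\zeta_z^2}z^{-\frac13}\bp^2\bzeta\,\bG^{\frac32}\theta$ matches directly. Expanding $\tfrac{\CLP z\theta(2\bzeta+\theta)}{\zeta^2\bzeta^2} = \tfrac{2\CLP z\theta}{\zeta^2\bzeta} + \tfrac{\CLP z\theta^2}{\zeta^2\bzeta^2}$ accounts for the explicit $\CLP$-contributions. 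The problem reduces to the identity
\begin{equation*}
\tfrac29 z^{-\frac23}\bG\tfrac{\bzeta_z^2}{\zeta_z^2}\theta - \tfrac{2\theta}{\zeta\bzeta} - \tfrac{\bzeta_{zz}}{\bzeta_z^2\zeta_z^2}(\pa_z\theta)^2 \;=\; \mathcal S_1 + \mathcal S_2 - \tfrac{\bG z^{\frac83}(\bzeta^3)_{zz}\theta_z^2\theta}{3(\bp\zeta)^2\zeta\bzeta^2}.
\end{equation*}

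The core algebraic step is to process the remaining singular $\theta_z^2$-piece. Using the identity $\tfrac{1}{\zeta_z^2} = \tfrac{\bzeta}{\zeta_z^2\zeta} + \tfrac{\theta}{\zeta_z^2\zeta}$, the term $-\tfrac{\bzeta_{zz}}{\bzeta_z^2\zeta_z^2}(\pa_z\theta)^2$ splits into $-\tfrac{\bzeta\bzeta_{zz}\theta_z^2}{\bzeta_z^2\zeta_z^2\zeta}$ and $-\tfrac{\bzeta_{zz}\theta_z^2\theta}{\bzeta_z^2\zeta_z^2\zeta}$. Applying the relations $z^{\frac83}\bG = z^{\frac43}/\bzeta_z^2$ and $(\bzeta^3)_{zz} = 3\bzeta^2\bzeta_{zz} + 6\bzeta\bzeta_z^2$, the target term $-\tfrac{\bG z^{\frac83}(\bzeta^3)_{zz}\theta_z^2\theta}{3(\bp\zeta)^2\zeta\bzeta^2}$ expands as $-\tfrac{\bzeta_{zz}\theta_z^2\theta}{\bzeta_z^2\zeta_z^2\zeta} - \tfrac{2\theta_z^2\theta}{\zeta_z^2\zeta\bzeta}$, so the $\bzeta_{zz}\theta_z^2\theta$ parts cancel and one is left to match a purely smooth $-\tfrac{\bzeta\bzeta_{zz}\theta_z^2}{\bzeta_z^2\zeta_z^2\zeta}$-type quadratic (which, modulo an explicit $2\theta_z^2\theta/(\zeta_z^2\zeta\bzeta)$-shift, produces the $(\bp\theta)^2$-coefficient in $\mathcal S_2$ and defines $g_1$ via $\tfrac{\bzeta g_1}{\zeta_z^2\zeta}$). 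Expanding the first summand of $\mathcal S_2$ with $\bp(\theta/z^{\frac13}) = z^{\frac13}\theta_z - \tfrac{\theta}{3z^{\frac23}}$ produces a $\theta_z^2$-contribution, a $\theta\theta_z$-contribution, and a $\theta^2$-contribution; matching the $\theta_z^2$-coefficient fixes $g_1$, and the $\theta\theta_z$ contribution sets up the constraint that, combined with $-\tfrac{4g_2\theta\theta_z}{\zeta_z^2\zeta}$ in $\mathcal S_2$, the total $\theta\theta_z$-coefficient vanishes, which identifies $g_2$.

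At this stage only terms linear in $\theta$ remain, namely $\tfrac29 z^{-\frac23}\bG\tfrac{\bzeta_z^2}{\zeta_z^2}\theta - \tfrac{2\theta}{\zeta\bzeta}$ versus $\mathcal S_1$ plus a $\theta^2$-residue from the $\theta^2/(9z^{\frac43})$ piece of the $[\bp(\theta/z^{\frac13})]^2$ expansion. Here I invoke the LP equation~\eqref{E:LPLAG} in the form $\tfrac{2}{\bzeta} = \big(z^2 - \tfrac{1}{\bzeta_z^2}\big)\bzeta_{zz} + \tfrac{\CLP z}{\bzeta^2}$ to trade the singular $\tfrac{2\theta}{\zeta\bzeta}$ term (after writing $\tfrac{1}{\zeta\bzeta} = \tfrac{1}{\bzeta^2} - \tfrac{\theta}{\zeta\bzeta^2}$) against contributions of the same order, and factor the resulting linear-in-$\theta$ remainder through the smooth combination $\bp\bzeta + \tfrac{\bzeta}{3z^{\frac13}} = \tfrac{3z\bzeta_z + \bzeta}{3z^{\frac13}}$; this precisely defines $g_2$ via the required form of $\mathcal S_1$.

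The main obstacle is to verify \emph{a posteriori} that the $g_1,g_2$ so constructed satisfy $g_1, z^{\frac23}g_2 \in \DZodd$. This reduces to tracking power-series expansions near $z=0$: by Lemmas~\ref{L:GBAR} and~\ref{L:ZETABAR} the quantities $\bG$, $\bzeta/z^{\frac13}$, and $\bp\bzeta$ are even power series in $z^{\frac23}$, so $3z\bzeta_z + \bzeta = 3z^{\frac13}(\bp\bzeta) + \bzeta = \tfrac{4\bzeta_0}{3}z^{\frac13} + O(z)$ is odd-in-$z^{\frac13}$, and dividing by $3z$ is compatible with $g_2 = O(z^{-\frac13})$ odd-in-$z^{\frac13}$; likewise $g_1$ emerges from bounded smooth $\bzeta$-quantities times $\theta$-free ratios of $\bzeta/\zeta$, which are even in $z^{\frac13}$, giving $g_1 \in \DZodd$ after accounting for the parity shift induced by the explicit $\bzeta/z^{\frac13}$-factor multiplying $g_1$ in $\mathcal S_2$. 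Smoothness away from the origin is automatic since all ingredients are smooth in $z$ for $z > 0$.
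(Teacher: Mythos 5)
Your overall outline—splitting $\mathcal V_1$, peeling off the explicit terms, and reducing to the identity
$$\tfrac29 z^{-\frac23}\bG\tfrac{\bzeta_z^2}{\zeta_z^2}\theta - \tfrac{2\theta}{\zeta\bzeta} - \tfrac{\bzeta_{zz}}{\bzeta_z^2\zeta_z^2}\theta_z^2 + \tfrac{\bG z^{\frac83}(\bzeta^3)_{zz}\theta_z^2\theta}{3(\bp\zeta)^2\zeta\bzeta^2} = \mathcal S_1 + \mathcal S_2$$
is correct, and your observation that the $\bzeta_{zz}\theta_z^2\theta$ pieces cancel is right. The value of $g_1$ you read off from the $\theta_z^2$ coefficient, $g_1 = -\bzeta_{zz}/\bzeta_z^2 - 2/\bzeta$, is the one the paper uses. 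Where your argument goes wrong is in the next two steps.

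First, the claim that ``the total $\theta\theta_z$-coefficient vanishes, which identifies $g_2$'' is incorrect. After expanding $2z^{\frac13}\frac{z^{\frac13}}{(\bp\zeta)^2\zeta}[\bp(\th/z^{\frac13})]^2 = \tfrac{2\theta_z^2}{\zeta_z^2\zeta} - \tfrac{4\theta\theta_z}{3z\zeta_z^2\zeta} + \tfrac{2\theta^2}{9z^2\zeta_z^2\zeta}$ and adding the $-\tfrac{4g_2\theta\theta_z}{\zeta_z^2\zeta}$ term, the combined $\theta\theta_z$ coefficient inside $\mathcal S_2$ is $-\tfrac{4}{\zeta_z^2\zeta}(g_2 + \tfrac{1}{3z}) = -\tfrac{4\bzeta_z}{\bzeta\zeta_z^2\zeta}$, which does \emph{not} vanish; demanding it vanish would force $g_2 = -\tfrac{1}{3z}$, not $\tfrac{\bzeta_z}{\bzeta}-\tfrac{1}{3z}$. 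The nonvanishing $\theta\theta_z$ contribution is essential: the terms $-\tfrac{2\bzeta_z^2\theta}{\zeta_z^2\zeta\bzeta}$ (from $\mathcal S_1$), $-\tfrac{4\bzeta_z\theta\theta_z}{\bzeta\zeta_z^2\zeta}$, and $-\tfrac{2\theta_z^2\theta}{\zeta_z^2\zeta\bzeta}$ must combine to reconstruct $-\tfrac{2\theta}{\zeta_z^2\zeta\bzeta}(\bzeta_z+\theta_z)^2 = -\tfrac{2\theta}{\zeta\bzeta}$. So the ``match and annihilate'' picture is the wrong model; you are not matching free monomials, because the $\zeta$, $\zeta_z$ in the denominators already carry $\theta$, $\theta_z$ dependence. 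The correct picture, and what the paper actually does, is that $g_2$ is determined by the difference-of-squares factorization $\mathcal S_1 = \tfrac{2}{(\bp\zeta)^2\zeta\bzeta}\big[\big(\tfrac{\bzeta}{3z^{1/3}}\big)^2 - (\bp\bzeta)^2\big]\theta$ using $\bp\bzeta - \tfrac{\bzeta}{3z^{1/3}} = z^{2/3}\bzeta g_2$; the $\theta\theta_z$ term then falls out of the verification, it is not a free constraint.

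Second, the invocation of the LP equation~\eqref{E:LPLAG} to handle the linear-in-$\theta$ terms is a detour the paper does not take, and it is unlikely to close cleanly: the substitution $\tfrac{2}{\bzeta} = (z^2 - \bzeta_z^{-2})\bzeta_{zz} + \CLP z\bzeta^{-2}$ reintroduces both a $\bzeta_{zz}$ and a $\CLP z$ term that then have to be recombined with the $\CLP z$ pieces you already pulled out, producing denominators involving $\zeta^2$ and $\bzeta^3$ that do not match the target form. The actual identity~\eqref{E:MATHFRAKR2} is purely algebraic in $\theta,\theta_z,\zeta,\bzeta$ and their derivatives; the LP equation only enters \emph{indirectly} via Lemma~\ref{L:ZETABAR} to guarantee the asymptotics $g_1 = O(z^{1/3})$, $g_2 = O(z^{-1/3})$ near $z=0$ through the leading-order cancellation of $-\bzeta_{zz}/\bzeta_z^2 - 2/\bzeta$. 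Your last paragraph on regularity is on the right track; but the factorization itself needs no LP input.
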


\begin{proof}
We substitute~\eqref{E:KIDENTITY} into~\eqref{E:MATHFRAKRDEF} and rewrite $\mathfrak R[\th]$ in the form 
\begin{align}
\mathfrak R[\th] & =
 \frac29 z^{-\frac23}\frac{\bzeta_z^2}{\zeta_z^2}\bG \th - \frac2{\bzeta\zeta}\th -\frac{\bzeta_{zz}}{\bzeta_z^2 \zeta_z^2} (\pa_z \theta)^2
+ \Big(\frac43 \frac{\bzeta_z^2}{\zeta_z^2}  z^{-\frac13} \bp^2 \bzeta\bG^\frac32 + \frac{2\CLP z}{\zeta^2\bzeta} \Big)\th 
+ \frac{\CLP z\theta^2}{\zeta^2\bzeta^2}.
 \label{E:NLPHIAUX}
\end{align}
A direct calculation allows us to rewrite the first three terms on the right-hand side of~\eqref{E:NLPHIAUX} in the form
\begin{align}
 & \frac29 z^{-\frac23}\frac{\bzeta_z^2}{\zeta_z^2}\bG \th - \frac2{\bzeta\zeta}\th -\frac{\bzeta_{zz}}{\bzeta_z^2 \zeta_z^2} (\pa_z \theta)^2\notag\\
&=
\frac{\bG z^{\frac83}}{(\bp\zeta)^2\zeta\bzeta^2} \Big(\Big[\frac29 z^{-2}\bzeta^3\bzeta_z^2-2\bzeta_z^4\bzeta\Big]\th   + \Big[\frac29 z^{-2}\bzeta^2\bzeta_z^2\th^2 - \bzeta_{zz}\bzeta^3 \th_z^2 - 4\bzeta \bzeta_z^3\th\th_z \Big] 
- \frac13(\bzeta^3)_{zz} \th_z^2\th \Big),
\end{align}
and we set
\begin{align}
\mathcal S_1[\th] &= \frac{\bG z^{\frac83}}{(\bp\zeta)^2\zeta\bzeta^2}  \Big[\frac29 z^{-2}\bzeta^3\bzeta_z^2-2\bzeta_z^4\bzeta\Big]\th, \label{E:SONEDEF1}  \\
\mathcal S_2[\th,\th_z] & = \frac{\bG z^{\frac83}}{(\bp\zeta)^2\zeta\bzeta^2} \Big[\frac29 z^{-2}\bzeta^2\bzeta_z^2\th^2 - \bzeta_{zz}\bzeta^3 \th_z^2 - 4\bzeta \bzeta_z^3\th\th_z \Big].
\label{E:STWODEF1}
\end{align}
Upon regrouping, it remains to show that $\mathcal S_1$ and $\mathcal S_2$ may be re-written in the forms claimed.

We now set
\begin{align}
g_1= & -\frac{\bzeta_{zz}}{\bzeta_z^2}- \frac2{\bzeta},\qquad  g_2=\frac{\bzeta_z}{\bzeta} - \frac1{3z} ,\label{E:gONEgTWO}
\end{align}
so that clearly, from Lemma~\ref{L:ZETABAR}, $g_1\sim_{z\to0}z^{\frac13}$, $g_2\sim_{z\to0}z^{-\frac13}$ and the claimed properties of $g_1,g_2$ hold.

In particular, we have
\begin{align}\label{E:GTWO1}
\bp\bzeta = \frac{\bzeta}{3z^{\frac13}} + z^{\frac23}\bzeta g_2,
\end{align}
so that
\begin{align}
\mathcal S_1 & =2 \frac{1}{(\bp\zeta)^2\zeta\bzeta}\big[\big(\frac{\bzeta}{3z^{\frac13}}\big)^2-\big(\bp\bzeta\big)^2\big]  = - 2 \frac{1}{(\bp\zeta)^2}  (z^\frac13 g_2)\frac{z^{\frac13}}{\zeta}\big(\bp\bzeta+\frac{\bzeta}{3z^{\frac13}}\big), \label{E:SONESIMPLE}
\end{align}
which proves~\eqref{E:SONEDEF}.

Using~\eqref{E:gONEgTWO}, we rewrite $\mathcal S_2[\th,\th_z]$ from~\eqref{E:STWODEF1} as
\begin{align}
\mathcal S_2[\th,\th_z] & =  
\frac{\bG z^{\frac83}}{(\bp\zeta)^2\zeta\bzeta^2} \Big[\frac29 z^{-2}\bzeta^2\bzeta_z^2\th^2 + (\frac{2\bzeta_z^2}{\bzeta}+g_1\bzeta_z^2)\bzeta^3 \th_z^2 - 4\bzeta \bzeta_z^3\th\th_z \Big] \notag\\
& = \frac{\bG z^{\frac83}}{(\bp\zeta)^2\zeta} \bzeta_z^2 \Big[\frac29 \th^2z^{-2} +2 \th_z^2 + \bzeta g_1 \th_z^2 - 4 (\frac1{3z}+g_2)\th\th_z \Big] \label{E:S2REFORM}.
\end{align}
We next observe that 
\begin{align}
\frac{\bG z^{\frac83}\bzeta_z^2}{(\bp\zeta)^2\zeta}  \Big[\frac29 \th^2z^{-2} +2 \th_z^2  - \frac4{3z} \th \th_z \Big]  = 2\frac{\bG z^{\frac53}\bzeta_z^2}{(\bp\zeta)^2\zeta}z^{\frac53}\Big[\pa_z\big(\frac{\th}{z^{\frac13}}\big)\Big]^2 
= 2z^{\frac13}\frac{ z^{\frac13}}{(\bp\zeta)^2\zeta} \Big[\bp\big(\frac{\th}{z^{\frac13}}\big)\Big]^2  \label{E:CANCEL1}
\end{align}
and
\beqa
\frac{\bG z^{\frac83}\bzeta_z^2}{(\bp\zeta)^2\zeta}   \Big[ \bzeta g_1 \th_z^2 - 4g_2\th\th_z \Big]=\frac{z^{\frac13}}{(\bp\zeta)^2\zeta}\Big(  \frac{\bzeta}{z^{\frac13}}g_1 (\bp\th)^2 - 4 z^{\frac13}g_2 \th \bp\th\Big).\label{E:S22PF}
\eeqa
Combining~\eqref{E:CANCEL1}--\eqref{E:S22PF} in~\eqref{E:S2REFORM} gives~\eqref{E:STWODEF}.
\end{proof}


\begin{remark}
Terms $\mathcal S_1[\th,\th_z]$ and $\mathcal S_2[\th,\th_z]$ have been rearranged to eliminate the formal singularities appearing in expressions~\eqref{E:SONEDEF1}--\eqref{E:STWODEF1}, where through a formal identification $\zeta,\bzeta,\th\sim_{z\to0}z^{\frac13}$, one easily checks that individual 
terms scale like $z^{-\frac13}$. We have therefore established this cancellation structure in order to obtain useful bounds on $\bD^{2j}(\mathcal S_1[\th,\th_z])$ and $\bD^{2j}(\mathcal S_2[\th,\th_z])$.
\end{remark}


\begin{lemma}\label{L:LPPROP}
Under the assumptions of Proposition~\ref{P:EE2},
there exists a positive constant $C(Z_0)>0$ such that for any $j\in\{1,\dots M\}$ the following bounds hold:
\begin{align}
\Big|\int_{0}^{Z_0} \chi_{2j}\big|\bD^{2j}\big(\mathcal S_1[\th,\th_z]\big)\big|^2\diff z \Big| & \le C(Z_0)\|\Phi\|_{\mathcal{H}^{2j-1}_{Z_0}}\tE_{2j}^{\frac12} + \tE_{\leq 2j}^2, \label{E:SONEBOUND}\\
\Big|\int_{0}^{Z_0} \chi_{2j}\big|\bD^{2j}\big(\mathcal S_2[\th,\th_z] \big)\big|^2\diff z \Big| & \le C \tE_{\leq 2j}^2,  \label{E:STWOBOUND}\\
\Big|\int_{0}^{Z_0} \chi_{2j}\big|\bD^{2j}\big(\frac{\bG z^{\frac83}}{3(\bp\zeta)^2\zeta\bzeta^2}(\bzeta^3)_{zz} \th_z^2\th\ \big)\big|^2\diff z \Big| & \le C \tE_{\leq 2j}^3. \label{E:STHREEBOUND}
\end{align}
\end{lemma}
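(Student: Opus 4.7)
My plan is to handle each of the three bounds in turn using the same overall strategy: on the interior domain $[0,Z_0]$ the weight $\chi_{2j}$ is uniformly bounded by $C(Z_0)$, and the pointwise a priori bound $\Pb[\Phi]\le\frac14$ ensures that all quantities of the form $(\bp\zeta)^{-2}$, $\zeta^{-1}$, and $(\bzeta_z/\zeta_z)^2$ are uniformly bounded and smooth. I will then apply $\bD^{2j}$ using the product and Fa\`a di Bruno formulae of Lemma~\ref{L:XYPRODCHAIN}, placing the factor carrying the most derivatives in $L^2$ and bounding the remaining low-order factors in $L^\infty$ via the Hardy-Sobolev embeddings of Lemma~\ref{L:EQUIVALENCE}.

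For \eqref{E:SONEBOUND}, I rewrite $\mathcal S_1$ as $F(z,\bp\th)\cdot \th$, where the coefficient
\[
F(z,\bp\th) \;=\; -\frac{2(z^{1/3}g_2)\,\tG}{(\bp\zeta)^2}\,\Big(\bp\bzeta+\frac{\bzeta}{3z^{1/3}}\Big)
\]
decomposes into a purely LP background factor that is smooth up to $z=0$ (the construction of $g_2$, $\tG$ and the combination $\bp\bzeta + \bzeta/(3z^{1/3})$ in Lemma~\ref{L:MATHFRAKRNEW} removes the apparent $z^{-1/3}$ singularity), times the nonlinear factor $(\bp\zeta)^{-2} = \bG\,f(\sqrt{\bG}\bp\th)$ with $f$ smooth near the origin. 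Distributing $\bD^{2j}$ over the product $F\cdot\th$, the contribution with no derivatives on $F$ is $F\bD^{2j}\th$, whose $L^2$-norm on $[0,Z_0]$ is bounded by $C(Z_0)\|\Phi\|_{\H^{2j-1}_{Z_0}}$ (using $F\in L^\infty$ and Lemma~\ref{L:EQUIVALENCE}); all remaining terms carry at least one derivative on $F$ and, through the chain rule on $f(\sqrt{\bG}\bp\th)$, acquire extra factors of $\bp\th$, so that combined with Hardy-Sobolev $L^\infty$-bounds on the low-order factors they produce at worst $\tE_{\leq 2j}^2$ contributions after squaring. Combined with the Cauchy-Schwarz pairing against $\tE_{2j}^{1/2}$ that arises when this bound is used inside the high-order energy identity, this yields the asserted right-hand side.

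For \eqref{E:STWOBOUND}, $\mathcal S_2$ is a sum of three quadratic expressions in $\th$-derivatives (namely $[\bp(\th/z^{1/3})]^2$, $(\bp\th)^2$, and $\th\bp\th$), each multiplied by a coefficient that is smooth and uniformly bounded on $[0,Z_0]$: the factors $g_1$, $z^{2/3}g_2$, $\bzeta/z^{1/3}$ and $z^{2/3}/\zeta$ belong to $\DZodd$ or $\DZeven$ by Lemma~\ref{L:MATHFRAKRNEW}, and $(\bp\zeta)^{-2}$ is bounded by the a priori bounds. Applying $\bD^{2j}$ to each quadratic term distributes $2j$ derivatives across the two $\th$-factors; at most one of them receives more than $j$ derivatives, which is placed in $L^2$ (bounded by $\tE_{\leq 2j}^{1/2}$), while the other is bounded in $L^\infty$ via Hardy-Sobolev (also giving $\tE_{\leq 2j}^{1/2}$). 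Squaring and integrating against the bounded weight $\chi_{2j}$ yields the stated $\tE_{\leq 2j}^2$. For \eqref{E:STHREEBOUND}, the same derivative-splitting argument applies to the cubic $(\bzeta^3)_{zz}\th_z^2\th$ multiplied by its bounded background coefficient $\tfrac{\bG z^{8/3}}{3(\bp\zeta)^2\zeta\bzeta^2}$: place the highest-derivative factor in $L^2$ ($\tE_{\leq 2j}^{1/2}$) and the other two in $L^\infty$ ($\tE_{\leq 2j}^{1/2}$ each via Hardy-Sobolev and the a priori bounds) to obtain $\tE_{\leq 2j}^3$.

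The main technical obstacle is the careful verification that, after the algebraic rearrangement of $\mathfrak R[\th]$ in Lemma~\ref{L:MATHFRAKRNEW}, every coefficient appearing in $\mathcal S_1$, $\mathcal S_2$, and the cubic term is genuinely smooth at the origin $z=0$ with the expected parity. This is essential because $\bD^{2j}$ applied to an object with a formal $z^{-1/3}$ singularity at $z=0$ would produce non-integrable contributions there; the rearrangement in Lemma~\ref{L:MATHFRAKRNEW} was engineered precisely to cancel these singularities, and one must trace the constructions of $g_1$, $g_2$, $\tG$, and of the combination $\bp\bzeta + \bzeta/(3z^{1/3})$ to ensure that the relevant products lie in $\DZeven$ or $\DZodd$ and are therefore compatible with the weighted differential operators. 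Once this smoothness is verified, the remainder of the argument is a routine derivative-counting exercise that parallels the interior estimates in Lemmas~\ref{L:COMMBOUND1} and~\ref{L:COMMBOUND2}.
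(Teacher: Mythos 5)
Your high-level strategy (bounded weight on $[0,Z_0]$, split derivatives between $L^2$ and $L^\infty$) is the right skeleton, and it does work for $\mathcal S_1$ and for the cubic term in~\eqref{E:STHREEBOUND}, where the $\theta$-factors are only hit by order-$\le 1$ operators ($\theta$, $\bp\theta$). However, for the leading contribution to $\mathcal S_2$ the argument as written has a genuine derivative-loss gap that your proposal does not address.

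The dangerous term is $2z^{\frac13}\tilde g(z)\big[\bar X\theta\big]^2$ with $\bar X := \bp\big(\tfrac{\cdot}{z^{1/3}}\big)\in\mathcal X_2$, which is an order-\emph{two} operator in the paper's graded count. If you simply distribute $\bD^{2j}$ across the two $\bar X\theta$ factors, the worst term has all $2j$ derivatives on one factor, producing an operator of effective order $2j+2$ on $\theta$. The energy $\tE_{2j}$ only controls $\bD^{2j+1}\theta$ in $L^2$, i.e.\ operators up to order $2j+1$; the $L^2$ bound you invoke would therefore require $\tE_{\le 2(j+1)}^{1/2}$, not $\tE_{\le 2j}^{1/2}$, and the claimed estimate~\eqref{E:STWOBOUND} would fail at top order. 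Your statement that "at most one [factor] receives more than $j$ derivatives, which is placed in $L^2$ (bounded by $\tE_{\le 2j}^{1/2}$)" tacitly ignores that the factor already carries $\bar X$, which adds two more units.

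The paper resolves this with the algebraic identity
\begin{equation*}
\bd\Big(z^{\frac13}\big[\bar X\theta\big]^2\Big)
= 2\,\bar X\theta\,\bp\bd\theta - \tfrac{5}{3}\,\big(\bar X\theta\big)^2 ,
\end{equation*}
exploiting the smoothing factor $z^{1/3}$ that you mention but do not actually use. This converts one application of $\bd$ into a quadratic expression in order-two quantities $\bar X\theta$ and $\bD^2\theta$, eliminating the apparent excess derivative before the remaining $\bD^{2j-1}$ is distributed. Without this cancellation (or an equivalent integration-by-parts substitute) the quadratic estimate cannot be closed at the level of $\tE_{\le 2j}$. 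You should insert this step before doing the $L^2$--$L^\infty$ splitting for $\mathcal S_2$; the rest of your argument then goes through.
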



\begin{proof}
We start with the proof of~\eqref{E:STWOBOUND}.
We recognise in the first term of~\eqref{E:STWODEF} a term of the form $z^{\frac13} \tilde g(z) \bar X\th \bar X\th$, where 
\[
\bar X:=\bp\frac{\cdot}{z^{\frac13}}\in \X_2, \ \ \ \tilde g(z):= \frac{z^{\frac13}}{(\bp\zeta)^2\zeta}.
\] 
Note that $\tilde g$ is a bounded function of $\th$. Moreover, by writing
\[\tilde g =\frac{\bG}{(1+\sqrt{\bG}\bp\th)^2}\frac{z^{\frac13}}{\bzeta}\frac{1}{1+\tilde G\frac{\th}{z^{\frac13}}}, \]
an argument as in Lemma~\ref{L:NBOUND} shows that for any $R\in \Y_\ell$ with $\ell\leq 2j$,
  $\|R\tilde g\|_{L^2([0,Z_0])}\leq C(1+\tE_{\leq 2\ell})\leq C$ by our a priori assumptions~\eqref{E:APRIORIMAIN}--\eqref{E:APRIORIMAINPT}, while if $\ell\leq j+3$, then the Hardy-Sobolev type estimate~\eqref{E:XKHARDY} gives $\|R\tilde g\|_{L^\infty(0,Z_0)}\leq C$.
  
   The quadratic nonlinearity $\bar X\th \bar X\th$ 
contains the correctly weighted derivatives of $\th$ - this already entails a cancellation. However, applying $\bD^{2j}$ to this expression
leads to an apparent derivatives loss, as the leading order term $\bD^{2j}\bar X\th$ appears to scale like one derivative too many since $\bD^{2j}\bar X\in \mathcal X_{2j+2}$.
To mitigate this we observe that, with $\bar X=\bp\frac{\cdot}{z^{\frac13}}$
\begin{align}\label{E:CANCEL2}
\bd(z^{\frac13}\Big[\bar X\th\Big]^2 )= 2 \bar X\th \bp\bd\th   - \frac53   (\bar X\th)^2,
\end{align}
which can be checked by a direct calculation. This shows that the presence of the ``smoothing" factor $z^{\frac13}$ in~\eqref{E:CANCEL1} gets rid 
of the perceived derivatives loss, as the right-hand side of~\eqref{E:CANCEL2} is a quadratic function of entries of the form $X\th$, $X\in\X_2$.
Therefore
\begin{align}
\Big|\int_{0}^{Z_0} \chi_{2j}\big|\bD^{2j}\big(z^{\frac13}\tilde g (\bar X\th)^2\big)\big|^2\diff z \Big|  
\le&\,  \Big|\int_{0}^{Z_0} \chi_{2j}\big|\bD^{2j-1}\big(\tilde g \big(2 \bar X\th \bp\bD\th  - \frac53  (\bar X\th)^2\big)\big)\big|^2\diff z \Big|\notag\\
&+ \Big|\int_{0}^{Z_0} \chi_{2j}\big|\bD^{2j-1}\big(\bp\tilde g z^{\frac13}  (\bar X\th)^2 \big) \diff z \Big|.
\end{align}
Taking the first term on the right hand side, we argue from the product rule~\eqref{E:LEIBNIZ} analogously to the proof of interior bounds of Lemma~\ref{L:COMMBOUND1} in order to decompose 
\beqs
\bD^{2j-1}\big(\tilde g \big(2 \bar X\th \bp\bD\th\big)\big)=\sum_{\ell_1+\ell_2=0}^{2j-1}\sum_{\substack{R\in\Y_{2j-1-\ell_1-\ell_2} \\ Q_1\in\X_{\ell_1} \\ Q_2 \in \X_{\ell_2}}} c_{RQ}(R\tilde g)( Q_1\bar X\th)(Q_2\bD^2\th).
\eeqs
Clearly at most one of the operators $R$, $Q_1\bar X$ or $Q_2\bD^2$ can be of order  $\geq j+2$, the others are of order $\leq j+1$, and none of them is of order greater than $2j+1$. The other terms admit similar decompositions. We therefore use a standard $L^\infty$-$L^\infty$-$L^2$ splitting, applying the $L^\infty$ Hardy-Sobolev estimate~\eqref{E:XKHARDY}, in order  to infer the desired bound 
\be
\Big|\int_{0}^{Z_0} \chi_{2j}\big|\bD^{2j}\big(z^{\frac13}\tilde g (\bar X\th)^2\big)\big|^2\diff z \Big|    \leq C \tE_{\leq 2j}^2.
\ee
To bound the remainder of $\mathcal S_{2}[\th,\th_z]$, 
we use the same strategy, together with the estimates for $g_1$, $z^{\frac13}g_2$ provided by Lemma~\ref{L:MATHFRAKRNEW}, to obtain
\be
\Big|\int_{0}^{Z_0} \chi_{2j}\big|\bD^{2j}\big(\tilde g \frac{\bzeta}{z^{\frac13}}g_1 (\bp\th)^2 - 4\tilde g z^{\frac13}g_2 \th \bp\th\big)\big|^2\diff z \Big|  \leq C \tE_{\leq 2j}^2.
\ee
By Lemma~\ref{L:MATHFRAKRNEW} and a priori bounds~\eqref{E:APRIORIMAIN}--\eqref{E:APRIORIMAINPT} we now deduce by an analogous argument that
\beqa
{}&\Big|\int_{0}^{Z_0} \chi_{2j}\big|\bD^{2j}\big(\mathcal S_1\th\big)\big|^2\diff z \Big| 
&\leq C(Z_0)\|\Phi\|_{\mathcal{H}^{2j-1}_{Z_0}}^2 + C\tE_{\leq 2j}^2,
\eeqa
where 
we also used the Hardy-Sobolev~\eqref{E:XKHARDY}.

To prove~\eqref{E:STHREEBOUND} we rewrite
\begin{align}
\frac{\bG z^{\frac83}}{3(\bp\zeta)^2\zeta\bzeta^2}(\bzeta^3)_{zz} \th_z^2\th =\frac{\bG}{3(\bp\zeta)^2} \frac{z^{\frac13}}{\zeta}\frac{z^{\frac23}}{\bzeta^2}z^{\frac13}(\bzeta^3)_{zz} (\bp\th)^2\th. 
\end{align}
From Lemma~\ref{L:ZETABAR}, we observe the key cancellation $(\bzeta^3)_{zz}=z^{-\frac13}(1+O(z^{\frac23}))$. Therefore, a similar argument to those above yields~\eqref{E:STHREEBOUND} as claimed.
\end{proof}


\begin{proposition}[Interior bound]\label{P:INT1}
Under the assumptions of Proposition~\ref{P:EE2}
there exists a positive constant $C>0$ such that for any $j\in\{1,\dots M\}$ the following bound holds:
\begin{align}\label{E:FRAKREST}
\Big|\int_{0}^{Z_0} \chi_{2j}\bD^{2j}\mathfrak R[\th] \phij \diff z \Big| \le C(Z_0)\|\Phi\|_{\mathcal{H}^{2j-1}_{Z_0}}\tE_{2j}^{\frac12}+C \tE_{\leq 2j}^{\frac32} ,
\end{align}
where we recall that $\mathfrak R[\th]$ is given by~\eqref{E:MATHFRAKR2}.

Moreover, under the assumptions of Proposition~\ref{P:HMZDIFF}, setting $\vartheta=\theta_1-\theta_2$ and $\varphi=\phi_1-\phi_2$, for $j\leq \m$, 
\begin{align}
\Big|\int_{0}^{Z_0} g_{2j}\bD^{2j}\big(\mathfrak R[\th_1]-\mathfrak R[\th_2]\big) \varphi_{2j} \diff z \Big| \le C(Z_0)\|\Phi_1-\Phi_2\|_{\mathcal{H}^{2j-1}_{Z_0}}\|\Phi_1-\Phi_2\|_{\mathcal{H}^{2j}_{Z_0}}\big(\tE_{2j}[\Phi_1]^{\frac12}+\tE_{2j}[\Phi_2]^{\frac12}\big).\label{E:COMMBOUND3DIFF}
\end{align}
\end{proposition}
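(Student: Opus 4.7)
The plan is to reduce the proposition to the machinery already assembled in Lemmas~\ref{L:MATHFRAKRNEW}--\ref{L:LPPROP}, together with routine Hardy--Sobolev bounds for the two ``tame'' contributions in the rewritten form of $\mathfrak R[\th]$. First, invoke Lemma~\ref{L:MATHFRAKRNEW} to replace $\mathfrak R[\th]$ in the integrand by the decomposition~\eqref{E:MATHFRAKR2}, which trades the naive formulation (where individual pieces scale like $z^{-1/3}$ near $z=0$) for one in which each summand is manifestly integrable after applying $\bD^{2j}$. Then, by Cauchy--Schwarz in the weighted space,
\begin{equation*}
\Big|\int_0^{Z_0}\chi_{2j}\bD^{2j}\mathfrak R[\th]\,\phij\,\diff z\Big|
\le\Big(\int_0^{Z_0}\chi_{2j}|\bD^{2j}\mathfrak R[\th]|^2\,\diff z\Big)^{1/2}\tE_{2j}^{1/2}.
\end{equation*}

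Next, estimate the $L^2$ norm term by term. For the three summands $\mathcal S_1$, $\mathcal S_2$, and $\tfrac{\bG z^{8/3}}{3(\bp\zeta)^2\zeta\bzeta^2}(\bzeta^3)_{zz}\th_z^2\th$, apply the bounds~\eqref{E:SONEBOUND},~\eqref{E:STWOBOUND},~\eqref{E:STHREEBOUND} from Lemma~\ref{L:LPPROP} directly. These yield contributions of sizes $C(Z_0)\|\Phi\|_{\mathcal{H}^{2j-1}_{Z_0}}^{1/2}\tE_{2j}^{1/2}+ C\tE_{\leq 2j}\,\tE_{2j}^{1/2}$ and $C\tE_{\le 2j}\tE_{2j}^{1/2}$ and $C\tE_{\leq 2j}^{3/2}\tE_{2j}^{1/2}$ respectively, all of which fit inside the right-hand side of~\eqref{E:FRAKREST}. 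For the two remaining contributions in~\eqref{E:MATHFRAKR2}, namely the linear-in-$\th$ term $\big[\tfrac{4}{3}\tfrac{\bzeta_z^2}{\zeta_z^2}z^{-1/3}\bp^2\bzeta\,\bG^{3/2}+\tfrac{2\CLP z}{\zeta^2\bzeta}\big]\th$ and the quadratic term $\tfrac{\CLP z\theta^2}{\zeta^2\bzeta^2}$, the coefficients are smooth bounded functions on $[0,Z_0]$ (living in the appropriate $\DZeven$-class by Lemma~\ref{L:ZETABAR} and the a priori bounds~\eqref{E:APRIORIMAIN}--\eqref{E:APRIORIMAINPT}). I would expand $\bD^{2j}$ of each product via the chain/product formulas of Lemma~\ref{L:XYPRODCHAIN}, distribute the $2j$ derivatives between the smooth coefficient and the $\theta$-factor(s), and then apply the $L^\infty$ Hardy--Sobolev embedding~\eqref{E:XKHARDY} on lower-order factors and a straightforward $L^2$ bound on the top-order factor, exactly as in the interior step of Lemma~\ref{L:COMMBOUND1}. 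This produces contributions bounded respectively by $C(Z_0)\|\Phi\|_{\mathcal{H}^{2j-1}_{Z_0}}\tE_{2j}^{1/2}$ and $C\tE_{\leq 2j}^{3/2}$, completing~\eqref{E:FRAKREST}.

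For the difference estimate~\eqref{E:COMMBOUND3DIFF}, the scheme is identical modulo bookkeeping. Applying Lemma~\ref{L:MATHFRAKRNEW} separately to $\th_1$ and $\th_2$ and subtracting, each summand becomes a telescoping difference in $\vartheta=\th_1-\th_2$ where one factor (either a $\theta$-linear coefficient or a factor such as $f(\tfrac{\th_{i,z}}{\bzeta_z})$) is replaced by a Lipschitz-type difference. As in Step~1 of the proof of Lemma~\ref{L:COMMBOUND1}, each such Lipschitz difference is controlled via the mean value theorem together with the uniform bound $\Pb[\Phi_i]\le 1/4$ from~\eqref{E:APRIORIMAINPT}, and the remaining factors are estimated by the same $L^\infty$--$L^2$ product-rule argument, now giving a factor of $\|\Phi_1-\Phi_2\|_{\mathcal H^{2j-1}_{Z_0}}$ on the lower-order side and $\tE_{2j}[\Phi_i]^{1/2}$ on the top-order side; coupling this with the $\varphi_{2j}$ bound $\|\Phi_1-\Phi_2\|_{\mathcal H^{2j}_{Z_0}}$ via Cauchy--Schwarz gives~\eqref{E:COMMBOUND3DIFF}.

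The main obstacle in this proposition has, in effect, already been overcome: it was the apparent $z^{-1/3}$ singularity of the individual summands of~\eqref{E:MATHFRAKRDEF} at the origin, which was defused by the algebraic rearrangement in Lemma~\ref{L:MATHFRAKRNEW} and then converted into concrete $L^2$ bounds in Lemma~\ref{L:LPPROP}. The only point requiring slight care in the execution is ensuring that, in the quadratic and cubic error terms, the bookkeeping respects which factor carries top-order derivatives — at most one factor per term can exceed order $j+1$ — so that the Hardy--Sobolev embedding~\eqref{E:XKHARDY} applies uniformly on $[0,Z_0]$ to the remaining lower-order factors.
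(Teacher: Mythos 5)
Your proposal is correct and follows essentially the same route as the paper: it invokes Lemma~\ref{L:MATHFRAKRNEW} to rewrite $\mathfrak R[\th]$ into the form~\eqref{E:MATHFRAKR2}, applies Cauchy--Schwarz, estimates $\mathcal S_1$, $\mathcal S_2$, and the cubic term via Lemma~\ref{L:LPPROP}, controls the remaining linear and quadratic terms by distributing $\bD^{2j}$ through the product rule together with the Hardy--Sobolev embedding~\eqref{E:XKHARDY}, and handles the difference estimate by the analogous telescoping argument. The only minor slip is in the intermediate bookkeeping for the $\mathcal S_1$-contribution (your stated power of $\tE_{2j}$ after Cauchy--Schwarz is off by a quarter), but this does not affect the validity of the conclusion or the structure of the argument.
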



\begin{proof}
To complete the proof of~\eqref{E:FRAKREST}, it remains to estimate the second line on the right-hand side of~\eqref{E:MATHFRAKR2}, as the bounds for the first line follow from Lemma~\ref{L:LPPROP}. The linear term $\Big[\frac43 \frac{\bzeta_z^2}{\zeta_z^2}  z^{-\frac13} \bp^2 \bzeta \bG^\frac32 + \frac{2\CLP z}{\zeta^2\bzeta}\Big]\th$ is estimated analogously to the proof of~\eqref{E:SONEBOUND}, with the added simplification that the term in rectangular parenthesis is already regular at $z=0$ and no further cancellations are necessary. A similar comment applies to the last (quadratic) term on the right-hand side of~\eqref{E:MATHFRAKR2}. This is estimated analogously to~\eqref{E:STWOBOUND}, without any further cancellations necessary since 
\[
\frac{\CLP z\theta^2}{\zeta^2\bzeta^2}=\CLP z^{\frac13} \big(\frac{\theta}{z^{\frac13}}\big)^2 \big(\frac{z^{\frac13}}{\zeta}\big)^2\big(\frac{z^{\frac13}}{\bzeta}\big)^2.
\]
Therefore we arrive at 
\begin{align}
\int_{0}^{Z_0} \chi_{2j}\Big|\bD^{2j}\Big(\Big[\frac43 \frac{\bzeta_z^2}{\zeta_z^2}  z^{-\frac13} \bp^2 \bzeta \bG^\frac32 + \frac{2\CLP z}{\zeta^2\bzeta}\Big]\th\Big)\Big|^2 \diff z  &\le C(Z_0)\|\Phi\|_{\mathcal{H}^{2j-1}_Z}\tE_{2j}^{\frac12}+\tE_{\leq 2j}^2, \notag\\
\int_{0}^{Z_0} \chi_{2j}\Big|\bD^{2j}\Big(\frac{\CLP z\theta^2}{\zeta^2\bzeta^2}\Big)\Big|^2\diff z & \le C  \tE_{\leq 2j}^{2}.\notag
\end{align}
The claim now follows from Lemma~\ref{L:LPPROP}, the above two bounds, and the Cauchy-Schwarz inequality. 

To prove~\eqref{E:COMMBOUND3DIFF}, one observes that every term in $\mathfrak{R}[\th_1]-\mathfrak{R}[\th_2]$, after re-writing as in Lemma~\ref{L:MATHFRAKRNEW}, admits a decomposition in terms of the differences $\th_1-\th_2$ that respects the structure used in the proof of Lemma~\ref{L:LPPROP} and argues analogously. We omit the details.
\end{proof}


\subsection{Proofs of Proposition~\ref{P:EE2} and Proposition~\ref{P:HMZDIFF}}


\begin{proof}[Proof of Proposition~\ref{P:EE2}]
Proposition~\ref{P:EE2} is now a simple consequence of the energy inequalities stated in Propositions~\ref{P:ZEROENERGYGLOBAL}--\ref{P:ENERGYGLOBAL} and the nonlinear bounds stated in Lemmas~\ref{L:CUTOFFBOUND},~\ref{L:COMMBOUND1},~\ref{L:COMMBOUND2}, Propositions~\ref{P:EXT1} and~\ref{P:INT1}. Specifically, from Proposition~\ref{P:ZEROENERGYGLOBAL} and Lemma~\ref{L:CUTOFFBOUND}, we obtain
\begin{align}
&\frac12\pa_s\tE_{0}(s)  + c_4\tE_{0} (s) \le C \kappa\|\Phi\|_{\H^{0}_{2\m,Z_0}}^2
  + \sqrt{\kappa}(r_*e^s)^{-\frac12} .\label{E:Energy-zero}
\end{align}
For $j\geq 1$, we apply Lemmas~\ref{L:CUTOFFBOUND},~\ref{L:COMMBOUND1},~\ref{L:COMMBOUND2}, Propositions~\ref{P:EXT1} and~\ref{P:INT1} to estimate the right hand side of Proposition~\ref{P:ENERGYGLOBAL} and obtain
\begin{align}
&\frac12\pa_s\tE_{2j}(s)  + (c_3 j +c_4)\tE_{2j} (s) \le C \kappa\|\Phi\|_{\H^{2J}_{2\m,Z_0}}^2 \bm{1}_{j\le J}
 \notag\\
&\quad  +C\Big( \sqrt{\kappa}(r_*e^s)^{-\frac12-\frac{2j}{3}}\tE_{\leq 2j}^{\frac12}+ (r_\ast e^s)^{ - (\frac23-c)j - \frac{\alpha+1}{2}} \tE_{\leq 2j}^\frac12 + (r_*e^s)^{-\frac23}\tE_{\leq 2j}  +  (r_\ast e^s)^{-\frac{\alpha+1}{2} + 2\delta j }\tE_{\leq 2j}^\frac32\Big) \notag\\
&\quad + C(Z_0)\|\Phi\|_{\mathcal{H}^{2j-1}_{Z_0}}\tE_{2j}^{\frac12} + CZ_0^{-\frac23}\tE_{\leq 2j}+C(Z_0)\tE_{\leq 2j} \tE_{\leq 2[\frac{j+3}{2}]}^{\frac12} +C   Z_0^{-\frac12}\tE_{\leq 2j}^{\frac32}+C Z_0^{-\frac{\al}{2}+(2M+1)\de-\frac{c}{2}-\frac16}\tE_{\leq 2j}^{2}\notag\\
&\quad  +C\Big(Z_0^{-\frac{\al+1}{2}+\de}\tE_{\leq 2j} + Z_0^{-\frac{\al+1}{2}+2\de j}\tE_{\leq 2j}^{\frac32}\Big) +C(Z_0)\|\Phi\|_{\mathcal{H}^{2j-1}_{Z_0}}\tE_{2j}^{\frac12}+C \tE_{\leq 2j}^{\frac32} .\label{E:Energy-high1}
\end{align}
By recalling assumption~\ref{item:a2}, the right hand side easily simplifies to the form in the statement of the proposition.
\end{proof} 

 The proof of Proposition~\ref{P:HMZDIFF} now follows by the standard adaptation of the energy arguments above. We first have the following energy inequality, analogous to Lemma~\ref{L:HIGHENERGY0}. 

\begin{lemma}\label{L:DIFFENERGY}
Under the assumptions of Proposition~\ref{P:HMZDIFF}, let $\vartheta=\theta_1-\theta_2$, $\varphi=\phi_1-\phi_2$. As usual, we denote $\vartheta_{2\m}=\bD^{2\m}\vartheta$ and similarly for $\varphi$. We have the energy identity
\begin{align}
& \frac12\frac{d}{ds}\int_0^{Z_0} g_{2\m} \left[ \frac{(\bd\vartheta_{2\m})^2}{(\bp\zeta_1)^2} +\varphi_{2\m}^2 \right] \notag \\
&\leq -\frac{4\m-3}{6}\int_0^{Z_0} g_{2\m} \left[\frac{(\bd\vartheta_{2\m})^2}{(\bp\zeta_1)^2} + \varphi_{2\m}^2\right]+ 2\m \int_0^{Z_0} \frac{g_{2\m}\bp \bG}{\sqrt{\bG}} \frac{ \bd \vartheta_{2\m} }{\bp \zeta_1 }\varphi_{2\m}
-\int_0^{Z_0} g_{2\m} \frac{(\bd\vartheta_{2\m})^2}{(\bp\zeta_1)^2} \frac{\pa_s\bp\theta_1}{\bp\zeta_1}\notag \\
&\ \ \  \ 
  + \frac12\int_0^{Z_0} \Lambda g_{2\m}  \left[ \frac{(\bd\vartheta_{2\m})^2}{(\bp\zeta_1)^2} +\varphi_{2\m}^2 \right] 
   - \int_0^{Z_0}  \frac{\bp g_{2\m}}{(\bp\zeta_1)^2} \bd \vartheta_{2\m} \varphi_{2\m}  - \int_0^{Z_0} g_{2\m} \frac{(\bd\vartheta_{2\m})^2}{(\bp\zeta_1)^2}\frac{z\pa_{zz}\zeta_1}{\pa_z\zeta_1}  \notag\\
   &\ \ \ \ - \int_0^{Z_0} g_{2\m} \bp(\frac{\bzeta_z^2}{\zeta_{1,z}^2}) \bG  \bd \vartheta_{2\m} \varphi_{2\m} -2\m \int_0^{Z_0} g_{2\m} \frac{ \pa_z\theta_1 }{\zeta_{1,z}} \frac{\bp \bG }{\sqrt{\bG }} \frac{ \bd \vartheta_{2\m} }{\bp \zeta_1 }\varphi_{2\m}\notag\\
& \ \ \ \ +\int_0^{Z_0} g_{2\m} \varphi_{2\m}  \Big(\frac{\bzeta_z^2}{\zeta_{1,z}^2}-\frac{\bzeta_z^2}{\zeta_{2,z}^2}\Big)  K\theta_{2,2\m} \notag\\
& \ \ \ \ +2\m \int_0^{Z_0} g_{2\m}  \bp \bG\Big(\Big(\frac{\bzeta_z^2}{\zeta_{1,z}^2}-\frac{\bzeta_z^2}{\zeta_{2,z}^2}\Big)  \bd \theta_{2,2\m}\Big) \varphi_{2\m} + \int_0^{Z_0} g_{2\m} \Big(\frac{\bzeta_z^2}{\zeta_{1,z}^2} \mathcal R_{2\m}\theta_1-\frac{\bzeta_z^2}{\zeta_{2,z}^2} \mathcal R_{2\m}\theta_2\Big)  \varphi_{2\m}   \notag\\ 
& \ \ \ \ + \int_0^{Z_0} g_{2\m} \big(\mathcal N_{2\m}[\theta_1]-\mathcal N_{2\m}[\theta_2]\big) \varphi_{2\m} +\int_0^{Z_0} g_{2\m} \bD^{2\m}\big(\RR[\th_1]-\RR[\th_2]\big) \varphi_{2\m}.
\end{align}
\end{lemma}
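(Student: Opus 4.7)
The approach is to follow the template of Lemma~\ref{L:HIGHENERGY0}, applied now to the difference pair $(\vartheta,\varphi):=(\th_1-\th_2,\phi_1-\phi_2)$, while carefully unpacking the quasilinear coefficient differences. Subtracting the evolution equations~\eqref{E:NLTHETA0}--\eqref{E:NLPHI0} for the two solutions, and observing that the flattening source $\frac{\CLP z-\tilde M}{\zeta^2}-\frac{\tilde g_z}{\tilde g}\frac1{\zeta_z}$ vanishes identically on $\{z\le r_\ast e^s\}$ by~\ref{item:g1}--\ref{item:g2}, and since $r_\ast$ is chosen in Theorem~\ref{T:MAIN} so that $r_\ast e^s>Z_0$ on the whole time interval, this source drops out on $[0,Z_0]$. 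The resulting difference system is
\begin{align*}
\vartheta_s&=-\Lambda\vartheta+\vartheta+\varphi,\\
\varphi_s&=-\Lambda\varphi+\frac{\bzeta_z^2}{\zeta_{1,z}^2}K\vartheta+\Big(\frac{\bzeta_z^2}{\zeta_{1,z}^2}-\frac{\bzeta_z^2}{\zeta_{2,z}^2}\Big)K\th_2+\big(\mathfrak R[\th_1]-\mathfrak R[\th_2]\big).
\end{align*}

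Next I would apply $\bD^{2\m}$ to both equations and invoke Lemma~\ref{L:KEYCOMM} separately on each summand $\frac{\bzeta_z^2}{\zeta_{i,z}^2}K\th_i$, $i=1,2$. Regrouping the resulting expansions along the split ``$K\vartheta_{2\m}+\text{remainder}$ vs.\ $(\zeta_1^{-2}-\zeta_2^{-2})K\th_{2,2\m}+\text{remainder}$'' produces exactly the four nonlinear integrals appearing at the end of the claimed inequality, namely the $K\th_{2,2\m}$ and $\bd\th_{2,2\m}$ terms weighted by $\frac{\bzeta_z^2}{\zeta_{1,z}^2}-\frac{\bzeta_z^2}{\zeta_{2,z}^2}$, the difference of the commutators $\frac{\bzeta_z^2}{\zeta_{i,z}^2}\mathcal R_{2\m}\th_i$, and $\mathcal N_{2\m}[\th_1]-\mathcal N_{2\m}[\th_2]$. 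Testing the equation for $\bd\vartheta_{2\m}$ against $g_{2\m}\bd\vartheta_{2\m}/(\bp\zeta_1)^2$, the equation for $\varphi_{2\m}$ against $g_{2\m}\varphi_{2\m}$, integrating over $[0,Z_0]$, and performing the same integration-by-parts steps as in the proof of Lemma~\ref{L:HIGHENERGY0} (with $\chi_{2j}\leadsto g_{2\m}$ and with every occurrence of $\zeta$ replaced by $\zeta_1$ wherever it multiplies $\vartheta_{2\m}$ or $\varphi_{2\m}$) delivers the claimed identity up to a boundary contribution at $z=Z_0$.

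The main obstacle is the handling of this boundary contribution, which is why the statement is an inequality rather than an equality. Collecting all contributions at $Z_0$ from the $\Lambda$-IBP and from the IBP in $\int g_{2\m}\varphi_{2\m}\frac{\bzeta_z^2}{\zeta_{1,z}^2}K\vartheta_{2\m}$ yields
\begin{align*}
-\tfrac12 Z_0\, g_{2\m}(Z_0)\frac{(\bd\vartheta_{2\m}(Z_0))^2}{(\bp\zeta_1(Z_0))^2}-\tfrac12 Z_0\, g_{2\m}(Z_0)\varphi_{2\m}(Z_0)^2+Z_0^{\frac23}g_{2\m}(Z_0)\frac{\bd\vartheta_{2\m}(Z_0)\varphi_{2\m}(Z_0)}{(\bp\zeta_1(Z_0))^2}.
\end{align*}
An AM--GM reduction identical to~\eqref{E:D4} in the proof of Proposition~\ref{P:DISSIP} bounds this above by $-\tfrac12 Z_0^{\frac13}g_{2\m}(Z_0)\varphi_{2\m}(Z_0)^2\bigl[Z_0^{\frac23}-(\bp\zeta_1(Z_0))^{-2}\bigr]$, which is non-positive provided $Z_0^{\frac23}\ge(\bp\zeta_1(Z_0))^{-2}$. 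Since the pointwise a priori bound $\mathbb P[\Phi_1]\le\frac14$ renders $\bp\zeta_1$ a small perturbation of $\bp\bzeta$, this last inequality is a perturbation of the strict LP inequality $Z_0^{\frac23}>\bG(Z_0)$ guaranteed by Lemma~\ref{L:GBAR} (for $Z_0$ slightly larger than $z_\ast$) and hence holds under our smallness assumptions. Discarding this non-positive boundary term produces the inequality in the statement.
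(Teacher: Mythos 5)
Your proposal is correct and follows the same route as the paper's (very brief) proof: subtract the two quasilinearised equations, observe that on $[0,Z_0]$ the flattening source is absent since $r_\ast e^s>Z_0$, repeat the integration-by-parts computation of Lemma~\ref{L:HIGHENERGY0} with $\chi_{2j}\leadsto g_{2\m}$ and $\zeta\leadsto\zeta_1$, and record the extra commutator/difference terms. The one point where your write-up is more careful than the paper is the boundary contribution at $z=Z_0$: the paper simply cites \eqref{E:D4} and writes ``due to $Z_0>z_*$'', whereas you correctly observe that the relevant weight is $(\bp\zeta_1)^{-2}$ rather than $\bG=(\bp\bzeta)^{-2}$, and that one therefore needs the pointwise a priori bound on $\Phi_1$ to conclude $Z_0^{2/3}\ge(\bp\zeta_1(Z_0))^{-2}$. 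This is a small but genuine sharpening of the paper's terse assertion. Two minor phrasing inaccuracies: Lemma~\ref{L:KEYCOMM} applies to $\bD^{2\m}K\theta_i$, not directly to $\bD^{2\m}(\tfrac{\bzeta_z^2}{\zeta_{i,z}^2}K\theta_i)$ --- you must first split off $\mathcal N_{2\m}[\theta_i]$ via the definition \eqref{E:NTWOMDEF} before invoking the commutator lemma; and the count of ``four nonlinear integrals'' omits the $\bD^{2\m}(\mathfrak R[\theta_1]-\mathfrak R[\theta_2])$ term, though you did account for it in the preceding paragraph. Neither affects the correctness of the argument.
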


\begin{proof}
This follows by a similar argument to Lemma~\ref{L:HIGHENERGY0}, where we note that, as $r_*>Z_0$, the far-field flattening error does not appear, and we have the additional boundary term on the right hand side  
\[-\Big[\frac12g_{2\m} z(\bp\zeta_1)^{-2}|\bd\vartheta_{2\m}|^2+\frac12g_{2\m} z|\varphi_{2\m}|^2-g_{2\m} z^{\frac23}\varphi_{2\m}(\bp\zeta_1)^{-2} \bd\vartheta_{2\m} \Big]^{Z_0}_0\leq 0
\]  
by the same argument as in Proposition~\ref{P:DISSIP}  (cf. \eqref{E:D4})  due to $Z_0>z_*$.
\end{proof}

\begin{proof}[Proof of Proposition~\ref{P:HMZDIFF}]
By arguing as in the proof of Proposition~\ref{P:EE2} but now employing the energy inequality of Lemma~\ref{L:DIFFENERGY} with~\eqref{E:COMMBOUND1DIFF},~\eqref{E:COMMBOUND2DIFF},~\eqref{E:COMMBOUND3DIFF}, in order to obtain control over the top order semi-norm contribution, $\|\Phi_1-\Phi_2\|_{\dot{\mathcal{H}}^{2 \m}_{Z_0}}^2$, it remains only to estimate
\beqa
{}&\bigg|\int_0^{Z_0} g_{2\m} \varphi_{2\m}  \Big(\frac{\bzeta_z^2}{\zeta_{1,z}^2}-\frac{\bzeta_z^2}{\zeta_{2,z}^2}\Big)  K\theta_{2,2\m} +2\m \int_0^{Z_0} g_{2\m} \bp \bG\Big(\Big(\frac{\bzeta_z^2}{\zeta_{1,z}^2}-\frac{\bzeta_z^2}{\zeta_{2,z}^2}\Big)  \bd \theta_{2,2\m}\Big) \varphi_{2\m}\bigg|\\
&\leq C\m\|\Phi_1-\Phi_2\|_{\H^{2\m}_{Z_0}}\|\Phi_2\|_{\mathcal{H}^{2\m+1}_{Z_0}}\|\bp\th_1-\bp\th_2\|_{L^\infty([0,\infty])}\\
&\leq C(Z_0)\|\Phi_1-\Phi_2\|_{\H^{2\m}_{Z_0}}^2\|\Phi_2\|_{\mathcal{H}^{2\m+1}_{Z_0}},
\eeqa
which yields the estimate  
\[
\frac12\frac{d}{ds}\|\Phi_1-\Phi_2\|_{\dot{\tilde{\mathcal{H}}}^{2 \m}_{Z_0}}^2\leq C(Z_0)\|\Phi_1-\Phi_2\|_{ \tilde{\mathcal{H}}^{2 \m}_{Z_0} }^2\big(\tE_{\leq 2\m+2}[\Phi_1]^{\frac12}+\tE_{\leq 2\m+2}[\Phi_2]^{\frac12}\big).
\]
where we have used ${\tilde{\mathcal{H}}^{2 \m}_{Z_0}}$ to denote the Hilbert space defined by replacing the weight $\bG$ by $(\bp \zeta_1)^{-2}$.   We observe this space is equivalent to $\HmZm$ due to the a priori estimate~\eqref{E:APRIORIMAINPT}.
 
To control the zero order contribution to the full norm, a similar (but simpler) integration by parts argument to Lemma~\ref{L:DIFFENERGY} yields
\begin{align}
& \frac12\frac{d}{ds}\int_0^{Z_0} g_{2\m} \left[ \frac{(\bd\vartheta)^2}{(\bp\zeta_1)^2} +\varphi^2 \right] \notag \\
&\leq \frac12\int_0^{Z_0} g_{2\m} \left[\frac{(\bd\vartheta)^2}{(\bp\zeta_1)^2} + \varphi^2\right]
-\int_0^{Z_0} g_{2\m} \frac{(\bd\vartheta)^2}{(\bp\zeta_1)^2} \frac{\pa_s\bp\theta_1}{\bp\zeta_1}\notag \\
&\ \ \  \ 
  + \frac12\int_0^{Z_0} \Lambda g_{2\m}  \left[ \frac{(\bd\vartheta)^2}{(\bp\zeta_1)^2} +\varphi^2 \right] 
   - \int_0^{Z_0}  \frac{\bp g_{2\m}}{(\bp\zeta_1)^2} \bd \vartheta \varphi  - \int_0^{Z_0} g_{2\m} \frac{(\bd\vartheta)^2}{(\bp\zeta_1)^2}\frac{z\pa_{zz}\zeta_1}{\pa_z\zeta_1}  \notag\\
   &\ \ \ \ - \int_0^{Z_0} g_{2\m} \bp(\frac{\bzeta_z^2}{\zeta_{1,z}^2}) \bG  \bd \vartheta \varphi +\int_0^{Z_0} g_{2\m} \varphi  \Big(\frac{\bzeta_z^2}{\zeta_{1,z}^2}-\frac{\bzeta_z^2}{\zeta_{2,z}^2}\Big)  K\theta_2  +\int_0^{Z_0} g_{2\m}  \big(\RR[\th_1]-\RR[\th_2]\big) \varphi   \notag.
\end{align}
From this, it is simple to use the a priori assumptions to deduce an estimate of the form  
\[  \frac12\frac{d}{ds}\|\Phi_1-\Phi_2\|^2_{\tilde{\H}^0_{2\m,Z_0}}\leq C\|\Phi_1-\Phi_2\|_{\tilde{\H}^0_{2\m,Z_0}}^2.\]
We integrate over $[s_1,s_2]$ and use the equivalence between $\|\cdot\|_{\HmZm}$ and $\|\cdot\|_{\tilde{\mathcal{H}}^{2 \m}_{Z_0}}$ to complete the proof. 
\end{proof}


\section{Pointwise control of the Lagrangian flow map}\label{S:POINTWISE}


In this section, we present the $L^\infty$-in-$z$ bounds on the quantity 
\[
\Pb(s)= \|\frac{\th}{\bzeta}\|_{L^\infty} +\|\frac{\bp\theta(s,\cdot)}{\bp\bzeta}\|_{L^\infty}+\|\frac{z\pa_z^2\theta(s,\cdot)}{\pa_z\bzeta}\|_{L^\infty} +\|\frac{\pa_s\bp\theta(s,\cdot)}{\bp\bzeta}\|_{L^\infty}
\]
introduced in~\eqref{E:PDEF}. This is the final step in justifying the main a priori assumptions~\eqref{E:APRIORIMAIN}--\eqref{E:APRIORIMAINPT}, which have been used heavily in the  high-order energy estimates of Section~\ref{S:ENERGYBOUNDS}. 

The essential idea is that  $L^\infty$ bounds on the finite interval $[0,Z]$ follow from the Hardy-Sobolev embedding and can thus be related to the norm $\HmZ$. 
However, the bounds on the counterpart $(Z,\infty)$ do not follow in the same way due to the inhomogeneity of the LP profile $\bzeta$ and the weight structure of our energy norm, which is not strong enough to control them directly, independent of time. To get around this difficulty, we propagate the initial control on $\Pb(0)$ via the finite speed of propagation property and the exponential decay of the total energy embedded in our definition~\eqref{E:TEDEF} of $\tE$ and the exponential decay with rate $\Omega$ in the a priori assumption~\eqref{E:APRIORIMAIN}. 

\begin{proposition} \label{P:PW}
Let $\Phi=(\th,\phi)^\top$ satisfy the assumptions of Proposition~\ref{P:EE2}.
Then there exists a constant $C_P =C(Z_0)\Omega^{-1}$ such that for any $s\in[0, S)$ the following bounds hold:
\begin{align}
\left\|\frac{\theta}{\bzeta}(s)\right\|_{L^\infty[0,\infty]} &\le   \left\|\frac{\theta_0}{\bzeta}\right\|_{L^\infty[0,\infty]} 
+ C_P \sqrt{\eps_0} \label{apriori1},\\ 
\left\|\frac{\bp\theta}{\bp\bzeta}(s)\right\|_{L^\infty[0,\infty]}& \le   \left\|\frac{\bp\theta_0}{\bp\bzeta}\right\|_{L^\infty[0,\infty]} 
+ C_P \sqrt{\eps_0} \label{apriori2},\\
\left\|\frac{z\pa_z^2\theta(s,\cdot)}{\pa_z\bzeta}\right\|_{L^\infty[0,\infty]}+\left\|\frac{\pa_s\bp\theta}{\bp\bzeta}(s)\right\|_{L^\infty[0,\infty]}& \le  \left\|\frac{z\pa_z^2\theta_0}{\pa_z\bzeta}\right\|_{L^\infty[0,\infty]}+ \left\|\frac{\pa_s\bp\theta_0}{\bp\bzeta}\right\|_{L^\infty[0,\infty]} 
+ C_P \sqrt{\eps_0}  \label{apriori3}.
\end{align}
\end{proposition}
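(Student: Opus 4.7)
The strategy splits the argument into an interior regime $z \in [0,Z_0]$ and an exterior regime $z \in [Z_0, \infty)$, joined via the finite-speed-of-propagation property inherent in the scaling operator $\Lambda = z\pa_z$ that appears in the transport term of the $\theta$-equation $\theta_s = -\Lambda\theta + \theta + \phi$. On $[0, Z_0]$, I will apply the weighted Hardy-Sobolev embeddings from Section~\ref{S:ENERGYBOUNDS} and Appendix~\ref{SS:SSP} to bound each of $\theta/\bzeta$, $\bp\theta/\bp\bzeta$, and $z\pa_z^2\theta/\bzeta_z$ by $C(Z_0)\|\Phi\|_{\HmZm}$. Since $\m \geq 2$, this norm supplies enough regularity to embed into the weighted $L^\infty$ spaces adapted to the $\bzeta \sim z^{1/3}$ behavior at the origin; combined with the a priori control $\|\Phi\|_{\HmZm}^2 \leq \bar C \tilde\eps$, this yields interior contributions bounded by $C(Z_0)\sqrt{\eps_0}$. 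The quantity $\pa_s\bp\theta/\bp\bzeta$ is not directly a spatial norm, so I will use the $\theta$-equation, the commutator identity $\bp\Lambda = \Lambda\bp + \tfrac{1}{3}\bp$ from \eqref{E:GAIN1}, and the expansion $\Lambda\bp\theta = \tfrac{2}{3}\bp\theta + z\pa_z^2\theta \cdot z^{2/3}$ to derive the algebraic identity
\begin{equation*}
\frac{\pa_s\bp\theta}{\bp\bzeta} = -\frac{z\pa_z^2\theta}{\bzeta_z} + \frac{\bp\phi}{\bp\bzeta},
\end{equation*}
valid throughout $[0,\infty)$, which reduces control of $\pa_s\bp\theta/\bp\bzeta$ to that of $z\pa_z^2\theta/\bzeta_z$ and $\bp\phi/\bp\bzeta$.

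On the exterior I exploit the characteristics $\tau \mapsto z(\tau) = z_1 e^{\tau - s}$ of the operator $\pa_s + \Lambda$. Traced backward from $(s, z_1)$ with $z_1 > Z_0$, each characteristic terminates either at $(\sin, z_0)$ with $z_0 > Z_0$ (where the initial-data bound applies) or at a boundary point $(s_0, Z_0)$ with $s_0 \in [\sin, s]$ (where the interior bound above applies). Along such a characteristic, each of $V_1 := \theta/\bzeta$, $V_2 := \bp\theta/\bp\bzeta$, $V_3 := z\pa_z^2\theta/\bzeta_z$ satisfies a linear ODE $\dot V = A(z(\tau)) V + S(\tau)$ obtained by differentiating the $\theta$-equation suitably and dividing by the appropriate power of $\bzeta$. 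Using the change of variables $d\tau = dz/z$, the integrating factor telescopes to a bounded quantity; for instance, for $V_1$,
\begin{equation*}
\exp\Big(\int_{\tau_0}^{s} \big(1 - \tfrac{\Lambda\bzeta}{\bzeta}\big)\, d\tau\Big) = \frac{z_1 \bzeta(z_0)}{z_0 \bzeta(z_1)},
\end{equation*}
which is uniformly bounded on $[Z_0,\infty)$ by the asymptotics $\bzeta(z)/z = 1 + O(z^{-2})$ of Lemma~\ref{L:ZETABAR}; analogous bounded factors arise for $V_2$ (via $\bp\bzeta \sim z^{2/3}$, giving factor $z_1^{2/3}\bp\bzeta(z_0)/(z_0^{2/3}\bp\bzeta(z_1))$) and for $V_3$ (via $\bzeta_z \to 1$, giving factor $\bzeta_z(z_0)/\bzeta_z(z_1)$). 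The source terms $\phi/\bzeta$, $\bp\phi/\bp\bzeta$, and $z\pa_z^2\phi/\bzeta_z$ will then be bounded in $L^\infty([Z_0,\infty))$ by $C(Z_0)\tE_{\leq 2(\m+1)}^{1/2}$ via the weighted Hardy-Sobolev estimates of Lemma~\ref{L:weightedthetaHardy} and its $\phi$-analogues, which follow from the structure of $\tE_{2j}$ with regularity $M = \m+1 \geq 3$. Combining with the a priori decay $\tE^{1/2}(\tau) \leq \sqrt{\eps_0}e^{-\Omega \tau}$, integration along the characteristic produces
\begin{equation*}
\int_{\tau_0}^{s} |S(\tau)|\, d\tau \leq C(Z_0)\sqrt{\eps_0} \int_{\sin}^{s} e^{-\Omega \tau}\, d\tau \leq C(Z_0)\Omega^{-1}\sqrt{\eps_0},
\end{equation*}
which together with the homogeneous part produces exactly the constant $C_P = C(Z_0)\Omega^{-1}$ in the statement.

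The principal technical hurdle is extracting the far-field $L^\infty$ bounds on source terms up to two derivatives of $\phi$ (required for the ODE for $V_3$, which arises after differentiating the $\theta$-equation twice) from the high-order energy $\tE_{\leq 2(\m+1)}$ through weighted Hardy-Sobolev embeddings, where the parameters $(\alpha, c)$ of assumptions~\ref{item:a1}--\ref{item:a3} must be consistent with the embedding exponents so that the sources are integrable. A secondary subtlety is the uniform boundedness of the integrating factor for $V_3$ on $[Z_0, \infty)$, which hinges on the sharp asymptotics $\bzeta_{zz} = O(z^{-3})$ at infinity from Lemma~\ref{L:ZETABAR}, as well as the continuity of the matching argument at the boundary $z = Z_0$ between interior Sobolev bounds and exterior transport bounds.
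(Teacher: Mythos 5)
Your proposal is correct and takes essentially the same approach as the paper: interior Hardy--Sobolev control on $[0,Z_0]$ via $\|\Phi\|_{\HmZm}$, backward integration along the characteristics of $\pa_s+\Lambda$ to export information from the accretive cone into the far field, far-field weighted $L^\infty$ bounds on $\phi/z$, $\pa_z\phi$, and $z\pa_z^2\phi$ in terms of $\tE^{1/2}$, and integration of the exponential decay $\tE^{1/2}\lesssim\sqrt{\eps_0}e^{-\Omega s}$ to produce the $\Omega^{-1}$ factor. The only cosmetic difference is that the paper works with $\theta/z$, $\pa_z\theta$, and $z\pa_z^2\theta$, which satisfy pure transport along characteristics (the exponential from the $\theta$-ODE cancels the change in the comoving label), whereas you track the $\bzeta$-weighted ratios directly and compute the corresponding bounded integrating factors; these are equivalent on $[Z_0,\infty)$ since $\bzeta\sim\tilde\zeta_{-1}z$ and $\bzeta_z$ is bounded above and below there, though note that Lemma~\ref{L:ZETABAR} gives $\bzeta/z\to\tilde\zeta_{-1}$, not $1$.
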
 

Below, in Lemma~\ref{L:STPROPERTIES}, we will take $C_*(Z_0)>6C_P+1$ and so successfully improve the a priori assumption~\eqref{E:APRIORIMAINPT}.

\begin{proof}
\textit{Step 1: Interior estimate.} Note that $\bzeta(z)\sim z^{\frac13}$ for $z\ll 1$ and $\bzeta (z)\sim z $ for $z\gg 1$, in particular for $z\geq Z_0$.
On $[0,Z_0]$, we demonstrate the estimate for the most complicated term, $\frac{\pa_s\bp\th}{\bp\bzeta}$, as the others follow by similar arguments. Recall that there exists a constant $C(Z_0)>0$ such that $\frac{1}{C(Z_0)}\leq \bp\bzeta\leq C(Z_0)$ for $z\in(0,Z_0)$. We now recall that $\theta_s +\Lambda\theta -\theta =\phi$, the commutation relation~\eqref{E:GAIN1}, and $\bp\th=\bd\th-\frac23 z^{-\frac13}\th$ to observe
\begin{align}
\pa_s\bp\theta=\bp(\phi-\Lambda\th+\th)=\bd(\phi+\frac23\th)-\frac23\frac{\phi+\frac23\th}{z^{\frac13}} -z^{\frac13}\bp\Big(\bd\th-\frac23\frac{\th}{z^{\frac13}}\Big).
\end{align}
It is now simple to estimate, using Lemma~\ref{L:LINFTYHARDYSOBOLEV} (specifically~\eqref{E:XKHARDY}),
\beqas
\big\|\pa_s\bp\theta\|_{L^\infty(0,Z_0)}\leq C(Z_0)\|\Phi\|_{\HmZm}.
\eeqas
In total, 
\begin{align}
&\left\|\frac{\theta}{\bzeta}(s)\right\|_{L^\infty[0,Z_0]}+\left\|\frac{z\pa_z^2\theta(s,\cdot)}{\pa_z\bzeta}\right\|_{L^\infty[0,\infty]}+\left\|\frac{\bp\theta}{\bp\bzeta}(s)\right\|_{L^\infty[0,Z_0]} +  \left\|\frac{\pa_s\bp\theta}{\bp\bzeta}(s)\right\|_{L^\infty[0,Z_0]} \notag\\
&\le C(Z_0) \|\Phi\|_{\HmZm} \le
 C(Z_0) \sqrt{\eps_0} e^{-\Om s },\label{0.8}
\end{align}
where we have used the a priori bound~\eqref{E:APRIORIMAIN} and \eqref{E:HMZTE} in the very last bound. It therefore remains only to establish~\eqref{apriori1}--\eqref{apriori3} for $z\geq Z_0$.

\textit{Step 2: Proof of~\eqref{apriori1}.} Recalling the relation $\theta_s +\Lambda\theta -\theta =\phi$ we integrate backwards in $s$ along the trajectory starting from $(s,z)$ to obtain, for $\tilde s\in[\sin,s]$,
\be\label{E:theta-phi}
\theta (s,z) =e^{s-\tilde s } \theta (\tilde s  ,e^{-(s-\tilde s )}z)  +\int_{\tilde s }^s e^{s-\sigma} \phi (\sigma , e^{-(s-\sigma)}z)\diff \sigma .
\ee
Then for $z>Z_0$, 
\be\label{E:FSP00}
\frac{\theta (s,z)}{z} =\frac{ \theta (\tilde s  ,e^{-(s-\tilde s )}z)}{e^{-(s-\tilde s )}z}  +\int_{\tilde s }^s  \left( \frac{\phi}{z} \right) (\sigma , e^{-(s-\sigma)}z)\diff \sigma.
\ee
We distinguish two cases. If $e^{-s}z\leq Z_0$, then we take $\tilde s$ such that $e^{-\tilde s}z=Z_0$. In the other case, we take $\tilde s=\sin$. Then we have found
\beqa\label{E:FSP0}
\Big|\frac{\theta (s,z)}{z}\Big| &\leq \big\|\frac{\theta_0}{z}\big\|_{L^\infty[Z_0,\infty]}  +\Big|\frac{\theta (\tilde s,Z_0)}{Z_0}\Big|+ \int_{\tilde s }^s  \left( \frac{\phi}{z} \right) (\sigma , e^{-(s-\sigma)}z)\diff \sigma\\
&\leq \big\|\frac{\theta_0}{z}\big\|_{L^\infty[Z_0,\infty]}  + C(Z_0) \sqrt{\eps_0}
+\int_{\tilde s }^s \left\|  \frac{\phi}{z} (\sigma)\right\|_{L^\infty(Z_0 ,\infty)} \diff\sigma,
\eeqa
where, in the last line, we have used that the trajectory along which we integrate is always contained in the region $z\geq Z_0$ by definition of $\tilde s$.

Now we employ H\"older's inequality and Lemma~\ref{L:LARGEZ}  to infer, for $z\geq Z_0$,
\begin{align}
\left|\frac{\phi}{z}\right| &= \left|\int_\infty^z \pa_z(\frac{\phi}{z}) \diff z\right| =  \left|\int_\infty^z \frac{\pa_z\phi}{z}- \frac{\phi}{z^2}\diff z\right|  \notag \\
&\le  (\int_z^\infty \frac{1}{\tilde z^{2c-\alpha +\frac83}} \diff \tilde z)^\frac12 (\int_z^\infty \tilde z^{2c-\alpha +\frac23 }(\pa_z\phi)^2 \diff \tilde z )^\frac12 + (\int_z^\infty \frac{1}{\tilde z^{4-\alpha}} \diff \tilde z)^\frac12 ( \int_z^\infty \tilde z^{-\alpha}\phi^2 \diff \tilde z )^\frac12 \notag\\
&\le CZ_0^{-(2c-\alpha +\frac53)} \bigg(\int_0^{Z_0} | \bd \phi |^2 (1+z)^{2c-\alpha-\frac23} \diff z + 
\int_{Z_0}^\infty |\bp \bd \phi|^2 (1+z)^{2c-\alpha} \diff z \bigg)^{\frac12}\notag\\
&\ \ \ \ +\frac{C}{\sqrt{\kappa}}Z_0^{-\frac{3-\al}{2}}\tE^{\frac12}\notag\\
&\leq C(Z_0)\tE^{\frac12},\label{E:PHIOVERZ}
\end{align} 
where in the last line we have  applied Lemma~\ref{L:EQUIVALENCE}. Using now the a priori estimate~\eqref{E:APRIORIMAIN}, we therefore find
\[\int_{\tilde s }^s \left\|  \frac{\phi}{z} (\sigma)\right\|_{L^\infty(Z_0 ,\infty)} \diff\sigma\leq C(Z_0)\sqrt{\eps_0}\int_0^s e^{-\Omega s}\,\dif s\leq C(Z_0)\Omega^{-1}\sqrt{\eps_0}. \]
Combining this with~\eqref{E:FSP0} yields~~\eqref{apriori1}.

\textit{Step 3: Proof of~\eqref{apriori2}.} Observe that $|\frac{\bp\theta}{\bp\bzeta} |= |\frac{\pa_z\th}{\pa_z\bzeta}| \approx |\pa_z\theta|$ on $(Z_0,\infty)$. From \eqref{E:theta-phi} we conclude,   
\[
\pa_z \theta (s,z) =\pa_z \theta (\tilde s,e^{-(s-\tilde s)}z)  +\int_{\tilde s}^s \pa_z \phi (\tilde s , e^{-(s-\sigma)}z)d\sigma.
\]
On the other hand, we have the simple identity $\pa_z \phi = \frac{ \bd \phi }{z^\frac23} - \frac23 \frac{\phi }{z}$. The second term on the right has already been estimated above by $C(Z_0)\tE^{\frac12}$, while for the first term we make the bound
\beqa
{}&\Big|\frac{ \bd \phi }{z^\frac23}\Big|^2\leq C Z_0^{-\frac43}|\bd \phi(Z_0)|^2+C\int_{Z_0}^\infty z^{-\frac53}|\bD^2\phi|^2\,\dif z\leq C C(Z_0)\tE^{\frac12},
\eeqa
where in the first inequality we have used~\eqref{E:HARDYREFINED2}, and the second estimate follows from Lemma~\eqref{L:EQUIVALENCE} and the simple inequality $z^{-\frac53}\leq (1+z)^{2c-\al}$ for $z\geq Z_0$.
We now apply an argument analogous to the one from the proof of~\eqref{apriori1} which then gives~\eqref{apriori2}. 

\textit{Step 4: Proof of~\eqref{apriori3}.} Using the equation  $\pa_z\theta_s + \Lambda \pa_z\theta = \pa_z\phi$, we note that 
\[
\frac{\pa_s\bp\theta}{\bp\bzeta}= \frac{\bp \phi - z^\frac23 \Lambda \pa_z\theta }{\bp\bzeta} = \frac{\pa_z\phi}{\pa_z\bzeta} - \frac{z \pa_z^2\theta}{\pa_z\bzeta},
\]
where we note that $\big|\frac{\pa_z\phi}{\pa_z\bzeta} \big|$ is bounded by $C(Z_0)\tE^{\frac12}$ by the above estimates. It is therefore sufficient to establish the estimate for $z\pa_z^2\th$.
We next note that 
\[
z\pa_z^2 \theta (s,z) =( z\pa_z^2 \theta) (\tilde s,e^{-(s-\tilde s)}z)  +\int_{\tilde s}^s (z\pa_z^2 \phi) (\sigma , e^{-(s-\sigma)}z)\diff \sigma.
\]
Since $z\pa_z^2\phi =  z\pa_z ( \frac{ \bd \phi }{z^\frac23}  - \frac23 \frac{\phi }{z}) = \frac{ \bp \bd \phi }{z^\frac13}   - \frac43 \frac{\bd \phi }{z^\frac23} + \frac{10}{9} \frac{\phi}{z}$, we deduce from similar arguments to the above that 
\[
\left\|  z\pa_z^2 \phi (\sigma)\right\|_{L^\infty(Z_0 ,\infty)}  
\leq C \tE (\sigma)^\frac12.
\]
By the same argument as in the  proof of~\eqref{apriori1}, \eqref{apriori3} follows. 
\end{proof}


We next provide a slightly sharpened bounds on the unknowns, which will be used in the proof of Theorem~\ref{T:EULERMAIN} in Section~\ref{S:MAINTHEOREM}.

\begin{lemma}[$L^\infty$-control of $\phi$]\label{L:PHIUPPER}
Let $\Phi=(\th,\phi)^\top$ satisfy the assumptions of Proposition~\ref{P:EE2}.
Then the following uniform upper bounds hold:
\begin{align}
\sup_{(s,r)\in[\sin,\infty)\times[0,\infty)}|\phi(s,e^sr)|&\le \|\phi_0\|_{L^\infty([0,\infty))} +C\sqrt{\eps_0}, \label{E:PHIUNIFORM}\\
 \sup_{(s,r)\in[\sin,\infty)\times[0,\infty)}|\Lambda\phi(s,e^sr)| &\le \|\Lambda\phi_0\|_{L^\infty([0,\infty))} +C\sqrt{\eps_0}.  \label{E:LAMBDAPHIUNIFORM}
\end{align}
\end{lemma}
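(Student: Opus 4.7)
The plan is to integrate the $\phi$-equation along the Lagrangian characteristics of the rescaled transport operator $\partial_s + \Lambda$, mirroring the strategy already used in Proposition~\ref{P:PW}. These characteristics are precisely the curves $z(s) = e^s r$ with $r \ge 0$ fixed, along which $\frac{d}{ds}[\phi(s, e^s r)] = (\partial_s + \Lambda)\phi(s, e^s r)$. Using Lemma~\ref{L:QUASI0}, I would rewrite equation~\eqref{E:NLPHI0} as $(\partial_s + \Lambda)\phi = F$ with
\[
F := \frac{\bzeta_z^2}{\zeta_z^2}K\theta + \frac{\CLP z - \tilde M}{\zeta^2} - \frac{\tilde g_z}{\tilde g}\frac{1}{\zeta_z} + \mathfrak R[\theta],
\]
so that along each characteristic
\[
\phi(s, e^s r) = \phi_0(e^{\sin}r) + \int_{\sin}^s F(\sigma, e^\sigma r)\,d\sigma.
\]
The first term is bounded by $\|\phi_0\|_{L^\infty}$, and the task reduces to bounding the integral by $C\sqrt{\eps_0}$ uniformly in $(s, r)$.

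I would split the time integration at the crossing time $s_0(r) := \log(Z_0/r)$, which separates the interior regime $e^\sigma r \le Z_0$ from the exterior $e^\sigma r \ge Z_0$ along the characteristic. On the interior portion, the Hardy--Sobolev embedding Lemma~\ref{L:LINFTYHARDYSOBOLEV} together with the Sobolev equivalence~\eqref{E:HSOBOLEVEQUIV} and the a priori assumption~\eqref{E:APRIORIMAIN} yield $\|F(\sigma, \cdot)\|_{L^\infty([0, Z_0])} \le C(Z_0)\|\Phi(\sigma)\|_{\HmZm} \le C(Z_0)\sqrt{\tilde\eps}\,e^{-\nu\sg\sigma}$, which integrates to $C(Z_0)(\nu\sg)^{-1}\sqrt{\tilde\eps}$. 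On the exterior portion, I would apply the pointwise estimates developed in Section~\ref{S:ENERGYBOUNDS}: specifically~\eqref{E:QKLINFTY} (with $Q = \mathrm{Id}$) for $K\theta$, Lemma~\ref{L:weightedthetaHardy} and the decomposition~\eqref{E:MATHFRAKR2} for $\mathfrak R[\theta]$, and assumptions~\ref{item:g1}--\ref{item:g2} together with Lemma~\ref{L:ZETABAR} for the two flattening terms. These produce a pointwise control of the schematic form $|F(\sigma, z)| \le C z^{-1/3}\sqrt{\eps_0}\,e^{-\Omega\sigma}$ (with strictly faster-decaying corrections). Evaluating at $z = e^\sigma r$ and using $e^{-s_0} = r/Z_0$, the exterior integral is bounded by $C r^{-1/3}e^{-(1/3+\Omega)\max(\sin,s_0)}\sqrt{\eps_0} \le CZ_0^{-5/6 - \Omega}\sqrt{\eps_0}$, uniformly in $r$; the flattening contributions additionally carry a factor of $r_*^{-1}$ or better, absorbed into $\sqrt{\eps_0}$ by the choice of $r_*$ large.

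For the $\Lambda\phi$ bound, I would apply $\Lambda$ to~\eqref{E:NLPHI0}. Since $[\partial_s, \Lambda] = 0$, this yields $(\partial_s + \Lambda)(\Lambda\phi) = \Lambda F$, and the same characteristic integration gives
\[
\Lambda\phi(s, e^s r) = (\Lambda\phi_0)(e^{\sin}r) + \int_{\sin}^s (\Lambda F)(\sigma, e^\sigma r)\, d\sigma.
\]
The $L^\infty$ estimates on $\Lambda F$ follow the same template, now commuting one further factor of $z\partial_z$ through the leading operators via~\eqref{E:GAIN1}--\eqref{E:ACC1} and drawing on the slightly higher-order pointwise bounds controlled by $\tE_{\le 2(\m+1)}$. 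The extra weight $z$ produced is harmless because every estimate invoked in the exterior region has a spare negative power of $z$ for precisely this purpose.

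The main obstacle, and the reason a characteristic argument is required, is that the naive $L^\infty$-in-$z$ bound $|\phi| \le Cz\,\tE^{1/2}$ derivable from~\eqref{E:PHIOVERZ} grows linearly in $z$ and therefore diverges along any characteristic as $s \to \infty$. The argument closes only because every summand of $F$ carries strictly negative powers of $z$ at infinity -- inherited from the spatial decay of the LP profile (Lemmas~\ref{L:GBAR},~\ref{L:ZETABAR}), from the flattening function $g$ via~\ref{item:g2}, and from the weighted high-order energy functionals~\eqref{E:TILDEEDEF} themselves -- and these powers are strong enough to balance the exponential growth $z = e^\sigma r$ of the characteristic against the exponential decay $e^{-\Omega\sigma}$ of $\tE$, rendering the time integral convergent uniformly in $r$.
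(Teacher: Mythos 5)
Your proposal takes the same core approach as the paper: integrating the transport equation $(\partial_s + \Lambda)\phi = F$ along the Lagrangian characteristics $z = e^\sigma r$ and exploiting decay of the source against the exponential growth of the characteristic. Both proofs split at the crossing time into interior and exterior regimes.

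There are, however, two places where the paper is materially lighter on machinery, and one where your stated schematic bound is not quite right as written. First, the paper does not bound the source $F$ on the interior portion at all: it evaluates $\phi$ itself at the crossing time $\tilde s$, using the bound $\|\phi(\tilde s,\cdot)\|_{L^\infty([0,Z_0])} \le C\sqrt{\eps_0}\,e^{-\Omega\tilde s}$ available directly from the Hardy--Sobolev embedding on $[0,Z_0]$ and the $\HmZm$ decay. This sidesteps the need for an $L^\infty$ bound on $K\theta$ (second derivatives of $\theta$) in the interior, which your argument requires and which in turn imposes a derivative count you must track. Second, for the exterior source bound, the paper exploits the $\mathbb{P}$-based pointwise norm directly: the a priori bound $\mathbb{P}\le C_*\sqrt{\eps_0}$ already yields $|F(\sigma,z)| \le C\sqrt{\eps_0}/z$ for $z\ge 1$, \emph{with no temporal decay factor}. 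This $z^{-1}$ rate is sharp enough that $\int \frac{d\sigma}{e^\sigma r} = r^{-1}e^{-\tilde s} - r^{-1}e^{-s}$ telescopes cleanly, with no residual power of $r$ to absorb. By contrast, your schematic $|F(\sigma,z)|\le C z^{-1/3}\sqrt{\eps_0}e^{-\Omega\sigma}$ uses weaker spatial decay compensated by temporal decay from $\tE$; it still closes (the residual $r^{\Omega}$ is tamed because $r \lesssim Z_0$ in the region where the crossing time is relevant), but it routes through the heavier $\tE$-based $L^\infty$ estimates of Section~6 rather than the simpler $\mathbb{P}$ bound. Third, a small inaccuracy: the two flattening terms in $F$ carry no $\tE^{1/2}$ factor and hence no $e^{-\Omega\sigma}$ decay, so they do not fit your ``schematic form.'' You correctly note they have $r_*^{-1}$ smallness and $z^{-1}$ spatial decay, but they should be excluded from the uniform schematic estimate and handled separately, as you do informally. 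The arithmetic $Z_0^{-5/6-\Omega}$ is also off, but inconsequential. Overall the plan is sound and would produce a correct proof, just with more infrastructure than the paper's version.
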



\begin{proof}
We first observe from~\eqref{E:NLPSI0} that 
\begin{align}\label{E:PHIUPPER0}
|\pa_s\phi+\Lambda\phi| \le \frac{C\sqrt{\eps_0}}z, \ \ z\ge 1. 
\end{align}
In particular
\begin{align}\label{E:PHIFINITE}
\phi(s,z) = \phi(\tilde s, e^{-(s-\tilde s)}z) + \int_{\tilde s}^s \big[\pa_s\phi+\Lambda\phi\big](\sigma, e^{\sigma-s}z) \diff\sigma.
\end{align}
On the other hand, in the accretive region $0\le z\le Z_0$ we have the exponential decay bound
\begin{align}\label{E:PHIUPPER}
\|\phi(s,\cdot)\|_{L^\infty([0,Z_0])} \le C\sqrt{\eps_0}e^{-\Om s}.
\end{align}
From~\eqref{E:PHIFINITE} and~\eqref{E:PHIUPPER0} we conclude that for any $r\ge\frac1{1+T}$ we have
\begin{align}
\big|\phi(s,e^sr) - \phi(\tilde s,e^{\tilde s}r) \big| & =\big|  \int_{\tilde s}^s \big[\pa_s\phi+\Lambda\phi\big](\sigma, e^{\sigma}r) \diff\sigma \big| \notag \\
& \le 
 \frac{C\sqrt{\eps_0}}{r} \int_{\tilde s}^s e^{-\sigma} \diff \sigma   \le \frac{C\sqrt{\eps_0}}{r} e^{-\tilde s}. \label{E:PHIUPPEROUT}
\end{align}
If $r\le\frac1{1+T}$ we have
\begin{align}
|\phi(s,e^sr)| &\le C\sqrt{\eps_0} e^{-\Om s}\chi_{[\sin,-\log r]}(s) +C\frac{\sqrt{\eps_0}}{r} \int_{-\log r}^s e^{-\sigma} \diff \sigma \notag\\
& \le C\sqrt{\eps_0} e^{-\Om \sin} + C\sqrt{\eps_0}  \le C\sqrt{\eps_0}. \label{E:PHIUPPERIN}
\end{align}

Letting $\tilde s = \sin$ in~\eqref{E:PHIUPPEROUT} and using~\eqref{E:PHIUPPERIN} we obtain the bound
\begin{align}
|\phi(s,e^sr)| \le \|\phi_0\|_{L^\infty([0,\infty))} +C\sqrt{\eps_0},
\end{align}
which concludes~\eqref{E:PHIUNIFORM}. The proof of~\eqref{E:LAMBDAPHIUNIFORM} is analogous.
\end{proof}


\begin{lemma}[Sharpened $L^\infty$-control]\label{L:SHARPER}
Let $\Phi=(\th,\phi)^\top$ satisfy the assumptions of Proposition~\ref{P:EE2}.
Then for $h\in[1,Z_0]$ there exists a constant $C>0$ such that 
\begin{align}\label{E:SHARPER}
\big|\frac{\th}{\bzeta}(s,e^sr)\big|+\big|\frac{\th_z}{\bzeta_z}(s,e^sr)\big| 
+ \big|\frac{z\th_{zz}}{\bzeta_z}(s,e^sr)\big|\le C\sqrt{\eps_0}\Big(e^{-\Om s}\chi_{s\le\log\frac{h}{r}}(s) + r^{\Om}\chi_{s\ge\log\frac{h}{r}}(s)\Big).
\end{align}
Letting
\begin{align}\label{E:JDEF}
J(s,z) : = \big(\frac{\bzeta(s,z)}{\zeta(s,z)}\big)^{2} \frac{\pa_z\bzeta}{\pa_z\zeta},
\end{align}
we then in particular have
\begin{align}\label{E:JBOUND}
\big|J(s,e^sr)-1\big| \le C\sqrt{\eps_0}\Big(e^{-\Om s}\chi_{s\le\log\frac{h}{r}}(s) + r^{\Om}\chi_{s\ge\log\frac{h}{r}}(s)\Big).
\end{align}
\end{lemma}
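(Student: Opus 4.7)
The plan is to adapt the backward-characteristic integration strategy of Proposition~\ref{P:PW} but to terminate the backward integration at the self-similar time at which the trajectory $\sigma\mapsto e^\sigma r$ first crosses the fixed spatial scale $h\in[1,Z_0]$, rather than at $\tilde s=\sin$ or at the time defined by hitting $Z_0$. This calibrated choice of $\tilde s$ will allow us to cash in the exponential decay of the low-order interior norm (via \eqref{E:HMZTE} and the a priori assumption~\eqref{E:APRIORIMAIN}) at precisely the right moment to convert $e^{-\Omega \tilde s}$ into the scale-invariant factor $r^\Omega$.

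I would split the analysis into two regimes. In the first regime $s\le \log(h/r)$, the point $z=e^s r$ satisfies $z\le h\le Z_0$, so the three $L^\infty$ quantities are all controlled by the interior Hardy--Sobolev argument of~\eqref{0.8}, giving a bound $C(Z_0)\sqrt{\eps_0}e^{-\Omega s}$ that matches the first piece of~\eqref{E:SHARPER}. In the second regime $s\ge \log(h/r)$, I set $\tilde s:=\log(h/r)\in[\sin,s]$ so that the rescaled characteristic passing through $(s,e^s r)$ satisfies $e^{-(s-\tilde s)}e^s r=h$. Integrating $\theta_s+\Lambda\theta-\theta=\phi$ backward along this trajectory gives, as in~\eqref{E:FSP00},
\begin{equation*}
\frac{\theta(s,e^s r)}{e^s r}=\frac{\theta(\tilde s,h)}{h}+\int_{\tilde s}^{s}\Big(\frac{\phi}{z}\Big)(\sigma, e^\sigma r)\,d\sigma .
\end{equation*}
The first term is bounded by the interior estimate from regime one, yielding $C(Z_0)\sqrt{\eps_0}e^{-\Omega\tilde s}=C(Z_0)\sqrt{\eps_0}(r/h)^{\Omega}\le C(Z_0)\sqrt{\eps_0}r^{\Omega}$ since $h\ge 1$. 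For the integral term I bound $|\phi/z|$ at $(\sigma, e^\sigma r)$ uniformly in $\sigma\in[\tilde s,s]$ by $C(Z_0)\sqrt{\eps_0}e^{-\Omega\sigma}$: on the subinterval where $e^\sigma r\in[1,Z_0]$ one uses $|\phi|\le C\sqrt{\eps_0}e^{-\Omega\sigma}$ from the interior bound~\eqref{0.8} together with $e^\sigma r\ge h\ge 1$, while on $e^\sigma r\ge Z_0$ one uses the exterior Hardy bound~\eqref{E:PHIOVERZ} combined with the top-order energy decay in~\eqref{E:APRIORIMAIN}. Integrating then yields at most $C(Z_0)\sqrt{\eps_0}\Omega^{-1}e^{-\Omega\tilde s}\le C(Z_0)\sqrt{\eps_0}r^\Omega$, and since $\bzeta(z)\sim z$ for $z\ge 1$, this delivers the bound for $\theta/\bzeta$.

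The estimates for $\theta_z/\bzeta_z$ and $z\theta_{zz}/\bzeta_z$ follow from the same two-regime scheme applied to the transported evolution equations for $\pa_z\theta$ and $z\pa_z^2\theta$ used in Steps~3--4 of the proof of Proposition~\ref{P:PW}: one writes the Duhamel identity along the rescaled characteristic, terminates at the same $\tilde s=\log(h/r)$, applies the interior $L^\infty$ control at $(\tilde s,h)$ (which decays like $e^{-\Omega\tilde s}=(r/h)^\Omega$), and controls the $\phi$-generated source via the weighted Hardy--Sobolev decomposition of $\pa_z\phi$ and $z\pa_z^2\phi$ that is already at our disposal. Finally, \eqref{E:JBOUND} is an algebraic consequence of~\eqref{E:SHARPER}: writing $J=(1+\theta/\bzeta)^{-2}(1+\theta_z/\bzeta_z)^{-1}$ and Taylor-expanding using the smallness built into the a priori bound $\Pb[\Phi]\le\tfrac14$ yields $|J-1|\le C(|\theta/\bzeta|+|\theta_z/\bzeta_z|)$, so the previous bounds transfer directly.

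The main obstacle is selecting $\tilde s$ so that the resulting polynomial factor is exactly $r^\Omega$: the choice $\tilde s=\log(h/r)$ is dictated by matching $e^{-\Omega\tilde s}$ with the desired space-time distance $d_T^\Omega$ used in the Eulerian stability Theorem~\ref{T:EULERMAIN}, and the role of $h\in[1,Z_0]$ is precisely to absorb any residual factor $h^{-\Omega}\le 1$. The secondary technical subtlety is that, in order to integrate $|\phi/z|$ uniformly along the trajectory that may cross the transition zone $e^\sigma r\sim Z_0$, one must verify that the interior Hardy--Sobolev bound for $\phi$ and the exterior weighted Hardy estimate~\eqref{E:PHIOVERZ} produce matching $e^{-\Omega\sigma}$ rates at the interface, which is ensured by the definition of the top-order energy norm $\tE$ and the a priori assumption~\eqref{E:APRIORIMAIN}.
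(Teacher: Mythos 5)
Your proposal matches the paper's proof essentially step for step: the split at $s=\log(h/r)$, the interior $L^\infty$ bound via \eqref{0.8} in the first regime, the backward-characteristic integration starting from $\tilde s=\log(h/r)$ via \eqref{E:FSP00} in the second regime, the bound $e^{-\Omega\tilde s}=(r/h)^\Omega\le r^\Omega$ for the boundary term, the combined use of \eqref{0.8} and \eqref{E:PHIOVERZ} to control $|\phi/z|$ along the trajectory, and the algebraic derivation of \eqref{E:JBOUND} from \eqref{E:SHARPER} together with Proposition~\ref{P:PW}. Making the interior/exterior subcase for $|\phi/z|$ explicit is a small expository clarification rather than a deviation from the paper's argument.
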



\begin{proof}
We only prove the first bound in~\eqref{E:SHARPER}, as the second one follows by analogous arguments.
If $s\le  \log\frac{h}{r}$ we have $e^sr\le Z_0$ and therefore by~\eqref{0.8}
\begin{align}
|\frac{\th}{\bzeta}(s,e^sr)| \le C\sqrt{\eps_0}e^{-\Om s}.
\end{align} 
On the other hand, if  $s\ge  \log\frac{h}{r}$, we use~\eqref{E:FSP00} with $z = e^s r$ and $\tilde s =  \log\frac{h}{r}$ to obtain the identity
\begin{align}
\frac{\theta (s,e^sr)}{e^sr} =\frac{ \theta (\log\frac hr  ,h)}{h}  +\int_{\log\frac{h}{r}}^s  \left( \frac{\phi}{z} \right) (\sigma , e^{\sigma}r)\diff \sigma.
\end{align}
We now use~\eqref{E:PHIOVERZ},~\eqref{0.8}, and $\E^{\frac12}\le C\sqrt{\eps_0}e^{-\Om s}$ to conclude that 
\begin{align}
\big|\frac{\theta (s,e^sr)}{e^sr}\big| & \le C \sqrt{\eps_0} e^{-\Om\log\frac hr } + C \sqrt{\eps_0}\int_{\log\frac{h}{r}}^s  e^{-\Om\sigma} \diff \sigma \notag \\
& \le C \sqrt{\eps_0} r^\Om,
\end{align}
which completes the proof of the first claimed bound in~\eqref{E:SHARPER} The remaining two bounds follow analogously. Bound~\eqref{E:JBOUND} now follows easily from~\eqref{E:SHARPER} and
Proposition~\ref{P:PW}.
\end{proof}


\section{Proof of the Main Theorem}\label{S:MAINTHEOREM}

The goal of this final section is to put together the nonlinear estimates of Sections~\ref{S:DUHAMEL}--\ref{S:POINTWISE} in order to conclude the proofs of Theorem~\ref{T:MAIN} (the Main Theorem in Lagrangian form) and Theorem~\ref{T:EULERMAIN} (the Eulerian stability of the LP solution).


Our first objective is to prove that the a priori assumptions~\eqref{E:APRIORIMAIN}--\eqref{E:APRIORIMAINPT} may be propagated beyond the maximal time $S_T$. As part of the proof of this propagation, we determine the relation between the smallness constants $\tilde\eps_0$, $\eps_0$ and $T_0$. Recall that $T_0$ is defined through the smallness parameter $\tilde\eps$ as $T_0=C_0\sqrt{\tilde\eps}$ via~\eqref{E:T0DEF} and that the choice of $C_0$ determines a constant $\bar C$ through~\eqref{E:IDEST0}. In fact, we first take $\eps_0$ sufficiently small depending on $Z_0$, $\m$, and then take constants $\bar C(Z_0,\m)\ll\widehat{C}(Z_0,\m)$ such that
\[ \tilde \eps_0=\widehat{C}^{-1}\eps_0.\]


\begin{lemma}\label{L:STPROPERTIES}
There exist $\eps_0>0$ and constants $C_0,\widehat{C},C_*>0$ such that, setting $\tilde\eps_0=\widehat{C}^{-1}\eps_0$  and, for $\tilde\eps\in[0,\tilde\eps_0]$,  $T_0=C_0\sqrt{\tilde\eps}$, the following holds. For each  $T\in[-T_0,T_0]$, let $\Phi^T$ be the unique smooth solution to~\eqref{E:NLTHETA0}--\eqref{E:NLPHI0} with initial data as in Theorem~\ref{T:MAIN} and recall the definition of the maximal time~\eqref{E:STDEF}. Then there exists $\de(T)>0$ such that the a priori assumptions~\eqref{E:APRIORIMAIN}--\eqref{E:APRIORIMAINPT} hold for all $s\in[\sin,S_T+\de]$, where in the case $S_T=\infty$, we understand this to be the whole positive real line. Moreover,  the maximal time function $T\mapsto S_T$ is continuous for all $T\in[-T_0,T_0]$ such that $S_T<\infty$.
	\end{lemma}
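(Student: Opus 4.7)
The plan is to propagate the two a priori assumptions simultaneously by a strict-improvement/bootstrap argument on the interval where they are active, using the definition of $S_T$ (which controls the low-order norm) to close the top-order energy estimate. Fix $C_*(Z_0) = 6C_P + 2$ with $C_P$ from Proposition~\ref{P:PW}, take $\eps_0 > 0$ small enough that $C_*\sqrt{\eps_0} \le 1/4$ and all smallness requirements from Sections~\ref{S:ENERGYBOUNDS}--\ref{S:POINTWISE} are met, and choose $\widehat C = \widehat C(Z_0,\m)$ large enough (to be specified in Paragraph~2), with $\tilde\eps_0 = \widehat{C}^{-1}\eps_0$ and $T_0=C_0\sqrt{\tilde\eps}$ as in Remark~\ref{R:CONS}. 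For $T\in[-T_0,T_0]$, the bounds~\eqref{E:IDEST}--\eqref{E:INITIALDATA} ensure strict inequality in~\eqref{E:APRIORIMAIN}--\eqref{E:APRIORIMAINPT} at $s=\sin$. Define
$$
S^\sharp := \sup\big\{s > \sin : \ e^{2\Omega\sigma}\tE_{\leq 2(\m+1)}(\sigma) \le \eps_0 \ \text{and} \ \Pb(\sigma)\le C_*\sqrt{\eps_0}\ \text{for all } \sigma\in[\sin,s]\big\},
$$
which is strictly larger than $\sin$ by local well-posedness in $\mathfrak H$. The goal is to show $S^\sharp > S_T$ when $S_T<\infty$ (and $S^\sharp=\infty$ when $S_T=\infty$).

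The core of the argument is the strict improvement of the energy bound on $[\sin,\min(S^\sharp,S_T)]$. On this interval both a priori bounds hold and the definition of $S_T$ yields $\|\Phi(s)\|_{\HmZm}^2 \leq \bar C\tilde\eps \, e^{-2\nu\sg s}$. Summing Proposition~\ref{P:EE2} over $j=0,\ldots,\m+1$, multiplying by $e^{2\Omega s}$, and integrating from $\sin$ to $s$, the coercive constants $c_3 j + c_4$ dominate $\Omega$ by~\eqref{E:OMEGABDS}, so standard Gronwall gives
$$
e^{2\Omega s}\tE_{\leq 2(\m+1)}(s) \le e^{2\Omega\sin}\tE_{\leq 2(\m+1)}(\sin) + C(Z_0)\frac{\bar C\tilde\eps}{2\nu\sg - 2\Omega} + R(r_*,Z_0,\eps_0),
$$
where the term $R$ collects the contributions from the flattening source ($\sim r_*^{-(\alpha+1)/2+2\delta M}$), the $Z_0^{-1/2}\tE$ term (handled by taking $Z_0$ large so it is absorbed by the coercivity), and the cubic term ($\sim \eps_0^{1/2}\tE$, controlled by the a priori assumption and smallness of $\eps_0$). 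By~\eqref{E:INITIALDATA} the initial contribution is $<3\eps_0/8$; choosing $\widehat C$ large enough (depending on $Z_0,\m$, and the constants above) that $\tilde\eps \leq \widehat C^{-1}\eps_0$ renders the middle term $\leq \eps_0/16$, and taking $r_*$ large and $\eps_0$ small makes $R \leq \eps_0/16$. This yields $e^{2\Omega s}\tE_{\leq 2(\m+1)}(s) < \eps_0/2$, a strict improvement of~\eqref{E:APRIORIMAIN}. This is the main technical step and the principal obstacle, since it requires the hierarchy of smallness $\eps_0 \ll 1 \ll r_*, Z_0, \widehat C$ to beat the explicit $C(Z_0)$-dependent constants.

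For the pointwise bound, Proposition~\ref{P:PW} yields directly
$$
\Pb(s) \le \Pb(\sin) + C_P\sqrt{\eps_0} < \tfrac12\sqrt{\eps_0} + C_P\sqrt{\eps_0} \le \tfrac{C_*}{2}\sqrt{\eps_0},
$$
by the choice of $C_* \geq 6C_P+1$. Combined with the previous paragraph, on $[\sin,\min(S^\sharp,S_T)]$ both a priori bounds hold with strict inequality. By continuity of $s \mapsto \tE(s)$ and $s \mapsto \Pb(s)$ (standard in the local well-posedness theory for~\eqref{E:NLTHETA0}--\eqref{E:NLPHI0} in $\mathfrak H$), the strict inequalities extend to $[\sin, S^\sharp + \delta']$ for some $\delta' > 0$, contradicting the maximality of $S^\sharp$ unless $S^\sharp > S_T$ (or $S^\sharp = \infty$). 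This proves the existence of $\delta(T)>0$ as claimed.

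Continuity of $T \mapsto S_T$ on $\{S_T<\infty\}$ follows from the continuous dependence of solutions on initial data. The map $T \mapsto \Phi^T_{in}$ is continuous in $\HmZm$ by~\eqref{E:IDMODULATE} and the smoothness of $\bzeta$; combined with Proposition~\ref{P:HMZDIFF}, which provides the Lipschitz estimate $\|\Phi^{T_1}(s) - \Phi^{T_2}(s)\|_{\HmZm} \le e^{C(Z_0)(s-\bar s_{in})}\|\Phi^{T_1}(\bar s_{in}) - \Phi^{T_2}(\bar s_{in})\|_{\HmZm}$ on any compact time interval (with $\bar s_{in} = \max_{i=1,2}\sin(T_i)$), the function $f(T,s) := e^{2\nu\sg s}\|\Phi^T(s)\|_{\HmZm}^2$ is jointly continuous on its domain. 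At a point $T_0$ with $S_{T_0}<\infty$, the supremum definition of $S_T$ together with the strict excess $f(T_0,s) > \bar C\tilde\eps$ for $s\in(S_{T_0},S_{T_0}+\delta)$ (which holds by the sup definition and continuity in $s$) yields upper semi-continuity by continuity in $T$; lower semi-continuity follows symmetrically from the fact that $f(T_0, \cdot) \le \bar C\tilde\eps$ on $[\sin, S_{T_0}]$ via a standard first-passage-time argument.
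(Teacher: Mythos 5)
Your proof is correct and takes essentially the same bootstrap approach as the paper: close the energy estimate of Proposition~\ref{P:EE2} using the low-order decay $\|\Phi\|^2_{\HmZm}\le\bar C\tilde\eps\,e^{-2\nu\sg s}$ coming from the definition of $S_T$, improve the pointwise bound via Proposition~\ref{P:PW}, and deduce strict improvement of both a priori assumptions, then conclude by continuity. The only cosmetic difference is that the paper introduces an intermediate time $\tilde S_T$ (with the relaxed threshold $2\bar C\tilde\eps$) and shows the a priori bounds propagate up to $\tilde S_T>S_T$, whereas you argue by contradiction directly from the maximality of your $S^\sharp$; both close identically, and your continuity argument for $T\mapsto S_T$ likewise mirrors the paper's use of Proposition~\ref{P:HMZDIFF}.
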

	
	
	\begin{proof}
	For convenience, we define a modified `maximal time'
	\beq
	\tilde S_T:=\sup\{s>\sin\,|\, e^{2\nu\sg \tilde s}\|\Phi(\tilde s,\cdot)\|_{\HmZm}^2\leq  2\bar C\tilde\eps\text{ for all }\tilde s\in(\sin,s)\}
	\eeq
	and, for the purposes of a continuity argument, 
	\beq\label{E:STAST}
	S_T^*:=\sup\{s\in(\sin,\tilde S_T)\,|\,\text{\eqref{E:APRIORIMAIN}--\eqref{E:APRIORIMAINPT} hold for all }\tilde s\in[\sin,s]\}.
	\eeq
	Observe that $\tilde S_T>S_T$ by standard local well-posedness theory, so that once we show that $S_T^*=\tilde S_T$, then we may obtain the function $\de(T)>0$ as claimed by setting $\de(T)=\tilde S_T-S_T$.
	
	For any $S\in(\sin, S_T^*)$, we first apply Proposition~\ref{P:PW}, and, for any $s\in[\sin,S_T^*]$, we easily obtain
\begin{align}
\mathbb{P}(s) & \leq \left\|\frac{\theta_0}{\bzeta}\right\|_{L^\infty[0,\infty]} + \left\|\frac{\bp\theta_0}{\bp\bzeta}\right\|_{L^\infty[0,\infty]} + \left\|\frac{z\pa_z^2\theta_0}{\pa_z\bzeta}\right\|_{L^\infty}+ \left\|\frac{\pa_s\bp\theta_0}{\bp\bzeta_0}\right\|_{L^\infty[0,\infty]} +3C_P\sqrt{\eps_0} \notag\\
& \leq \frac12C_*\sqrt{\eps_0},\label{E:PBOUNDBETTER}
\end{align}
by assuming $C_*\geq 6 C_P +1$  and using  condition~\eqref{E:INITIALDATA} on initial data.

Next, we
 apply Proposition~\ref{P:EE2} and sum over $2j=0,\ldots,2(\m+1)$ to deduce, for all $s\in(\sin,S_T^*)$,
	\beqa
	\frac12\pa_s\tE_{\leq 2(\m+1)}+c_4\tE_{\leq 2(\m+1)}\leq &\, C(Z_0)\|\Phi\|_{\HmZm}^2+\sqrt{\kappa}(r_*e^{s})^{-\frac12}\tE_{\leq 2(\m+1)}^{\frac12}+((r_*e^{s})^{-\frac23}+Z_0^{-\frac12})\tE_{\leq 2(\m+1)}\\
	&+C(Z_0)\tE_{\leq 2(\m+1)}^\frac32
	+C(Z_0)\|\Phi\|_{\mathcal{H}^{2\m+1}_{Z_0}}\tE_{\leq 2(\m+1)}^{\frac12}.
	\eeqa
	Therefore, taking $Z_0$ large,  then $\eps_0$ small, we obtain
 (recall $\|\Phi\|_{\mathcal{H}^{2\m+1}_{Z_0}}\leq C\|\Phi\|_{\mathcal{H}^{2\m+2}_{Z_0}}^{\frac12}\|\Phi\|_{\mathcal{H}^{2\m}_{Z_0}}^{\frac12}$ by standard interpolation of Sobolev spaces and the equivalence~\eqref{E:HSOBOLEVEQUIV})
 \beq
 \frac12\pa_s\tE_{\leq 2\m+2}+\frac12 c_4\tE_{\leq 2\m+2}\leq  2 C(Z_0)\bar Ce^{-2\nu\sg s}\tilde \eps + C\kappa r_*^{-1}e^{-s},
 \eeq
 where we have used $S\leq \tilde S_T$ and recall that $\bar C$ is determined from $C_0$ via~\eqref{E:IDEST0}.  In particular, for $s\in(\sin, S_T^*)$, as $2\Om\leq c_4$, $\Om<2\nu\sg $ from~\eqref{E:OMEGABDS} and the definition $c_4=\frac{\al-1}{4}>0$ from Proposition~\ref{P:EE2},
 \beq
e^{2\Omega s} \tE_{\leq 2(\m+1)} \leq e^{2\Omega \sin}\tE_{\leq 2(\m+1)}(\sin)+ 2 C(Z_0)\bar C\tilde \eps + C\kappa r_*^{-1}\leq \frac12 \eps_0,
\label{E:EBOUNDBETTER}
 \eeq
  where we have used~\eqref{E:INITIALDATA}, taken $r_\ast$ chosen sufficiently large as a function of $\eps_0$ (note that $r_*$ is taken independent of $\tilde\eps$)  and we take $\widehat{C}$ and $ C_0$ such that $\tilde\eps_0\leq \frac{1}{\widehat C}\eps_0$  so that for any $\tilde\eps\leq\tilde\eps_0$, we have $ 2 C(Z_0)\bar C\tilde\eps<\frac{\eps_0}{16}$. 
Thus, in light of~\eqref{E:PBOUNDBETTER} and~\eqref{E:EBOUNDBETTER} a continuity argument shows that $S_T^*=\tilde S_T$.
 
 To show the continuity of $S_T$, we now apply Proposition~\ref{P:HMZDIFF} and a standard Gr\"onwall argument, using the fact that the a priori assumptions hold on an interval strictly larger than $[\sin,S_T]$. In particular, given $T_1,T_2\in [-T_0,T_0]$, we assume without loss of generality that $S(T_1)\leq S(T_2)$. Let $\eta>0$. If $S_{T_1}<\infty$, then we observe that there exists a time $s_c\in(S(T_1), S_{T_1}+\eta)$ such that $e^{2\nu\sg s_c}\|\Phi^{T_1}\|_{\HmZm}>\bar C\tilde \eps$.  Let $\tilde\delta= e^{2\nu\sg \sin}\|\Phi^{T_1}\|_{\HmZm} -\bar C \tilde \eps>0$.  We apply 
 Proposition~\ref{P:HMZDIFF} with $s_1=\sin$ and $s_2=s_c$ 
 to make the estimate 
 \beqa
e^{2\nu\sg s_c}\|\Phi^{T_2}(s_c,\cdot)\|_{\HmZm}^2\geq&\, e^{2\nu\sg s_c}\big(\|\Phi^{T_1}(s_c,\cdot)\|_{\HmZm}^2- \|(\Phi^{T_1}-\Phi^{T_2})(s_c,\cdot)\|_{\HmZm}^2\big)\\
\ge &\, \bar C\tilde \eps + \tilde\delta -e^{2\nu\sg s_c}e^{C (S(T_1)+\eta)}\|\Phi^{T_1}_{\textup{in}}-\Phi^{T_2}_{\textup{in}}\|_{\HmZm}^2>\bar C\tilde \eps,
 \eeqa 
 provided $|T_1-T_2|$ is sufficiently small, so that $S(T_2)\in(S(T_1),S(T_1)+\eta)$. 
\end{proof}
 
 Having shown that we may propagate the a priori assumptions beyond the maximal time (and the continuity of this time), we are now able to run a Brouwer fixed point argument in order to identify the true blow-up time $T_*$.

 \begin{proposition}\label{P:BROUWER}
Suppose $\Phi$ satisfies the assumptions of Lemma~\ref{L:STPROPERTIES} and  assume that $S_T<\infty$ for all $T\in[-T_0,T_0]$, where we recall $T_0$ is defined in~\eqref{E:T0DEF}. Then, possibly increasing $\widehat{C}$ and $\bar C$, there exists $T_*\in[-T_0,T_0]$ such that 
 \beq
 \bfP\bigg( \Phi^{T_*}_{in} +\int_{\sin}^{S_{T_*}} e^{\sin-\sigma} \bfN[\Phi](\sigma)\diff \sigma\bigg)=0.
 \eeq
 \end{proposition}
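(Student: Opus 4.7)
The plan is to apply the one-dimensional intermediate value theorem (Brouwer's theorem in dimension one) to a continuous scalar function $F\colon [-T_0, T_0]\to \R$ whose vanishing is equivalent to the vanishing of the projected quantity. By Lemma~\ref{L:GROWINGPROJ}, $\textup{rg}\,\bfP = \langle \Gamma\rangle$, so there exists a bounded linear functional $\lambda\colon \HmZm \to \R$ with $\bfP X = \lambda(X)\,\Gamma$ for every $X\in \HmZm$; normalize by $\lambda(\Gamma)=1$ (consistent with $\bfP^2=\bfP$ and simplicity of the eigenvalue $1$). Define
\begin{equation*}
F(T) := \lambda\bigg(\Phi^T_{\textup{in}} + \int_{\sin}^{S_T} e^{\sin -\sigma}\,\bfN[\Phi^T](\sigma)\,\diff\sigma\bigg),\qquad T\in [-T_0, T_0],
\end{equation*}
where $\sin=-\log(1+T)$ depends on $T$. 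It suffices to produce $T_* \in [-T_0, T_0]$ with $F(T_*)=0$.

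The first step is to establish continuity of $F$. Continuity of $T\mapsto S_T$ is provided by Lemma~\ref{L:STPROPERTIES}, and $T\mapsto \Phi^T_{\textup{in}}\in \HmZm$ is smooth by~\eqref{E:PROFILE1} and smoothness of the LP profile. The delicate point is the continuity of the Duhamel integral $I(T) := \int_{\sin}^{S_T} e^{\sin -\sigma}\bfN[\Phi^T](\sigma)\,\diff\sigma$ as an element of $\HmZm$. For $T_1, T_2$ close, split $I(T_1)-I(T_2)$ into a boundary contribution supported on the thin interval between $S_{T_1}$ and $S_{T_2}$, which is controlled by the continuity of $S_T$ together with the uniform bound on $\|\bfN[\Phi^T]\|_{\HmZm}$ furnished by Lemma~\ref{L:NBOUND} and the a priori bounds~\eqref{E:APRIORIMAIN}--\eqref{E:APRIORIMAINPT}, plus an interior contribution, which is controlled by the difference estimate~\eqref{E:NPHIDIFF} of Lemma~\ref{L:NBOUND} combined with the Gr\"onwall bound of Proposition~\ref{P:HMZDIFF} for $\Phi^{T_1} - \Phi^{T_2}$. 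A similar argument handles the smooth dependence of the lower limit $\sin(T)$. Both Lemma~\ref{L:NBOUND} and Proposition~\ref{P:HMZDIFF} require the a priori assumptions to hold on an open extension of $[\sin, S_T]$, exactly as provided by the $\delta(T)$ of Lemma~\ref{L:STPROPERTIES}.

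Next I would compute the leading-order behaviour of $F$ in $T$. By the decomposition~\eqref{E:XIDEF} and Lemma~\ref{L:GMODE} (in the form~\eqref{E:GAMMAFORMULA}), one has $\Xi_{\textup{LP}}^T - \Xi_{\textup{LP}} = -T\,\Gamma + O(T^2)$ in $\HmZm$, so applying $\lambda$ yields
\begin{equation*}
F(T) = -T + \lambda(\Xi_0^T-\Xi_{\textup{LP}}^T) + O(T^2) + \lambda(I(T)).
\end{equation*}
The modulated data correction is bounded by $|\lambda(\Xi_0^T-\Xi_{\textup{LP}}^T)|\leq \|\lambda\|_{(\HmZm)^*}\sqrt{3\tilde\eps/(2\beta)}$ via~\eqref{E:IDMODULATE}, while $|\lambda(I(T))|\leq C\sqrt{\tilde\eps\,\eps_0}$ by the crude nonlinear bound~\eqref{E:CRUDENONLINEAR0}, the a priori bounds, and the integrable exponential weight $e^{\sin-\sigma}$. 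The choice~\eqref{E:T0DEF} of $T_0=C_0\sqrt{\tilde\eps}$ with $C_0$ sufficiently large (depending on $\m$, $Z_0$, $\beta$ and $\|\Gamma\|_{\H^0_{2\m,Z_0}}$) forces $\tfrac12 T_0$ to strictly dominate each correction, whence $F(-T_0) > 0 > F(T_0)$. The intermediate value theorem applied to the continuous $F$ then produces $T_*\in(-T_0, T_0)$ with $F(T_*)=0$, which is equivalent to the claimed vanishing of $\bfP$ of the relevant quantity. The main obstacle is the continuity of $I(T)$: one must apply Proposition~\ref{P:HMZDIFF} uniformly up to and slightly past the variable endpoint $S_T$, and this relies essentially on the open extension of the a priori regime produced in Lemma~\ref{L:STPROPERTIES}.
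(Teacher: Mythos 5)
Your proposal is correct and follows essentially the same route as the paper: reduce the vanishing of the projected quantity to a one‑dimensional topological argument (fixed point / intermediate value theorem), extract the leading $-T$ behaviour from the modulated LP family via Lemma~\ref{L:GMODE} and~\eqref{E:GAMMAFORMULA}, bound the correction terms using~\eqref{E:IDMODULATE} and the nonlinear Duhamel bound, and establish continuity via Lemma~\ref{L:STPROPERTIES} together with Proposition~\ref{P:HMZDIFF}.

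There is one genuine and noteworthy difference. The paper works with the scalar quantity $p(T) = \big(\PhiT + \int_{\sin}^{S_T}e^{\sin-\sigma}\bfN[\Phi]\,\dif\sigma,\Gamma\big)_{\H^0_{2\m,Z_0}}$, recasts $p(T)=0$ as the fixed‑point equation $f(T)=T$ for a continuous self‑map of $[-T_0,T_0]$, invokes Brouwer, and then deduces $\bfP(\cdots)=0$ from $p(T_*)=0$ by citing $\textup{rg}\,\bfP=\langle\Gamma\rangle$. You instead use the exact coefficient functional $\lambda$ with $\bfP X = \lambda(X)\Gamma$ and $\lambda(\Gamma)=1$, so that $F(T)=\lambda(\cdots)$ vanishes \emph{if and only if} the projection vanishes. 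Since $\bfL$ is not self‑adjoint, the Riesz projection $\bfP$ is not the $\H^0_{2\m,Z_0}$‑orthogonal projection onto $\Gamma$, so the step from $(X,\Gamma)_{\H^0_{2\m,Z_0}}=0$ to $\bfP X = 0$ is not automatic; your choice of $\lambda$ makes this last implication trivial, which is a clean improvement. The price you pay is cosmetic: the bound on the modulated data term should use $\|\lambda\|_{(\HmZm)^*}\sqrt{3\tilde\eps/2}$ rather than the $\sqrt{3\tilde\eps/(2\beta)}$ you wrote (the latter reflects the $\H^0$‑norm estimate from~\eqref{E:IDMODULATE}, which would require $\lambda$ to be bounded in that weaker norm), and the definition~\eqref{E:T0DEF} of $T_0$ involves $\|\Gamma\|_{\H^0_{2\m,Z_0}}^{-1}$ while your estimate naturally produces $\|\lambda\|_{(\HmZm)^*}$ — but both discrepancies are absorbed by the freedom to enlarge $C_0$ (equivalently $\widehat{C}$, $\bar C$) that the Proposition explicitly allows. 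Your continuity argument (splitting into a boundary contribution near the endpoint $S_T$ and an interior contribution controlled by~\eqref{E:NPHIDIFF} and the Gr\"onwall bound of Proposition~\ref{P:HMZDIFF}, all on the slightly extended interval provided by $\delta(T)>0$ from Lemma~\ref{L:STPROPERTIES}) is exactly the mechanism the paper appeals to.
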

 
 \begin{proof}
Assume that for all $T\in[-T_0,T_0]$ we have $S_T<\infty$, where we recall~\eqref{E:STDEF}. We then consider the map
\[
T\mapsto p(T):= \bigg(\PhiT + \int_{\sin}^{S_T} e^{\sin-\sigma}\bfN[\Phi](\sigma)\,d\sigma\,,\, \Gamma\bigg)_{\H^0_{2\m,Z_0}},
\]
where we recall the initial data~\eqref{E:PROFILE1}, the growing mode~\eqref{E:GM}, and the space $\H^0_{2\m,Z_0}$ (Section~\ref{S:LOW}). 

For any given profile $\Xi=\begin{pmatrix} \zeta \\ \mu \end{pmatrix}$, we recall the $T$-modulated profile and decomposition~\eqref{E:XIDEF}:
\begin{align}
\PhiT = \Xi_0^T-\Xi_{\text{LP}}^T + \Xi_{\text{LP}}^T-\Xi_{\text{LP}}, \ \ \qquad \Xi_0:=\begin{pmatrix} \zeta_0 \\ \mu_0\end{pmatrix}, \ \  \Xi_{\text{LP}}:=\begin{pmatrix}\bzeta \\ \widehat\mu\end{pmatrix}.
\end{align}
Recalling~\eqref{E:GAMMAFORMULA}, we have $\frac d{dT}\left( \Xi_{\text{LP}}^T-\Xi_{\text{LP}}\right)\Big|_{T=0} =-\Gamma$ and thus 
\[
\left( \Xi_{\text{LP}}^T-\Xi_{\text{LP}}\,,\,\Gamma\right)_{\H^0_{2\m,Z_0}} =- T \|\Gamma\|_{\H^0_{2\m,Z_0}}^2 + O_{T\to0}(T^2).
\]
From here we infer that the equation $p(T)=0$ can be equivalently re-expressed in the form
\begin{align}
f(T) & =T, \\
 f(T) & : = \frac1{\|\Gamma\|_{\H^0_{2\m,Z_0}}^2} \left(\bigg(\Xi_0^T-\Xi_{\text{LP}}^T + \int_{\sin}^{S_T} e^{\sin-\sigma}\bfN[\Phi](\sigma)\,d\sigma\,,\, \Gamma\bigg)_{\H^0_{2\m,Z_0}}+ O(T^2)\right).
\end{align}
Therefore by Lemma~\ref{L:NBOUND} and Cauchy-Schwarz we conclude
\begin{align}
|f(T)| & \le  \|\Gamma\|_{\H^0_{2\m,Z_0}}^{-1} \Big(\|\Xi_0^T-\Xi_{\text{LP}}^T\|_{\H^0_{2\m,Z_0}} + C \sup_{\sigma\in[\sin,S_T)}\|\Phi(\sigma,\cdot)\|_{\mathcal{H}^{1}_{Z_0}}^2+CT^2\Big) \notag\\
& \le   \frac12 T_0 + C(Z_0)(\bar C\tilde\eps+ T^2).\label{E:F(T)EST}
\end{align}
by~\eqref{E:IDMODULATE} and~\eqref{E:T0DEF}.
In particular, for $\tilde \eps_0$ sufficiently small (recall this implies $|T|$ small since $T_0=C_0\sqrt{\tilde\eps}$), we find that $f$ maps $[-T_0,T_0]$ to itself. 

Due to Lemma~\ref{L:STPROPERTIES}, and as we have assumed $S_T$ always to be finite, it is straightforward to apply Proposition~\ref{P:HMZDIFF} and deduce that the map $f$ is also continuous on $[-T_0,T_0]$.

Then by Brouwer's theorem there exists a $T_\ast\in[-T_0,T_0]$ such that $p(T_\ast)=0$. Now, recalling the property $\textup{rg}\bfP=\langle\Gamma\rangle$ from Lemma~\ref{L:GROWINGPROJ}, this implies
\begin{align}
\bfP\bigg(\Phi_{\text{in}}^{T_\ast} +\int_{\sin}^{S_{T_\ast}} e^{\sin-\sigma}\bfN[\Phi](\sigma)\,d\sigma\bigg) =0.
\end{align}
 \end{proof}
 
 {\em Proof of Theorem~\ref{T:MAIN}.}
 Observe that if there exists $T_*\in[-T_0,T_0]$ such that $S_{T_*}=\infty$, then we are done. Suppose for a contradiction that $S_T<\infty$ for all $T\in[-T_0,T_0]$. Then there exists $T_*\in[-T_0,T_0]$ as in Proposition~\ref{P:BROUWER}.
 Now, applying Proposition~\ref{P:EE12}, we deduce, for $T=T_*$, that for $s\in(\sin,S_T)$, since $S^\ast_{T}\geq S_T$ (recall~\eqref{E:STAST}), 
\beqa
	e^{\nu\sg s}\|\Phi^{T_*}\|_{\HmZm}\leq &\,C\sqrt\eps_0\sup_{\sigma\in[s_{T_*},s]}(e^{\nu\sg \sigma}\|\Phi^{T_*}\|_{\HmZm})e^{-\Omega s} + e^{-(1-\nu)\sg s}\|\Phi^{T_*}_{\textup{in}}\|_{\HmZm},
	\eeqa
	which implies, for sufficiently small $\eps_0$, taking $\sup$ on the left hand side, 
	\beq
		\sup_{s\in [s_{T_*}, S_{T_*})}e^{\nu\sg s}\|\Phi^{T_*}\|_{\HmZm}\leq \frac32 
		\|\Phi^{T_*}_{\textup{in}}\|_{\HmZm},
	\eeq  
	so that $S_{T_*}=\infty$, a contradiction to our assumption that $S_T<\infty$ on $[-T_0,T_0]$. Thus there does indeed exist a $T_*\in[-T_0,T_0]$ such that $S_{T_*}=\infty$, and we conclude the proof of the main theorem.

\begin{remark}
One can determine the terminal
blow-up time $T_\ast$ implicitly as the solution of the  equation
\begin{align}
\bfP\Big(\Phi_{\text{in}}^{T_\ast} + \int_{s_{T_*}}^{\infty} e^{s_{T_*}-\sigma}\bfN[\Phi](\sigma)\,d\sigma\Big) =0.
\end{align}
To see this, one returns to the Duhamel identity of Section~\ref{S:DUHAMEL}. Assuming for a contradiction that the projection term does not vanish, one argues as in Proposition~\ref{P:EE12} to see that the contribution $\Phi_+$ as in~\eqref{E:STABLEMFD} grows exponentially in the $\HmZ$ norm, but is equal to a sum of decaying terms, leading to a contradiction. 
\end{remark}

\begin{remark}\label{R:ENHANCEDREG}
From the proof of Lemma~\ref{L:STPROPERTIES}, it appears that it ought to be possible to save one derivative by defining a variant of the energy $\tE_{k}$ for $k$ odd with suitably adapted weights. Indeed, performing an analysis analogous to that of Section~\ref{S:ENERGYBOUNDS}, one expects to be able to close all of the estimates of that section at the level $\tE_{2\m+1}$ rather than the $\tE_{2\m+2}$ that we actually estimate here. If one were to conduct this rather tedious procedure, it would of course lower the differentiability requirement on $(\zeta,\mu)^\top$ to require only 6 derivatives of $\zeta$ and 5 for $\mu$, and hence, at the level of the fluid variables, require only 5 derivatives.
\end{remark}


We finally turn to the proof of Theorem~\ref{T:EULERMAIN}, the Eulerian stability of the LP solution. We recall here that we only expect the leading order collapse behaviour to be described by the LP-behaviour in a zone generated by the data from the region $\{r\le Z_0\}$. 

\begin{lemma}[LP-dominated zone: $\{r\le Z_0\}$]\label{L:LPDOM}
Let $[-1,T)\ni t\mapsto \eta(t,\cdot)$ be a solution of Euler-Poisson generated by Theorem~\ref{T:MAIN}. 
Then, for any $h>0$ there
exists a constant $C_h>0$ such that if $e^sr\ge h$, then $C_h^{-1}R\le r \le C_h R$.
Conversely, there exists $\tilde C_h$ such that if $e^sR\le\frac{\bzeta(h)}{\tilde C_h}$ then $e^sr\le h$, and if $e^sR\ge\tilde C_h\bzeta(h)$ then $e^sr\ge h$. As a consequence, there exists a constant $c>0$, which may be taken independent of $Z_0$, such that if $R = \eta(t,r)\le c Z_0$, it follows that $r\le Z_0$.
\end{lemma}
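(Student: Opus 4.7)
The proof rests on two elementary ingredients together with the pointwise stability of the main theorem. First, from the definitions $\eta(t,r) = (T-t)\zeta(s,z)$ and $z = r/(T-t) = e^s r$, one immediately reads off the identities
\begin{equation*}
\frac{R}{r} = \frac{\zeta(s,z)}{z}, \qquad e^s R = \zeta(s,z), \qquad e^s r = z.
\end{equation*}
Second, the pointwise bound $\mathbb P[\Phi](s) \le \bar C\sqrt{\eps_0}$ in~\eqref{E:MAINBOUND} gives $\|\theta/\bzeta\|_{L^\infty}\le \bar C\sqrt{\eps_0}$, so that, choosing $\eps_0$ small,
\begin{equation*}
\tfrac12\,\bzeta(z)\,\le\,\zeta(s,z)\,\le\,\tfrac32\,\bzeta(z),\qquad z\ge 0,\ s\ge \sin.
\end{equation*}
Finally, one uses that $\bzeta:[0,\infty)\to[0,\infty)$ is a smooth, strictly increasing bijection (from $\pa_z\bzeta>0$, equivalently $\bG>0$) with the asymptotics $\bzeta(z)\sim z^{1/3}$ as $z\to 0$ and $\bzeta(z)/z\to 1$ as $z\to \infty$, so that on $[h,\infty)$ the ratio $\bzeta(z)/z$ is bounded above and below by positive constants depending only on $h$.

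For the first claim, if $e^s r = z\ge h$, then
\begin{equation*}
\frac{R}{r} = \frac{\zeta(s,z)}{z} = \frac{\bzeta(z)}{z}\cdot\Big(1+\frac{\theta}{\bzeta}\Big)\in\Big[\tfrac12\cdot\inf_{z\ge h}\tfrac{\bzeta(z)}{z},\ \tfrac32\cdot\sup_{z\ge h}\tfrac{\bzeta(z)}{z}\Big],
\end{equation*}
and the constant $C_h$ is obtained by taking the worse of the two bounds.

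For the converse, we use the strict monotonicity of $\bzeta$. Since $\zeta(s,z)\ge \tfrac12\bzeta(z)$, if $e^s R = \zeta(s,z)\le \bzeta(h)/\tilde C_h$ with $\tilde C_h\ge 2$, then $\bzeta(z)\le \bzeta(h)$ and strict monotonicity yields $z\le h$; analogously $\zeta(s,z)\le \tfrac32\bzeta(z)$ combined with $e^s R\ge \tilde C_h\bzeta(h)$ (for $\tilde C_h\ge \tfrac32$) gives $z\ge h$.

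For the consequence, fix $h=1$, let $C_1$ be the constant from the first claim, and take $c:=\min\{1/(2C_1),1\}$, which is manifestly independent of $Z_0$. Suppose $R = \eta(t,r)\le c Z_0$. If $e^s r \le 1$, then $r\le e^{-s}\le e^{-\sin}=T+1\le 2$, which is $\le Z_0$ provided $Z_0\ge 2$ (guaranteed since $Z_0\gg z_\ast$). If instead $e^s r\ge 1$, the first claim gives $r\le C_1 R\le C_1 c Z_0\le Z_0/2\le Z_0$. The main (minor) subtlety to verify carefully is the boundedness of $\bzeta(z)/z$ from below on $[h,\infty)$, which follows from continuity, positivity of $\bzeta$ away from $0$, and the limit at infinity established in Lemma~\ref{L:ZETABAR}; everything else is a direct computation.
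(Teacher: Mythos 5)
Your proof is correct and follows essentially the same approach as the paper's: the ratio identity $R/r = \zeta(s,z)/z$ combined with the pointwise bound $\|\theta/\bzeta\|_{L^\infty}\le C\sqrt{\eps_0}$, the strict monotonicity, and the near-field/far-field asymptotics of $\bzeta$ from Lemma~\ref{L:ZETABAR}. One small organizational difference: the paper establishes strict monotonicity of $r\mapsto\eta(t,r)$ via $\pa_z\zeta>0$ and uses that to characterize the sets $\{e^sr\ge h\}$ directly in terms of $e^sR$, whereas you sandwich $\zeta$ between $\tfrac12\bzeta$ and $\tfrac32\bzeta$ and then invoke monotonicity of the \emph{background} $\bzeta$ only — this is marginally more economical. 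A small inaccuracy to flag: you state that $\bzeta(z)/z\to 1$ as $z\to\infty$; Lemma~\ref{L:ZETABAR} only gives $\bzeta(z) = \tilde\zeta_{-1}z + O_{z\to\infty}(1)$ for some constant $\tilde\zeta_{-1}>0$, which the paper does not normalize to $1$. This does not affect your argument, since all you actually use is that $\bzeta(z)/z$ is bounded above and below by positive constants on $[h,\infty)$, which follows from $\tilde\zeta_{-1}>0$ together with continuity and positivity of $\bzeta$.
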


\begin{proof}
Since $\eta(t,\cdot)=(T-t)\zeta(s,z)$ from~\eqref{E:SCALINGLAGR},
we have $\pa_r \eta = \pa_z\zeta$. Since $\|\frac{\pa_z\zeta(s,\cdot) - \pa_z \bzeta}{\pa_z\bzeta}\|_{L^\infty}\leq\bar C\sqrt{\eps_0}$ by the pointwise bound in~\eqref{E:MAINBOUND}, it follows that $\pa_z\zeta>0$ on $(0,\infty)$. In particular, the map $r\mapsto \eta(t,r)$ is strictly increasing.

Therefore, for any $h>0$ the set $\{e^sr\ge h\}$ is characterised by the condition $\zeta(s,e^s r)=e^s R\ge \zeta(s,h)$. 
Since $\frac{\zeta(s,e^sr)}{\bzeta(e^sr)}=1 + O(\sqrt{\eps_0})$ from the pointwise bound in Theorem~\ref{T:MAIN}, the first claim follows from the fact that there exists a constant $\tilde C_h>0$ such that
$\tilde C_h^{-1} z\le  \bzeta(z)\le \tilde C_h z$ for all $z\ge h$ by Lemma~\ref{L:ZETABAR}.

 The remaining claims follow from similar considerations, observing that for any $Z>0$, the set of Lagrangian labels $\{r\leq Z\}$ corresponds, at time $t$, to the Eulerian set of $R=\eta(t,r)$ such that $R\le \eta(t, Z)=e^{-s}\zeta(s,e^sZ)$, i.e.
\be\label{E:LPDOM1}
e^s R \le \zeta(s,e^s Z) = \bzeta(e^s Z)\Big(1+ \frac{\th(s,Z e^s)}{\bzeta (Z e^s)}\Big) =\bzeta(e^s Z)(1+O(\sqrt{\eps_0}))
\ee
from the pointwise bound in Theorem~\ref{T:MAIN}, while Lemma~\ref{L:ZETABAR} gives $\bzeta(z)= \tilde\zeta_{-1}z + O_{z\to\infty}(1)$ for some $\tilde\zeta_{-1}>0$.
\end{proof}


We now prove the existence of initial data in Eulerian variables that induces Lagrangian data satisfying the assumptions of our main Theorem~\ref{T:MAIN}. We recall that the LP flow map agrees with its self-similar representation at $t=-1$, so that $\eta_{\text{LP}}(-1,\cdot)=\bzeta(\cdot)$. For convenience, we recall from~\eqref{E:HSOBOLEVEQUIV} that we denote by $H^k$ the usual Sobolev space on $\R^3$, restricted to radial functions, i.e.~with the volume form $y^2\dif y$. For convenience, we also denote $\Delta_y$ the $\R^3$ Laplacian expressed in radial $y$ coordinate.

\begin{lemma}[Initial data in Eulerian variables]
Let $(\varrho_0,u_0)\in C^{2\m+2}_{loc}([0,\infty);\R^2)$. There exists $c_0>0$ such that if 
\begin{align}
&\big\|(\varrho_{\textup{LP}}-\varrho_{0},u_{\textup{LP}}-u_0)\big\|_{C^{2\m+2}([0,\bzeta(r_*)];\R^2)}^2\leq c_0\tilde\eps,\label{E:EULRHOINITBD}\\
&\sum_{j=0}^{\m+1} \bigg(\int_0^{\bzeta(Z_0)}\kappa\big|\Delta_y^j(u_{\textup{LP}}-u_0)\big|^2y^2\dif y+\int_{\bzeta(Z_0)}^\infty(1+y)^{2cj-\al}\big|(y^{\frac23}\pa_y^2 y^{\frac23})^j(u_{\textup{LP}}-u_0)\big|^2y^2\dif y\bigg)\leq c_0\eps_0,\label{E:EULUINITBD}
\end{align}
and, for $0\leq k\leq 2\m+2,$
\beq\label{E:EULRHODECAY}
\big|\pa_R^k\varrho_0(R)\big|\leq C(1+R)^{-(k+4)},\qquad R\geq \bzeta(r_*),
\eeq
then there exists an initial flow map $\eta_0:[0,\infty)\to[0,\infty)$ and gauge function $g$ so that $\eta_0$ is a strictly increasing bijection,
\begin{align}\label{E:ETASMALL}
\|\eta_{\textup{LP}}(-1,\cdot)\circ\eta_0^{-1} - \text{Id}\|_{C^{2}}\leq C \sqrt{\eps_0},
\end{align}
the pair $\tilde\Phi_0$ defined through
\begin{align}
\tilde\Phi_0 : = \begin{pmatrix} \eta_0-\eta_{\textup{LP}}(-1,\cdot) \\ u_0\circ\eta_0-u_{\textup{LP},0}\circ\eta_{\textup{LP}}(-1,\cdot)\end{pmatrix}
\end{align}
satisfies the smallness assumption~\eqref{IC0},
and $g$ satisfies the asymptotic flatness conditions~\ref{item:g1}--\ref{item:g2}.
\end{lemma}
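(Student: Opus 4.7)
The plan is to define $\eta_0$ in two stages. On $[0,r_\ast]$, the gauge condition \ref{item:g1} forces the identity
\[
M_0(\eta_0(r)) = \CLP r, \qquad M_0(R):=4\pi\int_0^R \varrho_0(y)\,y^2\,\dif y,
\]
which determines $\eta_0$ uniquely. On $[r_\ast,\infty)$ we are free to extend, and we pick an asymptotically linear extension so that the induced $g(r)=\varrho_0(\eta_0(r))\eta_0^2(r)\pa_r\eta_0(r)$ inherits the tail decay \eqref{E:EULRHODECAY} of $\varrho_0$. More precisely, since $\varrho_{\textup{LP}}(0)>\frac13$, assumption \eqref{E:EULRHOINITBD} with $c_0$ small guarantees $\varrho_0>0$ on $[0,\bzeta(r_\ast)]$, so $M_0$ is a $C^{2\m+3}$ diffeomorphism of $[0,\bzeta(r_\ast)]$ onto its image and $\eta_0(r):=M_0^{-1}(\CLP r)$ is well-defined on $[0,r_\ast]$. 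We then extend $\eta_0$ smoothly via a cutoff so that $\eta_0(r)=r+c_1$ for $r\ge 2r_\ast$, where $c_1$ is determined by $C^{2M}$-matching at $r_\ast$ and the extension is arranged to keep $\pa_r\eta_0>0$; this makes $\eta_0$ a $C^{2M}$ bijection of $[0,\infty)$ onto itself.

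\textbf{Interior difference estimates.} Since $\bzeta$ satisfies $M_{\textup{LP}}(\bzeta(r))=\CLP r$, on $[0,r_\ast]$ we have
\[
M_0(\eta_0(r))-M_0(\bzeta(r))=M_{\textup{LP}}(\bzeta(r))-M_0(\bzeta(r))=4\pi\int_0^{\bzeta(r)}(\varrho_{\textup{LP}}-\varrho_0)y^2\,\dif y,
\]
and the implicit function theorem combined with the uniform lower bound on $M_0'$ yields
\[
\|\eta_0-\bzeta\|_{C^{2\m+2}([0,r_\ast])}\le C\|\varrho_0-\varrho_{\textup{LP}}\|_{C^{2\m+2}([0,\bzeta(r_\ast)])}\le C\sqrt{c_0\tilde\eps}.
\]
Chain rule then gives the analogous bound for $u_0\circ\eta_0-\widehat u\circ\bzeta$ in terms of $\|u_0-u_{\textup{LP}}\|_{C^{2\m+2}}+\|\eta_0-\bzeta\|_{C^{2\m+2}}$. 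These pointwise estimates, with $\m\ge2$, immediately yield \eqref{E:ETASMALL} and the part of $\Pb[\tilde\Phi_0]\lesssim\sqrt{c_0\tilde\eps}$ supported on $[0,r_\ast]$; the $\pa_s\bp\th$ contribution in $\Pb$ is rewritten using \eqref{E:NLTHETA0}--\eqref{E:NLPHI0} at $s=\sin$ as a spatial expression involving at most two derivatives of $\tilde\Phi_0$.

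\textbf{Gauge verification and exterior energy bounds.} On $[2r_\ast,\infty)$ the construction gives $g(r)=(r+c_1)^2\varrho_0(r+c_1)$, and \eqref{E:EULRHODECAY} directly yields $|g^{(k)}(r)|\le C(1+r)^{-2-k}$ as well as the analogous bound on $(\log g)^{(k)}$; on the compact intermediate region $[r_\ast,2r_\ast]$, taking $\gamma$ large absorbs all derivatives, so \ref{item:g2} holds. For the energy $\tE_{\le 2(\m+1)}[\tilde\Phi_0]$ we split into $[0,Z_0]$ and $[Z_0,\infty)$. Choosing $r_\ast\ge Z_0$ (permissible since $r_\ast$ is free), the interior contribution is controlled by $\|\tilde\Phi_0\|_{\mathcal H^{2\m+2}_{Z_0}}^2\lesssim c_0\tilde\eps$ from the previous step. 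In the exterior, the far-field asymptotics $\bzeta(z)=\tilde\zeta_{-1}z+O(1)$ from Lemma~\ref{L:ZETABAR} together with $\eta_0(r)=r+c_1$ mean that the weighted derivative operators $\bD^k$ are equivalent to standard Eulerian radial derivatives $(y^{2/3}\pa_y)^k$ (and similarly for $\bd$), and the Jacobian $\pa_z\bzeta\to\tilde\zeta_{-1}$ is bounded above and below. The velocity contribution to the far-field energy is therefore directly dominated by the second sum in \eqref{E:EULUINITBD}, while the flow-map contribution is controlled by \eqref{E:EULRHODECAY} via the explicit formula $\eta_0-\bzeta=r+c_1-\bzeta(r)$ for $r\ge 2r_\ast$. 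Choosing $c_0$ small and $r_\ast$ large delivers all three smallness bounds in \eqref{IC0}.

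\textbf{Main obstacle.} The principal difficulty lies in matching the Lagrangian weighted Sobolev structure of $\tE_{\le 2(\m+1)}$---built out of the fractional operators $\bp=z^{2/3}\pa_z$, $\bd=\pa_z(z^{2/3}\cdot)$ adapted to the $z^{1/3}$-singularity of $\bzeta$ at the origin---with the Eulerian bounds \eqref{E:EULRHOINITBD}--\eqref{E:EULUINITBD}, whose interior version uses the full Euclidean Laplacian $\Delta_y$ and whose exterior version uses $(y^{2/3}\pa_y^2 y^{2/3})^j$. Near $z=0$ the equivalence relies on the real-analyticity of $\bzeta$ as a function of $z^{1/3}$ (Lemma~\ref{L:ZETABAR}) so that $\pa_y\leftrightarrow z^{-2/3}\cdot z^{2/3}\pa_z$ matches parity; in the far field one uses $y\sim z$. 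Once these equivalences are set up the energy estimates reduce to bounded multiplicative constants, and the proof closes; the technical bookkeeping in the gluing region $[r_\ast,2r_\ast]$ is absorbed by a fixed compact-region constant times $\sqrt{c_0\tilde\eps}$.
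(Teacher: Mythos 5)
Two steps in your argument fail, and both are places where the paper is more careful than you are.

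\textbf{The interior estimate $\|\eta_0-\bzeta\|_{C^{2\m+2}([0,r_\ast])}\le C\sqrt{c_0\tilde\eps}$ is false.} Near $r=0$, both $\eta_0(r)$ and $\bzeta(r)$ behave like $c r^{1/3}$ (compare Lemma~\ref{L:ONETHIRD}), but their leading coefficients are $\big(\tfrac{3\CLP}{4\pi\varrho_0(0)}\big)^{1/3}$ and $\big(\tfrac{3\CLP}{4\pi\rhoLP(0)}\big)^{1/3}$ respectively. When $\varrho_0(0)\ne\rhoLP(0)$, the difference is $\sim c\, r^{1/3}$ with $c\ne0$, so $\pa_r(\eta_0-\bzeta)\sim \tfrac{c}{3}r^{-2/3}$ diverges and the $C^1(\text{in }r)$ norm is already infinite; the issue worsens at higher order. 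Relatedly, your invocation of the implicit function theorem relies on a ``uniform lower bound on $M_0'$,'' but $M_0'(R)=4\pi\varrho_0(R)R^2\to0$ as $R\to0$, so there is no such bound. This is precisely why the paper measures closeness through $\|\eta_0^3-\bzeta^3\|_{C^{2\m+3}[0,r_*]}$ and through $\|\bzeta\circ\eta_0^{-1}-\mathrm{Id}\|_{C^{2\m+3}}$ (equations~\eqref{E:CLOSETOLP}), both of which factor out the cube-root singularity; equivalently, it works directly with $\bd(\zeta_0-\bzeta)$ and the weighted operators $\bp,\bd$, whose whole purpose is to absorb the $z^{1/3}$ degeneracy. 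You need to replace your $C^{2\m+2}$ bound with one for $\eta_0^3-\bzeta^3$ (or the composed map).

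\textbf{The far-field extension $\eta_0(r)=r+c_1$ for $r\ge 2r_\ast$ does not work.} By Lemma~\ref{L:ZETABAR}, $\bzeta(r)=\tilde\zeta_{-1}r+O(1)$ with $\tilde\zeta_{-1}=\lim_{z\to\infty}\pa_z\bzeta$, and nothing in the LP construction forces $\tilde\zeta_{-1}=1$. With your extension, $\theta_0=\eta_0-\bzeta=(1-\tilde\zeta_{-1})r+O(1)$ grows linearly, so $\theta_0/\bzeta\to(1-\tilde\zeta_{-1})/\tilde\zeta_{-1}$, a fixed nonzero constant. This violates the smallness of $\Pb[\tilde\Phi_0]$ in~\eqref{IC0} no matter how small $c_0$ is or how large $r_\ast$ is chosen — the defect is in the leading-order slope and does not improve with $r_\ast$. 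The paper instead extends $\eta_0$ so that each $\eta_0^{(k)}$ reproduces the asymptotics of $\pa_r^k\bzeta(r)$ as $r\to\infty$, in particular $\eta_0(r)\sim\tilde\zeta_{-1}r$, so that $\theta_0=O(1)$ and $\theta_0/\bzeta\to0$. (Your derivation of the gauge decay~\ref{item:g2} from~\eqref{E:EULRHODECAY} would go through once the extension is fixed in this way.)

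Your choice to parametrise the interior construction via $M_0(\eta_0(r))=\CLP r$ rather than the ODE~\eqref{E:DATAIN} is entirely equivalent to what the paper does and is fine; the issues above are with the analysis, not with that identity.
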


\begin{proof}
From condition~\ref{item:g1}, we require $g(r)=\frac{\CLP}{4\pi}$ for $r\leq r_*$, and hence from~\eqref{E:DATAIN}, the flow map $\eta_0$ must satisfy
\beqa\label{E:ETA0ODE}
\pa_r\eta_0=&\,\frac{\CLP}{4\pi\eta_0^2\varrho_(\eta_0)}, \qquad r\leq r_*,\\
\eta_0(0)=&\,0.
\eeqa
From the regularity of $\varrho_0$ (observe that $\varrho_0\in C^2$ at least and $\varrho_0(0)>0$), it is simple to see that the solution to this ODE exists, satisfies the asymptotic relation $\eta_0(r)\simeq r^{\frac13}$ as $r\to0$, and  $\eta_0\in C^{2\m+3}(0,r_\ast]$. Moreover,
\beqa \label{E:CLOSETOLP}
\|\eta_0^3-\eta_{\text{LP}}^3(-1,\cdot)\|_{C^{2\m+3}[0,r_*]}\leq C\sqrt{\tilde\eps},\\
\|\eta_{\text{LP}}(-1,\cdot)\circ \eta_0^{-1}-\text{Id}\|_{C^{2\m+3}[0,r_*]}\leq C\sqrt{\tilde\eps},
\eeqa
where in these final inequalities, we use that $\eta_{\text{LP}}(-1,\cdot)$ satisfies~\eqref{E:DATAIN} with $\varrho_{\text{LP}}$.

Next, we observe that at $t=-1$, $r=z$, $\zeta_0=\eta_0$, and $\bzeta=\eta_{\text{LP}}(-1,\cdot)$,  so that we may move to working directly with $\zeta$ and $\bzeta$. Now we compute
\beqa
\bd (\zeta_0-\bzeta)=\pa_z\big(z^{\frac23}(\zeta_0-\bzeta)\big)=\pa_z(\zeta_0^3-\bzeta^3) + \pa_z\Big((\frac{z^{\frac23}}{\zeta_0^2}-1)\zeta_0^3\Big)-\pa_z\Big((\frac{z^{\frac23}}{\bzeta^2}-1)\bzeta^3\Big),
\eeqa
which enables a convenient calculation of derivatives in the Eulerian $y$ variable.
Now we see directly from~\eqref{E:ETA0ODE} that 
\beq
\frac{4\pi}{3\CLP}\pa_z(\zeta_0^3-\bzeta^3)=\frac{1}{\varrho_0(\zeta_0)}-\frac{1}{\varrho_{\text{LP}}(-1,\zeta)},
\eeq
and so, recalling from Definition~\ref{D:KEYOP} that $\bp$ and $\bd$ scale like the Eulerian gradient and divergence, respectively, it is straightforward to see that 
\begin{align}
\int_0^{Z_0}|\bD^{2\m+1}(\zeta_0-\bzeta)|^2\bG(z) g_{2\m}(z)\,\dif z \leq &\,C\sum_{j=0}^{\m}\int_0^{\bzeta(Z_0)}|\Delta_y^j\varrho_0(\zeta_0\circ\bzeta^{-1}(y))-\Delta_y^j\varrho_{\text{LP}}(-1,y)|^2 y^2\,\dif y,\notag\\
\leq&\, C\|\varrho_0-\varrho_{\text{LP}}\|_{H^{2\m+1}[0,\bzeta(r_*)]}^2,
\end{align}
where we have also used that the volume form $\dif z\simeq y^2\dif y$ and the boundedness of the weights $\bG$ and $g_{2\m}$. A similar argument for $\mu_0-\mu_{\text{LP}}$ shows that
\begin{align}
\int_0^{Z_0}|\bD^{2\m+1}(\mu_0-\widehat\mu)|^2 g_{2\m}(z)\,\dif z\leq C\|u_0-u_{\text{LP}}\|_{H^{2\m+1}[0,\bzeta(r_*)]}^2,
\end{align}
and hence, for $c_0$ sufficiently small, the first condition in~\eqref{IC0} is verified.

We now extend $\eta_0$ from $[0,r_*]$ to all of $[0,\infty)$. From the construction of $\eta_0$ on $[0,r_*]$ and the assumed regularity on $\varrho_0$, it is straightforward to see that $\eta_0\in C^{2\m+3}$ remains close in this space to $\eta_{\text{LP}}(-1,\cdot)$ as specified in~\eqref{E:CLOSETOLP}. Therefore  $\eta_0\in C^{2\m+3}$  may be extended as a $C^{2\m+3}$ function to all of $(0,\infty)$ such that, for $0\leq k\leq 2\m+3$, $\eta_0^{(k)}(r)$ satisfies the same asymptotics as $r\to\infty$ as $\pa_r^{k}\eta_{\text{LP}}(-1,r)$, see, e.g., Lemma~\ref{L:ZETABAR}, and so that the pointwise assumption of~\eqref{IC0} is satisfied. Moreover, using the assumption~\eqref{E:EULUINITBD} with $c_0$ sufficiently small, we perform this extension such that the final estimate of~\eqref{IC0} for the top order energies $\tE_{\leq 2(\m+1)}$ is also verified.

The gauge function $g$ is then determined, for $r\geq r_*$, by~\eqref{E:DATAIN}, that is through
\[g(r) = \varrho_0(\eta_0(r)) \eta_0^2 (r)\pa_r \eta_0(r) .\]
By distributing derivatives across this identity, exploiting the decay of $\eta_0^{(k)}$ by construction, and~\eqref{E:EULRHODECAY}, it is then trivial to verify that $g$ satisfies~\ref{item:g2}.
\end{proof}


\begin{proof}[Proof of Theorem~\ref{T:EULERMAIN}]
We note that for any $r\in[0,\infty)$ we have
\begin{align}
\varrho(t,\eta(t,r))& = (T-t)^{-2} \frac{g(r)}{\zeta(s,z)^2\pa_z\zeta(s,z)} = (T-t)^{-2} \frac{g(r)}{\bzeta(z)^2\pa_z\bzeta(z)} J(s,z), \label{E:RHOEX1}\\
\varrho_{\text{LP,T}}(t,\eta_{\text{LP},T}(t,r)) & = (T-t)^{-2} \frac{\CLP}{4\pi \bzeta(z)^2\pa_z\bzeta(z)},\label{E:RHOEX2}
\end{align}
where we recall~\eqref{E:JDEF}.
We define $f$ as the intertwining map
\begin{align}\label{E:FINTDEF}
f(t,\cdot): =\eta_{\text{LP},T}(t,\eta^{-1}(t,\cdot)).
\end{align}
We conclude from~\eqref{E:RHOEX1}--\eqref{E:RHOEX2} that
\begin{align}\label{E:RHOEX3}
\frac{\varrho(t,R)}{\varrho_{\text{LP,T}}\circ f(t,R)} =  \frac{4\pi g(r) J(s,z)}{\CLP}, \ \  r = \eta^{-1}(t,R).
\end{align}
From~\eqref{E:FINTDEF} it follows that 
\begin{align}
 \frac{e^s(f(t,R)-R)}{\bzeta(e^s r)}  = \frac{\bzeta(e^s r) - \zeta(s,e^s r)}{\bzeta(e^s r)} = -\frac{\th(s,e^s r)}{\bzeta(e^s r)}.
\end{align}
We recall now that by Proposition~\ref{P:PW} we have $R = e^{-s}\zeta(s,e^sr) = e^{-s}\bzeta(e^sr) (1+ O(\sqrt{\eps_0}))$, and therefore from the above
equation and Lemma~\ref{L:SHARPER} (with $h=1$) we conclude
\begin{align}\label{E:LAMBDAR0}
\big|\frac{f(t,R)}{R}-1\big| \le C\sqrt{\eps_0}\Big(e^{-\Om s}\chi_{s\le-\log r}(s) + r^{\Om}\chi_{s\ge - \log r}(s)\Big).
\end{align}
Moreover,
\begin{align}\label{E:LAMBDAR1}
\pa_R\big(\frac{f(t,R)}{R}\big) = \pa_R\big(\frac{\bzeta(s,e^sr)}{\zeta(s,e^sr)}\big) = \pa_z\big(\frac{\bzeta(s,z)}{\zeta(s,z)}\big)\big|_{z=e^s r} \frac{e^s}{\pa_z\zeta(s,e^sr)} ,
\end{align}
where we have used $\pa_rR =  \pa_z\zeta(s,e^sr)$. Rewriting $\frac{\bzeta(s,z)}{\zeta(s,z)}$ as $\frac{-\th(s,z)}{\zeta(s,z)}+1$, we easily see that 
\begin{align}
\Big|\pa_R\big(\frac{f(t,R)}{R}\big)\Big|
=\Big|\big(-\frac{\pa_z\th}{\pa_z\bzeta}\frac{\pa_z\bzeta}{\zeta} + \frac{\th}{\zeta}\frac{\pa_z\zeta}{\zeta}\big)\frac{e^s}{\pa_z\zeta}\Big|
\le C \big(\big|\frac{\pa_z\th}{\pa_z\bzeta}\big| + \big| \frac{\th}{\zeta}\big|\big) \frac{e^s}{\zeta} =\frac1R\big(\big|\frac{\pa_z\th}{\pa_z\bzeta}\big| + \big| \frac{\th}{\zeta}\big|\big)  ,
\end{align}
where we have used Proposition~\ref{P:PW}. Thus from~Lemma~\ref{L:SHARPER} we infer that 
\begin{align}\label{E:LAMBDAR3}
\big|R\pa_R\big(\frac{f(t,R)}{R}\big)\big|\le C\sqrt{\eps_0}\Big(e^{-\Om s}\chi_{s\le-\log r}(s) + r^{\Om}\chi_{s\ge - \log r}(s)\Big).
\end{align}
Using Lemma~\ref{L:LPDOM} it follows that the right-hand sides of~\eqref{E:LAMBDAR0} and~\eqref{E:LAMBDAR3}
are bounded by $C\sqrt{\eps_0}d_T(t,R)^\Om$, thus proving~\eqref{E:FBOUND}.

{\em Proof of part (a).} 
Bound~\eqref{E:SANDWICH} now follows easily from~\eqref{E:RHOEX3},~\eqref{E:JDEF},~\eqref{E:JBOUND}, and Proposition~\ref{P:PW}.

{\em Proof of part (b).} Since $R\le\frac{Z_0}{2}<\frac{r_\ast}{2}$ it follows from Lemma~\ref{L:LPDOM} that $r$ defined implicitly through the relation $R=\eta(t,r)$ 
satisfies $r\le r_\ast$. Therefore $g(r) = \frac{\CLP}{4\pi}$ and
from~\eqref{E:RHOEX3} we obtain the identity 
\begin{align}
\frac{\varrho(t,R)-\varrho_{\text{LP,T}}\circ f(t,R)}{\varrho_{\text{LP,T}}\circ f(t,R)} =  J(s,e^sr)-1, \ \  r = \eta^{-1}(t,R).
\end{align}
Together with~\eqref{E:JBOUND} (with $h=1$) this leads to 
\begin{align}
\big|\frac{\varrho(t,R)-\varrho_{\text{LP,T}}\circ f(t,R)}{\varrho_{\text{LP,T}}\circ f(t,R)}\big| 
&\le   C\sqrt{\eps_0}\Big(e^{-\Om s}\chi_{s\le-\log r}(s) + r^{\Om}\chi_{s\ge - \log r}(s)\Big).
\end{align}
Similarly to~\eqref{E:LAMBDAR1}--\eqref{E:LAMBDAR3} it is easy to see that for any $R\in[0,\frac{Z_0}{2}]$ we have (from $\frac{\rho}{\rhoLPT\circ f} = J(s,e^sr)$) 
\begin{align}
\big|R\pa_R\big(\frac{\varrho}{\rhoLPT\circ f}\big)\big| &\le C  \big(\big|z\frac{\pa_{zz}\th}{\pa_z\bzeta}\big|+\big|\frac{\pa_z\th}{\pa_z\bzeta}\big| + \big| \frac{\th}{\bzeta}\big|\big) \big(1 + \frac{\bzeta}{\Lambda\bzeta}\big) \notag\\
& \le C\sqrt{\eps_0}\Big(e^{-\Om s}\chi_{s\le-\log r}(s) + r^{\Om}\chi_{s\ge - \log r}(s)\Big),
\end{align}
where we have used Proposition~\ref{P:PW}, the uniform upper bound on $\frac{\bzeta}{\Lambda\bzeta} = \frac{1}{\omLP}$, and Lemma~\ref{L:SHARPER} in the last line.
Bound~\eqref{E:DOM} now follows by Lemma~\ref{L:LPDOM}. To prove~\eqref{E:DOM2} we note that for any fixed $R>0$ there exists a constant $c>0$ such that 
$e^sR>c$ implies $e^sr>1$, and thus the claim is a consequence of~\eqref{E:DOM}.

\medskip
{\em Proof of part (c).}
We observe that 
\begin{align}\label{E:UFORMULA}
u(t,R) = \pa_t\eta(t,r) = \mu(s,z), \ \ R = \eta(t,r),
\end{align}
where we recall that $\mu= -\zeta+\zeta_s+\Lambda \zeta$.
From~\eqref{E:UFORMULA} we infer that
\begin{align}\label{E:UFORMULA2}
u(t,R) &= -\bzeta (e^s r) + \Lambda\bzeta(e^sr) + \phi(s,e^sr) \notag\\
& = \uLPT \circ f + \phi(s,e^sr), \ \ R =\eta(t,r)
\end{align} 
and therefore the claim follows from Lemma~\ref{L:PHIUPPER}.
\end{proof}

\appendix
\numberwithin{equation}{section}

\section{Hardy-Sobolev bounds and interpolation estimates}


\subsection{Hardy at origin and norms}
We collect together some convenient Hardy-type inequalities. For convenience, we state these for $C^1$ functions, but they extend by simple density arguments whenever the right hand sides are finite.

\begin{lemma}\label{L:UPW}
Let $\beta\in \R$, $\beta\neq 1$. Let $0\leq \tilde Z< Z<\infty$ be given. Then there exists $C>0$, depending only on $\beta$, such that, for any $u\in C^1(\tilde Z,Z)$,
\begin{align}
\int_{\tilde{Z}}^Z u^2 z^{\beta-\frac{2}{3}}\,\dif z + Z^{\beta+\frac13}|u(Z)|^2  \leq C \bigg(\tilde Z^{\beta+\frac13}|u(\tilde Z)|^2 + \int_{\tilde{Z}}^Zz^{\beta}|\bd u|^2\,\dif z\bigg), \ \ \ \beta<1,\label{E:HARDYREFINED}\\
\int_{\tilde{Z}}^Z u^2 z^{\beta-\frac{2}{3}}\,\dif z +  \tilde Z^{\beta+\frac13}|u(\tilde Z)|^2 \leq C \bigg(Z^{\beta+\frac13}|u(Z)|^2 + \int_{\tilde{Z}}^Zz^{\beta}|\bd u|^2\,\dif z\bigg), \ \ \ \ \beta>1\label{E:HARDYREFINEDb}
\end{align}
and, 
\begin{align}
\int_{\tilde{Z}}^Z u^2 z^{\beta-2}\,\dif z + Z^{\beta-1}|u(Z)|^2  \leq C\bigg( \tilde Z^{\beta-1}|u(\tilde Z)|^2 + \int_{\tilde{Z}}^Zz^{\beta-\frac43}|\bp u|^2\,\dif z\bigg),\ \ \ \beta<1,\label{E:HARDYREFINED2}\\
\int_{\tilde{Z}}^Z u^2 z^{\beta-2}\,\dif z + \tilde Z^{\beta-1}|u(\tilde Z)|^2  \leq C\bigg( Z^{\beta-1}|u(Z)|^2 + \int_{\tilde{Z}}^Zz^{\beta-\frac43}|\bp u|^2\,\dif z\bigg),\ \ \ \beta>1\label{E:HARDYREFINED2b}
\end{align}
\end{lemma}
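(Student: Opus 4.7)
The plan is to derive all four inequalities from an exact identity obtained by integrating $\pa_z(z^{\gamma}u^2)$ over $[\tilde Z, Z]$ for suitable $\gamma$, and then using Cauchy--Schwarz--Young to absorb the cross term. The only mildly delicate point is getting the exponent of $z$ on the weight $z^{\gamma}$ so that the product rule naturally reproduces $\bd u$ or $\bp u$ on the right, and tracking the sign of $(1-\beta)$ to determine whether the boundary term sits at $\tilde Z$ or $Z$.

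For \eqref{E:HARDYREFINED}--\eqref{E:HARDYREFINEDb}, I would choose the weight $z^{\beta+\frac13}$ and compute
\begin{equation*}
\pa_z(z^{\beta+\frac13}u^2)=(\beta+\tfrac13)z^{\beta-\frac23}u^2+2z^{\beta-\frac13}u\cdot z^{\frac23}\pa_z u.
\end{equation*}
Since $z^{\frac23}\pa_z u=\bd u-\tfrac23 z^{-\frac13}u$, this simplifies to
\begin{equation*}
\pa_z(z^{\beta+\frac13}u^2)=(\beta-1)z^{\beta-\frac23}u^2+2z^{\beta-\frac13}u\,\bd u.
\end{equation*}
Integrating over $[\tilde Z,Z]$ and rearranging yields the exact identity
\begin{equation*}
(1-\beta)\int_{\tilde Z}^{Z} z^{\beta-\frac23}u^2\,\dif z+Z^{\beta+\frac13}|u(Z)|^2=\tilde Z^{\beta+\frac13}|u(\tilde Z)|^2+2\int_{\tilde Z}^{Z} z^{\beta-\frac13}u\,\bd u\,\dif z.
\end{equation*}
Applying $2ab\le\varepsilon a^2+\varepsilon^{-1}b^2$ with $a=z^{(\beta-\frac23)/2}u$, $b=z^{\beta/2}\bd u$ and $\varepsilon=|1-\beta|/2$ absorbs the cross term into the left-hand integrand when $\beta<1$, giving \eqref{E:HARDYREFINED}. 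When $\beta>1$ the sign in front of the $L^2$ integral flips and the identity instead reads
\begin{equation*}
(\beta-1)\int_{\tilde Z}^{Z} z^{\beta-\frac23}u^2\,\dif z+\tilde Z^{\beta+\frac13}|u(\tilde Z)|^2=Z^{\beta+\frac13}|u(Z)|^2-2\int_{\tilde Z}^{Z} z^{\beta-\frac13}u\,\bd u\,\dif z,
\end{equation*}
and the same Young's inequality argument with $\varepsilon=(\beta-1)/2$ yields \eqref{E:HARDYREFINEDb}.

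For the $\bp$-versions \eqref{E:HARDYREFINED2}--\eqref{E:HARDYREFINED2b}, I repeat the same argument with the weight $z^{\beta-1}$, using $\pa_z u=z^{-\frac23}\bp u$:
\begin{equation*}
\pa_z(z^{\beta-1}u^2)=(\beta-1)z^{\beta-2}u^2+2z^{\beta-\frac53}u\,\bp u.
\end{equation*}
Integration over $[\tilde Z,Z]$ gives the analogous identity, and Cauchy--Schwarz--Young with the pairing $a=z^{(\beta-2)/2}u$, $b=z^{(\beta-4/3)/2}\bp u$ produces, in both the $\beta<1$ and $\beta>1$ cases, the claimed inequalities. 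A standard density argument extends the result from $u\in C^1([\tilde Z,Z])$ to the full class for which the right-hand side is finite.

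The only real obstacle is bookkeeping: tracking which boundary term lands on which side when $\beta$ crosses $1$, and choosing $\varepsilon$ in Young's inequality so that the $z^{\beta-\frac23}u^2$ (or $z^{\beta-2}u^2$) term is strictly absorbable. No further structural input (such as the LP profile or the form of $\bzeta$) is needed --- these are purely weighted one-dimensional Hardy inequalities.
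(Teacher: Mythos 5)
Your proof is correct and proceeds by essentially the same underlying computation as the paper's: both reduce the inequality to the identity relating $\int z^{\beta-\frac23}u^2$, the boundary terms $z^{\beta+\frac13}u^2|_{\tilde Z}^{Z}$, and the cross term $\int z^{\beta-\frac13}u\,\bd u$, then absorb the cross term. The only cosmetic difference is organizational: the paper completes a square, expanding $0\le\int z^\beta(\al\bd u-z^{-\frac13}u)^2$ and integrating by parts, whereas you integrate the product rule $\pa_z(z^{\beta+\frac13}u^2)$ directly and then apply Young's inequality --- two standard dressings of the same weighted Hardy argument. Your route is, if anything, a little more transparent about yielding both the $L^2$-weighted term and the boundary term simultaneously with a single choice of Young parameter.
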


\begin{proof}
To show~\eqref{E:HARDYREFINED}, we use $0\le \|z^{\frac{\beta}{2}}(\al \bd u-  u z^{-\frac13})\|_{L^2(0,Z)}^2$, expand inside the integral, integrate-by-parts, and set $\al=\frac{1}{1-\beta}$. The other inequalities follow similarly.
\end{proof}


\begin{lemma}\label{L:LINFTYHARDYSOBOLEV}
Let $Z>0$ be given. Then there exists $C=C(Z)>0$, such that for all $u\in C^2(0,Z)$,
\begin{align}
\big\|u\|_{L^\infty(0,Z)}\leq C\Big(\|\bl u\|_{L^2(0,Z)}+\|\bp u\|_{L^2(0,Z)}+\|u\|_{L^2(0,Z)}\Big).\label{E:LINFTYHARDY2}
\end{align}
As a consequence, for $k\in\N$, $(\th,\phi)^\top\in\mathcal{H}^{k+1}_Z$, $Q_1\in\mathcal X_k$, $Q_2\in\mathcal{X}_{k-1}$ where we recall the definition~\eqref{E:XTWOJ}--\eqref{E:XTWOJPLUSONE}, we have the estimate
\begin{align}\label{E:XKHARDY}
\|Q_1\theta\|_{L^\infty(0,Z)}+\|Q_2\phi\|_{L^\infty(0,Z)} \le C \big\|\begin{pmatrix}\theta\\\phi\end{pmatrix}\big\|_{\mathcal H^{k+1}_{Z}}.
\end{align}
\end{lemma}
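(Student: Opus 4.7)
The key observation is that inequality \eqref{E:LINFTYHARDY2} is, in disguise, nothing but the standard three-dimensional Sobolev embedding $H^2(B_R) \hookrightarrow L^\infty(B_R)$ for radial functions. I would make this precise through the change of variables $y = z^{1/3}$, under which $dz = 3 y^2\, dy$ and the differential operators transform as
\[
\bp u = \tfrac{1}{3}\pa_y v, \quad \bd u = \tfrac{1}{3}\bigl(\pa_y v+ \tfrac{2}{y}v\bigr) = \tfrac{1}{3} D_y v, \quad \bl u = \tfrac{1}{9}\Delta_y v,
\]
where $v(y) := u(y^3)$, and $D_y$, $\Delta_y$ denote the 3D radial divergence and Laplacian, respectively. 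Under this change of variables, $L^2(0,Z;dz) \cong L^2(0,Z^{1/3}; 3y^2\, dy)$, which is (a scalar multiple of) the natural 3D radial $L^2$ space. Therefore \eqref{E:LINFTYHARDY2} reduces to the classical radial inequality
\[
\|v\|_{L^\infty(B_{Y})} \le C\bigl(\|\Delta_y v\|_{L^2(B_{Y})} + \|\pa_y v\|_{L^2(B_{Y})} + \|v\|_{L^2(B_{Y})}\bigr), \quad Y := Z^{1/3},
\]
which in turn follows from the Sobolev embedding $H^2(B_Y) \hookrightarrow L^\infty(B_Y)$ together with elliptic regularity in the radial setting.

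Alternatively, and more concretely, one can argue directly in the $z$-variable as follows. For $z$ bounded away from the origin (say $z \in [Z/2, Z]$) the operators $\bp, \bd, \bl$ are uniformly equivalent to $\pa_z, \pa_z, \pa_z^2$, so one-dimensional Sobolev embedding gives the desired control. For $z$ near $0$, use the identity $\bd(\bp u) = \bl u$ to obtain, assuming $u$ is smooth, $z^{2/3} \bp u(z) = \int_0^z \bl u(t)\, dt$, so that by Cauchy--Schwarz $|\bp u(z)| \le z^{-1/6}\|\bl u\|_{L^2(0,z)}$. Then $|\pa_z u(z)| = z^{-2/3}|\bp u(z)| \le z^{-5/6}\|\bl u\|_{L^2}$, which is integrable on $(0,Z/2)$. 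Combining with an interior bound for $|u(Z/2)|$ yields the pointwise bound~\eqref{E:LINFTYHARDY2}. A standard density argument extends the inequality to all $u$ with the relevant quantities finite.

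For the consequence \eqref{E:XKHARDY}, I would apply \eqref{E:LINFTYHARDY2} to $u = Q_1\theta$ and $u = Q_2\phi$ respectively, which reduces the task to bounding $\|\bl Q_1\theta\|_{L^2}$, $\|\bp Q_1\theta\|_{L^2}$, $\|Q_1\theta\|_{L^2}$ (and the corresponding quantities for $\phi$) by $\|(\theta,\phi)\|_{\mathcal{H}^{k+1}_Z}$. Using Lemma~\ref{L:XJDECOMP} to expand the composite operators $\bp Q_1$ and $\bl Q_1 = \bd\bp Q_1$ as linear combinations $\sum c_\ell z^{-(k+j-\ell)/3}\bD^\ell\theta$ for $\ell \le k+2$, the weighted Hardy estimates of Lemma~\ref{L:UPW} convert these singular-weight expressions into $L^2$ norms of $\bd\bD^{\ell}\theta$ (i.e., $\bD^{\ell+1}\theta$) up to lower-order terms, which are precisely what the top-order and $\beta$-weighted zero-order parts of the norm $\|\cdot\|_{\mathcal H^{k+1}_Z}$ control.

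The main obstacle is the careful weight bookkeeping near $z=0$: the Lemma~\ref{L:XJDECOMP} decomposition produces negative powers $z^{-(j-\ell)/3}$ whenever $Q_1$ contains a factor of $z^{-1/3}$ instead of $\bd$, and similarly when we hit $\pa_z$ against these factors via $\bp, \bd$. The Hardy inequalities \eqref{E:HARDYREFINED}--\eqref{E:HARDYREFINED2} are exactly calibrated to absorb these weights into the natural derivative norms, but one must verify that the weight $\beta$ satisfies the admissibility constraint $\beta \neq 1$ in each appeal. Since we work on the bounded interval $(0,Z)$, the $Z$-dependent constants that arise from the transition between interior and near-origin regimes are harmless.
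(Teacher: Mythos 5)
Your first argument --- the change of variables $y = z^{1/3}$, under which $\bp,\bd,\bl$ become (up to a constant) the radial Eulerian gradient, divergence, and Laplacian, and $dz = 3y^2\,dy$ becomes the $3$D radial volume form --- is precisely the reduction the paper makes, citing a radial $H^2\hookrightarrow L^\infty$ estimate from~\cite{GHJ2021a}. So for~\eqref{E:LINFTYHARDY2} you and the paper agree. Your alternative direct-in-$z$ argument is a reasonable supplement, but it has a small gap: the identity $z^{2/3}\bp u(z) = \int_0^z \bl u\,dt$ requires that $\lim_{\varepsilon\to 0}\varepsilon^{2/3}\bp u(\varepsilon)=0$, which is \emph{not} automatic for $u\in C^2(0,Z)$ (take $u(z)=z^{-1/3}$, for which $z^{2/3}\bp u\equiv -\tfrac13$ and $\bl u\equiv 0$). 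What saves the argument is that under the standing assumption $\bp u,\bl u\in L^2(0,Z)$ the limit $c:=\lim_{\varepsilon\to 0}\varepsilon^{2/3}\bp u(\varepsilon)$ exists, and if $c\neq 0$ then $\bp u(z)\sim c\,z^{-2/3}\notin L^2$ near the origin --- a contradiction; you should spell this out rather than invoke "smoothness" of $u$. For the consequence~\eqref{E:XKHARDY}, the paper argues by induction on $k$, while you apply~\eqref{E:LINFTYHARDY2} directly to $Q_1\theta$, $Q_2\phi$ and unwind via Lemma~\ref{L:XJDECOMP} plus the weighted Hardy inequalities. These are two packagings of the same mechanism (each Hardy application trades one $z^{-1/3}$ weight for one $\bd$-derivative, with the admissibility $\beta<1$ trivially satisfied since $\beta\le 0$ at each step, and the residual mismatch $\bd\bD^\ell$ versus $\bD^{\ell+1}$ for odd $\ell$ producing only lower-order terms); the paper's induction is just a clean way to organize the recursion you sketch. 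Your approach is correct in spirit; the one thing to be careful about in the non-inductive formulation is that $\bp Q_1$ and $\bl Q_1$ are not literally in $\mathcal X_{k+1}$ or $\mathcal X_{k+2}$ but decompose as linear combinations of such operators via $\bp = \bd - \tfrac23 z^{-1/3}$, which you implicitly use.
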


\begin{proof}
This follows directly from~\cite[Lemma C.2]{GHJ2021a}. More precisely, we observe that, away from the origin $z=0$, inequality~\eqref{E:LINFTYHARDY2} follows from the standard Sobolev embedding, and so it suffices to prove the estimates for functions supported on the interval $(0,\frac12)$. In this case, transferring back to Eulerian variables, we recall that the operators $\bp$ and $\bd$ become the Eulerian gradient and divergence, while the volume form $\dif z=r^2\dif r$. Thus~\eqref{E:LINFTYHARDY2} follows from~\cite[(C.425)]{GHJ2021a}.

The final estimate,~\eqref{E:XKHARDY}, now follows by induction on the order $k$ and combining~\eqref{E:LINFTYHARDY2} with~\eqref{E:HARDYREFINED}.
\end{proof}

\begin{lemma}[Norm equivalence]\label{L:EQUIVALENCE}
Let $m\in\mathbb N$ be given. Then for any $\begin{pmatrix} \theta \\ \phi \end{pmatrix}\in\H^m_Z$ we have
\begin{align}
\left\|\begin{pmatrix} \theta \\ \phi \end{pmatrix}\right\|_{\H^m_Z} 
\cong   \sum_{k=1}^{m+1} \sum_{Q\in \X_k}\left\|Q \theta \right\|_{L^2(0,Z)}+\sum_{k=0}^{m} \sum_{Q\in \X_k}\left\|Q\phi \right\|_{L^2(0,Z)},
\end{align}
where we recall the operator classes $\X_j$~\eqref{E:XTWOJ}--\eqref{E:XTWOJPLUSONE} and the constants of equivalence for the two norms depend on $Z$.
\end{lemma}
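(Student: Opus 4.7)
The plan is to establish the two-sided equivalence separately; the argument reduces to algebraic manipulations via Lemma~\ref{L:XJDECOMP} together with iterated Hardy inequalities on the bounded interval $[0,Z]$, since the weights $\bG$ and $g_m$ are bounded above and below by positive constants on $[0,Z]$ (by Lemma~\ref{L:GBAR} and the definition~\eqref{DEF:gm}) and therefore play no quantitative role. The easy direction $\|\cdot\|_{\H^m_Z}\lesssim\text{RHS}$ follows immediately from the observation that, up to sign, $\bD^{m+1}=(\bp\bd)^{(m+1)/2}$ or $\bd(\bp\bd)^{m/2}$ is a specific element of $\X_{m+1}$, and likewise $\bD^m\in\X_m$, $\bd\in\X_1$, $\textup{Id}\in\X_0$, so every contribution to the $\H^m_Z$ norm appears as one of the summands on the right-hand side.

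For the reverse inequality, fix $Q\in\X_k$ with $1\leq k\leq m+1$. Lemma~\ref{L:XJDECOMP} yields the decomposition $Q=\sum_{\ell=0}^{k}c_\ell^Q z^{-(k-\ell)/3}\bD^\ell$, reducing the problem to bounding each $\|z^{-(k-\ell)/3}\bD^\ell\theta\|_{L^2(0,Z)}$ by $\|\bD^{m+1}\theta\|_{L^2(0,Z)}+\|\bd\theta\|_{L^2(0,Z)}$. The plan is to iterate Lemma~\ref{L:UPW} with $\beta$ chosen so that each application trades one factor $z^{-1/3}$ for a single derivative $\bd$ or $\bp$, raising the order of the central differential operator by one; after $k-\ell$ such applications the singular weight is exhausted and $\bD^\ell$ has been replaced by $\bD^k$. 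A standard Sobolev interpolation on the bounded interval $[0,Z]$ then dominates $\|\bD^k\theta\|_{L^2(0,Z)}$ by $\|\bD^{m+1}\theta\|_{L^2(0,Z)}+\|\bd\theta\|_{L^2(0,Z)}$ for all $1\leq k\leq m+1$. The $\phi$-side is handled analogously, now with $0\leq k\leq m$ and top order $\bD^m\phi$, using $\|\phi\|_{L^2(0,Z)}$ in place of $\|\bd\theta\|_{L^2(0,Z)}$.

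The main technical point is the boundary behaviour at $z=0$ in the iterated Hardy step. By density of $\DZodd$ in $\H^m_Z$ we may assume $\theta,\phi\in\DZodd$, and by the parity-reversal property of $\bp,\bd$ recorded after~\eqref{DEF:D^k} the function $\bD^\ell\theta$ vanishes to order $z^{1/3}$ for $\ell$ even and is merely bounded at the origin for $\ell$ odd; in either case the boundary contribution $\tilde Z^{\beta+1/3}|\bD^\ell\theta(\tilde Z)|^2$ in~\eqref{E:HARDYREFINED} vanishes as $\tilde Z\to 0^+$ for the exponents $\beta$ appearing in the iteration, so the Hardy chain closes cleanly. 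The boundary term at $z=Z$ that appears on the right of~\eqref{E:HARDYREFINED} is absorbed by a standard trace inequality on $[0,Z]$, which is controlled by lower-order $L^2$ norms already present in $\|\cdot\|_{\H^m_Z}$. Together these steps yield the equivalence with constants depending on $m$ and $Z$, as claimed.
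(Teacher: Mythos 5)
The reverse inequality as you have set it up does not go through: after invoking Lemma~\ref{L:XJDECOMP} to write $Q=\sum_{\ell=0}^{k}c_\ell^Q z^{-(k-\ell)/3}\bD^\ell$, you propose to bound each $\|z^{-(k-\ell)/3}\bD^\ell\theta\|_{L^2(0,Z)}$ separately, but these individual summands are in general not even finite. For $\theta\in\DZodd$ one has $\theta(z)\sim c_1 z^{1/3}$ and $\bd\theta(0)=c_1\neq 0$, so, for instance, $\|z^{-1}\theta\|_{L^2(0,Z)}$ and $\|z^{-2/3}\bd\theta\|_{L^2(0,Z)}$ both diverge like $\int_0 z^{-4/3}\,\mathrm{d}z$. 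This already happens at $k=3$: the operator $Q = z^{-1/3}\bp\, z^{-1/3}\in\X_3$ decomposes (one checks directly, using $z^{-2/3}\bd = \partial_z+\tfrac23 z^{-1}$) as $Q = -z^{-1}\,\mathrm{Id} + z^{-2/3}\bd$. The full quantity $Q\theta=z^{-1/3}\bp(z^{-1/3}\theta)$ is bounded near $z=0$ by the very parity computation you recall, yet both pieces $-z^{-1}\theta$ and $z^{-2/3}\bd\theta$ blow up like $c_1 z^{-2/3}$, with the singular parts cancelling only in the sum. Correspondingly, your assertion that the boundary contribution $\tilde Z^{\beta+1/3}|\bD^\ell\theta(\tilde Z)|^2$ vanishes as $\tilde Z\to0$ is false for the exponents that actually occur: for $u=\theta$, $\ell=0$, $k=3$ one has $\beta=-\tfrac43$ and $\tilde Z^{\beta+1/3}|\theta(\tilde Z)|^2 \sim c_1^2\tilde Z^{-1/3}\to\infty$. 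Splitting via Lemma~\ref{L:XJDECOMP} before estimating destroys precisely the cancellation that makes $Q\theta$ finite.

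The repair is to never separate the terms that cancel. Work instead with the compositional structure of the $\X_k$ algebra: for $Q\in\X_k$ peel off the outermost (or innermost) factor, writing $Q=\mathcal{R}\circ Q'$ or $Q=Q'\circ\mathcal{R}$ with $Q'$ of order $k-1$ and $\mathcal{R}\in\{\bd,\bp,z^{-1/3}\}$, and run an induction on the order. Then every intermediate quantity to which you apply Hardy is itself of the form $P\theta$ for some admissible $P$, hence parity-respecting and either bounded or $O(z^{1/3})$ near the origin, so the boundary terms at $z=0$ do vanish and the Hardy inequalities~\eqref{E:HARDYREFINED}--\eqref{E:HARDYREFINED2} are applied to finite quantities. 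This is presumably what the paper's ``induction on $m$ and the Hardy inequality'' is short for. Your easy direction, your Sobolev interpolation for the derivative count, and your treatment of the $z=Z$ boundary are fine once this repair is made.
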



\begin{proof}
That the $\HmZ$ norm is bounded by the quantity on the right is trivial from the definition~\eqref{E:INNERPROD} of the norm and the algebras $\mathcal{X}_k$. The converse direction follows by induction on $m$ and application of the Hardy inequality~\eqref{E:HARDYREFINED}.
\end{proof}


\subsection{Hardy at infinity and embeddings}


\begin{lemma} Let $\beta\in\R$, $Z> 0$, and $v\in C^1(Z,\infty)$ be such that $\lim_{z\to\infty}z^{\beta+\frac13}v^2(z)=0$. Then the following identity holds:
\begin{align}
\int_Z^\infty (\bp v)^2 z^{\beta} \diff z + \frac23(\frac13-\beta) \int_Z^\infty v^2 z^{\beta-\frac23}\diff z = \frac23 Z^{\beta+\frac13} v^2(Z) + \int_Z^\infty (\bd v)^2 z^{\beta} \diff z.  \label{E:HARDYinf3}
\end{align}
In particular, given $\al>1$, there exists $C>0$, depending on $\al$, such that 
\be\label{theta_bound}
\int_Z^\infty z^{-\alpha} (\pa_z\theta)^2\diff z + \int_Z^\infty z^{-\alpha-2}\theta^2  \leq C \int_0^\infty (1+z)^{-\alpha} \frac{(\bd \theta)^2}{(\bp\bzeta)^2} \diff z.
\ee
\end{lemma}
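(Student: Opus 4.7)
The identity is a one-line integration by parts, so I would dispatch it first. Using the definitions $\bp v=z^{2/3}v'$ and $\bd v = \bp v + \tfrac{2}{3}z^{-1/3}v$, one gets
\[
(\bd v)^2 - (\bp v)^2 \;=\; \tfrac{4}{3}z^{-1/3}v\,\bp v + \tfrac{4}{9}z^{-2/3}v^2 \;=\; \tfrac{2}{3}z^{1/3}(v^2)' + \tfrac{4}{9}z^{-2/3}v^2.
\]
Multiplying by $z^\beta$, integrating over $(Z,\infty)$, and integrating by parts in the $(v^2)'$ term (the boundary term at $\infty$ vanishes by the hypothesis $z^{\beta+1/3}v^2\to0$, the one at $z=Z$ produces $-\tfrac{2}{3}Z^{\beta+1/3}v^2(Z)$) gives, after collecting the $v^2 z^{\beta-2/3}$ contributions with combined coefficient $\tfrac{4}{9}-\tfrac{2}{3}(\beta+\tfrac13)=\tfrac{2}{3}(\tfrac13-\beta)$, exactly the identity \eqref{E:HARDYinf3}. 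This step is entirely routine and the only place parity/sign issues can arise is the collection of the two coefficients into $\tfrac{2}{3}(\tfrac13-\beta)$, which is trivial to verify.

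For the second bound \eqref{theta_bound}, I would specialise \eqref{E:HARDYinf3} with $v=\theta$ and $\beta=-\alpha-\tfrac{4}{3}$. Then $z^\beta(\bp\theta)^2 = z^{-\alpha}(\pa_z\theta)^2$, $z^{\beta-2/3}=z^{-\alpha-2}$, and the coefficient $\tfrac23(\tfrac13-\beta)=\tfrac23(\alpha+\tfrac53)>0$, so both terms on the left of \eqref{E:HARDYinf3} are positive and sit on the good side for the desired estimate. This produces
\[
\int_Z^\infty z^{-\alpha}(\pa_z\theta)^2\,dz + c_\alpha\int_Z^\infty z^{-\alpha-2}\theta^2\,dz \;\leq\; \tfrac{2}{3}Z^{-\alpha-1}\theta^2(Z) + \int_Z^\infty z^{-\alpha-\tfrac43}(\bd\theta)^2\,dz.
\]
The far-field integrand is controlled by $C(1+z)^{-\alpha}\bG(z)(\bd\theta)^2$, since $\bG(z)\sim z^{-4/3}$ at infinity by Lemma~\ref{L:GBAR}, so this term embeds into the right-hand side of \eqref{theta_bound}.

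The one nontrivial step is absorbing the boundary term $Z^{-\alpha-1}\theta^2(Z)$. This is where I would be most careful, since a naive Cauchy--Schwarz after writing $\theta^2(Z)=-2\int_Z^\infty \theta\theta'\,dz$ yields coefficients that just fail to beat $\tfrac23(\tfrac13-\beta)$. The fix is to trade the boundary at $Z$ against the interior $(0,Z)$: apply \eqref{E:HARDYREFINED} with $\tilde Z=0$ and $\beta=0$ to obtain $Z^{1/3}\theta^2(Z)\leq C\int_0^Z (\bd\theta)^2\,dz$, whence
\[
Z^{-\alpha-1}\theta^2(Z)\;\leq\; CZ^{-\alpha-\tfrac{4}{3}}\int_0^Z (\bd\theta)^2\,dz\;\leq\; C\int_0^Z (1+z)^{-\alpha}\bG(z)(\bd\theta)^2\,dz,
\]
using that $\bG\gtrsim 1$ and $(1+z)^{-\alpha}\gtrsim 1$ on $[0,Z]$ (the implicit constant depending on $Z$ can be absorbed into the constant $C(\alpha)$ for any fixed $Z$, which is the regime in which \eqref{theta_bound} is applied in the main text). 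Combining the two bounds closes the estimate. The main subtlety to watch for is that the coefficient algebra $\tfrac23(\alpha+\tfrac53)$ leaves genuine room after absorbing any trace-type term, which is why I prefer the split-domain approach rather than absorption back into $\int_Z^\infty$.
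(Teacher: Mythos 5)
Your proposal is correct and follows essentially the same route as the paper: expand the difference $(\bd v)^2-(\bp v)^2$, integrate by parts the cross-term to obtain \eqref{E:HARDYinf3}, then specialise $\beta=-\alpha-\tfrac43$ and dispose of the boundary trace at $Z$ via \eqref{E:HARDYREFINED} on $[0,Z]$, exactly as the paper's one-line proof indicates. Your remark that the absorbed constant inherits a dependence on $Z$ is a fair observation; in the paper's applications $Z$ is always bounded below (e.g.\ $Z\ge z_*>2$ or $Z=Z_0\ge1$), so no uniformity is lost.
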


\begin{proof}
To prove~\eqref{E:HARDYinf3}, we simply expand $(\bp v)^2 = (\bd v-\frac23 z^{-\frac13} v)^2$ inside the integral and integrate by parts in the resulting cross-term.
To show~\eqref{theta_bound}, we combine~\eqref{E:HARDYinf3} for $\beta=-\frac43-\al$ with \eqref{E:HARDYREFINED}.
\end{proof}

\begin{lemma}\label{L:LARGEZ} Let $c\in(0,\frac23)$, $\alpha>1$, $Z\geq 1$ be given such that $2c-\alpha + \frac43>1$ be given. Then there exists $C>0$, depending on $c,\al$, such that for all $\phi\in C^1(0,\infty)$ such that $\lim_{z\to\infty}z^{2c-\al+\frac13}|\phi(z)|^2=0$, 
\begin{equation}\label{E:phi_z}
\int_Z^\infty (\pa_z\phi)^2 z^{2c-\alpha+ \frac23} \diff z \leq C\bigg( \int_0^Z | \bd \phi |^2 (1+z)^{2c-\alpha-\frac23} \diff z + 
\int_Z^\infty |\bp \bd \phi|^2 (1+z)^{2c-\alpha} \diff z \bigg).
\end{equation}
Moreover, for any $\th\in C^1(0,\infty)$,
\begin{equation}\label{E:theta}
\int_Z^\infty\theta^2 z^{2cj -\alpha-2} \diff z \leq C
\int_0^\infty \frac{|\bd \theta|^2}{(\bp \bzeta)^2} (1+z)^{2cj -\alpha} \diff z, \ \text{ if } \  2cj -\alpha <\frac73 
\end{equation} 
and, if also $\lim_{z\to\infty}z^{2cj-\al-1}|\th(z)|^2=0$,
\begin{equation}\label{E:theta2}
\int_Z^\infty\theta^2 z^{2cj -\alpha-2} \diff z \leq C 
\int_Z^\infty \frac{|\bd \theta|^2}{(\bp \bzeta)^2} (1+z)^{2cj -\alpha} \diff z, \ \text{ if } \  2cj -\alpha >\frac73 .
\end{equation}
\end{lemma}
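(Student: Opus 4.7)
All three inequalities would follow from a unified strategy: apply the integration-by-parts identity~\eqref{E:HARDYinf3} with a cleverly chosen weight exponent, exploit the pointwise commutation $\bp v - \bd v = -\tfrac{2}{3}z^{-1/3}v$ to rearrange terms, and close via Cauchy-Schwarz and Young's inequality. By a standard density argument I would assume throughout that the test functions decay rapidly enough at infinity that all boundary terms at $z = \infty$ arising from~\eqref{E:HARDYinf3} vanish.

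To prove \eqref{E:theta} and \eqref{E:theta2} simultaneously, write $\gamma = 2cj - \alpha$ and apply~\eqref{E:HARDYinf3} with $v = \theta$ and $\beta = \gamma - 4/3$ (so that the $v^2 z^{\beta-2/3}$ term matches the LHS weight $z^{\gamma-2}$). Substituting the expansion $(\bp\theta)^2 = (\bd\theta)^2 - \tfrac{4}{3}z^{-1/3}\theta\,\bd\theta + \tfrac{4}{9}z^{-2/3}\theta^2$ into the identity and collecting all $\theta^2 z^{\gamma-2}$ contributions on one side produces the key identity
\begin{align*}
\tfrac{2}{3}\bigl(\tfrac{7}{3} - \gamma\bigr)\int_Z^\infty \theta^2 z^{\gamma-2}\diff z = \tfrac{4}{3}\int_Z^\infty \theta\, \bd\theta\, z^{\gamma-5/3}\diff z + \tfrac{2}{3}Z^{\gamma-1}\theta^2(Z).
\end{align*}
For $\gamma < 7/3$, the coefficient on the LHS is positive; Cauchy-Schwarz followed by Young on the first RHS term absorbs $\tfrac12 \int \theta^2 z^{\gamma - 2}$ into the LHS, and the boundary term $Z^{\gamma - 1}\theta^2(Z)$ is controlled by~\eqref{E:HARDYREFINED} with $u = \theta$, $\tilde Z = 0$, and $\beta = \gamma - 4/3 < 1$. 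A comparison of the weight $z^{\gamma - 4/3}$ with $\bG(z)(1+z)^\gamma$, which are asymptotically equivalent on $(Z,\infty)$ since $\bG \sim z^{-4/3}$ and dominated up to constants on bounded intervals since $\bG(0) = 9$, then yields~\eqref{E:theta}. For $\gamma > 7/3$, the LHS coefficient is negative; transposing gives $\tfrac{2}{3}(\gamma - \tfrac{7}{3})\int\theta^2 z^{\gamma - 2} = -\tfrac{4}{3}\int\theta\bd\theta z^{\gamma - 5/3} - \tfrac{2}{3}Z^{\gamma - 1}\theta^2(Z)$, where the boundary term now carries a favorable sign and can be discarded; Cauchy-Schwarz and Young close \eqref{E:theta2} without needing any contribution from $(0, Z)$, matching the stated RHS.

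For \eqref{E:phi_z}, set $\psi = \pa_z\phi = z^{-2/3}\bp\phi$ and integrate by parts in $\int_Z^\infty \psi^2 z^{2c - \alpha + 2/3}\diff z$ via $(z^{2c-\alpha+5/3})' = (2c-\alpha+5/3)\,z^{2c-\alpha+2/3}$, noting the denominator is nonzero by the standing hypothesis $2c - \alpha + 4/3 > 1$. Cauchy-Schwarz and Young with absorption give $\int_Z^\infty (\pa_z\phi)^2 z^{2c-\alpha+2/3}\diff z \leq C\bigl(\int_Z^\infty (\pa_{zz}\phi)^2 z^{2c-\alpha+8/3}\diff z + Z^{2c-\alpha+5/3}(\pa_z\phi(Z))^2\bigr)$. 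The top-order integral is converted using the algebraic identity $\bp\bd\phi = z^{4/3}\pa_{zz}\phi + \tfrac{4}{3}z^{1/3}\pa_z\phi - \tfrac{2}{9}z^{-2/3}\phi$, which yields $(\pa_{zz}\phi)^2 z^{2c-\alpha+8/3} \leq C z^{2c-\alpha}(\bp\bd\phi)^2 + \text{(lower-order in }\pa_z\phi,\phi)$, with the lower-order contributions reabsorbed through iterated Hardy at infinity. The boundary term $Z^{2c-\alpha+5/3}(\pa_z\phi(Z))^2 = Z^{2c-\alpha+1/3}(\bp\phi(Z))^2$ is then bounded by $\int_0^Z (\bd\phi)^2 z^{2c - \alpha - 2/3}\diff z$ via \eqref{E:HARDYREFINED} with $u = \bd\phi$, $\beta = 2c - \alpha - 2/3$, and $\tilde Z = 0$, giving the first term on the RHS of~\eqref{E:phi_z}. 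The main technical obstacle is the intermediate range $\gamma \in (5/3, 7/3)$ in~\eqref{E:theta}, where a direct application of~\eqref{E:HARDYinf3} yields a Hardy coefficient with the wrong sign; the commutator trick $\bp\theta - \bd\theta = -\tfrac{2}{3}z^{-1/3}\theta$ produces a corrective $\int\theta^2 z^{\gamma - 2}$ term whose net coefficient $\tfrac{2}{3}(\tfrac{7}{3} - \gamma)$ is positive precisely under the stated hypothesis, cleanly distinguishing the two regimes at $\gamma = 7/3$.
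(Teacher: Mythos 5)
Your general strategy (integration by parts plus the commutation $\bp v = \bd v - \tfrac23 z^{-1/3}v$) is the right idea and matches the spirit of Lemmas~\ref{L:UPW} and~\eqref{E:HARDYinf3} used in the paper. The ``key identity'' you derive for~\eqref{E:theta}--\eqref{E:theta2}, with its net coefficient $\tfrac23(\tfrac73-\gamma)$, is correct and your handling of~\eqref{E:theta2} (where the boundary term is favourable) is essentially the paper's. However there are two genuine gaps.

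\textbf{Gap in~\eqref{E:theta}.} You bound $Z^{\gamma-1}\theta^2(Z)$ via~\eqref{E:HARDYREFINED} anchored at $\tilde Z=0$, obtaining $Z^{\gamma-1}\theta^2(Z)\lesssim \int_0^Z z^{\gamma-4/3}|\bd\theta|^2\,\dif z$, and then claim this is comparable to $\int_0^Z \bG(z)(1+z)^\gamma|\bd\theta|^2\,\dif z$ because ``$\bG(0)=9$''. But near $z=0$, $\bG(z)(1+z)^\gamma$ is bounded while $z^{\gamma-4/3}\to\infty$ whenever $\gamma<\tfrac43$, which certainly happens here: with $c<\tfrac23$, $\alpha>1$ one has $\gamma=2cj-\alpha<\tfrac43$ already for $j=1$, and $\gamma<0$ for $j=0$. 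So the inequality $z^{\gamma-4/3}\lesssim\bG(z)(1+z)^\gamma$ fails near the origin and the comparison does not ``yield~\eqref{E:theta}''. Moreover, when $\gamma<\tfrac13$ the boundary contribution $\tilde Z^{\gamma-1}\theta^2(\tilde Z)$ at $\tilde Z\to 0$ in~\eqref{E:HARDYREFINED} need not vanish, so that application is itself dubious. The paper avoids both issues by anchoring at $\tilde Z=1$: it bounds $Z^{\gamma-1}\theta^2(Z)\lesssim\theta^2(1)+\int_1^Z z^{\gamma-4/3}|\bd\theta|^2$, uses $z^{\gamma-4/3}\simeq\bG(z)(1+z)^\gamma$ only on $[1,\infty)$ where this is true, and controls $\theta^2(1)$ separately.

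\textbf{Gap in~\eqref{E:phi_z}.} After the integration-by-parts step you have (with $a:=2c-\alpha+\tfrac53$, and dropping the favourable boundary term)
\begin{equation*}
\int_Z^\infty(\pa_z\phi)^2 z^{2c-\alpha+\tfrac23}\,\dif z \le \frac{4}{a^2}\int_Z^\infty(\pa_{zz}\phi)^2 z^{2c-\alpha+\tfrac83}\,\dif z.
\end{equation*}
Now substituting $z^{4/3}\pa_{zz}\phi = \bp\bd\phi - \tfrac43 z^{1/3}\pa_z\phi+\tfrac29 z^{-2/3}\phi$ and using $(u+v+w)^2\le 3(u^2+v^2+w^2)$, the $\pa_z\phi$ term on the right is $\tfrac{4}{a^2}\cdot\tfrac{16}{3}\int(\pa_z\phi)^2 z^{2c-\alpha+2/3}=\tfrac{64}{3a^2}\,(\text{LHS})$, which can be absorbed only if $a>\sqrt{64/3}\approx 4.6$. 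But under the lemma's hypotheses $a=2c-\alpha+\tfrac53<\tfrac43-1+\tfrac53=2$, so the absorption fails for \emph{every} admissible $(c,\alpha)$; ``iterated Hardy at infinity'' does not fix this because each iteration reproduces the same obstruction. The paper's route is structurally different: it first applies~\eqref{E:HARDYREFINED2b} directly to $\bd\phi$ to get $\int_Z^\infty|\bd\phi|^2(1+z)^{2c-\alpha-2/3}\lesssim\int_Z^\infty|\bp\bd\phi|^2(1+z)^{2c-\alpha}$ (no lower-order $\pa_z\phi$ terms appear), and then separately relates $\int_Z^\infty(\bp\phi)^2$ to $\int_Z^\infty(\bd\phi)^2$ via the commutation identity, where the $\phi^2$ commutator contributions carry a \emph{favourable} sign because $2c-\alpha-\tfrac23<0$. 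You should work at the level of $\bd\phi$ and $\bp\bd\phi$ rather than raw $\pa_z$ and $\pa_{zz}$.
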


\begin{proof} Since $2c-\alpha + \frac43>1$, 
by \eqref{E:HARDYREFINED2b}, we first obtain (noting that the boundary term at infinity vanishes by assumption on $\phi$)
\[
\int_Z^\infty | \bd \phi|^2 (1+z)^{2c-\alpha-\frac23} \diff z  \leq C \int_Z^\infty |\bp \bd \phi|^2 (1+z)^{2c-\alpha} \diff z.
\]
On the other hand, an identity analogous to~\eqref{E:HARDYinf3} shows 
\[
\begin{split}
\int_Z^\infty  | \bd \phi|^2 (1+z)^{2c-\alpha-\frac23} \diff z = \int_Z^\infty  | \bp \phi|^2 (1+z)^{2c-\alpha-\frac23} \diff z  - \frac23 Z^\frac13 (1+Z)^{2c-\alpha-\frac23} \phi^2 (Z) \\
+ \int_Z^\infty (\tfrac29 z^{-\frac23} -\tfrac23(2c-\alpha-\tfrac23)\frac{z^\frac13}{1+z}) (1+z)^{2c-\alpha-\frac23} \phi^2 \diff z.
\end{split}
\]
Hence, as $\al>1$ and $c<\frac23$, so that $2c-\al-\frac23<0$,
\beqas
{}&\int_Z^\infty | \pa_z \phi|^2 z^{2c-\alpha+\frac23} \diff z \leq C \bigg(Z^{\frac13} (1+Z)^{2c-\alpha-\frac23} |\phi(Z)|^2 + \int_Z^\infty | \bd \phi|^2 z^{2c-\alpha-\frac23} \diff z\bigg)\\
& \leq  C\bigg(|\phi(1)|^2+ \int_1^Z | \bd \phi|^2 z^{2c-\alpha-\frac23} \diff z+ \int_Z^\infty | \bd \phi|^2 z^{2c-\alpha-\frac23} \diff z\bigg),
\eeqas
where we have applied~\eqref{E:HARDYREFINED}.
By further using \eqref{E:HARDYREFINED}, we obtain~\eqref{E:phi_z}. 

To prove \eqref{E:theta}, we apply~\eqref{E:HARDYREFINED} twice to obtain
\beqa
{}&\int_Z^\infty\theta_j^2 z^{2cj -\alpha-2} \diff z \leq C\bigg( Z^{2cj -\alpha-\frac43+\frac13} \theta_j^{2}(Z) +  \int_Z^\infty (\bd \theta_j)^2z^{2cj -\alpha-\frac43} \diff z\bigg)\\
&\leq C \bigg(\theta_j^{2}(1)+ \int_1^\infty  \frac{(\bd \theta_j)^2}{(\bp\bzeta)^2}z^{2cj -\alpha} \diff z\bigg)\leq C \int_0^\infty  \frac{(\bd \theta_j)^2}{(\bp\bzeta)^2}(1+z)^{2cj -\alpha} \diff z,
\eeqa
where we have used that $z^{-\frac43}\simeq (\bp\bzeta)^2$ on $(1,\infty)$ and $(\bp\zeta)^{-2}$ is bounded above and below on $(0,1)$ by Lemma~\ref{L:GBAR}. Finally, \eqref{E:theta2} follows from~\eqref{E:HARDYREFINEDb}. 
\end{proof}


\section{Semi-group theory}


For the convenience of the reader, we state here a version of the spectral theorem on compact perturbations of semigroups from~\cite[Appendix B]{Glogic22}. We first recall the definition of the Riesz projection associated to an isolated point $\lambda_0\in\sigma(A)$ for some closed, linear operator $A$ on a Banach space $X$, as in~\eqref{E:RIESZDEF}:
\[\bfP_{\lambda_0}:=\frac{1}{2\pi i}\int_{\gamma}(\lambda I-A)^{-1}\dif\lambda,\]
where $\gamma$ is a positively oriented circle centred at $\lambda_0$ contained in the resolvent set $\rho(A)$ and containing no other spectral point of $A$ in its interior.
\begin{theorem}[{\cite[Theorem B.1]{Glogic22}}]\label{THM:COMPACTPERT}
Let $X$ be a Banach space and suppose $A_0:D(A_0)\subset X\to X$ is a closed, densely defined linear operator generating a strongly continuous semigroup $\mathcal S_0=\big(S_0(\tau)\big)_{\tau\geq 0}\subset\mathcal B(X)$ satisfying
$$\|S_0(\tau)u\|\leq Me^{\varpi_0\tau}\|u\|$$
for all $u\in X$, $\tau\geq 0$ and some $M>0$ and $\varpi_0\in\R$. Let $B_0:X\to X$ be compact. Then the operator
$$A:=A_0+B_0:D(A):=D(A_0)\subset X\to X$$
generates a strongly continuous semigroup $\mathcal S=\big(S(\tau)\big)_{\tau\geq 0}\subset\mathcal B(X)$ such that, for all $\eps>0$, the following properties hold:
\begin{itemize}
\item[(i)] The spectrum of $A$ satisfies
$$\sigma_\eps:=\sigma(A)\cap\{\lambda\in\C\,|\,\Re\lambda\geq \varpi_0+\eps\}$$
is a finite set of discrete eigenvalues with finite algebraic multiplicity.
\item[(ii)] The spectral mapping
$$\sigma(S(\tau))\setminus\mathbb D_{e^{\tau(\varpi_0+\eps)}}=\{e^{\tau\lambda}\,|\,\lambda\in \sigma_\eps\},$$
for all $\tau>0$, where $\mathbb D_r$ is the open disc of radius $r$ centred at $0$.
\item[(iii)]  Let $\bfP=\sum_{\lambda\in\sigma_\eps}\bfP_\lambda$ be the Riesz projection onto the set $\sigma_\eps$ and write $r_\eps:=\sup\{\Re\lambda\,|\,\lambda\in\sigma(A)\setminus\sigma_\eps\}$. Then, for all $\sg> \max\{\varpi_0+\eps,r_\eps\}$, there exists $C\geq 1$ such that
$$\|S(\tau)(1-\bfP)u\|\leq Ce^{\sg\tau}\|(1-\bfP)u\|$$
for all $u\in X$.
\end{itemize}
\end{theorem}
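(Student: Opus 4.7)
The plan is to derive the three assertions from the classical perturbation theory of strongly continuous semigroups, leaning on two structural facts: that $B_0$, being bounded, cannot destroy generation, and that compactness stabilises the essential spectrum.

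First I would establish existence of the semigroup $\mathcal S$ by invoking the bounded perturbation theorem: since $B_0\in\mathcal B(X)$, the operator $A=A_0+B_0$ with domain $D(A_0)$ is still closed and densely defined, and generates a strongly continuous semigroup $S(\tau)$ that can be built from $S_0$ via the Dyson--Phillips series $S(\tau)=\sum_{n=0}^\infty S_n(\tau)$ with $S_{n+1}(\tau)u=\int_0^\tau S_0(\tau-s)B_0 S_n(s)u\,ds$. Uniform bounds on $\|S(\tau)\|$ of the form $Me^{\varpi \tau}$ with some $\varpi\ge \varpi_0$ come out of Gronwall.

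Next, for (i), I would appeal to the invariance of the essential spectrum under compact perturbations (Weyl's theorem applied to the closed operator $A$): $\sigma_{\textup{ess}}(A)=\sigma_{\textup{ess}}(A_0)$. Since generation plus the bound on $\mathcal S_0$ places the spectrum of $A_0$, and hence $\sigma_{\textup{ess}}(A)$, inside $\{\Re\lambda\le\varpi_0\}$, every point of $\sigma(A)\cap\{\Re\lambda\ge\varpi_0+\epsilon\}$ must be an isolated eigenvalue of finite algebraic multiplicity. Finiteness of $\sigma_\epsilon$ follows because isolated points cannot accumulate at the boundary of the essential spectrum without producing an accumulation point in $\sigma_{\textup{ess}}(A)$, contradicting closedness of $\sigma(A)$ and the strict gap $\epsilon$. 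For (ii), the spectral mapping theorem fails in general, but it holds for the part of the spectrum strictly outside the essential spectral radius: the essential growth bound satisfies $\omega_{\textup{ess}}(\mathcal S)=\omega_{\textup{ess}}(\mathcal S_0)\le \varpi_0$ (again using compactness of $B_0$ to show $S(\tau)-S_0(\tau)$ is compact for $\tau>0$), so outside the disc of radius $e^{\tau(\varpi_0+\epsilon)}$ the spectrum of $S(\tau)$ consists precisely of $\{e^{\tau\lambda}:\lambda\in\sigma_\epsilon\}$, with multiplicities matching those of the Riesz projections.

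Finally, for (iii), I would use that the total Riesz projection $\bfP=\sum_{\lambda\in\sigma_\epsilon}\bfP_\lambda$ commutes with $A$ and with $S(\tau)$, giving a $\mathcal S$-invariant splitting $X=\textup{rg}\,\bfP\oplus\ker\bfP$. On $\ker\bfP$ the generator $A|_{\ker\bfP}$ has spectrum $\sigma(A)\setminus\sigma_\epsilon$ contained in $\{\Re\lambda\le r_\epsilon\}$, while on the finite-dimensional invariant subspace $\textup{rg}\,\bfP$ the semigroup is explicit. The spectral mapping (ii) applied to the restricted semigroup gives spectral radius $e^{\tau r_\epsilon'}$ for any $r_\epsilon'$ strictly between $r_\epsilon$ and the next spectral level, and combined with the uniform resolvent bounds that accompany the isolation of $\sigma_\epsilon$, a standard Gearhart--Prüss style argument (or, more concretely, writing $S(\tau)(I-\bfP)$ through the inverse Laplace transform of the resolvent along a line $\Re\lambda=\sigma$) yields the bound $\|S(\tau)(I-\bfP)u\|\le Ce^{\sigma\tau}\|(I-\bfP)u\|$ for any $\sigma>\max\{\varpi_0+\epsilon,r_\epsilon\}$.

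The main obstacle is the interface between (ii) and (iii): one has to convert the spectral bound on the restricted generator into an exponential growth bound on the restricted semigroup, which in general Banach spaces is not automatic. The workaround is that the gap between $\sigma(A)\setminus\sigma_\epsilon$ and the line $\Re\lambda=\sigma$ combined with compactness of $B_0$ (which gives compactness of $S(\tau)-S_0(\tau)$ and hence strong norm continuity properties of the perturbed semigroup on $\ker\bfP$) suffices to justify the contour shift. This is exactly the content of the Glogić argument in~\cite[Appendix B]{Glogic22}, to which we ultimately defer.
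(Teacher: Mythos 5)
The paper does not actually prove this theorem; it is reproduced verbatim and attributed to \cite[Theorem B.1]{Glogic22}, so there is no in-paper proof to compare against. Your sketch follows the standard textbook route (Engel--Nagel, Chapter~IV): bounded perturbation for generation, preservation of the essential growth bound under compact perturbation, spectral mapping outside the essential spectral radius, and the Riesz-projection splitting for the decay estimate. This is structurally correct and is indeed what \cite{Glogic22} does.

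Two points warrant caution. First, the assertion that $S(\tau)-S_0(\tau)$ is compact for all $\tau$ is true but not obvious: a Riemann-sum argument for the Dyson--Phillips term $\int_0^\tau S_0(\tau-s)B_0S_0(s)\,ds$ fails naively because $s\mapsto B_0 S_0(s)$ need not be norm continuous. What one does have is norm continuity of $s\mapsto S_0(s)B_0$ (because $B_0$ maps the unit ball into a precompact set on which $S_0$ is uniformly equicontinuous), and that is the hypothesis needed for the relevant result, Engel--Nagel Prop.~III.1.12, which then gives compactness of the difference and hence $\omega_{\mathrm{ess}}(\mathcal S)=\omega_{\mathrm{ess}}(\mathcal S_0)$. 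You should cite this rather than suggest it is a direct Riemann-sum computation. Second, and more substantively, your appeal to a ``Gearhart--Pr\"uss style argument'' for part (iii) is misplaced: Gearhart--Pr\"uss is a Hilbert-space result and is known to fail on general Banach spaces. It is also unnecessary here. Once (i) and (ii) are in hand, you have, on the $\mathcal S$-invariant subspace $\ker\mathbf P$, that $s(A|_{\ker\mathbf P})\le r_\eps$ and $\omega_{\mathrm{ess}}(\mathcal S|_{\ker\mathbf P})\le\varpi_0$, and the correct tool is the identity $\omega_0(\mathcal S|_{\ker\mathbf P})=\max\{\omega_{\mathrm{ess}}(\mathcal S|_{\ker\mathbf P}),\,s(A|_{\ker\mathbf P})\}$ (Engel--Nagel Cor.~IV.2.11), which delivers the exponential bound for any $\sigma>\max\{\varpi_0+\eps,r_\eps\}$ directly. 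Replacing the Gearhart--Pr\"uss step with this corollary closes the one genuine gap in your sketch.
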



\section{Eulerian and Lagrangian formulations and equivalence}\label{A:EULLAGEQUIV}


\subsection{Steady state properties}\label{SS:SSP}

We recall that the Larson-Penston solution is a self-similar solution of the Euler-Poisson system, that is, a steady solution $(\rhoLP,\vLP)=(\rhoLP,y\omLP)$ of system~\eqref{E:CONTSS}--\eqref{E:MOMSS}.
 As shown in~\cite{GHJ2021b}, the functions $\rhoLP$ and $\omLP$ satisfy the system
\begin{align}
\rhoLP'=&\,-\frac{2y\rhoLP\omLP(\rhoLP-\omLP)}{1-y^2\omLP^2},\label{E:RHOLP}\\
\omLP'=&\,\frac{1-3\omLP}{y}+\frac{2y\omLP^2(\rhoLP-\omLP)}{1-y^2\omLP^2}.\label{E:OMLP}
\end{align}
The Larson-Penston solution is an analytic solution of~\eqref{E:RHOLP}--\eqref{E:OMLP}, see Theorem~\ref{T:LP}. It is simple to show that $\rhoLP$ and $\omLP$ are both even in $y$ (due to the invariance of~\eqref{E:RHOLP}--\eqref{E:OMLP} under the change $y\mapsto-y$, see also Appendix~\ref{S:ORIGINTAYLOR}).

In order to employ the Lagrangian formulation, we collect here some crucial properties of the Lagrangian flow map. Recall from Lemma~\ref{L:LPLAGRANGIAN} that the initial particle labelling satisfies~\eqref{E:DATAIN}, with gauge function $g_{\textup{LP}}$ constant.
The following lemma is very simple to prove, but points to a crucial shift of perspective when working with Lagrangian coordinates.
\begin{lemma}[Flow map regularity at $r=0$]\label{L:ONETHIRD}
Assume that $\varrho_0:[0,\infty)\to\mathbb (0,\infty)$ and $g:[0,\infty)\to (0,\infty)$ are $C^1$-functions and let $\eta_0$ be an initial labelling satisfying~\eqref{E:DATAIN} and such that $\eta_0(0)=0$. Then $\eta_0$ satisfies the asymptotic relationship
\begin{align}
\eta_0(r)& \sim_{r\to0^+} r^{\frac13} \label{E:ONETHIRDREG}.
\end{align}
\end{lemma}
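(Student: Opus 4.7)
The plan is to observe that the identity~\eqref{E:DATAIN} is an exact derivative in disguise. Define the mass-type function
\[
F(y) := \int_0^y 3\varrho_0(s)\, s^2\, ds, \qquad y\ge 0,
\]
which is $C^2$ on $[0,\infty)$ with $F'(y)=3\varrho_0(y)y^2$, $F(0)=0$, and, by continuity and positivity of $\varrho_0$, satisfies $F(y)=\varrho_0(0)y^3+o(y^3)$ as $y\to 0^+$. The first step is to rewrite~\eqref{E:DATAIN} as
\[
\tfrac{d}{dr} F(\eta_0(r)) = F'(\eta_0(r))\,\pa_r\eta_0(r) = 3\varrho_0(\eta_0(r))\,\eta_0(r)^2\,\pa_r\eta_0(r) = 3g(r).
\]
Integrating from $0$ to $r$ and using $\eta_0(0)=0$ (hence $F(\eta_0(0))=0$), I would conclude
\[
F(\eta_0(r)) = 3\int_0^r g(\tau)\,d\tau.
\]

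Since $g\in C^1$ with $g(0)>0$, the right-hand side equals $3g(0)\,r + o(r)$ as $r\to 0^+$. On the left, from the asymptotic expansion of $F$ above, $F(\eta_0(r)) = \varrho_0(0)\,\eta_0(r)^3 + o(\eta_0(r)^3)$. Matching these two expressions yields
\[
\eta_0(r)^3 = \frac{3 g(0)}{\varrho_0(0)}\, r + o(r) + o(\eta_0(r)^3), \qquad r\to 0^+,
\]
from which~\eqref{E:ONETHIRDREG} follows, with explicit constant $\eta_0(r)\sim \bigl(3g(0)/\varrho_0(0)\bigr)^{1/3} r^{1/3}$.

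There is no genuine obstacle here: the only point requiring a moment of care is justifying that $\eta_0(r)\to 0$ as $r\to 0^+$ (so that the $o(\eta_0(r)^3)$ remainder may be absorbed), which is immediate from the continuity of $\eta_0$ at $0$ together with $\eta_0(0)=0$; positivity of $\varrho_0(0)$ and $g(0)$ ensures that we can divide to solve for $\eta_0(r)^3$.
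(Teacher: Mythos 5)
Your argument is correct and is precisely the "simple consequence" the paper alludes to: you integrate the defining ODE relation~\eqref{E:DATAIN} exactly via the antiderivative $F$, use $\eta_0(0)=0$, and read off the leading asymptotics from $g(0),\varrho_0(0)>0$. The paper offers no details beyond this observation, so there is nothing to contrast.
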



\begin{proof}
Claim~\eqref{E:ONETHIRDREG} is a simple consequence of~\eqref{E:DATAIN} and the strict positivity of $g(0),\varrho_0(0)$. 
\end{proof}

As a consequence, the Lagrangian flow map associated with positive smooth $\varrho_0$ and $g$, in particular with the LP-collapse 
is not smooth as a function of the Lagrangian label $r$ at $r=0$. In the self-similar coordinates, a similar feature appears, and we include
its description in the next lemma. In order to state and prove the expansions of $\bzeta$ at $z=0$ and $z=\infty$, we first recall from Lemma~\ref{L:LPLAGRANGIAN} the defining relation
\be\label{relation00}
z\pa_z\bzeta(z) = \bzeta(z) \omLP(\bzeta(z)).
\ee 
Differentiating this identity, we further find
\begin{align}
\frac{z\pa_{zz}\bzeta}{\pa_z\bzeta}=\bzeta\omLP'(\zeta(z))+\omLP(\bzeta(z))-1.\label{eq:zdzzZeta}
\end{align}

\begin{lemma}\label{L:ZETABAR} 
The self-similar flow map, $\bzeta$, expands analytically in powers of $z^{\frac13}$ near the sonic point. More precisely, there exist constants $(\zeta_k)_{k\in\N_0}$, $\zeta_0=1$, such that, for $z\ll1$,
\begin{align}\label{E:BARZETAREGULARITY}
\bzeta(z) = z^{\frac13}\sum_{k=0}^\infty\zeta_k z^{\frac{2k}{3}}.
\end{align}
In the far-field, there exist constants $\tilde{\zeta}_{-1}>0$ and $(\tilde\zeta_k)_{k\in\N_0}$ such that, for $z\gg1$, 
\beq
\bzeta(z) = \tilde{\zeta}_{-1} z + \sum_{k=0}^\infty \tilde\zeta_k z^{-k}.
\eeq
\end{lemma}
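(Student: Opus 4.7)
The proof splits into the analyses at $z = 0$ and at $z = \infty$, which require different techniques.

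\emph{Expansion near $z = 0$.} We work from the algebraic identity $z = \frac{2}{\CLP}\bzeta^3 \omLP(\bzeta)\rhoLP(\bzeta)$, obtained by integrating the relation $\pa_y(y^2\vLP\rhoLP) = y^2\rhoLP$ (used in the proof of Lemma~\ref{L:LPLAGRANGIAN}) with the boundary condition $z|_{y=0} = 0$. By Theorem~\ref{T:LP} and the invariance of the ODE system~\eqref{E:RHOLP}--\eqref{E:OMLP} under $y \mapsto -y$ (combined with the fact that $(\rhoLP(0), \omLP(0))$ is a regular value of the system), both $\omLP$ and $\rhoLP$ are real-analytic even functions of $y$ near the origin. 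Hence $z = y^3 h(y^2)$ for a real-analytic $h$ with $h(0) = \frac{2\omLP(0)\rhoLP(0)}{\CLP}$; the normalization $\zeta_0 = 1$ asserted in~\eqref{E:BARZETAREGULARITY} corresponds to $h(0) = 1$, which fixes $\CLP = 2\omLP(0)\rhoLP(0) = \tfrac{2}{3}\rhoLP(0)$. Then $u := z^{1/3} = y \, h(y^2)^{1/3}$ is a real-analytic odd function of $y$ with $\pa_y u(0) = 1$, so the analytic inverse function theorem yields $y = \bzeta(z)$ as an odd real-analytic function of $u = z^{1/3}$, giving~\eqref{E:BARZETAREGULARITY}.

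\emph{Expansion at $z = \infty$.} The cleanest route is to work directly with~\eqref{E:LPLAG} after the change of variables $\xi := 1/z$ and $F(\xi) := \xi \bzeta(1/\xi)$. Using $\pa_z = -\xi^2 \pa_\xi$, one computes $\bzeta = F/\xi$, $\pa_z\bzeta = F - \xi F'$, and $\pa_z^2\bzeta = \xi^3 F''$; substitution into~\eqref{E:LPLAG} and division by $\xi$ reduces it to
\[
F''\Big(1 - \frac{\xi^2}{(F - \xi F')^2}\Big) = \frac{2}{F} - \frac{\CLP}{F^2}.
\]
The coefficient of $F''$ equals $1$ at $\xi = 0$, and the right-hand side is jointly real-analytic in $(\xi, F, F')$ near any triple with $F \neq 0$. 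Standard analytic ODE theory therefore produces, for each admissible pair of initial values $F(0) = F_0 > 0$ and $F'(0) = F_0'$, a unique real-analytic solution defined in a neighborhood of $\xi = 0$. Reverting to the original variables gives exactly an expansion of the form $\bzeta(z) = F_0 z + F_0' + \sum_{k \geq 1} \tilde\zeta_k z^{-k}$ in a neighborhood of $z = \infty$.

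\emph{Matching the LP solution.} It remains to show that the Larson--Penston flow map coincides with one such analytic solution. Existence of $\tilde\zeta_{-1} := \lim_{z\to\infty}\bzeta(z)/z > 0$ follows from the identity $z\pa_z\bzeta = \bzeta\,\omLP(\bzeta)$: integrating gives $\log \bzeta(z) = \log z + C + \int_{z_0}^z \frac{\omLP(\bzeta(z')) - 1}{z'}\,dz'$, where the improper integral converges thanks to $\omLP(y) - 1 = O(y^{-2})$ as $y \to \infty$ --- a straightforward asymptotic analysis of~\eqref{E:OMLP} using $\lim_{y\to\infty} y^2\rhoLP(y) \in (0,\infty)$ from Theorem~\ref{T:LP}. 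Differentiating this relation and iterating the decay estimate yields a finite value of $F'(0)$; a bootstrap via the normal-form ODE above then upgrades $C^2$-regularity of $F$ at $\xi = 0$ to real-analyticity, identifying $\bzeta$ with the associated analytic solution and completing the expansion.

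\emph{Main obstacle.} The delicate step is this last matching: the LP solution is determined globally by its origin and sonic-point data, and ruling out corrections of fractional or logarithmic type at $z = \infty$ requires careful asymptotic analysis at the irregular singular point at $y = \infty$ of~\eqref{E:RHOLP}--\eqref{E:OMLP} (equivalently, of~\eqref{E:LPLAG} at $z=\infty$). The expansion near $z = 0$ is, by contrast, essentially a direct application of the analytic inverse function theorem to the identity $z = \frac{2}{\CLP}\bzeta^3\omLP(\bzeta)\rhoLP(\bzeta)$.
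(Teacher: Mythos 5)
Your argument near $z=0$ is correct and takes a different --- arguably cleaner --- route than the paper. The paper reads off the expansion from the first-order relation $z\pa_z\bzeta = \bzeta\,\omLP(\bzeta)$ together with analyticity and evenness of $(\rhoLP,\omLP)$ at $y=0$; you instead invert the integrated mass identity $z=\tfrac{2}{\CLP}\bzeta^3\omLP(\bzeta)\rhoLP(\bzeta)$ by the analytic inverse function theorem. Both are valid, and your route avoids any separate power-series convergence argument. (The normalization $\zeta_0=1$ does implicitly fix the scaling of $\CLP$, as you note.)

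Near $z=\infty$, however, there is a genuine gap. The claim $\omLP(y)-1=O(y^{-2})$ is not justified and appears to be false: writing $\epsilon=1-\omLP$ and expanding~\eqref{E:OMLP} for large $y$ gives, to leading order, $(y\epsilon)'=2\rhoLP+O(y^{-2})$, so $y\epsilon$ tends to a finite constant as $y\to\infty$ (using $\rhoLP=O(y^{-2})$), with no reason for that constant to vanish. Generically one only has $\omLP-1=O(y^{-1})$, which is exactly what the paper's cited expansion $\omLP(y)=1+\sum_{k\geq1}\tom_k y^{-k}$ from~\cite{GHJ2023} allows. With $O(y^{-1})$ decay, the first-order relation $\xi F'=F\big(1-\omLP(F/\xi)\big)$ shows $F'$ is bounded near $\xi=0$ but not that it converges, so "differentiating and iterating the decay estimate" does not by itself produce $F'(0)$, and the matching step you flag as the main obstacle is left open.

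The good news is that your own normal-form equation closes this gap without needing the far-field expansion of $\omLP$ at all. Since $F-\xi F'=\pa_z\bzeta=F\,\omLP(\bzeta)\to F_0>0$ as $\xi\to 0^+$ (using only $\omLP\to1$), the coefficient of $F''$ in your normal form tends to $1$ and the right-hand side tends to $2/F_0-\CLP/F_0^2$, so $F''$ is bounded on $(0,\epsilon]$. Hence $F'$ is uniformly Lipschitz there, so $F'(\xi)$ converges as $\xi\to0^+$ and $F\in C^1([0,\epsilon])$. Local uniqueness for the regular analytic Cauchy problem $F''=g(\xi,F,F')$ with data $\big(F(0),F'(0)\big)$ then identifies $F$ with the Cauchy--Kovalevskaya solution, yielding analyticity at $\xi=0$. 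Rewritten this way, your far-field argument is self-contained, whereas the paper instead cites~\cite{GHJ2023} for the analyticity of $\omLP$ at infinity and applies the first-order equation.
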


\begin{proof} The expansion at the origin follows from the real-analyticity of $(\rhoLP,\omLP)$ at $y=0$, the property $\omLP(0)=\frac13$, and \eqref{relation00}. The far-field expansion follows similarly from the analytic properties of $\omLP$, in particular, $\omLP(z)=1+\sum_{k=1}^\infty \tom_kz^{-k}$,  which can be derived as in~\cite[Theorem 7.4]{GHJ2023}.
\end{proof}

The final properties of the LP solution that we require concern the behaviour of the flow map $\bzeta$ around the sonic point $z_*$, defined in Definition~\ref{def:sonic}. We recall the definition of the key weight function $\bG(z)$ from~\eqref{E:GGDEF}. We characterise the sub/supersonicity of the LP solution via the sign of the function $z\mapsto z^{\frac23}-G(z)$. Namely $z=z_\ast$ if and only if $G(z_\ast)=z_\ast^{\frac23}$ 
and
\begin{align} \label{E:GRELATION}
z\lessgtr z_\ast \Longleftrightarrow z^{\frac23}-G(z) \lessgtr 0.
\end{align}
In Lemma~\ref{L:GBAR}, we collected some key properties of the function $z\mapsto G(z)$, which we now prove here.

\begin{proof}[Proof of Lemma~\ref{L:GBAR}]
The asymptotic behaviour $\bzeta^{-2} =z^{-\frac23}\big( 1+ O_{z\to\infty}(z^{\frac23})\big)$ follows directly from~\eqref{E:BARZETAREGULARITY} and hence the expansion
$\bG=9 + O_{z\to0}(z^{\frac23})$ follows easily. This implies $\bG\in\Dinfeven$. Similarly, the boundedness $\frac1C\leq \pa_z\bzeta \leq  C$ for $z$ large from Lemma~\ref{L:ZETABAR} implies the second estimate in~\eqref{E:GBDS}. 
 
To show~\eqref{E:DZG}--\eqref{E:DGSQRTG}, we observe that since $\bG = (\bp \bzeta)^{-2}$ we have
\begin{align}
\frac{\bp \bG}{\sqrt{\bG}} & = \bp \left((\bp \bzeta)^{-2}\right) \bp \bzeta = - 2 \frac{\bp^2\bzeta}{(\bp \bzeta)^2}  = - 2 \frac{z^2\pa_{zz}\bzeta+\frac23 z \pa_z\bzeta}{(z\pa_z \bzeta)^2}.
\end{align}
From~\eqref{eq:zdzzZeta}, we obtain
\[
z^2\pa_{zz}\bzeta = z\pa_z\bzeta \left(\omLP-1+\bzeta \omLP'\right).
\]
Therefore
\begin{align}
\frac{\bp \bG}{\sqrt{\bG}} 
& = -2\frac{\omLP+y\omLP'-\frac13}{\vLP},\label{E:dbarGidentity}
\end{align}
where we recall $y=\bzeta$, $\bar v = y \om$. From the key monotonicity property $\om'(y)\geq 0$ and $\om(y)\geq \frac13$, we deduce~\eqref{E:DZG}. From the asymptotics $\omLP(\bzeta(z))=1+O(\frac1z)$ and Lemma~\ref{L:ZETABAR}, we obtain \eqref{E:DZGBARFF}.
\end{proof}


\subsection{Stability problem in the Eulerian variables}

In this subsection, we identify the linearised operator in Eulerian variables. We recall that the Eulerian EP-system in self-similar variables was stated above in~\eqref{E:CONTSS}--\eqref{E:MOMSS}. 
To formulate the stability problem it is convenient to work with the momentum variable
\begin{align}\label{E:PIDEF}
\Pi(s,y): = \rho(s,y) v(s,y)
\end{align}
and the dynamically accessible variable $\psi$, defined as the unique solution to the equation
\begin{align}\label{E:PHIDEF}
D_y \psi(s,y) = \rho(s,y)-\rhoLP(y),\qquad \psi(s,0)=0.
\end{align}
We note that the existence of such a $\psi$ is guaranteed by the ellipticity of the divergence in radial coordinates, provided $\rho$ is sufficiently regular.
 This allows us to reformulate and linearise the self-similar problem~\eqref{E:CONTSS}--\eqref{E:MOMSS}. 

\begin{lemma}[Eulerian linearisation]\label{L:EULERIANLIN}
Let $(\rho,v)$ be a smooth solution of~\eqref{E:CONTSS}--\eqref{E:MOMSS} and let 
\[
\rho=\rhoLP+ D_y\psi, \ \  
\Pi = \PLP+ P,
\]
and assume that $(\rho,\Pi)$ solves the system~\eqref{E:CONTSS1}--\eqref{E:MOMSS1}. Then the pair $(\psi,P)$ solves the system
\begin{align}
\pa_s\begin{pmatrix} \psi \\ P \end{pmatrix} = \bfLEul \begin{pmatrix} \psi \\ P \end{pmatrix}+\begin{pmatrix} 0 \\ N^{\textup{Eul}}[D_y\psi,P] \end{pmatrix},\label{E:EULPERTURB}
\end{align}
where the Eulerian linearisation takes the form
\begin{align}
\bfLEul \begin{pmatrix} \psi \\ P \end{pmatrix}
= \begin{pmatrix} - P+\psi \\ - \pa_y \left(w D_y \psi \right) -2\pa_y(\vLP P) + 2\vLP\left(\omLP - \rhoLP\right) D_y \psi - 2\rhoLP \psi + (2-4\omLP) P \end{pmatrix}, \label{E:BFLEULDEF}
\end{align}
where we recall $w=1-\vLP^2$ from~\eqref{E:WDEF} and the nonlinearity is given by
\beq
N^{\textup{Eul}}[D_y\psi,P]: = -D_y\left(\frac{(\vLP D_y\psi-P)^2}{\rhoLP + D_y\psi} \right) - \frac{2D_y\psi M[D_y\psi]}{y^2}.
\eeq
If the perturbation $(\psi,P)$  solves the linearised problem obtained by neglecting the nonlinearity in~\eqref{E:EULPERTURB}, then equivalently $\psi$ solves the second-order linear partial differential equation
\begin{align}
&\pa_{ss}\psi +2\pa_sD_y(\vLP \psi) + D_y\left(\vLP^2 D_y \psi\right) -  \pa_yD_y \psi \notag\\
&  -2\PLP D_y \psi-2D_y(\vLP \psi)-3\pa_s\psi+(2- 2\rhoLP) \psi =0.\label{E:LIN4}
\end{align}
\end{lemma}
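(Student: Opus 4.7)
My plan is to proceed by direct computation, organized around the algebraic cancellations induced by the fact that $(\rhoLP,\PLP)$ is an $s$-independent solution of the Eulerian system in momentum form. The starting point is to rewrite~\eqref{E:CONTSS}--\eqref{E:MOMSS} in terms of $(\rho,\Pi)$ (the referenced formulation~\eqref{E:CONTSS1}--\eqref{E:MOMSS1}). Recall that $\PLP=\rhoLP\vLP$, and the steady continuity equation gives the crucial identity $D_y\PLP=\rhoLP$ (used repeatedly below).

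First I would treat the continuity equation. Substituting $\rho=\rhoLP+D_y\psi$ and $\Pi=\PLP+P$ into $\pa_s\rho+D_y\Pi-\rho=0$ and subtracting the steady identity gives
\[
D_y\bigl(\pa_s\psi+P-\psi\bigr)=0.
\]
Since $\psi(s,0)=0$ and solutions are smooth at the origin, the radial kernel of $D_y$ is trivial in the relevant function class, so $\pa_s\psi=-P+\psi$, which is exactly the first component of $\bfLEul$ and produces no nonlinear remainder (consistent with the zero entry of the nonlinearity).

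Next comes the momentum equation, which is the main bookkeeping step. Expanding the flux term $D_y(v\Pi)=D_y(\Pi^2/\rho)$ about the LP state, using $\Pi=\rhoLP\vLP+P$ and $\rho=\rhoLP+D_y\psi$, yields the linear contribution $2\pa_y(\vLP P)-\pa_y(\vLP^2 D_y\psi)$ together with a genuinely quadratic remainder of the form $D_y((\vLP D_y\psi-P)^2/(\rhoLP+D_y\psi))$, which is precisely the first piece of $N^{\textup{Eul}}$. The pressure term $\pa_y\rho$ linearises to $\pa_y D_y\psi$. For the gravitational term $2\rho M/y^2$, I would use the key radial identity $M[D_y\psi]=y^2\psi$ (integration by parts in the defining integral, which is valid because $\psi(s,0)=0$): this converts $2(\rhoLP+D_y\psi)M/y^2$ into $2\MLP\rhoLP/y^2+2\rhoLP\psi+2\MLP D_y\psi/y^2+2D_y\psi\,M[D_y\psi]/y^2$, where the last term is the remaining quadratic piece of $N^{\textup{Eul}}$. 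Subtracting the steady momentum equation $\pa_y\rhoLP+2\rhoLP\MLP/y^2+D_y(\vLP\PLP)-\PLP=0$ and using $\MLP/y^2=(D_y\PLP)/y^2\cdot(\text{ratios})$ together with~\eqref{eq:v}--\eqref{E:RHOSS} to rewrite derivatives of $\rhoLP$ and $\vLP$ produces, after cancellations, exactly the second component of $\bfLEul$ in~\eqref{E:BFLEULDEF}, with the remainder collected into $N^{\textup{Eul}}$. The only delicate bookkeeping is the coefficient of $\psi$, which arises from combining $2\rhoLP\psi$ from the potential expansion with the $(2-4\omLP)P$ term that emerges from commuting $D_y$ with multiplication by $\vLP$; here the LP relation $\pa_y(\rhoLP\vLP)=\rhoLP-\rhoLP\vLP\cdot 2/y$ is invoked.

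Finally, to obtain the second-order scalar equation~\eqref{E:LIN4}, I would use the linearised first equation to eliminate $P=\psi-\pa_s\psi$. Differentiating in $s$, $\pa_sP=\pa_s\psi-\pa_{ss}\psi$. Substituting into the second equation of the linearised system and regrouping, the terms $\pa_s D_y(\vLP\psi)$, $D_y(\vLP^2 D_y\psi)$, and $\pa_y D_y\psi$ assemble with the pointwise linear contributions to give exactly~\eqref{E:LIN4}; no further use of the LP equations is needed at this stage beyond what was already imposed. I expect the main obstacle to be the careful tracking of signs and the reorganisation of the gravity and pressure contributions in the momentum equation—this is a pure identity-chase, but the number of terms is large and it is easy to misplace a factor of $2$ or a sign when separating the linear operator from the quadratic remainder.
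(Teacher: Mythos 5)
Your proposal is correct and follows essentially the same route as the paper's (brief) proof: expand $\Pi^2/\rho$ exactly about the LP state, use $M[D_y\psi]=y^2\psi$ to split the gravity term, subtract the steady equations, and then eliminate $P=\psi-\pa_s\psi$ to obtain~\eqref{E:LIN4}. Two small slips in the write-up: the steady momentum equation should read $D_y(\vLP\PLP)+\pa_y\rhoLP+2\rhoLP\MLP/y^2-2\PLP=0$ (you dropped the factor $2$ on $\PLP$), and the step does not actually require the LP ODEs~\eqref{eq:v}--\eqref{E:RHOSS}: once you record the steady continuity identity $D_y\PLP=\rhoLP$, its integrated form $\MLP=y^2\PLP$ (so that $\MLP/y^2=\PLP$) together with $\vLP/y=\omLP$ is all that is needed to collect the linear terms into~\eqref{E:BFLEULDEF}.
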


\begin{proof}
As $(\rho,v)$ is a smooth solution of~\eqref{E:CONTSS}--\eqref{E:MOMSS}, clearly $(\rho,\Pi)$ solve
\begin{align}
\pa_s\rho + D_y(\Pi) -\rho & = 0 ,\label{E:CONTSS1}\\
\pa_s\Pi + D_y(\frac{\Pi^2}{\rho}) + \pa_y\rho + 2\frac{\rho M}{y^2} -2\Pi & =0.   \label{E:MOMSS1}
\end{align}
As~\eqref{E:CONTSS1} is a linear equation, the first equation in~\eqref{E:EULPERTURB} follows easily.
Writing $R=D_y\psi$ for convenience, to expand the momentum equation around the LP solution, we first consider
\begin{align}
\frac{\Pi^2}{\rho}
& = (\PLP^2+2\PLP P+P^2)\left(\frac1{\rhoLP} - \frac{R}{\rhoLP^2}+ \frac{R^2}{\rhoLP^2(\rhoLP+R)} \right) \notag\\
& =  \frac{\PLP^2}{\rhoLP} - \vLP^2 R + \frac{\vLP^2 R^2}{\rhoLP+R} + 2\vLP P - \frac{2\vLP R P }{\rhoLP} + \frac{2\vLP P R^2}{\rhoLP (\rhoLP+R)}
+ \frac{P^2}{\rhoLP} - \frac{P^2 R}{\rhoLP^2} + \frac{P^2 R^2}{\rhoLP^2 (\rhoLP+R)}\notag\\
& = \frac{\PLP^2}{\rhoLP}+ 2\vLP P  - \vLP^2 R+ \frac{(\vLP R-P)^2}{\rhoLP + R}. 
\end{align}
The second equation in~\eqref{E:EULPERTURB} then follows directly. Equation~\eqref{E:LIN4} is a direct consequence.
\end{proof}


\subsection{Equivalence of Eulerian and Lagrangian linearisation}  


In this section, we show that the Eulerian and Lagrangian linearisation are equivalent. As it is simplest to prove this for the second order linearised operators, we state the equivalence for these formulations of the linearised problem and recall from Lemmas~\ref{L:GPROPERTIES} and~\ref{L:EULERIANLIN} that the first order formulations are then also equivalent. To this end, therefore, we recall from the proof of Lemma~\ref{L:GPROPERTIES} (specifically equations~\eqref{E:NONLINEARFLOW} and~\eqref{E:INTERIORFLOW}) that the Lagrangian linearized perturbation problem in second order form may be written, for $z\in(0,r_*e^s)$,
\begin{align}\label{E:THETADYNAMICS}
\theta_{ss} +2z\theta_{sz} - \pa_z\Big(\big(\frac{1}{\bzeta_z^2}-z^2\big) \theta_{z}\Big) - \theta_s - 2z\theta_z +\frac{2}{\bzeta}\Big(1-\frac{\CLP z}{\bzeta}\Big)\theta = 0.
\end{align}


\begin{lemma}[Equivalence between the Lagrangian and the Eulerian linearisation] \label{L:LAGEUL}
The Lagrangian and Eulerian linearised operators $\bfL$ and $\bfLEul$ satisfy the following equivalence properties.
\begin{itemize}
\item[(i)] Let $\theta\in\DZodd$ be a  solution to the linearised problem~\eqref{E:THETADYNAMICS}.
Then the 
function 
\begin{align}\label{Def:phi_y}
\psi(s,y) : = \bar\rho(y) \theta(s,z), \ \ z=\bzeta^{-1}(y)
\end{align}
solves the Eulerian linearisation~\eqref{E:LIN4}. Similarly, if $\psi$ is a smooth solution of~\eqref{E:LIN4}, then the function 
\begin{align}
\theta(s,z) : = \frac{\psi(s,y)}{\bar\rho(y)}, \ \ y=\bzeta(z)
\end{align}
solves the Lagrangian linearisation~\eqref{E:THETADYNAMICS}.
\item[(ii)] For any given $\l\in\mathbb C$ and $ \begin{pmatrix} f_1 \\ f_2 \end{pmatrix}\in \HmZ$, if 
\begin{align}\label{E:RESLAG}
(\bfL - \la) \begin{pmatrix} \th \\ \phi \end{pmatrix} = \begin{pmatrix} f_1 \\ f_2 \end{pmatrix},
\end{align}
then
\begin{align}
(\bfLEul - \la) \begin{pmatrix} \psi \\ P \end{pmatrix} = \begin{pmatrix} h_1 \\ h_2 \end{pmatrix}, \label{E:RESEUL}
\end{align}
where $\psi$ is given by~\eqref{Def:phi_y}, $P=(1-\la)\psi+h_1$, and 
\begin{align}\label{E:H12}
h_1 =  -\frac12 \rhoLP f_1\circ\bzeta^{-1}, \ \ h_2 = \rhoLP f_2\circ\bzeta^{-1} - \la h_1.
\end{align}
\end{itemize}
\end{lemma}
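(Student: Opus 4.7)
The plan for part (i) is a direct substitution calculation. Given a solution $\theta\in\DZodd$ of the Lagrangian linearised equation \eqref{E:THETADYNAMICS}, I would set $\psi(s,y):=\rhoLP(y)\theta(s,z)$ with $y=\bzeta(z)$ and compute each of the terms $\psi_{ss}$, $\pa_sD_y(\vLP\psi)$, $D_y(\vLP^2 D_y\psi)$, $\pa_yD_y\psi$, $\PLP D_y\psi$, $D_y(\vLP\psi)$, $\rhoLP\psi$ appearing in \eqref{E:LIN4} by the chain rule under the change of variable $y=\bzeta(z)$, which gives $\pa_y=(\pa_z\bzeta)^{-1}\pa_z$ and $D_y=(\pa_z\bzeta)^{-1}\pa_z+2/\bzeta$. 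The central algebraic identities are \eqref{relation00} in the form $z\pa_z\bzeta=\vLP\bzeta$, the identity $\bzeta_z^{-2}-z^2=-\wLP/\bzeta_z^2$ connecting the principal symbol of \eqref{E:THETADYNAMICS} to the Eulerian principal symbol $\wLP=1-\vLP^2$ in \eqref{E:LIN4}, and the LP steady-state equations \eqref{E:RHOLP}--\eqref{E:OMLP}. The weight $\rhoLP$ in \eqref{Def:phi_y} is precisely tuned so that the cross-terms coming from differentiating $\rhoLP(y)$ combine, via $\rhoLP'=-2\vLP\rhoLP(\rhoLP-\omLP)/\wLP$, with the convective terms to produce the Lagrangian potential $\frac{2}{\bzeta}(1-\CLP z/\bzeta)\theta$. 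The reverse direction is identical since the transformation is invertible away from $z=0$.

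For part (ii), the plan is to combine (i) with the first-order structure of both operators. Reading off the first component of \eqref{E:RESLAG} gives $\phi=\Lambda\theta-(1-\la)\theta+f_1$. On the Eulerian side, the first component of \eqref{E:RESEUL} reads $-P+\psi-\la\psi=h_1$, which forces a relation of the form $P=(1-\la)\psi+h_1$ (up to a sign convention on $h_1$). Translating the Lagrangian relation $\phi=\Lambda\theta-(1-\la)\theta+f_1$ through the correspondence $\psi=\rhoLP\theta\circ\bzeta^{-1}$ and using the LP identity \eqref{E:LPPROP} in the form $2\rhoLP=\CLP z/(\bzeta^2\pa_z\bzeta)$ produces the prefactor $\frac12$ in $h_1=-\frac12\rhoLP f_1\circ\bzeta^{-1}$. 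The formula $h_2=\rhoLP f_2\circ\bzeta^{-1}-\la h_1$ is then obtained by inserting the ansatz into the second component of \eqref{E:RESEUL} and invoking the homogeneous equivalence already established in (i) to cancel all terms that are determined by the LP steady-state relations.

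The main obstacle will be the algebraic bookkeeping, in particular the many cancellations between the Eulerian drift terms $2\pa_sD_y(\vLP\psi)$, $D_y(\vLP^2D_y\psi)$ and the Lagrangian convective derivatives $2z\theta_{sz}$, $z^2\theta_{zz}$, together with the matching of lower-order coefficients through repeated use of \eqref{E:RHOLP}--\eqref{E:OMLP} and \eqref{relation00}. A related subtlety is that the change of variable $y=\bzeta(z)$ is globally smooth but degenerate at $z=0$ (where $\bzeta\sim z^{1/3}$), so the parity condition built into $\DZodd$ is essential to ensure the resulting $\psi$ lies in $\HmZEul$ (i.e.\ is analytic in $y$ near the origin), and conversely that a regular $\psi$ produces a $\theta$ in the right Lagrangian regularity class.
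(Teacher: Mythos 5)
Your approach is essentially the same as the paper's for both parts. For (i) the paper also proceeds by direct chain-rule substitution: it rewrites $\pa_z((\bzeta_z^{-2}-z^2)\pa_z\theta)$ via $z\pa_z=\vLP\pa_y$ from \eqref{relation00}, passes to the equation for $\psi=\rhoLP\theta$, and uses the LP identities \eqref{eq:v} and \eqref{E:RHOSS} together with $(y^2\rhoLP\vLP)'=y^2\rhoLP$ to regroup terms into the form \eqref{E:LIN4}. For (ii) the paper, like you, reduces both resolvent equations to second-order wave equations (inserting the $e^{\la s}$ ansatz), invokes part (i) to identify the left-hand sides, and then matches the resulting inhomogeneities; your description of this step is structurally correct.

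The one point where your sketch goes astray is the claimed origin of the prefactor $\frac12$ in $h_1=-\frac12\rhoLP f_1\circ\bzeta^{-1}$. You attribute it to the identity $2\rhoLP=\CLP/(\bzeta^2\pa_z\bzeta)$ from \eqref{E:LPPROP}, but that is not the mechanism. In the paper's right-hand-side matching, the Lagrangian forcing transforms to $\rhoLP(-\Lambda f_1-\la f_1-f_2)e^{\la s}$ with $\Lambda f_1 = \vLP\pa_y F_1$, while the Eulerian second-order form picks up a forcing $(-2\pa_y(\vLP h_1)-4\omLP h_1+(2-\la)h_1+h_2)e^{\la s}$; the factor $2$ in $-2\pa_y(\vLP h_1)$, coming from the coefficient $-2\pa_y(\vLP P)$ in \eqref{E:BFLEULDEF}, is what forces $h_1\propto \frac12\rhoLP F_1$ upon balancing the $\pa_y F_1$ terms, and the LP identity actually invoked to close the remaining zero-order balance is $\pa_y(\rhoLP\vLP)=(1-2\omLP)\rhoLP$, not \eqref{E:LPPROP}. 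This is a misattribution rather than a structural error, but if followed literally it would not produce the formula for $h_1$.
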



\begin{proof}
The results directly follow from the change of variables and unknowns via \eqref{Def:phi_y}, and by using the LP equations \eqref{eq:v} and \eqref{E:RHOSS}. We sketch the proof for readers' convenience. From the the LP properties \eqref{E:LPPROP} and  $z\pa_z = z\bzeta_z\pa_y = \vLP \pa_y $, we first obtain 
\[
\pa_z ((\bzeta_z^{-2}-z^2) \pa_z \theta ) =\frac{\vLP}{z} \pa_ y \Big(  (1-\vLP^2) \frac{z}{\vLP} \pa_y\theta\Big) =  \pa_ y\big( (1-\vLP^2) \pa_y\theta \big) -  \frac{\vLP' -1}{\vLP} (1-\vLP^2) \pa_y \theta.
\]
Now~\eqref{E:THETADYNAMICS} can be written in $(s,y)$ coordinates: 
\[
\pa_s^2\theta + 2 \vLP \pa_{y s}^2 \theta - \pa_ y\big( (1-\vLP^2) \pa_y\theta \big) + \frac{\vLP' -1}{\vLP} (1-\vLP^2) \pa_y \theta - \pa_s \theta  -2 \vLP \pa_y \theta  + \frac{2}{y^2} \left( 1- 2 y\bar\rho\vLP \right) \theta=0.
\]
The equation for $\psi =\bar\rho \theta$ now reads 
\be\label{linear_psi}
\pa_s^2\psi +2 \bar\rho \vLP \pa_{y s}^2 \theta- \pa_s \psi  - \bar\rho \pa_ y\big( (1-\vLP^2) \pa_y\theta \big) + \frac{\vLP' -1}{\vLP} (1-\vLP^2)\bar\rho  \pa_y \theta  -2 \vLP\bar\rho  \pa_y \theta  + \frac{2}{y^2} \left( 1-2 y\bar\rho\vLP \right) \psi =0.
\ee
Using the LP identity $(y^2\bar\rho\vLP)'=y^2\bar\rho$, which is derived easily from~\eqref{eq:v}--\eqref{E:RHOSS}, it is easy to verify  
\begin{align*}
2 \bar\rho \vLP \pa_{y s}^2 \theta- \pa_s \psi&=  2  \pa_s D_y  (\vLP\psi)  - 3\pa_s \psi. 
\end{align*} 
We note further that $\pa_y\theta = \frac{\pa_y \psi}{\bar\rho} - \frac{\bar\rho '}{\bar\rho } \frac{ \psi}{\bar\rho}$. Using this and the LP equations~\eqref{eq:v} and~\eqref{E:RHOSS} to express  terms in terms of $\psi$, after a calculation we conclude that 
\begin{align}
&- \bar\rho \pa_ y\big( (1-\vLP^2) \pa_y\theta \big) + \frac{\vLP' -1}{\vLP} (1-\vLP^2)\bar\rho  \pa_y \theta  -2 \vLP\bar\rho  \pa_y \theta \notag\\
& =  - \pa_ y\big( (1-\vLP^2)  D_y\psi  \big) + 2\vLP (\bar\omega-\bar\rho -1 )  \pa_y \psi  + \big(  4\bar\omega (\bar\omega -1) + 2(1-\bar\rho -\vLP' ) - \frac{2}{y^2}\big)\psi.
\end{align}
Using $\pa_y = D_y - \frac{2}{y}$, we see that \eqref{linear_psi} can be equivalently written as \eqref{E:LIN4}. The proof that
solutions of the Eulerian linearisation~\eqref{E:LIN4} map back to solutions of the Lagrangian linearisation~\eqref{E:THETADYNAMICS} is now straightforward
by unwinding the above changes of variables.

If~\eqref{E:RESLAG} is true, it is easy to check that~\eqref{E:THETADYNAMICS} holds with $\th$ replaced by $e^{\la s}\th$ and a right-hand side given
by $(-\Lambda f-\l f-g)e^{\la s}$. On the other hand equation~\eqref{E:RESEUL} is equivalent to~\eqref{E:LIN4} with $\psi$ replaced by $e^{\la s}\psi$ and
a right-hand side given by  $(-2\pa_y(\vLP h_1)-4\omLP h_1+(2-\l) h_1+h_2) e^{\la s}$. By the above calculation, the two wave equations are equivalent if we let $\psi$ be defined as in~\eqref{Def:phi_y} and if we can make the right-hand sides the same.
Choosing $h_1$, $h_2$ as in~\eqref{E:H12} and using the relation $\pa_y(\rhoLP\vLP)=(1-2\omLP)\rhoLP$ we easily see that the right-hand sides are indeed the same.
\end{proof}

Finally, we collect some useful properties of the time-translation mode in the Eulerian variables.
\begin{lemma}\label{L:GROWINGMODEEUL}
The time-translation mode $\Gamma$ associated to eigenvalue $1$ for $\bfL$ corresponds to the eigenfunction
\beq\label{E:GROWINGMODEEUL}
g_1(y)=y\rhoLP(1-\omLP).
\eeq
associated to $\l=1$ for $\bfLEul$. This eigenfunction satisfies the property
\beq\label{E:GROWINGMODEPROP}
\vLP\pa_y g_1(y)+g_1(y)> 0, \ \ \ y\in(0,y_*).
\eeq
\end{lemma}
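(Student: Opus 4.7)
The proof splits naturally into identifying the Eulerian form of the trivial eigenfunction and then verifying the positivity property. For the first part, I would invoke Lemma~\ref{L:GMODE} to recall that the Lagrangian eigenvector for $\lambda=1$ has first component $\theta = \bzeta - \Lambda\bzeta$. Using the defining LP relation~\eqref{relation00}, i.e.\ $\Lambda\bzeta = \bzeta\omLP(\bzeta)$, this reduces to $\theta = \bzeta(1-\omLP(\bzeta))$. The Eulerian-Lagrangian mapping of Lemma~\ref{L:LAGEUL}(i), with $y = \bzeta(z)$ and the multiplication by $\rhoLP(y)$, then yields $\psi(y) = y\rhoLP(y)(1-\omLP(y)) = g_1(y)$, as claimed.

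For the second part, I would first derive a closed-form expression for $\vLP\pa_y g_1 + g_1$. Writing $g_1 = \rhoLP(y-\vLP)$ and using the identities $(\vLP\rhoLP)' = (1-2\omLP)\rhoLP$ (an immediate consequence of~\eqref{E:RHOLP} combined with $\vLP = y\omLP$) and $\vLP' = \omLP + y\omLP'$, a brief algebraic manipulation gives
\[
\vLP\pa_y g_1 + g_1 = y\rhoLP\bigl[2(1-\omLP)^2 - y\omLP'\bigr].
\]
Substituting for $y\omLP'$ from~\eqref{E:OMLP} and regrouping via the elementary identity $1 - \omLP + 2\omLP^2 = (1-\omLP)(1-2\omLP) + 2\omLP$, this rewrites as
\[
\vLP\pa_y g_1 + g_1 = \frac{y\rhoLP}{\wLP}\Big[(1-\omLP)(1-2\omLP)\wLP + 2\omLP\bigl(1 - y^2\omLP\rhoLP\bigr)\Big].
\]

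On $(0, y_*)$ the prefactor $y\rhoLP/\wLP$ is strictly positive, and the first bracketed term $(1-\omLP)(1-2\omLP)\wLP$ is strictly positive: we have $\wLP > 0$ by the definition of the sonic point, $1-\omLP > 0$ since $\omLP \to 1$ only at infinity, and $1-2\omLP > 0$ because the monotonicity~\eqref{E:LPMONOTONICITY} and sonic relation $\omLP(y_*) = 1/y_* < 1/2$ (using $y_* > 2$ from Theorem~\ref{T:LP}) imply $\omLP(y) < 1/2$ throughout $[0, y_*]$. Thus the positivity reduces to the estimate $y^2\omLP\rhoLP \le 1$ on $[0, y_*]$, with equality at the endpoints $y=0$ (trivially) and $y=y_*$ (via the sonic point identities $y_*\omLP(y_*)=1$ and $\rhoLP(y_*)=\omLP(y_*)$ from Lemma~\ref{lemma:Frobenius}).

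The main obstacle is verifying this inequality globally on $(0, y_*)$: the endpoint values are immediate, but ruling out any interior overshoot requires more than the monotonicities of Theorem~\ref{T:LP}. I would compute
\[
\pa_y\bigl(y^2\omLP\rhoLP\bigr) = \frac{y\rhoLP(1-\omLP)}{\wLP}\bigl[\wLP - 2\vLP^2(\rhoLP - \omLP)\bigr]
\]
using~\eqref{E:RHOLP}--\eqref{E:OMLP} and then control the sign of the bracket on $(0, y_*)$ by combining the monotonicities $\omLP'>0$, $\rhoLP' < 0$, $\rhoLP > \omLP$ with the Taylor expansion of $(\rhoLP, \omLP)$ at the sonic point (derivable from~\cite{GHJ2021b} together with the smoothness condition satisfied by regular solutions at $y_*$). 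The delicate nature of this last step reflects the ground state character of the LP collapse, which globally pins down the interior behaviour of $y^2\omLP\rhoLP$.
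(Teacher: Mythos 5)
Your identification of the Eulerian eigenfunction $g_1(y)=y\rhoLP(1-\omLP)$ is correct and matches the paper. Your algebraic reduction of $\vLP\pa_yg_1+g_1$ to the form $\frac{y\rhoLP}{\wLP}\bigl[(1-\omLP)(1-2\omLP)\wLP+2\omLP(1-y^2\omLP\rhoLP)\bigr]$ is also correct, and the observation $\omLP<\frac12$ on $[0,y_*]$ is fine. However, there is a genuine gap at the final step, and a computational error that prevents you from seeing how to close it. The displayed derivative formula
$\pa_y(y^2\omLP\rhoLP) = \frac{y\rhoLP(1-\omLP)}{\wLP}\bigl[\wLP - 2\vLP^2(\rhoLP - \omLP)\bigr]$
is wrong: substituting~\eqref{E:RHOLP}--\eqref{E:OMLP} into $\pa_y(y^2\omLP\rhoLP)=2y\omLP\rhoLP+y^2\omLP'\rhoLP+y^2\omLP\rhoLP'$, the two $\wLP^{-1}$ contributions cancel exactly (they are $\pm 2y^3\rhoLP\omLP^2(\rhoLP-\omLP)/\wLP$), giving the clean identity
\begin{equation*}
\pa_y(y^2\omLP\rhoLP)=y\rhoLP(1-\omLP)>0.
\end{equation*}
Once you have this, your argument closes immediately and elementarily: $y^2\omLP\rhoLP$ is strictly increasing on $(0,y_*)$ and equals $1$ at $y_*$ (from $y_*\omLP(y_*)=1$ and $\rhoLP(y_*)=\omLP(y_*)$), hence $y^2\omLP\rhoLP<1$ on $(0,y_*)$. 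No Taylor expansion at the sonic point, no appeal to the ``ground state character,'' and no extra structural input beyond the ODE and the sonic relations are needed. As written, though, your proof does not actually establish the inequality; it defers the essential point to a ``delicate'' argument that you neither carry out nor show is correct, and the formula you do record would not lead anywhere useful.

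For comparison, the paper closes the positivity differently: it rewrites $g_1(y)=\frac{1}{y^2}\int_0^y\bigl(\rhoLP+(y\rhoLP)'\bigr)\tilde y^2\,\dif\tilde y$ (using $M[\rhoLP]=y^2\rhoLP\vLP$), invokes the monotonicity $(y\rhoLP)'>0$ on $(0,y_*)$ from \cite[Lemma~4.2]{GHJ2021b}, and arrives at the lower bound $\vLP\pa_yg_1+g_1\ge y\rhoLP(1-2\omLP+2\omLP^2)$, which is positive since $1-2\omLP+2\omLP^2=2(\omLP-\tfrac12)^2+\tfrac12>0$ unconditionally. That route avoids needing $\omLP<\tfrac12$ at all, at the cost of importing the external monotonicity fact $(y\rhoLP)'>0$; your route, once the derivative is computed correctly, is self-contained and arguably more elementary.
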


\begin{proof}
The identity~\eqref{E:GROWINGMODEEUL} follows directly from~\eqref{E:GM} and the Eulerian-Lagrangian change of variables~\eqref{Def:phi_y}.

To prove~\eqref{E:GROWINGMODEPROP}, we first rewrite $g_1$ by recalling from~\cite{GHJ2021b} the identity $M[\rhoLP]=y^2\rhoLP\vLP$ and rewriting~\eqref{E:GROWINGMODEEUL} as
\beq\label{E:GROWINGMODEEULALT}
g_1(y)=\frac{1}{y^2}\int_0^y(\rhoLP+(y\rhoLP)')\tilde y^2\,\dif \tilde y.
\eeq
We now recall from~\cite[Lemma 4.2]{GHJ2021b} that $(y\rhoLP)'> 0$ for $y<y_*$, and then differentiate~\eqref{E:GROWINGMODEEULALT} to see
\begin{align*}
\vLP\pa_y g_1(y)+g_1(y)=&\,\vLP(\rhoLP+(y\rhoLP)')-\frac{2\vLP}{y}g_1+g_1\geq y\rhoLP\omLP+(1-2\omLP)y\rhoLP(1-\omLP)=y\rhoLP(1-2\omLP+2\omLP^2)>0,
\end{align*}
where we have substituted~\eqref{E:GROWINGMODEEUL} after the first inequality.
\end{proof}


\section{Implementation of Interval Arithmetic}\label{APP:IA}

As described in the introduction, we employ interval arithmetic in order to give validated bounds on the solution to certain ODE systems and algebraic quantities. The primary tool we use for this purpose is the package \verb!VNODE-LP!~\cite{Nedialkov10a,Nedialkov10b}. \verb!VNODE-LP! is a rigorous interval arithmetic solver for initial value problems of the form $\dot{u}(t,\la)=F(t,u,\la)$ for a vectorial function $u$ depending on time $t$ and (vectorial) parameters $\la$. When the nonlinearity is regular, \verb!VNODE-LP! takes an interval of initial time $[\underline{t_0},\overline{t_0}]$ with initial data $[\underline{u_0},\overline{u_0}]$ and parameters $[\underline{\la},\overline{\la}]$ and solves the ODE up to time interval $[\underline{t_1},\overline{t_1}]$, giving a validated enclosure $[\underline{u_1},\overline{u_1}]$ for the solution on this final time interval.

\verb!VNODE-LP! is not adapted to handling ODE problems with singularities, and so we are forced to build a rigorous approximation close to the singular points (both origin and sonic point) in order to start the ODE solver. To perform this approximation, we construct Taylor series for the LP solution and the solutions of the eigenfunction ODEs with explicit bounds on the growth rate of the coefficients at both singular points. By standard estimates on the difference between the Taylor polynomial of order $N$ and the full Taylor series, we then obtain enclosures for the solutions.

The code for the interval arithmetic implementation is available at \url{https://github.com/mrischrecker/Larson-Penston-Stability}. It was run on a MacBook Air 2013. The details of which functions within the code are used to prove each lemma are detailed below through this appendix. A table of contents for the code is contained in Appendix~\ref{APP:CODE} with further details to aid the reader to navigate the code.

\subsection{Constructing the LP solution}\label{A:LPCONSTRUCTION}

In order to implement the interval arithmetic ODE solver VNODE-LP for the ODE system~\eqref{E:RHOLP}--\eqref{E:OMLP}, we need to be able to pose data strictly away from the singular points of the ODE system, in particular, away from $y=0,y_*$. To this end, we first show that the Taylor series for the LP solution (which we know to converge from \cite{GHJ2021b}) has a precise growth rate for its coefficients, and hence a precise rate of convergence.

\subsubsection{Taylor expansion for LP near the sonic point}\label{S:SONICTAYLOR}
We begin by considering the expansion for the LP solution around the sonic point, $y_*$.
Recall that, given any $y_*\in[2,3]$, \cite{GHJ2021b} constructed a local analytic solution to~\eqref{E:RHOLP}--\eqref{E:OMLP} on an interval $[y_*-\nu,y_*+\nu]$ by
\beq\label{eq:Euleriansonicexpansions}
\rho(\frac{y}{y_*};y_*)=\sum_{k=0}^\infty\rho_k(y_*)(\frac{y}{y_*}-1)^k,\quad \om(\frac{y}{y_*};y_*)=\sum_{k=0}^\infty\om_k(y_*)(\frac{y}{y_*}-1)^k,
\eeq
where 
\beqa\label{eq:order012coeffs}
&\rho_0 = \omega_0 = \frac1{y_\ast}, \qquad (\rho_1,\omega_1) = (-\omega_0, 1-2\omega_0),\\
&(\rho_2,\omega_2) = \left(\frac{-y_\ast^2+6y_\ast-7}{2y_\ast (2y_\ast-3)}, \frac{-5y_\ast^2+19y_\ast-17}{2y_\ast(2y_\ast-3)}\right).
\eeqa
The main goal of this subsection is to establish the precise growth bounds on these coefficients contained in the following proposition.

\begin{proposition}\label{P:RHONOMNBDS}
The coefficients of the expansions~\eqref{eq:Euleriansonicexpansions} satisfy the growth bounds, for $j\geq 2$,
\begin{align}
\lv \rho_j \rv  \le \frac{C^{j-\alpha}}{j^2}, \qquad \lv \omega_j \rv  \le \frac{C^{j-\alpha}}{j^2},
\end{align}
where $C$ and $\al$ may be taken as
\beq
(C,\al)=\begin{cases}
(8.25,1.95), & y_*\in[2,3],\\
(7.2,1.98), & y_*\in[2.34,2.342].
\end{cases}
\eeq
\end{proposition}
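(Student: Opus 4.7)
The strategy is a majorant-style induction on the Taylor coefficients, bootstrapped from an explicit numerical check of the first few orders by interval arithmetic. Multiplying \eqref{E:RHOLP}--\eqref{E:OMLP} by the denominator $1-y^2\omega^2$ and writing $x = y/y_\ast - 1$, one obtains polynomial identities of the schematic form
\begin{equation*}
(1-y^2\omega^2)\,\rho' = P_\rho(y,\rho,\omega), \qquad (1-y^2\omega^2)\,\omega' = P_\omega(y,\rho,\omega),
\end{equation*}
with $P_\rho$, $P_\omega$ polynomials of low degree. Substituting the ansatz \eqref{eq:Euleriansonicexpansions} and matching powers of $x$, the sonic condition $1-y_\ast^2\omega_0^2=0$ forces the leading piece of the left-hand side at order $x^{j-1}$ to be the coefficient of $x$ in $1-y^2\omega^2$ times $j\rho_j$. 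A direct computation using \eqref{eq:order012coeffs} shows this coefficient equals $-2y_\ast^{-1}(1+y_\ast\omega_1) = 2(2/y_\ast - 1)$, which is strictly nonzero on $[2,3]$. One therefore obtains a recursion
\begin{equation*}
j\,\rho_j = \frac{1}{2(2/y_\ast-1)}\Big(F_j^\rho(\rho_0,\ldots,\rho_{j-1},\omega_0,\ldots,\omega_{j-1})\Big), \qquad j\,\omega_j = (\cdots),
\end{equation*}
where $F_j^\rho, F_j^\omega$ are explicit convolution sums of the previous coefficients, with uniformly bounded (in $y_\ast$) combinatorial weights.

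The first step of the plan is to write out the recursion carefully and identify a constant $K=K(y_\ast)$ bounding the per-order amplification from the convolution weights together with the $1/(2(2/y_\ast-1))$ prefactor, uniformly for $y_\ast$ in the range under consideration. The second step is the induction: assuming $|\rho_k|,|\omega_k| \leq C^{k-\alpha}/k^2$ for $2\leq k<j$ (and using explicit bounds for $k=0,1$ from \eqref{eq:order012coeffs}), one estimates the convolutions via
\begin{equation*}
\sum_{k=2}^{j-2} \frac{C^{k-\alpha}}{k^2}\cdot \frac{C^{j-k-\alpha}}{(j-k)^2} \leq C^{j-2\alpha}\cdot \frac{C_0}{j^2},
\end{equation*}
with $C_0$ the absolute constant coming from $\sum_{k=1}^{\infty} 2/(k^2(j-k)^2) \leq C_0/j^2$, together with boundary terms $k\in\{0,1,j-1\}$ handled separately using the explicit bounds on $\rho_0,\omega_0,\rho_1,\omega_1$. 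The induction closes provided one chooses $C$ large enough, and $\alpha$ close enough to $2$, that $K\cdot C_0 \cdot C^{-\alpha} \leq 1$ after absorbing the $1/j$ gain from the recursion's left-hand side.

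The third step is the base case: verify by direct computation that $|\rho_j|,|\omega_j| \leq C^{j-\alpha}/j^2$ for $2\leq j \leq N$ with some cutoff $N$ large enough that the asymptotic induction takes over. This is where interval arithmetic enters: for each fixed $j$ up to $N$, one rigorously evaluates the explicit polynomial expression for $\rho_j(y_\ast),\omega_j(y_\ast)$ on the input interval $y_\ast \in [2,3]$ (respectively $[2.34,2.342]$) using interval arithmetic, and compares with $C^{j-\alpha}/j^2$. The tighter constants for the small interval $[2.34,2.342]$ reflect the smaller interval-arithmetic overestimation and the better behaviour of $1/(2(2/y_\ast-1))$ there.

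The main obstacle will be making the constants in the induction and the base-case cutoff $N$ compatible: the convolution constant $C_0$ and the factor $K(y_\ast)$ must be tracked carefully enough that the proposed $(C,\alpha)$ absorb them, and $N$ must be chosen so that finite-precision interval evaluation of $\rho_j,\omega_j$ at the high-index base cases does not blow up. In practice this is a balancing act: taking $\alpha$ slightly below $2$ provides the slack needed to dominate the boundary terms of the convolution, while taking $C$ not too large keeps the rate of convergence useful for the subsequent ODE integration via \verb!VNODE-LP!. The computation is routine once the recursion and the convolution estimate are in place; the essential input is the transversality condition $2/y_\ast - 1 \neq 0$, which holds throughout the interval $y_\ast \in [2,3]$.
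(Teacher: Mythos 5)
Your overall strategy matches the paper's: derive a recurrence for $\rho_N,\omega_N$ by matching Taylor coefficients of the polynomialised ODE at the sonic point, bound the convolutions appearing in the source terms inductively using $|\rho_k|,|\omega_k|\leq C^{k-\alpha}/k^2$, and close the base cases with interval arithmetic. The paper organises this through Lemma~\ref{L:FNGN}, which produces the explicit recursion $\rho_N = \frac{1}{2(N(1-1/\omega_0)+1)}\mathcal F_N$ (and a coupled formula for $\omega_N$ involving both $\mathcal F_N$ and $\mathcal G_N$), together with Lemma~\ref{L:PRELIMBOUNDS} for the quadratic coefficients and Lemma~\ref{lemma:inductivebounds} for the convolution bounds, while you describe the same pipeline at a higher level.

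However there is a concrete computational error in your transversality check that would, if taken at face value, break the recursion at $y_*=2$. You claim the coefficient of $x$ in $1-y^2\omega^2$ (with $x = y/y_*-1$) equals $-2y_*^{-1}(1+y_*\omega_1) = 2(2/y_*-1)$ and note this is "strictly nonzero on $[2,3]$". But $2(2/y_*-1)$ vanishes at $y_*=2$, the left endpoint of the very interval at issue. The correct evaluation (using $\omega_0=1/y_*$, $\omega_1=1-2/y_*$, so $\omega_0+\omega_1=1-1/y_*$) is
\begin{equation*}
\big[1-y^2\omega^2\big]_{x^1} = -2y_*^2\,\omega_0(\omega_0+\omega_1) = -2(y_*-1),
\end{equation*}
which is indeed nonzero throughout $[2,3]$. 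Your intermediate formula $-2y_*^{-1}(1+y_*\omega_1)$ also evaluates to $-2(y_*-1)/y_*$, which is neither of the two quantities you equate it to, so there are two separate arithmetic slips. This matches the $N\to\infty$ asymptotics of the paper's prefactor $\frac{1}{2(N(1-y_*)+1)}\sim -\frac{1}{2N(y_*-1)}$. Relatedly, your suggestion that the tighter constants on $[2.34,2.342]$ come from "better behaviour of $1/(2(2/y_*-1))$" there is not supported even by your (incorrect) formula; the improvement comes from the narrower parameter interval tightening the interval-arithmetic enclosures in Lemma~\ref{lemma:order23coeffs} and the final verification. Finally, note that the paper closes the induction from $j\geq 4$ with only $j=2,3$ checked by interval arithmetic, rather than pushing to a large cutoff $N$ as you propose; this requires more careful constant-tracking (the $N(1-y_*)+1$ denominator rather than a constant, the precise $\mathfrak F$ and $\mathfrak G$) but avoids ever having to evaluate the recursion explicitly at high order.
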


The precise numerical bounds on the growth rate of the coefficients will arise from making the strategy of~\cite{GHJ2021b} produce explicit numerical bounds.  Observe that we have sharpened the constants $(C,\al)$ to $(7.2,1.98)$ in the neighbourhood of the sonic point $\S\in[2.34,2.342]$.  To this end, we therefore first establish the recurrence relation for the coefficients $\rho_j$ and $\om_j$.

\begin{lemma}\label{L:FNGN}
The coefficients $\rho_N$ and $\om_N$ satisfy
\begin{align}
\rho_N & = \frac1{2\left(N(1-\frac1{\omega_0})+1\right)} \mathcal F_N, \label{E:RHONFN}\\
\omega_N & = \frac1{2N(1-\frac1{\omega_0})} \mathcal G_N
+ \frac1{2N(1-\frac1{\omega_0})\left(N(1-\frac1{\omega_0})+1\right)} \mathcal F_N, \label{E:OMEGANFNGN}
\end{align}
where
the functions $\mathcal{F}_N$ and $\mathcal{G}_N$ depend on $y_*$ and coefficients $(\rho_k,\om_k)$ of order up to $N-1$ and may be expressed as
\begin{align}
\mathcal F_N = &\,\rho_{N-1}\Big((N-1)\S^2\big((\om^2)_2+2(\om^2)_1+(\om^2)_0\big)+2\Big)+\om_{N-1}\frac{2(2-3\S+\S^2)}{2\S-3}\notag\\
&+2\S^2(\rho_2+\rho_1)\sum_{k+\ell=N-1 \atop 0<k<N-1}\om_k\om_\ell  
+ \S^2\rho_1\sum_{m+n=N \atop 1<m<N-1}\omega_m\omega_n\notag\\
 &+\S^2 \bigg(\sum_{k+\ell = N \atop 1<k<N-2}(k+1)\rho_{k+1}(\omega^2)_\ell 
  + 2\sum_{k+\ell = N-1 \atop 0<k<N-2}(k+1)\rho_{k+1}(\omega^2)_\ell
 + \sum_{k+\ell = N-2 \atop k<N-2}(k+1)\rho_{k+1}(\omega^2)_\ell\bigg) \notag \\
 &  -2\S^2 \bigg((\rho_1-\om_1)\sum_{k+\ell=N-1 \atop 0<k<N-1}\om_k\rho_\ell  +  \sum_{k+\ell+n=N \atop 1<n<N-1 }\omega_k\rho_\ell(\rho_n-\omega_n)+\sum_{k+\ell+n=N-1 \atop 0<n< N-1}\omega_k\rho_\ell(\rho_n-\omega_n) \bigg)\label{E:FNREFORMED}
\end{align}
and
\begin{align}
\mathcal{G}_N=&(4\S-6)\rho_{N-1}+\om_{N-1}\Big(\S^2 (N-1)\big((\om^2)_2+2(\om^2)_1+(\om^2)_0\big) + \frac{2 (4 - 7 \S + 3 \S^2)}{2\S - 3 }\Big)  \notag\\
-&\frac{\S(\S^2-3\S+2)}{2\S-3}\sum_{k+\ell=N-1 \atop 0<k<N-1}\om_k\om_\ell +\S^2\sum_{k+\ell = N \atop 1<k<N-2}(k+1)\om_{k+1}(\omega^2)_\ell\notag\\
& +2\S^2\sum_{k+\ell = N-1 \atop 0<k<N-2}(k+1)\om_{k+1}(\omega^2)_\ell+\S^2\sum_{k+\ell = N-2 \atop k<N-2}(k+1)\om_{k+1}(\omega^2)_\ell+ \S\sum_{m+n=N \atop 1<m<N-1}\omega_m\omega_n \notag\\
& +2\S^2\sum_{k+\ell+n=N \atop 1<n<N-1 }\omega_k\om_\ell(\rho_n-\omega_n)+2\S^2\sum_{k+\ell+n=N-1 \atop 0<n<N-1}\omega_k\om_\ell(\rho_n-\omega_n)\notag\\
&+  \S^2(-1)^{N}(1-\frac{3}{y_*}+3\om_1)(\om^2)_0\notag\\
&+3\S^2 \bigg(-(\om^2)_0\sum_{k+m=N-2\atop k>1}(-1)^m\om_k+ \sum_{k+\ell=N-1 \atop 0<\ell<N-1}\omega_k(\omega^2)_\ell  +\sum_{\substack{k+\ell=N \\ 1<\ell <N-1}}\omega_k(\omega^2)_\ell\bigg). \label{E:GNREFORMED}
\end{align}
\end{lemma}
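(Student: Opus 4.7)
The strategy is the standard one for deriving coefficient recurrences from a regular-singular ODE: substitute the assumed power series, clear the denominators, match coefficients, and then isolate the top-order coefficients $\rho_N$ and $\omega_N$. I introduce the shifted variable $x = y/y_\ast - 1$, so that $\rho = \sum \rho_k x^k$, $\omega = \sum \omega_k x^k$ and $\rho' = y_\ast^{-1}\sum k\rho_k x^{k-1}$, and similarly for $\omega'$. Multiplying both equations of~\eqref{E:RHOLP}--\eqref{E:OMLP} by the sonic factor $1-y^2\omega^2$ gives polynomial expressions in $\rho$, $\omega$ and their first derivatives, with no denominators. Since $1 - y^2\omega^2$ vanishes linearly at $x=0$ with leading coefficient $-2(y_\ast - 1)/y_\ast = 2(\omega_0 - 1)$ (using $\omega_0 = 1/y_\ast$), only one convolution of the product $(1-y^2\omega^2)\rho'$ can produce $\rho_N$ at order $x^{N-1}$, contributing $2N(\omega_0 - 1)\rho_N = -2N(1 - 1/\omega_0)\rho_N$; analogously for $\omega_N$ in the second equation.

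Next I enumerate, on each side of each equation, the \emph{all} terms of the form (constant)$\cdot \rho_N$ or (constant)$\cdot \omega_N$ arising at the $x^{N-1}$ level. On the RHS of~\eqref{E:RHOLP}, the polynomial $-2y\rho\omega(\rho - \omega)$ produces contributions of $\rho_N$ through $-2y\rho_0\omega_0 \cdot \rho_N + 2y \omega_0 \omega_0 \cdot \rho_N$ type terms evaluated at $x=0$, each of which can be reduced using $y_\ast\omega_0 = 1$. A parallel enumeration is carried out in~\eqref{E:OMLP}, where the extra term $(1-3\omega)/y$ must also be expanded in $x$ using the geometric series $1/y = y_\ast^{-1}(1+x)^{-1} = y_\ast^{-1}\sum_{m\geq 0}(-1)^m x^m$, which is exactly the source of the alternating signs $(-1)^N$ and $(-1)^m$ appearing in~\eqref{E:GNREFORMED}. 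Collecting these linear contributions yields a $2\times 2$ triangular system
\begin{equation*}
\begin{pmatrix} 2(N(1-1/\omega_0) + 1) & 0 \\ -2 & 2N(1-1/\omega_0) \end{pmatrix}
\begin{pmatrix} \rho_N \\ \omega_N \end{pmatrix}
= \begin{pmatrix} \mathcal F_N \\ \mathcal G_N \end{pmatrix},
\end{equation*}
where $\mathcal F_N$ and $\mathcal G_N$ collect everything else. Solving this triangular system by back-substitution gives precisely~\eqref{E:RHONFN}--\eqref{E:OMEGANFNGN}, with the mixed term in~\eqref{E:OMEGANFNGN} coming from the off-diagonal entry. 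The system is invertible provided $N(1-1/\omega_0) \neq 0$ and $N(1-1/\omega_0) + 1 \neq 0$; since $1-1/\omega_0 = 1 - y_\ast < 0$ and $y_\ast \in (2,3)$, one checks directly that both conditions hold for all $N \ge 2$.

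The remaining task is purely bookkeeping: carefully expanding $(1-y^2\omega^2)\rho'$ using $y^2\omega^2 = y_\ast^2(1+x)^2(\omega^2)(x)$, writing $(\omega^2)(x) = \sum (\omega^2)_k x^k$ (the Cauchy convolution of $\omega$ with itself), and distributing the prefactor $(1+x)^2 = 1 + 2x + x^2$ across the convolution --- this is exactly what produces the three separate sums with shifted indices $k+\ell = N, N-1, N-2$ appearing in~\eqref{E:FNREFORMED} and~\eqref{E:GNREFORMED}. The analogous procedure applied to the cubic RHS $2y\rho\omega(\rho - \omega)$ yields the triple-convolution sums in $\omega_k \rho_\ell (\rho_n - \omega_n)$, with the restrictions $0 < n < N-1$ and $1 < n < N-1$ precisely excluding the indices where a $\rho_N$ or $\omega_N$ factor would appear (those having already been separated out in Step 1). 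The exclusions $0<k<N-1$, $1<k<N-2$ and their variants in the double sums likewise correspond to removing the contributions already accounted for on the LHS.

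\textbf{Main obstacle.} The conceptual content is minimal; the difficulty is entirely combinatorial. The main care required is to ensure that every term producing $\rho_N$ or $\omega_N$ at the $x^{N-1}$ level has been extracted \emph{exactly once}, with the correct sign and coefficient, and that the index restrictions in the residual convolution sums match this extraction precisely. A careful bookkeeping, using the tabulated low-order values $(\rho_0,\omega_0,\rho_1,\omega_1,\rho_2,\omega_2)$ in~\eqref{eq:order012coeffs} to handle the boundary cases when $N=2,3$, completes the proof.
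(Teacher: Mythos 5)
Your approach --- clear the denominator, expand in $x = y/y_\ast - 1$, match coefficients, isolate the lower-triangular $2\times 2$ linear system in $(\rho_N,\omega_N)$, and solve by back-substitution --- is the same direct calculation the paper invokes (it cites \cite[Lemma 2.3]{GHJ2021b}), and your $2\times 2$ system is correct and inverts to give~\eqref{E:RHONFN}--\eqref{E:OMEGANFNGN}. Two intermediate slips, however: the linear coefficient of $w(x) = 1 - y^2\omega^2$ in $x$ is $w_1 = -2(y_\ast - 1) = 2\bigl(1 - \tfrac{1}{\omega_0}\bigr)$, \emph{not} $-2(y_\ast - 1)/y_\ast = 2(\omega_0 - 1)$ (off by a factor of $y_\ast$), and the asserted identity $2N(\omega_0 - 1) = -2N(1 - 1/\omega_0)$ is false as written (it would require $\omega_0 = 1/\omega_0$). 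Also note that obtaining the $+2$ on the diagonal of the $\rho$-row, and the vanishing of its $\omega_N$-entry, requires tracking the $\omega_N$-dependence hidden inside $w_N$ (via $w_N\,\rho_1$ and the analogous term multiplying $(1-3\omega)/y$), together with a cancellation against the contribution of the cubic right-hand side; your narrative pins the $+2$ solely on contracting the quadratic RHS against $y_\ast\omega_0 = 1$, which misses these pieces. These do not invalidate the approach, but they are exactly the kind of bookkeeping errors you flag as the main obstacle, so they should be corrected before trusting the derivation.
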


\begin{proof}
The proof is a direct calculation, following~\cite[Lemma 2.3]{GHJ2021b}, but fixing the sign errors arising in the final line of \cite[(2.42)]{GHJ2021b} and the second line of~\cite[(2.44)]{GHJ2021b}. This yields 
\begin{align*}
\mathcal F_N = & \S^2 \bigg(\sum_{k+\ell = N \atop 0<k<N-1}(k+1)\rho_{k+1}(\omega^2)_\ell 
+ \sum_{m+n=N \atop 0<m<N}\rho_1\omega_m\omega_n  + 2\sum_{k+\ell = N-1 \atop k<N-1}(k+1)\rho_{k+1}(\omega^2)_\ell
 \notag \\
 & + \sum_{k+\ell = N-2}(k+1)\rho_{k+1}(\omega^2)_\ell  - 2 \Big(\sum_{k+\ell+n=N \atop 0<n<N }\omega_k\rho_\ell(\rho_n-\omega_n)+\left(\omega\rho(\rho-\omega)\right)_{N-1} \Big)\bigg)
\end{align*}
and
\begin{align*}
\mathcal G_N = & \S^2 \bigg(\sum_{k+\ell = N \atop 0<k<N-1}(k+1)\omega_{k+1}(\omega^2)_\ell 
+ \sum_{m+n=N \atop 0<m<N}\omega_1\omega_m\omega_n  + 2\sum_{k+\ell = N-1 \atop k<N-1}(k+1)\omega_{k+1}(\omega^2)_\ell
  \notag \\
 &  + \sum_{k+\ell = N-2}(k+1)\omega_{k+1}(\omega^2)_\ell+ 2 \Big(\sum_{k+\ell+n=N \atop 0<n<N }\omega_k\omega_\ell(\rho_n-\omega_n)+\left(\omega^2(\rho-\omega)\right)_{N-1} \Big)\bigg) \notag \\
 &- \S^2\bigg((-1)^{N-1}(1-\frac{3}{y_*})(\om^2)_0+(\frac{3}{\S}-2)(\om^2)_{N-1}+ (1-\frac{3}{y_*} )\sum_{k+n=N\atop 0<k<N}\omega_k\omega_n\bigg) \notag\\
&+3\S^2 \bigg(-(\om^2)_0\sum_{k+m=N-2\atop k>0}(-1)^m\om_k+\big(2(\om^2)_0+(\om^2)_1\big)\om_{N-1} +  \sum_{k+\ell=N-1 \atop 0<\ell<N-1}\omega_k(\omega^2)_\ell  +\sum_{\substack{k+\ell=N \\ 1<\ell <N-1}}\omega_k(\omega^2)_\ell\bigg).
\end{align*}
 The final form of the coefficient source terms $\mathcal F_N$ and $\mathcal G_N$ then follow by simplifying these expressions.
\end{proof}

In order to control the growth of coefficients associated to quadratic functions of $\rho$ and $\om$, we introduce a constant
\beq\label{def:DCalpha}
D(C,\al,\S)=\frac{2}{y_*}+(1-\frac{1}{\S})\frac{16}{9C}+\frac{7}{4C^{\alpha}}
\eeq
and establish the following lemma.

\begin{lemma}\label{lemma:order23coeffs}
The coefficients at order 2 and 3 satisfy the estimates 
\beqa
|\rho_2|,\,|\om_2|\leq&\, \frac{C^{2-\al}}{4},\qquad |(\rho\om)_2|,\,|(\om^2)_2|\leq&\, D\frac{C^{2-\al}}{4},\\
|\rho_3|,\,|\om_3|\leq&\, \frac{C^{3-\al}}{9},\qquad |(\rho\om)_3|,\,|(\om^2)_3|\leq&\, D\frac{C^{3-\al}}{9},
\eeqa
where $D=D(C,\al,y_*)$ is as in \eqref{def:DCalpha} for the choices 
$(C,\al) =(8.25,1.95)$ whenever $\S\in[2,3]$ and for $(C,\al)= (7.2,1.98)$ whenever $\S\in[2.34,2.342]$.
\end{lemma}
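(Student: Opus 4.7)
The proof is an explicit algebraic computation that breaks into four steps, each reducing to a finite collection of rational inequalities in $\S$ on the stated parameter ranges.

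First I would establish the base bounds on $\rho_2$ and $\om_2$. Since these admit the closed forms recorded in~\eqref{eq:order012coeffs} as rational functions of $\S$, the bound $|\rho_2|,|\om_2|\le C^{2-\al}/4$ reduces to a pair of rational inequalities in $\S$. On $[2,3]$ the denominator $2\S(2\S-3)$ is bounded uniformly away from zero and the numerators are quadratic in $\S$, so elementary optimisation on the endpoints and interior critical points gives the required estimates against $C^{2-\al}/4$ with $(C,\al)=(8.25,1.95)$; on the narrower window $[2.34,2.342]$ the same rational expressions are essentially constant and can be enclosed rigorously at a single point.

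Next, for the order-2 Cauchy products $(\rho\om)_2 = \rho_0\om_2+\rho_1\om_1+\rho_2\om_0$ and $(\om^2)_2 = 2\om_0\om_2+\om_1^2$, I would use the explicit values $\rho_0=\om_0=\S^{-1}$, $\rho_1=-\S^{-1}$, $\om_1=1-2\S^{-1}$ together with the bounds just obtained for $\rho_2,\om_2$. The structure of $D$ in~\eqref{def:DCalpha} is designed exactly for this estimate: the leading $2/\S$ term absorbs the contribution $2\om_0\om_2$, the $(1-1/\S)\cdot 16/(9C)$ piece absorbs the purely lower-order quadratic term $\om_1^2=(\S-2)^2/\S^2$, and the $7/(4C^\al)$ piece absorbs the remaining cross-term $\rho_1\om_1$. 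Each absorption is a single scalar inequality in $\S$.

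For the third-order coefficients I would apply Lemma~\ref{L:FNGN} with $N=3$. Since $\mathcal F_3$ and $\mathcal G_3$ involve only products of coefficients of orders $\le 2$ and the already-controlled Cauchy products $(\om^2)_0,(\om^2)_1,(\om^2)_2$, every term in the recurrence is an explicit function of $\S$ controlled by the previous two steps. The prefactors $[2(3(1-1/\om_0)+1)]^{-1}$ and $[6(1-1/\om_0)]^{-1}$, using $\om_0=1/\S$ with $\S\ge 2$, provide a factor of essentially $1/9$ on both ranges, which is the origin of the denominator $9=3^2$ in the claim. Reducing to rational inequalities in $\S$ alone, one then verifies $|\rho_3|,|\om_3|\le C^{3-\al}/9$ with margin. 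The products $(\rho\om)_3$ and $(\om^2)_3$ are then estimated exactly as at order two, with the identical splitting that defines $D$ carrying the new top-order piece in $\rho_3,\om_3$ into the leading $2/\S$ term and the lower-order Cauchy contributions into the two correction terms.

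The principal obstacle is purely computational: one must verify a moderate number of rational-function inequalities in $\S$ with sufficient slack that the two numerical choices $(C,\al)=(8.25,1.95)$ on $[2,3]$ and $(7.2,1.98)$ on $[2.34,2.342]$ both succeed. This is precisely the kind of verification handled by the interval-arithmetic framework of Appendix~\ref{APP:IA} via \verb!VNODE-LP!, and the sharpening of $(C,\al)$ on the smaller window reflects the extra slack produced by collapsing the $\S$-range to a near point.
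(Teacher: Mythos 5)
The paper's entire proof is a one-line citation of the interval-arithmetic routine \verb!C_alpha_constraint_check_Sonic!, so your proposal is essentially the same approach — unpack the explicit formulas of~\eqref{eq:order012coeffs} and Lemma~\ref{L:FNGN}, reduce each claim to a finite set of rational inequalities in $y_*$, and verify them by interval arithmetic with the given $(C,\al)$. That is exactly what the cited routine does, and the strategy is sound.

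One small correction to your explanatory gloss: the denominator $9$ in $C^{3-\al}/9$ is not ``the origin'' of, nor produced by, the prefactors $[2(3(1-1/\om_0)+1)]^{-1}$ and $[6(1-1/\om_0)]^{-1}$. For $\S\in[2,3]$ those prefactors range roughly over $[1/10,1/4]$ and $[1/12,1/6]$ respectively and are not pinned to $1/9$; the $9=3^2$ is simply the $k=3$ instance of the inductive ansatz $|\rho_k|,|\om_k|\le C^{k-\al}/k^2$ being seeded at its base case. Similarly, the three terms of $D$ in~\eqref{def:DCalpha} are not a designed term-by-term match to the pieces of $(\rho\om)_2$ and $(\om^2)_2$; they are calibrated to absorb the worst case at general $\ell\ge 4$ in Lemma~\ref{L:PRELIMBOUNDS} (with $16/9$ arising as the bound on $\ell^2/(\ell-1)^2$ for $\ell\ge 4$), and one simply verifies numerically that there is enough slack for the $\ell=2,3$ base cases as well. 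None of this affects your proof strategy; the interval check goes through regardless.
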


\begin{proof}
The inequalities are established directly from  interval arithmetic. In the supplementary code, the function \verb!C_alpha_constraint_check_Sonic!  is employed. 
\end{proof}


This shows that Proposition~\ref{P:RHONOMNBDS} holds for $k=2,3$.


\begin{lemma}\label{L:PRELIMBOUNDS}
Let $\S\in[2,3]$ and $\alpha\in(1.9,2)$.  
Assume that 
\begin{align}
\lv \rho_k \rv,\ \ \lv\om_k\rv & \le \frac{C^{k-\alpha}}{k^2}, \ \ 2\le k \le N-1 \label{E:ASS1PRELIM}
\end{align}
for some $C\ge4$ and $N\ge4$. 
Then, for all $2\leq \ell \le N-1$,
\beq
|(\omega^2)_\ell|, \ \ \lv (\omega \rho)_\ell\rv 
\le D \frac{C^{\ell-\alpha}}{\ell^2}.
\eeq
\end{lemma}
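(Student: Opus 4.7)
\textbf{Proof plan for Lemma~\ref{L:PRELIMBOUNDS}.} The plan is a direct, term-by-term estimate of the convolution
\[
(\omega^2)_\ell = \sum_{k=0}^\ell \omega_k\omega_{\ell-k}, \qquad (\omega\rho)_\ell = \sum_{k=0}^\ell \omega_k\rho_{\ell-k},
\]
exploiting the assumption~\eqref{E:ASS1PRELIM} in the interior range $2\le k\le \ell-2$ and the explicit formulas~\eqref{eq:order012coeffs} at the boundary indices $k\in\{0,1,\ell-1,\ell\}$. The base cases $\ell=2,3$ are already covered by Lemma~\ref{lemma:order23coeffs}, so we may restrict to $\ell\ge 4$. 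The coefficient $D$ in~\eqref{def:DCalpha} is, by design, a sum of three pieces, and each corresponds to one of these three ranges of summation, which suggests the split.

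First, I would isolate the four boundary terms. The $k\in\{0,\ell\}$ contribution is $2|\omega_0|\cdot|\omega_\ell|\le \frac{2}{y_\ast}\frac{C^{\ell-\alpha}}{\ell^2}$ (and similarly $|\omega_0\rho_\ell|+|\rho_0\omega_\ell|\le \frac{2}{y_\ast}\frac{C^{\ell-\alpha}}{\ell^2}$ since $|\rho_0|=|\omega_0|=1/y_\ast$); this produces the first summand of $D$. For the $k\in\{1,\ell-1\}$ contribution, using $|\omega_1|=1-\tfrac{2}{y_\ast}$ and $|\rho_1|=1/y_\ast$, together with the elementary estimate $2(1-\tfrac{2}{y_\ast})\le 1-\tfrac{1}{y_\ast}$ for $y_\ast\in[2,3]$ (which is equivalent to $y_\ast\le 3$), and $\ell^2/(\ell-1)^2\le 16/9$ for $\ell\ge 4$, one gets
\[
2|\omega_1||\omega_{\ell-1}|\le (1-\tfrac{1}{y_\ast})\frac{16}{9C}\cdot\frac{C^{\ell-\alpha}}{\ell^2},
\]
and similarly $|\omega_1\rho_{\ell-1}|+|\rho_1\omega_{\ell-1}|\le (1-\tfrac{1}{y_\ast})\frac{16}{9C}\cdot\frac{C^{\ell-\alpha}}{\ell^2}$, producing the second summand of $D$.

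The main obstacle is the interior convolution $\sum_{k=2}^{\ell-2}\frac{1}{k^2(\ell-k)^2}$, which must be bounded by $\frac{7/4}{\ell^2}$ in order to match the third summand of $D$ (since the sequence prefactors combine as $C^{\ell-2\alpha}/(k^2(\ell-k)^2)$). This bound is tight, so the argument cannot be too lossy. My approach would be to use the partial-fraction identity $\frac{1}{k(\ell-k)}=\frac{1}{\ell}\bigl(\frac{1}{k}+\frac{1}{\ell-k}\bigr)$, squaring to obtain
\[
\frac{1}{k^2(\ell-k)^2}=\frac{1}{\ell^2}\bigl(\tfrac{1}{k^2}+\tfrac{1}{(\ell-k)^2}+\tfrac{2}{k(\ell-k)}\bigr),
\]
so that, after summing and using $\sum_{k=2}^\infty 1/k^2=\pi^2/6-1$ and $\sum_{k=2}^{\ell-2}\frac{1}{k(\ell-k)}=\frac{2}{\ell}(H_{\ell-2}-1)$,
\[
\sum_{k=2}^{\ell-2}\frac{1}{k^2(\ell-k)^2}\le\frac{2}{\ell^2}\Bigl((\pi^2/6-1)+\frac{2(H_{\ell-2}-1)}{\ell}\Bigr).
\]
Numerically, the bracketed quantity is maximised at moderate $\ell$ (around $\ell\approx 10$), where it is just below $7/8$, so the required inequality $\le 7/(4\ell^2)$ holds for all $\ell\ge 4$. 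I would verify this by checking $\ell=4,5,\dots,\ell_0$ by interval arithmetic and then observe that the function is monotone decreasing for $\ell\ge\ell_0$ large enough (since $H_{\ell-2}/\ell\to 0$), bounded above by $2(\pi^2/6-1)/\ell^2\approx 1.29/\ell^2$.

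Finally, adding the three pieces gives $|(\omega^2)_\ell|\le D\,C^{\ell-\alpha}/\ell^2$ and, by the identical estimate with $\omega,\rho$ interchanged in the interior sum (using the assumption~\eqref{E:ASS1PRELIM} symmetrically), also $|(\omega\rho)_\ell|\le D\,C^{\ell-\alpha}/\ell^2$. This closes the induction.
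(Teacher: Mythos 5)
Your proposal is correct and matches the paper's proof: the same split of the convolution into the boundary indices $k\in\{0,1,\ell-1,\ell\}$ and the interior range $2\le k\le\ell-2$, the same use of $|\rho_1|+|\omega_1|=1-\tfrac{1}{y_*}$ and of $2|\omega_1|=2-\tfrac{4}{y_*}\le 1-\tfrac{1}{y_*}$ to reuse a single constant $D$ for both $(\omega^2)_\ell$ and $(\omega\rho)_\ell$, and the same combinatorial bound $\sum_{k=2}^{\ell-2}\tfrac{1}{k^2(\ell-k)^2}\le\tfrac{7}{4\ell^2}$, which the paper defers to \eqref{ineq:comb8} in Lemma~\ref{L:COMB} rather than re-deriving inline. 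Your partial-fraction sketch of that combinatorial fact is sound, but the intermediate upper bound obtained by replacing $\sum_{k=2}^{\ell-2}k^{-2}$ with $\pi^2/6-1$ already overshoots $7/(4\ell^2)$ at $\ell=4$, so the direct interval-arithmetic check of small $\ell$ that you mention is load-bearing, not an optional safety net.
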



\begin{proof}
The bound in the case $\ell=2,3$ is given in Lemma \ref{lemma:order23coeffs}.
We therefore prove the bounds for $|(\rho\omega)_\ell|$, $\ell\ge4$.

If $\ell\ge4$, we apply the identities \eqref{eq:order012coeffs} and \eqref{E:ASS1PRELIM} and have
\begin{align}
|(\rho\omega)_\ell| & \le \sum_{k=0}^\ell |\rho_k||\omega_{\ell-k}| \le |\omega_0||\rho_\ell|+|\rho_0||\omega_\ell| + |\omega_1||\rho_{\ell-1}|+|\rho_1||\omega_{\ell-1}| +\sum_{k=2}^{\ell-2}|\rho_k||\omega_{\ell-k}| \notag\\
& \le  \frac{2}{\S}\frac{C^{\ell-\alpha}}{\ell^2}  +  (1-\frac{1}{\S})\frac{C^{\ell-1-\alpha}}{(\ell-1)^2}
+ \sum_{k=2}^{\ell-2} \frac{C^{\ell-2\alpha}}{k^2(\ell-k)^2} \notag \\
& \le   \frac{C^{\ell-\alpha}}{\ell^2}\Big(\frac{2}{\S}+(1-\frac{1}{\S})\frac{\ell^2}{C(\ell-1)^2}+\frac{7}{4C^{\alpha}}\Big) \le D\frac{C^{\ell-\alpha}}{\ell^2}, \notag
\end{align}
where we have used that $|\rho_1|+|\om_1|=1-\om_0=1-\frac1{\S}$ by~\eqref{eq:order012coeffs}, $\frac{\ell^2}{(\ell-1)^2}\le\frac{16}{9}$ for all $\ell\ge4$, and
$$\sum_{k=2}^{\ell-2} \frac{1}{k^2(\ell-k)^2}\leq\frac{7}{4\ell^2},$$
the proof of which we temporarily defer to \eqref{ineq:comb8} in Lemma \ref{L:COMB} below.
Clearly, for $C\geq 4$, $D\leq 2$ as $\alpha>1$.
It is now clear that the estimates for $(\omega^2)_\ell$, 
 $\ell\ge4$ follow in the same way, as the only estimate that changes is to replace $|\rho_1|+|\om_1|=1-\frac{1}{\S}$ with $2\om_1=2-\frac{4}{\S}\leq 1-\frac{1}{\S}$. Other than that, we have used simple numerical bounds from~~\eqref{eq:order012coeffs} and $\S\in[2,3]$ and the inductive assumptions~\eqref{E:ASS1PRELIM}, which both depend only on the index, and are symmetric with respect to $\rho$ and $\omega$.
\end{proof}


The goal of this analysis is to establish the following lemma.

\begin{lemma}\label{lemma:inductivebounds}
Let $N\geq 4$ and assume that
\beqa
|\rho_k|\leq \frac{C^{k-\al}}{k^2}, \quad |\om_k|\leq\frac{C^{k-\al}}{k^2},\\
|(\rho\om)_k|\leq D\frac{C^{k-\al}}{k^2}, \quad |(\om^2)_k|\leq D\frac{C^{k-\al}}{k^2},\\
\eeqa
for all $2\leq k\leq N-1$, some constants $C\geq 4$, $D>0 $, $\al\in(1,2)$. Then, for $N\geq 4$,
\beqa
|\mathcal{F}_N|\leq \mathfrak{F}\frac{C^{N-\al}}{N},
\eeqa
where 
\beqa
\mathfrak{F}=&\S^2 \big|(\om^2)_2+2(\om^2)_1+(\om^2)_0\big|\frac{4}{3C}+\frac{8}{9C}+\frac{2-3\S+\S^2}{2\S-3}\frac{8}{9C}+2|\rho_2+\rho_1|\S^2\Big(\frac{2|\om_1|}{C^2}+ \frac{1}{3C^{1+\al}}\Big)\\
&+ \S\frac{5}{18}\frac{1}{C^{\al}}
+\S^2(0.506)\frac{D}{C^{\al-1}}+\S^2\frac{9}{5}\frac{D}{C^{\al}}+\S^2\Big(|\rho_1|\frac{D}{C^2}+|\om_0\om_1|\frac{4}{C^2}+\frac{D}{C^{1+\al}}\Big)\\
&+2\S^2|\om_1-\rho_1|\Big(\frac{|\rho_1|+|\om_1|}{C^2}+ \frac{1}{3C^{1+\al}}\Big)+\S^2\frac{10}{9}\frac{D}{C^\al} \\
&+2\S^2\Big(|\rho_1-\om_1|\frac{D}{C^2}+|\rho_1\om_0+\om_1\rho_0|\frac{2}{C^2}+\frac{2D}{3C^{1+\al}} \Big) 
\eeqa
and also
\beqa\label{ineq:GNBOUND}
|\mathcal{G}_N|\leq \mathfrak{G}\frac{C^{N-\al}}{N},
\eeqa
where
\beqa
\mathfrak{G}=&\,(4\S-6)\frac{4}{9C}+\S^2 \big|(\om^2)_2+2(\om^2)_1+(\om^2)_0\big|\frac{4}{3C} + \frac{8 (4 - 7 \S + 3 \S^2)}{9(2\S - 3) }\frac{1}{C}\\
&+\frac{\S(\S^2-3\S+2)}{2\S-3}\Big(\frac{2|\om_1|}{C^2}+ \frac{1}{3C^{1+\al}}\Big)+\S^2(0.506)\frac{D}{C^{\al-1}}\\
&+\S^2\Big(\frac{9}{5}\frac{D}{C^{\al}}+|\om_1|\frac{D}{C^2}+|\om_0\om_1|\frac{4}{C^2}+\frac{D}{C^{1+\al}}\Big)+\S \frac{5}{18}\frac{1}{C^{\al}}\\
&+2\S^2\Big(\frac{5}{9}\frac{D}{C^\al}+|\rho_1-\om_1|\frac{D}{C^2}+|\om_1\om_0|\frac{4}{C^2}+\frac{2D}{3C^{1+\al}} \Big)\\
&+\Big|1-\frac{3}{\S}+3\om_1\Big|\frac{4}{C^{4-\al}}+3\S^2\Big(\frac{1}{\S^2C^2}+\frac{2\om_0\om_1+D\om_1}{C^2}+\frac{D}{3C^{1+\al}}+\frac{5}{18}\frac{D}{C^\al}\Big). 
\eeqa
\end{lemma}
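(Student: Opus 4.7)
The strategy is a direct term-by-term estimation of the formulas~\eqref{E:FNREFORMED} for $\mathcal{F}_N$ and~\eqref{E:GNREFORMED} for $\mathcal{G}_N$, matching each summand to the corresponding numerical contribution in $\mathfrak{F}$ and $\mathfrak{G}$. Throughout, I would use the inductive hypotheses $|\rho_k|,|\om_k|\le C^{k-\al}k^{-2}$ and $|(\rho\om)_k|, |(\om^2)_k|\le DC^{k-\al}k^{-2}$ (valid for $2\le k\le N-1$), together with the explicit values $\rho_0=\om_0=1/\S$, $\rho_1=-\om_0$, $\om_1=1-2\om_0$. The linear-in-$(\rho_{N-1},\om_{N-1})$ contributions that open each formula are the easiest: they are bounded directly by $\tfrac{C^{N-1-\al}}{(N-1)^2}$ times the explicit coefficient, and then majorized by $\tfrac{4}{9C}\cdot\tfrac{C^{N-\al}}{N}$ using $\tfrac{N}{(N-1)^2}\le \tfrac{4}{9}$ for $N\ge 4$.

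For the binary convolutions of the form $\sum_{k+\ell=M,\,k,\ell\ge2} a_k b_\ell$ with $a,b\in\{\rho,\om,\rho\om,\om^2\}$, I would apply the combinatorial bound $\sum_{k=2}^{M-2}\tfrac{1}{k^2(M-k)^2}\le \tfrac{7}{4M^2}$ alluded to in \eqref{ineq:comb8}, yielding a bound of the schematic form $\tfrac{7D}{4C^\al}\cdot\tfrac{C^{M-\al}}{M^2}$ (or $\tfrac{7}{4C^\al}\cdot\tfrac{C^{M-\al}}{M^2}$ without $D$); after the $M=N-1$ shift and application of $\tfrac{N}{(N-1)^2}\le \tfrac{4}{9}$, this gives the ``$\tfrac{9}{5}\tfrac{D}{C^\al}$'' and ``$\tfrac{5}{18C^\al}$''--type contributions appearing in $\mathfrak{F},\mathfrak{G}$. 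Mixed sums where one of $k,\ell$ is $0$ or $1$ must be separated and handled using the explicit values of $\rho_0,\om_0,\rho_1,\om_1$, producing the ``$|\om_1|/C^2$'' and ``$1/C^{1+\al}$''--type terms.

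The crucial new ingredient compared with the binary case is the weighted convolution $\sum_{k+\ell=M,\,k\ge 1}(k+1)\rho_{k+1}(\om^2)_\ell$ and its $\om$-counterpart. Splitting the sum at $k=M/2$ gives, by the inductive hypotheses,
\[
\sum_{k+\ell = M}\frac{D\,C^{M+1-2\al}}{(k+1)\ell^2}\le \frac{D\,C^{M+1-2\al}}{M}\Bigl(\sum_{\ell\le M/2}\frac{2}{\ell^2}+\sum_{k\le M/2}\frac{4\log M}{M(k+1)}\Bigr),
\]
which I would bound numerically (via interval arithmetic if needed) by $\tfrac{0.506 D}{C^{\al-1}}\cdot\tfrac{C^{N-\al}}{N}$, giving the ``$\S^2(0.506)D/C^{\al-1}$'' terms in both $\mathfrak{F}$ and $\mathfrak{G}$. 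For the ternary convolutions $\sum_{k+\ell+n=N}\om_k\rho_\ell(\rho_n-\om_n)$ (with $n$ bounded away from the endpoints) I would again split off the contributions with $(k,\ell)\in\{(0,0),(0,1),(1,0)\}$---which are the dominant ones and produce the ``$|\rho_1-\om_1|D/C^2$'' and ``$D/C^{1+\al}$'' terms---and estimate the purely inductive remainder by convolving the binary-sum bound already established with the scalar $|\rho_n-\om_n|\le 2C^{n-\al}n^{-2}$, yielding a further ``$\tfrac{10}{9}\tfrac{D}{C^\al}$''--type contribution. The alternating sum $\sum_{k+m=N-2}(-1)^m\om_k$ in $\mathcal{G}_N$ is handled identically, with the $\al_*$-independent prefactor $\S^2(1-3/\S+3\om_1)$ extracted explicitly.

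The main obstacle is not conceptual but bookkeeping: the formulas for $\mathcal{F}_N$ and $\mathcal{G}_N$ contain many overlapping sums with slightly shifted index ranges ($k+\ell=N$, $N-1$, $N-2$, with various exclusions), and the numerical constants in $\mathfrak{F},\mathfrak{G}$ are chosen tightly. I would therefore tabulate each summand of~\eqref{E:FNREFORMED}--\eqref{E:GNREFORMED} alongside the combinatorial bound used and the resulting term in $\mathfrak{F}$ or $\mathfrak{G}$, and verify each line separately. All of the combinatorial sums needed ($\sum k^{-2}(\ell)^{-2}$, $\sum (k+1)^{-1}\ell^{-2}$, triple convolutions bounded by two successive double convolutions) are to be collected in the companion Lemma~\ref{L:COMB} and invoked uniformly. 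Since all remaining constants depend only on $\S$ and $\al$, the resulting inequalities hold for every $N\ge 4$, completing the estimate.
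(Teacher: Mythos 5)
Your proposal is correct and matches the paper's proof, which consists of a single sentence stating that one should apply the summation estimates of Lemma~\ref{L:COMB} to the formulas from Lemma~\ref{L:FNGN} using Lemma~\ref{L:PRELIMBOUNDS}; you have simply fleshed out the bookkeeping the paper leaves implicit (the $N/(N-1)^2\le 4/9$ majorizations, invoking \eqref{ineq:comb8}, \eqref{ineq:comb2}, \eqref{ineq:comb3} for binary and weighted convolutions, and isolating the low-index contributions). The only minor slip is in your ad hoc re-derivation of the $0.506$-type bound, where the displayed splitting has the $\log M$ factor misplaced, but this is moot since the paper simply cites \eqref{ineq:comb3} from Lemma~\ref{L:COMB}, which is what your plan falls back on anyway.
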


\begin{proof}
The proof follows directly but tediously by applying the summation estimates of Lemma~\ref{L:COMB} to the expressions given for $\mathcal{F}_N$ and $\mathcal{G}_N$ in Lemma~\ref{L:FNGN} and using also Lemma~\ref{L:PRELIMBOUNDS}.
\end{proof}

\begin{proof}[Proof of Proposition~\ref{P:RHONOMNBDS}]
We argue inductively. From Lemma \ref{lemma:order23coeffs}, we recall that the claimed estimates are indeed satisfied for $k=2,3$ provided $\al=1.95$, $C= 8.25$ and $\S\in[2,3]$ or $C= 7.2$, $\S\in[2.34,2.342]$. Now, making the inductive assumption that the estimates hold up to some order $N-1$, we begin from \eqref{E:RHONFN}--\eqref{E:OMEGANFNGN}:
\begin{align}
\rho_N & = \frac1{2\left(N(1-\frac1{\omega_0})+1\right)} \mathcal F_N, \\
\omega_N & = \frac1{2N(1-\frac1{\omega_0})} \mathcal G_N
+ \frac1{2N(1-\frac1{\omega_0})\left(N(1-\frac1{\omega_0})+1\right)} \mathcal F_N .
\end{align}
As the assumptions of Lemmas \ref{lemma:inductivebounds} and \ref{L:PRELIMBOUNDS} are satisfied, we make the estimates
\beqa
|\rho_N|\leq&\, \Big|\frac1{2\left(N(1-\frac1{\omega_0})+1\right)} \mathcal F_N\Big|\leq\frac{4}{2\left(4(\S-1)-1\right)}\mathfrak{F}\frac{C^{N-\al}}{N^2},
\eeqa
where we have used that the quantity
\beqs
\frac{N}{|2\left(N(1-\frac1{\omega_0})+1\right)|}
\eeqs
is maximised for $N=4$ and recalled $\frac{1}{\om_0}=\S$. 

Similarly, 
\beqa
|\omega_N| \leq &\Big|\frac1{2N(1-\frac1{\omega_0})} \mathcal G_N\Big|
+ \Big|\frac1{2N(1-\frac1{\omega_0})\left(N(1-\frac1{\omega_0})+1\right)} \mathcal F_N \Big|\\
\leq& \Big(\frac{1}{2(\S-1)}\mathfrak{G}+\frac{1}{2(1-\S)(4(1-\S)+1)}\mathfrak{F}\Big)\frac{C^{N-\al}}{N^2}. 
\eeqa
 Finally, we use interval arithmetic to conclude that, for $C= 8.25$, $\al=1.95$, all $\S\in[2,3]$, we have \beqa
&\frac{4}{2\left(4(\S-1)-1\right)}\mathfrak{F}<1,\\
&\frac{1}{2(\S-1)}\mathfrak{G}+\frac{1}{2(1-\S)(4(1-\S)+1)}\mathfrak{F}<1.
\eeqa
Moreover, in the case $\S\in[2.34,2.342]$, we obtain the same inequalities for $\al=1.98$ and $C\geq 7.2$.

More precisely, the function \verb!C_alpha_const_check_Sonic! is employed in the attached supplementary code to establish this.
\end{proof}

Next, we identify an accurate range for the Larson-Penston sonic point $\bar y_*$.
\begin{lemma}\label{L:LPYSTAR}
The sonic point of the Larson-Penston solution, $\bar y_*$, is bounded in the following interval,
\beq
\bar y_*\in \mathfrak{y}_*:=[2.3411172805,2.34111728062].
\eeq
\end{lemma}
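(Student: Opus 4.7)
The plan is to combine the rigorous Taylor expansion bounds at the sonic point from Proposition~\ref{P:RHONOMNBDS} with interval arithmetic backward integration using \verb!VNODE-LP!, and to exploit the characterisation of the LP solution as the unique member of the analytic family $\{(\rho(\cdot;y_*),\omega(\cdot;y_*))\}_{y_*\in(2,3)}$ obtained in Theorem~\ref{T:LP} that matches the origin condition $\omega(0)=\tfrac13$.

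First, for any candidate $y_*$ in (or covering) $\mathfrak{y}_*$, I would obtain a validated enclosure of $(\rho(y_*-\nu;y_*),\omega(y_*-\nu;y_*))$ by truncating the series~\eqref{eq:Euleriansonicexpansions} at some order $N$ and controlling the remainder using the sharpened bound $(C,\alpha)=(7.2,1.98)$ from Proposition~\ref{P:RHONOMNBDS}, which applies on $[2.34,2.342]\supset\mathfrak{y}_*$. Concretely, the tail is dominated by a convergent geometric series $\sum_{k>N}C^{k-\alpha}\nu^k/k^2$, rigorously small provided $\nu<1/C$ and $N$ is moderately large. Feeding $y_*$ into \verb!VNODE-LP! as an interval variable produces an enclosure valid uniformly over any candidate subinterval of $\mathfrak{y}_*$.

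Next, I would integrate the ODE system~\eqref{E:RHOLP}--\eqref{E:OMLP} backwards from $y_*-\nu$ down to some fixed $y_0>0$ using \verb!VNODE-LP!. The system is regular on $(0,y_*)$, so this is a standard application. The result is a validated enclosure of $(\rho(y_0;y_*),\omega(y_0;y_*))$. To test a candidate, I would compare this against the origin normalisation: by Theorem~\ref{T:LP} the true LP solution is real-analytic on $[0,\infty)$ with $\omega(0)=\tfrac13$, so the LP value of $\omega$ at $y_0$ lies in an explicit window $\tfrac13+[-K,K]y_0^2$ determined by a uniform bound $K$ on $|\omega''|$ near the origin (itself verifiable by IA from \eqref{E:OMLP}). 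Executing the scheme separately at the two endpoints $\underline{y}_*=2.3411172805$ and $\overline{y}_*=2.34111728062$, I would check that the enclosures of $\omega(y_0;\underline{y}_*)$ and $\omega(y_0;\overline{y}_*)$ lie strictly on opposite sides of the admissible LP window. Combined with continuity of the map $y_*\mapsto\omega(y_0;y_*)$ and the uniqueness of $\bar y_*$ from Theorem~\ref{T:LP}, this forces $\bar y_*\in\mathfrak{y}_*$.

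The main obstacle is retaining sufficient precision through the backward integration: interval-arithmetic enclosures for ODEs generically dilate with step number, and resolving the eleven-digit width of $\mathfrak{y}_*$ requires careful tuning of the Taylor order $N$, the offset $\nu$, and the \verb!VNODE-LP! tolerances, together with possibly subdividing the integration path $[y_0,y_*-\nu]$ into shorter segments. A secondary subtlety is that the Taylor-tail estimate from Proposition~\ref{P:RHONOMNBDS} must be inflated to accommodate $y_*$ ranging over an interval, which is handled by evaluating the bound in its monotone-increasing variable $\nu$ at its endpoint and treating the coefficient intervals themselves as outputs of a validated recursion based on Lemma~\ref{L:FNGN}.
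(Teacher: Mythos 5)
Your proposal heads in the same technical direction at the level of tools (Taylor control at the sonic point via Proposition~\ref{P:RHONOMNBDS}, \verb!VNODE-LP! backward integration), but the logic you use to \emph{conclude} is different from the paper's, and it has a genuine gap.

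The paper does not test whether the backward-integrated $\omega(y_0;y_*)$ lands near $\tfrac13$. Instead it invokes the characterisation from~\cite{GHJS2022},
$\bar y_*=\inf\{y_*\in(2,3)\,:\,\exists\, y_c\text{ with }\omega(y_c;y_*)=\tfrac13\text{ and }\omega'(\cdot;y_*)>0\text{ on }[y_c,y_*]\}$,
and then (a) verifies this crossing-with-monotonicity property for \emph{every} $y_*\in[2.34111728062,3]$, giving the upper bound, and (b) exhibits for $y_*=2.3411172805$ a point $y_d=0.001$ with $\omega'(y_d;y_*)<0$, violating the defining monotonicity and hence giving the lower bound directly from the infimum description.

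Your window argument, by contrast, compares $\omega(y_0;\underline y_*)$ and $\omega(y_0;\overline y_*)$ to the band $\tfrac13\pm Ky_0^2$ and then invokes an intermediate-value argument plus ``uniqueness of $\bar y_*$''. The gap is that this does not locate $\bar y_*$: the band $\tfrac13\pm Ky_0^2$ is not a discriminating feature of LP. In fact \emph{any} solution of~\eqref{E:OMLP} that extends continuously to $y=0$ is forced by the singular term $(1-3\omega)/y$ to satisfy $\omega(0)=\tfrac13$, so lying in the window at small $y_0$ cannot single out $\bar y_*$. Moreover, even granting a clean sign flip of $\omega(y_0;y_*)-\tfrac13$ across $\mathfrak{y}_*$, your IVT step only produces \emph{some} $y_*^{\mathrm{cr}}$ with $\omega(y_0;y_*^{\mathrm{cr}})$ in the window; to deduce $y_*^{\mathrm{cr}}=\bar y_*$ you would need the map $y_*\mapsto\omega(y_0;y_*)$ to be injective (or at least monotone), which you have not established and which Theorem~\ref{T:LP} does not assert. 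Finally, even if the endpoints were both excluded, you would not have ruled out $\bar y_*$ lying outside $[\underline y_*,\overline y_*]$ entirely; the paper addresses this by verifying the membership condition on the full interval $[\overline y_*,3]$ for the upper bound and by appealing to the infimum structure for the lower bound. To repair your argument you would essentially need to replace the window test by the crossing/monotonicity criterion, at which point you recover the paper's proof.

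A secondary practical issue: for the lower-bound endpoint, the paper only integrates to $y_d=0.001$ and checks a sign; your plan to integrate both endpoint solutions to a common $y_0$ may fail or lose all precision, since off-LP solutions need not extend usefully to small $y$.
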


\begin{proof}
We recall from~\cite{GHJS2022} that the Larson-Penston sonic point $\bar y_*$ has the property 
\beq
\bar y_*=\inf\big\{y_*\in(2,3)\,|\,\exists y_c(y_*)\text{ s.t.~}\om(y_c;y_*)=\frac13,\,\om'(y;y_*)>0\,\text{ for all~}y\in[y_c,y_*]\big\}.
\eeq 
To identify an accurate range for the sonic point $\bar y_*$, we therefore use the interval arithmetic ODE solver VNODE--LP (more precisely, the function \verb!y_bar_star_upper_bound! in the supplementary code) to verify that for all $y_*\in[2.34111728062,3]$, the corresponding solution $\om(y;y_*)$ passes below $\frac13$ at some $y_c\in(0,y_*)$ and $\om'(y;y_*)>0$ for all $y\in[y_c;y_*]$. This establishes the claimed upper bound. To prove the lower bound, we use the interval arithmetic solver \verb!LP_solver! to see that for $y_*=2.3411172805$, the solution admits a point $y_d=0.001$ such that $\om'(y_d;y_*)<0$. In each application of the ODE solver, initial data is given at the rescaled point $\frac{y}{y_*}=1-\de$ for $\de=0.08$. The initial data is determined from~\eqref{eq:Euleriansonicexpansions} as
\beqas
\rho(1-\de;y_*)=\sum_{k=0}^{200}\rho_k(y_*)(-\de)^k + \textup{err}[-1,1],\quad \om(1-\de;y_*)=\sum_{k=0}^{200}\om_k(y_*)(-\de)^k+ \textup{err}[-1,1],
\eeqas where the size of the error $\textup{err}$ is determined by the rate of convergence of the series using the coefficient estimates of Proposition~\ref{P:RHONOMNBDS} and a simple geometric series estimate.
\end{proof}

\subsubsection{Taylor expansion for LP near the origin}\label{S:ORIGINTAYLOR}
The final ingredient that we need to give an accurate construction of the LP solution is an estimate on $\rhoLP(0)$, as this characterises the LP  flow near $y=0$.

\begin{lemma}\label{L:ORIGINCOEFFS}
(i) The LP solution $(\rhoLP,\omLP)$ satisfies that $\rhoLP(0)\in[0.83290803,0.83290811]$.\\
(ii) The LP solution expands analytically, as a function of $y$ around the origin $y=0$ as 
\beq
\rhoLP(y)=\sum_{j=0}^\infty\tilde \rho_{2j} y^{2j},\qquad \omLP(y)=\sum_{j=0}^\infty\tom_{2j}y^{2j},
\eeq
where $\trho_{2j}$, $\tom_{2j}$ are smooth functions of $\rhoLP(0)=\trho_0$,
\begin{align}
\tr_2 &= -  \frac13\rho_0(\rho_0-\frac13) = - \frac13  \rho_0^2 + \frac19  \rho_0,  \label{eq:tr2} \\
\tom_2 & = \frac 2{45} (\rho_0-\frac13) = - \frac{2}{135}+   \frac{2}{45} \rho_0, \label{eq:tom2}
\end{align}
and, for $j\geq 2$, the coefficients satisfy the growth bounds
\begin{align}
\lv \trho_{2j} \rv  \le \frac{C^{2j-\alpha}}{(2j)^2}, \qquad \lv \tom_{2j} \rv  \le \frac{C^{2j-\alpha}}{(2j)^2},
\end{align}
where $C=2$ and $\al=1.95$.
\end{lemma}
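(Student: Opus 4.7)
The strategy mirrors the analysis at the sonic point carried out in Section~\ref{S:SONICTAYLOR}, but now at the origin, combined with a shooting argument for part (i).

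For part (ii), I would begin by exploiting the parity of the ODE system~\eqref{E:RHOLP}--\eqref{E:OMLP}: under $y\mapsto -y$, both equations are invariant, so any solution satisfying $\omLP(0)=\frac13$ (which removes the singularity in the $\omLP$ equation) with $\rhoLP(0)=\trho_0$ free extends to an even analytic function near the origin. Writing $\rhoLP(y)=\sum_{j\geq 0}\trho_{2j}y^{2j}$ and $\omLP(y)=\sum_{j\geq 0}\tom_{2j}y^{2j}$ with $\tom_0=\frac13$, I would substitute into the cleared ODEs $(1-y^2\omLP^2)\rhoLP'=-2y\rhoLP\omLP(\rhoLP-\omLP)$ and $y(1-y^2\omLP^2)\omLP'=(1-3\omLP)(1-y^2\omLP^2)+2y^2\omLP^2(\rhoLP-\omLP)$ and match coefficients. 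The order-$y$ match gives~\eqref{eq:tr2}--\eqref{eq:tom2} directly upon using $\tom_0=\frac13$. Matching order $y^{2N-1}$ for $N\geq 2$ yields a linear system
\[
2N\,\trho_{2N}+\tilde{\mathcal A}_N \tom_{2N}=\tilde{\mathcal F}_N, \qquad (2N+1)\tom_{2N}+\tilde{\mathcal B}_N\trho_{2N}=\tilde{\mathcal G}_N,
\]
where $\tilde{\mathcal A}_N,\tilde{\mathcal B}_N$ are elementary functions of $\trho_0$ and $\tilde{\mathcal F}_N,\tilde{\mathcal G}_N$ depend polynomially on $\{\trho_{2k},\tom_{2k}\}_{k<N}$. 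The $2\times 2$ matrix on the left is invertible for $\trho_0>\frac13$ (this will need a short explicit check for $\trho_0\in[0.8,0.84]$), yielding a recurrence of the form~\eqref{E:RHONFN}--\eqref{E:OMEGANFNGN}.

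To establish the growth bound $|\trho_{2j}|,|\tom_{2j}|\leq C^{2j-\alpha}/(2j)^2$ with $C=2$, $\alpha=1.95$, I would proceed by strong induction in $N$, exactly paralleling the proof of Proposition~\ref{P:RHONOMNBDS}. First, a base case at $N=2,3$ can be verified by direct interval-arithmetic evaluation of the explicit formulae for $\trho_{2N},\tom_{2N}$ (using the range for $\trho_0$ from part (i)). The inductive step requires controlling convolution sums $(\rho\omega)_{2\ell}$ and $(\omega^2)_{2\ell}$ via the same combinatorial lemma $\sum_{k=2}^{\ell-2}\frac{1}{k^2(\ell-k)^2}\leq\frac{7}{4\ell^2}$ used in Lemma~\ref{L:PRELIMBOUNDS}, producing an analogue of the constant $D(C,\alpha,\trho_0)$. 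The only real change from the sonic case is that the denominators $2N$ and $2N+1$ in the linear system are cleaner (no $\frac{1}{\omega_0}-1$ factor), which actually permits the smaller choice $C=2$. The main obstacle will be bookkeeping the combinatorial estimates carefully enough to close with $C$ as small as $2$; this may require explicit verification at orders $2\leq 2j\leq 6$ or so before the asymptotic estimates take over.

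For part (i), I would use the rigorous shooting strategy underlying Lemma~\ref{L:LPYSTAR}, reversed in role. Given the Taylor expansion of part (ii) parametrised by $\trho_0$, I would initialise the VNODE-LP solver at a small $y=\delta$ (say $\delta=0.1$) using the truncated series at order $200$ with geometric tail estimate based on the just-proved growth bound, for $\trho_0$ ranging over short intervals within $[0.8329,0.8330]$. Then I would integrate forward to $y\in \mathfrak y_\ast=[2.3411172805,2.34111728062]$ (the sonic window from Lemma~\ref{L:LPYSTAR}) and check whether $(1-y^2\omLP^2)$ has crossed zero with $\rhoLP=\omLP$ there, which is the Larson-Penston sonic passage condition~\cite{GHJS2022}. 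Bisection on $\trho_0$, combined with interval arithmetic, should yield the enclosure $\rhoLP(0)\in[0.83290803,0.83290811]$: for $\trho_0$ just below the interval, the flow will fail to reach the sonic line smoothly (e.g., $\omLP$ will develop the wrong monotonicity), while for $\trho_0$ just above it will overshoot in an analogous way. The hardest technical point here is ensuring that the initial error of truncating the origin expansion, amplified by the ODE flow out to $y\approx 2.34$, remains narrower than the target interval of width $\sim 10^{-7}$; this should be manageable by taking the Taylor truncation order large enough and $\delta$ small enough, leveraging the $C=2$ bound so the tail decays like $2^{-N}$.
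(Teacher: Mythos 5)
Your plan for part (ii) is essentially the paper's approach---derive a recurrence for the origin coefficients, verify base cases by interval arithmetic, and close by strong induction using the convolution bounds of Lemma~\ref{L:COMB}. One structural correction: the recurrence is decoupled, not a coupled $2\times 2$ system. Substituting the even power series into $(1-y^2\omLP^2)\rhoLP' = -2y\rhoLP\omLP(\rhoLP-\omLP)$ and $y(1-y^2\omLP^2)\omLP' = (1-3\omLP)(1-y^2\omLP^2)+2y^2\omLP^2(\rhoLP-\omLP)$, the $y^{2N-1}$ coefficient of the first equation gives $2N\,\trho_{2N}$ equal to a quantity depending only on coefficients of order $\le 2N-2$, and the $y^{2N}$ coefficient of the second gives $(2N+3)\tom_{2N}$ equal to a similar lower-order quantity. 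The off-diagonal entries $\tilde{\mathcal A}_N,\tilde{\mathcal B}_N$ of your proposed matrix both vanish, so the invertibility check you anticipated is unnecessary (compare~\eqref{E:RHONOMNORIGIN}).

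Your plan for part (i) is genuinely different from the paper's and, as written, circular. You propose to first prove (ii) as a family parametrised by $\trho_0$, then determine $\trho_0$ by forward shooting from $y=\delta$ to the sonic window using VNODE-LP---but you also say the base case of (ii) will use ``the range for $\trho_0$ from part (i).'' Each half of your argument invokes the other. The paper avoids this by proving (i) first and entirely independently of the origin expansion: it initialises VNODE-LP at the sonic point, whose location $\mathfrak y_*$ and Taylor data are already rigorously pinned down by Lemma~\ref{L:LPYSTAR} and Proposition~\ref{P:RHONOMNBDS}, integrates \emph{backward} to $y=y_*/100$, and then bounds $\rhoLP(0)$ by a short bootstrap over $[0,y_*/100]$ using the ODE together with the monotonicities $\rhoLP'\le 0\le\omLP'$. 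This route reuses the single shooting argument already carried out over $y_*$ in Lemma~\ref{L:LPYSTAR}; your plan would require a second, independent shooting over $\trho_0$, a rigorous verification of a topological selection criterion (the sonic-passage condition you sketch), and control of the amplification of the origin truncation error over the full interval $[\delta,\,\approx 2.34]$. To repair the circularity you would have to first establish (ii) over a strictly wider interval of $\trho_0$ than $[0.83,0.84]$, which may force worse constants than the $C=2$, $\alpha=1.95$ the paper targets, and then narrow $\trho_0$ afterward---substantially more work than the paper's backward-integration route.
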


\begin{proof}
(i) We first prove the claimed range for $\rhoLP(0)$. To obtain this, we first find from the implementation of VNODE--LP (using the function \verb!LP_solver!) that, at  $y=\frac{y_*}{100}$, for any $\S\in\mathfrak{y}_*$, we have $\rhoLP(\frac{y_*}{100})\in[0.832832036,0.832832045]=:[\underline{\rho},\bar\rho]$ and $\omLP(\frac{y_*}{100})\in[0.333318,0.333376]=:[\underline{\om},\bar\om]$.  We now recall firstly that $\rhoLP'(y)\leq 0\leq \omLP'(y)$, and bootstrap the estimate by supposing that we retain $\rhoLP(y)\leq 0.833$ for $y\in[\bar y,\frac{y_*}{100}]$ and integrating the ODE from any $y\geq \bar y$ to $\frac{y_*}{100}$:
\beq
\rhoLP(y)\leq \rhoLP(\frac{y_*}{100})+\int_{\bar y}^{\frac{y_*}{100}}\frac{2\tilde y\rhoLP\omLP(\rhoLP-\omLP)}{1-\tilde y^2\omLP^2}\,\dif \tilde y\leq \bar\rho+\frac{(0.833)\bar\om(0.833-\frac13)}{1-(\frac{y_*}{100})^2\bar\om^2}(\frac{y_*}{100})^2\leq 0.83290811,
\eeq
where  the precise number is established by the function \verb!rho0_bound!. This proves $\bar y=0$ and closes the claimed upper bound estimate on $\rhoLP(0)$. On the other hand, we similarly obtain a lower bound on $\rhoLP(0)$ by
\beqa
\rhoLP(0)=&\, \rhoLP(\frac{y_*}{100})+\int_{0}^{\frac{y_*}{100}}\frac{2\tilde y\rhoLP\omLP(\rhoLP-\omLP)}{1-\tilde y^2\omLP^2}\,\dif \tilde y\geq \underline{\rho}+\underline{\rho}\frac{1}{3}(\underline{\rho}-\bar{\om})(\frac{y_*}{100})^2
\geq 0.83290803,
\eeqa
where again the function \verb!rho0_bound! is used to verify the precise number.

(ii) In order to obtain the claimed estimate on the Taylor series, we again derive a recurrence relation for the coefficients, given $\trho_0=\rhoLP(0)$, observing that all odd coefficients vanish, to derive, for $N+1=2n$ even
\begin{align}
\tr_{N+1} & = \frac1{N+1} \tilde{\mathcal F}_{N+1},\qquad 
\tom_{N+1}  = \frac1{N+4} \tilde{\mathcal G}_{N+1},  \label{E:RHONOMNORIGIN}
\end{align}
where 
\begin{align}
\tilde{\mathcal F}_{N+1} & : =  \sum_{i+j = n-1}2i\tr_{2i}(\omega^2)_{2j} 
- 2  \left(\omega\rho(\rho-\omega)\right)_{2(n-1)}  \label{E:TILDEFNDEF2}\\
\tilde{\mathcal G}_{N+1} & : = \sum_{i+j = n-1}2i\tom_{2i}(\omega^2)_{2j} 
+3 \sum_{\substack{i+j = n-1 \\ i\neq 0}} \tom_{2i}(\omega^2)_{2j}
+ 2  \left(\omega^2(\rho-\omega)\right)_{2(n-1)}. \label{E:TILDEGNDEF2} 
\end{align}
Following an argument analogous to that above for the expansion around the sonic point (but significantly simpler), we define
$$D_0(C,\al,\tr_0)=\frac13+\tr_0+\frac{41}{36 C^{\al}}$$
in order to show, under the obvious inductive hypothesis,
\beq\label{E:ORIGINQUADRATICCOEFFS}
|(\tom^2)_{2n}|,\:|(\tr\tom)_{2n}|\leq D_0 \frac{C^{2n-\al}}{(2n)^2}.
\eeq
For the choices $C=2$, $\al=1.95$, we verify that these estimates hold for $n=1$ by interval arithmetic, in particular employing the function \verb!C_alpha_constraint_check_Origin!. Making the  obvious inductive assumptions, we obtain that
\beqa
\big|\tilde{\mathcal{F}}_{N+1}\big|\leq \frac{1}{C^2}\Big(&\frac{(N+1)^2}{9(N-1)}+\frac{3D_0(N+1)}{4 C^{\al}}+\frac{4\tr_0(N+1)^2}{3(N-1)^2}\\
&+\frac{2D_0(\tr_0-\frac13)(N+1)^2}{(N-1)^2}+\frac{9D_0}{C^{\al}}\Big)\frac{C^{N+1-\al}}{(N+1)^2}, \\
\big|\tilde{\mathcal{G}}_{N+1}\big|\leq \frac{1}{C^2}\Big(&\frac{(N+1)^2}{9(N-1)}+\frac{3D_0(N+1)}{4 C^{\al}}+\frac{4(N+1)^2}{9(N-1)^2}\\
&+\frac{2D_0(\tr_0-\frac13)(N+1)^2}{(N-1)^2}+\frac{9D_0}{C^{\al}}+\frac{(N+1)^2}{3(N-1)^2}+\frac{27D_0}{4C^{\al}}\Big)\frac{C^{N+1-\al}}{(N+1)^2}.  
\eeqa
Combining these estimates with~\eqref{E:RHONOMNORIGIN}, we close the inductive step using interval arithmetic, in particular employing the function \verb!C_alpha_constraint_check_Origin!.
\end{proof}


\subsection{Energy coefficients}

The purpose of this section is to establish identities for the coefficients used in the exclusion of eigenvalues proofs of Sections~\ref{S:LARGEIMAG}--\ref{S:SMALLIMAG}. Specifically, we demonstrate the coefficients used in the ODE formulae of Proposition~\ref{prop:A4B4} and~\eqref{eq:Pequation}.

\subsubsection{Coefficients for Proposition~\ref{prop:A4B4}}\label{S:Highbcoeffs}
Throughout this subsection, we follow the notation used in the proof of Proposition~\ref{prop:A4B4} and simply collect together the identities arising in that proof for use in the interval arithmetic code. The coefficient functions, expressed as functions of the LP solution and its derivatives up to order 5, are contained in the section of the code titled \verb!SEC: Large b Eigenfunction Coefficients!.

 First, solving for $A^{(1)}$ and $B^{(1)}$ using~\eqref{E:AUPPERZERO}--\eqref{def:b0} and~\eqref{E:A1B1DEF}, we find
\beq
A^{(1)}=\frac{2\wLP'+2a\vLP+2ib\vLP}{\wLP}-\frac{\rhoLP'}{\rhoLP}-d^{(0)},
\eeq
and
\begin{align*}
B^{(1)}=&\,\frac{b^{(1)}_1+ib^{(1)}_2}{\wLP},\\
b^{(1)}_1=&\,b^2-a^2-a+2\rhoLP+4a\vLP'+4a\omLP+\wLP''-4\omLP\vLP'-\wLP\big(\frac{\rhoLP'}{\rhoLP}\big)'-\wLP'\frac{\rhoLP'}{\rhoLP}-2\wLP\frac{\rhoLP'}{y\rhoLP}\\
&- (\wLP'+2a\vLP) d^{(0)}_1+2b\vLP d^{(0)}_2+\wLP d^{(0)}_1\frac{\rhoLP'}{\rhoLP},\\
b^{(1)}_2=&\,b\big(4\vLP'+4\omLP-2a-1\big)+\wLP d^{(0)}_2\frac{\rhoLP'}{\rhoLP}- (\wLP'+2a\vLP)  d^{(0)}_2-2b\vLP d^{(0)}_1,
\end{align*}
where
\beq
d^{(0)}=d^{(0)}_1+id^{(0)}_2=\frac{(b^{(0)})'}{b^{(0)}}.
\eeq
For convenience, we set $$D^{(1)}=d^{(0)}.$$
Next, using~\eqref{E:A2B2DEF} to solve for $A^{(2)}$ and $B^{(2)}$, we obtain
\beq
A^{(2)}=\frac{3\wLP'+2a\vLP+2ib\vLP}{\wLP}-\frac{\rhoLP'}{\rhoLP}-D^{(2)}
\eeq
and
\begin{align*}
{}&B^{(2)}=\frac{b^{(2)}_1+ib^{(2)}_2}{\wLP},\label{def:b2}\\
&b^{(2)}_1=b^2-a^2-a+2\rhoLP+6a\vLP'+3\wLP''+4\omLP\vLP'-2\big(\wLP\frac{\rhoLP'}{\rhoLP}\big)'
+\big( \wLP \frac{\rhoLP'}{\rhoLP} - \wLP'- 2a\vLP\big) d^{(0)}_1+2b\vLP d_2^{(0)}\\
&+2\wLP\frac{d^{(0)}_1}{y}-\wLP(d^{(0)}_1)'-\wLP'd^{(0)}_1-(2\wLP'+2a\vLP)d^{(1)}_1+2b\vLP d^{(1)}_2+\wLP\frac{\rhoLP'}{\rhoLP}d^{(1)}_1+\wLP d^{(0)}_1d^{(1)}_1-\wLP d^{(0)}_2d^{(1)}_2,\\
&b^{(2)}_2=b\big(6\vLP'-2a-1\big)+\wLP d^{(0)}_2\frac{\rhoLP'}{\rhoLP}+2\wLP \frac{d^{(0)}_2}{y}-\wLP(d^{(0)}_2)'-\wLP'd^{(0)}_2 - (\wLP'+2a\vLP) d_2^{(0)} - 2b \vLP d_1^{(0)} \\
&-2b\vLP d^{(1)}_1-(2\wLP'+2a\vLP)d^{(1)}_2+\wLP d^{(1)}_2\frac{\rhoLP'}{\rhoLP}+\wLP d^{(0)}_2d^{(1)}_1+\wLP d^{(0)}_1d^{(1)}_2,
\end{align*} 
where
\beq
D^{(2)}=d^{(0)}+d^{(1)},\qquad d^{(1)}=d^{(1)}_1+id^{(1)}_2=\frac{(b^{(1)})'}{b^{(1)}}.
\eeq
Continuing the procedure,
\beq
A^{(3)}=\frac{4\wLP'+2a\vLP+2ib\vLP}{\wLP}-\frac{\rhoLP'}{\rhoLP}-D^{(3)}
\eeq
and
\begin{align*}
B^{(3)}=&\,\frac{b^{(3)}_1+ib^{(3)}_2}{\wLP},\\
b^{(3)}_1 = &\,b^{(2)}_1 + \frac{2}{y} \big( 3\wLP'  + 2a\vLP - w\frac{\rhoLP'}{\rhoLP} - \wLP d_1^{(0)} - \wLP d_1^{(1)} \big) + 3\wLP'' + 2a\vLP' - \big(\wLP\frac{\rhoLP'}{\rhoLP} \big)'- (\wLP d_1^{(0)})' - (\wLP d_1^{(1)})' \\
& - d_1^{(2)} \Big( 3\wLP' +2a\vLP - \wLP\frac{\rhoLP'}{\rhoLP} - \wLP d_1^{(0)} - \wLP d_1^{(1)} \Big) + d_2^{(2)}\big( 2b \vLP - \wLP d_2^{(0)}- \wLP d_2^{(1)} \big),\\
b^{(3)}_2 = &\,b^{(2)}_2 + \frac{2}{y} \big( 2b \vLP - \wLP d_2^{(0)}- \wLP d_2^{(1)} \big) + 2b \vLP' - (\wLP d_2^{(0)})' - (\wLP d_2^{(1)})' \\
& - d_2^{(2)} \Big( 3\wLP' +2a\vLP - \wLP\frac{\rhoLP'}{\rhoLP} - \wLP d_1^{(0)} - \wLP d_1^{(1)} \Big) - d_1^{(2)}\big( 2b \vLP - \wLP d_2^{(0)}- \wLP d_2^{(1)} \big),
\end{align*}
where
\beq
D^{(3)}=d^{(0)}+d^{(1)}+d^{(2)},\qquad  d^{(2)}=d^{(2)}_1+id^{(2)}_2=\frac{(b^{(2)})'}{b^{(2)}}.
\eeq
Finally, after the application of the final derivative, $A^{(4)}$ takes the claimed form
\beq\label{E:A4IDENTITY}
A^{(4)}=\frac{4\wLP'+2a\vLP+2ib\vLP}{\wLP}-\frac{\rhoLP'}{\rhoLP}-D^{(4)}
\eeq
 and 
\beqa
b^{(4)}_1=&\,b^{(3)}_1-\frac{2}{y}\big(4\wLP'+2a\vLP-\wLP\frac{\rhoLP'}{\rhoLP}-\wLP(d^{(0)}_1+d^{(1)}_1+d^{(2)}_1)\big)+(4\wLP''+2a\vLP')-\big(\wLP\frac{\rhoLP'}{\rhoLP}\big)'\\
&-\big(\wLP(d^{(0)}_1+d^{(1)}_1+d^{(2)}_1)\big)'-d^{(3)}_1(4\wLP'+2a\vLP)+2b\vLP d^{(3)}_2\\
&+\wLP d^{(3)}_1\big(\frac{\rhoLP'}{\rhoLP}+d^{(0)}_1+d^{(1)}_1+d^{(2)}_1\big)-\wLP d^{(3)}_2\big(d^{(0)}_2+d^{(1)}_2+d^{(2)}_2),
\eeqa
and
\beqa
b^{(4)}_2=&\,b^{(3)}_2-\frac{2}{y}\big(2b\vLP-\wLP(d^{(0)}_2+d^{(1)}_2+d^{(2)}_2)\big)+2b\vLP'-\big(\wLP(d^{(0)}_2+d^{(1)}_2+d^{(2)}_2)\big)'\\
&-d^{(3)}_2\big(4\wLP'+2a\vLP-w\frac{\rhoLP'}{\rhoLP}-\wLP(d^{(0)}_1+d^{(1)}_1+d^{(2)}_1)\big)-d^{(3)}_1\big(2b\vLP-\wLP(d^{(0)}_2+d^{(1)}_2+d^{(2)}_2)\big),
\eeqa
where
\[
D^{(4)}=d^{(0)}+d^{(1)}+d^{(2)}+d^{(3)}, \qquad d^{(3)}=d^{(3)}_1+id^{(3)}_2=\frac{(b^{(3)})'}{b^{(3)}}.
\]

 \subsubsection{{Coefficients for~\eqref{eq:Pequation}}}\label{S:PEQCOEFFS}

Throughout this subsection, we follow the notation used in the proof of Proposition~\ref{P:NOSMALLIMAG} and simply collect together the identities arising in that proof for use in the interval arithmetic code. The coefficient functions, expressed as functions of the LP solution and its derivatives up to order 3, are contained in the section of the code titled \verb!SEC: Small b Eigenfunction Coefficients!.

 We begin by recalling the equation~\eqref{eq:Qlambda} for $Q=Q_\la$: 
\beq\label{eq:Q}
Q''+\widetilde{A}^{(0)}Q'+\widetilde{B}^{(0)}Q=0,
\eeq
where
\beq\label{def:A0Q}
\widetilde{A}^{(0)}=\frac{4}{y}+\frac{\wLP'+2a\vLP+2ib\vLP}{\wLP}+\widetilde{a}^{(0)},
\eeq
where 
\beq\label{def:a0Q}
\widetilde{a}^{(0)}=\frac{\rhoLP'}{\rhoLP}-\frac{2\omLP'}{1-\omLP},
\eeq
and
\beqa\label{def:B0Q}
\widetilde{B}^{(0)}=&\,\frac{\widetilde{b}^{(0)}_1+i\widetilde{b}^{(0)}_2}{\wLP},\\
\widetilde{b}^{(0)}_1=&\,a(1-a)+b^2+2a\omLP\frac{1-\vLP'}{1-\omLP},\qquad\widetilde{b}^{(0)}_2=(1-2a)b+2b\omLP\frac{1-\vLP'}{1-\omLP}.
\eeqa
 The above coefficients are easily read off from~\eqref{def:W1W2}.
Note that $B^{(0)}$ may equivalently be written as
\beq\label{eq:B0Qalt}
\widetilde{B}^{(0)}=\frac{a+ib}{\wLP}\Big(1-a-ib+2\vLP\frac{1-\vLP'}{y-\vLP}\Big).
\eeq


By differentiating~\eqref{eq:Q} directly, we derive that the first derivative quantity $Q'$ satisfies
\beq\label{eq:Q'}
(Q')''+\widetilde{A}^{(1)}(Q')'+\widetilde{B}^{(1)}Q'=0,
\eeq
where the coefficients are defined by
\beq\label{def:A1Q}
\widetilde{A}^{(1)}=\frac{4}{y}+\frac{2\wLP'+2a\vLP+2ib\vLP}{\wLP}+\widetilde{a}^{(0)}-\widetilde{d}^{(0)},
\eeq
where 
\beqa\label{def:d0Q}
\widetilde{d}^{(0)}=&\,\widetilde{d}^{(0)}_1+i\widetilde{d}^{(0)}_2,\\
\widetilde{d}^{(0)}_1=&\,\frac{\big(1-a+2\vLP\frac{1-\vLP'}{y-\vLP}\big)\big(2\vLP\frac{1-\vLP'}{y-\vLP}\big)'}{(1-a+2\vLP\frac{1-\vLP'}{y-\vLP})^2+b^2},\quad 
\widetilde{d}^{(0)}_2=\frac{b\big(2\vLP\frac{1-\vLP'}{y-\vLP}\big)'}{(1-a+2\vLP\frac{1-\vLP'}{y-\vLP})^2+b^2},
\eeqa
and
\beqa\label{def:B1Q}
\widetilde{B}^{(1)}=&\,-\frac{4}{y^2}+\frac{\widetilde{b}^{(1)}_1+i \widetilde{b}^{(1)}_2}{\wLP},\\
\widetilde{b}^{(1)}_1=&\,\widetilde{b}^{(0)}_1+\wLP''+2a\vLP'+(\wLP\frac{\rhoLP'}{\rhoLP}-2\frac{\wLP\omLP'}{1-\omLP})'-8\omLP\vLP'\\
&-(\wLP'+2a\vLP)\widetilde{d}^{(0)}_1+2b\vLP \widetilde{d}^{(0)}_2-\wLP\widetilde{d}^{(0)}_1\big(\frac{4}{y}+\widetilde{a}^{(0)}\big),\\
\widetilde{b}^{(1)}_2=&\,\widetilde{b}^{(0)}_2+2b\vLP'-(\wLP'+2a\vLP)\widetilde{d}^{(0)}_2-2b\vLP \widetilde{d}^{(0)}_1-\wLP\widetilde{d}^{(0)}_2\big(\frac{4}{y}+\widetilde{a}^{(0)}\big).
\eeqa


Now, differentiating equation \eqref{eq:Q'} and recalling $P=Q''$ from~\eqref{E:PLADEF} gives
\beqa
P''+\widetilde{A}^{(2)}P'+\widetilde{B}^{(2)}P=0,
\eeqa
\beqa
\widetilde{A}^{(2)}=&\,\frac{6}{y}+\frac{3\wLP'+2a\vLP+2ib\vLP}{\wLP}+\widetilde{a}^{(0)}-\widetilde{d}^{(0)}-\widetilde{d}^{(1)},
\eeqa
where we define
\beqa\label{def:d1Q}
\widetilde{d}^{(1)}=&\,\frac{(-4\wLP+y^2( \widetilde{b}^{(1)}_1+i\widetilde{b}^{(1)}_2))'}{-4\wLP+y^2(\widetilde{b}^{(1)}_1+i\widetilde{b}^{(1)}_2)}.
\eeqa
By recalling that $U$ is defined in~\eqref{def:U} such that
$$\frac{\wLP'}{\wLP}=\frac{-2a_*\vLP}{\wLP}+U,$$
we get
\beq\label{def:A2Q}
\widetilde{A}^{(2)}=\frac{6}{y}+\frac{\wLP'+2(a-2a_*)\vLP+2ib\vLP}{\wLP}+\widetilde{a}^{(2)},
\eeq
with
\beqa\label{def:a2Q}
\widetilde{a}^{(2)}=& \widetilde{a}^{(2)}_1+i\widetilde{a}_2^{(2)},\\
\widetilde{a}_1^{(2)}=&\,\frac{\rhoLP'}{\rhoLP}-\frac{2\omLP'}{1-\omLP}+2U-\widetilde{d}^{(0)}_1-\widetilde{d}^{(1)}_1,\qquad \widetilde{a}_2^{(2)}=-\widetilde{d}^{(0)}_2-\widetilde{d}^{(1)}_2,
\eeqa
Moreover,
\beqa
\widetilde{B}^{(2)}=&\,\frac{1}{\wLP}\Big(\widetilde{b}^{(1)}_1+2\wLP''+2a\vLP'+(\wLP\widetilde{a}^{(0)})'-\wLP(\widetilde{d}^{(0)}_1)'-16\omLP\vLP'+4a\omLP+\frac{2\wLP}{y}\big(\widetilde{a}^{(0)}-\widetilde{d}^{(0)}_1\big)\\
&\quad-\wLP'\widetilde{d}^{(0)}_1-(2\wLP'+2a\vLP)\widetilde{d}^{(1)}_1+2b\vLP \widetilde{d}^{(1)}_2-\wLP\widetilde{d}^{(1)}_1\big(\frac{4}{y}+\widetilde{a}^{(0)}-\widetilde{d}^{(0)}_1\big)-\wLP\widetilde{d}^{(1)}_2\widetilde{d}^{(0)}_2\Big)\\
&+i\frac{1}{\wLP}\Big(\widetilde{b}^{(1)}_2+2b\vLP'-\wLP(\widetilde{d}^{(0)}_2)'+4b\omLP-\frac{2}{y}\widetilde{d}^{(0)}_2-\wLP'\widetilde{d}^{(0)}_2-(2\wLP'+2a\vLP)\widetilde{d}^{(1)}_2-2b\vLP \widetilde{d}^{(1)}_1\\
&\quad -\wLP\widetilde{d}^{(1)}_2\big(\frac{4}{y}+\widetilde{a}^{(0)}-\widetilde{d}^{(0)}_1\big)+\wLP\widetilde{d}^{(1)}_1\widetilde{d}^{(0)}_2\Big),
\eeqa
as claimed in \eqref{eq:Pequation}.

\subsubsection{Energy coefficients}\label{S:ENERGYIA}

\begin{lemma}\label{L:HLAMBDASIGN}
The coefficients ${H_\l}(y)$ defined in~\eqref{E:HPSI} and $D^{(4)}_2$ as in~\eqref{E:AFOURDEF} (compare~\eqref{E:A4IDENTITY}) satisfy, for all $a\in[0,1]$, $b\geq 8$, 
\beq
{H_\l}(y)+\frac{1+\frac{b^2}{\al_*^2}}{4}\wLP|D^{(4)}_2|^2<0,\ \ \ \ y\in[0,y_*].
\eeq
\end{lemma}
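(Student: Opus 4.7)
The plan is to split the parameter range into an asymptotic regime, where $b$ is large enough that the leading-order $b^2$ term dominates and a contradiction follows from closed-form estimates, and a compact intermediate regime, in which rigorous interval arithmetic applied to the explicit LP solution yields a sign verification.

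\textbf{Step 1 (Leading-order asymptotic analysis).} First I would expand the quantity $\mathcal{Q}(y,a,b):=H_\lambda(y)+\frac{1+b^2/\alpha_*^2}{4}w|D^{(4)}_2|^2$ in powers of $b$, using the estimates $|b^{(4)}_1-b^2|\le C$, $|b^{(4)}_2-b(10v'-2a-1)|\le Cb^{-1}$ of Proposition~\ref{prop:A4B4}, together with $D^{(4)}_1=O(b^{-2})$ and $D^{(4)}_2=O(b^{-1})$ from~\eqref{ineq:D42ests}. As computed in the remark following Lemma~\ref{L:LARGEBENERGY}, this gives
\[
\mathcal{Q}(y,a,b)=\tfrac{b^2}{\alpha_*}\Bigl(\tfrac{4v^2 v'}{w}-a-2a_*+8v'-2vU\Bigr)+O_{b\to\infty}(1),
\]
uniformly in $a\in[0,1]$ and $y\in[0,y_*]$. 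Since $\alpha_*=a-2a_*<0$ for $y_*\in(2,3)$ (recall $a_*=1-1/y_*$), and since the bracketed coefficient is bounded below by $1/3$ on $[0,y_*]\times[0,1]$ (using $v'\ge 1/3$, $4v^2 v'/w\ge 2v\vLP U$, and $\vLP'(y_*)=a_*\le 2/3$), there exists a constant $c_0>0$ depending only on the LP profile such that
\[
\mathcal{Q}(y,a,b)\le -c_0 b^2+C_1,\qquad y\in[0,y_*],\ a\in[0,1].
\]
Choosing $B_*=\lceil\sqrt{2C_1/c_0}\rceil$ then yields $\mathcal{Q}<0$ for all $b\ge B_*$, $a\in[0,1]$, $y\in[0,y_*]$. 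The constants $C_1,B_*$ can be made fully explicit by tracking every $O(b^{-1})$ remainder in the formulas for $b^{(4)}_j$ and $D^{(4)}_j$ derived in Appendix~\ref{S:Highbcoeffs}.

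\textbf{Step 2 (Interval arithmetic on the compact box).} For $(a,b)\in[0,1]\times[8,B_*]$, the expression $\mathcal{Q}$ is a continuous function of $y\in[0,y_*]$ and of the LP solution $(\rhoLP,\omLP)$ and its derivatives up to order five (required by the formulas in Appendix~\ref{S:Highbcoeffs}). I would use the Taylor expansions of Sections~\ref{S:SONICTAYLOR}--\ref{S:ORIGINTAYLOR} to obtain validated enclosures for $(\rhoLP,\omLP)$ near $y=0$ and $y=y_*$ with $y_*\in\mathfrak{y}_*$ (Lemma~\ref{L:LPYSTAR}), and then VNODE-LP to propagate these enclosures across the remainder of $[0,y_*]$, producing interval enclosures for $(\rhoLP,\omLP,\rhoLP',\omLP',\ldots)$ on a partition $\{y_k\}$ of $[0,y_*]$. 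Combined with enclosures $[8,B_*]$ for $b$ and $[0,1]$ for $a$, each coefficient $\tilde{a}^{(0)},b^{(j)}_i,d^{(j)}_i,D^{(j)}_i,U$ from the recursions in Appendix~\ref{S:Highbcoeffs} becomes a composition of rigorous interval-arithmetic operations, yielding an interval enclosure for $\mathcal{Q}$ on each cell $[y_k,y_{k+1}]\times[0,1]\times[8,B_*]$. The claim is verified by checking, cell-by-cell (with adaptive bisection where needed), that the upper endpoint of this interval is strictly negative.

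\textbf{Main obstacle.} The hardest part is handling $y$ close to the sonic point, where $w(y)\to 0$ so that several coefficients ($D^{(j)}, b^{(j)}/w$) are of indeterminate form. Direct propagation by the ODE solver fails in any neighborhood of $y_*$, so I would rely on the convergent Taylor series from Section~\ref{S:SONICTAYLOR} (with explicit geometric tail bound $|\rho_j|,|\omega_j|\le C^{j-\alpha}/j^2$ given by Proposition~\ref{P:RHONOMNBDS} for $y_*\in\mathfrak{y}_*$) to evaluate $\mathcal{Q}$ analytically on a small interval $[y_*-\delta,y_*]$. A delicate cancellation must be exhibited: the factor $w$ in $\tfrac14 w|D^{(4)}_2|^2$ precisely absorbs the $w^{-2}$ singularity of $|D^{(4)}_2|^2$, so the proposed computation is regular at the sonic point; I would verify this by expanding $\mathcal{Q}$ to leading order in $(y_*-y)$ using the sonic Taylor coefficients, checking negativity of the resulting polynomial in $(a,b)$ on $[0,1]\times[8,B_*]$, and choosing $\delta$ and the computational partition so that the validated enclosure on $[y_*-\delta,y_*]$ remains strictly negative. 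A secondary difficulty is controlling the derivative count: five derivatives of $(\rhoLP,\omLP)$ are needed, so the Taylor tail estimate in Proposition~\ref{P:RHONOMNBDS} must be differentiated five times and used to bound the truncation error of each derivative in the initial enclosure supplied to VNODE-LP.
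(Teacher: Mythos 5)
Your overall two-step plan (an asymptotic threshold $B_*$ obtained by making the remark after Lemma~3.9 quantitative, followed by interval arithmetic on the compact box $[0,y_*]\times[0,1]\times[8,B_*]$ using Taylor expansions near $y=0,y_*$ and VNODE-LP in between) matches the paper's strategy; the paper's proof text is terse and simply delegates to three code functions, but the mathematical content — leading-order $b^2$ damping via the sign $\alpha_*<0$ plus validated evaluation of $(\rhoLP,\omLP)$ and their first five derivatives — is the same. Your Step~1 estimates $-a-2a_*+8\vLP'\geq\frac13$ and $\frac{4\vLP^2\vLP'}{\wLP}-2\vLP U=\frac{4\vLP^2}{\wLP}(2\vLP'-\vLP'(y_*))\geq 0$ are exactly the paper's.

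However, the ``main obstacle'' paragraph contains a genuine misconception about the singularity structure at $y=y_*$. You claim that $|D^{(4)}_2|^2$ has a $\wLP^{-2}$ singularity that the prefactor $\wLP$ must ``precisely absorb.'' This is not so: Proposition~3.8 establishes that $D^{(4)}$ is a \emph{smooth} function on $[0,y_*]$ (it is a sum of logarithmic derivatives $(b^{(j)})'/b^{(j)}$ of smooth, non-vanishing functions $b^{(j)}$), with $D^{(4)}_2=O(b^{-1})$ uniformly. Consequently $\wLP|D^{(4)}_2|^2\to 0$ at the sonic point — there is no cancellation to exhibit. The $\wLP^{-1}$ divergence in the quantity $\mathcal Q$ you are bounding lives instead inside $H_\lambda$ itself, in the term $\big(\alpha_*+\frac{b^2}{\alpha_*}\big)\frac{4\vLP^2\vLP'}{\wLP}$; since $\alpha_*+\frac{b^2}{\alpha_*}<0$ and $\vLP^2\vLP'/\wLP\to+\infty$, this term diverges to $-\infty$, so the sign condition is in fact easiest near $y_*$. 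The computational care needed there is not to handle a hidden cancellation but simply to evaluate $\vLP^2\vLP'/\wLP$ reliably from the sonic Taylor series (which is what the code's \verb!large_b_Sonic_Prover! does); if you feed the unsimplified $0/0$ form into interval arithmetic you get useless enclosures, but the fix is algebraic simplification using $\wLP=(z-1)\tilde w(z)$, not a cancellation between $H_\lambda$ and $\wLP|D^{(4)}_2|^2$. With this correction the rest of your plan is sound.
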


\begin{proof}
The proof of this inequality directly employs the interval arithmetic package VNODE-LP. Having described above in Appendix~\ref{A:LPCONSTRUCTION} how we construct the LP solution, we now employ the identities of Section~\ref{S:Highbcoeffs} to derive expressions for this coefficient $H_\l$ in terms of the LP solution and its derivatives up to order 5. Due to the singular points at the origin and at the sonic point, we employ the Taylor expansions near these two points to construct the LP quantities, and use the VNODE-LP ODE solver to obtain the LP solution in the intermediate region between them. The interval arithmetic functions \verb!large_b_Sonic_Prover!, \verb!large_b_Origin_Prover!, and \verb!large_b_Intermediate_Prover! then establish the claimed sign condition for $b\geq 8$.

In order to compute the coefficient functions arising in $H_\l$ and as derived in Section~\ref{S:Highbcoeffs}, we require derivatives of the LP solution of order up to 5, as well as quotients of such functions with $y$ and their derivatives. All of these derivatives are computed and stored using functions in the section of the code titled \verb!SEC: LP Derivatives!. Due to the singularities at the origin and sonic point, we derive these various quantities using Taylor expansions (with suitable errors arising from truncation) in \verb!SUBSEC: Derivatives near sonic point! and \verb!SUBSEC: Derivatives near origin!, and rely on the structure of the LP ODE system~\eqref{E:RHOLP}--\eqref{E:OMLP} in the intermediate regions in the section \verb!SUBSEC: Derivatives in intermediate region!.
\end{proof}

\begin{lemma}\label{L:HLAMBDATILDE}
The coefficients ${\widetilde{H}_\l}(y)$ defined in~\eqref{def:HP} and $\widetilde{a}^{(2)}_2$ as in~\eqref{def:a2Q} satisfy, for all $a\in[0,1]$, $|b|\leq \frac15$, 
\beq
\widetilde{H}_\l + \frac{1+\frac{b^2}{\al_*^2}}{4}\wLP|a^{(2)}_2|^2\leq -c_1<0,\ \ \ y\in[0,y_*].
\eeq
\end{lemma}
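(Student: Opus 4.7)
The plan is to mirror the strategy used in the proof of Lemma~\ref{L:HLAMBDASIGN} for the large-$b$ regime, replacing the ingredients for the fourth-order commuted equation with those coming from the second-order quotient equation \eqref{eq:Pequation}. The starting point is the explicit description of $\widetilde{H}_\l$ and $\widetilde{a}^{(2)}_2$ assembled in Section~\ref{S:PEQCOEFFS}: both quantities are rational combinations of $\rhoLP$, $\omLP$, $\vLP$, $\wLP$, the shifted weight $U$, and the iterated logarithmic derivatives $\widetilde{d}^{(0)},\widetilde{d}^{(1)}$. Tracking these formulas, one sees that the full coefficient $\widetilde{H}_\l+\frac{1+b^2/\al_\ast^2}{4}\wLP|\widetilde{a}^{(2)}_2|^2$ may be written as a real-analytic function of $y$, depending on the LP solution and its derivatives up to order three only, together with the parameters $a\in[0,1]$ and $b\in[-\tfrac{1}{5},\tfrac{1}{5}]$. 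This is a strictly weaker regularity requirement than in the high-frequency case, which used derivatives up to order five.

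Next, I would build rigorous enclosures for the LP quantities on the entire closed interval $[0,y_\ast]$ exactly as in Appendix~\ref{A:LPCONSTRUCTION}: near $y=0$ use the even Taylor expansion from Lemma~\ref{L:ORIGINCOEFFS} together with the enclosure $\rhoLP(0)\in[0.83290803,0.83290811]$; near the sonic point use the Taylor expansion of Section~\ref{S:SONICTAYLOR} with the coefficient bounds of Proposition~\ref{P:RHONOMNBDS} (taking advantage of the sharper constants $(C,\alpha)=(7.2,1.98)$ available on the small window $\mathfrak y_\ast$ from Lemma~\ref{L:LPYSTAR}); and in the intermediate region propagate with VNODE-LP. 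In each regime, both the LP variables and all three orders of their derivatives (handled through the code sections \verb!SUBSEC: Derivatives near sonic point!, \verb!SUBSEC: Derivatives near origin! and \verb!SUBSEC: Derivatives in intermediate region!) can be enclosed to arbitrary precision. Feeding these enclosures into the formulas of Section~\ref{S:PEQCOEFFS} produces an interval-valued function bounding $\widetilde{H}_\l+\frac{1+b^2/\al_\ast^2}{4}\wLP|\widetilde{a}^{(2)}_2|^2$.

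The verification itself would then be performed by a partition of the $(y,a,b)$-parameter box $[0,y_\ast]\times[0,1]\times[-\tfrac{1}{5},\tfrac{1}{5}]$ into sufficiently small subboxes, with three separate interval-arithmetic provers \verb!small_b_Origin_Prover!, \verb!small_b_Intermediate_Prover!, \verb!small_b_Sonic_Prover! responsible for the three regimes where the LP solution is represented differently. On each subbox, the enclosure of the coefficient should come out strictly negative, from which a uniform constant $c_1>0$ is extracted as the maximum of the upper endpoints across all subboxes.

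The principal obstacle is that, unlike in the high-frequency case of Lemma~\ref{L:HLAMBDASIGN}, there is no $b^2$-term controlling the sign of $\widetilde{H}_\l$ through the monotonicity structure \eqref{E:LPMONOTONICITY} of LP. For $b$ small, the dominant contribution of $\widetilde{H}_\l$ reduces essentially to $\al_\ast(-4\omLP-1+\tfrac{2\omLP'\vLP}{1-\omLP}-\cdots)+\widetilde b^{(2)}_1$, whose sign must be established purely by numerical verification combined with a delicate accounting of cancellations. Two specific sensitivities will demand care: at the sonic point, $\wLP$ vanishes but pieces of $\widetilde b^{(2)}_1/\wLP$ and of $\widetilde d^{(0)},\widetilde d^{(1)}$ individually blow up, so interval arithmetic must be carried out after explicit cancellation; at the origin, the factor $1/y$ in $\widetilde A^{(2)}$ and similar terms must be replaced by its Taylor expansion against $\rhoLP',\omLP'$ so that the enclosure stays bounded. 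Only after these cancellations are hard-coded will the partitioning step actually return negative intervals.
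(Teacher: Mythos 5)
Your proposal matches the paper's approach: both express $\widetilde{H}_\l + \frac{1+b^2/\al_*^2}{4}\wLP|\widetilde{a}^{(2)}_2|^2$ via the identities of Appendix~\ref{S:PEQCOEFFS} as a function of the LP solution and its derivatives up to order three, construct validated enclosures of these LP quantities in three regimes (Taylor expansions near the origin and sonic point, VNODE-LP propagation in between), and then verify strict negativity by interval arithmetic over the full $(y,a,b)$-box; the paper simply records this as "follows the same lines as the previous lemma" and points to the code functions. Your observation that the genuine singularities at $y=0$ and $y=y_*$ force explicit cancellations before the interval evaluation stays bounded is precisely the implementation difficulty the paper implicitly handles in the code, so the proposal is correct and on the same route.
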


\begin{proof}
The proof again uses interval arithmetic, and follows the same lines as the previous lemma, now using the identities of Appendix~\ref{S:PEQCOEFFS}.
\end{proof}


\subsection{Excluding intermediate eigenvalues}
In this appendix, we follow a strategy analogous to that employed above in sections~\ref{S:SONICTAYLOR}--\ref{S:ORIGINTAYLOR} in order to construct the Taylor series for potential eigenfunctions for $\bfL$ close to the sonic point $y_*$ and the origin $y=0$.  Throughout, we write $$1-\la=a+i b.$$
The goal of this appendix is to establish the following coefficient growth estimates.
 
 \begin{proposition}\label{L:EFUNCTIONTAYLOR}
Let $\la=1-a - ib$ where  $a\in[0,1]$, $b\in[\frac15,8]$, be an eigenvalue of $\bfL$ and suppose $\psi_\l$ is an eigenfunction solving~\eqref{eq:phila}. \\
(i) Then the eigenfunction $\psi_\la$ admits the expansion near the sonic point
 \beq\label{E:PSITILDEEXPSONIC}
 y^2\psi_\la(y)=\sum_{k=0}^\infty \psi_k(\l)\big(\frac{y}{y_*}-1\big)^k
 \eeq
 where for $k\geq 2$,
 \beq
 \Big|\frac{\psi_{k}}{\psi_0}\Big|\leq 4y_*^2\frac{\mathfrak{C}^{k+1-\al}}{k^3},
 \eeq
 where $\mathfrak{C}=8+\frac{3b}{2}$ and $\al=1.98$, so that, for $|\frac{y}{y_*}-1|\leq \de<\frac{1}{\mathfrak{C}}$, the normalised eigenfunction with $\psi_0=1$ satisfies
 \beq
 \Big|y^2\psi_\la(y)-\sum_{k=0}^N\psi_k(\frac{y}{y_*}-1)^k\Big|\leq 4y_*^2\mathfrak{C}^{1-\al}\frac{(\mathfrak{C}\de)^{N+1}}{(N+1)^3(1-\mathfrak{C}\de)}.
 \eeq
 (ii) Near the origin, the eigenfunction expands as 
 \beq
y^2\psi_\la(y)=\sum_{k=1}^\infty \tilde\psi_{2k+1}(\l) y^{2k+1},\label{E:PSITILDEEXPORIGIN}
\eeq
where, normalising to $\tilde \psi_3=1$,
\beq
|\tilde \psi_{2\ell+1}|\leq 7\frac{{\mathfrak{C}}_0^{2\ell-\al_0}}{(2\ell+1)^3}\qquad \text{ for }\ell\geq 4,
\eeq
for
 $\mathfrak{C}_0=2+\frac{b}{4}$, $\al_0=1.95$, so that for $0\leq y\leq \de <\frac{1}{\mathfrak{C}_0}$
\beq
 \Big|\psi_\l(y)-\sum_{k=1}^n \tilde\psi_{2k+1} y^{2k+1}\Big|\leq \frac{7}{\mathfrak{C}_0^{1+\al_0}}\frac{(\mathfrak{C}_0\de)^{2n+3}}{(2n+3)^3(1-(\mathfrak{C}_0\de)^2)}.
 \eeq
 \end{proposition}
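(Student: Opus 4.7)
The plan is to mirror the strategy used for the LP profile in Appendix~\ref{A:LPCONSTRUCTION}: derive explicit recurrence relations for the Taylor coefficients of $y^2\psi_\l$ at each singular point, bound them inductively using the LP coefficient bounds from Proposition~\ref{P:RHONOMNBDS} and Lemma~\ref{L:ORIGINCOEFFS}, and verify the low-order base cases by interval arithmetic. The resulting geometric-series tail estimate is then what is actually fed into the \verb!VNODE-LP! solver in the proof of Proposition~\ref{P:NOINTERIMAG}.

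For part (i), I would substitute the ansatz $y^2\psi_\l(y)=\sum_{k\ge0}\psi_k(y/y_*-1)^k$ into the eigenfunction ODE~\eqref{eq:phila} after multiplying by $y^2\wLP(y)$ to clear the regular singularity. Using the local expansions~\eqref{eq:Euleriansonicexpansions} of $\rhoLP,\omLP$ at the sonic point and the identity $\wLP(y)=(y-y_*)\tilde w(y)$ with $\tilde w(y_*)=1-\tfrac{1}{y_*}\ne0$, one obtains a recursion of schematic form $k\bigl(k-r_2(\l)\bigr)\,\psi_k=\mathcal P_k(\psi_0,\dots,\psi_{k-1};\rho_j,\omega_j)$, where $r_2(\l)=(1-\l)/(1-1/y_*)$ is the non-analytic Frobenius index. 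Because we are excluding the values $\l=1$ and $\l=1-1/y_*$ (handled in Lemma~\ref{lemma:Frobenius}), the prefactor $k(k-r_2(\l))$ stays bounded below uniformly in $a\in[0,1]$, $b\in[\tfrac15,8]$ and $k\ge k_0$, with a loss of at most a constant multiple of $k$. One then proceeds by induction on $k$: assuming $|\psi_j/\psi_0|\le 4y_*^2\mathfrak C^{j+1-\al}/j^3$ for $j\le k-1$, the convolutions in $\mathcal P_k$ are bounded using the LP bounds (Proposition~\ref{P:RHONOMNBDS} with $C=7.2$, $\al=1.98$ on $y_*\in\mathfrak y_*$) together with the combinatorial sums collected in Lemma~\ref{L:COMB}. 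The constant $\mathfrak C=8+\tfrac{3b}{2}$ is precisely what is needed to absorb the extra $b$-linear contributions arising from the $2(1-\l)\vLP/\wLP$ term in $V_1$ and the $(2-\l)(1-\l)/\wLP$ term in $V_2$.

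For part (ii), the analogous strategy applies at the origin, but with two additional structural simplifications. First, the evenness of $\rhoLP,\omLP$ (Lemma~\ref{L:ORIGINCOEFFS}) combined with the fact that the ODE~\eqref{eq:phila} has only even-in-$y$ coefficients implies that the regular Frobenius solution expands in odd powers of $y$, justifying~\eqref{E:PSITILDEEXPORIGIN}. Second, the Frobenius index $r_1=1$ is real and isolated, so the prefactor $(2\ell+1)(2\ell-2)$ arising in the recurrence for $\tilde\psi_{2\ell+1}$ is uniformly bounded below for $\ell\ge2$. Substituting the ansatz into~\eqref{eq:phila} and using the origin expansions with $C_0=2$, $\al_0=1.95$, one obtains a recursion of the same type as for the sonic point, and the inductive argument proceeds with $\mathfrak C_0=2+b/4$. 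The low-order coefficients $\tilde\psi_5,\tilde\psi_7,\tilde\psi_9$ providing the base of the induction are evaluated and bounded by interval arithmetic, mirroring Lemma~\ref{lemma:order23coeffs} and the function \verb!C_alpha_constraint_check_Origin!.

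The hard part is the calibration of $\mathfrak C$ and $\mathfrak C_0$: they must be large enough that the inductive step closes uniformly in $(a,b)$ in the prescribed ranges, yet small enough that the radius of convergence $1/\mathfrak C$ (respectively $1/\mathfrak C_0$) is wide enough for the interval-arithmetic ODE solver to bridge to the intermediate region where the ODE is regular. This forces an explicit tradeoff between the LP decay exponent $\al$, the $b$-dependent contributions, and the combinatorial constants in Lemma~\ref{L:COMB}, and a slightly slack choice (as reflected in the factor $4y_*^2$ in (i) and the prefactor $7$ in (ii)) is needed to absorb cross-terms arising from products $(\rho\psi)_k$ and $(\omega\psi)_k$ in the recursion. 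Once the inductive estimate is established, the explicit error bound on the truncated Taylor polynomial follows from the standard geometric tail $\sum_{k\ge N+1}(\mathfrak C\delta)^k\le (\mathfrak C\delta)^{N+1}/(1-\mathfrak C\delta)$, completing both parts of the proposition.
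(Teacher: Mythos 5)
Your proposal captures the paper's own argument precisely: the paper also works with $\tpsi_\l=y^2\psi_\l$, rewrites~\eqref{eq:phila} in a Fuchsian form (see~\eqref{eq:efunctionsimpleform} and~\eqref{eq:psilatilde}), reads off a recurrence with prefactor equal to the indicial polynomial evaluated at the current index (equivalently your $k(k-r_2(\la))$ for part (i) and a shifted version of your $(2\ell+1)(2\ell-2)$ for part (ii)), and then closes an induction using the LP coefficient bounds of Proposition~\ref{P:RHONOMNBDS} and Lemma~\ref{L:ORIGINCOEFFS}, the convolution bounds of Lemma~\ref{L:COMB}, and interval-arithmetic verification of the low-order base cases and of the calibration of $\mathfrak{C},\mathfrak{C}_0$. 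The only thing your sketch omits (correctly flagging it as "the hard part") is the intermediate technical layer in which the paper first derives explicit expansions and growth rates for the ODE coefficients $\tilde a_k,\tilde b_k$ and $(X/\wLP)_{2k},(Y/\wLP)_{2k}$ (Lemmas~\ref{L:WEXPANSIONSONIC},~\ref{L:D13},~\ref{L:D14}) before running the inductive estimate — but the route is the same.
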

 
 As is clear from the statement of the proposition above, we will work for convenience with the rescaled function
 \beq\label{E:PSITILDEDEF}
 \tpsi_\l(y):=y^2\psi_\l,
 \eeq
 which satisfies the ODE
  \beq\label{eq:psila}
 \tpsi_{\la}''+\Big(-\frac{2}{y}-\frac{\rhoLP'}{\rhoLP}+\frac{\wLP'}{\wLP}+\frac{2(1-\la)\vLP}{\wLP}\Big)\tpsi_\la'+\Big(\frac{2\rhoLP}{\wLP}+\frac{2(1-\la)\vLP'}{\wLP}-\frac{(2-\la)(1-\la)}{\wLP}\Big)\tpsi_\la=0.
 \eeq
 The advantage of this formulation is that the coefficients $W_1$ and $W_2$ are somewhat simpler, and so we hope to get tighter bounds on their Taylor coefficients.
 
 Moreover, for reasons of scaling for the coefficients near the sonic point, it is advantageous to work with the rescaled variable $z=\frac{y}{y_*}$ which fixes the sonic point to $z=1$.  We warn the reader here that this $z$ is used only in this appendix and should not be confused with the Lagrangian self-similar variable used throughout this paper. We commonly do not distinguish between functions defined in $z$ and in $y$, making it clear in each instance in which variable we are working. 

\subsubsection{Expansions for $\wLP(y)$}
From~\eqref{eq:psila}, it is clear that we will require expansions for the Taylor coefficients of the weight function $\wLP(y)=1-\vLP^2(y)$ in order to close the expansions for potential eigenfunctions below. We therefore collect here the growth rates of the coefficients for this function. To fix notation, we write
\beq
\wLP(y)=\Big(\frac{y}{y_*}-1\Big)\tilde w(y).
\eeq
 \begin{lemma}\label{L:WEXPANSIONSONIC}
 Let $C =7.2$, $\al=1.98$. \\
(i) The derivative $\frac{\dif}{\dif z}\wLP(z)$ can be expanded close to the sonic point $z=1$ as
\beq
\wLP'(z)=\sum_{k=0}^\infty (w')_k(z-1)^k,
\eeq
where 
\beqa\label{ineq:w'1expression}
(w')_0=&\,-2(y_*-1),\\
(w')_1=&\,-2y_*^2(\om_1^2+2\om_2\om_0+4\om_1\om_0+\om_0^2)=-2\frac{2y_*^3-8y_*^2+13y_*-8}{(2y_*-3)},
\eeqa and
\beq\label{ineq:w'bd}
|(w')_k|\leq 1.4166y_*^2\frac{C^{k+1-\al}}{k+1}\text{ for }k\geq 2.
\eeq
(ii) The quantities $\tilde w$ and $\frac{1}{\tilde w}$ expand as
\beq
\tilde w(z)=\sum_{k=0}^\infty \tilde w_j(z-1)^k\quad \text{ and }\quad  \frac{1}{\tilde w(z)}=\sum_{k=0}^\infty \bar w_k(z-1)^k
\eeq
with $\tilde w_0=-2(y_*-1)$ and $\bar w_0=(-2(y_*-1))^{-1}$, 
\beq\label{def:wbar1}
\bar w_1=2y_*^2\frac{\om_1^2+2\om_2\om_0+4\om_1\om_0+\om_0^2}{2\tilde w_0^2},
\eeq
and
 \beq\label{ineq:wtildebd}
 |\tilde w_k|\leq 1.4166y_*^2\frac{{C}^{k+1-\al}}{(k+1)^2}\text{ for }k\geq 2,
 \eeq
 and
 \beq\label{ineq:wbarbd}
 |\bar w_k|\leq (0.506)y_*^2\frac{{C}^{k+1-\al}}{(k+1)^2}\text{ for }k\geq 2.
 \eeq
\end{lemma}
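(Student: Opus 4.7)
The proof splits naturally into the three assertions of the lemma. For part (i), I would start from the defining identity $\wLP(z) = 1 - y_\ast^2 z^2\omLP(z)^2$ in the rescaled variable $z = y/y_\ast$ and combine the trivial expansion $z^2 = 1 + 2(z-1) + (z-1)^2$ with the sonic series \eqref{eq:Euleriansonicexpansions}. This immediately yields
\[
(\wLP)_k = -y_\ast^2\big[(\omega^2)_k + 2(\omega^2)_{k-1} + (\omega^2)_{k-2}\big], \quad k\ge 1,
\]
(with the convention $(\omega^2)_{-1}=(\omega^2)_{-2}=0$) and $(w')_k = (k+1)(\wLP)_{k+1}$. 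The identities \eqref{ineq:w'1expression} at $k=0,1$ then follow by substituting $\omega_0 = 1/y_\ast$, $\omega_1 = 1 - 2/y_\ast$, and the value of $\omega_2$ from \eqref{eq:order012coeffs}, together with a short algebraic simplification.

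For the general estimate \eqref{ineq:w'bd} with $k\ge2$, I would invoke the sharpened bound of Proposition~\ref{P:RHONOMNBDS} valid on $y_\ast\in[2.34,2.342]$ with $C=7.2$, $\alpha=1.98$, and repeat the convolution argument of Lemma~\ref{L:PRELIMBOUNDS}, using \eqref{ineq:comb8} to control $\sum_{j=2}^{\ell-2}(j(\ell-j))^{-2}$. This produces a bound of the shape $|(\omega^2)_\ell| \leq D\, C^{\ell-\alpha}/\ell^2$ for $\ell\ge2$, while $(\omega^2)_0 = 1/y_\ast^2$ and $(\omega^2)_1 = 2\omega_0\omega_1$ are known explicitly. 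Plugging these into the convolution formula for $(\wLP)_{k+1}$ and dividing by $(k+1)^2$ produces asymptotic behavior like $y_\ast^2 D\,C^{k+1-\alpha}/(k+1)$. The key analytic point is that, after isolating the low-order contributions $\ell \leq 1$, the remaining bulk is bounded by a factor approaching $(1+1/C)^2$; the numerical constant $1.4166$ and the verification of the low-order cases ($k=2,3$) would be handled via interval arithmetic, in analogy with Lemma~\ref{lemma:order23coeffs} and using the function \verb!C_alpha_constraint_check_Sonic!.

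Part (ii) is then essentially a bookkeeping exercise. Because $\wLP$ vanishes at the sonic point, $(\wLP)_0=0$, and the relation $\wLP(z) = (z-1)\tilde w(z)$ gives the simple identity $\tilde w_k = (\wLP)_{k+1} = (w')_k/(k+1)$. In particular $\tilde w_0 = -2(y_\ast-1)$, and \eqref{ineq:wtildebd} is just \eqref{ineq:w'bd} divided by $k+1$.

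For part (iii), I would use the convolution identity $\sum_{j=0}^k \bar w_j \tilde w_{k-j} = \delta_{k,0}$, which for $k\ge 1$ gives the recursion $\bar w_k = -\tilde w_0^{-1}\sum_{j=0}^{k-1}\bar w_j \tilde w_{k-j}$. The values $\bar w_0 = 1/\tilde w_0$ and \eqref{def:wbar1} follow directly. For $k\ge 2$ I would argue by strong induction, assuming $|\bar w_j|\le 0.506\,y_\ast^2 C^{j+1-\alpha}/(j+1)^2$ for $2\le j\le k-1$ and using the explicit values of $\bar w_0$, $\bar w_1$ and the bound \eqref{ineq:wtildebd} on the $\tilde w$-coefficients. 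Splitting the sum into contributions from $j\in\{0,1\}$, $k-j\in\{0,1\}$ (where the inductive hypothesis does not apply), and the bulk $2\le j,k-j\le k-2$ (where another application of \eqref{ineq:comb8} gives the quadratic decay in $k$), one recovers the desired bound. The main obstacle, as in part (i), will be closing the numerical constants: naive telescoping loses a multiplicative factor, so the argument must carefully isolate the low-order contributions and rely on interval arithmetic for the base cases and the final check that $0.506$ suffices.
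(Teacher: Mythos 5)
Your approach is structurally sound and, at the level of exact coefficient identities, is algebraically equivalent to the paper's: writing $(w')_k = (k+1)(\wLP)_{k+1}$ with $(\wLP)_{k+1} = -y_*^2\big[(\omega^2)_{k+1}+2(\omega^2)_k+(\omega^2)_{k-1}\big]$ gives exactly the same numbers as the paper's differentiate-first identity $\wLP' = -2y_*^2 z\omLP(z\omLP'+\omLP)$. Your treatment of parts (ii) and (iii) — the relation $\tilde w_k=(w')_k/(k+1)$ and the Cauchy-product induction for $\bar w_k$ with isolated low-order terms and an appeal to \eqref{ineq:comb8} — matches the paper's proof.

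The gap is in the \emph{estimate} for part (i). Invoking the blanket bound $|(\omega^2)_\ell|\le D\,C^{\ell-\al}/\ell^2$ from Lemma~\ref{L:PRELIMBOUNDS} and then multiplying by the external factor $(k+1)$ gives
\[
|(w')_k|\le y_*^2\,D\,\frac{C^{k+1-\al}}{k+1}\Big(1+\frac{2(k+1)^2}{Ck^2}+\frac{(k+1)^2}{C^2(k-1)^2}\Big),
\]
and with $D\approx 1.03$, $C=7.2$, $\al=1.98$ the bracket must be $\le 1.4166/D\approx 1.374$. A direct check shows this \emph{fails} for $k=4,5,6,7,8$ (e.g.\ $k=4$ gives $\approx 1.49$, $k=6$ gives $\approx 1.42$); only from $k\ge 9$ on does the ratio $(k+1)^2/k^2$ become small enough. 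So your plan of ``IA for $k=2,3$ plus the asymptotic factor $(1+1/C)^2$'' does not close: either the IA checks must cover $k$ up to roughly $9$, or — as the paper does — one must refine the estimate. The paper avoids the overshoot by not lumping the low-order $\omega_0,\omega_1,\omega_2$ contributions into $D$ and then multiplying by $(k+1)$ outside. Instead it expands the $(\omega\omega')$ convolutions as in \eqref{E:D76}, pulls out the terms proportional to $\omega_{k+1},\omega_k,\omega_{k-1},\omega_{k-2}$ against the \emph{explicit} small values of $\omega_0,\omega_1,\omega_2$, and then bounds the genuinely ``middle-index'' tail by the combinatorial lemmas \eqref{ineq:comb3}--\eqref{ineq:comb5}. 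This keeps each low-order coefficient paired with the index weight it actually carries, which is exactly what brings the uniform constant down to $1.4166$. You could recover the same conclusion in your framework by decomposing $(\omega^2)_\ell = 2\omega_0\omega_\ell + 2\omega_1\omega_{\ell-1}+\text{tail}$ before multiplying by $(k+1)$, but the ``isolate low-order contributions $\ell\le 1$'' step as you wrote it refers to $(\omega^2)_0,(\omega^2)_1$ (which do not even occur for $k\ge 3$) rather than to the low-order factors inside each $(\omega^2)_\ell$, so as stated the estimate does not close. A minor additional slip: the IA check for $(w')_k$ at low order is done by the function \texttt{w\_Sonic\_Constraint} (Lemma~\ref{L:WSONICIA}(i)), not by \texttt{C\_alpha\_constraint\_check\_Sonic}.
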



\begin{proof}
We begin by proving (i). Recall
 \beqa\label{eq:w'expansion}
 \wLP'(z)=&\,-2y_*^2z\omLP(z\omLP'(z)+\omLP(z))\\
 =&\,-2y_*^2\sum_{k=0}^\infty\Big(\sum_{j=0}^k(j+1)\om_{j+1}\om_{k-j}+2\sum_{j=0}^{k-1}(j+1)\om_{j+1}\om_{k-1-j}+\sum_{j=0}^{k-2}(j+1)\om_{j+1}\om_{k-2-j}\Big)(z-1)^k\\
 &-2y_*^2\sum_{k=0}^\infty\big((\om^2)_k+(\om^2)_{k-1}\big)(z-1)^k. 
 \eeqa
 From Lemma~\ref{L:PRELIMBOUNDS}, we have that $|(\om^2)_k|\leq D\frac{C^{k-\al}}{k^2}$ for $k\geq 2$ already, with constants $C=7.2$ and $\al=1.98$.  We check directly with interval arithmetic (see Lemma~\ref{L:WSONICIA}(i)) that $|(w')_k|\leq 1.4166y_*^2\frac{C^{k+1-\al}}{k+1}$ for $k=2,3$  and the identities for $(w')_0$ and $(w')_1$ are a direct computation.
 
We now consider $k\geq 4$. We expand the coefficient in the first line on the right in \eqref{eq:w'expansion} as
 \beqa\label{E:D76}
{}& \sum_{j=0}^k(j+1)\om_{j+1}\om_{k-j}+2\sum_{j=0}^{k-1}(j+1)\om_{j+1}\om_{k-1-j}+\sum_{j=0}^{k-2}(j+1)\om_{j+1}\om_{k-2-j}\\
 &=(k+1)\om_{k+1}\om_0+\om_k\big((k+1)\om_1+2k\om_0\big)+\om_{k-1}\big((k+1)\om_2+2k\om_1+(k-1)\om_0\big)\\
 &\ \ \ +\om_{k-2}(k-1)\om_1+\sum_{j=2}^{k-3}(j+1)\om_{j+1}\om_{k-j}+\sum_{j=1}^{k-3}(j+1)\om_{j+1}\om_{k-1-j}+\sum_{j=1}^{k-4}(j+1)\om_{j+1}\om_{k-2-j}. 
 \eeqa
 Recalling the definitions of $\om_j$, $j=0,1,2$, from~\eqref{eq:order012coeffs}, the first line on the right hand side of~\eqref{E:D76} simplifies to
 \footnotesize
 \beqas
{}&\bigg|\frac{(k+1)}{y_*}\om_{k+1}+\om_k(k+1-\frac{2}{y_*})+\om_{k-1}\Big((k-1)\frac{1 - 5 y_* + 3 y_*^2}{4 y_*^2-6 y_*}+\frac{5 - 5 y_* + y_*^2}{3 y_* - 2 y_*^2}\Big)+\om_{k-2}(k-1)(1-\frac{2}{y_*})\bigg|\\
&\leq \frac{C^{k+1-\al}}{(k+1)}\Big(\frac{1}{y_*}+\frac{13}{10C}+\frac{1.214}{C^2}+\frac{7}{40C^2}+\frac{0.548}{C^3}\Big),
 \eeqas
 \small
 where we have used Lemma~\ref{L:WSONICIA}(ii). 
 
 We then see, from bounds \eqref{ineq:comb3}--\eqref{ineq:comb5},  that the remainder of~\eqref{E:D76} is bounded, for $k\geq 4$, i.e.
 \beqa
 \Big|\sum_{j=2}^{k-3}&(j+1)\om_{j+1}\om_{k-j}+\sum_{j=1}^{k-3}(j+1)\om_{j+1}\om_{k-1-j}+\sum_{j=1}^{k-4}(j+1)\om_{j+1}\om_{k-2-j}\Big|\\
 \leq&\,C^{k+1-2\al}\Big(\sum_{j=2}^{k-3}\frac{1}{(j+1)(k-j)^2}+\frac{1}{C}\sum_{j=1}^{k-3}\frac{1}{(j+1)(k-1-j)^2}+\frac{1}{C^2}\sum_{j=1}^{k-4}\frac{1}{(j+1)(k-2-j)^2}\Big)\\
 \leq&\,\frac{C^{k+1-\al}}{k}\Big(\frac{0.506}{C^\al}+\frac{9}{10C^{\al+1}}+\frac{1}{C^{\al+2}}\Big) \\
 \leq&\,\frac{C^{k+1-\al}}{(k+1)}\frac{5}{4}\Big(\frac{0.506}{C^\al}+\frac{9}{10C^{\al+1}}+\frac{1}{C^{\al+2}}\Big). 
 \eeqa
 We also observe the bound
 \beqa
\Big| (\om^2)_k&+(\om^2)_{k-1}\Big|\leq D\Big(\frac{C^{k-\al}}{k^2}+\frac{C^{k-1-\al}}{(k-1)^2}\Big)
\leq \frac{C^{k+1-\al}}{k+1}\Big(\frac{5D}{16C}+\frac{5D}{9C^2}\Big)
 \eeqa
 for $k\geq 4$.
 So the $k$-th coefficient of $\wLP'(z)$ is bounded as 
 \footnotesize
 \beqas
 |(w'&)_k|\\
 \leq&\, \frac{C^{k+1-\al}}{(k+1)}2y_*^2\Big(\frac{1}{y_*}+\frac{13}{10C}+\frac{1.214}{C^2}+\frac{7}{40C^2}+\frac{0.548}{C^3}+\frac{5}{4}\big(\frac{0.506}{C^\al}+\frac{9}{10C^{\al+1}}+\frac{1}{C^{\al+2}}\big)+\frac{5D}{16C}+\frac{5D}{9C^2}\Big)\\
 \leq&\,1.4166y_*^2\frac{C^{k+1-\al}}{(k+1)} 
 \eeqas
 \small
 for $C=7.2$, $\al=1.98$, $y_*\in[2.34,2.342]$ by Lemma~\ref{L:WSONICIA}(iii).   This concludes the proof of part (i).\\
 
 (ii) First, from the relation 
 \beq
 \sum_{k=0}^\infty(w')_k(z-1)^k=\wLP'(z)=\tilde w + \tilde w'(z-1)=\sum_{k=0}^\infty\tilde w_k(z-1)^k+\sum_{k=0}^\infty k\tilde w_{k}(z-1)^{k},
 \eeq
 we deduce
 \beq\label{eq:w'towtilde}
 (w')_{k}=(k+1)\tilde w_k\text{ for } k\geq 1,
 \eeq
which implies~\eqref{ineq:wtildebd}, while 
 \beq
 \tilde w_1= -\frac{2y_*^3-8y_*^2+13y_*-8}{(2y_*-3)}.
 \eeq
To address $\frac{1}{\tilde w}$, we first expand the trivial identity $\frac{\tilde w}{\tilde w}=1$ to find 
 \beqa
 \bar w_k=-\frac{1}{\tilde w_0}\sum_{j=1}^k\tilde w_j\bar w_{k-j}=-\frac{\tilde w_k}{\tilde w_0^2} -\frac{1}{\tilde w_0} \sum_{j=1}^{k-1}\tilde w_j\bar w_{k-j},
 \eeqa
 where we have used that $\bar w_0=\frac{1}{\tilde w_0}$.  
 
 We use the relation \eqref{eq:w'towtilde} and the identities \eqref{ineq:w'1expression} to obtain
 \beqa
 |\bar w_1|
 \leq \frac{y_*^2}{4}\frac{C^{2-\al}}{4}, \qquad |\bar w_2| \leq0.506y_*^2\frac{C^{3-\al}}{9},
 \eeqa
 where we use $C=7.2$, $\al=1.98$, and Lemma~\ref{L:WSONICIA}(iv).\\
 Then 
 we assume for an induction that $$\bar w_j\leq \beta y_*^2\frac{C^{j+1-\al}}{(j+1)^2},\quad j=2,\ldots,k-1,$$
 and estimate for $k\geq 3$
 \begin{align*}
 | \bar w_k|\leq&\,\big|\frac{\tilde w_k}{\tilde w_0^2}\big| +\frac{|\tilde w_{k-1}||\bar w_1|}{|\tilde w_0|} +\frac{1}{|\tilde w_0|} \sum_{j=1}^{k-2}|\tilde w_j\bar w_{k-j}|\\
 \leq&\,\frac{C^{k+1-\al}}{(k+1)^2}\frac{1.4166y_*^2}{\tilde w_0^2}+\frac{C^{k-\al}}{k^2}\frac{1.4166y_*^2|\bar w_1|}{|\tilde w_0|}+1.4166\beta y_*^4\frac{C^{k+2-2\al}}{|\tilde w_0|}\sum_{j=1}^{k-2}\frac{1}{(j+1)^2(k+1-j)^2}\\
 \leq&\, y_*^2\frac{C^{k+1-\al}}{(k+1)^2}\Big(\frac{1.4166}{\tilde w_0^2}+\frac{1.4166(k+1)^2|\bar w_1|}{Ck^2|\tilde w_0|}+ (1.242)\beta\frac{1.4166y_*^2}{C^{\al-1}|\tilde w_0|}\Big)\\
 \leq&\,\beta y_*^2\frac{C^{k+1-\al}}{(k+1)^2}
 \end{align*}
provided $\beta= 0.506$, where we have used~\eqref{ineq:wtildebd}, $\sum_{j=1}^{k-2}\frac{1}{(j+1)^2(k+1-j)^2}\leq \frac{1.242}{(k+1)^2}$, $C=7.2$, $\alpha=1.98$  and Lemma~\ref{L:WSONICIA}(v).
This concludes the proof.
 \end{proof}

 Close to the origin, we also require estimates for the coefficients of $\wLP$, which we obtain in the following lemma.
 \begin{lemma}
 Let $C_0=2$, $\al_0=1.95$. 
Expand $\wLP$ and $\frac{1}{\wLP}$ near the origin in $y$ coordinates as
\beq
\wLP(y)=\sum_{k=0}^\infty \hat w_k y^k,\qquad \frac{1}{\wLP}(y)=\sum_{k=0}^\infty \check{w}_ky^k.
\eeq
Then the coefficients satisfy
\beqa\label{eq:worigincoeffs}
&\hat w_0=\check{w}_0=1,\\
&\hat w_2=-\check{w}_2=-\frac19,\\
&\hat w_4=-\frac{4}{135}(\tr_0-\frac13),\qquad \check{w}_4=\frac{1}{81}+\frac{4}{135}(\tr_0-\frac13),
\eeqa
and, for all $k\geq 2$,
\beq\label{eq:whatwcheck2kbds}
|\hat w_{2k}|\leq \frac{C_0^{2(k-1)-\al_0}}{(2(k-1))^2},\qquad |\check{w}_{2k}|\leq(1.35)\frac{C_0^{2(k-1)-\al_0}}{(2(k-1))^2}.
\eeq
\end{lemma}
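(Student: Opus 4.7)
The plan is to exploit the representation $\wLP(y) = 1 - y^2\omLP^2(y)$ together with the Taylor expansion of $\omLP$ at the origin and its coefficient bounds established in Lemma~\ref{L:ORIGINCOEFFS}. Since $\omLP$ is even in $y$ (see Section~\ref{SS:SSP}), so are $\omLP^2$ and hence $\wLP$; the recurrence $\check w_{2k}=-\sum_{j=1}^{k}\hat w_{2j}\check w_{2(k-j)}$ deduced by matching coefficients in $\wLP\cdot(1/\wLP)=1$ then forces $\check w$ to be even as well, so all odd-index coefficients vanish.

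First I would compute the explicit low-order identities in \eqref{eq:worigincoeffs}. Writing $\omLP^2(y)=\sum_{n\ge 0}(\tom^2)_{2n}y^{2n}$ and inserting into $\wLP=1-y^2\omLP^2$ yields $\hat w_0=1$ and $\hat w_{2k}=-(\tom^2)_{2(k-1)}$ for $k\ge 1$. Using $\tom_0=\tfrac13$ and the explicit value of $\tom_2$ from \eqref{eq:tom2} immediately gives $\hat w_2=-\tom_0^2=-\tfrac19$ and $\hat w_4=-2\tom_0\tom_2=-\tfrac{4}{135}(\tr_0-\tfrac13)$. The formulas for $\check w_2$ and $\check w_4$ then follow directly from the recurrence above with these values inserted.

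Second, for the inductive bound on $\hat w_{2k}$ with $k\ge 2$, I would split the Cauchy product as
\begin{equation*}
(\tom^2)_{2(k-1)}=2\tom_0\tom_{2(k-1)}+2\tom_2\tom_{2(k-2)}+\sum_{i=2}^{k-3}\tom_{2i}\tom_{2(k-1-i)},
\end{equation*}
bound the first (dominant) term by $\tfrac23\cdot C_0^{2(k-1)-\al_0}/(2(k-1))^2$ using Lemma~\ref{L:ORIGINCOEFFS}, and handle the convolution by a combinatorial sum estimate of the type collected in Lemma~\ref{L:COMB} (cf.\ \eqref{ineq:comb8}), producing a factor $O(C_0^{-\al_0})$ that provides the needed margin. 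Direct numerical verification of the base cases $k=2,3$, using the closed-form values computed above together with the crude bound $\tr_0\le 0.833$ from Lemma~\ref{L:ORIGINCOEFFS}(i), then closes the induction at the constant $1$ required in \eqref{eq:whatwcheck2kbds}.

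Finally, for $\check w_{2k}$ I would run an induction directly parallel to the proof of \eqref{ineq:wbarbd} in Lemma~\ref{L:WEXPANSIONSONIC}(ii). Assuming $|\check w_{2j}|\le 1.35\,C_0^{2(j-1)-\al_0}/(2(j-1))^2$ for $2\le j\le k-1$, one peels off the boundary contributions $\hat w_{2k}\check w_0$, $\hat w_{2(k-1)}\check w_2$ and $\hat w_2\check w_{2(k-1)}$ from the recurrence and controls the remaining convolution by combining the $\hat w$ bounds just established with the inductive hypothesis and a combinatorial estimate of the form $\sum_{j}\frac{1}{(2(j-1))^2(2(k-j-1))^2}\le \tfrac{C}{(2(k-1))^2}$. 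The main obstacle will be verifying that the arithmetic closes at the precise constant $1.35$ with $C_0=2$ and $\al_0=1.95$; this is a matter of explicit numerical bookkeeping, most of whose analogues have already been carried out in Lemma~\ref{L:WEXPANSIONSONIC}(ii), and if any step proves tight it can additionally be validated by interval arithmetic as in the proof of Lemma~\ref{L:ORIGINCOEFFS}.
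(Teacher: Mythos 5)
Your proposal is essentially the same argument the paper gives: derive $\hat w_{2k}=-(\tom^2)_{2(k-1)}$ from $\wLP=1-y^2\omLP^2$, read off the explicit low-order formulas, prove the $\hat w$ bound by peeling boundary terms from the Cauchy product and invoking the coefficient bounds of Lemma~\ref{L:ORIGINCOEFFS}(ii) together with a combinatorial sum estimate from Lemma~\ref{L:COMB}, and then run the parallel induction for $\check w_{2k}$ on the recurrence coming from $\wLP\cdot(1/\wLP)=1$, with the base cases and the constant verifications closed by interval arithmetic.

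The one small difference is worth noting. You peel both $2\tom_0\tom_{2(k-1)}$ and $2\tom_2\tom_{2(k-2)}$ before bounding the interior convolution, so that every index in the remaining sum has $j\geq 2$ and Lemma~\ref{L:ORIGINCOEFFS}(ii) applies verbatim; the cost is that your splitting is only valid for $k\geq 4$, so you need the additional base case $k=3$, and $\hat w_6=-(2\tom_0\tom_4+\tom_2^2)$ does \emph{not} have a closed form in $\tr_0$ alone (it requires the growth bound on $\tom_4$). The paper instead peels only the $2\tom_0\tom_{2(k-1)}$ term, so the interior sum starts at $j=1$ and it must tacitly use the bound $|\tom_2|\leq C_0^{2-\al_0}/4$ (an easy fact, but one outside the stated range $j\geq 2$ of Lemma~\ref{L:ORIGINCOEFFS}(ii)); in exchange the induction starts at $k\geq 3$ and only $k=2$ needs direct verification. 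Both routes close at the stated constants; neither is materially harder than the other.
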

\begin{proof}
It is obvious that $\what_0=1$ and $\wLP$ is even in $y$. Expanding  $\wLP=1-\vLP^2=1-y^2\omLP^2$, we see easily that for $k\geq 1$,
\beq
\what_{2k}=-\sum_{j=0}^{k-1}\tom_{2j}\tom_{2(k-1-j)},
\eeq
 where we recall that only the even coefficients are non-vanishing. So we easily see also
\beqa
\what_2=-\tom_0^2=-\frac19,\qquad \what_4=-2\tom_0\tom_2=-\frac{4}{135}(\tr_0-\frac13).
\eeqa
We see from Lemma~\ref{L:WORIGINIA}(i) that $|\what_4|\leq \frac{C_0^{2-\al_0}}{4}$ with $C_0=2$, $\al_0=1.95$, $\hat\rho(0)=\tilde\rho_0\in[0.83,0.84]$.

Now, for $k\geq 3$, we assume the bound by induction for $j\leq 2\leq k-1$, and estimate
\beqa
|\what_{2k}|=&\,\Big|2\tom_0\tom_{2(k-1)}+\sum_{j=1}^{k-2}\tom_{2j}\tom_{2(k-1-j)}\Big|\\
\leq&\,\frac23\frac{C_0^{2(k-1)-\al_0}}{(2(k-1))^2}+\frac{C_0^{2(k-1)-\al_0}}{C_0^{\al_0}}\sum_{j=1}^{k-2}\frac{1}{(2j)^2(2(k-j-1)^2)}\\
\leq&\,\frac{C_0^{2(k-1)-\al_0}}{(2(k-1))^2}\Big(\frac23+\frac{41}{36C_0^{\al_0}}\Big)\leq\frac{C_0^{2(k-1)-\al_0}}{(2(k-1))^2},
\eeqa
where we have bounded the sum using \eqref{ineq:comb9} and, in the last inequality, we used that $C_0=2$, $\al_0=1.95$ to deduce $\frac23+\frac{41}{36C_0^{\al_0}}<1$ Lemma~\ref{L:WORIGINIA}(ii). This concludes the first estimate in~\eqref{eq:whatwcheck2kbds}.

Now to estimate $\check{w}_{2k}$, we note
 $\check{w}_0=1$ and, for $k\geq 1$,
\beq
\check{w}_{2k}=-\sum_{j=1}^k\what_{2j}\check{w}_{2k-2j}.
\eeq
So 
\beqa
\check{w}_2=&\,-\what_2\check{w}_0=\frac19,\qquad \check{w}_4=-\what_2\check{w}_2-\what_4\check{w}_0=\frac{1}{81}+\frac{4}{135}(\tr_0-\frac13),
\eeqa
and we check from Lemma~\ref{L:WORIGINIA}(iii) that 
\beqa
|\check{w}_4|\leq 0.11\frac{C_0^{2-\al_0}}{4},\qquad |\check{w}_6|\leq 1.02\frac{C_0^{4-\al_0}}{16}.
\eeqa
Now for $k\geq 4$,
\beq
\check{w}_{2k}=-\what_2\check{w}_{2(k-1)}-\what_{2k}\check{w}_0-\what_{2(k-1)}\check{w}_{2}-\sum_{j=2}^{k-2}\what_{2j}\check{w}_{2k-2j}.
\eeq
Assuming for an induction that $|\check{w}_{2j}|\leq \beta \frac{C_0^{2(j-1)-\al_0}}{(2(j-1))^2}$ for all $2\leq j\leq k-1$, we have, for $k\geq 4$,
\beqa
|\check{w}_{2k}|\leq&\,|\what_2\check{w}_{2(k-1)}|+|\what_{2k}|+|\what_{2(k-1)}\check{w}_{2}|+\sum_{j=2}^{k-2}|\what_{2j}||\check{w}_{2k-2j}|\\
\leq&\,\frac{C_0^{2(k-1)-\al_0}}{(2(k-1))^2}\Big((\beta |\what_2|+|\check{w}_2|)\frac{(k-1)^2}{C_0^2(k-2)^2}+1\Big)+\beta\sum_{j=2}^{k-2}\frac{C_0^{2(j-1)-\al_0}}{(2(j-1))^2}\frac{C_0^{2(k-j-1)-\al_0}}{(2(k-j-1))^2}\\
\leq&\,\frac{C_0^{2(k-1)-\al_0}}{(2(k-1))^2}\Big(\frac{\beta+1}{9}\frac{(k-1)^2}{C_0^2(k-2)^2}+1+\frac{\beta}{C_0^{2+\al_0}}\sum_{j=2}^{k-2}\frac{1}{(2(j-1))^2(2(k-j-1))^2}\Big)\\
\leq&\,\frac{C_0^{2(k-1)-\al_0}}{(2(k-1))^2}\Big(\frac{\beta+1}{9}\frac{(k-1)^2}{C_0^2(k-2)^2}+1+\frac{9\beta}{4C_0^{2+\al_0}}\Big)\\
\leq&\,(1.35)\frac{C_0^{2(k-1)-\al_0}}{(2(k-1))^2}
\eeqa
for $\beta =1.35$, where we have used \eqref{ineq:comb11} to bound the sum and employed $C_0=2$, $\al_0=1.95$, and Lemma~\ref{L:WORIGINIA}(iv). This concludes the proof.
\end{proof}


\subsubsection{Expanding eigenfunctions near the sonic point}
The goal of this subsection is to establish the coefficient expansions and growth rates for the eigenfunction $\psi_\l$ around the sonic point, as claimed in Proposition~\ref{L:EFUNCTIONTAYLOR}(i). As advertised above, we work in the coordinate $z=\frac{y}{y_*}$ (not to be confused with the Lagrangian label $z$ used elsewhere in this paper). We recall the function $\tpsi_\l$ defined in~\eqref{E:PSITILDEDEF}, so that, in $z$ coordinates, $\tpsi(z)=(y_*z)^2\psi_\l(z).$ 
 Now, shifting to the $z$ variable in~\eqref{eq:psila} and recalling the factorisation $\wLP(z)=(1-z)\tilde w(z)$, we find that \eqref{eq:psila} is of the form
 \beq\label{eq:efunctionsimpleform}
 \tpsi_{\la}''(z)+\frac{\tilde a(z)}{z-1}\tpsi_\la'(z)+\frac{\tilde b(z)}{z-1}\tpsi_\la(z)=0,
 \eeq
 where $\tilde{a}=\sum_{j=0}^\infty \tilde a_j(z-1)^j$ and $\tilde b=\sum_{j=0}^\infty\tilde b_j(z-1)^j$ are analytic functions, defined by 
 \beqa\label{eq:atilde}
 \tilde a(z)=&\,-\frac{2(z-1)}{z}+\frac{A(z)}{\tilde w(z)},
 \eeqa
 where we have defined
 \beq\label{eq:A}
 A(z)=2y_*^2z\omLP(\rhoLP-\omLP)+\wLP'+2(1-\la)y_*^2z\omLP, 
 \eeq
 and 
 \beq\label{E:BTILDE}
\tilde b= \frac{y_*^2\big(2\rhoLP+2(1-\la)(z\omLP'+\omLP)-(2-\la)(1-\la)\big)}{\tilde w}.
 \eeq
 We emphasise once more that the $'$ notation refers to $z$ derivatives.
 

  In order to close the coefficient estimates for the eigenfunction $\tpsi_\l$, we require growth estimates on $\tilde a_j$ and $\tilde b_j$. This is the content of the next lemma.
  
  \begin{lemma}\label{L:D13}
  Let $1-\la=a+ib$, $a\in[0,1]$, $b\geq 0$. The coefficients $\tilde a$ and $\tilde b$ expand as
  \beq
  \tilde a(z)=\sum_{k=0}^\infty \tilde a_k(z-1)^k,\qquad  \tilde b(z)=\sum_{k=0}^\infty \tilde b_k(z-1)^k
  \eeq
  where we have explicit formulae for $k=0,1,2$, and, for $k\geq 3$,
  \begin{align}
  |\tilde a_k|\leq&\,(1.5819+0.692b)y_*^2\frac{C^{k+1-\al}}{k+1},\label{ineq:akbd}\\
  |\tilde b_k|\leq&\, \big(1.8932+2.251 b+\frac{(0.506)b^2y_*^2}{4}\big)y_*^2\frac{C^{k+1-\al}}{k+1},\label{ineq:bkbd}
  \end{align}
   for all $\S\in[2.34,2.342]$, where $C=7.2$, $\al=1.98$. 
  \end{lemma}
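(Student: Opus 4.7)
\medskip

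\textbf{Proof plan for Lemma~\ref{L:D13}.} The strategy is to express $\tilde a$ and $\tilde b$ as Cauchy products of Taylor series whose coefficient bounds have already been established in Proposition~\ref{P:RHONOMNBDS}, Lemma~\ref{L:PRELIMBOUNDS}, and Lemma~\ref{L:WEXPANSIONSONIC}, and then to propagate these bounds through the products using the combinatorial estimates from Lemma~\ref{L:COMB}. The factor $\frac{1}{k+1}$ in the target bound (as opposed to the tighter $\frac{1}{(k+1)^2}$ appearing in Lemma~\ref{L:WEXPANSIONSONIC}) provides the slack needed to absorb convolutions cleanly.

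First I would expand the numerator
\[
A(z) = 2y_*^2 z(\omega\rho)(z) - 2y_*^2 z(\omega^2)(z) + \wLP'(z) + 2(a+ib) y_*^2 z\,\omLP(z)
\]
as a Taylor series $\sum_k A_k (z-1)^k$. Writing $z = 1 + (z-1)$ turns each product $z(\omega\rho)$, $z(\omega^2)$, $z\omLP$ into a simple shift of the underlying coefficient sequence, so the bounds from Lemma~\ref{L:PRELIMBOUNDS} (with $D = D(C,\alpha,y_*)$) and Proposition~\ref{P:RHONOMNBDS} give, for $k \geq 2$,
\[
|A_k| \le 2y_*^2(1+\tfrac{1}{k+1}) D\,\frac{C^{k-\alpha}}{k^2}\cdot 2 + |\tilde w_k|\cdot\tfrac{k+1}{k} + 2|a+ib|\,y_*^2(1+\tfrac{1}{k+1})\,\frac{C^{k-\alpha}}{k^2},
\]
after using $(w')_k = (k+1)\tilde w_k$. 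Grouping terms and using $|a|+|b| \le 1 + b$ yields a bound of the form $|A_k| \le (P_1 + P_2 b) y_*^2 \frac{C^{k+1-\alpha}}{(k+1)^2}$ with explicit constants $P_1,P_2$. The low-order coefficients $A_0,A_1,A_2$ are handled by a direct interval-arithmetic check analogous to Lemma~\ref{L:WSONICIA}.

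Next, I would write $\tilde a(z) = -\frac{2(z-1)}{z} + A(z)\,\bar w(z)$, noting that
\[
-\frac{2(z-1)}{z} = 2\sum_{k=1}^\infty (-1)^k (z-1)^k,
\]
which contributes a bounded $O(1)$ piece. The remaining term $A(z)\,\bar w(z)$ gives the convolution $\tilde a_k = 2(-1)^k\mathbf{1}_{k\ge 1} + \sum_{j=0}^k A_j \bar w_{k-j}$. Splitting off the boundary terms $j\in\{0,1,k-1,k\}$ (where the coefficient bounds in Lemma~\ref{L:WEXPANSIONSONIC} and Proposition~\ref{P:RHONOMNBDS} take their exceptional forms), the interior convolution is controlled by
\[
\sum_{j=2}^{k-2} \frac{1}{(j+1)^2(k-j+1)^2} \le \frac{C'}{(k+1)^3}
\]
from Lemma~\ref{L:COMB}, which is more than enough to yield the claimed $\frac{C^{k+1-\alpha}}{k+1}$ decay. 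Collecting all constants and optimizing the arithmetic (e.g.~by splitting the contribution from $\bar w_0,\bar w_1$ which are the largest summands) yields~\eqref{ineq:akbd}; the precise numerical constant $1.5819 + 0.692b$ is obtained by performing the summation at interval precision and verifying the base cases $k=3,4$ by IA as in Lemma~\ref{L:WSONICIA}.

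The analysis for $\tilde b$ follows the same template, applied to the numerator
\[
N_b(z) = y_*^2\bigl[2\rhoLP + 2(a+ib)(z\omega'+\omega) - (1+a+ib)(a+ib)\bigr].
\]
The only genuinely new feature is the appearance of the constant term $-(1+a+ib)(a+ib) = a(1+a) - b^2 + ib(1+2a)$, which contributes a $-b^2$ summand. Because this is a zeroth-order term in $(z-1)$, it only affects $N_b^{(0)}$, but upon convolution with $\bar w(z)$ it produces a $b^2 \bar w_k$ contribution to $\tilde b_k$. Using $|\bar w_k| \le (0.506)y_*^2 C^{k+1-\alpha}/(k+1)^2 \le (0.506)y_*^2 C^{k+1-\alpha}/(k+1)$, this is exactly the source of the $\frac{(0.506) b^2 y_*^2}{4}$ coefficient in~\eqref{ineq:bkbd} (with the extra factor $1/4$ coming from a tighter bound for $k\ge 3$). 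The linear-in-$b$ piece is dominated by the $2(a+ib)(z\omega'+\omega)$ contribution, whose coefficients inherit a factor $|a+ib|$ times the already controlled coefficients of $\omLP,\omLP'$. Combining everything as above and verifying $k=3$ by IA yields~\eqref{ineq:bkbd}.

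The main technical obstacle is bookkeeping: the exceptional formulas for $\rho_0,\rho_1,\omega_0,\omega_1,\tilde w_0, \tilde w_1, \bar w_0, \bar w_1$ fall outside the uniform bounds of Proposition~\ref{P:RHONOMNBDS} and Lemma~\ref{L:WEXPANSIONSONIC}, so each convolution must be split into a finite number of ``boundary terms'' (handled by explicit identities and IA) and a ``bulk'' piece (handled by Lemma~\ref{L:COMB}). The rest is purely mechanical.
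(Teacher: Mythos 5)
Your proposal follows essentially the same route as the paper's proof: write $\tilde a = -\tfrac{2(z-1)}{z} + A/\tilde w$, bound the Taylor coefficients of the numerator $A$ and of $\bar w = 1/\tilde w$ separately, then control the Cauchy product by peeling off the low-index boundary terms (whose decay is exceptional) and bounding the bulk via the combinatorial sums of Lemma~\ref{L:COMB}, with the numerical constants certified by interval arithmetic; the same template is applied to $\tilde b$ with the numerator $B$. You also correctly identify that the $b^2$ contribution to the $\tilde b_k$ bound originates from the constant part $-(2-\lambda)(1-\lambda)$ of the numerator.

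There is, however, a concrete error in your intermediate decay claim for $A_k$. You assert $|A_k|\lesssim (P_1+P_2 b)\,y_*^2\,C^{k+1-\alpha}/(k+1)^2$, but this is too strong. The numerator $A(z)$ contains $\wLP'(z)$, and from Lemma~\ref{L:WEXPANSIONSONIC} (or equivalently $(w')_k = (k+1)\tilde w_k$ with $\tilde w_k = O(1/(k+1)^2)$) one only gets $|(w')_k|\lesssim C^{k+1-\alpha}/(k+1)$; the paper's actual bound~\eqref{ineq:Akbds} is $|A_k|\leq (1.913 + 0.1621b)\,y_*^2\,C^{k+1-\alpha}/(k+1)$, one power of $(k+1)$ short of what you write (note also a slipped factor in your $|\tilde w_k|\cdot\tfrac{k+1}{k}$ term, which should be $|\tilde w_k|(k+1)$). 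This matters because your bulk convolution estimate then quotes $\sum_{j=2}^{k-2}(j+1)^{-2}(k-j+1)^{-2}\lesssim (k+1)^{-3}$, whereas with the correct $A_j=O(1/(j+1))$ the relevant sum is $\sum_j (j+1)^{-1}(k-j+1)^{-2}$, which is only $O(1/(k+1))$ (compare~\eqref{ineq:comb3} and~\eqref{ineq:combextra}). The conclusion $\tilde a_k = O(1/(k+1))$ still follows, but the precise numerical constants in~\eqref{ineq:akbd}--\eqref{ineq:bkbd} depend on this accounting, so the error must be fixed before the interval-arithmetic verification can be trusted. A minor sign typo: $-(1+a+ib)(a+ib)$ has real part $-a(1+a)+b^2$, not $a(1+a)-b^2$; your statement about the source of the $b^2$ term is nonetheless correct in spirit.
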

  

  \begin{proof}
 \textbf{Step 1: Expansion for $A$}\\
 To obtain an expansion for $A$, we first expand
 \beqas
 2y_*^2z\omLP(\rhoLP-\omLP)
 =&\,2y_*^2\sum_{k=1}^\infty \big((\rho\om)_k-(\om^2)_k+(\rho\om)_{k-1}-(\om^2)_{k-1}\big)(z-1)^k,\\
  2(1-\la)y_*^2z\omLP=&\,2(1-\la)y_*^2\sum_{k=0}^\infty(\om_k+\om_{k-1})(z-1)^k,
 \eeqas
 where we note that the zero order term has vanished due to $(\rho\om)_0=(\om^2)_0$. 
 
Substituting these identities into~\eqref{eq:A}, we then apply the bounds of Proposition~\ref{P:RHONOMNBDS} and Lemma~\ref{L:PRELIMBOUNDS} for the LP coefficients, along with~\eqref{ineq:w'bd}, to see that, for $k\geq 3$, 
 \begin{align}
 |A_k|\leq&\, 2y_*^2 \big|(\rho\om)_k-(\om^2)_k+(\rho\om)_{k-1}-(\om^2)_{k-1}\big|+|(w')_k|+\big|2(1-\la)y_*^2(\om_k+\om_{k-1})\big| \notag\\
 \leq&\, |(w')_k|+4Dy_*^2\big(\frac{C^{k-\al}}{k^2}+\frac{C^{k-1-\al}}{(k-1)^2}\big)+ 2y_*^2|1-\la|\big(\frac{C^{k-\al}}{k^2}+\frac{C^{k-1-\al}}{(k-1)^2}\big)\notag\\
 \leq&\,y_*^2\frac{C^{k+1-\al}}{k+1}\Big(1.4166+4D\big(\frac{k+1}{Ck^2}+\frac{k+1}{C^2(k-1)^2}\big)+ 2|1-\la|\big(\frac{k+1}{Ck^2}+\frac{k+1}{C^2(k-1)^2}\big)\Big)\notag\\
 \leq&\,y_*^2\frac{C^{k+1-\al}}{k+1}\Big(1.4166+(4D+2(1+b))\big(\frac{4}{9C}+\frac{1}{C^2}\big)\Big)\notag\\
 \leq&\,(1.913+0.1621b)y_*^2\frac{C^{k+1-\al}}{k+1}\label{ineq:Akbds}
 \end{align}
 for $k\geq 3$, where we use  Lemma~\ref{L:ABSONICIA}(i).

 To enable more accurate estimates, we give exact representations for the first few coefficients of $A$. These are computed as 
 \beqa\label{eq:A0A1A2}
 A_0=&\,(w')_0+2(a+ib)y_*=-2(y_*-1)+2(a+ib)y_*, \\
 A_1=&\,(w')_1+2y_*^2((\rho\om)_1-(\om^2)_1)+2(a+ib)y_*^2(\om_1+\om_0) , \\
 A_2=&\,(w')_2+2y_*^2\big((\rho\om)_2-(\om^2)_2+(\rho\om)_1-(\om^2)_1\big)+2(a+ib)y_*^2(\om_2+\om_1),
 \eeqa
 which can be used more conveniently for bounds.
 
\textbf{Step 2: Expansion for $\tilde a$}\\
 Now to obtain an expansion for the whole of $\tilde a$, we note that, for the given function $A(z)$, we have
 \beq
 \frac{A(z)}{\tilde w(z)}=\sum_{k=0}^\infty \Big(\sum_{j=0}^kA_j\bar w_{k-j}\Big)(z-1)^k .
 \eeq
 For our $A_j$, we have obtained explicit expressions for $j=0,1,2$ and bounds for $j\geq 3$, while $\bar w_j$ is bounded nicely for $j\geq 2$ and has an explicit expression for $j=0,1,2$ from Lemma~\ref{L:WEXPANSIONSONIC}(ii). We therefore treat the coefficients $\big( \frac{A}{\tilde w}\big)_j$ explicitly for $j=0,1,2$ for the purposes of estimates. 
 For the third order coefficient, we apply Lemma~\ref{L:ABSONICIA}(ii) to see
 \beqa\label{ineq:A/w3}
 \Big| \big( \frac{A}{\tilde w}\big)_3\Big| \leq&\,(1.057+(0.6626)b)y_*^2\frac{C^{4-\al}}{4},\qquad \Big|\big(\frac{A}{\tilde w}\big)_4\Big|\leq (1.301 + (0.692)b)y_*^2 \frac{C^{5-\al}}{5}.
 \eeqa
 For $k\geq 5$,
 \beq\label{eq:atilde4plus}
 \sum_{j=0}^kA_j\bar w_{k-j}=A_0\bar w_k + A_1\bar w_{k-1}+A_2\bar w_{k-2}+A_k\bar w_0 + A_{k-1}\bar w_1+ \sum_{j=3}^{k-2}A_j\bar w_{k-j}. 
 \eeq
To estimate the sum in the last term of \eqref{eq:atilde4plus}, recalling the estimates \eqref{ineq:wbarbd} and \eqref{ineq:Akbds}, we bound
 \beqa
 \sum_{j=3}^{k-2}|A_j\bar w_{k-j}|\leq&\,(0.506)(1.913+0.1621b)y_*^4\sum_{j=3}^{k-2}\frac{C^{j+1-\al}}{j+1}\frac{C^{k-j+1-\al}}{(k-j+1)^2} \\
 \leq &\,(0.506)(1.913+0.1621b)\frac{y_*^4}{C^{\al-1}}0.49\frac{C^{k+1-\al}}{k+1}, 
 \eeqa
 where we have used~\eqref{ineq:combextra} to control the summation. Thus, for $k\geq 5$, we may bound all of \eqref{eq:atilde4plus} using also Lemma~\ref{L:ABSONICIA}(iii) as 
 \beqa
 \big|A_0&\bar w_k + A_1\bar w_{k-1}+A_2\bar w_{k-2}+A_k\bar w_0+A_{k-1}\bar w_1\big|+ \sum_{j=3}^{k-2}|A_j\bar w_{k-j}|\\
 \leq&\,y_*^2\frac{C^{k+1-\al}}{k+1}\bigg((0.506)\Big(\frac{|A_0|}{6}+\frac{6|A_1|}{25C}+\frac{6|A_2|}{16C^2}\Big)\\
 &+(1.913+0.1621b)\Big(|\bar w_1|\frac{6}{5C}+|\bar w_0|+(0.506)(0.49)\frac{y_*^2}{C^{\al-1}}\Big)\bigg)\\
 \leq&\,(1.5771+(0.6091)b)y_*^2\frac{C^{k+1-\al}}{k+1}.
 \eeqa
 Then, for all $k\geq 4$, we take the representation for $\tilde a$ given by \eqref{eq:atilde} and note 
\beq
 -\frac{2(z-1)}{z}=\sum_{k=1}^\infty 2(-1)^k(z-1)^k
 \eeq
 in order to apply Lemma~\ref{L:ABSONICIA}(iv) and bound
 \beqa
 |\tilde a_k|\leq&\, |2(-1)^k|+(1.5771+0.692b)y_*^2\frac{C^{k+1-\al}}{k+1}\leq y_*^2\frac{C^{k+1-\al}}{k+1}\Big(2\frac{5}{y_*^2(7.2)^{5-\al}}+(1.5771+0.692b)\Big)\\
 \leq&\,(1.5819+0.692b)y_*^2\frac{C^{k+1-\al}}{k+1}.
 \eeqa
 For $k=0,1,2$, we have explicit representations of $\tilde a_k$ from \eqref{eq:A0A1A2},
 while for $k=3$, we apply  Lemma~\ref{L:ABSONICIA}(v) to verify 
 \beqa\label{ineq:a3tilde}
 |\tilde a_3|
 \leq&\,(1.0841+(0.6626)b)y_*^2\frac{C^{4-\al}}{4}.
 \eeqa
 This concludes the proof of~\eqref{ineq:akbd} for $\tilde a$.\\
 
\textbf{Step 3: Estimate for $B$}\\
 To estimate $\tilde b$, we begin by setting
 $$B=y_*^2\big(2\rhoLP+2(a+ib)(z\omLP'+\omLP)-(2-\la)(1-\la)\big),$$
  so that 
$ \tilde b=\frac{B}{\tilde w}.$
It is simple to see that 
 \begin{align}
 B_0=&\,y_*^2\big(2\rho_0+2(a+ib)(\om_1+\om_0)-(1+a)a+b^2-i(1+2a)b\big),\\ 
 B_j=&\,y_*^2\big(2\rho_j+2(a+ib)(j+1)(\om_{j+1}+\om_j)\big),\qquad j\geq 1.
 \end{align}
To estimate $|B_k|$, $k\geq 3$, from Proposition~\ref{P:RHONOMNBDS} and Lemma~\ref{L:ABSONICIA}(vi),
\beqa\label{ineq:Bkbd}
|B_k|\leq&\,y_*^2\big(2\frac{C^{k-\al}}{k^2}+2(a+b)\frac{C^{k+1-\al}}{k+1}+2(a+b)\frac{C^{k-\al}(k+1)}{k^2}\big)\\
\leq&\,y_*^2\frac{C^{k+1-\al}}{k+1}\Big(\frac{2(k+1)}{Ck^2}+2a+2b+\frac{(2a+2b)(k+1)}{k^2C}\Big)
\leq(2.247+2.124b)y_*^2\frac{C^{k+1-\al}}{k+1}.
\eeqa
 \textbf{Step 4: Expanding $\tilde b$}\\
As before, we leave $\big( \frac{B}{\tilde w}\big)_j$ explicit for $j=0,1,2$ for the purposes of estimates, and will need to control
 \beqa
\big( \frac{B}{\tilde w}\big)_3=&\,B_0\bar w_3+B_1\bar w_2+B_2\bar w_1+B_3\bar w_0,
 \eeqa
 and, for $k\geq 4$,
 \beq
 \sum_{j=0}^kB_j\bar w_{k-j}=B_0\bar w_k + B_1\bar w_{k-1}+B_2\bar w_{k-2}+B_{k-1}\bar w_1+B_k\bar w_0 + \sum_{j=3}^{k-2}B_j\bar w_{k-j}. 
 \eeq
We bound, using Lemma~\ref{L:ABSONICIA}(vii)--(viii),
 \begin{align}\label{ineq:btilde3}
& \Big| \big( \frac{B}{\tilde w}\big)_3\Big|\leq(1.4569+(2.091)b+\frac{0.506y_*^2b^2}{4})y_*^2\frac{C^{4-\al}}{4},\\
 &\Big| \big( \frac{B}{\tilde w}\big)_4\Big|\leq(1.5638+(2.0285)b+\frac{0.506y_*^2b^2}{5})y_*^2\frac{C^{5-\al}}{5}.
 \end{align}
 For $k\geq 5$, we estimate, using \eqref{ineq:wbarbd} and \eqref{ineq:Bkbd},
 \footnotesize
 \beqas
{}& \big| \sum_{j=0}^kB_j\bar w_{k-j}\big|=|B_0||\bar w_k| + |B_1||\bar w_{k-1}|+|B_2||\bar w_{k-2}|+|B_k||\bar w_0| + |B_{k-1}||\bar w_1|+ \sum_{j=3}^{k-2}|B_j||\bar w_{k-j}|\\
&\leq y_*^2\frac{C^{k+1-\al}}{k+1}\Big(0.506\big(\frac{|B_0|}{k+1}+\frac{|B_1|(k+1)}{Ck^2}+\frac{|B_2|(k+1)}{C^2(k-1)^2}\big)+(2.247+2.124b)\big(|\bar w_0|+\frac{(k+1)|\bar w_1|}{Ck})\Big)\\
&+(2.247+2.124b)(0.506)y_*^4C^{k+2-2\al}{\sum_{j=3}^{k-2}\frac{1}{(j+1)(k-j+1)^2}}\\
&\leq y_*^2\frac{C^{k+1-\al}}{k+1}\bigg(0.506\big(\frac{|B_0|}{6}+\frac{6|B_1|}{25C}+\frac{6|B_2|}{16C^2}\big)+(2.247+2.124b)\Big(|\bar w_0|+\frac{6|\bar w_1|}{5C}+\frac{(0.506)(0.49)y_*^2}{C^{\al-1}}\Big)\bigg)\\
&\leq(1.8932+2.251 b+\frac{(0.506)b^2y_*^2}{6})y_*^2\frac{C^{k+1-\al}}{k+1}, 
 \eeqas
 \small
 where we have finally used Lemma~\ref{L:ABSONICIA}(ix).\\
 Combining these various estimates, we conclude, for $k\geq 3$,
 \beqa
 |\tilde b_k|\leq y_*^2\frac{C^{k+1-\al}}{k+1}\big(1.8932+2.251 b+\frac{(0.506)b^2y_*^2}{4}\big).
 \eeqa
 \end{proof}


We are now finally in a position to estimate the growth rate of the coefficients of the eigenfunction $\psi_\l$ near the sonic point.

\begin{proof}[Proof of Proposition~\ref{L:EFUNCTIONTAYLOR}(i)]
  Starting from the Taylor expansion~\eqref{E:PSITILDEEXPSONIC}, we substitute into \eqref{eq:efunctionsimpleform} (recall from Lemma~\ref{lemma:Frobenius} that, for the regular solution, we have $\tilde a_0\tpsi'(1)+\tilde b_0\tpsi(1)=0$) and group terms to obtain
  \beqa
  0=&\,\sum_{k=0}^\infty\Big((k+1)(k+2)\psi_{k+2}+\sum_{j=0}^{k+1}\tilde a_{k-j+1}(j+1)\psi_{j+1}+\sum_{j=0}^{k+1}\tilde b_{k-j+1}\psi_j\Big)(z-1)^k. 
  \eeqa
 Then we have, for $k\geq 0$,
  \beq\label{eq:psik+2exp}
  \psi_{k+2}=-\frac{1}{(k+1)(k+2)+(k+2)\tilde a_0}\Big(\sum_{j=0}^k\tilde a_{k-j+1}(j+1)\psi_{j+1}+\sum_{j=0}^{k+1}\tilde b_{k-j+1}\psi_j\Big). 
  \eeq
  Without loss of generality, suppose $\psi_0=1$ (this is achieved by a simple scaling as the ODE is linear; one also checks easily that if $\psi_0=0$, then the regular solution is identically zero). We then compute
  \beqa
  \psi_1=&\,-\frac{\tilde b_0}{\tilde a_0}=-\frac{2y_*+2(1-\la)(y_*^2\om_1+y_*^2\om_0)-(2-\la)(1-\la)y_*^2}{(w')_0+2(1-\la)y_*}\\
  =&\,-\frac{2y_*+2(1-\la)y_*(y_*-1)-(2-\la)(1-\la)y_*^2}{-2(y_*-1)+2(1-\la)y_*}, 
  \eeqa
where we use~\eqref{eq:atilde} and~\eqref{E:BTILDE}. 
  We check directly from Lemma~\ref{L:PSISONICIA}(i) that 
  \beqa
  |\psi_2|\leq 4y_*^2\frac{\mathfrak{C}^{3-\al}}{2^3},\quad
    |\psi_3|\leq 4y_*^2\frac{\mathfrak{C}^{4-\al}}{3^3},\quad
  |\psi_4|\leq 4y_*^2\frac{\mathfrak{C}^{5-\al}}{4^3} ,\quad |\psi_{5}|\leq 4\frac{y_*^2\mathfrak{C}^{6-\al}}{5^3},
  \eeqa 
  for $a\in[0,1]$, $b\in[0.2,8]$.
  For $k\geq 4$, we shorten notation by writing 
  \beq\label{E:AFRAKBFRAK}
  \mathfrak{a}=1.5819+0.692b,\qquad \mathfrak{b} = 1.8932+2.251 b+\frac{(0.506)b^2y_*^2}{4}.
  \eeq For computational convenience, we assume for an induction that, for $j= 2,...,k+1$, we have 
  \beq\label{ineq:psikinductive}
  |\psi_j|\leq Py_*^2\frac{\mathfrak{C}^{j+1-\al}}{j^3},
  \eeq
  where $\mathfrak{C}=8+\frac{3b}{2}$ as in the statement of Proposition~\ref{L:EFUNCTIONTAYLOR} and $P=4$.
  We now separate terms in \eqref{eq:psik+2exp} and apply estimates~\eqref{ineq:akbd}--\eqref{ineq:bkbd} for $\tilde a_k$ and $\tilde b_k$, along with the inductive hypothesis \eqref{ineq:psikinductive}
  \beqa\label{ineq:psik+2init}
  (k+2)&|(k+1)+\tilde a_0||\psi_{k+2}|\leq \Big(|\psi_{1}||\tilde a_{k+1}|+(k+1)|\psi_{k+1}||\tilde a_{1}|+k|\psi_{k}||\tilde a_{2}|\\
  &\hspace{30mm}+|\psi_0||\tilde b_{k+1}|+|\psi_1||\tilde b_{k}|+ |\psi_{k+1}||\tilde b_0|+ |\psi_{k}||\tilde b_1|+|\psi_{k-1}||\tilde b_{2}|\\
  &\hspace{30mm}+\sum_{j=1}^{k-2}|\tilde a_{k-j+1}|(j+1)|\psi_{j+1}|+\sum_{j=2}^{k-2}|\tilde b_{k-j+1}||\psi_j|\Big) \\
  \leq&\Big(|\psi_1|\mathfrak{a} y_*^2\frac{C^{k+2-\al}}{k+2} + P|\tilde a_1|y_*^2\frac{\mathfrak{C}^{k+2-\al}}{(k+1)^2}+P|\tilde a_2|y_*^2\frac{\mathfrak{C}^{k+1-\al}}{k^2}  \\
  &+ \mathfrak{b} y_*^2\frac{C^{k+2-\al}}{k+2}+|\psi_1|\mathfrak{b} y_*^2\frac{C^{k+1-\al}}{k+1}+ P|\tilde b_0|y_*^2\frac{\mathfrak{C}^{k+2-\al}}{(k+1)^3}+ P|\tilde b_1|y_*^2\frac{\mathfrak{C}^{k+1-\al}}{k^3}+P|\tilde b_2|y_*^2\frac{\mathfrak{C}^{k-\al}}{(k-1)^3}  \\
  &+P\mathfrak{a} y_*^4\sum_{j=1}^{k-2}\frac{C^{k-j+2-\al}}{k-j+2}\frac{\mathfrak{C}^{j+2-\al}}{(j+1)^2}+P\mathfrak{b} y_*^4\sum_{j=2}^{k-2}\frac{C^{k-j+2-\al}}{k-j+2}\frac{\mathfrak{C}^{j+1-\al}}{(j+1)^3}\Big) \\
  \leq&\frac{y_*^2\mathfrak{C}^{k+3-\al}}{(k+2)}\bigg(\big(\frac{C}{\mathfrak{C}}\big)^{k+2-\al}\Big(\frac{|\psi_1|\mathfrak{a}+\mathfrak{b}}{\mathfrak{C}} +\frac{|\psi_1|\mathfrak{b} (k+2)}{(k+1)C\mathfrak{C}}\Big) \\
  &+P|\tilde a_1|\frac{(k+2)}{\mathfrak{C}(k+1)^2}+P|\tilde a_2|\frac{k+2}{\mathfrak{C}^2k^2}+ P|\tilde b_0|\frac{(k+2)}{\mathfrak{C}(k+1)^3}+ P|\tilde b_1|\frac{(k+2)}{\mathfrak{C}^2k^3}+P|\tilde b_2|\frac{(k+2)}{\mathfrak{C}^3(k-1)^3} \\
  &+(k+2)P\mathfrak{a} y_*^2 \mathfrak{C}^{1-\al}{\sum_{j=1}^{k-2}\frac{\big(\frac{C}{\mathfrak{C}}\big)^{k-j+2-\al}}{(k-j+2)(j+1)^2}}+(k+2)\frac{P\mathfrak{b} y_*^2}{\mathfrak{C}^\al}{\sum_{j=2}^{k-2}\frac{\big(\frac{C}{\mathfrak{C}}\big)^{k-j+2-\al}}{(k-j+2)(j+1)^3}}\bigg).
  \eeqa
   Here we recall $C=7.2$, $\al=1.98$. 
    To get a bound for $k\geq 4$, we first make the bound, from Lemma~\ref{L:PSISONICIA}(ii),
  \beqa
  \frac{(k+2)^2}{(k+2)|k+1+\tilde a_0|}\leq 1.41,  
  \eeqa
for $a\in[0,1]$, $b\in[0.2,8]$. Then, from Lemma~\ref{L:PSISONICIA}(iii), for $k\geq 4$, also
  \beqas
  \sum_{j=1}^{k-2}\frac{\big(\frac{C}{\mathfrak{C}}\big)^{k-j+2-\al}}{(k-j+2)(j+1)^2}\leq&\,\frac{0.465}{(k+2)(1+b)}, \qquad\sum_{j=2}^{k-2}\frac{\big(\frac{C}{\mathfrak{C}}\big)^{k-j+2-\al}}{(k-j+2)(j+1)^3}\leq&\,\frac{0.08}{(k+2)(1+b)}. 
  \eeqas
Substituting these bounds into \eqref{ineq:psik+2init}, we make the estimate, using now $k\geq 4$,
  \beqa
  |\psi_{k+2}|\leq&\, y_*^2\frac{\mathfrak{C}^{k+3-\al}}{(k+2)^3}(1.41)\bigg(\big(\frac{C}{\mathfrak{C}}\big)^{6-\al}\Big(\frac{|\psi_1|\mathfrak{a}+\mathfrak{b}}{\mathfrak{C}} +\frac{6|\psi_1|\mathfrak{b} }{5C\mathfrak{C}}\Big)+P|\tilde a_1|\frac{6}{25\mathfrak{C}}+P|\tilde a_2|\frac{6}{16\mathfrak{C}^2}\\
  &+ P|\tilde b_0|\frac{6}{5^3\mathfrak{C}}+ P|\tilde b_1|\frac{6}{4^3\mathfrak{C}^2}+P|\tilde b_2|\frac{6}{3^3\mathfrak{C}^3} +0.465\frac{P\mathfrak{a} y_*^2}{1+b} \mathfrak{C}^{1-\al}+0.08\frac{P\mathfrak{b} y_*^2}{(1+b)\mathfrak{C}^\al}\bigg)\\
  \leq&\,Py_*^2\frac{\mathfrak{C}^{k+3-\al}}{(k+2)^3}
  \eeqa
as required, where we have used Lemma~\ref{L:PSISONICIA}(iv).
\end{proof}


\subsubsection{Expanding eigenfunctions near the origin}
The goal of this subsection is to establish the coefficient expansions and growth rates for the eigenfunction $\psi_\l$ around the origin, as claimed in Proposition~\ref{L:EFUNCTIONTAYLOR}(ii). We continue to work in the $y$ variable and observe from Lemma~\ref{lemma:Frobenius} and~\eqref{E:PSITILDEDEF} that $\tpsi(y)$ is an odd, analytic function. From~\eqref{eq:psila}, we write the eigenfunction ODE in the form
\beq\label{eq:psilatilde}
\tpsi_\l''(y)+\frac1y\Big(-2+\frac{X}{\wLP}\Big)\tpsi_\l'(y)+\frac{Y}{\wLP}\tpsi_\l(y)=0,
\eeq
where
\begin{align}
 X(y) =&\,2y^2\rhoLP\omLP-4y^2\omLP^2-2y^3\omLP\omLP'+2(a+ib)y^2\omLP,\label{def:X}\\
 Y(y)=&\,2\rhoLP+2(a+ib)(\omLP+y\omLP')-a(1+a)+b^2-i(2a+1)b. \label{def:Y}
\end{align}
We first estimate the coefficients of $\frac{X}{\wLP}$ and $\frac{Y}{\wLP}$ with the following lemma.

\begin{lemma}\label{L:D14}
  Let $1-\la=a+ib$, $a\in[0,1]$, $b\geq 0$. The coefficients $\frac{X}{\wLP}$ and $\frac{Y}{\wLP}$ expand as
  \beq
\frac{X}{\wLP}=\sum_{k=0}^\infty \big(\frac{X}{\wLP}\big)_{2k}y^{2k},\qquad \frac{Y}{\wLP}=\sum_{k=0}^\infty \big(\frac{Y}{\wLP}\big)_{2k}y^{2k}
  \eeq
  where $(\frac{X}{\wLP})_0=0$, we have explicit formulae for $k=0,1,2$, and the estimates
 \begin{align}
 \big|\big(\frac{X}{\wLP}\big)_{2k}\big|\leq (4.156 + 0.638 b )\frac{C_0^{2(k-1)-\al_0}}{2(k-1)} \quad k\geq 3,\label{ineq:Xwbds}\\
 \big|\big(\frac{Y}{\wLP}\big)_{2k}\big|\leq \big(3.41 + 3.239 b+ 0.1266 b^2 \big)\frac{C_0^{2k-\al_0}}{2k}\quad k\geq 2,\label{ineq:Ywbds}
\end{align}
 where $C_0=2$, $\al_0=1.95$. 
  \end{lemma}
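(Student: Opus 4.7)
The proof proposal for Lemma~\ref{L:D14} follows the same template as the proof of Lemma~\ref{L:D13} for the sonic point, but using the Taylor data at the origin established in Lemma~\ref{L:ORIGINCOEFFS} together with the weight bounds~\eqref{eq:whatwcheck2kbds}. The analysis splits naturally into four steps.

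\textbf{Step 1: Coefficient bounds for $X$ and $Y$.} From~\eqref{def:X}--\eqref{def:Y}, I would first write out the even Taylor coefficients of $X$ and $Y$ in terms of $\tilde\rho_{2j}$, $\tilde\omega_{2j}$, $(\tilde\rho\tilde\omega)_{2j}$, and $(\tilde\omega^2)_{2j}$. Since $X$ starts at order $y^2$, one has $X = 2\sum_{k\geq 1} y^{2k}\bigl((\tilde\rho\tilde\omega)_{2(k-1)} - 2(\tilde\omega^2)_{2(k-1)} - (2(k-1))(\tilde\omega^2)_{2(k-2)} + (a+ib)\tilde\omega_{2(k-1)}\bigr)$ (with the convention that terms with negative index vanish). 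Combining Lemma~\ref{L:ORIGINCOEFFS}(ii) with the quadratic-coefficient bound~\eqref{E:ORIGINQUADRATICCOEFFS} and $D_0(2,1.95,\tilde\rho_0)\leq 2$, I would obtain the pointwise estimates $|X_{2k}|\leq (\mathfrak{a}_1+\mathfrak{a}_2 b)C_0^{2(k-1)-\al_0}/(2(k-1))$ and $|Y_{2k}|\leq (\mathfrak{b}_1+\mathfrak{b}_2 b+\mathfrak{b}_3 b^2)C_0^{2k-\al_0}/(2k)$ for $k\geq 3$ (respectively $k\geq 2$), with explicit small numerical constants. Low-order coefficients $k=0,1,2$ are computed by hand from the explicit formulas~\eqref{eq:order012coeffs}, \eqref{eq:tr2}, \eqref{eq:tom2}, and~\eqref{eq:worigincoeffs}.

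\textbf{Step 2: Convolution against $1/\wLP$.} Recall $\wLP^{-1}=\sum \check{w}_{2k}y^{2k}$, with $\check{w}_0=1$, $\check{w}_2=1/9$, an explicit $\check{w}_4$, and the general bound $|\check{w}_{2k}|\leq 1.35\,C_0^{2(k-1)-\al_0}/(2(k-1))^2$ from~\eqref{eq:whatwcheck2kbds}. Then
\[
\bigl(\tfrac{X}{\wLP}\bigr)_{2k}=\sum_{j=0}^{k}X_{2j}\check{w}_{2(k-j)},\qquad \bigl(\tfrac{Y}{\wLP}\bigr)_{2k}=\sum_{j=0}^{k}Y_{2j}\check{w}_{2(k-j)}.
\]
Following the sonic-point strategy, I would isolate the ``head'' terms where $j\in\{0,1,2\}$ or $k-j\in\{0,1,2\}$ (using explicit low-order data) and estimate the remaining ``body'' sum $\sum_{j=3}^{k-3}$ by plugging in both general bounds. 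For the body, the key combinatorial estimate is
\[
\sum_{j=3}^{k-3}\frac{1}{2(j-1)\cdot(2(k-j-1))^{2}}\leq \frac{K}{2(k-1)},
\]
analogous to the estimates used in Section~\ref{S:Highbcoeffs} (and in the same spirit as~\eqref{ineq:comb9}, \eqref{ineq:comb11}, \eqref{ineq:combextra}). This gives a bound of the required shape $C_0^{2(k-1)-\al_0}/(2(k-1))$ for $(X/\wLP)_{2k}$ and $C_0^{2k-\al_0}/(2k)$ for $(Y/\wLP)_{2k}$, with the coefficient dependence on $a,b$ tracked linearly from Step 1.

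\textbf{Step 3: Verification of low orders via interval arithmetic.} The recursive argument of Step 2 only closes once the bound has been verified for a few small $k$. As in Lemma~\ref{L:PRELIMBOUNDS} and Lemma~\ref{L:D13}, I would write a routine analogous to \verb!C_alpha_constraint_check_Origin! (say, \verb!XY_over_w_origin_check!) that evaluates the explicit formulas for $(X/\wLP)_{2k}$ and $(Y/\wLP)_{2k}$ at $k=1,2$ and $k=2,3$ respectively, using the interval enclosure $\tilde\rho_0\in[0.83290803,0.83290811]$ from Lemma~\ref{L:ORIGINCOEFFS}(i) and $a\in[0,1]$, $b\in[0.2,8]$, and confirms numerically that the claimed bounds $(4.156+0.638b)\cdot\tfrac{C_0^{2(k-1)-\al_0}}{2(k-1)}$ and $(3.41+3.239b+0.1266b^2)\cdot\tfrac{C_0^{2k-\al_0}}{2k}$ already hold.

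\textbf{Main obstacle.} The main difficulty is not structural but numerical: the constants in the target bounds (in particular the $b^2$-free part $3.41$ in~\eqref{ineq:Ywbds}) are tight, so one must be careful in Step 2 to avoid wasteful use of the triangle inequality on the head terms. The cleanest route is to keep $\check{w}_0,\check{w}_2,\check{w}_4$ and the analogues for $X,Y$ fully explicit (rather than passing through the inductive bound) so that the ``waste'' appears only in the body sum where the combinatorial estimate dominates anyway. A secondary subtlety is that $X$ begins at $y^2$, which forces an index shift that makes $(X/\wLP)_{2k}$ decay like $C_0^{2(k-1)-\al_0}/(2(k-1))$ rather than $/(2(k-1))^2$; this is the correct rate because $X_{2k}$ inherits one derivative from the $y^3\omLP\omLP'$ term, and the estimate of Step 1 must be performed accordingly, using $(2(k-1))$ rather than $(2(k-1))^2$ in the denominator.
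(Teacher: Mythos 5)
Your proposal takes essentially the same route as the paper: expand $X$ and $Y$ in even powers, bound the Taylor coefficients using Lemma~\ref{L:ORIGINCOEFFS} and~\eqref{E:ORIGINQUADRATICCOEFFS}, then convolve against the expansion of $1/\wLP$, keeping the low-order terms explicit and estimating the body of the convolution via a combinatorial sum in the spirit of Lemma~\ref{L:COMB}, with interval arithmetic to nail down the numerical constants. Two small slips are worth flagging. First, in your formula for $X_{2k}$ the contribution of $-2y^3\omLP\omLP'$ is $-2(k-1)(\tom^2)_{2(k-1)}$, whereas you wrote (after pulling out the overall factor of $2$) $-(2(k-1))(\tilde\omega^2)_{2(k-2)}$: both the prefactor (off by a factor of two) and the subscript (off by one shift) need correcting. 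This does not change the crucial observation you make that the derivative term produces an extra factor of $k$, so that $|X_{2k}|$ decays like $C_0^{2(k-1)-\al_0}/(2(k-1))$ rather than $/(2(k-1))^2$, but it would perturb the numerical constants in Step~1, which feed into the tight targets $(4.156+0.638b)$ and $(3.41+3.239b+0.1266b^2)$. Second, your Step~3 limits the role of interval arithmetic to checking low-order coefficients, and you speak of a ``recursive'' argument, but once the bounds on $X_{2k}$, $Y_{2k}$, $\check w_{2k}$ are in hand the convolution estimate for general $k\ge 4$ is direct, not inductive; the paper also invokes interval arithmetic (Lemma~\ref{L:XYORIGINIA}(ii),(iv)) to verify the final numerical inequality that closes the general-$k$ estimate, in addition to the low-order checks in (i),(iii). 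One more minor point: for $k\ge 2$ the coefficient $Y_{2k}=2\tr_{2k}+2(a+ib)(2k+1)\tom_{2k}$ is only linear in $b$, so the $b^2$ term in the final bound comes exclusively from the explicit head term $Y_0\check w_{2k}$, not from the generic $Y_{2k}$.
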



\begin{proof}
 \textbf{Step 1: Expansion for $X$}\\
 First, from \eqref{def:X}, the coefficients $X_{2k}$ of $X$ satisfy
 \beqa\label{eq:X2kidentity}
 X_{2k}
 =&\,2(\tr\tom)_{2(k-1)}-4(\tom^2)_{2(k-1)}+2(a+ib)\tom_{2(k-1)}-2\sum_{j=0}^{k-1}2j\tom_{2j}\tom_{2(k-1)-2j},
 \eeqa
which immediately yields formulae when $k=0,1,2$, including $X_0=0$ and
\beqa\label{eq:X2X4}
 X_{2}=&\,\frac23\tr_0-\frac49+\frac23(a+ib), \quad X_4=\frac{2(\tr_0-\frac13)}{45}\big(-3\tr_0-4+2(a+ib)).
\eeqa
To bound $|X_{2k}|$ for $k\geq 3$, we use~\eqref{E:ORIGINQUADRATICCOEFFS} and Lemma~\ref{L:ORIGINCOEFFS} to estimate
\begin{align}
|X_{2k}|\leq&\,(6D_0+2(1+b))\frac{C_0^{2(k-1)-\al_0}}{(2(k-1))^2}+ 4(k-1)|\tom_{2(k-1)}|\tom_0+2\frac{C_0^{2(k-1)-\al_0}}{C_0^{\al_0}}\sum_{j=1}^{k-2}\frac{1}{2j(2(k-1)-2j)^2}\notag\\
\leq&\,\frac{C_0^{2(k-1)-\al_0}}{2(k-1)}\Big(\frac{3D_0+(1+b)}{2}+\frac23+\frac{2(0.57)}{C_0^{\al_0}}\Big),\label{E:X2KBD}
\end{align}
 where we used~\eqref{ineq:combextra2} to bound the sum.\\
\textbf{Step 2: Expansion for $\frac{X}{\wLP}$}\\
 Therefore, to expand $\frac{X}{\wLP}$, we use
 \beq
 \big(\frac{X}{\wLP}\big)_{2k}=\sum_{j=0}^kX_{2j}\check{w}_{2k-2j}.
 \eeq
 As usual, we work with the closed formulae for the low order coefficients $(\frac{X}{w})_{2k}$, combining~\eqref{eq:worigincoeffs} and~\eqref{eq:X2X4} which yields
\beqa\label{X/w012}
 \big|\big(\frac{X}{\wLP}\big)_{0}\big|=&\,0,\qquad \big|\big(\frac{X}{\wLP}\big)_{2}\big|=|\frac23\rho_0-\frac49+\frac23(a+ib)|,\\
 \big|\big(\frac{X}{\wLP}\big)_{4}\big|=&\,\big|\frac19\big(\frac23\rho_0-\frac49+\frac23(a+ib)\big)+\frac{2(\rho_0-\frac13)}{45}\big(-3\rho_0-4+2(a+ib))\big|.
 \eeqa
We apply Lemma~\ref{L:XYORIGINIA}(i) to see
 \beqa\label{ineq:Xw6bound}
  \big|\big(\frac{X}{\wLP}\big)_{6}\big|  \leq(3.677+0.523 b)\frac{C_0^{4-\al_0}}{4}.
 \eeqa
 Now using $|\check{w}_{2k}|\leq1.35\frac{C_0^{2(k-1)-\al_0}}{(2(k-1))^2}$ from \eqref{eq:whatwcheck2kbds} along with \eqref{eq:worigincoeffs} and~\eqref{eq:X2X4}--\eqref{E:X2KBD}, we have, for $k\geq 4$,
 \footnotesize
 \beqa
  \big|\big(\frac{X}{\wLP}\big)_{2k}\big|\leq&\,\Big|X_2\check{w}_{2(k-1)}+X_{2(k-1)}\check{w}_2+X_{2k}\check{w}_0+\sum_{j=2}^{k-2}X_{2j}\check{w}_{2k-2j}\Big|\\
  \leq&\,|\frac23\tr_0-\frac49+\frac23(a+ib)|(1.35)\frac{C_0^{2(k-2)-\al_0}}{(2(k-2))^2}+\frac19\frac{C_0^{2(k-2)-\al_0}}{2(k-2)}\Big(\frac{3D_0+(1+b)}{2}+\frac23+\frac{2(0.57)}{C_0^{\al_0}}\Big)\\
  &+\frac{C_0^{2(k-1)-\al_0}}{2(k-1)}\Big(\frac{3D_0+(1+b)}{2}+\frac23+\frac{2(0.57)}{C_0^{\al_0}}\Big)\\
  &+(1.35)\Big(\frac{3D_0+(1+b)}{2}+\frac23+\frac{2(0.57)}{C_0^{\al_0}}\Big)\sum_{j=2}^{k-2}\frac{C_0^{2(j-1)-\al_0}}{2(j-1)}\frac{C_0^{2(k-j-1)-\al_0}}{(2(k-j-1))^2}\\
  \leq&\,\frac{C_0^{2(k-1)-\al_0}}{2(k-1)}\bigg((1.35)|\frac23\tr_0-\frac49+\frac23(a+ib)|\frac{2(k-1)}{C_0^2(2(k-2))^2}\\
  &\hspace{6mm}+\Big(\frac{3D_0+(1+b)}{2}+\frac23+\frac{2(0.57)}{C_0^{\al_0}}\Big)\Big(\frac{(k-1)}{9(k-2)C_0^2}+1+(1.35)\frac{3}{4}\frac{1}{C_0^{2+\al_0}}\Big)\bigg),
 \eeqa
 \small
 where we have used~\eqref{ineq:comb10} to bound the sum.

This gives us the bound, from Lemma~\ref{L:XYORIGINIA}(ii), for $k\geq 4$,
\beq
 \big|\big(\frac{X}{\wLP}\big)_{2k}\big|\leq (4.156 + 0.638 b )\frac{C_0^{2(k-1)-\al_0}}{2(k-1)}. 
\eeq
\textbf{Step 3: Expansion for $Y$}\\
Moving on to $Y$, we recall from \eqref{def:Y} that this is 
$$Y=2\rhoLP+2(a+ib)(\omLP+y\omLP')-(a+1+ib)(a+ib).$$ 
Expanding $Y$ for low order coefficients, we substitute the expressions \eqref{eq:tr2}--\eqref{eq:tom2} and find we have
\beqa\label{eq:Y0Y2}
Y_0=&\,2\tr_0+\frac{2(a+ib)}{3}-a(1+a)+b^2 -i(1+2a)b, \quad Y_2
=-\frac23\tr_0(\tr_0-\frac13)+\frac{4(a+ib)}{15}(\tr_0-\frac13), \\
Y_{2k}=&\,2\tr_{2k}+2(a+ib)(2k+1)\tom_{2k}, \ \ k\ge2.
\eeqa
So, from Lemma~\ref{L:ORIGINCOEFFS}, for $k\geq 2$, we have
\beqa\label{ineq:Y2kbd}
|Y_{2k}|\leq \frac{C_0^{2k-\al_0}}{(2k)^2}\big(2+2(2k+1)(a+b)\big)\leq \big(3+\frac{5b}{2}\big)\frac{C_0^{2k-\al_0}}{2k}. 
\eeqa
\textbf{Step 4: Expanding $\frac{Y}{\wLP}$}\\
As usual, we leave $(\frac{Y}{\wLP})_0$ and $(\frac{Y}{\wLP})_2$ as explicit functions of $\rhoLP(0)$, giving
 \beqa\label{Y/w012}
\big|\big(\frac{Y}{\wLP}\big)_0\big|=&\,|Y_0|=\big|2\rhoLP_0+\frac{2(a+ib)}{3}-(1+a+ib)(a+ib)\big|,\\
\big|\big(\frac{Y}{\wLP}\big)_2\big|
=&\,\Big|\frac19\Big(2\rhoLP_0+\frac{2(a+ib)}{3}-(1+a+ib)(a+ib)\Big)+(\rhoLP_0-\frac13)\big(-\frac23\rhoLP_0+\frac{4(a+ib)}{15}\big)\Big|.
\eeqa
Estimating  $(\frac{Y}{\wLP})_4$ and $(\frac{Y}{\wLP})_6$, we apply Lemma~\ref{L:XYORIGINIA}(iii) to get the bounds
\begin{align}\label{ineq:Yw4bound}
\big|\big(\frac{Y}{\wLP}\big)_4\big|
\leq&\,\big(3.0143 + 2.548 b + 0.0265 b^2)\frac{C_0^{4-\al_0}}{4},\\
\big|\big(\frac{Y}{\wLP}\big)_6\big|\leq&\,\big(3.341 + 3.239 b+ 0.1266 b^2 \big)\frac{C_0^{6-\al_0}}{6}.
\end{align}
Then, for $k\geq 4$, we use the representations \eqref{eq:worigincoeffs} and \eqref{eq:Y0Y2} as well as the bounds \eqref{eq:whatwcheck2kbds} and \eqref{ineq:Y2kbd} to estimate
\footnotesize
\beqa
{}&\big|\big(\frac{Y}{\wLP}\big)_{2k}\big|=\Big|Y_0\check{w}_{2k}+Y_2\check{w}_{2(k-1)}+Y_{2(k-1)}\check{w}_2+Y_{2k}+\sum_{j=2}^{k-2}Y_{2j}\check{w}_{2(k-j)}\Big|\\
&\leq\big|2\tr_0+\frac{2(a+ib)}{3}-(1+a+ib)(a+ib)\big|(1.35)\frac{C_0^{2(k-1)-\al_0}}{(2(k-1))^2}\\
&+\Big|(\tr_0-\frac13)\big(-\frac23\tr_0+\frac{4(a+ib)}{15}\big)\Big|(1.35)\frac{C_0^{2(k-2)-\al_0}}{(2(k-2))^2}\\
&+\frac19\big(3+\frac{5b}{2}\big)\frac{C_0^{2(k-1)-\al_0}}{2(k-1)}+\big(3+\frac{5b}{2}\big)\frac{C_0^{2k-\al_0}}{2k}+(1.35)\big(3+\frac{5b}{2}\big)\frac{C_0^{2k-\al_0}}{C_0^{2+\al_0}}\sum_{j=2}^{k-2}\frac{1}{2j(2(k-1-j))^2}\\
&\leq\frac{C_0^{2k-\al_0}}{2k}\Big(\big|2\tr_0+\frac{2(a+ib)}{3}-(1+a+ib)(a+ib)\big|(1.35)\frac{1}{C_0^2}\frac{2}{9} \\
&+\Big|(\tr_0-\frac13)\big(-\frac23\tr_0+\frac{4(a+ib)}{15}\big)\Big|(1.35)\frac{1}{C_0^{4}}\frac{1}{2}+\big(3+\frac{5b}{2}\big)\big(1+\frac{4}{27C_0^2}\big)+ (1.35)\frac{7}{12C_0^{2+\al_0}}\big(3+\frac{5b}{2}\big)\Big),
\eeqa
\small
where we have used~\eqref{ineq:combextra3} to bound the sum.
This yields the bound, by Lemma~\ref{L:XYORIGINIA}(iv), 
\beq
\big|\big(\frac{Y}{\wLP}\big)_{2k}\big|\leq\big(3.41 + 2.91 b + 0.075 b^2\big)\frac{C_0^{2k-\al_0}}{2k}. 
\eeq
\end{proof}

  
  We are now finally in a position to estimate the growth rate of the coefficients of the eigenfunction $\psi_\l$ near the origin.

\begin{proof}[Proof of Proposition~\ref{L:EFUNCTIONTAYLOR}(ii)]
Starting from the Taylor expansion~\eqref{E:PSITILDEEXPORIGIN} for $\tpsi_\l$, we substitute this into~\eqref{eq:psilatilde} and group terms (recalling $\tilde\psi_1=0$, $(\frac{X}{\wLP})_0=0$) to yield, for $\ell\geq 1$,
\beq
2\ell(2\ell+3)\tilde\psi_{2\ell+3}=-\sum_{j=1}^\ell(2j+1)\tilde\psi_{2j+1}\big(\frac{X}{\wLP}\big)_{2\ell+2-2j}-\sum_{j=1}^\ell\tilde\psi_{2j +1}\big(\frac{Y}{\wLP}\big)_{2\ell-2j.} 
\eeq
We will assume without loss of generality that $\tilde\psi_3=1$ as this can be ensured by scaling. \\
For $\ell=1,2,3$, we easily obtain $\tilde\psi_{2\ell+3}$ from this identity.
For $\ell=4,5$, we employ Lemma~\ref{L:PSIORIGINIA}(i)  to see
\beqa
|\tilde\psi_{2\ell+1}|\leq Q\frac{\mathfrak{C}_0^{2\ell-\al_0}}{(2\ell+1)^3},
\eeqa
where $Q=7$, $\mathfrak{C}_0=2+\frac{b}{4}$.

We assume for an induction that, for $\ell\geq 4$, we have the bound
\beq
|\tilde\psi_{2\ell+1}|\leq Q\frac{\mathfrak{C}_0^{2\ell-\al_0}}{(2\ell+1)^3}.
\eeq
Now, for $\ell\geq 5$, we have the general identity (where the sums in the last line are empty if $\ell=5$)
\beqa
\tilde\psi_{2\ell+3}=&-\frac{1}{2\ell(2\ell+3)}\bigg(3\big(\frac{X}{\wLP}\big)_{2\ell}+5\tilde\psi_5\big(\frac{X}{\wLP}\big)_{2\ell-2}+7\tilde\psi_7\big(\frac{X}{\wLP}\big)_{2\ell-4}+(2\ell-1)\tilde\psi_{2\ell-1}\big(\frac{X}{\wLP}\big)_4\\
&+(2\ell+1)\tilde\psi_{2\ell+1}\big(\frac{X}{\wLP}\big)_2+(\frac{Y}{\wLP}\big)_{2\ell-2}+\tilde\psi_5(\frac{Y}{\wLP}\big)_{2\ell-4}+\tilde\psi_7(\frac{Y}{\wLP}\big)_{2\ell-6}+\tilde\psi_{2\ell-1}(\frac{Y}{\wLP}\big)_2\\
&+\tilde\psi_{2\ell+1}(\frac{Y}{\wLP}\big)_0+\sum_{j=4}^{\ell-2}(2j+1)\tilde\psi_{2j+1}\big(\frac{X}{\wLP}\big)_{2\ell+2-2j}+\sum_{j=4}^{\ell-2}\tilde\psi_{2j+1}\big(\frac{Y}{\wLP}\big)_{2\ell-2j}\bigg). 
\eeqa
To compress notation, we set 
\beq\label{E:AFRAK0BFRAK0}
\mathfrak{a}_0=4.156 + 0.638 b,\qquad \mathfrak{b}_0 = 3.41 + 3.239 b+ 0.1266 b^2.
\eeq
 Using the estimates  \eqref{ineq:Xwbds}--\eqref{ineq:Ywbds} and the inductive hypothesis, we obtain
\beqa
|\tilde\psi_{2\ell+3}|\leq\frac{1}{2\ell(2\ell+3)}\bigg(&\mathfrak{a}_0 \frac{C_0^{2\ell-\al_0}}{2\ell}\Big(\frac{3\ell}{C_0^2(\ell-1)}+5|\tilde\psi_5|\frac{2\ell}{2(\ell-2)C_0^4}+7|\tilde\psi_7|\frac{2\ell}{2(\ell-3)C_0^6}\Big)\\
&+Q\frac{\mathfrak{C}_0^{2\ell-\al_0}}{(2\ell+1)^2}\Big(\frac{(2\ell+1)^2}{\mathfrak{C}_0^2(2\ell-1)^2}\big|\big(\frac{X}{\wLP}\big)_4\big|+\big|\big(\frac{X}{\wLP}\big)_2\big|\Big)\\
&+\mathfrak{b}_0\frac{C_0^{2\ell-\al_0}}{2\ell}\Big(\frac{2\ell}{C_0^2(2(\ell-1))}+|\tilde\psi_5|\frac{2\ell}{2(\ell-2)C_0^4}+|\tilde\psi_7|\frac{2\ell}{2(\ell-3)C_0^6}\Big)\\
&+Q\frac{\mathfrak{C}_0^{2\ell-\al_0}}{(2\ell+1)^2}\Big(\frac{(2\ell+1)^2}{\mathfrak{C}_0^2(2\ell-1)^3}\big|\big(\frac{Y}{\wLP}\big)_2\big|+\frac{1}{2\ell+1}\big|\big(\frac{Y}{\wLP}\big)_0\big|\Big)\\
&+\mathfrak{a}_0 Q\sum_{j=4}^{\ell-2}\frac{\mathfrak{C}_0^{2j-\al_0}}{(2j+1)^2}\frac{C_0^{2\ell-2j-\al_0}}{2(\ell-j)}+\mathfrak{b}_0 Q\sum_{j=4}^{\ell-2}\frac{\mathfrak{C}_0^{2j-\al_0}}{(2j+1)^3}\frac{C_0^{2\ell-2j-\al_0}}{2(\ell-j)}\bigg).
\eeqa
This is then bounded by
\beqa
|\tilde\psi_{2\ell+3}|\leq&\,\frac{\mathfrak{C}_0^{2\ell+2-\al_0}}{(2\ell+3)^3}\frac{(2\ell+3)^2}{2\ell}\\
\times\bigg(& \frac{\big(\frac{C_0}{\mathfrak{C}_0}\big)^{2\ell-\al_0}}{2\ell\mathfrak{C}_0^2}\Big(\mathfrak{a}_0\big(\frac{3\ell}{C_0^2(\ell-1)}+5|\tilde\psi_5|\frac{2\ell}{2(\ell-2)C_0^4}+7|\tilde\psi_7|\frac{2\ell}{2(\ell-3)C_0^6}\big)\\
&\hspace{10mm}+\mathfrak{b}_0\big(\frac{2\ell}{C_0^2(2(\ell-1))}+|\tilde\psi_5|\frac{2\ell}{2(\ell-2)C_0^4}+|\tilde\psi_7|\frac{2\ell}{2(\ell-3)C_0^6}\big)\Big)\\
&+\frac{Q}{\mathfrak{C}_0^2}\Big(\frac{\big|\big(\frac{X}{w}\big)_4\big|}{\mathfrak{C}_0^2(2\ell-1)^2}+\frac{\big|\big(\frac{Y}{w}\big)_2\big|}{\mathfrak{C}_0^2(2\ell-1)^3}+\frac{\big|\big(\frac{X}{w}\big)_2\big|}{(2\ell+1)^2}+\frac{\big|\big(\frac{Y}{w}\big)_0\big|}{(2\ell+1)^3}\Big)\\
&+\mathfrak{a}_0 \frac{Q}{\mathfrak{C}_0^{2+\al_0}}\sum_{j=4}^{\ell-2}\frac{\big(\frac{C_0}{\mathfrak{C}_0}\big)^{2\ell-2j-\al_0}}{(2j+1)^2(2(\ell-j))}+\mathfrak{b}_0 \frac{Q}{\mathfrak{C}_0^{2+\al_0}}\sum_{j=4}^{\ell-2}\frac{\big(\frac{C_0}{\mathfrak{C}_0}\big)^{2\ell-2j-\al_0}}{(2j+1)^32(\ell-j)}\bigg).
\eeqa
We apply Lemma~\ref{L:PSIORIGINIA}(ii) to estimate the summation terms and so obtain the bound
\beqa
|\tilde\psi_{2\ell+3}|\leq&\,\frac{\mathfrak{C}_0^{2\ell+2-\al_0}}{(2\ell+3)^3}\frac{(2\ell+3)^2}{2\ell}\\
\times\bigg(& \frac{\big(\frac{C_0}{\mathfrak{C}_0}\big)^{2\ell-\al_0}}{2\ell\mathfrak{C}_0^2}\Big(\mathfrak{a}_0\big(\frac{3\ell}{C_0^2(\ell-1)}+5|\tilde\psi_5|\frac{2\ell}{2(\ell-2)C_0^4}+7|\tilde\psi_7|\frac{2\ell}{2(\ell-3)C_0^6}\big)\\
&\hspace{10mm}+\mathfrak{b}_0\big(\frac{2\ell}{C_0^2(2(\ell-1))}+|\tilde\psi_5|\frac{2\ell}{2(\ell-2)C_0^4}+|\tilde\psi_7|\frac{2\ell}{2(\ell-3)C_0^6}\big)\Big)\\
&+\frac{Q}{\mathfrak{C}_0^2}\Big(\frac{\big|\big(\frac{X}{\wLP}\big)_4\big|}{\mathfrak{C}_0^2(2\ell-1)^2}+\frac{\big|\big(\frac{Y}{\wLP}\big)_2\big|}{\mathfrak{C}_0^2(2\ell-1)^3}+\frac{\big|\big(\frac{X}{\wLP}\big)_2\big|}{(2\ell+1)^2}+\frac{\big|\big(\frac{Y}{\wLP}\big)_0\big|}{(2\ell+1)^3}\Big)\\
&+\mathfrak{a}_0 \frac{Q}{\mathfrak{C}_0^{2+\al_0}}\frac{0.093}{2\ell(1+b)}+\mathfrak{b}_0 \frac{Q}{\mathfrak{C}_0^{2+\al_0}}\frac{0.01}{2\ell(1+b)}\bigg)
\eeqa
By employing Lemma~\ref{L:PSIORIGINIA}(iii), we deduce
\beq
|\tilde\psi_{2\ell+3}|\leq 7\frac{\mathfrak{C}_0^{2(\ell+1)-\al_0}}{(2\ell+3)^3},
\eeq
as required.
\end{proof}


\subsection{Auxiliary lemmas}

 \begin{lemma}\label{L:WSONICIA}
 The following estimates hold, for $y_*\in[2.341,2.342]$, $C=7.2$, $\alpha=1.98$.
 \begin{itemize}
 \item[(i)] $|(w')_k|\leq 1.4166y_*^2\frac{C^{k+1-\al}}{k+1}$ for $k=2,3$;
 \item[(ii)] For $k\geq 4$, $y_*\in[2.34,2.342]$,
 \begin{align*}
{}&(k+1)\frac{k+1-\frac{2}{y_*}}{k^2}\leq \frac{5(5-\frac{2}{y_*})}{16}\leq \frac{13}{10},\\
&\big|\frac{1 - 5 y_* + 3 y_*^2}{4 y_*^2-6 y_*}\big|\frac{k+1}{k-1}\leq \frac{1 - 5 y_* + 3 y_*^2}{4 y_*^2-6 y_*}\frac{5}{3}\leq1.214,\\
&\big|\frac{5 - 5 y_* + y_*^2}{3 y_* - 2 y_*^2}\big|\frac{k+1}{(k-1)^2}\leq \frac{5 - 5 y_* + y_*^2}{3 y_* - 2 y_*^2}\frac{5}{9}\leq\frac{7}{40},\\
&\frac{(k-1)(k+1)}{(k-2)^2}(1-\frac{2}{y_*})\leq\frac{15}{4}(1-\frac{2}{y_*})\leq 0.548;
\end{align*}
\item[(iii)] $2\Big(\frac{1}{y_*}+\frac{13}{10C}+\frac{1.214}{C^2}+\frac{7}{40C^2}+\frac{0.548}{C^3}+\frac{5}{4}\big(\frac{0.506}{C^\al}+\frac{9}{10C^{\al+1}}+\frac{1}{C^{\al+2}}\big)+\frac{5D}{16C}+\frac{5D}{9C^2}\Big)
 \leq 1.4166$;
 \item[(iv)] $ |\bar w_1|\leq \frac{y_*^2}{4}\frac{C^{2-\al}}{4}$ and $ |\bar w_2| \leq0.0458y_*^2\frac{C^{3-\al}}{9}$;
 \item[(v)] $\frac{1.4166}{\tilde w_0^2}+\frac{(1.4166)\cdot16|\bar w_1|}{9C|\tilde w_0|}+ (1.242)\cdot(0.506)\frac{1.4166y_*^2}{C^{\al-1}|\tilde w_0|}\leq 0.506$.
 \end{itemize}
 \end{lemma}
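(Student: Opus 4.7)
\medskip

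\noindent\emph{Proof plan.} The statements are entirely numerical: each bound reduces to evaluating explicit algebraic expressions in $y_\ast$ (and $k$) against a fixed constant, using only the closed-form data already obtained for the LP Taylor coefficients at the sonic point. The plan is therefore to combine the closed-form expressions from Lemma~\ref{L:WEXPANSIONSONIC} and~\eqref{eq:order012coeffs} with the interval-arithmetic routines already developed for the sonic-point expansions. Concretely, I would proceed item by item.

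For (i), I would use the recurrence~\eqref{eq:w'expansion} together with the closed-form values of $\omega_0,\omega_1,\omega_2,\rho_0,\rho_1$ from~\eqref{eq:order012coeffs} and the formulas for $\omega_3,\rho_3,\omega_4,\rho_4$ obtained from Lemma~\ref{L:FNGN}. This yields explicit rational expressions in $y_\ast$ for $(w')_2$ and $(w')_3$, which are then evaluated on the interval $y_\ast\in[2.341,2.342]$ by a call to the interval-arithmetic sonic-point module (analogous to \texttt{C\_alpha\_constraint\_check\_Sonic}) and compared against the right-hand side $1.4166\, y_\ast^2 C^{k+1-\alpha}/(k+1)$ with $C=7.2$, $\alpha=1.98$. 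Part (iv) is handled in exactly the same way, using~\eqref{def:wbar1} and the recursive definition $\bar w_k=-\tilde w_0^{-1}\sum_{j=1}^k\tilde w_j\bar w_{k-j}$ together with the identity $(w')_k=(k+1)\tilde w_k$ from~\eqref{eq:w'towtilde} to express $\bar w_1$ and $\bar w_2$ as explicit rational functions of $y_\ast$, which are then bounded by interval arithmetic.

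For (ii), the point is that each of the four quantities on the left-hand side is a product of a rational function of $y_\ast$ (uniformly bounded on $[2.34,2.342]$) times an elementary rational function of $k$. The $k$-dependent factors $\frac{(k+1)(k+1-2/y_\ast)}{k^2}$, $\frac{k+1}{k-1}$, $\frac{k+1}{(k-1)^2}$, $\frac{(k-1)(k+1)}{(k-2)^2}$ are each monotone decreasing in $k$ for $k\geq 4$ (as is trivially checked by examining the derivative or by writing $\frac{(k+1)(k+1-2/y_\ast)}{k^2}=1+\frac{2(1-1/y_\ast)}{k}+\frac{1-2/y_\ast}{k^2}$, etc.), so their suprema are attained at $k=4$, giving the claimed constants $\tfrac{13}{10}$, $1.214$, $\tfrac{7}{40}$, $0.548$ after a final interval-arithmetic evaluation of the coefficient polynomials in $y_\ast$.

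For (iii) and (v), the inequalities are fixed finite linear combinations of terms already computed or estimated above (with $D$ as in~\eqref{def:DCalpha} with $C=7.2$, $\alpha=1.98$, $y_\ast\in[2.341,2.342]$, recalling also the explicit value of $\tilde w_0=-2(y_\ast-1)$ and the bound for $|\bar w_1|$ from (iv)). Each would be evaluated using the interval-arithmetic package on the intervals for $y_\ast$, $C$, $\alpha$, and the result compared to the target constants $1.4166$ and $0.506$ respectively. The only delicate point — and what I expect to be the main obstacle — is that these last two inequalities leave very little slack: the terms carrying factors of $D$ or $|\bar w_1|$ depend cumulatively on bounds fixed elsewhere in the paper (in particular on the precise choice $\alpha=1.98$, $C=7.2$, and on the width of the interval $\mathfrak{y}_\ast$), so if the enclosures widen too much through interval propagation the bound in (v) would fail. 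The workaround would be to enforce tighter intervals for $y_\ast$ (which Lemma~\ref{L:LPYSTAR} allows, since $\bar y_\ast\in\mathfrak{y}_\ast\subset[2.3411,2.3412]$) and to evaluate the right-hand sides of (iii) and (v) as single expressions rather than summing interval bounds, thereby avoiding needless over-estimation.
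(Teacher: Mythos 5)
Your proposal is correct and follows essentially the same approach as the paper: the paper's proof is a one-line citation to the interval-arithmetic routine \verb!w_Sonic_Constraint!, which carries out precisely the evaluation you describe. The one piece of analytic content you make explicit that the paper leaves buried in the code is the monotonicity in $k$ of the four factors in (ii); your elementary checks (e.g. writing $\frac{(k+1)(k+1-2/y_\ast)}{k^2}=1+\frac{2(1-1/y_\ast)}{k}+\frac{1-2/y_\ast}{k^2}$ with both correction terms positive and decreasing, since $y_\ast>2$) are correct and show why the suprema over $k\geq 4$ are attained at $k=4$, reducing each line to a single interval-arithmetic evaluation of a rational function of $y_\ast$. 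Your caution about slack in (iii) and (v) is also warranted — note that the sharper enclosure for $\bar w_2$ in (iv) (namely $0.0458$ rather than the $0.506$ used downstream) reflects exactly the kind of tightening you anticipate being needed for the propagation not to overshoot.
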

 
 \begin{proof}
 These estimates are proved using the function \verb!w_Sonic_Constraint!  in the attached code.
 \end{proof}
 

\begin{lemma}\label{L:WORIGINIA}
The following estimates hold for $\tilde\rho_0\in[0.83,0.84]$, $C_0=2$, $\al_0=1.95$.
\begin{itemize}
\item[(i)] $|\what_4|\leq \frac{C_0^{2-\al_0}}{4}$;
\item[(ii)] $\frac23+\frac{41}{36C_0^{\al_0}}<1$;
\item[(iii)] $|\check{w}_4|\leq 0.11\frac{C_0^{2-\al_0}}{4},\qquad |\check{w}_6|\leq 1.02\frac{C_0^{4-\al_0}}{16}$.
\item[(iv)] For $k\geq 4$, $\frac{2.35}{9}\frac{(k-1)^2}{C_0^2(k-2)^2}+1+\frac{9\cdot(1.35)}{4C_0^{2+\al_0}}\leq 1.35$.
\end{itemize}
\end{lemma}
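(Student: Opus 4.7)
All four estimates are finite numerical checks, so the plan is to dispatch them via direct interval arithmetic computation, mirroring the style already established in the companion Lemmas~\ref{L:WSONICIA}, \ref{L:ABSONICIA}, etc. The key observation is that each quantity depends only on the absolute constants $C_0=2$, $\alpha_0=1.95$, and (for (i) and the first part of (iii)) on the single parameter $\tilde\rho_0\in[0.83,0.84]$ which enters through the closed-form expressions for $\hat w_{2k}$ and $\check w_{2k}$, $k\le 3$, derived in the preceding lemma. Thus the checks involve only interval-valued arithmetic on a handful of rational functions of $\tilde\rho_0$ plus explicit evaluation of $2^{\alpha_0}$, $2^{2-\alpha_0}$, etc., which is exactly what \verb!VNODE-LP!'s interval routines are designed to handle.

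For (i), the identity $\hat w_4=-\frac{4}{135}(\tilde\rho_0-\frac13)$ from~\eqref{eq:worigincoeffs} shows $|\hat w_4|\le \frac{4}{135}\cdot 0.51$, which is substantially below $\frac{C_0^{2-\alpha_0}}{4}=\frac{2^{0.05}}{4}$; the check is a single interval comparison. For (ii), $\frac23+\frac{41}{36\cdot 2^{1.95}}$ is a constant, and one only needs a validated upper bound on $2^{-1.95}$ to conclude the sum lies strictly below $1$. For the first half of (iii), the closed form $\check w_4=\frac{1}{81}+\frac{4}{135}(\tilde\rho_0-\frac13)$ yields an interval of width $O(10^{-4})$ centred near $0.027$, which is below the target $0.11\cdot\frac{C_0^{2-\alpha_0}}{4}$; the second half, bounding $|\check w_6|$, requires first evaluating $\hat w_6=-2\tilde\omega_0\tilde\omega_4-\tilde\omega_2^2$ (with $\tilde\omega_4$ obtained from the recurrence~\eqref{E:RHONOMNORIGIN}--\eqref{E:TILDEGNDEF2}) and then using $\check w_6=-\hat w_2\check w_4-\hat w_4\check w_2-\hat w_6\check w_0$, which is again a rational function of $\tilde\rho_0$ evaluated on a narrow interval.

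For (iv), the only genuinely $k$-dependent ingredient is $\frac{(k-1)^2}{(k-2)^2}$, which is strictly decreasing in $k$ for $k\ge 4$ (as is trivial from $\frac{d}{dk}\log\frac{k-1}{k-2}=\frac{-1}{(k-1)(k-2)}<0$). Hence it suffices to verify the inequality at $k=4$, reducing it to the scalar check $\frac{2.35}{9}\cdot\frac{9}{16\cdot 4}+1+\frac{12.15}{4\cdot 2^{3.95}}\le 1.35$, which I would evaluate with a single interval arithmetic call producing an enclosure with upper endpoint safely below $1.35$.

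The main (and essentially only) ``obstacle'' is bookkeeping: ensuring the interval enclosure of $2^{\alpha_0}$ and of the rational expressions in $\tilde\rho_0$ is computed with sufficient precision that the strict inequalities in (ii) and (iv)---where the margins $1-0.9614\approx 0.039$ and $1.35-1.343\approx 0.007$ respectively are smallest---are certified. These margins are comfortably larger than any roundoff error produced by \verb!VNODE-LP!'s outward rounding, so a naive implementation via a dedicated routine (say \verb!w_Origin_Constraint!, parallel to the existing \verb!w_Sonic_Constraint!) will suffice. No further analytical input is needed beyond the closed-form identities already recorded in Lemma~\ref{L:ORIGINCOEFFS} and in~\eqref{eq:worigincoeffs}.
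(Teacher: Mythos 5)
Your proposal is correct and matches the paper's approach: the paper's proof consists precisely of delegating all four bounds to the interval arithmetic routine \verb!w_Origin_Constraint!, and the ingredients you outline (the closed-form expressions for $\hat w_4$, $\check w_4$, $\check w_6$ as rational functions of $\tilde\rho_0$, the outward-rounded evaluation of $2^{\alpha_0}$, and for (iv) the reduction to $k=4$ via monotonicity of $(k-1)^2/(k-2)^2$) are exactly what such a routine must encode. One minor arithmetic slip: at $k=4$ the first term is $\frac{2.35}{9}\cdot\frac{9}{4\cdot 4}=\frac{2.35}{16}\approx 0.147$, not $\frac{2.35}{9}\cdot\frac{9}{16\cdot 4}$; with the corrected value the sum is about $1.343<1.35$, so the check still passes with the margin you anticipated.
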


 \begin{proof}
 These estimates are proved using the function \verb!w_Origin_Constraint!  in the attached code.
 \end{proof}


The following lemma contains the key bounds obtained via Interval Arithmetic used in the proof of Lemma~\ref{L:D13}.


\begin{lemma}\label{L:ABSONICIA}
 The following estimates hold, for $y_*\in[2.341,2.342]$, $C=7.2$, $\alpha=1.98$.
 \begin{itemize}
 \item[(i)] $1.4166+(4D+2(1+b))\big(\frac{4}{9C}+\frac{1}{C^2}\big)\leq 1.913+0.1621b$ for $k=2,3$;
 \item[(ii)] $ \Big| \big( \frac{A}{\tilde w}\big)_3\Big| \leq (1.057+(0.6626)b)y_*^2\frac{C^{4-\al}}{4}$ and $\Big|\big(\frac{A}{\tilde w}\big)_4\Big|\leq (1.301 + (0.692)b)y_*^2 \frac{C^{5-\al}}{5}$;
\item[(iii)] $(0.506)\Big(\frac{|A_0|}{6}+\frac{6|A_1|}{25C}+\frac{6|A_2|}{16C^2}\Big)+(1.913+0.1621b)\Big(|\bar w_1|\frac{6}{5C}+|\bar w_0|+(0.506)(0.49)\frac{y_*^2}{C^{\al-1}}\Big)\leq 1.5771+(0.6091)b$;
 \item[(iv)] $2\frac{5}{y_*^2(7.2)^{5-\al}}+(1.5771+0.692b)\leq 1.5819+0.692b$;
 \item[(v)] $ |\tilde a_3| \leq (1.0841+(0.6626)b)y_*^2\frac{C^{4-\al}}{4}$;
 \item[(vi)] For $k\geq 3$, $\frac{2(k+1)}{Ck^2}+2a+2b+\frac{(2a+2b)(k+1)}{k^2C}\leq 2.247+2.124b$;
 \item[(vii)] $ \Big| \big( \frac{B}{\tilde w}\big)_3\Big|\leq(1.4569+(2.091)b+\frac{0.506y_*^2b^2}{4})y_*^2\frac{C^{4-\al}}{4}$;
 \item[(viii)] $ \Big| \big( \frac{B}{\tilde w}\big)_4\Big|\leq(1.5638+(2.0285)b+\frac{0.506y_*^2b^2}{5})y_*^2\frac{C^{5-\al}}{5}$;
\item[(ix)] $0.506\big(\frac{|B_0|}{6}+\frac{6|B_1|}{25C}+\frac{6|B_2|}{16C^2}\big)+(2.247+2.124b)\Big(|\bar w_0|+\frac{6|\bar w_1|}{5C}+\frac{(0.506)(0.49)y_*^2}{C^{\al-1}}\Big)
\leq 1.8932+2.251 b+\frac{(0.506)b^2y_*^2}{6}$.
 \end{itemize}
 \end{lemma}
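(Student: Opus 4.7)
The plan is to prove Lemma~\ref{L:ABSONICIA} by exactly the same strategy used in the analogous Lemmas~\ref{L:WSONICIA} and~\ref{L:WORIGINIA}: each of the nine inequalities (i)--(ix) is a purely algebraic/arithmetic statement involving (a) the fixed numerical constants $C=7.2$, $\alpha=1.98$, $D$ from~\eqref{def:DCalpha}, (b) the ranges $y_\ast\in[2.341,2.342]$, $a\in[0,1]$, $b\geq 0$ (with separate cases when small indices like $k=2,3$ appear), and (c) finitely many low-order Taylor coefficients of the LP solution at the sonic point, namely $\rho_0,\omega_0,\rho_1,\omega_1,\rho_2,\omega_2$ (via~\eqref{eq:order012coeffs}) together with derived quantities $(w')_0,(w')_1,\tilde w_0,\tilde w_1,\bar w_0,\bar w_1,\bar w_2$ from Lemma~\ref{L:WEXPANSIONSONIC}, and the explicit low-order expressions $A_0,A_1,A_2$ from~\eqref{eq:A0A1A2} and $B_0,B_1,B_2$ from the paragraph following~\eqref{def:Y}. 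Since every quantity on the left-hand and right-hand sides is either an explicit polynomial/rational function of $y_\ast,a,b$ or a known numerical constant, each inequality falls within the scope of rigorous interval arithmetic verification.

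I would first collect the enclosures of the low-order LP coefficients. For $y_\ast\in[2.341,2.342]$, evaluating~\eqref{eq:order012coeffs} in interval arithmetic produces validated intervals for $\omega_0,\omega_1,\omega_2,\rho_0,\rho_1,\rho_2$, and combining these yields intervals for $(w')_0,(w')_1,\tilde w_0,\tilde w_1,\bar w_0,\bar w_1,\bar w_2$ and hence for $A_0,A_1,A_2,B_0,B_1,B_2$ (which depend also on the parameter interval $a+ib$). Feeding these into the right-hand sides of (ii), (v), (vii), (viii), which are of the form ``a specific bilinear/linear combination of computed constants is bounded by a stated affine expression in $b$'', reduces each such inequality to a certified comparison of two interval-valued affine functions of $(a,b)$, which \verb!VNODE-LP!/interval arithmetic handles in a single call.

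For the purely numerical estimates (iii), (iv), (ix), after substituting the known interval values of $|A_j|$, $|B_j|$, $|\bar w_j|$ and of $D$, $y_\ast$, the statements reduce to checking that a concrete affine function of $b$ with interval-enclosed coefficients is dominated by a second affine function of $b$ on $b\in[0.2,8]$ (together with the monotonicity in $a\in[0,1]$); this is routine interval arithmetic. The monotone/supremum estimates in (i) and (vi) are elementary calculus bounds: for instance, in (vi) the function $k\mapsto\frac{2(k+1)}{Ck^2}+\frac{(2a+2b)(k+1)}{k^2C}$ is decreasing in $k\geq 3$, so one only needs to check $k=3$, which is again a single interval computation. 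The slightly subtler case is (i), where one must separately verify $k=2$ and $k=3$ using the explicit values of $D$; this is handled by evaluating the two concrete expressions at each $k$.

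The main (minor) obstacle is bookkeeping rather than mathematical: one must ensure that the interval enclosures for $\omega_j,\rho_j$ used throughout are tight enough that the resulting interval on the left-hand side strictly fits under the stated right-hand side for \emph{every} $(a,b)$ in the declared parameter range, uniformly in $y_\ast\in[2.341,2.342]$. In practice this is done by supplying the entire parameter box $[y_\ast]\times[a]\times[b]$ to a validated arithmetic routine, in direct analogy with the functions \verb!w_Sonic_Constraint! and \verb!w_Origin_Constraint! used to prove Lemmas~\ref{L:WSONICIA}--\ref{L:WORIGINIA}. Accordingly, the proof of Lemma~\ref{L:ABSONICIA} is ``These estimates are proved by a single interval arithmetic routine, analogous to \verb!w_Sonic_Constraint!, which we record in the companion code repository.''
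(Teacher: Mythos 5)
Your proposal matches the paper's approach: Lemma~\ref{L:ABSONICIA} is proved (as are Lemmas~\ref{L:WSONICIA},~\ref{L:WORIGINIA},~\ref{L:XYORIGINIA}, etc.) by a single interval-arithmetic routine -- here named \verb!efun_Coeff_Sonic_Constraint! in the companion code -- that evaluates the finitely many explicit LP Taylor coefficients and derived quantities in validated arithmetic over the declared parameter box and certifies each of the nine inequalities. Your sketch of what that routine must do is substantively correct; the only imprecision is that the list of required inputs extends slightly beyond the coefficients $\rho_j,\omega_j$ for $j\le 2$ (e.g.\ $(w')_2$ and $\bar w_2$ involve $\omega_3$, and items (ii), (viii) additionally invoke the \emph{bounds} on $A_3,A_4,\bar w_3,\bar w_4$ from Lemma~\ref{L:WEXPANSIONSONIC} and \eqref{ineq:Akbds} rather than closed formulas), but this does not change the method.
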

 
 \begin{proof}
These estimates are proved using the function \verb!efun_Coeff_Sonic_Constraint! in the attached code.
 \end{proof}


The following lemma contains the key bounds obtained via Interval Arithmetic used in the proof of Proposition~\ref{L:EFUNCTIONTAYLOR}(i), which can be found
right after the proof of Lemma~\ref{L:D13}.


 \begin{lemma}\label{L:PSISONICIA}
 The following estimates hold, for $y_*\in[2.341,2.342]$, $C=7.2$, $\alpha=1.98$, $\mathfrak{C}=8+\frac{3b}{2}$.
  Let $\psi_0 = 1$, $\psi_1 = -\frac{2y_*+2(1-\la)y_*(y_*-1)-(2-\la)(1-\la)y_*^2}{-2(y_*-1)+2(1-\la)y_*}$, and $\psi_j$, $j\ge2$, be given
 recursively by the relation~\eqref{eq:psik+2exp}. Then 
 \begin{itemize}
 \item[(i)] $  |\psi_2|\leq 4y_*^2\frac{\mathfrak{C}^{3-\al}}{2^3}$, \  
   $ |\psi_3|\leq 4y_*^2\frac{\mathfrak{C}^{4-\al}}{3^3}$, \
 $ |\psi_4|\leq 4y_*^2\frac{\mathfrak{C}^{5-\al}}{4^3}$,  \ $|\psi_{5}|\leq 4\frac{y_*^2\mathfrak{C}^{6-\al}}{5^3}$;
 \item[(ii)] $k\geq 4$, $ \frac{(k+2)^2}{(k+2)|k+1+\tilde a_0|}\leq 1.41$;
\item[(iii)] For $k\geq 4$, $\sum_{j=1}^{k-2}\frac{\big(\frac{C}{\mathfrak{C}}\big)^{k-j+2-\al}}{(k-j+2)(j+1)^2}\leq\frac{0.465}{(k+2)(1+b)}$ and $\sum_{j=2}^{k-2}\frac{\big(\frac{C}{\mathfrak{C}}\big)^{k-j+2-\al}}{(k-j+2)(j+1)^3}\leq\frac{0.08}{(k+2)(1+b)}$;
 \item[(iv)] For $P=4$, \beqas
 &\big(\frac{C}{\mathfrak{C}}\big)^{6-\al}\Big(\frac{|\psi_1|\mathfrak{a}+\mathfrak{b}}{\mathfrak{C}} +\frac{6|\psi_1|\mathfrak{b} }{5C\mathfrak{C}}\Big)+P|\tilde a_1|\frac{6}{25\mathfrak{C}}+P|\tilde a_2|\frac{6}{16\mathfrak{C}^2}+ P|\tilde b_0|\frac{6}{5^3\mathfrak{C}}+ P|\tilde b_1|\frac{6}{4^3\mathfrak{C}^2} \\
  &+P|\tilde b_2|\frac{6}{3^3\mathfrak{C}^3}+0.465\frac{P\mathfrak{a} y_*^2}{1+b} \mathfrak{C}^{1-\al}+0.08\frac{P\mathfrak{b} y_*^2}{(1+b)\mathfrak{C}^\al} \leq \frac{4}{1.41},
 \eeqas
 where we recall $\mathfrak{a}$ and $\mathfrak{b}$ from~\eqref{E:AFRAKBFRAK}.
 \end{itemize}
 \end{lemma}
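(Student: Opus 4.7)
\medskip

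\textbf{Proof plan for Lemma~\ref{L:PSISONICIA}.} Each of the four items reduces to a \emph{finitary} verification that can be carried out by rigorous interval arithmetic, using the explicit formulas for the low-order coefficients $\tilde a_0, \tilde a_1, \tilde a_2, \tilde b_0, \tilde b_1, \tilde b_2$ (cf.~\eqref{eq:atilde},~\eqref{E:BTILDE}, together with~\eqref{eq:A0A1A2} and the companion expressions for $B_0, B_1, B_2$) and the enclosures $y_*\in[2.341,2.342]$ from Lemma~\ref{L:LPYSTAR}. The plan is to implement all four checks in the same companion routine as Lemma~\ref{L:PSISONICIA} and Lemma~\ref{L:ABSONICIA}; I expect the bookkeeping in part~(iii) to be the main obstacle, since one must split an infinite sum into an explicit head and a tail controlled by a geometric series with sharp constants.

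For part~(i), I would simply apply the recursion~\eqref{eq:psik+2exp} four times, starting from $\psi_0=1$ and the closed-form $\psi_1$ quoted in the statement. Each of $\psi_2,\psi_3,\psi_4,\psi_5$ is a rational function of $a+ib$ and of $y_*$; enclosing $a\in[0,1]$ and $b\in[0.2,8]$ as intervals and $y_*\in\mathfrak y_*$, the VNODE-LP arithmetic produces intervals for each $\psi_k$, which are then compared against the explicit right-hand sides $4y_*^2\mathfrak C^{k+1-\alpha}/k^3$ with $\mathfrak C=8+3b/2$. For part~(ii), note that $\tilde a_0 = (w')_0 + 2(a+ib)y_* = -2(y_*-1)+2(a+ib)y_*$ from~\eqref{eq:atilde}--\eqref{eq:A0A1A2}, so $|k+1+\tilde a_0|^2 = (k+1-2(y_*-1)+2ay_*)^2 + (2by_*)^2$ is explicit; one checks via interval arithmetic that for every $k\ge 4$ this bounds $(k+2)^2/(k+2)$ by $1.41$, which in fact is monotonic and can be reduced to checking $k=4$ (uniformly over $a\in[0,1]$, $b\in[0.2,8]$, $y_*\in\mathfrak y_*$).

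For part~(iii), the natural approach is to write $\rho:=(C/\mathfrak C)^2 \le (7.2/8.2)^2 < 1$ and split
\[
S_2(k) := \sum_{j=1}^{k-2}\frac{(C/\mathfrak C)^{k-j+2-\alpha}}{(k-j+2)(j+1)^2}
= \sum_{j=1}^{J}\frac{(C/\mathfrak C)^{k-j+2-\alpha}}{(k-j+2)(j+1)^2}
+ \sum_{j=J+1}^{k-2}\frac{(C/\mathfrak C)^{k-j+2-\alpha}}{(k-j+2)(j+1)^2},
\]
for a moderate cutoff $J$. In the head one uses $j+1\le J+1$ and the geometric decay $(C/\mathfrak C)^{k-j}$ to obtain a bound proportional to $(C/\mathfrak C)^{k-J}/(k+2)$; in the tail one uses $1/(k-j+2)\le 1$ and sums $1/(j+1)^2$ against the geometric factor. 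Both parts are then scaled by $1/(k+2)$ using $k+2\le (k-j+2)(j+1)/\text{(small constant)}$, and one uses the trivial bound $\mathfrak C\ge 8+3/10$ (from $b\ge 1/5$) to extract the $1/(1+b)$ factor; the analogous decomposition works for $S_3(k):=\sum_{j=2}^{k-2}(C/\mathfrak C)^{k-j+2-\alpha}/((k-j+2)(j+1)^3)$. This reduces to checking a finite set of inequalities in interval arithmetic. Finally, part~(iv) is a purely algebraic inequality in $a,b,y_*$: once one substitutes the explicit formulas for $\tilde a_0,\tilde a_1,\tilde a_2,\tilde b_0,\tilde b_1,\tilde b_2$ and $\psi_1$ (a rational function of $a+ib$ and $y_*$), the left-hand side is a rational function of $a,b,y_*$, and the desired bound $\le 4/1.41$ follows from an interval evaluation over $a\in[0,1]$, $b\in[0.2,8]$, $y_*\in\mathfrak y_*$, possibly after subdividing the $b$-interval to mitigate dependency loss. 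The function \verb!psi_Sonic_Constraint! (to be added alongside \verb!efun_Coeff_Sonic_Constraint!) would carry out all four checks and produce validated outputs.
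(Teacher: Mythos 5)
The paper proves (i), (ii), (iv) directly by interval arithmetic (the function \verb!efun_Sonic_Constraint!), and handles (iii) by a specific analytic argument for $k\ge 16$ combined with exhaustion of $k=4,\dots,15$ by interval arithmetic. Your plan broadly matches this architecture but has two concrete gaps.

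First, in part~(ii) you write $\tilde a_0 = (w')_0 + 2(a+ib)y_* = -2(y_*-1)+2(a+ib)y_*$. This is actually $A_0$, not $\tilde a_0$: from~\eqref{eq:atilde}, $\tilde a(z)=-\tfrac{2(z-1)}{z}+\tfrac{A(z)}{\tilde w(z)}$, so $\tilde a_0 = A_0\bar w_0 = A_0/\tilde w_0 = A_0/(-2(y_*-1)) = 1 - \tfrac{(a+ib)y_*}{y_*-1}$. This is not a cosmetic slip: with your formula, at $k=4$, $a=0$, $b=0.2$, $y_*\approx 2.341$ one gets $|k+1+\tilde a_0| \approx 2.50$ and hence $\tfrac{k+2}{|k+1+\tilde a_0|}\approx 2.4$, which exceeds $1.41$, so the verification would (rightly) fail. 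With the correct $\tilde a_0$ the ratio is $\approx 1.406$ at the extremal point, and your monotonicity observation is plausible; but the formula as quoted would have to be corrected before the approach works.

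Second, in part~(iii) the claim cannot be reduced to a finite interval check without an analytic tail argument uniform in $k$, and your sketch of that argument is not tight enough to produce the $1/(1+b)$ factor. In particular, a lower bound $\mathfrak C\ge 8.3$ alone does not yield a factor $1/(1+b)$: what actually makes this work is that $\mathfrak{C}=8+\tfrac{3b}{2}$ grows linearly in $b$, so $(1+b)(C/\mathfrak{C})^n$ is bounded \emph{uniformly over $b\in[0.2,8]$} for a fixed power $n$. The paper exploits precisely this, verifying $(1+b)(C/\mathfrak{C})^2\le 1.33$ and $(1+b)(C/\mathfrak{C})^8\le 0.385$, splits the sum at $j=\lfloor k/2\rfloor$ (so the far part has $k-j\ge 8$, earning the better power $8$) and uses the pre-computed sum estimates~\eqref{ineq:combapp1}--\eqref{ineq:combapp2} to obtain the required bound $(0.662)(0.385)+(0.135)(1.33)\le 0.44<0.465$ for $k\ge 16$, and finally exhausts $k=4,\dots,15$ by interval arithmetic. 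Your head/tail split with a generic cutoff $J$, plus the vague scaling $k+2\le (k-j+2)(j+1)/\text{const}$, would need to be sharpened into something of that shape; as written it does not yield the stated constants.
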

 
 \begin{proof}
  The estimates in (i), (ii), and (iv) are proved using the function \verb!efun_Sonic_Constraint! in the attached code. (iii) requires more work, and so we give a more complete proof here. First, we note the estimates~\eqref{ineq:combapp1}--\eqref{ineq:combapp2} which give, for $k\geq 16$, 
 \beqas
 \sum_{j=1}^{\lfloor\frac{k}{2}\rfloor}\frac{k+2}{(k-j+2)(j+1)^2}\leq 0.662,\\
 \sum_{\lceil\frac{k}{2}\rceil}^{k-2}\frac{k+2}{(k-j+2)(j+1)^2}\leq 0.135.
 \eeqas
 Moreover, for $b\in[0.2,8]$, we verify $(1+b)(C/\mathfrak{C})^2\leq 1.33$ and $(1+b)(C/\mathfrak{C})^8\leq 0.385$. Thus, splitting the summation in the first estimate and using the simple estimates $(C/\mathfrak{C})^{k-j+2-\al}\leq (C/\mathfrak{C})^2$ for $j\leq k-2$ and $(C/\mathfrak{C})^{k-j+2-\al}\leq (C/\mathfrak{C})^8$ for $j\leq \lfloor\frac{k}{2}\rfloor$ and $k\geq 16$, we conclude the first estimate in (iii) when $k\geq 16$ as $(0.662)\cdot(0.385) + (0.135)\cdot(1.33)\leq 0.44$. We then exhaust over $k=4,\ldots,15$ in the function \verb!efun_Sonic_Sum_Constraint! to conclude the claimed estimate. For the second summation estimate, a similar argument shows that the estimate holds for all $k\geq 16$, and we again exhaust the remaining collection $k=4\ldots,15$ in \verb!efun_Sonic_Sum_Constraint!.
 \end{proof}

  
  The following lemma contains the key bounds obtained via Interval Arithmetic used in the proof of Lemma~\ref{L:D14}.


 \begin{lemma}\label{L:XYORIGINIA}
 The following estimates hold, for $\tilde\rho_0\in[0.83,0.84]$, $C_0=2$, $\alpha_0=1.95$, $a\in[0,1]$, $b\in[0.2,8]$:
 \begin{itemize}
 \item[(i)]   $\big|\big(\frac{X}{\wLP}\big)_{6}\big|  \leq(3.677+0.523 b)\frac{C_0^{4-\al_0}}{4}$;
 \item[(ii)] For $k\geq 4$, $(1.35)|\frac23\tr_0-\frac49+\frac23(a+ib)|\frac{2(k-1)}{C_0^2(2(k-2))^2}  +\Big(\frac{3D_0+(1+b)}{2}+\frac23+\frac{2(0.57)}{C_0^{\al_0}}\Big)\Big(\frac{(k-1)}{9(k-2)C_0^2}+1+(1.35)\frac{3}{4}\frac{1}{C_0^{2+\al_0}}\Big)\leq 4.156 + 0.638 b$;
\item[(iii)] $\big|\big(\frac{Y}{\wLP}\big)_4\big|\leq\big(3.0143 + 2.548 b + 0.0265 b^2)\frac{C_0^{4-\al_0}}{4}$ and
$\big|\big(\frac{Y}{\wLP}\big)_6\big|\leq\big(3.341 + 3.239 b+ 0.1266 b^2 \big)\frac{C_0^{6-\al_0}}{6}$;
 \item[(iv)] \footnotesize\beqas
& \big|2\tr_0+\frac{2(a+ib)}{3}-(1+a+ib)(a+ib)\big|(1.35)\frac{1}{C_0^2}\frac{2}{9} +\Big|(\tr_0-\frac13)\big(-\frac23\tr_0+\frac{4(a+ib)}{15}\big)\Big|(1.35)\frac{1}{C_0^{4}}\frac{1}{2}\\
&+\big(3+\frac{5b}{2}\big)\big(1+\frac{4}{27C_0^2}\big)+ (1.35)\frac{7}{12C_0^{2+\al_0}}\big(3+\frac{5b}{2}\big)\leq3.41 + 2.91 b + 0.075 b^2.
 \eeqas
 \small
 \end{itemize}
 \end{lemma}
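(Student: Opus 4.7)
The plan is to verify each of the four estimates via rigorous interval arithmetic using the VNODE-LP framework described throughout Appendix~\ref{APP:IA}, entirely paralleling the sonic-point analogue Lemma~\ref{L:ABSONICIA}. Parts (i), (iii), and (iv) involve only finite algebraic expressions in the parameters $\tilde\rho_0 \in [0.83, 0.84]$, $a\in[0,1]$, $b\in[0.2,8]$ together with the fixed numerical constants $C_0=2$, $\alpha_0=1.95$ and the explicit low-order coefficients from the proof of Lemma~\ref{L:D14} (the identities~\eqref{X/w012} for $(X/\wLP)_{0,2,4}$ and~\eqref{Y/w012} for $(Y/\wLP)_{0,2}$, together with the representations of $\what_{2k}, \check w_{2k}$ from~\eqref{eq:worigincoeffs}). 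Thus I would simply enclose each parameter in its stated interval, compute interval enclosures of the left-hand sides using the standard rules of interval arithmetic (including care for moduli of complex quantities such as $|(1+a+ib)(a+ib)|$ in part (iv)), and verify that each enclosure lies strictly in the required half-line. This would be implemented by a routine \verb!efun_Coeff_Origin_Constraint! analogous to \verb!efun_Coeff_Sonic_Constraint!.

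For part (ii), the inequality must hold uniformly in $k\geq 4$, so the first step is to eliminate the $k$-dependence. The key observation is that the two $k$-dependent factors,
\[
\frac{2(k-1)}{(2(k-2))^2} \;=\; \frac{k-1}{2(k-2)^2}
\qquad\text{and}\qquad
\frac{k-1}{9(k-2)C_0^2},
\]
are both strictly decreasing functions of $k$ on $[4,\infty)$, while all the remaining terms in the left-hand side are independent of $k$. Consequently the left-hand side is monotonically decreasing in $k$, attaining its maximum at $k=4$, and it suffices to verify
\[
(1.35)\,\big|\tfrac{2}{3}\tilde\rho_0-\tfrac{4}{9}+\tfrac{2}{3}(a+ib)\big|\,\tfrac{3}{4C_0^2}
+\Big(\tfrac{3D_0+(1+b)}{2}+\tfrac{2}{3}+\tfrac{2(0.57)}{C_0^{\alpha_0}}\Big)\Big(\tfrac{3}{18 C_0^2}+1+(1.35)\tfrac{3}{4 C_0^{2+\alpha_0}}\Big)
\leq 4.156+0.638 b
\]
for all $(\tilde\rho_0,a,b)$ in the stated box. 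This is again a single algebraic inequality, checkable by interval arithmetic in the same routine.

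No ODE integration is required for this lemma, so the only real work is algebraic bookkeeping. The main potential obstacle is ensuring sharpness: since the stated constants (for instance the $4.156$ in (ii) or the $3.0143$ in (iii)) sit close enough to the true maxima that a crude enclosure could fail, one must be careful to write each expression in a form that avoids needless interval inflation (e.g.~factoring out constants, using Horner form for polynomials in $a$ and $b$, and keeping the modulus-squared of complex expressions as a single real polynomial rather than expanding $|z_1 z_2|$ as $|z_1|\cdot|z_2|$). Comparing with the corresponding inequalities of Lemma~\ref{L:ABSONICIA}, where similar margins were achieved successfully, this is a known technical issue with a standard resolution, and no new conceptual difficulty is anticipated.
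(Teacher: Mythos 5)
Your overall strategy matches the paper exactly: the paper's proof is simply to invoke the interval-arithmetic routine \verb!efun_Coeff_Origin_Constraint!, which checks finite algebraic inequalities over the parameter box, just as you describe for parts (i), (iii), (iv). Your explicit monotonicity argument for part (ii) — noting that $\frac{2(k-1)}{(2(k-2))^2}$ and $\frac{k-1}{9(k-2)C_0^2}$ are both decreasing on $[4,\infty)$, so the supremum over $k$ is attained at $k=4$ — is the correct and natural way to reduce that uniform-in-$k$ estimate to a single algebraic check, and is what the code must implicitly do.

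However, your evaluation at $k=4$ contains an arithmetic error that is actually consequential. At $k=4$ one has
\[
\frac{2(k-1)}{(2(k-2))^2} \;=\; \frac{2\cdot 3}{(2\cdot 2)^2} \;=\; \frac{6}{16} \;=\; \frac{3}{8},
\]
so the first $k$-dependent factor evaluates to $\frac{3}{8C_0^2} = \frac{3}{32}$, not the $\frac{3}{4C_0^2} = \frac{3}{16}$ you wrote (you correctly evaluated the second factor, $\frac{(k-1)}{9(k-2)C_0^2}\big|_{k=4}=\frac{3}{18C_0^2}$). Since your version overestimates the left-hand side, the implication direction is logically safe, but the margin in part (ii) is tight enough that your overestimate would in fact cause the interval check to \emph{fail}: a rough evaluation at $a=1$, $b=0.2$, $\tilde\rho_0=0.84$ puts the true left-hand side at roughly $4.27$ against a right-hand side of $\approx 4.28$, whereas doubling the first term (from $\approx 0.1$ to $\approx 0.2$) pushes it past the bound. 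So the reduction to $k=4$ is sound, but the resulting finite inequality must be written with $\frac{3}{8C_0^2}$ rather than $\frac{3}{4C_0^2}$ for the verification to succeed.
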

 
 \begin{proof}
These estimates are proved using the function \verb!efun_Coeff_Origin_Constraint! in the attached code.
 \end{proof}


The following lemma contains the key bounds obtained via Interval Arithmetic used in the proof of Proposition~\ref{L:EFUNCTIONTAYLOR}(ii), which can be found
right after the proof of Lemma~\ref{L:D14}.


 \begin{lemma}\label{L:PSIORIGINIA}
 The following estimates hold, for $\tilde\rho_0\in[0.83,0.84]$, $C_0=2$, $\alpha_0=1.95$, $\mathfrak{C}_0=2+\frac{b}{4}$.
 \begin{itemize}
 \item[(i)] For $\ell=4,5$, $ |\tilde\psi_{2\ell+1}|\leq 7\frac{\mathfrak{C}_0^{2\ell-\al_0}}{(2\ell+1)^3},$;
\item[(ii)] For $\ell\geq 6$, $\sum_{j=4}^{\ell-2}\frac{\big(\frac{C_0}{\mathfrak{C}_0}\big)^{2\ell-2j-\al_0}}{(2j+1)^2(2(\ell-j))}\leq \frac{0.093}{2\ell(1+b)}$ and $\sum_{j=4}^{\ell-2}\frac{\big(\frac{C_0}{\mathfrak{C}_0}\big)^{2\ell-2j-\al_0}}{(2j+1)^3(2(\ell-j))}\leq \frac{0.01}{2\ell(1+b)}$;
 \item[(iii)] For  $\ell\geq 5$,
  \beqas
 &\frac{(2\ell+3)^2}{2\ell}\bigg(\frac{\big(\frac{C_0}{\mathfrak{C}_0}\big)^{2\ell-\al_0}}{2\ell\mathfrak{C}_0^2}\Big(\mathfrak{a}_0\big(\frac{3\ell}{C_0^2(\ell-1)}+5|\tilde\psi_5|\frac{2\ell}{2(\ell-2)C_0^4}+7|\tilde\psi_7|\frac{2\ell}{2(\ell-3)C_0^6}\big)\\
&\hspace{10mm}+\mathfrak{b}_0\big(\frac{2\ell}{C_0^2(2(\ell-1))}+|\tilde\psi_5|\frac{2\ell}{2(\ell-2)C_0^4}+|\tilde\psi_7|\frac{2\ell}{2(\ell-3)C_0^6}\big)\Big)\\
&+\frac{7}{\mathfrak{C}_0^2}\Big(\frac{\big|\big(\frac{X}{\wLP}\big)_4\big|}{\mathfrak{C}_0^2(2\ell-1)^2}+\frac{\big|\big(\frac{Y}{\wLP}\big)_2\big|}{\mathfrak{C}_0^2(2\ell-1)^3}+\frac{\big|\big(\frac{X}{\wLP}\big)_2\big|}{(2\ell+1)^2}+\frac{\big|\big(\frac{Y}{\wLP}\big)_0\big|}{(2\ell+1)^3}\Big)\\
&+\mathfrak{a}_0 \frac{7}{\mathfrak{C}_0^{2+\al_0}}\frac{0.093}{2\ell(1+b)}+\mathfrak{b}_0 \frac{7}{\mathfrak{C}_0^{2+\al_0}}\frac{0.01}{2\ell(1+b)} \bigg)\leq 7,
 \eeqas
 where we recall $\mathfrak{a}_0$ and $\mathfrak{b}_0$ from~\eqref{E:AFRAK0BFRAK0}.
 \end{itemize}
 \end{lemma}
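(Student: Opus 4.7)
The plan is to mirror the strategy of Lemma \ref{L:PSISONICIA}, which handles the analogous bounds at the sonic point, adapting each step to the origin setting. For part (i), I would evaluate $\tilde\psi_5, \tilde\psi_7, \tilde\psi_9, \tilde\psi_{11}$ recursively from the identity
\begin{equation*}
2\ell(2\ell+3)\tilde\psi_{2\ell+3} = -\sum_{j=1}^{\ell} (2j+1)\tilde\psi_{2j+1} \bigl(\tfrac{X}{\wLP}\bigr)_{2\ell+2-2j} - \sum_{j=1}^{\ell} \tilde\psi_{2j+1} \bigl(\tfrac{Y}{\wLP}\bigr)_{2\ell-2j}
\end{equation*}
derived in the proof of Proposition \ref{L:EFUNCTIONTAYLOR}(ii), normalised by $\tilde\psi_3 = 1$. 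Each $\tilde\psi_{2\ell+1}$ is then a rational expression in $(a,b,\tilde\rho_0)$ assembled from finitely many LP Taylor coefficients already enclosed by Lemmas \ref{L:ORIGINCOEFFS}, \ref{L:WORIGINIA}, and \ref{L:XYORIGINIA}. Since the parameter ranges $a \in [0,1]$, $b \in [0.2, 8]$, $\tilde\rho_0 \in [0.83, 0.84]$ are compact intervals, the inequality for $\ell = 4, 5$ reduces to a direct interval arithmetic check, to be implemented in a routine analogous to \verb!efun_Sonic_Constraint!.

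For part (ii), I would follow the template of Lemma \ref{L:PSISONICIA}(iii). Since $C_0/\mathfrak{C}_0 = 2/(2 + b/4) \leq 1$ for all $b \geq 0$, the factor $(C_0/\mathfrak{C}_0)^{2\ell-2j-\alpha_0}$ decreases with $\ell - j$. I would split each sum at $j = \lfloor \ell/2 \rfloor$: on the head ($j \leq \ell/2$), the factor $(j+1)^{-k}$ is summable uniformly in $\ell$ and the geometric term is bounded by $(C_0/\mathfrak{C}_0)^{\ell-\alpha_0}$, which tends to zero as $\ell \to \infty$; on the tail ($j \geq \ell/2$), the power $(j+1)^{-k}$ is of order $\ell^{-k}$ while the geometric factor is controlled by $(C_0/\mathfrak{C}_0)^2$. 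Using uniform bounds of the form $(1+b)(C_0/\mathfrak{C}_0)^{2m} \leq \mathrm{const}$ together with the combinatorial-type inequalities already employed in this appendix, both claimed bounds hold for every $\ell \geq \ell_0$ with $\ell_0$ explicit. The remaining values $\ell = 6, \ldots, \ell_0 - 1$ are then dispatched by a single interval arithmetic exhaustion.

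For part (iii), one observes that every $\ell$-dependent quantity multiplying the large prefactor $(2\ell+3)^2/(2\ell)$ is at worst of order $1/\ell$: this is immediate for the terms containing $(2\ell \pm 1)^{-2}$ or $(2\ell \pm 1)^{-3}$, exponentially smaller for the $(C_0/\mathfrak{C}_0)^{2\ell-\alpha_0}/(2\ell)$ term, and built into the sum bounds of (ii) through the factor $\frac{1}{2\ell(1+b)}$. Consequently the entire left-hand side is uniformly bounded in $\ell \geq 5$, and converges as $\ell \to \infty$ to an explicit limit depending only on $b$ (through $\mathfrak{a}_0$, $\mathfrak{b}_0$, $\mathfrak{C}_0$) and on $\tilde\rho_0$. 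Interval arithmetic on a finite range $\ell = 5, \ldots, \ell_1$, with $\ell_1$ large enough that the tail contributions are negligible, then gives the bound uniformly on the full parameter box.

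The principal obstacle will be part (ii), as in its sonic counterpart: the summand has two competing contributions (polynomial decay in $j$ and geometric decay in $\ell-j$), and extracting a bound carrying the desired $\frac{1}{1+b}$ factor requires a careful balance between the splitting index, the numerical constants from geometric decay, and the combinatorial sum estimates. Once (ii) is secured, (iii) reduces essentially to a numerical inequality in $b$ combined with a finite sweep over $\ell$, both of which are well within the capability of \verb!VNODE-LP!-style interval arithmetic already deployed throughout the paper.
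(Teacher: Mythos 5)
Your proposal follows essentially the same route as the paper: (i) reduces to a finite interval-arithmetic check of the recursion for $\tilde\psi_5,\ldots,\tilde\psi_{11}$; (ii) follows the split-sum template of Lemma~\ref{L:PSISONICIA}(iii), splitting at $j = \lfloor \ell/2 \rfloor$, using a uniform bound of the form $(1+b)(C_0/\mathfrak{C}_0)^{2m}\leq\text{const}$ to absorb the $1/(1+b)$ factor, establishing the estimate analytically for large $\ell$, and exhausting the finite remainder by interval arithmetic (the paper's \texttt{efun\_Origin\_Sum\_Constraint} covers $\ell=6,\ldots,15$); and (iii) follows from the observation that all terms decay at least like $1/\ell$ against the prefactor $(2\ell+3)^2/(2\ell)$, so a finite interval-arithmetic sweep plus a tail estimate suffices (the paper's \texttt{efun\_Origin\_Constraint}). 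One small caution on (iii): "tail contributions are negligible" should be sharpened to an actual inequality valid for all $\ell\geq\ell_1$ — for instance by showing the left-hand side is eventually decreasing in $\ell$, or by bounding each $\ell$-dependent factor by its value at $\ell=\ell_1$ — rather than relying on the limit alone, but this is exactly the kind of argument the paper's interval-arithmetic routines implement and is not a substantive gap.
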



\begin{proof}
  The estimates in (i) and (iii) are proved using the function \verb!efun_Origin_Constraint! in the attached code. The proof of (ii) proceeds as in the proof of Lemma~\ref{L:PSISONICIA}(iii), and the exhaustion over the cases $\ell=6,\ldots,15$ is contained in the function \verb!efun_Origin_Sum_Constraint!.
 \end{proof}


\section{Summation estimates}
To control the functions $\mathcal{F}_N$ and $\mathcal{G}_N$,  we need some simple technical bounds which will be important in establishing convergence later on. These bounds are optimised for constant rather than for power of $N$. In particular, as we only want a single power of $N$ in the denominator, we only find the best constant $c/N$ in each case, even for sums that are $O(N^{-2})$ as $N\to\infty$.

\begin{lemma}\label{L:COMB}
There exists a constant $c>0$ such that for all $N\in\mathbb N$, the following bounds hold
\begin{align}
   \sum_{k=2}^{N-2} \frac{1}{k^2(N-1-k)^2} &\le  N^{-1} \label{ineq:comb1} \\
 \sum_{k=2}^{N-2} \frac{1}{k^2(N-k)^2} &\le  \frac{5}{18} N^{-1} \label{ineq:comb2} \\
  \sum_{k=2}^{N-3}\frac1{k+1}\frac1{(N-k)^2} &\le 0.506 N^{-1} \label{ineq:comb3} \\
  \sum_{k=1}^{N-3}\frac{1}{(k+1)(N-1-k)^2} & \le \frac{9}{10}N^{-1}\label{ineq:comb4}  \\
   \sum_{k=1}^{N-4} \frac1{(k+1)(N-2-k)^2} & \le N^{-1} \label{ineq:comb5} \\
  \sum_{k=2}^{N-3} \frac{1}{k^2(N-1-k)^2} &\le  \frac13 N^{-1} \label{ineq:comb6} \\
   \sum_{k=2}^{N-2} \frac{C^{k-\alpha}}{k^2} & \le \frac{C^{N-2-\al}}{N}, \quad \text{ for }C\geq4,\label{ineq:comb7} \\
\sum_{k=2}^{N-2} \frac{1}{k^2(N-k)^2} &\le  \frac74 N^{-2}, \label{ineq:comb8} \\
\sum_{k=3}^{N-2}\frac{1}{(k+1)(N-k+1)^2}&\leq 0.49\frac{1}{N+1}, \label{ineq:combextra}\\
\sum_{k=1}^{n-1} \frac{1}{(2k)^2(2n-2k)^2} &\le  \frac{41}{36} (2n)^{-2}. \label{ineq:comb9} \\
\sum_{k=1}^{n-2} \frac{1}{(2k)(2(n-1)-2k)^2} &\le  \frac{3}{4} (2n)^{-1}. \label{ineq:comb10} \\
\sum_{k=1}^{n-2} \frac{1}{(2k)^2(2(n-1)-2k)^2} &\le  \frac{9}{4} (2n)^{-2}, \label{ineq:comb11} \\
\sum_{k=1}^{n-2}\frac{1}{2k(2(n-1)-2k)^2}&\leq(0.57)\frac{1}{2(n-1)}, \label{ineq:combextra2}\\
\sum_{k=2}^{n-2}\frac{1}{2k(2(n-1-k))^2}&\leq\frac{7}{12}\frac{1}{2n}, \label{ineq:combextra3}\\
 \sum_{j=1}^{\lfloor\frac{k}{2}\rfloor}\frac{1}{(k-j+2)(j+1)^2}&\leq \frac{0.662}{k+2}\quad \text{for }k\geq16,\label{ineq:combapp1}\\
 \sum_{\lceil\frac{k}{2}\rceil}^{k-2}\frac{1}{(k-j+2)(j+1)^2}&\leq \frac{0.135}{k+2}\quad \text{for }k\geq16.\label{ineq:combapp2}
\end{align}
\end{lemma}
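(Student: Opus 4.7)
The plan is to establish each inequality by a combination of three techniques: (a) partial fraction decomposition to separate factors of the form $1/[k^a(N-k)^b]$ into simpler pieces, (b) dyadic splitting of the summation range at $k \approx N/2$ so that in each half one factor is bounded below by a multiple of $N$, and (c) direct (numerical) verification for small values of $N$ to obtain the sharp explicit constants, combined with asymptotic analysis for large $N$. Since many of the inequalities share the same structural form, one can prove them in parallel, grouping them by asymptotic shape ($N^{-1}$ or $N^{-2}$).

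First I would treat the $N^{-2}$-type bounds, exemplified by \eqref{ineq:comb8}. Using the identity
\[
\frac{1}{k^2(N-k)^2} = \frac{1}{N^2}\left(\frac{1}{k^2} + \frac{1}{(N-k)^2}\right) + \frac{2}{N^3}\left(\frac{1}{k} + \frac{1}{N-k}\right),
\]
the sum reduces to $\frac{2}{N^2}\sum_{k=2}^{N-2} k^{-2} + \frac{4}{N^3}\sum_{k=2}^{N-2} k^{-1}$. The first term is bounded by $\frac{2}{N^2}\bigl(\tfrac{\pi^2}{6}-1\bigr) \approx \frac{1.29}{N^2}$, and the logarithmic correction is absorbed into the gap between $1.29$ and $7/4$ for $N$ sufficiently large; the residual small-$N$ cases are checked directly. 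The bounds \eqref{ineq:comb9}, \eqref{ineq:comb11}, and \eqref{ineq:comb2} follow by the same identity (with relabelling $k\mapsto 2k$ for the even-index sums, which just contributes a factor of $4$).

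Second, for the $N^{-1}$-type bounds \eqref{ineq:comb1}--\eqref{ineq:comb7}, \eqref{ineq:comb10}, \eqref{ineq:combextra}, \eqref{ineq:combextra2}, and \eqref{ineq:combextra3}, I would split at $k=\lfloor N/2\rfloor$: on $\{k\le N/2\}$ the factor $(N-k)^{-2}$ is bounded by $4/N^2$ and the remaining $\sum k^{-\beta}$ (or $\sum (k+1)^{-1}$) converges or grows at most logarithmically, giving a contribution of order $\log N/N^2$; on $\{k\ge N/2\}$ the factor $(k+1)^{-1}$ is bounded by $2/N$ and $\sum (N-k)^{-2}$ is a convergent tail, giving the leading $O(N^{-1})$ contribution with explicit constant. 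The sharp constants in \eqref{ineq:comb3}--\eqref{ineq:comb5} come from keeping the tail bound $\sum_{j\ge 2}j^{-2} = \pi^2/6 - 1$ explicit; the resulting asymptotic constant needs to be checked against the claimed one (e.g.~$0.506$, $9/10$), and small-$N$ cases are verified by direct computation. The identity \eqref{ineq:comb7} is just the tail estimate for a geometric-type series and follows from $\sum_{k\ge N-1} C^{k-\alpha}/k^2 \le \frac{C^{N-2-\alpha}}{(N-1)^2}\cdot \frac{1}{1-1/C}$ combined with $C\ge 4$.

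Finally, I would address the delicate inequalities \eqref{ineq:combapp1}--\eqref{ineq:combapp2}, where sharp constants are required only for $k\ge 16$. Here the splitting argument gives a leading-order bound of the form $\frac{2(\pi^2/6-1)}{k+4}\approx \frac{1.29}{k+2}$, which exceeds the claimed $0.662/(k+2)$; a refinement is needed. I would further subdivide, handling $j=1,2,3$ separately by their exact values (largest contributions) and bounding $\sum_{j\ge 4}(j+1)^{-2}$ by its sharp tail, then using $k-j+2 \ge k/2+2$ uniformly on the remaining summation range. The threshold $k\ge 16$ is then exploited so that the subleading $1/k^2$-type corrections are small relative to the $0.662/(k+2)-1/2\cdot\text{(leading constant)}$ gap; the finite range $16\le k \le k_0$ for some explicit $k_0$ is checked numerically (analogously to the interval-arithmetic verifications already used elsewhere in the paper). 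The main obstacle is precisely this final step: optimising the splitting so that the explicit constant $0.662$ (respectively $0.135$) is actually attained, which may require a more refined three-zone split ($j$ small, $j\sim\sqrt{k}$, $j\sim k/2$) rather than the binary split used in earlier parts.
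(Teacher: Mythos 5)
Your plan for the $N^{-2}$-type bounds (\eqref{ineq:comb8}, \eqref{ineq:comb9}, \eqref{ineq:comb11}) via the exact expansion $\frac{1}{k^2(N-k)^2} = \frac{1}{N^2}\big(\frac{1}{k}+\frac{1}{N-k}\big)^2$ (which your partial fraction identity is an expansion of) is exactly the paper's route, so that part is fine. Two of the remaining steps have genuine problems, however.

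First, for the $N^{-1}$-type bounds with tight numerical constants (\eqref{ineq:comb3}--\eqref{ineq:comb5}, \eqref{ineq:combextra}, \eqref{ineq:combapp1}--\eqref{ineq:combapp2}), your dyadic split at $k\approx N/2$ is too lossy to reach the claimed constants. Take \eqref{ineq:comb3}: on $\{k\geq N/2\}$ you bound $\frac{1}{k+1}\leq \frac{2}{N}$, so your leading term is at most $\frac{2}{N}\sum_{j\geq 3}\frac{1}{j^2}\approx\frac{0.79}{N}$, already exceeding the claimed $\frac{0.506}{N}$. The true constant is closer to $\big(\frac{\pi^2}{6}-\frac54\big)\approx 0.40$ because for the dominant terms (small $j=N-k$) one has $k+1\approx N$, not $N/2$. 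The paper instead uses the \emph{exact} partial fraction $\frac{1}{(k+1)(N-k)^2}=\frac{1}{(N+1)^2}\frac{1}{k+1}+\frac{1}{(N+1)^2}\frac{1}{N-k}+\frac{1}{N+1}\frac{1}{(N-k)^2}$, which captures the correct leading coefficient and relegates the $1/(k+1)$ sum to an $O(\log N/N^2)$ error. Your suggestion of a three-zone refinement for \eqref{ineq:combapp1}--\eqref{ineq:combapp2} is aiming at the same sharpening but is not carried out; the same issue you flag there already afflicts the earlier inequalities \eqref{ineq:comb3}--\eqref{ineq:comb5} and \eqref{ineq:combextra}.

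Second, your treatment of \eqref{ineq:comb7} is incorrect. The inequality concerns $\sum_{k=2}^{N-2}C^{k-\alpha}/k^2$, not a tail $\sum_{k\geq N-1}$ (which in any case diverges for $C>1$). Moreover, even if one reads your intended estimate as bounding the head by a geometric tail anchored at the last term, it does not close, because the ratio of consecutive terms going backwards, $\frac{k^2}{C(k-1)^2}$, is only $\leq\frac{4}{C}$ and thus equal to $1$ at $k=2$ when $C=4$, so there is no uniform geometric contraction. The paper's proof requires a genuine split at $k\approx N/2$: for $k\leq N/2$ the factor $C^{k+2-N}$ is exponentially small and the $1/k^2$-sum converges absolutely; for $k>N/2$ one has $C^{k+2-N}\leq 1$ and bounds $\sum_{k>N/2}^{N-2}k^{-2}$ by the integral $\int_{N/2}^{N-2}x^{-2}\dif x=\frac{2}{N}-\frac{1}{N-2}$. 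The two pieces add up exactly to $\frac{1}{N}$ for $N\geq 12$, with smaller $N$ checked directly.
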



\begin{proof}
We first prove the inequalities for the quantities where the required bound $1/N$ is weaker than the true asymptotic decay. These are \eqref{ineq:comb1}, \eqref{ineq:comb2}, and \eqref{ineq:comb6}. For simplicity, we give the proof  only for \eqref{ineq:comb2}, as the others are similar.\\
We note that
\beqa
\sum_{k=2}^{N-2}\frac{1}{k^2(N-k)^2}=\sum_{k=2}^{N-2}\frac{1}{N^2}\Big(\frac{1}{k}+\frac{1}{N-k}\Big)^2\leq\frac{4}{N^2}\sum_{k=2}^\infty\frac{1}{k^2}=\frac{4}{N^2}\big(\frac{\pi^2}{6}-1\big).
\eeqa
Observing that $\frac{4}{N^2}\big(\frac{\pi^2}{6}-1\big)\leq \frac{5}{18}\frac{1}{N}$ provided $N\geq 10$, 
we simply maximise the quantity
$$N\sum_{k=2}^{N-2}\frac{1}{k^2(N-k)^2}\text{ over }N=4,\ldots,9,$$
and find the maximum is achieved at exactly $\frac{5}{18}$ for $N=5$, thus proving \eqref{ineq:comb2}.
 We note that~\eqref{ineq:comb1} is a consequence of~\eqref{ineq:comb2} and the obvious bound $\frac{N-1}{N}\ge\frac{5}{18}$ for all $N\ge2$. 

For those sums whose asymptotic decay is of the same order as the required bound, we are slightly more careful. We give the proof here for \eqref{ineq:comb3}. First, we again split the sum as
\beqa
  \sum_{k=2}^{N-3}\frac1{k+1}\frac1{(N-k)^2} =&\,\sum_{k=2}^{N-3}\bigg(\frac{1}{(N+1)^2}\Big(\frac{1}{k+1}+\frac{1}{N-k}\Big)+\frac{1}{N+1}\frac{1}{(N-k)^2}\bigg)\\
  =&\,\frac{2}{(N+1)^2}\sum_{k=3}^{N-2}\frac{1}{k}+\frac{1}{N+1}\sum_{k=3}^{N-2}\frac{1}{k^2}. 
  \eeqa
  For the first term, we bound it with the integral test as $\sum_{k=3}^{N-2}\frac{1}{k}\leq\int_2^{N-2}\frac1x\,\dif x=\log(\frac{N-2}{2})$, while the second sum is bounded by $\sum_{k=3}^\infty\frac{1}{k^2}=\frac{\pi^2}{6}-\frac54$, so we have
  \beqa
  \sum_{k=2}^{N-3}\frac1{k+1}\frac1{(N-k)^2}\leq \frac{2}{(N+1)^2}\log(N-2)+\frac{1}{N+1}(\frac{\pi^2}{6}-\frac54).
  \eeqa
  Noting that $\frac{\pi^2}{6}-\frac54<\frac{4}{10}$, we note also that, for $N\geq 100$,
  $$\frac{2N}{(N+1)^2}\log(N-2)\leq\frac{2\log N}{N}<\frac1{10}, $$ 
  so that, for $N\geq 100$, we conclude
   \beqa
  \sum_{k=2}^{N-3}\frac1{k+1}\frac1{(N-k)^2}\leq \frac{2}{(N+1)^2}\log(N-2)+\frac{1}{N+1}(\frac{\pi^2}{6}-\frac54)<\frac{1}{2N}. 
  \eeqa
 Then, maximising $N\sum_{k=2}^{N-3}\frac1{k+1}\frac1{(N-k)^2}$ over $N=5,\ldots,100$, we find this is maximised at $N=21$ and conclude the claimed bound. The bounds for \eqref{ineq:comb4}, \eqref{ineq:comb5}, and \eqref{ineq:comb8}--\eqref{ineq:comb11} follow similar lines, splitting the sum into a top order part with good constant plus a rapidly decaying remainder, and maximising over a finite set of $N$. 

The final inequality to show is  \eqref{ineq:comb7}, which is slightly different. We first split the sum, for $N\geq 12$, as
\beqa
\sum_{k=2}^{N-2}\frac{C^{k+2-N}}{k^2}=&\,\sum_{k=2}^{\lceil\frac{N}{2}\rceil}\frac{C^{k+2-N}}{k^2}+\sum_{k=\lceil\frac{N}{2}\rceil+1}^{N-2}\frac{C^{k+2-N}}{k^2}
\leq C^{3-\frac{N}{2}}\sum_{k=2}^\infty\frac{1}{k^2}+\sum_{k=\lceil\frac{N}{2}\rceil+1}^{N-2}\frac{1}{k^2}\\
\leq&\,4^{3-\frac{N}{2}}\big(\frac{\pi^2}{6}-1\big)+\int_{\frac{N}{2}}^{N-2}\frac{1}{x^2}\dif x
\leq \frac{2}{N(N-2)}+\frac{2}{N}-\frac{1}{N-2}=\frac{1}{N},
\eeqa
where we have used $C\geq 4$ and where the inequality $(\frac{\pi^2}{6}-1)4^{3-\frac{N}{2}}\leq\frac{2}{N(N-2)}$ follows from  checking that
$$\frac{\dif}{\dif x}\big(x(x-2)4^{3-\frac{x}{2}}\big)=-2^{5-x}\big((\log 4) x^2-2x(2+\log4)+4\big)<0$$
for $x\geq 12$ and, moreover, $\big(N(N-2)(\frac{\pi^2}{6}-1)4^{3-\frac{N}{2}}-2\big)\big|_{N=12}<0$.  
 It then remains to maximise the sum over $N=4\ldots,11$.
\end{proof}

For each of the Taylor series for $\rhoLP$ and $\omLP$ at the origin and sonic point, we will want error bounds for the difference between the truncated series and the full one for derivatives up to order 5. These are stated in the following lemma which, for convenience, is stated in a coordinate $x$ which can be considered either as $y$ near the origin or $z$ near 1.

\begin{lemma}
Let $F$ be an analytic function defined on an interval $x\in[x_0-\nu,x_0+\nu]$ with power series
$$\bar F(x)=\sum_{k=0}^\infty F_k(x-x_0)^k$$
such that the coefficients $F_k$ satisfy the estimate
$$|F_k|\leq\frac{C_*^{k-\al}}{k^2}\text{ for all }k\geq N_0 $$
for some $C_*\geq 1$ and $\al\in\R$.
Given $N\geq N_0$, we define the truncated Taylor series 
\beq
\bar F_N=\sum_{k=0}^NF_k(x-x_0)^k.
\eeq
The difference between $\bar F$  and its truncation satisfies, on the region $|x-x_0|\leq\frac{\epsilon}{C_*}$,
\begin{align}
|\bar F-\bar F_N|\leq&\, \frac{C_*^{-\al}}{(N+1)^2}\frac{\epsilon^{N+1}}{1-\epsilon},  \qquad \big|\pa_{x}\big(\bar F-\bar F_N\big)\big|\leq \frac{C_*^{1-\al}}{N+1}\frac{\epsilon^N}{1-\epsilon},   \\
\big|\pa^2_{x}\big(\bar F-\bar F_N\big)\big|\leq&\, C_*^{2-\al}\frac{\epsilon^{N-1}}{1-\epsilon},   \qquad \big|\pa^3_{x}\big(\bar F-\bar F_N\big)\big|\leq C_*^{3-\al}\frac{\epsilon^{N-2}}{(1-\epsilon)^2}\Big(N(1-\epsilon)+2\epsilon-1\Big),\\
\big|\pa^4_{x}\big(\bar F-\bar F_N\big)\big|\leq&\, C_*^{4-\al}\frac{\epsilon^{N-3}}{(1-\epsilon)^3}\Big( N^2 (1-\epsilon)^2 -  N (5\epsilon-3)(\epsilon-1) + 6 \epsilon^2 - 6 \epsilon+ 2\Big),\\
\big|\pa^5_{x}\big(\bar F-\bar F_N\big)\big|\leq&\, C_*^{5-\al}\frac{\epsilon^{N-4}}{(1-\epsilon)^4}\Big( N^3 (1-\epsilon)^3 + 3N^2(1-\epsilon)^2(3\epsilon-2)\notag\\
&\hspace{18mm} + N(1-\epsilon)(26\epsilon^2-31\epsilon+11) + 6(2\epsilon-1)(2\epsilon^2-2\epsilon+1)\Big).
\end{align}
\end{lemma}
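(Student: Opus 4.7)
The approach is direct: write the truncation remainder as a Taylor tail
\beqas
\bar F(x)-\bar F_N(x) = \sum_{k=N+1}^\infty F_k(x-x_0)^k,
\eeqas
and differentiate termwise (justified since $|x-x_0| \le \epsilon/C_* < 1/C_*$ lies strictly inside the disk of convergence). For the $n$-th derivative, the generic term becomes $k(k-1)\cdots(k-n+1)F_k(x-x_0)^{k-n}$, and invoking the hypothesis $|F_k|\le C_*^{k-\alpha}/k^2$ together with $|x-x_0|\le\epsilon/C_*$ gives
\beqas
|\pa_x^n(\bar F-\bar F_N)| \le C_*^{n-\alpha}\sum_{k=N+1}^\infty \frac{(k-1)(k-2)\cdots(k-n+1)}{k}\epsilon^{k-n}.
\eeqas
For $n=0,1,2$ the combinatorial prefactor is bounded by $1/k^2$, $1/k$, $1$ respectively; applying the elementary bounds $1/k^2\le 1/(N+1)^2$ and $1/k\le 1/(N+1)$ in the tail, together with $\sum_{k=N+1}^\infty\epsilon^{k-n} = \epsilon^{N+1-n}/(1-\epsilon)$, immediately yields the first three claimed estimates.

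For $n\ge 3$ I would exploit the elementary identity
\beqas
\frac{(k-1)(k-2)\cdots(k-n+1)}{k} \le (k-2)(k-3)\cdots(k-n+1), \qquad k\ge 2,
\eeqas
which follows from $(k-1)/k\le 1$ and lowers one factor of $k$. The remaining sum is then of the form $\sum_{k=N+1}^\infty P_{n-2}(k)\epsilon^{k-n}$ for an explicit polynomial $P_{n-2}$ of degree $n-2$ in $k$. Re-indexing with $j=k-n+1$ recasts this as $\sum_{j=N-n+2}^\infty Q(j)\epsilon^{j-1}$ where $Q$ has the form $j(j-1)\cdots(j-n+3)$, and such tails are directly computable via tail formulas for $\sum_{j=M}^\infty \binom{j}{\ell}\epsilon^j$, which in turn follow from differentiating $\sum_{j=0}^\infty\epsilon^j = 1/(1-\epsilon)$ a total of $\ell$ times and extracting the tail.

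Concretely, for $n=3$ the computation gives
\beqas
\sum_{k=N+1}^\infty (k-2)\epsilon^{k-3} = \sum_{j=N-1}^\infty j\epsilon^{j-1} = \frac{\epsilon^{N-2}\big((N-1)(1-\epsilon)+\epsilon\big)}{(1-\epsilon)^2},
\eeqas
and $(N-1)(1-\epsilon)+\epsilon = N(1-\epsilon)+2\epsilon-1$, which is precisely the stated bound at order three. The fourth and fifth derivative estimates follow from the same recipe, now summing $\sum j(j+1)\epsilon^{j-1}$ and $\sum j(j+1)(j+2)\epsilon^{j-1}$ respectively, which are computable from the second and third derivatives of the geometric series evaluated at the tail.

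\textbf{Main obstacle.} There is no conceptual difficulty: the proof is essentially a bookkeeping exercise. The most delicate aspect is verifying that the polynomial factors in $N$ and $\epsilon$ at orders $n=3,4,5$ match the exact forms stated in the lemma, which requires careful tracking of the shift in summation index and the algebraic simplifications $(N-1)(1-\epsilon)+\epsilon = N(1-\epsilon)+2\epsilon-1$ and their higher-order analogues. The use of $(k-1)/k\le 1$ is just sharp enough to produce the stated coefficients; a cruder bound like $(k-1)(k-2)\cdots(k-n+1)/k \le (k-1)^{n-1}$ would inflate the constants by unacceptable factors.
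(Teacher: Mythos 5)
Your proposal is correct, and it takes essentially the same route as the paper. The paper's entire proof consists of citing the identity
\begin{equation*}
\sum_{k=n-m}^\infty(k+m)\cdots(k+1)\,r^k = \frac{\dif^m}{\dif r^m}\Big(\frac{r^n}{1-r}\Big),
\end{equation*}
which is exactly the re-indexed geometric-tail device you invoke to evaluate $\sum_j Q(j)\epsilon^{j-1}$. The preliminary absorptions $1/k^2\le 1/(N+1)^2$, $1/k\le 1/(N+1)$, and $(k-1)/k\le 1$ that you spell out are the elementary steps the paper leaves implicit; I verified that your re-indexings ($j=k-2,\ k-3,\ k-4$ at orders $3,4,5$) and the resulting polynomial identities, e.g.~$(N-1)(1-\epsilon)+\epsilon = N(1-\epsilon)+2\epsilon-1$ and its fourth- and fifth-order analogues, reproduce the stated coefficients exactly. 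No gap.
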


\begin{proof}
The proof follows easily from the identity
\begin{align}\label{eq:powersum}
\sum_{k=n-m}^\infty(k+m)\cdots(k+1)r^k = \frac{\textup{d}^m}{\textup{d}r^m}\Big(\frac{r^n}{1-r}\Big).
\end{align}
\end{proof}

Recall that we have found $\rhoLP$ near the origin as the series
\beq
\rhoLP(y)=\sum_{i=0}^\infty \rho_{2i}y^{2i}.
\eeq
The following lemma provides  convenient expressions for the derivatives of $\rhoLP$ and its ratios with $y$.
\begin{lemma}
The series for the derivative $\rhoLP^{(j)}(y)$, $j\geq 1$, can be written as
\begin{align}
\rhoLP^{(j)}(y)=&\,j!\rho_{2(\frac{j}{2})}+\sum_{i=1}^\infty (2i+j)\cdots(2i+1)\rho_{2(i+\frac{j}{2})} y^{2i} && \text{ if $j$ is even}, \\
\rhoLP^{(j)}(y)=&\,(j+1)!\rho_{2(\frac{j+1}{2})} y + \sum_{i=1}^\infty \Big(\prod_{k=2i+2}^{2i+j+1} k\Big)\rho_{2(i+\frac{j+1}{2})} y^{2i + 1} && \text{ if $j$ is odd}. 
\end{align}
Moreover, dividing by $y$ after differentiating once,
\begin{align}
\frac{\rhoLP'}{y}=&\,
 2\rho_2+\sum_{i=1}^{n-1} 2(i+1)\rho_{2(i+1)}y^{2i} 
+ \textup{err}_1\\
\Big(\frac{\rhoLP'}{y}\Big)'=&\,
 8\rho_4 y + \sum_{i=1}^{n-2} 2(i+2)2(i+1)\rho_{2(i+2)} y^{2i+1} + \textup{err}_2\\
\Big(\frac{\rhoLP'}{y}\Big)''=&\,
8\rho_4 + \sum_{i=1}^{n-2} 2(i+2)2(i+1)(2i+1)\rho_{2(i+2)} y^{2i} + \textup{err}_3\\
\Big(\frac{\rhoLP'}{y}\Big)^{(3)}=&\,
 144\rho_6y + \sum_{i=1}^{n-3} 2(i+3)2(i+2)(2i+3)2(i+1)\rho_{2(i+3)} y^{2i+1}+ \textup{err}_4,\\
\Big(\frac{\rhoLP'}{y}\Big)'\frac{1}{y}=&\,
  8\rho_4  + \sum_{i=1}^{n-2} 2(i+2)2(i+1)\rho_{2(i+2)} y^{2i} + \textup{err}_5,\\
 \Big(\Big(\frac{\rhoLP'}{y}\Big)'\frac{1}{y}\Big)'=&\, \sum_{i=1}^{n-2} 2(i+2)2(i+1)2i\rho_{2(i+2)} y^{2i-1}  + \textup{err}_6,\\
\frac{\rhoLP^{(3)}}{y}=&\,
 24\rho_4+\sum_{i=1}^{n-2}(2i+4)(2i+3)(2i+2)\rho_{2(i+2)} y^{2i}+\textup{err}_7\\
\Big(\frac{\rhoLP^{(3)}}{y}\Big)'=&\,
 \sum_{i=1}^{n-2}(2i+4)(2i+3)(2i+2)2i\rho_{2(i+2)} y^{2i-1}+\textup{err}_8,
\end{align}
where the error functions satisfy the bounds
\begin{align}
|\textup{err}_1|\leq&\, \frac{C_0^{2-\al}}{2(n+1)}\frac{(C_0\de)^{2n}}{1-(C_0\de)^2},\\
|\textup{err}_2|\leq&\, C_0^{4-\al}\de\frac{(C_0\de)^{2(n-1)}}{1-(C_0\de)^2},\\
|\textup{err}_3|\leq&\, C_0^{4-\al}\frac{(C_0\de)^{2(n-1)}\big(2n(1-(C_0\de)^2)+3(C_0\de)^2-1\big)}{(1-(C_0\de)^2)^2},\\
|\textup{err}_4|\leq&\, 2C_0^{5-\al}(C_0\de)^{2n-3}\frac{\big(2n^2(1-(C_0\de)^2)^2 +n(-7(C_0\de)^4+10(C_0\de)^2-3)+6(C_0\de)^4-3(C_0\de)^2+1\big)}{(1-(C_0\de)^2)^3}, \\
|\textup{err}_5|\leq&\, C_0^{4-\al}\frac{(C_0\de)^{2(n-1)}}{1-(C_0\de)^2}, \\
|\textup{err}_6|\leq&\, 2C_0^{5-\al}(C_0\de)^{2n-3}\frac{n(1-(C_0\de)^2)+2(C_0\de)^2-1}{(1-(C_0\de)^2)^2}, \\
|\textup{err}_7|\leq&\, 2C_0^{4-\al}(C_0\de)^{2n-2}\frac{n(1-(C_0\de)^2)+(C_0\de)^2}{(1-(C_0\de)^2)^2}, \\
|\textup{err}_8|\leq&\, 4C_0^{5-\al}(C_0\de)^{2n-3}\frac{n^2(1-(C_0\de)^2)^2+n(-3(C_0\de)^4+4(C_0\de)^2-1)+2(C_0\de)^4}{(1-(C_0\de)^2)^3}. 
\end{align}
\end{lemma}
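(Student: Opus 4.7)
The plan is to derive all expressions by term-by-term differentiation of the Taylor series
\[
\rhoLP(y) = \sum_{i=0}^{\infty} \rho_{2i} y^{2i},
\]
valid on the disc $|y| < 1/C_0$ by virtue of the coefficient bound $|\rho_{2i}| \leq C_0^{2i-\al_0}/(2i)^2$ from Lemma~\ref{L:ORIGINCOEFFS}, and then bound the truncation remainders by applying the estimates of the preceding lemma (which is a direct consequence of the identity~\eqref{eq:powersum}). The first part of the lemma (the formulae for $\rhoLP^{(j)}$, $\rhoLP'/y$, $(\rhoLP'/y)'$, etc.) is a routine index-shifting calculation: dividing by $y$ removes the zeroth-order contribution and re-indexes the series by one, while each subsequent derivative multiplies in the appropriate combinatorial factors from the exponents. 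The parities of the exponents follow from the fact that only even coefficients are non-zero in $\rhoLP$.

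The substantive part of the proof is the error estimates. For each quantity, I would identify the residual series and bound it by the tail-of-geometric-series estimate. For example, for $\textup{err}_1$, the residual is
\[
\textup{err}_1 = \sum_{i=n}^\infty 2(i+1)\rho_{2(i+1)} y^{2i},
\]
and the bound $2(i+1)|\rho_{2(i+1)}| \leq C_0^{2i+2-\al_0}/(2(i+1))$ together with $|y|\leq\delta$ and $\frac{1}{2(i+1)} \leq \frac{1}{2(n+1)}$ for $i \geq n$ yields
\[
|\textup{err}_1| \leq \frac{C_0^{2-\al_0}}{2(n+1)} \sum_{i=n}^\infty (C_0\delta)^{2i} = \frac{C_0^{2-\al_0}}{2(n+1)} \cdot \frac{(C_0\delta)^{2n}}{1-(C_0\delta)^2}
\]
as claimed. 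For the higher-order errors $\textup{err}_2,\ldots,\textup{err}_8$, the structure is analogous: the residual series takes the form $\sum_{i \geq n-k} P_k(i) \rho_{2(i+j)} y^{2i-\ell}$ for some polynomial $P_k$ of degree $k$ arising from successive differentiations, and the coefficient bound absorbs $P_k(i)$ into a form to which~\eqref{eq:powersum} applies.

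Concretely, the recipe is to match each error to a derivative of $r^N/(1-r)$ with $r = (C_0\delta)^2$: an $m$-th derivative of this function produces the prefactor $(1-r)^{-(m+1)}$ and a polynomial in $N$ and $r$ of the required form. For instance, $\textup{err}_6$ and $\textup{err}_8$ involve $r^{N-2}$ divided by $(1-r)^2$ or $(1-r)^3$ respectively, which come from applying $\frac{d^m}{dr^m}$ with $m=1$ or $m=2$ to the geometric tail after absorbing the combinatorial factor $2(i+1)2i$ or $(2i+4)(2i+3)(2i+2)(2i)$ into the bound on $\rho_{2(i+j)}$. In each case, the prefactor $C_0^{k-\al_0}$ in the claimed bound matches the power of $C_0$ arising from $k$ total $y$-derivatives.

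The main obstacle is purely bookkeeping: one must carefully track which derivative count $k$ applies to each quantity (noting that $\rhoLP'/y$ is formally a zeroth-derivative object but has lost a factor of $y$, so that $(\rhoLP'/y)''$ corresponds to effectively a fourth $y$-derivative), and match the resulting polynomial in $n$ and $(C_0\delta)^2$ against the stated formulae. Beyond this, no essentially new ideas are required; the computations are all of the same template described above and can be executed in parallel for each $\textup{err}_j$. A streamlined alternative is to apply the preceding lemma directly with $F = \rhoLP$, $C_* = C_0$, $\alpha = \al_0$, and $N = 2n$ (noting $|\rho_k| \leq C_0^{k-\al_0}/k^2$ trivially when $k$ is odd since then $\rho_k = 0$), and then deduce the errors for $\rhoLP'/y$ and its derivatives by the Leibniz rule applied to $y^{-1}$; the two approaches give identical final constants.
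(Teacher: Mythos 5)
Your approach is correct and is essentially the paper's own: the paper's proof of this lemma consists of the single line ``The identities and error bounds are all straightforward,'' and the straightforward method is exactly the one you outline — term-by-term differentiation and index shifting for the formal identities, then bounding each truncation residual by absorbing the combinatorial factor into the coefficient bound $|\rho_{2i}|\le C_0^{2i-\al_0}/(2i)^2$ and summing via the identity~\eqref{eq:powersum}. Your worked example for $\textup{err}_1$ reproduces the stated constant exactly, and I verified that the same template (set $r=(C_0\de)^2$, absorb the degree-$m$ polynomial in $i$ via $\frac{\textup{d}^m}{\textup{d}r^m}\big(r^n/(1-r)\big)$, track the power of $C_0$ from the number of $y$-derivatives) reproduces, e.g., $\textup{err}_3$.

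One small caveat: the closing remark about a ``streamlined alternative'' — applying the preceding lemma directly to $F=\rhoLP$ with $N=2n$ and then invoking a Leibniz rule for $y^{-1}$ — does not give identical final constants, contrary to what you claim. The preceding lemma, being agnostic to the vanishing of the odd Taylor coefficients, yields denominators $1-\epsilon = 1-C_0\de$ rather than the sharper $1-(C_0\de)^2$ that appears in the statement, and its prefactors differ accordingly (e.g.\ $C_0^{1-\al_0}/(2n+1)$ vs.\ $C_0^{2-\al_0}/(2(n+1))$). Moreover its pointwise bounds on $\pa_y(\bar F-\bar F_N)$ are not directly divisible by $y$ near the origin, so the Leibniz reduction is not licensed without further argument. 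None of this affects your main proof, which relies on the direct residual estimate, but the parenthetical claim of equivalence should be dropped.
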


\begin{proof}
The identities and error bounds are all straightforward.
\end{proof}

\section{Interval Arithmetic code -- a guide}\label{APP:CODE}

For the convenience of the reader, we provide here an outline table of contents for the attached interval arithmetic code (\url{https://github.com/mrischrecker/Collapse/blob/main/sonic_Taylor.cc#L7341}) For each section/subsection, we provide a short description of the contents in order to aid the reader. The headings in bold and italics are written directly in comments in the attached code.\\

\noindent\textbf{SECTION: Auxiliary functions}\\
A collection of convenient auxiliary functions used throughout the code.\\

\noindent\textbf{SECTION: Larson-Penston solution}\\
\noindent\textit{SUBSECTION: LP Construction from the sonic point}\\
Functions used to prove the growth rate of the Taylor coefficients for the LP solution from the sonic point and to construct the Taylor series approximation of the LP solution close to the sonic poinr. Specifically used for the proof of Proposition~\ref{P:RHONOMNBDS}.\\
\noindent\textit{SUBSECTION: LP Construction from the origin}\\
Functions used to prove the growth rate of the Taylor coefficients for the LP solution from the origin and to construct the Taylor series approximation of the LP solution close to the origin. Specifically used for the proof of Lemma~\ref{L:ORIGINCOEFFS}(ii).\\
\noindent\textit{SUBSECTION: LP ODE functions}\\
This contains the functions implementing \verb!VNODE-LP! to solve the LP ODE backwards from the sonic point, taking initial data determined from the Taylor series.\\
\noindent\textit{SUBSECTION: Solvers for $y_*$ and $\rho_0$}\\
Functions to establish the enclosures for $y_*$ and $\rhoLP(0)$. Specifically used for the proof of Lemma~\ref{L:LPYSTAR} and~\ref{L:ORIGINCOEFFS}(i).\\

\noindent\textbf{SECTION: Eigenfunction solvers}\\
This section contains all the functions needed to construct solutions to the eigenfunction ODE~\eqref{eq:phila} via Taylor expansion close to the origin and sonic point and to verify the numerical bounds on the growth rates of the coefficients. It also contains the functions used in the exclusion of intermediate eigenvalues. It is split into several subsections.\\
\noindent\textit{SUBSECTION: Eigenfunction construction at sonic point}\\
This subsection contains the functions used to verify the constants in the growth rates of the eigenfunctions around the sonic point in Proposition~\ref{L:EFUNCTIONTAYLOR}(i), including the estimates of Lemmas~\ref{L:WSONICIA},~\ref{L:ABSONICIA} and~\ref{L:PSISONICIA}.\\
\noindent\textit{SUBSECTION: Eigenfunction ODE solvers}\\
Functions to implement \verb!VNODE-LP! and solve the eigenfunction ODE~\eqref{eq:phila} backwards from the sonic point towards the origin, given data determined by the Taylor series.\\
\noindent\textit{SUBSECTION: Eigenfunction construction at origin}\\
This subsection contains the functions used to verify the constants in the growth rates of the eigenfunctions around the sonic point in Proposition~\ref{L:EFUNCTIONTAYLOR}(ii), including the estimates of Lemmas~\ref{L:WORIGINIA} and~\ref{L:PSIORIGINIA}.\\
\noindent\textit{SUBSECTION: Eigenfunction exclusion}\\
This section contains the functions used to exclude intermediate eigenvalues, as described in Proposition~\ref{P:NOINTERIMAG}.\\

\noindent\textbf{SECTION: Energy}\\
This section contains all of the functions needed to verify the exclusion of eigenvalues in the high and low frequency regimes, stated in Propositions~\ref{P:NOLARGEIMAG} and~\ref{P:NOSMALLIMAG}. To implement this, we split the code into subsections, first to construct all of the relevant derivative quantities derived from the LP solution that are needed to prove these propositions, then to construct the ODE coefficients appearing in Proposition~\ref{prop:A4B4} and~\ref{P:NOSMALLIMAG}, and finally to construct the energy coefficients $H_\la$ and $\widetilde{H}_\la$ appearing in the identities of Lemma~\ref{L:LARGEBENERGY} and Proposition~\ref{P:NOSMALLIMAG}.\\
\noindent\textit{SUBSECTION: Derivatives near sonic point}\\
This section contains the functions needed to compute the derivatives of the LP solution on a region $\frac{y}{y_*}\in[1-\de_1,1]$ using the Taylor expansions.\\
\noindent\textit{SUBSECTION: Derivatives in intermediate region}\\
This section contains the functions needed to compute the derivatives of the LP solution on a region $\frac{y}{y_*}\in[\de_0,1-\de_1]$ via differentiating the LP ODE.\\
\noindent\textit{SUBSECTION: Derivatives near origin}\\
This section contains the functions needed to compute the derivatives of the LP solution on a region $\frac{y}{y_*}\in[0,\de_0]$ via the Taylor expansion.\\
\noindent\textit{SUBSECTION: Energy -- Large b coefficients}\\
Functions appearing as coefficients upon differentiating the eigenfunction ODE~\eqref{eq:phila}, as detailed in Appendix~\ref{S:Highbcoeffs}.\\
\noindent\textit{SUBSECTION: Energy -- Large $b$ energy coefficients}\\
Functions defining the energy coefficient $H_\la$ appearing in the proof of Proposition~\ref{P:NOLARGEIMAG}.\\
\noindent\textit{SUBSECTION: Energy -- Small $b$ coefficients}\\
Functions appearing as coefficients upon differentiating the eigenfunction ODE~\eqref{eq:phila}, as detailed in Appendix~\ref{S:PEQCOEFFS}, and  defining the energy coefficient $\widetilde{H_\la}$ appearing in the proof of Proposition~\ref{P:NOSMALLIMAG}.\\
\noindent\textit{SUBSECTION: Energy -- Coefficient sign functions}\\
Functions used to establish the required sign conditions for the energy coefficients, proving Propositions~\ref{P:NOLARGEIMAG} and~\ref{P:NOSMALLIMAG}.


The main function of the attached code simply runs the proving functions in the order contained in the following table, with approximate run-times included.

\begin{table}[h]
\centering
 \begin{tabular}{|c |c |c|} 
 \hline 
 Lemma/Proposition & Function & Approx.~run-time  \\ [0.5ex] 
 \hline\hline
Lemma D.3 & \verb!C_alpha_constraint_check_Sonic! & <1s \\ \hline
Proposition D.1 & \verb!C_alpha_const_check_Sonic! & <1s \\ \hline
Lemma D.6 & \begin{tabular}{@{}c@{}}\verb!y_bar_star_upper_bound! \\ \verb!LP_solver!\end{tabular}   & \ $\sim$ 50s\\ \hline
Lemma D.7 & \begin{tabular}{@{}c@{}}\verb!rho0_bound! \\ \verb!C_alpha_constraint_check_Origin!\end{tabular} & <1s\\ \hline
Lemma D.8 &  \begin{tabular}{@{}c@{}} \verb!large_b_Origin_Prover! \\ \verb!large_b_Sonic_Prover! \\ \verb!large_b_Sonic_Prover!\end{tabular}  & $\sim$2h\\ \hline
Lemma D.9 &  \begin{tabular}{@{}c@{}} \verb!small_b_Origin_Prover! \\ \verb!small_b_Sonic_Prover! \\ \verb!small_b_Sonic_Prover!\end{tabular} & $\sim$2h\\ \hline
Lemma D.15 & \verb!w_Sonic_Constraint! & <1s\\ \hline
Lemma D.16 & \verb!w_Origin_Constraint! & <1s\\ \hline
Lemma D.17 & \verb!efun_Coeff_Sonic_Constraint! & <1s\\ \hline
Lemma D.18 & \begin{tabular}{@{}c@{}}\verb!efun_Sonic_Constraint! \\ \verb!efun_Sonic_Sum_Constraint!\end{tabular} & $\sim$13m\\ \hline
Lemma D.19 & \verb!efun_Coeff_Origin_Constraint! & $\sim$7s\\ \hline
Lemma D.20 & \begin{tabular}{@{}c@{}}\verb!efun_Origin_Constraint! \\ \verb!efun_Origin_Sum_Constraint!\end{tabular} & <1s\\ \hline
Proposition 3.16 & \verb!intermediate_evalue_excluder! & $\sim$ 23 days\\ \hline
 \end{tabular}
\end{table}


\end{document}